\newcommand{\color}[6]{}
\theoremstyle{plain}
\newtheorem{thm}{Theorem}[section]
\newtheorem{prop}[thm]{Proposition}
\newtheorem{cor}[thm]{Corollary}
\newtheorem{lem}[thm]{Lemma}
\newtheorem{req}[thm]{Requirement}
\newtheorem{que}[thm]{Question}
\newtheorem{defn}[thm]{Definition}
\theoremstyle{definition}
\newtheorem{rem}{Remark}[section]
\newtheorem{assum}[rem]{Assumption}
\newtheorem{cond}[rem]{Condition}
\newtheorem{examp}[rem]{Example}
\DeclareMathOperator{\OD}{\mathtt{OD}}
\DeclareMathOperator{\coni}{con}
\DeclareMathOperator{\sDet}{\mathtt{sDet}}
\DeclareMathOperator{\RHom}{\mathbf{R}Hom}
\DeclareMathOperator{\Free}{Free}
\DeclareMathOperator{\forget}{\mathbf{fib}}
\DeclareMathOperator{\nonu}{nu}
\DeclareMathOperator{\TC}{\mathbf{TC}}
\DeclareMathOperator{\TCC}{\mathbf{TCC}}
\DeclareMathOperator{\coalg}{coalg}
\DeclareMathOperator{\Constr}{\mathtt{Constr}}
\DeclareMathOperator{\ev}{\mathrm{ev}}
\DeclareMathOperator{\Coh}{Coh}
\DeclareMathOperator{\Tnu}{\mathbf{T}_{nu}}
\DeclareMathOperator{\cone}{\mathtt{cone}}
\DeclareMathOperator{\nt}{nt}
\DeclareMathOperator{\Spec}{Spec}
\DeclareMathOperator{\mm}{\mathbf{m}}
\DeclareMathOperator{\KK}{\mathfrak{K}^*}
\DeclareMathOperator{\id}{\mathbf{id}}
\DeclareMathOperator{\idmon}{\mathbf{1}}
\DeclareMathOperator{\MM}{\mathcal{M}}
\DeclareMathOperator{\homdual1}{\mathcal{H}^*}
\DeclareMathOperator{\Tot}{Tot}
\DeclareMathOperator{\nilp}{nilp}
\DeclareMathOperator{\Image}{Image}
\DeclareMathOperator{\triang}{\mathbf{triang}}
\DeclareMathOperator{\thick}{\mathbf{thick}}
\DeclareMathOperator{\Perf}{\mathsf{Perf}}
\DeclareMathOperator{\vect}{\mathbf{vect}_k}
\DeclareMathOperator{\Rep}{Rep}
\DeclareMathOperator{\Fun}{Fun}
\DeclareMathOperator{\dgvect}{\mathbf{dgvect}_k}
\DeclareMathOperator{\ob}{\mathbf{Ob}}
\DeclareMathOperator{\ext}{ext}
\DeclareMathOperator{\Ext}{Ext}
\DeclareMathOperator{\Hom}{Hom}
\DeclareMathOperator{\End}{End}
\DeclareMathOperator{\st}{\mathsf{st}}
\DeclareMathOperator{\Mot}{\mathsf{Mot}}
\DeclareMathOperator{\MF}{\mathsf{MF}}
\DeclareMathOperator{\Hilb}{\mathsf{Hilb}}
\DeclareMathOperator{\DT}{\mathsf{DT}}
\DeclareMathOperator{\Kth}{\mathsf{K}_0}
\DeclareMathOperator{\sest}{ss}
\DeclareMathOperator{\var}{var}
\DeclareMathOperator{\Var}{Var}
\DeclareMathOperator{\GL}{GL}
\DeclareMathOperator{\BGL}{BGL}
\DeclareMathOperator{\lmod}{-\mathsf{mod}}
\DeclareMathOperator{\lMod}{-\mathsf{Mod}}
\DeclareMathOperator{\lModn}{-\mathsf{Mod}_{norm}}
\DeclareMathOperator{\lModis}{-\mathsf{Mod}_{\infty,S}}
\DeclareMathOperator{\lmodi}{-\mathsf{mod}_{\infty}}
\DeclareMathOperator{\lmodinu}{-\mathsf{mod}_{\infty,nu}}
\DeclareMathOperator{\lModinu}{-\mathsf{Mod}_{\infty,nu}}
\DeclareMathOperator{\lModnu}{-\mathsf{Mod}_{nu}}
\DeclareMathOperator{\lModbs}{-\mathsf{Mod}_{\blacksquare}}
\DeclareMathOperator{\rModbs}{\mathsf{Mod}_{\blacksquare}-}
\DeclareMathOperator{\lModibs}{-\mathsf{Mod}_{\infty,\blacksquare}}
\DeclareMathOperator{\lmodinilp}{-\mathsf{mod}_{\infty,nilp}}
\DeclareMathOperator{\rmodinilp}{\mathsf{mod}_{\infty,nilp}-}
\DeclareMathOperator{\lModi}{-\mathsf{Mod}_{\infty}}
\DeclareMathOperator{\rmod}{\mathsf{mod}-}
\DeclareMathOperator{\rMod}{\mathsf{Mod}-}
\DeclareMathOperator{\rmodi}{\mathsf{mod}_{\infty}-}
\DeclareMathOperator{\rModi}{\mathsf{Mod}_{\infty}-}
\DeclareMathOperator{\rModnu}{\mathsf{Mod}_{nu}-}
\DeclareMathOperator{\Mat}{Mat}
\DeclareMathOperator{\biMods}{-\mathsf{Mod}_{S}-}
\DeclareMathOperator{\bimods}{-\mathsf{mod}_{S}-}
\DeclareMathOperator{\bimod}{-\mathsf{mod}-}
\DeclareMathOperator{\biMod}{-\mathsf{Mod}-}
\DeclareMathOperator{\bimodi}{-\mathsf{mod}_{\infty}-}
\DeclareMathOperator{\biModi}{-\mathsf{Mod}_{\infty}-}
\DeclareMathOperator{\biModn}{-\mathsf{Mod}_{norm}-}
\DeclareMathOperator{\biModbs}{-\mathsf{Mod}_{\blacksquare}-}
\DeclareMathOperator{\biModinu}{-\mathsf{Mod}_{\infty,nu}-}
\DeclareMathOperator{\amod}{\mathsf{mod}}
\DeclareMathOperator{\lcomod}{-\mathsf{coMod}}
\DeclareMathOperator{\coBimod}{-\mathsf{coBimod}-}
\DeclareMathOperator{\sut}{sut}
\DeclareMathOperator{\Di}{\mathbf{D}_{\infty}}
\DeclareMathOperator{\vir}{vir}
\DeclareMathOperator{\crit}{crit}
\DeclareMathOperator{\mf}{\mathsf{mf}}
\DeclareMathOperator{\tra}{\mathbf{tr}}
\DeclareMathOperator{\sw}{\mathbf{sw}}
\DeclareMathOperator{\cpct}{cpct}
\DeclareMathOperator{\HOM}{\mathcal{HOM}}
\DeclareMathOperator{\ZZ}{\mathbb{Z}}
\DeclareMathOperator{\Kcyc}{\mathfrak{K}^*_{\mathrm{cyc}}}
\DeclareMathOperator{\cyc}{cyc}
\DeclareMathOperator{\image}{Image}
\DeclareMathOperator{\coimage}{Coimage}
\DeclareMathOperator{\ch}{\mathbf{chains}}
\DeclareMathOperator{\cc}{\mathfrak{C}^*}
\DeclareMathOperator{\brackets}{\langle\bullet,\bullet\rangle}
\DeclareMathOperator{\CC}{\mathcal{C}}
\DeclareMathOperator{\FF}{\mathcal{F}}
\DeclareMathOperator{\VV}{\mathfrak{V}}
\DeclareMathOperator{\LL}{\mathbb{L}}
\DeclareMathOperator{\maM}{\mathbf{M}}
\DeclareMathOperator{\DD}{\mathcal{D}}
\DeclareMathOperator{\Aa}{\mathfrak{A}}
\DeclareMathOperator{\fram}{fr}
\DeclareMathOperator{\nc}{nc}
\DeclareMathOperator{\Ho}{H}
\DeclareMathOperator{\Zo}{Z}
\DeclareMathOperator{\alg}{alg}
\DeclareMathOperator{\PSL}{PSL}
\DeclareMathOperator{\fin}{fin}
\DeclareMathOperator{\Ker}{Ker}
\DeclareMathOperator{\tw}{\mathbf{tw}}
\DeclareMathOperator{\Coker}{Coker}
\DeclareMathOperator{\op}{op}
\newcommand{\rperf}{\Perf(\rModi\mathcal{C})}
\newcommand{\lperf}{\Perf(\mathcal{C}\lModi)}
\newcommand{\Cp}{\mathbb{C}}
\newcommand{\muhat}{\hat{\mu}}
\date {}
\author{Ben Davison}
\title{Orientation data in\\[1ex]     
        Motivic Donaldson--Thomas theory}   
\author{Ben Davison}             
\begin{document}

\baselineskip=14pt plus1pt

\setcounter{secnumdepth}{2}
\setcounter{tocdepth}{1}
\mbox{}
\thispagestyle{empty}
\clearpage
\thispagestyle{empty}
\maketitle  






\mbox{}
\thispagestyle{empty}
\clearpage
\mbox{}
\thispagestyle{empty}
\clearpage
\begin{romanpages}          
\fancyhead[CO,CE]{\fancyplain{}{\textit{Contents}}}
\tableofcontents            
\cleardoublepage
\section*{Acknowledgements}
Firstly I must thank Bal\'{a}zs Szendr\H{o}i, for setting me onto such interesting mathematics and for patiently listening to the endless nonsense that preceded this thesis.  Thanks also must go to the Mathematics department at Oxford, and the surrounding mathematical community, such an incredible place for exchanging ideas and getting things done.  In particular I thank Tom Bridgeland, Richard Thomas, Alastair King and Dominic Joyce, for their generosity with their time and their ideas.  Also in Oxford I thank Tobias Barthel, for keeping me always on my toes, and Victoria Hoskins, for keeping me sane.  Next I must thank Maxim Kontsevich and Yan Soibelman, for producing the magnificently rich paper \cite{KS}, to which the current work owes so much.  Working out and understanding the contents of this paper has been a project that I have greatly enjoyed.  I thank Maxim Kontsevich in particular, for his input and comments.  My understanding, and the understanding of the material by the community in general, also owes much to Bernhard Keller, Michel Van den Bergh, Jim Bryan and Victor Ginzburg, all of whom I am very grateful to for stimulating conversations along the way.  Additional thanks must go to my examiners, Tom Bridgeland and Bernhard Keller for helping to beat this thesis into some shape.  I thank Sergey Mozgovoy, Kazushi Ueda, Ezra Getzler and Yukinobu Toda for again also showing such generosity with their ideas -- all of the above people have made the last few years a real pleasure.  I am extremely grateful also to the kind people at Northwestern, who have put me up twice, in such intellectually stimulating surroundings; the conversations I had with Kevin Costello did me enormous good.  Finally, I thank again my supervisor Bal\'{a}zs Szendr\H{o}i, for making the last few years so productive and so much fun, my friends in London and elsewhere for providing distractions, especially Nicholas, Harry, Adam, Naomi, and Tom, and my family, whose kind support over these years of my producing what must be, to them at least, incomprehensible gibberish, has kept me going.
\clearpage


\section*{Statement of Originality}
This thesis contains no material that has already been accepted, or is concurrently being submitted, for any degree or diploma or certificate or other qualification in this University or elsewhere. To the best of my knowledge and belief this thesis contains no material previously published or written by another person, except where due reference is made in the text.
\bigskip \bigskip

\hfill Ben Davison

\hfill 31st March 2011
\clearpage

\mbox{}
\thispagestyle{empty}
\clearpage

\end{romanpages}

\setlength{\headheight}{15pt}
\pagestyle{fancy}
\fancyhead{}
\renewcommand{\sectionmark}[1]{\markright{\thesection.\ #1}}
\renewcommand{\chaptermark}[1]{\markboth{Chapter \thechapter.\ #1}{}}
\fancyhead[CO]{\fancyplain{}{\textit{\rightmark}}}
\fancyhead[CE]{\fancyplain{}{\textit{\leftmark}}}

\chapter{Introduction}
\section{Motivation of the thesis}
\label{mott}
This thesis regards the concept of orientation data, as introduced in \cite{KS}.  This is a notion that arises when one tries to write down an `integration map' in motivic Donaldson--Thomas theory.  To see what one could mean by all this, it is worth bearing in mind what motivic Donaldson--Thomas theory is a refinement \textit{of}, i.e. old-fashioned (more than three years old...) Donaldson--Thomas theory.  The \textit{really} old-fashioned approach to this subject (more than six years old...) is that the Donaldson--Thomas count associated to a moduli space $\mathcal{M}$ of stable sheaves on a projective 3-Calabi-Yau variety is the degree of a zero-dimensional class in the Chow ring of $\mathcal{M}$ (this is what is known as the virtual fundamental class).  The existence of this class goes back to Richard Thomas's thesis \cite{RTthesis}, and further back to \cite{LT}.  The recourse to intersection theory here means that we need compactness.  Also, the virtual fundamental class itself is constructed from a symmetric perfect obstruction theory, a two term partial resolution of the cotangent complex of our moduli space by vector bundles, and so there is no indication at this stage that there is anything `motivic' going on -- if we cut the moduli space up along some constructible decomposition, and restrict our two term complex to each part, we will in general forget some gluing data for the perfect obstruction theory, as well as sacrificing compactness.
\bigbreak
As an indication that there \textit{is} something motivic going on, consider Behrend's result (see \cite{Behr09}), that for an arbitrary finite type scheme $X$ over $\Cp$ there is a constructible $\mathbb{Z}$-valued function $\nu_X$, such that if $X$ is equipped with a symmetric perfect obstruction theory, and is compact, then the degree of the resulting virtual fundamental class is given by the weighted Euler characteristic of $X$ with respect to the function $\nu_X$.  So if one knows what $\nu_X$ is for a moduli space $X$, one is free to constructibly chop up $X$ any way one likes, and calculate the contribution to the Donaldson--Thomas count from each piece.
\bigbreak
Behrend's theorem, in principle, makes it easier to calculate Donaldson--Thomas invariants.  On the one hand, if there is a torus action on a scheme $\mathcal{M}$, then the calculation of the weighted Euler characteristic reduces to the calculation of the weighted Euler characteristic at the fixed locus $F$ (still weighting by $\nu_{\mathcal{M}}$, \textit{not} $\nu_F$).  More generally, the weighted Euler characteristic is the right notion for understanding \textit{wall crossing}, which is a story we will proceed to sketch.  Wall crossing phenomena provide a key to understanding actual calculations of Donaldson--Thomas invariants, which on the face of it, and especially away from the world of toric varieties where we have localization results that reduce calculations to combinatorics in many cases are rather hard to get a grip on (see e.g. \cite{LocalizationFormula}, \cite{MNOP1}, \cite{ObstructionsHilbert} for the foundations of the approach via torus actions, and then e.g. \cite{Conifold}, \cite{Dominos}, \cite{ColouredYoungDiagrams}, \cite{MR}, \cite{Dav08}...  for a selection of applications).
\bigbreak
Say we have an Abelian category $\mathcal{S}$, a homomorphism $\theta:\Kth(\mathcal{S})\rightarrow \mathbb{Z}^n$ from the Grothendieck group, some notion of (semi)stability $\alpha$ for objects of $\mathcal{S}$ (see \cite{HL}), and moduli spaces $\mathcal{M}_{\alpha-\sest}^{\gamma}$ of semistable objects of class $\theta^{-1}(\gamma)$ for each $\gamma\in \mathbb{Z}^n$, then we define a partition function
\[
\mathcal{Z}_{\DT,\alpha}(\textbf{t}):=\sum_{\gamma\in\mathbb{Z}^n} \left(\int_{\mathcal{M}_{\alpha -\sest}^{\gamma}}\nu_{\mathcal{M}_{\alpha -\sest}^{\gamma}}d\chi\right)\textbf{t}^{\gamma},
\]
where $\gamma$ is considered as a multi-index.  We are intentionally being a little vague about these spaces $\mathcal{M}_{\alpha -\sest}^{\gamma}$ -- the experts will already have spotted that when there are `strictly semistables' these spaces are either stacks (and not schemes), making the right hand side undefined, or the spaces can be considered as (non-fine) moduli schemes, making the given definition the wrong one.  The solution to this problem lies in the work of Joyce and Song (see \cite{JS08}), and consists of modifying $\nu_{\mathcal{M}_{\alpha -\sest}^{\gamma}}$ at the points parameterising strictly semistable objects of $\mathcal{S}$.  One of the main results of \cite{JS08} concerns the variation of the partition function $\mathcal{Z}_{\DT,\alpha}(\textbf{t})$ as we vary $\alpha$ -- this is what is meant by wall crossing.  We can sketch an example of this variation in a specific case (this example is somehow paradigmatic, see Section 7 of \cite{KS}).  Let $A$ be some finitely presented algebra (in fact $A$ will be a 3-Calabi-Yau algebra, though we needn't worry about what this means at this point, see Chapter \ref{quiveralgebras}, or \cite{KLH}, \cite{kaj03} \cite{keller-intro}, \cite{KSnotes} for details).  We can consider $A$ as the quiver algebra for a quiver $Q$ with one vertex $v_0$, and finitely many edges, and finitely many relations, since $A$ is finitely presented.    We adjoin an extra vertex $v_{\infty}$ to $Q$, with one arrow from $v_{\infty}$ to the original vertex $v_0$, and no new relations.  Call the new algebra $\tilde{A}$, then we may fix a homomorphism $\theta$ from the Grothendieck group of finite-dimensional left $\tilde{A}$-modules to $\mathbb{Z}^2$, given by taking the dimension vector.
\bigbreak
Before going any further we must introduce the Hall algebra of stack functions $\st(\mathcal{M}_{\tilde{A}})$ for the category of left $\tilde{A}$-modules.  We have taken the presentation of this from \cite{TB10}, which in turn is a distillation of the much longer series (\cite{Jo06a}, \cite{Jo06b}, \cite{Jo07a}, \cite{Jo07b}).  This is generated, as a $\mathbb{Z}$-module, by symbols $[f:S\rightarrow \mathcal{M}_{\tilde{A}}]$, where $\mathcal{M}_{\tilde{A}}$ is the moduli stack of finite-dimensional left $\tilde{A}$-modules, and $S$ is a finite type stack.  We ask that all the stacks appearing in these symbols have affine stabilizer groups for all their geometric points (recall that a geometric point of a stack $X$ defined over $k$ is a morphism $\Spec(K)\rightarrow X$, where $K$ is an algebraically closed extension of $k$).  We impose the obvious cut and paste relations on this $\mathbb{Z}$-module, and identify $[f_1:S_1\rightarrow \mathcal{M}_{\tilde{A}}]=[f_2:S_2\rightarrow \mathcal{M}_{\tilde{A}}]$ if there exists a morphism $g:S_1\rightarrow S_1$ over $\mathcal{M}_{\tilde{A}}$ inducing an isomorphism of categories $S_1(\Spec(K))\cong S_2(\Spec(K))$ for every $K$ an algebraically closed extension of our ground field $k$.  Finally, we impose the relation that for every pair of Zariski fibrations of stacks $[f_1:Y_1\rightarrow X]$ and $[f_2:Y_2\rightarrow X]$, and every morphism $[g:X\rightarrow \mathcal{M}_{\tilde{A}}]$, with the fibres of $f_1$ and $f_2$ isomorphic, and with $X$ and the fibres of the $f_i$ of finite type, there is an identity 
\[
[g\circ f_1:Y_1\rightarrow \mathcal{M}_{\tilde{A}}]=[g\circ f_2:Y_2\rightarrow \mathcal{M}_{\tilde{A}}].
\]

The moduli stack $\mathcal{M}_{\tilde{A}}$ is locally finite type, and breaks into countably many components $\mathcal{M}_{\tilde{A},\gamma}$, one for each dimension vector $\gamma\in\mathbb{Z}^2$.  Going back to the general case, in which we are interested in objects of some category $\mathcal{S}$, where $\mathcal{M}$ is the moduli space of objects in $\mathcal{S}$, we need to impose some conditions on $\mathcal{M}$, the target of our functions, i.e. we ask that it be locally finite type, and that the stabilizer of any geometric point be an affine algebraic group.  One can readily verify that in the case at hand, each stack $\mathcal{M}_{\tilde{A},\gamma}$ is a global quotient stack of an affine finite type scheme by a product of general linear groups, so these conditions are met.  The $\mathbb{Z}$-module $\st(\mathcal{M}_{\tilde{A}})$ carries an action of $\Kth(\st_k)$, which is defined as a $\mathbb{Z}$-module the same way as we defined the underlying $\mathbb{Z}$-module of $\st(\mathcal{M}_{\tilde{A}})$, except that our base stack is now $\Spec(k)$.  The group $\Kth(\st_k)$ carries a ring structure, with multiplication given by Cartesian product.  The action of a class $[X]\in \Kth(\st_k)$ on $[f:S\rightarrow \mathcal{M}_{\tilde{A}}]$ is given by precomposing $f$ with the projection $S\times X\rightarrow S$.  We could instead have considered the $\mathbb{Z}$-module $\st(\mathcal{M}_{\tilde{A}})_{\var}$ generated by functions $f$ whose source is a variety.  This $\mathbb{Z}$-module is a module over $\Kth(\Var_k)$, which is defined to be the sub $\mathbb{Z}$-module of $\Kth(\st_k)$ generated by varieties.  Under our assumptions, there is a natural map from the localization
\begin{equation}
\label{halllocal}
\st(\mathcal{M}_{\tilde{A}})_{\var}[ [\GL_n(k)]^{-1},n\geq 1]\rightarrow \st(\mathcal{M}_{\tilde{A}})
\end{equation}
and it is an isomorphism (see \cite{kre99}).  It is given by sending the formal inverse $[\GL_n(k)]^{-1}$ to $\BGL_n(k)$, the stack theoretic quotient of $\Spec(k)$ by the trivial action of $\GL_n(k)$.
\bigbreak
The Hall algebra product is expressed precisely, and neatly, in the language of stacks.  Let $\mathcal{M}^{(2)}_{\tilde{A}}$ be the stack of short exact sequences of finite-dimensional $\tilde{A}$-modules.  Then projection of a short exact sequence onto its first and last term defines a map of stacks $\mathcal{M}_{\tilde{A}}^{(2)}\rightarrow \mathcal{M}_{\tilde{A}}\times \mathcal{M}_{\tilde{A}}$.  To give a family $\mathcal{F}_i$ of $\tilde{A}$-modules is the same as giving a morphism $f_i:S_i\rightarrow \mathcal{M}_{\tilde{A}}$, which can be considered as a stack function.  Then we obtain a diagram
\[
\xymatrix{
T\ar[r]\ar[d] &\mathcal{M}_{\tilde{A}}^{(2)}\ar[r]\ar[d]&\mathcal{M}_{\tilde{A}}\\
S_1\times S_2 \ar[r]&\mathcal{M}_{\tilde{A}}\times\mathcal{M}_{\tilde{A}}
}
\]
where the square is a pullback, and the rightmost map is obtained by sending a short exact sequence to its middle term.  The composition of the morphisms in the top row is again a stack function, and we define $[f_1:S_1\rightarrow \mathcal{M}_{\tilde{A}}]\star [f_2:S_2\rightarrow \mathcal{M}_{\tilde{A}}]:=[T\rightarrow \mathcal{M}_{\tilde{A}}]$.  Informally, $[f_1:S_1\rightarrow \mathcal{M}_{\tilde{A}}]\star [f_2:S_2\rightarrow \mathcal{M}_{\tilde{A}}]$ is obtained by first taking the stack parameterising short exact sequences
\[
\xymatrix{
0\ar[r]&M'\ar[r]& M\ar[r]&M''\ar[r]&0
}
\]
with $M'$ in the family parameterised by $\mathcal{F}_1$ and $M''$ in the family parameterised by $\mathcal{F}_2$, and allowing this stack to be the base for a family of $\tilde{A}$-modules by considering the middle term of such short exact sequences.  Note, however, that even in the case in which $S_1$ and $S_2$ above are varieties, $T$ will almost always be a stack, and not a variety, incorporating the fact that short exact sequences have stabilizers given by $\Hom(M'',M')$.
\bigbreak
One of the central ideas of the study of motivic Donaldson--Thomas theory is that there is a $\Kth(\st_k)$-linear map (an `integration map' $\Phi$) from the motivic Hall algebra to a twisted polynomial ring $\textbf{M}[x^{\gamma}|\gamma\in\Gamma]$ where 
\begin{equation}
\label{Mdef}
\textbf{M}:=\overline{\Mot}^{\hat{\mu}}(\Spec(k))[\mathbb{L}^{1/2},[\GL_n(k)]^{-1}\hbox{ }|\hbox{ }n\geq 1]
\end{equation}
is obtained from the `universal coefficient ring' of equivariant motives by adding formal inverses to the motives of the general linear groups, and a formal square root of the motive $\mathbb{L}$ representing the affine line.  The group $\hat{\mu}$ is the inverse limit of the finite groups of roots of unity.  The ring $\overline{\Mot}^{\hat{\mu}}(\Spec(k))$ is a quotient ring of $\Mot^{\hat{\mu}}(\Spec(k))$, which as a $\mathbb{Z}$-module is generated by $\hat{\mu}$-equivariant varieties, subject to the usual cut and paste relations and identifications under $\hat{\mu}$-equivariant morphisms inducing bijections of geometric points (we require also that the $\hat{\mu}$-equivariant varieties that generate our ring have $\hat{\mu}$-action factoring through some finite group of roots of unity).  To properly define $\Mot^{\hat{\mu}}(\Spec(k))$ we should impose the extra relations $[X]=[Y]\cdot \mathbb{L}^n$, for $X$ any $\hat{\mu}$-equivariant vector bundle of dimension $n$ on a variety $Y$ -- this relation is required in order to make the formula for the motivic vanishing cycle, in terms of an embedded resolution, well defined (see Section \ref{mvc}).  The equivalence relation imposed in order to pass further, from $\Mot^{\hat{\mu}}(\Spec(k))$ to $\overline{\Mot}^{\hat{\mu}}(\Spec(k))$, is such that all realizations of motives one might care to take (e.g. $\muhat$-equivariant Hodge polynomial) factor through the quotient ring (see \cite{KS}).  The twisting here comes from the rule $x_{\gamma_1}\cdot x_{\gamma_2}=\mathbb{L}^{\frac{\langle \gamma_1,\gamma_2\rangle}{2}} x_{\gamma_1+\gamma_2}$, where $\langle\bullet,\bullet\rangle$ is a skew symmetric form on $\Gamma$ (we will assume that our category $\mathcal{S}$ is 3-Calabi-Yau, and require that the pullback of $\langle\bullet,\bullet\rangle$ under $\theta$ is the Euler form).  The ring $\overline{\Mot}^{\hat{\mu}}(\Spec(k))$ comes with a product, such that realizations of motives (taking Euler characteristic, Serre polynomials, taking $\Kth$ class in the category of $\hat{\mu}$-equivariant mixed Hodge structures, etc) are ring homomorphisms (see \cite{Looi00}, \cite{And86}, \cite{DL01} for the statement regarding Hodge structures, from which the others follow).  The key requirement of a putative integration map is that it take the Hall algebra product to the induced twisted product in $\textbf{M}[x^{\gamma}|\gamma\in\Gamma]$.  This enables us to read off properties of generating series with coefficients in $\textbf{M}$ from Hall algebra identities.\bigbreak
We will give an example of this in the case at hand, but first we address a little technicality.  We would really like to be able to consider symbols such as $[\id:\mathcal{M}_{\tilde{A}}\rightarrow \mathcal{M}_{\tilde{A}}]$ as objects in our Hall algebra.  However the stack $\mathcal{M}_{\tilde{A}}$ is not of finite type, and so this symbol is a priori excluded from our treatment.  We work around this by considering it as an object of a completion of our Hall algebra.  Recall that in general we have in the background some morphism $\theta:\Kth(\mathcal{S})\rightarrow \Gamma$ to a lattice $\Gamma$.  Assuming we know how to construct it, the stack of objects in the category $\mathcal{S}$ decomposes as an infinite union of closed and open substacks $\mathcal{M}_{\gamma}$, each representing the moduli functor for families of objects $M$ with $\theta([M])=\gamma$.  In general $\theta([\mathcal{S}])$, the image of the effective cone, is a subsemigroup of $\Gamma$, and $\theta([\mathcal{S}])\otimes_{\mathbb{Z}} \mathbb{R}_{\geq 0}$ is a cone in $\Gamma\otimes_{\mathbb{Z}}\mathbb{R}$.  Let $T$ be a convex subset of this real cone, which is an ideal, when considered as a subsemigroup of the semigroup $\theta([\mathcal{S}])\otimes_{\mathbb{Z}} \mathbb{R}_{\geq 0}$.  We will deal with the case where $\mathcal{S}=\tilde{A}\lmod$.
\smallbreak 
We define $\st(\mathcal{M}_{\tilde{A}})/T$ to be generated as a $\Kth(\st_k)$-module by $\st(\mathcal{M}_{\tilde{A}})$ with the extra relation
\begin{equation}
\label{extrarel}
[f:S\rightarrow \mathcal{M}_{\tilde{A}}]=0
\end{equation}
if $f$ factors through an inclusion $\mathcal{M}_{\tilde{A},\gamma}\rightarrow \mathcal{M}_{\tilde{A}}$ for $\gamma\in T$.  The Hall algebra product descends to a product on this quotient.  There are inclusions $nT\subset (n-1)T$ inducing maps $\st(\mathcal{M}_{\tilde{A}})/nT\rightarrow \st(\mathcal{M}_{\tilde{A}})/(n-1)T$, and we let $\hat{\st}_{T}(\mathcal{M}_{\tilde{A}})$ be the inverse limit, which inherits a natural product structure from the products on the quotients.  In our example we picked $\Gamma=\mathbb{Z}^2$, with $\theta$ being the map given by taking dimension vector, and so we may pick $T$ to be the set of elements $(a,b)\in \mathbb{R}_{\geq 0}^2=\Image([\mathcal{M}_{\tilde{A}}])\otimes_{\mathbb{Z}}\mathbb{R}_{\geq 0}$ with $a+b\geq 1$.  Then we have a natural element
\[
[\id:\mathcal{M}_{\tilde{A}}\rightarrow \mathcal{M}_{\tilde{A}}]\in \hat{\st}_{T}(\mathcal{M}_{\tilde{A}}).
\]
From now on we'll work in this completion.  An arbitrary $\tilde{A}$-module $M$ fits into a unique short exact sequence
\[
\xymatrix{
0\ar[r]&M'\ar[r]&M\ar[r]& V\ar[r]&0
}
\]
where $M'$ is an $A$-module, considered as an $\tilde{A}$-module in the natural way, and $V$ is an $\tilde{A}$-module whose dimension vector is zero when restricted to the vertex $v_0$.  So if $V_{\infty}$ is the stack of such $\tilde{A}$-modules, and $\mathcal{M}_A$ is the stack of $A$-modules, then there is an identity in the Hall algebra
\[
[\mathcal{M}_{\tilde{A}}]=[\mathcal{M}_A]\star [V_{\infty}],
\]
where here we are a little lax and we identify each stack with the natural morphism from it to $\mathcal{M}_{\tilde{A}}$.  We define \textit{multicyclic} $\tilde{A}$-modules to be those $\tilde{A}$-modules $M$ such that the sub-vector space $e_{\infty}M$ generates $M$ as an $\tilde{A}$-module.  Every $\tilde{A}$-module sits in a unique short exact sequence
\[
\xymatrix{
0\ar[r]&M''\ar[r]&M\ar[r]&M'\ar[r]&0
}
\]
where $M'$ is an $A$-module as before, while $M''$ is a multicyclic $\tilde{A}$-module (this short exact sequence comes from a coarsening of a Harder-Narasimhan filtration, (see \cite{King94}).  So, letting $\mathcal{M}_{\tilde{A},\cyc}$ be the stack of multicyclic $\tilde{A}$-modules, there is an identity in the Hall algebra
\[
[\mathcal{M}_{\tilde{A}}]=[\mathcal{M}_{\tilde{A},\cyc}]\star [\mathcal{M}_{A}],
\]
where we again identify these stacks with their natural morphisms to $\mathcal{M}_{\tilde{A}}$.  There is a substack $(\mathcal{M}_{\tilde{A},\cyc})_{(1,\mathbb{Z})}$ of $\mathcal{M}_{\tilde{A},\cyc}$ consisting of multicyclic modules that have dimension vector $(v_{\infty},v_0)=(1,n)$, where $n\in \mathbb{Z}$.  This is the stack of cyclic $\tilde{A}$-modules, which is a $k^*$-torsor over the noncommutative Hilbert scheme for $A$, meaning that there is an identity in the Hall algebra $[\Hilb(A)]=[k^*]\cdot [(\mathcal{M}_{\tilde{A},\cyc})_{(1,\mathbb{Z})}]$, where the multiplication here is coming from the $\Kth(\st_k)$-action on $\st(\mathcal{M}_{\tilde{A}})$.\bigbreak
So let us assume that we are interested in $\Phi([\Hilb(A)])$.  We have shown that up to a $[k^*]$ factor, it is given by conjugating $\Phi([V_{\infty}])$ by $\Phi([\mathcal{M}_{A}])$ and then truncating.  In practice these two quantities are often easier to work out, and so if we can just define $\Phi$ we are in good shape.  Actually defining $\Phi$ is where the orientation data comes in.\bigbreak
As an aside we remark that this example provides an interesting advertisement for the study of motivic Donaldson--Thomas theory, in that it shows a way that we can obtain statements about the ordinary Donaldson--Thomas partition function from the motivic setup.  Note that unlike at least some other motivic realizations of the ring $\textbf{M}$, the Euler characteristic is not defined on the whole ring $\textbf{M}$.  This is because there are invertible elements (e.g. $[\GL_n(k)]$ for all $n$) that have Euler characteristic zero.  Now the motivic generating series for $\Phi([\Hilb(A)])$ can be shown to have schemes as coefficients, so we are certainly able to apply Euler characteristic to it, and this is exactly what we do to recover the ordinary Donaldson--Thomas partition function.  However we cannot take the identity 
\begin{equation}
\label{neatid}
\Phi([\Hilb(A)])=[k^*]\cdot\Phi([\mathcal{M}_{A}])\cdot\Phi([(V_{\infty})_{(1,0)}])\cdot\Phi([\mathcal{M}_{A}])^{-1}
\end{equation}
and just evaluate the Euler characteristic of each side, since the stack functions appearing in the right hand side are genuine stacks, with stabilizer groups having Euler characteristic zero.  We can, however, evaluate Serre polynomials of both sides, and we get something reasonable.  Then we specialize the Serre polynomial to get the Euler characteristic of the left hand side.  Note that this way of getting a handle on the ordinary partition function for Donaldson--Thomas counts of noncommutative Hilbert schemes relies on being able to see the identity (\ref{neatid}) at the level of motivic generating series -- working at the level of ordinary counting invariants all along and making use of (\ref{neatid}) is a rather tricky proposition (though see \cite{JS08}).\bigbreak
Finally we get on to the integration map $\Phi$.  In short, the change we make when we refine from ordinary Donaldson--Thomas theory to motivic Donaldson--Thomas theory is that instead of integrating schemes with respect to a constructible function, with measure given by Euler characteristic, we take the motivic integral of a motivic weight.  In cases where the ordinary Donaldson--Thomas count is defined, it is then recovered as the Euler characteristic of the resulting element in the ring of motives $\overline{\Mot}(\Spec(k))$ (we forget the $\hat{\mu}$-action -- this corresponds to the statement that in order to calculate the Milnor number we don't need to remember anything about monodromy).  The remaining data, then, is this motivic weight, which we will denote by $w$.  It turns out that there is a very reasonable candidate for this $w$, defined purely in terms of the category we are building an integration map for, in this case $\tilde{A}\lmod$, as long as we place some assumptions on $A$, i.e. we need some kind of 3-Calabi-Yau property, to be discussed at length in Chapter \ref{quiveralgebras}.  We can briefly explain what happens in this case.  Let $M$ be some $\tilde{A}$-module.  Then a local neighborhood of $M$, considered as a point in the moduli stack $\mathcal{M}_{\tilde{A}}$, is given by the scheme-theoretic degeneracy locus of a function $f$ on some ambient smooth space $X$ (see \cite{Conifold}), and in this case we have that $\nu_{\mathcal{M}_{\tilde{A}}}(M)=(-1)^{\dim(X)}(1-\chi(\mf(p^*f,\tilde{M})))$ where $\mf(p^*f,\tilde{M})$ is the Milnor fibre of the pullback of $f$ to a smooth atlas $p:X'\rightarrow X$, at a lift $\tilde{M}$ of the point $M$.  There is then a ready-made motivic refinement of the function $\nu_{\mathcal{M}_{\tilde{A}}}$ given by the motivic Milnor fibre $\MF(p^*f,\tilde{M})$ defined by Denef and Loeser.  Precisely, we give the point $M$ the weight $\mathbb{L}^{-\dim(X)/2}(1-\MF(p^*f,\tilde{M}))$.
\bigbreak
There is, however, a little ambiguity in the refinement as it stands.  If there is some other smooth $Y$, with a smooth atlas $q:Y'\rightarrow Y$, and a function $g$ on $Y$ such that $\mathcal{M}_{\tilde{A}}$ is described locally around $M$ as the scheme-theoretic degeneracy locus of $g$, it turns out that $(-1)^{\dim(Y)}(1-\chi(\MF(q^*g,\tilde{M})))$ is the same number (it is just the microlocal function $\nu_{\mathcal{M}_{\tilde{A}}}$ evaluated at $M$ again).  However, there needn't be an equality $\mathbb{L}^{-\dim(X)/2}(1-\MF(p^*f,\tilde{M}))=\mathbb{L}^{-\dim(Y)/2}(1-\MF(q^*g,\tilde{M}))$.  To remedy this we take a canonical choice of $X$ and $f$: we let $X'=\Ext^1(M,M)$, a smooth atlas for the formal deformation stack $X$ of $M$, and let $f=W_{\min,M}$ be the minimal potential for $M$ (see Definition \ref{CYcatdef} and Theorem \ref{cycmm}).  This is a function on $\Ext^1(M,M)$ defined purely in terms of the category $\tilde{A}\lmod$.
\bigbreak
It turns out, however, that even though we have a canonical choice of $X$ and $f$, it isn't quite right for defining the motivic weight: if we use this weight, the map $\Phi$ need not preserve the product.  Call this weight $w_{\min}$.  The remedy involves quite a mild change to $w_{\min}$.  Consider again the function $W_{\min,M}$ on $\Ext^1(M,M)$.  If we replace $\Ext^1(M,M)$ by $\Ext^1(M,M)\oplus V$ for some finite-dimensional vector space $V$, and let $Q$ be a nondegenerate quadratic form on $V$, then one can verify that if we treat $Q$ as a function on $V$ by setting $Q(x)=Q(x,x)$, the scheme-theoretic degeneracy locus of $W_{\min,M}+Q$ on $\Ext^1(M,M)\oplus V$ is isomorphic to the scheme-theoretic degeneracy locus of $W_{\min,M}$ on $\Ext^1(M,M)$.  This statement remains true at the level of quotient stacks.  So the pair $(\Ext^1(M,M)\oplus V, W_{\min,M}+Q)$ is another candidate for the function we take the motivic Milnor fibre of.  By the motivic Thom-Sebastiani theorem (\cite{DL99}), this replacement has the effect of multiplying our original motivic weight by $(1-\MF(Q))\mathbb{L}^{-\frac{\dim(V)}{2}}$.  It turns out that there is a fix given by such a replacement, and this is what orientation data is, at a rough approximation.  At a slightly closer approximation, since we only care about the motivic weight $w$, and this is determined by the motivic weight $w_Q=(1-\MF(Q))\mathbb{L}^{-\frac{\dim(V)}{2}}$, we identify pairs $(V,Q)$ that lead to the same motivic weight, and orientation data is an equivalence class under these identifications.  In order to be classed as orientation data, this equivalence class needs to make the integration map $\Phi$ that integrates with respect to the weight $w_{\min}\circ w_{Q}$ preserve ring structure (here the product is in the ring of relative motives over an atlas for our moduli space).  This condition ultimately becomes what is called the \textit{cocycle condition} -- see Condition \ref{cocycle}. This thesis is dedicated to describing such a choice in cases that don't differ too much from our motivating example of modules over an algebra, and describing the range of choices in such cases too.  
\section{Structure of the thesis}
Once the role of orientation data is clear, and its construction is demystified, it turns out to be a rather manageable entity, and as mentioned above, the rewards, such as the ability to actually calculate motivic Donaldson--Thomas invariants, are great.  So in many ways the main problem is in seeing what exactly orientation data does for us.  To this end we start the thesis with a worked example, in which almost all the features of the theory emerge, and we motivate the twisted definition of the integration map, involving orientation data.  The example itself is rather simple, and we are able to calculate everything in sight by hand, and see what kind of trouble the untwisted version of the integration map gets us into.
\bigbreak
We then progress towards the construction of orientation data in a wide class of examples.  This part of the story rests quite heavily on the theory of $A_{\infty}$-algebras and categories.  As a result, before we can get on with the definition of the orientation data associated to the presentation of a category as the category of modules over a certain type of ($A_{\infty}$) algebra, we recall some of this theory.  This occupies the whole of Chapter \ref{inftystuff}.
\bigbreak
We concentrate, for the most part, on categories of perfect modules over compact Calabi-Yau algebras.  It turns out that these categories lend themselves to Algebraic Geometry, in the sense that their objects (up to quasi-isomorphism) can be arranged (with repetitions) into an infinite ascending union of algebraic varieties, with compositions of morphisms given by operations on finite-dimensional bundles lying over these varieties.  This is achieved by restricting to a smaller category, the category of twisted objects of Definition \ref{twl}, which is the smallest subcategory of the category of perfect modules to contain the image of the Yoneda embedding and be closed under shifts and triangles.  In many examples, this category contains a representative of every quasi-isomorphism class of objects in the category of perfect modules, and this smaller category lends itself very well to the whole theory of motivic Donaldson--Thomas theory.  Our exposition of the necessary $A_{\infty}$ algebra for understanding the construction of orientation data will concentrate heavily on the nice geometric nature of this category.  In particular we work towards a proof of the technical lemma that this smaller category is a cyclic Calabi-Yau category, which is essentially the statement that it fits into the motivic Donaldson--Thomas machine.  This stage-setting is again essentially technical in nature, and occupies us in Chapter \ref{quiveralgebras}.
\bigbreak
Chapter \ref{stackofobjects} amounts to showing that, at least in the quiver cases we care about, up to quasi-isomorphism all flat families of modules can be pulled back from the nice category of twisted objects.  This is important, since the motivic weight that forms the crucial ingredient of the integration map $\Phi$, applied to a family of objects in a category, is built from the structure of that category, and so we need to be able to arrange that structure (i.e. homomorphisms and their compositions) into families.  This chapter deals with the problem of finding cyclic Calabi-Yau models for endomorphism algebras in (constructible) families, where the previous chapter merely deals with this problem for points.
\bigbreak
Once all of the ingredients are in place, we actually get on with constructing orientation data.  It turns out that the notion of orientation data makes sense even when the theory of motivic Donaldson--Thomas invariants is not really defined (i.e. in higher dimensions, and when our category of modules is not quasi-equivalent to a nice subcategory like the category of twisted objects).  In Chapter \ref{ODchapter} we give a very general construction of orientation data, the original version of which can be found in \cite{KS}.
\bigbreak
In Chapter \ref{exchap} we deal with some examples.  The main technical difficulties here are in dealing with noncompact spaces, for which the categories of compactly supported sheaves, which are the natural candidates for Calabi-Yau categories, diverge a little from the categories we consider up until this point, in that they are not given by categories of perfect modules over finite-dimensional Calabi-Yau algebras.  We deal with this problem in the smooth case, at the same time making clear the link between this work and \cite{BBS}.  
\bigbreak
It often happens that our big ($A_{\infty}$) categories have Abelian subcategories $\mathcal{A}$, and that we have a natural handle on the objects in $\mathcal{A}$ as living in a space cut out from a smooth ambient space as a scheme-theoretic degeneracy locus of a function, which we will denote $\tra(W)$.  In such a situation we have an alternative integration map given by pulling back the motivic vanishing cycle of $\tra(W)$.  We address the issue of how this compares with the Kontsevich--Soibelman integration map.
\bigbreak
It turns out that our `natural handle' on the space of objects in this Abelian subcategory as the scheme-theoretic degeneracy locus of a function comes from much the same information that gives us a choice of orientation data.  The comparison result that one would like to be true, then, is that the two integration maps (the Kontsevich--Soibelman integration map and the integration against vanishing cycles) are the same, and this is Proposition \ref{BBScompare}.  The vanishing cycles in the examples we consider in Chapter \ref{exchap} are easy to define, and actually possible to calculate in many examples (e.g. \cite{BBS}, \cite{DM11}, \cite{MMNS11}, \cite{moz11}...).  So one may wonder why we should bother with the seemingly very heavy machine of \cite{KS}, given this result.  A partial answer is provided in Section \ref{conif} -- the reduction of the theory to considering minimal potentials and orientation data in fact makes the example of the noncommutative conifold almost trivial, while calculating the relevant motivic vanishing cycles cannot be called straightforward (though it is done, from this point of view, in \cite{MMNS11}).
\bigbreak
A deeper answer to the question of why we should be concerned with the orientation data picture is that we are not always interested solely in comparing moduli of objects in a single Abelian subcategory of our categories.  In studying phenomena such as the PT/DT correspondence, cluster transformations and noncommutative crepant resolutions, we would like to be able to determine the value of moduli of objects in different hearts of derived categories under the integration map.  The Kontsevich--Soibelman machine is designed to let us do this.  This leads us to a natural question: say we have our two Abelian categories, and the space of objects of both of them occur naturally as the critical locus of functions on smooth spaces, then are the two integration maps coming from motivic vanishing cycles the same?  The correct language in which to consider this problem is that of orientation data, where it tends to be rather manageable.  Chapter \ref{gocompare} is devoted to answering a number of variations of this question.  We show that natural choices of orientation data are unchanged by cluster transformations, flops, and a large class of tilts in general.  In particular, we settle Conjecture 12 of \cite{KS} in the affirmative.  We finish by considering the conifold once more -- the way in which the orientation data glues across the two categories of coherent sheaves obtained by flopping the resolved conifold singularity is a nice demonstration of how well the seemingly difficult orientation data behaves.
\section{Notation and conventions}
\label{notation}
We work throughout over a field $k$, of characteristic zero.  We will work for much of the time with graded $k$-linear categories, that is, categories $\mathcal{C}$ such that for each $X,Y\in\mathcal{C}$, $\Hom_{\mathcal{C}}(X,Y)$ has the additional structure of a $\mathbb{Z}$-graded $k$-linear vector space, and for $X,Y,Z\in\mathcal{C}$ the map
\[
\Hom_{\mathcal{C}}(Y,Z)\otimes \Hom_{\mathcal{C}}(X,Y)\rightarrow \Hom_{\mathcal{C}}(X,Z)
\]
is a degree zero homogeneous map of $\mathbb{Z}$-graded $k$-vector spaces.  Given an arbitrary $k$-linear category $\mathcal{C}$ one constructs a graded category $\mathcal{C}_{\mathbb{Z}}$ whose objects are sequences of objects in $\mathcal{C}$, indexed by $\mathbb{Z}$, and whose morphisms are given by
\[
\Hom_{\mathcal{C}_{\mathbb{Z}}}((\ldots,X^{-1},X^0,\ldots),(\ldots,Y^{-1},Y^0,\ldots)):=\prod_{i\in\mathbb{Z}} \Hom_{\mathcal{C}}(X^i,Y^i).
\]
If $\mathcal{A}$ is an Abelian category with enough projectives or enough injectives, we denote by $\mathcal{A}_{\ext}$ the graded category obtained by setting $\ob(\mathcal{A}_{\ext}):=\ob(\mathcal{A})$, and setting 
\[
\Hom_{\mathcal{A}_{\ext}}^n(X,Y):=\Ext_{\mathcal{A}}^n(X,Y)
\]
for $X,Y\in \ob(\mathcal{A})$.
\bigbreak
If, on the other hand, $\mathcal{A}$ is a category with an understood shift functor, and an understood notion of homotopy equivalence of morphisms, preserved by the shift functor, we denote by $\mathcal{A}_{\ext}$ the graded category, again satisfying $\ob(\mathcal{A}_{\ext}):=\ob(\mathcal{A})$, and with
\[
\Hom_{\mathcal{A}_{\ext}}^n(X,Y):=\Hom_{\mathcal{A}}(X,Y[n])/\sim_{\mathrm{homotopy}}.
\]
In general we give objects the cohomological grading, denoted by superscripts, and the shift functor is given by $Z[n]^m:=Z^{n+m}$.  In swapping with the homological grading we set $Z^n:=Z_{-n}$.
\bigbreak
We will be dealing for much of the time with differential graded categories, by which we mean a category $\mathcal{C}$ such that each $\Hom_{\mathcal{C}}(X,Y)$ has the structure of a differential graded complex of $k$-vector spaces, and each map
\[
\Hom_{\mathcal{C}}(Y,Z)\otimes \Hom_{\mathcal{C}}(X,Y)\rightarrow \Hom_{\mathcal{C}}(X,Z)
\]
has the structure of a graded map of complexes.  Finally we will use also the notion of symmetric monoidal category.  A monoidal category is a category $\mathcal{C}$ equipped with a bifunctor 
\[
-\otimes -:\mathcal{C}\times\mathcal{C}\rightarrow \mathcal{C}
\]
along with a natural isomorphism
\[
(-\otimes -)\otimes -\rightarrow -\otimes (-\otimes -)
\]
expressing associativity, and an object $\idmon_{\mathcal{C}}$ that is a monoidal unit.  These structures are required to satisfy the usual coherence conditions (see \cite{cfwm}).  The symmetric structure comes from another natural isomorphism
\[
\nu: -\otimes -\rightarrow (-\otimes -)\circ \sw
\]
where $\sw:\mathcal{C}\times\mathcal{C}\rightarrow \mathcal{C}\times\mathcal{C}$ is the isomorphism swapping arguments.  This natural isomorphism $\nu$ is required to satisfy $(\nu\circ \sw)\nu=\id_{-\otimes -}$, and another set of obvious coherence conditions (again see \cite{cfwm}).
\smallbreak
Generally we consider twisted symmetric monoidal structures.  In the case of categories of $\mathbb{Z}$-graded objects for which there is a natural symmetric monoidal structure (for example $(\vect)_{\mathbb{Z}}$, the category of graded vector spaces over $k$), the twisted structure is obtained by multiplying the isomorphism $\nu$ by -1 each time it swaps two objects which are both oddly graded.  This is a fancy way of encoding the Koszul sign convention.  
\medbreak
We denote by $\dgvect$ the symmetric monoidal differential graded category of differential graded vector spaces over $k$, with the twisted monoidal structure.  If $(V,d_V)$ is a differential graded vector space, i.e. a pair of a graded vector space $V$ and a $k$-linear map $d_V:V\rightarrow V$ of (cohomological) degree 1 such that $d_V^2=0$, and $(W,d_W)$ is another such differential graded vector space, then $\Hom_{\dgvect}^n((V,d_V),(W,d_W))$ is the subspace of the vector space of morphisms between the underlying, ungraded, vector spaces $V$ and $W$, such that homogeneous elements $v\in V^m$ are sent to homogeneous elements $w\in W^{m+n}$.  So we obtain a graded vector space $\Hom_{\dgvect}((V,d_V),(W,d_W))$, which comes with a differential defined on homogeneous $f$ given by
\[
d(f)=d_W\circ f-(-1)^{|f|} f\circ d_V,
\]
where here and elsewhere $|\bullet|$ is the  function taking homogeneous elements to their degree.
\smallbreak
For much of the time we are interested in the category of $A_{\infty}$-modules over some $A_{\infty}$-algebra $A$ (these notions are recalled in Chapter \ref{inftystuff}).  We adopt the following conventions:
\begin{enumerate}
\label{basicnot}
\item
$A\lMod$, $A\lmod$ will denote the ordinary category of left $A$-modules, or the ordinary category of left $A$-modules with finite-dimensional total homology, respectively.
\item
$\rMod A$, $\rmod A$ will denote the ordinary category of right $A$-modules, or the ordinary category of right $A$-modules with finite-dimensional total homology, respectively.
\item
$A\biMod B$, $A\bimod B$ will denote the ordinary category of $(A,B)$-bimodules, $(A,B)$-bimodules with finite-dimensional total homology, respectively.
\end{enumerate}
All of these categories have $A_{\infty}$-enriched versions, which again are recalled in Chapter \ref{inftystuff}.  These are constructed as differential graded categories, and they are denoted by adding the subscript $\infty$ to the notation, so for example the enriched category of left $A$-modules is denoted $A\lModi$.  In each of these $A_{\infty}$-categories the morphism spaces are differential graded vector spaces, and we obtain a triangulated category by taking the zeroth homology of these differential graded vector spaces.  In each case this new category is denoted by the same notation as the old one, but placed inside a $\Di(-)$, so for example $\Di(A\lmod)$ denotes the category of left $A$-modules with finite-dimensional total homology, with morphism spaces obtained by setting
\[
\Hom_{\Di(A\lmod)}(M,N):=\Ho^0(\Hom_{A\lmodi}(M,N)).
\] 
\smallbreak
Many of the categories we encounter will come with a natural triangulated structure, or a natural triangulated structure will be understood on their homotopy categories.  We adopt the convention that if a background (triangulated) category $\mathcal{C}$ is understood, and $S$ is a subset $S\subset \ob(\mathcal{C})$, then $\langle S \rangle_{\triang}$ is the smallest sub-triangulated category of $\mathcal{C}$ containing all the objects of $S$, and which is also closed under isomorphisms, shifts, and triangles.  If the triangulated structure is understood on the homotopy category, the condition that $\langle S \rangle_{\triang}$ is closed under triangles is replaced by the condition that it is closed under diagrams that become triangles upon descent to the homotopy category.  We also adopt the convention that $\langle S \rangle_{\thick}$ is the smallest full subcategory of $\mathcal{C}$ to contain $\langle S \rangle_{\triang}$ and also be closed under diagrams
\begin{equation}
\label{retractp}
M\rightarrow N\rightarrow M
\end{equation}
that compose to the identity in the original category (in which case (\ref{retractp}) is a retract), or in the homotopy category (these diagrams will be called \textit{homotopy retracts}).  If we wish to make the category $\mathcal{C}$ clear we write $\triang_{\mathcal{C}}(S)$ for $\langle S \rangle_{\triang}$ and $\thick_{\mathcal{C}}(S)$ for $\langle S \rangle_{\thick}$.\smallbreak
Let $A$ be an $A_{\infty}$-algebra.  Then $A$ forms naturally a left $A_{\infty}$-module over itself.  We will generally write
\[
\Perf(A\lMod):=\thick_{A\lMod}(A)
\]
and
\[
\Perf(A\lModi):=\thick_{A\lModi}(A)
\]
and we will use the following proposition:
\begin{prop}\cite{derivingDG}
\label{cpctprop}
The sets $\ob(\Perf(A\lMod))=\ob(\Perf(A\lModi))$ are exactly the sets of compact objects of $\Di(A\lMod)$, i.e. those objects $M$ for which the functor $\Hom_{\Di(A\lMod)}(M,-)$ commutes with the taking of arbitrary direct sums.
\end{prop}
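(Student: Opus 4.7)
The plan is to establish the two equalities separately. The identification $\ob(\Perf(A\lMod))=\ob(\Perf(A\lModi))$ should follow almost directly from the definitions: both categories consist of the smallest class of objects containing $A$ and closed under shifts, (homotopy) triangles, and retracts. The key point is that the notion of triangle in the ordinary category $A\lMod$ (mapping cone construction) agrees under descent with the notion of triangle in $\Di(A\lModi)$, so at the level of objects the two constructions of the thick hull coincide. I would spell this out by an induction on the number of cone/retract steps, showing that each step produces the same objects in both settings up to quasi-isomorphism.

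For the main equivalence with compact objects, I would proceed in two directions. First, the inclusion $\Perf(A\lMod)\subseteq \{\text{compact objects}\}$ proceeds by showing that the class of compact objects in $\Di(A\lMod)$ forms a thick subcategory: closure under shifts is immediate, closure under triangles follows from the five-lemma applied to the long exact sequences obtained by applying $\Hom_{\Di(A\lMod)}(-,N)$ to a triangle and observing that direct sums commute with finite limits of abelian groups, and closure under retracts is automatic since a retract of a representable functor commuting with sums again commutes with sums. It then suffices to observe that $A$ itself is compact: we have $\Hom_{\Di(A\lMod)}(A,N)\cong \Ho^0(N)$, and cohomology commutes with direct sums.

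The reverse direction, that every compact object lies in $\Perf(A\lMod)$, is the main obstacle. The standard strategy is to represent an arbitrary $M\in \Di(A\lMod)$ as a sequential homotopy colimit $M\simeq \hocolim_n M_n$, where $M_0=0$ and each $M_{n+1}$ is obtained from $M_n$ via a triangle
\[
\bigoplus_{\alpha} A[i_\alpha] \rightarrow M_n \rightarrow M_{n+1},
\]
so in particular every $M_n$ lies in $\triang_{A\lModi}(A)$. This amounts to a cellular (or bar-resolution) description of $M$, and the construction can be carried out in the $A_\infty$-setting either directly or after rectifying $A$ to a quasi-isomorphic dg algebra. If $M$ is compact, then the identity map $\id_M\in \Hom_{\Di(A\lMod)}(M,\hocolim_n M_n)\cong \varinjlim_n \Hom_{\Di(A\lMod)}(M,M_n)$ factors through some $M_n$, exhibiting $M$ as a homotopy retract of an object of $\triang_{A\lModi}(A)$, and hence placing $M$ in $\thick_{A\lModi}(A)=\Perf(A\lModi)$.

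The main technical hurdle is the construction and the colimit-compactness computation: one must know that $\Hom_{\Di(A\lMod)}(M,-)$ carries sequential homotopy colimits to ordinary colimits (which is where compactness is used, via the equivalence between arbitrary coproducts and sequential colimits built from them), and one must know that the cellular filtration genuinely has $M$ as its homotopy colimit. Both facts are standard for dg algebras (see \cite{derivingDG}), and carry over to $A_\infty$-algebras by replacing $A$ with a quasi-isomorphic dg model, under which $\Di(A\lMod)$ and the notion of perfect module are preserved.
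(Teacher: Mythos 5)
The paper itself offers no proof of this statement (it is quoted from \cite{derivingDG}), so your proposal is measured against the standard argument from that reference; your first two steps follow it faithfully and are fine: the identification $\ob(\Perf(A\lMod))=\ob(\Perf(A\lModi))$ is essentially definitional since $\Zo^0(A\lModi)\cong A\lMod$ and both thick hulls are built from the same cone and retract constructions, and the inclusion of perfect objects into compact objects is correctly reduced to the thickness of the class of compact objects together with $\Hom_{\Di(A\lMod)}(A,N)\cong \Ho^0(N)$.

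There is, however, a genuine gap in the converse direction, and it sits exactly at the sentence ``so in particular every $M_n$ lies in $\triang_{A\lModi}(A)$.'' For an arbitrary module $M$ the cellular (bar-type) tower requires attaching maps from coproducts $\bigoplus_{\alpha}A[i_{\alpha}]$ over index sets that are in general infinite, whereas $\triang_{A\lModi}(A)$ is closed only under shifts and triangles, hence contains only finite iterated extensions of \emph{finite} sums of shifts of $A$. So the stages $M_n$ lie in the localizing subcategory generated by $A$ (which is all of $\Di(A\lMod)$), not in $\triang_{A\lModi}(A)$, and factoring $\id_M$ through some $M_n$ only exhibits $M$ as a homotopy retract of a finite extension of arbitrary coproducts of shifts of $A$. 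That does not place $M$ in $\thick_{A\lModi}(A)$, so as written the argument proves nothing beyond the (vacuous) statement that $M$ is generated by $A$.

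The missing ingredient is the hardest part of the theorem: one must invoke compactness of $M$ a second time, inductively up the finite tower of triangles defining $M_n$, to show that the map $M\rightarrow M_n$ factors through an object built in the same way but with a \emph{finite} subcoproduct chosen at each of the finitely many stages (this is Neeman's lemma on compact objects of compactly generated triangulated categories, and is the content of the corresponding step in \cite{derivingDG}). Concretely: a map from a compact object into an $n$-fold extension of coproducts of shifts of $A$ factors through an $n$-fold extension of finite subcoproducts, which does lie in $\triang_{A\lModi}(A)$; only after this refinement is $M$ a homotopy retract of an object of $\triang_{A\lModi}(A)$ and hence perfect. Your final paragraph identifies the hocolim-versus-colim computation as the technical hurdle, but that part is routine; the subcoproduct refinement is the step your outline omits and the one that actually needs proving.
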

We define $\Perf(\rModi A)$ and $\Perf(\rMod A)$ similarly, and the analogous result is true for these categories.  There is a version of the above proposition for modules over an $A_{\infty}$-category $\mathcal{C}$, where we replace the free left or right $A$-module with the image of the contravariant or covariant Yoneda embedding -- these notions will be recalled in Chapter \ref{inftystuff}.
\smallbreak
Finally, we adopt the following convention: $A_{\infty}$ algebra is a subject, while an $A_{\infty}$-algebra is one of the objects studied in it.

\chapter{Motivic Donaldson--Thomas invariants and the role of orientation data}
\label{example}
The purpose of this chapter is to introduce the salient features of the theory of motivic Donaldson--Thomas invariants, and explain the role of orientation data.  The aim is to explain, roughly, what orientation data is, why it is needed, and what goes wrong when we ignore it.   
\section{An introduction to the Donaldson--Thomas invariant}
Let $X$ be a smooth projective 3-fold.  Then for a given Hilbert polynomial $p$ we may consider $\mathcal{M}_p$, the moduli space of semistable torsion-free coherent sheaves $\mathcal{F}$ on $X$ that have Hilbert polynomial $p$.  In order to get a reasonable space we impose some kind of stability condition (Gieseker stability or slope stability), and under suitable conditions this space will be a finite type scheme, which we will denote by $\mathcal{M}$, for example if we fix the rank and the degree of $\mathcal{F}$ to be given coprime numbers, so that semistability implies stability (see for example \cite{HL}).  It is an important feature of the space $\mathcal{M}$ that it is compact: the Donaldson--Thomas count for $\mathcal{M}$ is the degree of some cycle class of zero-dimensional subschemes of $\mathcal{M}$, and in the compact case this is just given by the count of the points in this class, with multiplicity.  In the noncompact case this breaks down somewhat.
\bigbreak
We arrive at this zero-dimensional class by next assuming that our 3-fold $X$ was, all along, a Calabi-Yau 3-fold.  This implies, in particular, that the expected dimension of $\mathcal{M}$ is zero.  More precisely, $\mathcal{M}$ comes equipped with a perfect obstruction theory $L^{\bullet}:=[E_1\rightarrow E_0]$ which satisfies the condition $\dim(E_1)=\dim(E_0)$.  From such data one can construct (see \cite{NormalCone}) a virtual fundamental class of the correct dimension in $A^{*}(\mathcal{M})$, i.e. a class $[\mathcal{M}]_{\vir}\in A^{0}(\mathcal{M})$.  Finally, the Donaldson--Thomas invariant is given by $\deg[\mathcal{M}]_{\vir}$.
\bigbreak
The justification for taking this virtual fundamental class is the fact that, since our moduli scheme $\mathcal{M}$ `should' be zero-dimensional, there is an intuition that the correct number (the Donaldson--Thomas invariant) should be obtained by perturbation.  So if $\mathcal{M}$ has a component that is smooth, with an obstruction bundle over it that is just a vector bundle, the contribution from that component should be the Euler class of that vector bundle.  Similarly, if $\mathcal{M}$ has a component that has underlying topological space a point, but structure sheaf of length $n$, then its contribution to the DT invariant should be $n$, as this component `should' generically deform to give $n$ points (the inverted commas here are on account of the fact that we remain vague as to where these deformations are taking place).  The taking of a virtual fundamental class is a way of using excess intersection theory to make all of this precise.
\bigbreak
The perfect obstruction theory $L^{\bullet}$ constructed by Thomas in \cite{RTthesis} has the extra property that it is symmetric, in the sense of \cite{ObstructionsHilbert}, that is there is an isomorphism $\theta:L^{\bullet}\rightarrow (L^{\bullet})^{\vee}[1]$ in the derived category of coherent sheaves on $\mathcal{M}$ satisfying $\theta^{\vee}[1]=\theta$.  So, returning to the situation in which a component $\mathcal{M}_1$ of $\mathcal{M}$ is smooth, the obstruction bundle is automatically a vector bundle, and isomorphic to the cotangent bundle, and in this case we can say exactly what we think the contribution to the Donaldson--Thomas count of the component should be: $(-1)^{\dim(\mathcal{M}_1)}\chi(\mathcal{M}_1)$.  This is the first indication that in fact the contribution of every component should be (and actually is, in the case in which the perfect obstruction theory with which we calculate our Donladson--Thomas count is symmetric) a weighted Euler characteristic, with the weighting of smooth points given by the parity of the dimension, and the weight of isolated points given by the length of their structure sheaves.
\bigbreak
The goal, then, is to associate to an arbitrary finite-type scheme $Y$ a constructible function $\nu_Y$, with image lying in the integers, such that, in the event that $Y$ is compact and is equipped with a symmetric perfect obstruction theory, there is an equality
\begin{equation}
\label{Behrends}
\deg[Y]_{\vir}=\sum_{n\in \mathbb{Z}} n \cdot \chi(\nu_Y^{-1}(n)),
\end{equation}
where the class on the left hand side is the virtual fundamental class constructed from the symmetric perfect obstruction theory.  For schemes defined over $\Cp$, this is achieved in \cite{Behr09}, and this function $\nu_Y$ is called the microlocal function for $Y$.  Note that in the case in which $Y$ is a noncompact scheme with a symmetric perfect obstruction theory, the machinery of \cite{NormalCone} still gives us a virtual fundamental class $[Y]_{\vir}$, for which (\ref{Behrends}) does not make sense, since $\deg[Y]_{\vir}$ will be undefined.  In this case, however, we can take the right hand side as our definition of the Donaldson--Thomas count.  
\bigbreak
Recall the moduli space $\mathcal{M}$ we started with.  For gauge-theoretic reasons (see \cite{JS08}), a formal neighborhood of an arbitrary sheaf $\mathcal{F}$, considered as a point in $\mathcal{M}$, is given by the following setup.  Let $\Cp^t$ be some affine space, and let $f$ be the germ of an analytic function defined and equal to zero at the origin.  Then a formal neighborhood of $\mathcal{F}$ is isomorphic to a formal neighborhood of the origin in the critical locus of $f$.  This becomes an important observation given the following fact regarding the microlocal function $\nu_{\mathcal{M}}$: if a scheme $Y$ is given by the critical locus of some function $f$ on some smooth $d$-dimensional scheme, at least formally around some point $y\in Y$, then $\nu_Y(y)$ is given by $(-1)^d(1-\chi(\mf(f,y)))$, where $\mf(f,y)$ is the Milnor fibre of $f$ at the point $y$ (see \cite{Behr09}).
\section{Motivic vanishing cycles and Milnor fibres}
\label{mvc}
We first recall just the group structure of the ring of motives $\Mot(X)$, for $X$ a finite type scheme.  This is generated, as an Abelian group, by symbols $[f:Y\rightarrow X]$, where $Y$ is a finite type scheme, subject to the relations
\[
[f_1:Y_1\rightarrow X]+[f_2:Y_2\rightarrow X]=[f_1\amalg f_2: Y_1\amalg Y_2\rightarrow X]
\]
and 
\[
[g_1:Z_1\rightarrow X]=[g_2:Z_2\rightarrow X]
\]
if there is a morphism $h:Z_1\rightarrow Z_2$, which induces an isomorphism of geometric points, making the obvious diagram commute.  Let $\mu_n$ be the group of the $n$th roots of unity in $\Cp$, with group operation given by multiplication.  We give $X$ the trivial $\mu_n$-action, and define $\Mot^{\mu_n}(X)$ in the natural way, as being generated by $\mu_n$-equivariant maps $[f:Y\rightarrow X]$ with relations as above, but with all maps assumed to be $\mu_n$-equivariant.  We consider only $\mu_n$-equivariant schemes $Y$ satisfying the condition that every orbit lies in a $\mu_n$-invariant open affine subscheme of $Y$.  We impose also the extra relation that if $Z$ is a $\mu_n$-equivariant $d$-dimensional vector bundle on a $\mu_n$-equivariant variety $Y$, and $[f:Y\rightarrow X]$ is a $\mu_n$-equivariant map, then 
\[
[Z\rightarrow X]=[Y\times \mathbb{A}^d\rightarrow X]
\]
where in both symbols the map is given by the projection to $Y$, followed by $f$.
There is a natural morphism 
\[
\mu_{mn}\rightarrow \mu_{n}
\]
given by sending $z$ to $z^m$.  This embeds $\Mot^{\mu_n}(X)$ in $\Mot^{\mu_{mn}}(X)$, and we let $\Mot^{\hat{\mu}}(X)$ be the colimit of these embeddings, as we vary $m$ and $n$.  
\medbreak
Let $h:X_1\rightarrow X_2$ be a morphism of finite type schemes.  Then we obtain a morphism $h_*:\Mot^{\hat{\mu}}(X_1)\rightarrow \Mot^{\hat{\mu}}(X_2)$ by sending $[f:Y\rightarrow X_1]$ to $[h\circ f:Y\rightarrow X_2]$.  The pullback morphism $h^*:\Mot^{\hat{\mu}}(X_2)\rightarrow \Mot^{\hat{\mu}}(X_1)$ is defined by sending $[f:Y\rightarrow X_1]$ to $[Y\times_{X_1} X_2\rightarrow X_2]$.
\medbreak
The motive
\begin{equation}
\label{baddef1}
\sum_{n\in \mathbb{Z}} n [\nu_{X}^{-1}(n)]\in \Mot(\Spec(k))
\end{equation}
is in some sense a motivic refinement of the Donaldson--Thomas invariant, but it is a somewhat unnatural halfway point.  For we have replaced the measure $\chi$ with a motivic measure, without replacing the weight by a motivic weight.  The natural refinement of our weight, from a number to a motive, is given by taking the motivic Milnor fibre, instead of its Euler characteristic.  So we next recall some of the definitions and formulae regarding motivic vanishing cycles and nearby fibres.  
\bigbreak
Let $f$ be some function from a smooth complex finite type scheme $X$ to $\Cp$.  Let 
\[
\xymatrix{
Y\ar[d]^h\\
X
}
\]
be an embedded resolution of the function $f$.  Then the motivic nearby cycle $[\psi_f]$, as defined by Denef and Loeser in \cite{DL01}, in terms of arc spaces, has an explicit formula in terms of this embedded resolution, which we will describe presently.  The level set $(fh)^{-1}(0)$ consists of a set of divisors, indexed by a set forever denoted $J$, with each divisor $D_i$ meeting every other one transversally.  Given $I\subset J$, a nonempty subset, let $D^0_{I}$ be the complement in the intersection of all the divisors in $I$ of the union of the divisors that are not in $I$.  So the $D^0_{I}$ form a stratification of $(fh)^{-1}(0)$, with deeper strata coming from larger subsets $I\subset J$.  
\medbreak
Let $I\subset J$ be a subset.  Let $U\subset Y$ be an open patch (in the analytic topology), intersecting only those $D_i$ for $i$ that are in $I$.  We pick $U$ so that $fh$ has defining equation
\[
\prod_{D_i\in I}x_i^{a_i}u
\]
where $x_i$ is a local (analytic) coordinate for $D_i$, $a_i$ is the order of vanishing of $fh$ on $D_i$, and $u$ is a unit.  Let $a_I$ be the greatest common divisor of the $a_i$ appearing above.  Then we form an \'{e}tale cover of $D^0_I\cap U$ by taking the natural projection to $D^0_I\cap U$ from the scheme
\begin{align}
U'\subset (D^0_I\cap U)\times \Cp\\
U'=\{(x,y)|y^{a_I}=u(x)\}.
\end{align}
These \'{e}tale covers patch to form an \'{e}tale cover
\[
\xymatrix{
\tilde{D}_I\ar[d]\\
D^0_I.
}
\]
The scheme $\tilde{D}_I$ over $D_I$ carries the obvious action under the group of $a_I$th roots of unity, and so we obtain an element of $\Mot^{\hat{\mu}}(X)$ by pushforward from $D_I$ to $X$ along $h$.  Finally, the formula is
\begin{equation}
\label{MFformula}
[\psi_f]=\sum_{\emptyset\neq I\subset J}(1-\mathbb{L})^{|I|-1}[\tilde{D}_I]\in \Mot^{\hat{\mu}}(X).
\end{equation}
Let $T$ be a constructible subset of $X$.  Restriction to $T$ defines a map from $\hat{\mu}$-equivariant motives over $X$ to $\hat{\mu}$-equivariant motives over $T$.  Pushforward from $T$ to a point gives us an absolute $\hat{\mu}$-equivariant motive.  We let $[\psi_f]_T$ denote the image of $[\psi_f]$ under the composition of these two maps.
\smallbreak
Let $f$ be as above, and let $p\in X$ be a point in $f^{-1}(0)$.  Then the \textit{motivic Milnor fibre} of $f$ at $p$ is defined to be
\[
\MF(f,p):=[\psi_f]_p \in \Mot^{\hat{\mu}}(\Spec(k)).
\]
If $X$ is affine space, and $f$ is a function vanishing at the origin, then we define
\[
\MF(f):=\MF(f,0).
\]
Finally, define the \textit{motivic vanishing cycle}:
\[
[\phi_f]:=[\psi_f]-[f^{-1}(0)]\in \Mot^{\hat{\mu}}(X).
\]
In the above equation, $[f^{-1}(0)]$ carries the trivial $\hat{\mu}$-action.
\smallbreak
We close this section with a fundamental theorem regarding motivic vanishing cycles.
\begin{thm}(Motivic Thom-Sebastiani)\cite{DL99}
\label{TStheorem}
Let $V$ and $V'$ be vector bundles on smooth schemes $X$ and $X'$ respectively.  Let $\pi$ and $\pi'$ be the projections from $X\times X'$ to $X$ and $X'$ respectively.  Let $f$ and $f'$ be algebraic functions on the vector bundles $V$ and $V'$ respectively.  Denote by $f\oplus f'$ the sum of the pullbacks of $f$ and $f'$ to the vector bundle $\pi^*(V)\oplus \pi'^*(V')$.  Then there is an equality
\begin{equation}
[-\phi_{f\oplus f'}]=\pi^*([-\phi_f])\cdot \pi'^*([-\phi_{f'}])\in \Mot^{\hat{\mu}}(X\times X').
\end{equation}
\end{thm}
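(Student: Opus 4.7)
The plan is to follow Denef and Loeser's original strategy, exploiting the explicit formula (\ref{MFformula}) for the motivic nearby/vanishing cycle in terms of an embedded resolution. A preliminary reduction: since formula (\ref{MFformula}) and the pullbacks $\pi^*$, $\pi'^*$ are both compatible with smooth base change, and since both sides of the claimed identity are constructible in the base $X \times X'$, it suffices to prove the statement fibrewise. We therefore reduce to the case $X = X' = \Spec(k)$, $V = \mathbb{A}^m$, $V' = \mathbb{A}^{m'}$ with $f$, $f'$ algebraic functions vanishing at the origin, and the identity to prove lives in $\Mot^{\hat{\mu}}(\Spec(k))$.

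Next, pick embedded resolutions $h : Y \to V$ of $f$ and $h' : Y' \to V'$ of $f'$, with strict normal-crossing zero loci $\bigcup_{i \in J} D_i$ and $\bigcup_{j \in J'} D'_j$ of $fh$ and $f'h'$. The naive product $h \times h' : Y \times Y' \to V \oplus V'$ is \emph{not} an embedded resolution of $f \oplus f'$, because in local coordinates $(fh)(y) + (f'h')(y')$ is a sum of two monomial-times-unit expressions and not itself of that form. The key geometric step is to construct, by further blowing up $Y \times Y'$ along the loci $D_i \cap D'_j$ (iteratively, in a toric fashion), a resolution $\tilde h : Z \to V \oplus V'$ for which $(f \oplus f')\tilde h$ is again monomial-times-unit. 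The new normal-crossing divisor on $Z$ is indexed by the disjoint union of $J$, $J'$, and new exceptional divisors labelled by pairs $(i,j) \in J \times J'$, with vanishing multiplicities $a_i$, $b_j$, and $a_i + b_j$ respectively.

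Third, apply (\ref{MFformula}) to $Z$ and regroup the resulting sum over subsets $I$ of the new divisor index set according to which pieces of $J$, $J'$ and $J \times J'$ they touch. The alternating factors $(1-\mathbb{L})^{|I|-1}$ factorise across the three types of contribution, and the \'etale cyclic covers $\tilde D_I$ split as fibred products of covers coming from $Y$ and $Y'$ (with a diagonal identification by $\mu_{\gcd(a_i, b_j)}$ on the exceptional strata, which is harmless after passage to the colimit $\hat{\mu}$). Collecting terms, one sees that the combination $[\psi_{f \oplus f'}] - [(f \oplus f')^{-1}(0)]$ simplifies drastically: the correction $[(f \oplus f')^{-1}(0)] = [f^{-1}(0) \times V'] + [V \times f'^{-1}(0)] - [f^{-1}(0) \times f'^{-1}(0)]$ exactly absorbs the cross terms $[\psi_f]\cdot[f'^{-1}(0)]$ and $[f^{-1}(0)]\cdot[\psi_{f'}]$ which obstruct a factorisation at the level of $\psi$, leaving the clean product identity for $[-\phi_f]$.

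The main obstacle is the second step: explicitly controlling the iterated blowup $Z \to Y \times Y'$ and computing the pulled-back divisor structure together with the induced $\hat{\mu}$-action on the new exceptional strata. This is the combinatorial heart of \cite{DL99}, and the sign and shift conventions built into the definition $[-\phi_f] = [f^{-1}(0)] - [\psi_f]$ exist precisely so that the Thom--Sebastiani identity takes this clean multiplicative form rather than the convolution form satisfied by $[\psi_f]$ itself.
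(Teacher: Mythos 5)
The paper does not prove this statement at all: it is quoted directly from Denef--Loeser \cite{DL99}, and the only commentary the paper adds is the remark immediately afterwards that the multiplication on the right-hand side is \emph{not} the naive product of $\hat{\mu}$-equivariant motives but Looijenga's exotic (convolution) product from \cite{Looi00}, whose defining property is precisely that it makes this theorem true. So there is no in-paper proof to compare against, and your proposal has to be judged on its own.

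Judged that way, it has a genuine gap, and it sits exactly at the point the paper flags. In your third step you assert that the cyclic covers $\tilde D_I$ over the new exceptional strata of $Z$ "split as fibred products of covers coming from $Y$ and $Y'$ (with a diagonal identification by $\mu_{\gcd(a_i,b_j)}$ ... which is harmless after passage to the colimit $\hat{\mu}$)", and you then collect terms as if the right-hand side were the ordinary product of equivariant classes. This is where the argument fails: on an exceptional divisor lying over $D_i\cap D'_j$ the function $(f\oplus f')\tilde h$ vanishes to order $a_i+b_j$, and the associated $\mu_{a_i+b_j}$-cover is \emph{not} the fibred product of the $\mu_{a_i}$- and $\mu_{b_j}$-covers; relating it to the two factors is exactly where the join/Fermat-type correction appears, and that correction is what the convolution product encodes. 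The identity is simply false for the naive product --- already for $f=x^2$, $f'=y^2$ one has $\MF(x^2)=[\mu_2]$ with its nontrivial action, and the naive square of $(1-\MF(x^2))$ does not equal $\mathbb{L}=1-\MF(x^2+y^2)+\ldots$; only the exotic product makes $(1-\MF(x^2))$ a square root of $\mathbb{L}$, which is how the paper later uses it. So "collecting terms" cannot yield the clean product unless you first prove the convolution identity for $[\psi]$, which is the actual content of \cite{DL99} (proved there via motivic zeta functions and arc spaces rather than by resolving $f\oplus f'$ directly). Secondarily, your opening reduction "it suffices to prove the statement fibrewise" is not justified: both sides are relative classes in $\Mot^{\hat{\mu}}(X\times X')$, and agreement of the fibres over geometric points does not by itself give equality of relative motives without a stratification/spreading-out argument, which you would need to supply.
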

Of course this theorem doesn't yet make much sense for us: we have not defined the multiplication in the ring of motives appearing on the right hand side.  This is given by Looijenga's exotic product, as introduced in \cite{Looi00} -- its salient feature for us is that it makes the above theorem true, and also, common realizations of motives become ring homomorphisms, once the Abelian group of motives is given this ring structure.  We must also add that what one really needs is a slightly stronger theorem, with the assumption that $f$ and $f'$ are algebraic replaced by the assumption that they are formal functions defined on the zero sections of $V$ and $V'$ respectively.  In this section we will stick to algebraic functions, for which the theorem is indeed a theorem.
\smallbreak
Ultimately the ring $\Mot^{\hat{\mu}}(X)$ is not ideal for our purposes.  Orientation data is most easily managed in the ring $\overline{\Mot}^{\hat{\mu}}(X)$, which is the target of a ring homomorphism $\Mot^{\hat{\mu}}(X)\rightarrow \overline{\Mot}^{\hat{\mu}}(X)$ through which the realizations of motives factor (so the Thom-Sebastiani theorem remains true in it).  It is also one of the key features of the motivic Donaldson--Thomas count that it adequately handles stacks with affine stabilizers.  So in the case of absolute motives the ring that we end up working in is the localized ring $\textbf{M}$, as in (\ref{Mdef}).

\section{A basic example}
\label{basicexample}
We haven't even got to the definition of the motivic Donaldson--Thomas count, but we are already launching into an example!  The idea is to get a grip of what we would like a motivic Donaldson--Thomas count to do, and what problems we expect to find when we ask it to do these things.  We start with an apology -- while the following example is in some ways the most basic possible example exhibiting all the relevant phenomena, it can hardly be said to be basic in an absolute sense.  There is some fiddling about with motives to be done; in the interests of presentation some of these calculations are postponed to the Appendix \ref{motivesapp}
\bigbreak
We start on the `geometric' side of Koszul duality, since this seems a better place to motivate things, and is perhaps more familiar.  Koszul duality lies in the background of much of the work in this thesis -- excellent references are \cite{derivingDG} and \cite{Yoneda}.  Let $B=\Cp[x]/\langle x^3\rangle$.  We will be looking at the moduli space of finite-dimensional modules over $B$.
\medbreak
In fact $B$ is a special example of a `Jacobi algebra', or a `superpotential' algebra.  Let $Q$ be the quiver with one vertex and one loop.  Then $\Cp Q\cong \Cp\langle a \rangle$, where $\Cp Q$ denotes the free path algebra of the quiver $Q$.  Let $W$ be the cyclic word in this quiver given by $W=a^4$.  Then we are meant to form the `noncommutative differentials' of $W$ by differentiating it with respect to each of the arrows in $Q$ (see \cite{ginz} or \cite{CBEG} for an explanation of what this means).  Here, this noncommutative generalization of differential calculus reduces to familiar calculus, since $\Cp Q$ is commutative.  So the only noncommutative differential we need to think about is
\[
\frac{\partial}{\partial a}W=4a^3.
\]
The statement that $B$ is a Jacobi algebra amounts to saying that 
\[
B\cong \Cp Q/\langle \frac{\partial}{\partial a}W \rangle.
\]
This puts us in a special situation, noted in \cite{ginz}, \cite{Conifold}, \cite{Seg08}, in which we have a way of coherently embedding the representation spaces of $B$-modules as subschemes of smooth schemes.  The word `coherently' doesn't yet have a precise meaning here, but has to do with the problem of comparing the motivic weight associated to extensions of modules to the motivic weights of those modules themselves, which in turn will be the central difficulty when it comes to checking that putative integration maps from families of $B$-modules to motives preserve associative products.  This in turn is the central problem motivating the introduction of orientation data.
\bigbreak
How this works out in our case is as follows.  Define
\[
\Rep_n(B)=\Hom_{\alg}(B,\Mat_{n\times n}(\Cp)),
\]
the set of homomorphisms of unital algebras.  This is a scheme, the points of which correspond to representations of $B$.  In general the more natural object to study is perhaps the stack formed under the conjugation action of $\GL_n(\Cp)$, but for the time being we will really just be looking at the above scheme.  Similarly, we define
\[
\Rep_n(\Cp Q)=\Hom_{\alg}(\Cp Q, \Mat_{n\times n}(\Cp)),
\]
then since a representation of $B$ is just a representation of $\Cp Q$ satisfying some relations, $\Rep_n(B)$ is defined as a Zariski closed subscheme of this \textit{smooth} scheme.  There is a map 
\[
\ev_a:\Rep_n(\Cp Q)\rightarrow \Mat_{n\times n}(\Cp)
\]
that sends 
\[
\theta\mapsto\theta(a).
\]
In fact this is clearly an isomorphism.  It turns out (and this is a general fact about Jacobi algebras) that 
\[
\Rep_n(B)=\crit(\tra((\ev_a)^4)).
\]
For a general Jacobi algebra we replace $(\ev_a)^4$ with a function of evaluation maps built from $W$, and the corresponding statement remains true.
\medbreak
The goal of this subject is to define motivic Donaldson--Thomas counts, that soup up the old one, which was just the Euler characteristic weighted by a microlocal function $\nu$.  Recall that the microlocal function of a scheme at a point $x$, at which the scheme is locally described as $\crit(f)$ for some $f$ on a $d$-dimensional ambient smooth scheme, is just $(-1)^d(1-\chi(\MF(f,x))$.  Consider just 
\[
\Rep_1(B)\cong\Spec(B).
\]
The fact that we have an explicit presentation of our space as a critical locus enables us to go ahead and refine the microlocal function $\nu_{\Rep_1(B)}$ to a motive, which is given by minus the (absolute) motivic vanishing cycle of the function $x^4$.  Here and elsewhere we will adopt the shorthand that where a function $f(x_1,\ldots,x_n)$ appears without reference to a space that it is a function on, that space will always be assumed to be affine $n$-space, and the motivic vanishing/nearby cycle of it is the motivic vanishing/nearby cycle of the function on affine $n$-space.  We define
\[
\DT(\Rep_1(B)):=[-\phi_{x^4}]_{\Rep_1(B)}.
\]
(There is a sign discrepancy here, since we are going to want to pull back the motivic weight from an ambient stack that takes into account stabilizer groups, which in this case are 1-dimensional).  In order to establish uniform notation with what follows we rewrite this as
\begin{equation}
\label{DTtry}
\DT(\Rep_1(B)):=[-\phi_{\tra(T^4)}]_{\Rep_1(B)},
\end{equation}
where $\tra(T^4)$ is considered as a function on $\Cp$ by identifying $\Cp$ with the ring of $1\times 1$ matrices.  Since $\Rep_1(B)$ is just a point, in this case we have
\[
\DT(\Rep_1(B))=(1-\MF(x^4)).
\]
The unique closed point of the space $\Rep_1(B)$ is given by a $1\times 1$ matrix, the zero matrix.  Call this representation $M$.  Considered as a module for the quiver algebra $\Cp Q/\langle a^3\rangle$, this is the one-dimensional simple module killed by all the arrows of $Q$.  In this example it is easy enough to explain what we mean by `preservation of the ring structure'.  Define
\[
\Rep_1(B)\star \Rep_1(B)
\]
to be the stack of flags $M\subset N$ with $N/M\cong M$.  The stabilizer at any point is given by $\Hom(M,M)\cong \Cp$, and in fact this stack can be described explicitly as a group quotient of the space $\Mat_{\sut, 2\times 2}(\Cp)$ of strictly upper-triangular $2$ by $2$ matrices by the trivial action of the additive group $\Cp\cong \Hom(M,M)$.  So we write the motive of this stack as
\[
[\Rep_1(B)\star \Rep_1(B)]=[\Mat_{\sut,2\times 2}(\Cp)]/\mathbb{L}.
\]
Now what we really want is the identity, in the ring of motives:
\begin{equation}
\label{yesplease}
\DT[\Rep_1(B)\star \Rep_1(B)]=\DT[\Rep_1(B)]\cdot \DT[\Rep_1(B)],
\end{equation}
where on the right hand side we use Looijenga's product on the ring of motives.  From the motivic Thom-Sebastiani Theorem \ref{TStheorem} , we deduce that
\[
\DT[\Rep_1(B)]\cdot \DT[\Rep_1(B)]=(1-\MF(x^4+y^4)).
\]
\begin{prop}
\label{x4y4}
Denote the representation ring of $\mathbb{Z}_4$ by $\mathbb{Z}[\alpha]/\alpha^4$, where $\alpha$ is the 1-dimensional representation sending $1\in \mathbb{Z}_4$ to multiplication by $i$.  There is an equality of motives
\[
\MF(x^4+y^4)=[C_1]-4\mathbb{L},
\]
where $C_1$ is a genus 3 curve with the representation $2(\alpha+\alpha^2+\alpha^3)$ on its middle cohomology.
\end{prop}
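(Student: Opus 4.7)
The plan is to apply the Denef--Loeser formula (\ref{MFformula}) to an explicit embedded resolution of $f = x^4 + y^4$, and then identify the resulting étale cover with a Zariski-open subset of the Fermat quartic $C_1$. First, I would blow up $\mathbb{A}^2$ at the origin to obtain an embedded resolution $h:Y\rightarrow \mathbb{A}^2$. In the chart with coordinates $(x, t=y/x)$ one computes $f\circ h = x^4(1+t^4)$, so the exceptional divisor $E\simeq\mathbb{P}^1$ has multiplicity $4$, the four strict transforms $L_1,\ldots,L_4$ (corresponding to the four roots of $1+t^4=0$) each have multiplicity $1$, and they meet $E$ transversally at distinct points; the local unit needed to define the cyclic covers is $u(t)=1+t^4$.

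Next I would enumerate the strata $D_I^0$ that contribute to $[\psi_f]_0$. Since the $L_i$ are pairwise disjoint in $Y$, only singletons and the four crossings $\{E,L_i\}$ have non-empty support. The strata $\{L_i\}$ do not contribute, because $D_{L_i}^0$ maps off the origin under $h$. Each crossing $\{E,L_i\}$ has $a_I=\gcd(4,1)=1$, so its $\tilde{D}_I$ is a single point with trivial $\hat{\mu}$-action and contributes $(1-\mathbb{L})$. The stratum $\{E\}$ yields the degree-$4$ cyclic cover $\tilde{D}_E$ cut out locally by $z^4=1+t^4$ over $D_E^0=\mathbb{P}^1\setminus\{1+t^4=0\}$, with $\mu_4$ acting by $z\mapsto \zeta z$. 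Summing gives
\[
\MF(f) = [\tilde{D}_E] + 4(1-\mathbb{L}).
\]

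The key geometric identification is then to glue the local cover $z^4=1+t^4$ across the two standard charts of the blowup to the smooth projective Fermat quartic $C_1\subset\mathbb{P}^2_{[z:t:w]}$ defined by $z^4=t^4+w^4$, a plane quartic of genus $3$. The projection $C_1\to\mathbb{P}^1_{[t:w]}$ is a $\mu_4$-Galois cover ramified precisely over the four points of $\{t^4+w^4=0\}$, each with a single preimage $z=0$, and those four preimages are $\mu_4$-fixed and so carry trivial $\hat{\mu}$-action. Hence $\tilde{D}_E\cong C_1\setminus\{\text{four ramification points}\}$, giving $[\tilde{D}_E]=[C_1]-4$ in $\Mot^{\hat{\mu}}(\Spec\Cp)$, so substitution yields $\MF(x^4+y^4)=[C_1]-4\mathbb{L}$.

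Finally, for the character of $H^1(C_1)$ I would apply the Lefschetz fixed-point theorem. Since $H^0(C_1)$ and $H^2(C_1)$ are the trivial representation, $\chi(C_1^g)=2-\mathrm{tr}(g\mid H^1(C_1))$ for each $g\in\mu_4$. Any nontrivial $g$ acts freely on $C_1\setminus\{z=0\}$ and fixes exactly the four points of $\{z=0\}\cap C_1$, so $\mathrm{tr}(g\mid H^1)=2-4=-2$, whereas the identity contributes $\mathrm{tr}=\dim H^1 = 6$. Writing $H^1(C_1)=a\mathbf{1}+b\alpha+c\alpha^2+d\alpha^3$ and solving the four character equations forces $a=0$ and $b=c=d=2$, confirming $H^1(C_1)=2(\alpha+\alpha^2+\alpha^3)$. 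The main obstacle is really a bookkeeping one: correctly tracking which strata survive restriction to the origin, and patching the two affine charts of the blowup to recognise the global projective curve $C_1$ with its $\mu_4$-action.
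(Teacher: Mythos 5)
Your proposal is correct and follows essentially the same route as the paper: a single blow-up at the origin, the Denef--Loeser formula giving $4(1-\mathbb{L})$ from the crossings $\{E,L_i\}$ plus the $4$-sheeted cover over $E$ minus four points, which completes to the genus $3$ curve $C_1$ with $[\tilde D_E]=[C_1]-4$. The only cosmetic difference is that you identify $C_1$ explicitly as the Fermat quartic and extract the $\mu_4$-character on $H^1$ via the Lefschetz fixed-point theorem, whereas the paper treats $C_1$ as an abstract simply-ramified branched cover and computes the same character data through an equivariant Euler characteristic.
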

We defer the proof of this proposition to the start of Appendix \ref{motivesapp}.
\medbreak

By using Proposition \ref{x4y4} and the motivic Thom-Sebastiani theorem \ref{TStheorem} we can calculate the right hand side of equation (\ref{yesplease}).  What, then, of the left hand side?  Well, first we should define it!  This we do as follows: the coarse moduli space $\Mat_{\sut,2\times 2}(\Cp)$ of our stack $\Mat_{\sut, 2\times 2}(\Cp)/\mathbb{A}^1$ is a subscheme of $\Rep_{2}(B)$.  Let 
\[
\iota:\Mat_{\sut, 2\times 2}(\Cp)\hookrightarrow \Rep_{2}(B)
\]
be the inclusion.  Then recall that we want a motivic refinement of the weighted Euler characteristic
\[
\sum_{n\in \mathbb{Z}}n\cdot \chi(\iota^*(\nu_{\Rep_{2}}(B))^{-1}(n)).
\]
It's clear enough what this should be.  The space $\Rep_{2}(B)$ occurs again as a critical locus of a function on a smooth space, the function $\tra(T^4)$ on the space of $2\times 2$ matrices, and so a refinement of the pullback of the microlocal function is already at hand, we can just pull back the motivic vanishing cycle of the function $\tra(T^4)$ along the inclusion of the space of strictly upper-triangular matrices into the space of all matrices, i.e. take $[-\phi_{\tra(T^4)}]_{\Mat_{\sut,2\times 2}(\Cp)}$.  The content of the word `coherently' in the statement that a Jacobi algebra presentation enables us to coherently express different representation spaces as critical loci will amount to the claim that this naive pulling back actually gives a good answer, one that gives the equality (\ref{yesplease}).  So let us unpick this particular case.\medbreak
We follow the natural suggestion for defining the left hand side of (\ref{yesplease}), that is we write
\begin{equation}
\label{DTstar}
\DT[\Rep_1(B)\star \Rep_1(B)]:=[-\phi_{\tra(T^4)}]_{\Mat_{\sut, 2\times 2}(\Cp)}\mathbb{L}^{-1}.
\end{equation}
Working this quantity out will occupy the next section.
\section{Verifying preservation of ring structure: an example}
To start with, we should work out an embedded resolution of 
\[
\tra(T^4):\Mat_{2\times 2}(\Cp)\rightarrow \Cp.
\]
The function $\tra(T^4)$ has its worst singularity at $0$, and is homogeneous, so a good start would be to blow up at the zero matrix.  Write $X=\Mat_{2\times 2}(\Cp)$ and let
\[
\xymatrix{
\tilde{X}\ar[d]^h
\\
X
}
\]
be the blowup at the zero matrix.  The strict transform of $(\tra(T^4))^{-1}(0)$ in $\tilde{X}$, intersected with the exceptional $\mathbb{P}^3$, is the projective surface cut out by the homogeneous equation $\tra(T^4)$.  Call this projective variety $V(\tra(T^4))$.\smallbreak
Let
\[
\xymatrix{
Y\ar[d]^{h_p}\\
\mathbb{P}^3
}
\]
be an embedded resolution of the singular projective variety $V(\tra(T^4))$.  Then we have a diagram
\[
\xymatrix{
\tilde{X}_{1}\ar[r]^{h_1}\ar[d]^{\pi_1}&\tilde{X}\ar[d]^{\pi} \ar[r]^h &X\\
Y\ar[r]^{h_p} &\mathbb{P}^3
}
\]
with the leftmost square a pullback (in fact this is a pullback of a vector bundle, since $\tilde{X}$ is the total space of the tautological bundle for $\mathbb{P}^3$, and $\pi$ is the projection).  It is not hard to see that $h':=h\circ h_1$ is an embedded resolution for $\tra(T^4)$.  It follows from the fact that $\tra(T^4)\circ h$ vanishes to order 4 on $\mathbb{P}^3$ that there is an equality of divisors 
\begin{equation}
\label{diveasy}
(\tra(T^4)\circ h')^*(0)=(h_p\circ \pi_1)^*(V(\tra(T^4)))+4Y
\end{equation}
where $Y$ is considered as a divisor on $\tilde{X}_1$, the zero section of the vector bundle $\tilde{X_1}\rightarrow Y$.  
\medbreak
So we just need to work out an embedded resolution of $V(\tra(T^4))$.  Note that $\PSL(2,\Cp)$ acts on $X$ by conjugation, $\tra(T^4)$ is invariant under this action, the action lifts to $\tilde{X}$, and $V(\tra(T^4))$ is also invariant under the action.  There are exactly three orbits of the $\PSL(2,\Cp)$-action in $V(\tra(T^4))$.  Define
\begin{enumerate}
\item $S_1$ to be the orbit consisting of matrices whose eigenvalues differ by a factor of $e^{i\pi/4}$,
\item $S_2$ to be the orbit consisting of matrices whose eigenvalues differ by a factor of $e^{3i\pi/4}$,
\item $S_3$ to be the orbit of nilpotent matrices.
\end{enumerate}
\begin{prop}
\label{nonsingloc}
In the ring $\Mot^{\hat{\mu}}(\Spec(k))$ there are equalities 
\begin{equation}
[S_1]=[S_2]=[\mathbb{P}^1\times \Cp],
\end{equation}
where all of these motives carry the trivial $\hat{\mu}$-action.
\end{prop}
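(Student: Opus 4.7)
The plan is to parametrize each $S_i$ explicitly via its pair of eigenlines and then compute the resulting motive by cut-and-paste. Set $\zeta_1:=e^{i\pi/4}$ and $\zeta_2:=e^{3i\pi/4}$. For each $i\in\{1,2\}$ I would define
\[
\phi_i: \mathbb{P}^1\times\mathbb{P}^1\setminus\Delta\longrightarrow S_i\subset V(\tra(T^4))\subset\mathbb{P}^3
\]
by sending an ordered pair $(L_1,L_2)$ of distinct lines in $\Cp^2$ to the projective class $[M_{L_1,L_2}]$, where $M_{L_1,L_2}$ is the unique matrix acting as the identity on $L_1$ and as multiplication by $\zeta_i$ on $L_2$ (well-defined because $L_1\oplus L_2=\Cp^2$). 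The image lies in $V(\tra(T^4))$ since $1^4+\zeta_i^4=0$, and inside $V(\tra(T^4))$ the image lies in $S_i$ since $M_{L_1,L_2}$ has eigenvalues in the prescribed ratio.

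The main verification is that $\phi_i$ is an isomorphism onto $S_i$. Surjectivity is immediate: any $[M]\in S_i$ has two distinct eigenvalues with ratio $\zeta_i$, and since $\zeta_i^2\neq 1$ there is a unique choice of rescaling of $M$ together with an ordering of its two eigenlines under which the eigenvalues become $(1,\zeta_i)$ in that order. For injectivity, suppose $cM_{L_1,L_2}=M_{L_1',L_2'}$ for some $c\in\Cp^*$. The eigenline pairs must coincide as unordered sets, so $\{L_1,L_2\}=\{L_1',L_2'\}$. If they agree as ordered pairs then matching $c\cdot 1=1$ on $L_1$ forces $c=1$; the only other possibility is $(L_1',L_2')=(L_2,L_1)$, in which case matching eigenvalues on $L_1$ and $L_2$ forces $c=\zeta_i$ and $c\zeta_i=1$, i.e.\ $\zeta_i^2=1$, which fails. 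Since both source and target are smooth quasi-projective varieties over $\Cp$, the bijective morphism $\phi_i$ is an isomorphism.

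It remains to compute the motive. Since $S_i$ and $\mathbb{P}^1\times\mathbb{P}^1\setminus\Delta$ carry only the trivial $\hat{\mu}$-action, the identification takes place in the trivially equivariant part of $\Mot^{\hat{\mu}}(\Spec(k))$, and cut-and-paste yields
\[
[\mathbb{P}^1\times\mathbb{P}^1\setminus\Delta]=[\mathbb{P}^1]^2-[\Delta]=(\mathbb{L}+1)^2-(\mathbb{L}+1)=\mathbb{L}(\mathbb{L}+1)=[\mathbb{P}^1\times\Cp],
\]
establishing both equalities $[S_1]=[S_2]=[\mathbb{P}^1\times\Cp]$. The only delicate ingredient is the repeated use of $\zeta_i^2\neq 1$, which is precisely what distinguishes $S_1$ and $S_2$ from the nilpotent orbit $S_3$ (where the two eigenvalues collide) and is also what prevents the swap $(L_1,L_2)\leftrightarrow (L_2,L_1)$ from producing a spurious second preimage under $\phi_i$.
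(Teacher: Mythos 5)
Your proof is correct and follows essentially the same route as the paper's: both parametrize $S_i$ by the ordered pair of eigenlines, i.e.\ by $\mathbb{P}^1\times\mathbb{P}^1\setminus\Delta$, the only difference being that the paper reads off the motive by viewing this space as an $\mathbb{A}^1$-(line-)bundle over $\mathbb{P}^1$ while you subtract the diagonal, giving the same answer $\mathbb{L}(\mathbb{L}+1)$. Your explicit handling of the rescaling/ordering ambiguity via $\zeta_i^2\neq 1$ is a point the paper's brief argument leaves implicit, and is a welcome extra check.
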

\begin{proof}
Fix two nonzero numbers $a$ and $b$ differing by a factor of $e^{i\pi/4}$.  Then to pick a matrix with these two numbers as eigenvalues is the same as to pick two distinct vectors (up to rescaling) to be the respective eigenvalues.  So pick the eigenvector for $a$ first, this gives us a $\mathbb{P}^1$ of choice, then pick the eigenvector for $b$, giving a $\Cp$ of choices, one can in fact see that $S_1$ is a line bundle over $\mathbb{P}^1$.  The motive of any line bundle is the same as the motive of the trivial line bundle -- any ordered open cover underlying a trivialization induces a stratification on which each restriction of the line bundle is trivial.
\end{proof}
\begin{prop}
There is an isomorphism $S_3\cong \mathbb{P}^1$.
\end{prop}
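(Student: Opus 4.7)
The plan is to exhibit $S_3$ as the image of an explicit morphism $\mathbb{P}^1\to\mathbb{P}^3$ of degree two, using the fact that a nonzero $2\times 2$ nilpotent matrix is determined up to scalar by its kernel.

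First I would observe that if $N\in\Mat_{2\times 2}(\Cp)$ is nonzero and nilpotent, then $N^2=0$ together with $\dim\Image(N)=1$ forces $\Ker(N)=\Image(N)$, a line in $\Cp^2$. The assignment $N\mapsto\Ker(N)$ is invariant under rescaling, so it descends to a map $\kappa: S_3\to\mathbb{P}^1$. Conversely, for any line $L\subset\Cp^2$, the subspace of matrices with $\Image(N)\subseteq L\subseteq\Ker(N)$ is one-dimensional in $\Mat_{2\times 2}(\Cp)$, hence gives a single point of $\mathbb{P}^3$; this shows $\kappa$ is a set-theoretic bijection.

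Next I would write down an inverse morphism explicitly. Let $[a:b]\in\mathbb{P}^1$ parametrize the line $L\subset\Cp^2$ spanned by $(b,-a)^{T}$; then $(a,b)$ is a covector vanishing on $L$, and the outer product
\[
\begin{pmatrix} b\\ -a\end{pmatrix}(a,b)=\begin{pmatrix} ab & b^2\\ -a^2 & -ab\end{pmatrix}
\]
is nilpotent with kernel $L$. This gives a morphism $\iota:\mathbb{P}^1\to S_3\subset\mathbb{P}^3$ defined in homogeneous coordinates by $[a:b]\mapsto[\,ab:b^2:-a^2:-ab\,]$, which is the composition of the degree-two Veronese $\mathbb{P}^1\to\mathbb{P}^2$ with the linear inclusion of $\mathbb{P}^2$ into $\mathbb{P}^3$ as the trace-zero hyperplane $x_{11}+x_{22}=0$. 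Its image is therefore a smooth plane conic, which is isomorphic to $\mathbb{P}^1$. The composites $\kappa\circ\iota$ and $\iota\circ\kappa$ are identities on closed points, so it remains only to check that both maps are algebraic.

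The main obstacle, in so far as there is one, is verifying algebraicity of $\kappa$, but this is immediate: since $\Ker(N)=\Image(N)$ for nilpotent $N$, the kernel is just the line spanned by any nonzero column of $N$, and on the open subset of $S_3$ where a fixed column is nonzero this is a polynomial expression in the matrix entries. Alternatively, one can avoid all coordinate computations by noting that $\PSL(2,\Cp)$ acts transitively on $S_3$ with stabilizer of a nilpotent line equal (up to a finite quotient) to the upper-triangular Borel subgroup $B$, so that $S_3\cong\PSL(2,\Cp)/B\cong\mathbb{P}^1$ tautologically.
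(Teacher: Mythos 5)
Your proof is correct and rests on the same identification the paper makes: $S_3$ is the locus $\mathrm{trace}=\det=0$, a smooth conic inside the trace-zero hyperplane of $\mathbb{P}^3$, hence isomorphic to $\mathbb{P}^1$ — the paper just writes these two equations in the coordinates $(X:Y:Z:W)$ and stops, whereas you additionally supply the explicit Veronese parametrization $[a:b]\mapsto[ab:b^2:-a^2:-ab]$, its inverse $N\mapsto\Ker(N)$, and the $\PSL(2,\Cp)/B$ orbit argument as an alternative. There is nothing to correct; your write-up is simply a more explicit version of the paper's one-line proof.
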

\begin{proof}
Give $\mathbb{P}^3$ coordinates $(X:Y:Z:W)$ by writing matrices as
\[
\left(\begin{array}{cc} X&Z\\ W&Y\end{array}\right).
\]
Then the nilpotent matrices are precisely those satisfying $\mathbf{trace}=\mathbf{det}=0$.  So they are the ones satisfying 
\begin{align*}
&X=-Y,\\
&XY=WZ,
\end{align*}
giving a $\mathbb{P}^1$ inside $\mathbb{P}^3$.
\end{proof}
The singular locus of $V(\tra(T^4))$ is precisely $S_3$.  Since $S_3$ is a $\PSL(2,\Cp)$-orbit, the singularity is going to end up being the same all along this $\mathbb{P}^1$.  We restrict to an affine patch $U$ by setting $W\neq 0$.  On this patch we use the coordinates
\[
(x,y,z)\mapsto \left(\begin{array}{cc}x&z\\1&y\end{array}\right).
\]
There is an isomorphism $U\cap S_3\cong \Cp$, and $U\cap S_3$ can be parameterised as follows
\begin{align}
&\Cp\rightarrow U\cap S_3\\
&t\mapsto \left(\begin{array}{cc}t&-t^2\\1&-t\end{array}\right).
\end{align}
We can extend this to a coordinate system $(t,a,b)$ for $U$, given by
\begin{align}
&(t,a,b)\mapsto \left(\begin{array}{cc}t+a &b-t^2\\1&-t\end{array}\right).
\end{align}
In these coordinates the local defining equation for $\tra(T^4)$ becomes 
\[
\tra(T^4)=a^4+4a^3t+4a^2b+2a^2t^2+4abt+2b^2,
\]
or, after rearranging,
\[
\tra(T^4)=-a^4+2(at+b+a^2)^2.
\]
After replacing $b$ with $b'=b+at+a^2$ we get that the local defining equation for $\tra(T^4)$ is
\[
\tra(T^4)=-a^4+2b'^2,
\]
and so we have a $\mathbb{P}^1$ of $A_3$ singularities along $S_3$.  If we blow up $S_3$ we replace this with an exceptional divisor (the projectivization of the normal bundle of $S_3$), on which there is another $\mathbb{P}^1$ of singularities, this time of type $D_4$.  Blowing up this new $\mathbb{P}^1$ gives our embedded resolution
\[
\xymatrix{
Y\ar[d]^{h_p}\\
\mathbb{P}^3.
}
\]
\bigbreak
Let $J$ be the set of divisors in $(\tra(T^4)\circ h')^{-1}(0)$.  We wish to calculate the absolute equivariant motive
\begin{equation}
\label{pok}
[\psi_{\tra(T^4)}]_{\Mat_{\sut,2\times 2}(\Cp)}=\sum_{\emptyset\neq I\subset J}(1-\mathbb{L})^{|I|-1}[\tilde{D}_I]_{h'^{-1}(\Mat_{\sut,2 \times 2}(\Cp))}.
\end{equation}
Consider the decomposition 
\[
\Mat_{\sut, 2\times 2}(\Cp)=\{0\}\coprod H,
\]
where $\{0\}$ is the zero matrix, and $H\cong \Cp$ is the complement.  This decomposition induces a decomposition of the sum (\ref{pok}): if we define
\begin{align}
M_{\nt}=[\psi_{\tra(T^4)}]_H=&\sum_{\emptyset\neq I\subset J}(1-\mathbb{L})^{|I|-1}[\tilde{D_I}]_ {h'^{-1}(H)}
\\
\label{tricky}
M_t=[\psi_{\tra(T^4)}]_{\{0\}}=&\sum_{\emptyset\neq I\subset J}(1-\mathbb{L})^{|I|-1}[\tilde{D_I}]_{h'^{-1}(\{0\})},
\end{align}
then
\[
[\psi_{\tra(T^4)}]_{\Mat_{\sut,2\times 2}(\Cp)}=M_{\nt}+M_t.
\]
Since $H$ is just the complement to the zero section in the fibre $\pi^{-1}\left(\left(\begin{array}{cc}0&1\\0&0\end{array}\right)\right)$, and $V(\tra(T^4))$ has an $A_3$ singularity at this matrix, i.e. the singularity defined by the singular curve $x^4+y^2$, the following proposition follows from equation (\ref{diveasy}).
\begin{prop}
There are equalities of absolute motives 
\begin{align}
M_{\nt}&=(\mathbb{L}-1)\MF(x^4+y^2)\\
&=(\mathbb{L}-1)([C_2]-2\mathbb{L})
\end{align}
where $C_2$ is a torus with the representation $\alpha+\alpha^3$ on its middle cohomology.
\end{prop}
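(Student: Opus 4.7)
The plan is to establish the two stated equalities in sequence. For $M_{\nt}=(\mathbb{L}-1)\MF(x^4+y^2)$, the key observation is that $H$ is disjoint from the zero matrix and, via $h$, embeds into the single fibre of $\pi:\tilde X\to\mathbb{P}^3$ over $p_0=\left[\left(\begin{array}{cc}0&1\\0&0\end{array}\right)\right]\in S_3$, missing the zero section. Hence $h$ is an isomorphism above $H$, and the divisor $Y$ appearing with multiplicity $4$ in (\ref{diveasy}) does not meet $h'^{-1}(H)$; only divisors pulled back via $\pi_1$ from the $h_p$-resolution above a neighbourhood of $p_0$ contribute to $M_{\nt}$. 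The explicit local form $\tra(T^4)=-a^4+2b'^2$ in the coordinates $(t,a,b')$ established just before the statement exhibits $V(\tra(T^4))$ near $p_0$ as a product of the $A_3$ plane curve $\{x^4+y^2=0\}$ with a smooth transverse direction $t$. The relevant embedded resolution above $h'^{-1}(H)$ therefore factors as (embedded resolution of the $A_3$ plane curve)$\,\times\,\Cp^*$, the $\Cp^*$ coming from the tautological line bundle with its zero section removed; applying (\ref{MFformula}) term-by-term then yields the first equality.

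For $\MF(x^4+y^2)=[C_2]-2\mathbb{L}$, I would compute directly from (\ref{MFformula}) following the strategy that will be used to prove Proposition \ref{x4y4}. Two blowups suffice to resolve $\{x^4+y^2=0\}\subset\mathbb{A}^2$: the first at the origin produces an exceptional $E_1\cong\mathbb{P}^1$ of multiplicity $2$, after which the strict transform still has a node; blowing up this node yields a second exceptional $E_2\cong\mathbb{P}^1$ of multiplicity $4$ and separates the two branches into $E_3,E_4$. The double cover over $E_1^\circ$ is locally $t^2=s^4$, which splits into two copies and contributes $2\mathbb{L}$; the cover over $E_2^\circ=\mathbb{P}^1\setminus\{3\,\text{pts}\}$ is locally $t^4=u^2(1+u^2)$, a $\mu_4$-branched cover of $\mathbb{P}^1$ with ramification profile $(2,4,4)$ whose smooth projective completion is an elliptic curve $C_2$ by Riemann--Hurwitz, giving $[\tilde{E}_2^\circ]=[C_2]-4$. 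The codimension-two strata $E_{\{1,2\}}^\circ,E_{\{2,3\}}^\circ,E_{\{2,4\}}^\circ$ contribute $2,1,1$ respectively, and (\ref{MFformula}) sums to
\[
2\mathbb{L}+([C_2]-4)+(1-\mathbb{L})(2+1+1)=[C_2]-2\mathbb{L}.
\]

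The main technical obstacle is identifying the $\mu_4$-representation on $H^1(C_2)$ as $\alpha+\alpha^3$. I would exhibit the explicit holomorphic differential $\omega=u\,du/t^3$ on the affine model $\{t^4=u^2(1+u^2)\}$, check that it is regular at all ramification points (via local uniformisers $s$ with $s^2=u$ near $u=0$ and $s^4=u\mp i$ near $u=\pm i$) and at infinity (via $u=1/v$, $\tau=tv$, so $\tau^4=v^2+1$); the Galois action $t\mapsto\zeta t$ transforms $\omega\mapsto\zeta^{-3}\omega=\zeta\,\omega$, so $H^{1,0}(C_2)$ carries the character $\alpha$ and $H^{0,1}(C_2)$ carries $\alpha^3$. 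Alternatively the Chevalley--Weil formula applied to $C_2\to C_2/\mu_4=\mathbb{P}^1$ with the computed ramification profile gives the same decomposition. A careful bookkeeping in $\overline{\Mot}^{\hat{\mu}}(\Spec k)$ is also needed to verify that the contributions of the lower-dimensional strata do not introduce any nontrivial $\mu_4$-character outside what is carried by $C_2$.
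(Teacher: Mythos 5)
Your proposal is correct and follows essentially the same route as the paper: the first equality comes from (\ref{diveasy}) together with the product structure of the resolution over the punctured fibre $H$ (the transverse $A_3$ slice $x^4+y^2$), and the second is exactly the two-blowup computation that the paper delegates to Proposition \ref{x4y2} in the appendix, with the same multiplicities $2,4,1,1$ and the same strata contributions to the formula (\ref{MFformula}). The only divergence is in how the $\mu_4$-representation $\alpha+\alpha^3$ on $H^1(C_2)$ is identified: you use an explicit holomorphic differential (or Chevalley--Weil), whereas the paper reads it off from the equivariant Euler characteristic of the branched cover; both arguments are valid and give the same answer.
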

\begin{proof}
Only the second equality needs proving.  This is implied by Proposition \ref{x4y2} in the appendix to this chapter. 
\end{proof}
\begin{prop}
\label{trivpart}
There is an equality of absolute motives 
\[
M_{t}=(1-\mathbb{L})\MF(x^4+y^2)+\mathbb{L}\MF(x^4+y^4).
\]
\end{prop}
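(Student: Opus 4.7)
The plan is to apply the Denef--Loeser formula (\ref{MFformula}) to the embedded resolution $h' = h \circ h_1 : \tilde{X}_1 \to X$ constructed above, restricted via (\ref{tricky}) to the preimage of $\{0\}\in X$. By (\ref{diveasy}) together with the local coordinate computations performed around the $A_3$ locus $S_3$, the irreducible components of the divisor $(\tra(T^4)\circ h')^{-1}(0)$ are $Y$ (the zero section, multiplicity $4$) and $\hat{D}_1,\hat{D}_2,\hat{D}_3$ (of multiplicities $1,2,4$ respectively), where $\hat{D}_i := \pi_1^*(D_i)$ and $D_1,D_2,D_3\subset Y$ are the strict transform of $V(\tra(T^4))$ together with the two exceptional divisors of $h_p:Y\to \mathbb{P}^3$.

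The first observation is that $h'^{-1}(\{0\})$ is exactly $Y$, so every stratum $D_I^0$ with $Y\notin I$ is disjoint from $h'^{-1}(\{0\})$ by construction and contributes nothing to $M_t$. The local analysis above further shows that the two sheets of $D_1$ are separated from $D_2$ by the second blow-up, so any $I$ containing both $\hat{D}_1$ and $\hat{D}_2$ also contributes nothing. The contributing subsets are therefore $\{Y\}$, $\{Y,\hat{D}_i\}$ for $i=1,2,3$, $\{Y,\hat{D}_1,\hat{D}_3\}$, and $\{Y,\hat{D}_2,\hat{D}_3\}$.

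For each such $I$, I would read off the local equation of $\tra(T^4)\circ h'$ from the coordinate computations, determine $a_I=\gcd\{a_i:i\in I\}$ and the associated \'{e}tale $\mu_{a_I}$-cover $\tilde{D}_I^0\to D_I^0$, and identify the underlying stratum via $h_p$: the stratum over $\{Y\}$ is $\mathbb{P}^3\setminus V$ with a $\mu_4$-cover, the subset $\{Y,\hat{D}_1\}$ gives $V\setminus S_3$ with trivial cover, $\{Y,\hat{D}_2\}$ and $\{Y,\hat{D}_3\}$ give the open parts of the Hirzebruch surfaces $D_2$ and $D_3$ with $\mu_2$- and $\mu_4$-covers respectively, and the triple intersections give unions of $\mathbb{P}^1$'s. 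Summing $(1-\mathbb{L})^{|I|-1}[\tilde{D}_I^0]$ over these six subsets yields an explicit expression for $M_t$ in $\Mot^{\hat{\mu}}(\Spec(k))$.

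The hard part is recognizing the resulting sum as $(1-\mathbb{L})\MF(x^4+y^2)+\mathbb{L}\MF(x^4+y^4)$. Using $\MF(x^4+y^4)=[C_1]-4\mathbb{L}$ (Proposition \ref{x4y4}) and $\MF(x^4+y^2)=[C_2]-2\mathbb{L}$ (Proposition \ref{x4y2}), I expect the contributions from $I$'s involving $\hat{D}_2$ to combine with the $(1-\mathbb{L})$ factor and reproduce $\MF(x^4+y^2)$, capturing the transverse $A_3$ singularity of $V$ along $S_3$, while the remaining contributions pick up the $\mathbb{L}$ factor from the $\pi_1$-fibre direction normal to $Y$ and reproduce $\MF(x^4+y^4)$. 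A useful consistency check comes from adding this to the expression for $M_{nt}$ from the preceding proposition: the cancellation of the $\MF(x^4+y^2)$ contributions gives $[\psi_{\tra(T^4)}]_{\Mat_{\sut,2\times 2}(\Cp)} = \mathbb{L}\MF(x^4+y^4)$, which, upon subtracting $[\Mat_{\sut,2\times 2}(\Cp)\cap V(\tra(T^4))]=\mathbb{L}$ and dividing by $\mathbb{L}$ as in (\ref{DTstar}), recovers the Thom--Sebastiani identity $1-\MF(x^4+y^4)=\DT[\Rep_1(B)]\cdot \DT[\Rep_1(B)]$ required by (\ref{yesplease}).
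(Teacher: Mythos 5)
Your Denef--Loeser bookkeeping follows essentially the same route as the paper: only subsets containing $Y$ contribute, the multiplicities are $4$ on $Y$, $1$ on the strict transform (which, note, has \emph{two} irreducible components $F_1,F_2$, the strict transforms of the two quadric cones, rather than a single $\hat{D}_1$), and $2$ and $4$ on the two exceptional divisors; your list of nonempty strata and of the associated covers is the right one. The genuine gap is that the proposal stops exactly at what you yourself call ``the hard part''. Almost the entire content of the proof is the evaluation of the single term $[\tilde{D}_{\{Y\}}]$, the $\mu_4$-cover of $\mathbb{P}^3\setminus V(\tra(T^4))$ determined by the unit; in the paper this is Proposition \ref{4sheetcover}, proved by stratifying the complement into six pieces using the conjugation and torus actions, with the answer $[\tilde{D}_{\{Y\}}]=\mathbb{L}\MF(x^4+y^4)+(\mathbb{L}-1)\mathbb{L}\MF(x^4+y^2)+2\mathbb{L}(\mathbb{L}^2-1)$. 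You never compute this class, and the expectation you substitute for the computation is false as a regrouping.

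Concretely: the strata containing $\hat{D}_2$ (the first exceptional divisor, multiplicity $2$) cannot reproduce $(1-\mathbb{L})\MF(x^4+y^2)$, because in the transverse $A_3$ slice the genus-one curve $C_2$ of Proposition \ref{x4y2} arises as the $4$-fold cover over the \emph{second} exceptional divisor; the $\hat{D}_2$-strata only contribute $(1-\mathbb{L})[\mathbb{P}^1]$ times classes of double covers of $\Cp$ and of points. Conversely, the strata avoiding $\hat{D}_2$ do not give $\mathbb{L}\MF(x^4+y^4)$: the open stratum $\{Y\}$ alone already carries the term $(\mathbb{L}-1)\mathbb{L}\MF(x^4+y^2)$, which is essential. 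The identity actually assembles as follows: the two strata $\{Y,F_i\}$ contribute $2(1-\mathbb{L})[\mathbb{P}^1\times\Cp]$, cancelling $2\mathbb{L}(\mathbb{L}^2-1)$ (Proposition \ref{nonsingloc}); all strata meeting the exceptional locus of $Y\rightarrow\mathbb{P}^3$ combine, after motivically trivializing the bundles over the $\mathbb{P}^1$ of $A_3$ points, into $(1-\mathbb{L})[\mathbb{P}^1]\MF(x^4+y^2)$; and then $(1-\mathbb{L}^2)\MF(x^4+y^2)+(\mathbb{L}-1)\mathbb{L}\MF(x^4+y^2)=(1-\mathbb{L})\MF(x^4+y^2)$. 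Note also that there is no ``$\mathbb{L}$ factor from the $\pi_1$-fibre direction'': the stratum over $\{Y\}$ is three-dimensional and its cover is \'{e}tale over it, so the $\mathbb{L}$ in $\mathbb{L}\MF(x^4+y^4)$ emerges only from the orbit-by-orbit computation of the cover of $\mathbb{P}^3\setminus V(\tra(T^4))$, not from a normal direction. Without Proposition \ref{4sheetcover}, or an equivalent direct computation of that quartic cyclic cover, the proposed argument does not close.
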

\begin{proof}
One of the terms in the sum (\ref{tricky}) comes from setting $I=\{Y\}$.  Now $fh'$ vanishes to order 4 on $Y$, and so $\tilde{D}_I$ is a 4-sheeted \'{e}tale cover over the complement of $V(\tra(T^4))$ in $\mathbb{P}^3$.  It follows from Proposition \ref{4sheetcover} in the Appendix \ref{motivesapp} that
\[
[\tilde{D}_{\{Y\}}]_{h'^{-1}(\{0\})}=[\tilde{D}_{\{Y\}}]_X=\mathbb{L}\MF(x^4+y^4)+(\mathbb{L}-1)\mathbb{L}\MF(x^4+y^2)+2\mathbb{L}(\mathbb{L}^2-1).
\]
The subvariety of $\Mat_{2\times 2}(\Cp)$ cut out by $\tra(T^4)$ has two components, the cones over the divisors $S_1\cup S_3$ and $S_2\cup S_3$, and we denote the strict transform of these divisors in the embedded resolution $\tilde{X}_1$ by $F_1$ and $F_2$, respectively.  These divisors occur with multiplicity 1.  Since we only blow up along $S_3$, there is an isomorphism $\tilde{D}_{\{F_i,Y\}}\cong S_i$ for $i=1,2$.  So these two subsets of $J$ each contribute
\[
(1-\mathbb{L})[D_{\{Y,F_i\}}]_{h'^{-1}(\{0\})}=(1-\mathbb{L})[\mathbb{P}^1\times \Cp]
\]
to $M_t$, by Proposition \ref{nonsingloc}.  All the other contributions to (\ref{tricky}) come from the modifications made to the singular locus of $V(\tra(T^4))$, i.e. from subsets $I\subset J$ that contain $Y$ and at least one divisor occurring as the cone over an exceptional divisor of 
\[
\xymatrix{
Y\ar[d]^{h_P}\\
\mathbb{P}^3.
}
\]
\medbreak
At the first blowup, along the $\mathbb{P}^1$ of $A_3$-singularities $S_3$, we introduce a $\mathbb{P}^1$-bundle, along with a $\mathbb{P}^1$ of new singularities.  Since we are working in the motivic ring, we can assume that the bundle in question is trivial.  The same is true for the second blowup.  The result is the equation
\[
\sum_{\emptyset\neq J\subset D | J\nsubseteq \{Y, F_1,F_2\}}(1-\mathbb{L})^{|J|-1}[\tilde{D_J}]_{h'^{-1}(\{0\})}=(1-\mathbb{L})[\mathbb{P}^1]\MF(x^2+y^4).
\]
Putting all this together gives the result.
\end{proof}
It turns out, then, that we have exactly what we want:
\begin{prop}
There is an equality of $\hat{\mu}$-equivariant motives
\begin{equation}
[-\phi_{\tra(T^4)}]_{\Mat_{\sut,2\times 2}(\Cp)}\mathbb{L}^{-1}=1-\MF(x^4+y^4)
\end{equation}
and so there is an equality of $\hat{\mu}$-equivariant motives 
\begin{equation}
\DT[\Rep_1(B)\star \Rep_1(B)]=\DT[\Rep_1(B)] \cdot \DT[\Rep_1(B)]
\end{equation}
where these `DT counts' are as defined in (\ref{DTtry}) and (\ref{DTstar}).
\end{prop}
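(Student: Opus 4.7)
The proof is essentially a matter of collecting the pieces already assembled in the previous two propositions and comparing the result with the right-hand side via the motivic Thom--Sebastiani theorem. First I would add the formulae for $M_{\nt}$ and $M_t$. We have
\[
M_{\nt}+M_t=(\mathbb{L}-1)\MF(x^4+y^2)+(1-\mathbb{L})\MF(x^4+y^2)+\mathbb{L}\MF(x^4+y^4)=\mathbb{L}\MF(x^4+y^4),
\]
so the $A_3$-singularity contributions cancel exactly and only the $\MF(x^4+y^4)$-term survives. By the definition of $M_{\nt}$ and $M_t$ as restrictions of $[\psi_{\tra(T^4)}]$ to the two strata $H$ and $\{0\}$ of $\Mat_{\sut,2\times 2}(\Cp)$, we obtain
\[
[\psi_{\tra(T^4)}]_{\Mat_{\sut,2\times 2}(\Cp)}=\mathbb{L}\MF(x^4+y^4).
\]

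Next I would compute $[\tra(T^4)^{-1}(0)]_{\Mat_{\sut,2\times 2}(\Cp)}$. Any strictly upper triangular $2\times 2$ matrix $T$ satisfies $T^{2}=0$, so $\tra(T^{4})$ vanishes identically on $\Mat_{\sut,2\times 2}(\Cp)$; hence this restricted motive is simply $[\Mat_{\sut,2\times 2}(\Cp)]=\mathbb{L}$, with the trivial $\hat{\mu}$-action. Using the definition $[-\phi_f]=[f^{-1}(0)]-[\psi_f]$, it follows that
\[
[-\phi_{\tra(T^4)}]_{\Mat_{\sut,2\times 2}(\Cp)}=\mathbb{L}-\mathbb{L}\MF(x^4+y^4)=\mathbb{L}\bigl(1-\MF(x^4+y^4)\bigr),
\]
and multiplying by $\mathbb{L}^{-1}$ gives the first claimed equality.

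For the second equality, I would apply the motivic Thom--Sebastiani Theorem \ref{TStheorem} to the pair of functions $x^{4}$ and $y^{4}$ viewed on the trivial line bundles over a point. This yields $[-\phi_{x^4\oplus y^4}]=[-\phi_{x^4}]\cdot[-\phi_{y^4}]$, which at the origin reads
\[
1-\MF(x^4+y^4)=\bigl(1-\MF(x^4)\bigr)\bigl(1-\MF(y^4)\bigr).
\]
Since $\DT[\Rep_1(B)]=1-\MF(x^4)$ and $\MF(y^4)=\MF(x^4)$ (they are the motivic Milnor fibres of isomorphic functions), the right-hand side is exactly $\DT[\Rep_1(B)]\cdot\DT[\Rep_1(B)]$, and combining with the first part of the proposition finishes the argument.

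The only non-routine step here is the cancellation in $M_{\nt}+M_t$: the real work has already been done in establishing the explicit formulae for $M_{\nt}$ and $M_t$, and in particular in recognising that the divisor $Y$ contributes both a $4$-sheeted \'etale cover (encoding $\MF(x^4+y^4)$) and, via intersections with the exceptional divisors above the $\mathbb{P}^{1}$ of $A_{3}$-singularities, a correcting term $\MF(x^4+y^2)$ that precisely cancels against $M_{\nt}$. Once this cancellation is observed, the remainder is formal: a restriction computation on $\Mat_{\sut,2\times 2}(\Cp)$ and one application of Thom--Sebastiani.
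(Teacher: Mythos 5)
Your proposal is correct and is essentially the paper's intended argument: the proposition is stated there as the direct consequence of the formulas for $M_{\nt}$ and $M_t$ (whose $\MF(x^4+y^2)$ terms cancel, leaving $[\psi_{\tra(T^4)}]_{\Mat_{\sut,2\times 2}(\Cp)}=\mathbb{L}\MF(x^4+y^4)$), together with the Thom--Sebastiani identity $\DT[\Rep_1(B)]\cdot\DT[\Rep_1(B)]=1-\MF(x^4+y^4)$ already recorded earlier in the chapter. Your only addition is to spell out the step $[\tra(T^4)^{-1}(0)]_{\Mat_{\sut,2\times 2}(\Cp)}=\mathbb{L}$ (since $T^2=0$ forces $\tra(T^4)\equiv 0$ there), which the paper leaves implicit but which is exactly what is needed to pass from $[\psi_f]$ to $[-\phi_f]$.
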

\begin{rem}
We have shown this equality directly, but also it turns out to be a comparatively simple application of the (partly conjectural) Kontsevich--Soibelman integral identity (see Section 4.4 of \cite{KS}).  This motivic identity implies that this motivic refinement of the Donaldson--Thomas invariant preserves ring structure for more general moduli spaces of objects in the Abelian category of $B$-modules, and more general Jacobi algebras.
\end{rem}
\section{Towards motivic Donaldson--Thomas invariants}
The above calculations show that a `naive' motivic refinement of the Donaldson--Thomas count preserves ring structure, at least in our basic example.  It will turn out that the key ingredient for achieving this was the \textit{extra} data provided by a realization of our algebra as a superpotential algebra, which in turn enables us to realise the representation spaces we were interested in as critical loci in such a way that the integration map defined via the associated motivic vanishing cycles preserves products.  The key question is: can we do without this extra data?  
\begin{que}
\label{bigq}
\textbf{If we are handed a `Calabi-Yau 3-dimensional category', whatever that may turn out to be, can we construct a motivic integration map from the ring of stack functions, preserving the product?}
\end{que}
There is a notion of quasi-equivalence of Calabi-Yau categories, that in particular induces quasi-isomorphisms of homomorphism spaces and quasi-isomorphisms of endomorphism spaces as $A_{\infty}$-algebras.  Again, we needn't worry at the moment about what that means precisely, but already an implication for a satisfactory theory of motivic Donaldson--Thomas invariants follows from the fact that quasi-equivalences of Calabi-Yau categories induce isomorphisms of derived categories:
\begin{req}
\label{req2}
\textbf{
The motivic Donaldson--Thomas count associated to a stack function should be invariant under pullback along quasi-equivalences of Calabi-Yau 3-dimensional categories.}
\end{req}

Consider again our archetypal Donaldson--Thomas setup: associating numbers `counting' sheaves $\mathcal{F}$ in fine moduli spaces $\mathcal{M}$.  Recall that if $\mathcal{F}$ is a coherent sheaf on our Calabi-Yau 3-fold $X$, the constructible function $\nu_{\mathcal{M}}(\mathcal{F})$ depends solely on the scheme structure of the moduli space $\mathcal{M}$, where we use the common abuse of notation whereby $\mathcal{F}$ also denotes the point of $\mathcal{M}$ representing it.  The fact that the scheme structure of $\mathcal{M}$ tells us what kind of contribution $\mathcal{F}$ should make to the Donaldson--Thomas invariant is explained by the fact that $\mathcal{M}$ is a fine moduli space, and so carries information about infinitesemal deformations of $\mathcal{F}$.
\bigbreak
The idea is that the contribution of an object $\mathcal{F}$ need not be calculated from the local structure around $\mathcal{F}$ in some moduli scheme $\mathcal{M}$.  In the example above we used a particular way of realising our moduli spaces as critical loci in order to give a motivic refinement of the Donaldson--Thomas count, but of course this application of extra data means that we have not provided an affirmative answer to Question \ref{bigq}.  The crucial observation is that some version of a critical locus description around $\mathcal{F}$ can be read off straight from the formal deformation theory of that object.  This is important since we want our invariant to be motivic, that is, to behave well under cutting and pasting of families of objects in our category.  Say we have a constructible decomposition
\begin{equation}
\label{decolo}
X=\coprod X_i
\end{equation}
of schemes parameterising objects of some Calabi-Yau 3-dimensional category $\mathcal{C}$ (i.e. \textit{stack functions}).  Since the motivic contribution that $\mathcal{F}$ makes is just a function of $\mathcal{F}$, and wholly independent of how we view $\mathcal{F}$ as a point in any family of objects of our category, it follows trivially that when we integrate these motivic contributions over the families in the decomposition on the right hand side of (\ref{decolo}) we will get the same contribution as we get when we integrate over the left hand family.  This happy fact can be observed already in the setup sketched in the example of Section \ref{basicexample}: it corresponds to the identity
\[
[\phi_f]_X=\sum [\phi_f]_{X_i}
\]
for $f$ any function $f:X\rightarrow \Cp$, and $\coprod X_i$ any constructible decomposition of $X$.
\smallbreak
The contribution of an object $\mathcal{F}$, sitting inside a fine moduli space $\mathcal{M}$, to the ordinary Donaldson--Thomas count is a function of the Euler characteristic of the Milnor fibre of a function 
\[f:\Cp^t\rightarrow \Cp,\]
for some $t$, satisfying the condition that $\crit(f)$ looks (locally) like a formal neighborhood of the point $x$ representing $\mathcal{F}$ in $\mathcal{M}$.  To refine this number to a motive we would like to find some way of building such an $f$ directly from the category, and as a preliminary step we should find somewhere for $f$ to \textit{live}, e.g. as a function on a vector bundle on a stack of objects.  It turns out that a reasonable candidate for $f$, at $x$, is a function defined on $\Ext^1(\mathcal{F},\mathcal{F})$.  Now our aim was to write down motivic Donaldson--Thomas invariants for arbitrary families, at which point we are confronted by the fact that the dimension of $\Ext^1(\mathcal{F},\mathcal{F})$ is liable to jump as we vary $\mathcal{F}$, so we cannot hope that our $f$ will be a function on a vector bundle.  The appropriate sheaf (which we will call $\mathcal{EXT}^1$ here) will, rather, be a \textit{constructible} vector bundle.

\section{Some remarks on constructible vector bundles}
\label{cvect}
Let $X$ be a locally Noetherian scheme.  By a constructible decomposition of $X$ we will hereafter mean a decomposition of $X$ into locally closed subschemes such that there is a cover of $X$ by open affine schemes $U_i$ for which the restriction of the decomposition of $X$ to each $U_i$ is a finite constructible decomposition.  A constructible vector bundle $V$ on $X$ is given by a constructible decomposition of $X$, and a vector bundle on each component of the decomposition.  There is, in principle, no reason why one must impose any kind of finite-dimensionality of $V$ in the definition, but we will see shortly that doing so makes the category of such constructible vector bundles much better behaved.  We identify a constructible vector bundle with the one obtained, by restrictions, on a subordinate constructible decomposition.  A morphism between two constructible vector bundles $V_1$ and $V_2$ is given by taking a constructible decomposition subordinate to the two decompositions defining $V_1$ and $V_2$, and giving a morphism, for each $X_i$ in the decomposition, from $V_1|_{X_i}$ to $V_2|_{X_i}$.  We identify a morphism $f$ with the morphism obtained by restricting $f$ to a constructible decomposition subordinate to the one defined by $f$.\smallbreak
The following basic observation underlies much of what follows.
\begin{prop}
\label{SSab}
Let $X$ be a locally Noetherian scheme, and let $\mathcal{V}_{\fin}$ be the full subcategory of the category of constructible vector bundles on $X$ consisting of locally finite-dimensional vector bundles, by which we mean constructible vector bundles such that every fibre is finite-dimensional.  Then $\mathcal{V}_{\fin}$ is a semisimple Abelian category.
\end{prop}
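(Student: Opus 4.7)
The plan is to verify the Abelian category axioms for $\mathcal{V}_{\fin}$ and then show that every short exact sequence in it splits. Binary direct sums are constructed by passing to a common refinement of the constructible decompositions defining the two summands and taking fibrewise direct sums. For kernels and cokernels, given a morphism $f\colon V_1\to V_2$ in $\mathcal{V}_{\fin}$, I would first pass to a constructible refinement on which both $V_1$ and $V_2$ are honest vector bundles on each piece. On each affine open $U_i$ in the cover of $X$ witnessing local finiteness of the decomposition, the rank of $f$ is upper semicontinuous and takes only finitely many values, so a further refinement by rank locus yields a (still locally finite) constructible decomposition on which $f$ has locally constant rank on each piece. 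There $\Ker(f)$, $\image(f)$ and $\Coker(f)$ are genuine vector bundles, and they assemble into objects of $\mathcal{V}_{\fin}$. The canonical map $\coimage(f)\to\image(f)$ is an isomorphism piecewise, for the classical reason, so $\mathcal{V}_{\fin}$ is Abelian.

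For semisimplicity, suppose $0\to V'\to V\to V''\to 0$ is a short exact sequence in $\mathcal{V}_{\fin}$. I would refine the underlying constructible decomposition so that on each piece the sequence becomes a short exact sequence of honest vector bundles on a locally closed subscheme, then refine further so that each piece is affine. On such an affine piece, $V''$ restricts to a finitely generated projective $\mathcal{O}$-module, so the restricted sequence splits; choosing splittings independently on the (disjoint) pieces produces a morphism $V''\to V$ in $\mathcal{V}_{\fin}$ that splits the original sequence.

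The main obstacle is the affinization refinement: I need to argue that any constructible decomposition admits a subordinate one each of whose pieces is affine. This ultimately rests on the fact that a locally closed subscheme of a Noetherian affine scheme admits a finite decomposition into affine locally closed pieces (cover the reduced closure by principal affine opens and intersect with complements of the boundary), combined with the local finiteness built into the definition of constructible decomposition. Once this is in hand, the remaining steps are routine bookkeeping with subordinate constructible decompositions and the fact that, on a piece where $f$ has constant rank, the formation of $\Ker$, $\image$ and $\Coker$ commutes with passing to the fibre.
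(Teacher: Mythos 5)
Your overall strategy matches the paper's --- reduce to finite decompositions over affine Noetherian opens, refine until the relevant sheaves become honest bundles on each piece, check universal properties piecewise, and get semisimplicity by splitting on affine pieces --- but the mechanism differs in one real way. The paper never stratifies by the rank of $f$: it forms the coherent-sheaf cokernel of $\phi|_V$ on each piece, uses generic freeness and Noetherian induction to refine until that cokernel is free on affine strata, notes that right-exactness of tensor product makes restriction to finer strata preserve the cokernel's universal property (which is also why the identification under subordinate decompositions is harmless), and then obtains kernels for free from the self-duality $V\mapsto V^*$ of $\mathcal{V}_{\fin}$, with the coimage-to-image isomorphism checked against coherent sheaves piecewise. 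Semisimplicity comes from splitting $L_1\to L_2\to\Coker(\phi)\to 0$ on affine strata where the cokernel is free, which is essentially your projectivity argument. The ``affinization'' step you flag as the main obstacle is handled there by the finiteness built into the definition of a constructible decomposition (finitely many pieces over each affine open of an ordered cover) together with Noetherian induction inside each affine, much as you sketch.

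One step of your version needs care as stated: on a possibly non-reduced piece, constancy of the fibrewise rank of $f$ does not force $\Ker(f)$, $\image(f)$, $\Coker(f)$ to be vector bundles --- multiplication by $\epsilon$ on the trivial line bundle over $\Spec(k[\epsilon]/\epsilon^2)$ has fibrewise rank identically zero, yet its sheaf kernel and cokernel are not locally free. Since constructible vector bundles are identified under subordinate decompositions, you can repair this by refining so that every piece is reduced (pull back to the reductions), after which the constant-rank statement is the classical one and your fibrewise kernel, image and cokernel do compute the categorical ones; the paper's route via the coherent cokernel plus duality sidesteps the issue entirely. (Also, the fibrewise rank of a map of bundles is lower, not upper, semicontinuous; this is harmless, since all you need is that it is a constructible function taking finitely many values on each Noetherian piece.)
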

\begin{proof}
The homomorphism sets of $\mathcal{V}_{\fin}$ are clearly Abelian groups.  Given a finite set of constructible vector bundles $V_i$ on $X$ one forms the direct sum and product by passing to a constructible decomposition of $X$ that is subordinate to the constructible decomposition defined by each of the $V_i$, and then taking the direct sum of vector bundles.  So $\mathcal{V}_{\fin}$ is an additive category.\smallbreak
By our finiteness assumption, the entire category is self-dual under the operation of taking duals of vector spaces.  This follows from the fact that there is a natural isomorphism of $A$-modules, for $A$ a commutative ring, $L$ a free $A$-module, and $I$ an ideal of $A$:
\begin{align*}
\Hom_{A\lMod}(L,A)\otimes_A A/I\rightarrow& \Hom_{A/I \lMod}(L\otimes_A A/I,A/I)\\
(\phi,a)\mapsto & (\phi\otimes \id_{A/I})\cdot a.
\end{align*}
This implies that taking vector duals is compatible with taking subordinate constructible decompositions.  So if we can find cokernels, kernels will follow for free.  Let
\[
\xymatrix{
\phi:L_1\ar[r]& L_2
}
\]
be a morphism in $\mathcal{V}_{\fin}$.  Let $U\subset X$ be an open affine subscheme of $X$, with $U=\Spec(R)$ for $R$ a Noetherian ring.  Then the constructible decomposition defining the morphism $\phi$ induces a finite constructible decomposition of $U$.  Let $V$ be one of the schemes in this decomposition.  Considering $\phi|_{V}$ instead as a morphism in the category of coherent sheaves on $V$, the cokernel $\Coker(\phi|_{V})$ exists.  By generic freeness (see \cite{CAwav}) and Noetherian induction there is a finite stratification of $V$ into subschemes $V_i$ such that each $\Coker(\phi|_{V})|_{V_i}$ is free, and by the right exactness of tensor products, the restriction of $\Coker(\phi|_{V})$ to each stratum $V_i$ remains a cokernel (in the category of coherent sheaves on $V_i$) to the morphism $\phi$ restricted to $V_i$.  After further stratification of the $V_i$ into $V'_i$ we may assume that the cokernel is a free module on an affine scheme, and so the exact sequence
\begin{equation}
\label{splitter}
\xymatrix{
L_1|_{V'_i}\ar[r]^{\phi|_{V'_i}}&L_2|_{V'_i}\ar[r]&\Coker(\phi|_{V'_i})\ar[r]&0
}
\end{equation}
splits.  In addition, $\Coker(\phi|_{V})|_{V'_i}$ is the cokernel for $\phi|_{V'_i}$ in the category of constructible vector bundles on $V'_i$, again by the right exactness of tensor products.  It follows that we have constructed a cokernel, in the category of constructible vector bundles on $U$, for $\phi|_U$.\smallbreak
Let $\{U_i,i\in I\}$ be an ordered cover of $X$ such that each $U_i$ is equal to $\Spec(R_i)$ for $R_i$ a Noetherian ring.  Then each $U_i$ carries a finite decomposition 
\[
U_i=\coprod_{j\leq i}(U_j\cap U_i-(\bigcup_{k<j}U_k)).
\]
Since each $U_i$ is the spectrum of a Noetherian ring, all but finitely many terms in the decomposition are empty.  Since we have shown that each $U_j$ admits a finite constructible decomposition into affine schemes $V_t$ such that on each $V_t$ the coherent sheaf $\Coker(\phi|_{V_t})$ is free, and is defined by a universal property in the category of constructible vector bundles on $V_t$, it follows that $\Coker(\phi)$ is a well-defined constructible vector bundle, which is clearly locally finite.\smallbreak
One forms kernels as dual to cokernels.  Let $V_0$ be, as above, one of the affine subschemes of $X$ in the constructible decomposition of $X$ we have defined for $\Coker(\phi)$.  Note that since
\[
\xymatrix{
L_1|_{V_0}\ar[r]& L_2|_{V_0}\ar[r] &\Coker(\phi)|_{V_0}\ar[r]&0
}
\]
is an exact sequence of free modules, its dual is too, as is the restriction of its dual to subschemes of $V_0$.  It follows that after restricting to a subscheme $X'$ in the constructible decomposition defined by the kernel of $\phi$, the kernel we have constructed is just the kernel in the category of coherent sheaves on $X'$.  Since the category of coherent sheaves on a Noetherian scheme is Abelian, the natural morphism from the coimage to the image of $\phi$ is an isomorphism of coherent sheaves, once we restrict to each subscheme in a constructible decomposition of $X$ subordinate to the constructible decompositions defined by the kernel, cokernel, image and coimage of $\phi$.  It follows that $\coimage(\phi)\rightarrow \image(\phi)$ is an isomorphism of constructible vector bundles, and so $\mathcal{V}_{\fin}$ is an Abelian category.  Since we can pick a constructible decomposition such that (\ref{splitter}) splits, it follows also that $\mathcal{V}_{\fin}$ is semisimple.
\end{proof}

We define the (ordinary) category of constructible differential graded vector bundles on $X$ as the category with objects given by pairs of a constructible decomposition of $X$, and on each subscheme of the decomposition a differential graded vector bundle.  Morphisms are given by morphisms of such objects that preserve degree and commute with the differential, and we make the obvious identifications of objects and morphisms under subordinate decompositions.
\begin{cor}(Formality)
\label{formcor}
Let $V^{\bullet}$ be a constructible differential graded vector bundle on a locally Noetherian scheme $X$ such that each fibre of $V^{\bullet}$ is finite-dimensional in each degree, and on each of the subschemes $X_i$ of $X$ defined by the constructible decomposition associated to $V^{\bullet}$, the homology $\Ho^i(V^{\bullet})$, considered as a constructible vector bundle on $X_i$, is nonzero for only finitely many $i$.  Then there is a quasi-isomorphism from a constructible differential graded vector bundle with zero differential to $V^{\bullet}$.
\end{cor}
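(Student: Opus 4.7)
The plan is to produce a Hodge-style direct sum decomposition of $V^\bullet$ by exploiting the semisimplicity of $\mathcal{V}_{\fin}$ established in Proposition \ref{SSab}. In a semisimple Abelian category any chain complex is formal, i.e.\ is canonically quasi-isomorphic to its cohomology equipped with the zero differential; the whole argument amounts to spelling this out in the constructible setting and checking that the bookkeeping of the constructible decomposition behaves well.

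Fix a stratum $X_i$ of the constructible decomposition associated to $V^\bullet$. The differentials $d^n\colon V^n \to V^{n+1}$ are morphisms in the semisimple Abelian category of locally finite-dimensional constructible vector bundles on $X_i$, so I can form the cycles $\Zo^n := \Ker(d^n)$, the boundaries $B^n := \image(d^{n-1})$, and the cohomology constructible vector bundles $\Ho^n := \Zo^n/B^n$ within this category. By hypothesis $\Ho^n$ is nonzero for only finitely many $n$ on $X_i$. Semisimplicity then provides splittings
\[
\Zo^n = B^n \oplus K^n, \qquad V^n = \Zo^n \oplus C^n,
\]
where the composition $K^n \hookrightarrow \Zo^n \twoheadrightarrow \Ho^n$ is an isomorphism and $d^n$ restricts to an isomorphism $C^n \xrightarrow{\sim} B^{n+1}$.

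The graded sub-object $K^\bullet \subset V^\bullet$ lies inside $\Zo^\bullet$, hence the inclusion $K^\bullet \hookrightarrow V^\bullet$ is a morphism of constructible differential graded vector bundles in which $K^\bullet$ inherits the zero differential. By construction this inclusion induces the isomorphism $K^n \xrightarrow{\sim} \Ho^n$ in every degree, so it is a quasi-isomorphism; equivalently, the complementary summand $(B \oplus C)^\bullet$ of $V^\bullet$, with differential $(b,c)\mapsto(d^n(c),0)$, is easily seen to be acyclic degree by degree.

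Globalizing across $X$ is the main technical obstacle. Each formation of kernel, image, and splitting above may require refining the constructible decomposition of $X_i$ into finitely many affine pieces, exactly as in the proof of Proposition \ref{SSab}. The bounded-cohomology hypothesis is crucial here: on each $X_i$ only finitely many degrees carry a nonzero $K^n$, so only finitely many refinements are needed to exhibit the subbundles $\Ho^n$, $K^n \subset V^n$ and the complementary summand. Taking the common refinement of these finitely many decompositions on each stratum, and then assembling across strata using the cover by Noetherian affines as in the proof of Proposition \ref{SSab}, produces a single constructible decomposition of $X$ over which $K^\bullet$ is a constructible dg vector bundle with zero differential and $K^\bullet \hookrightarrow V^\bullet$ is the desired quasi-isomorphism.
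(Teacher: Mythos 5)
Your proposal is correct and is essentially the paper's own argument: the paper's proof simply notes that homology can be formed in $\mathcal{V}_{\fin}$ since it is Abelian, and that formality then follows from semisimplicity together with the local finiteness of the homology — exactly the splittings $\Zo^n = B^n\oplus K^n$, $V^n=\Zo^n\oplus C^n$ you spell out. The only difference is expository: the paper leaves the splitting and the constructible refinements implicit (they are already packaged into working inside the Abelian category of Proposition \ref{SSab}), whereas you make them explicit.
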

\begin{proof}
We can define the $i$th homology of $V^{\bullet}$, in the category of constructible vector bundles, since the category $\mathcal{V}_{\fin}$ of Proposition \ref{SSab} is Abelian.  Then the formality follows from the fact that $\mathcal{V}_{\fin}$ is semisimple, and our local finiteness assumption on the homology.
\end{proof}
\begin{rem}
Kernels in the category $\mathcal{V}_{\fin}$ above are maybe a little surprising.  For instance, the homomorphism of $\Cp[x]$-modules
\[
\xymatrix{
\Cp[x]\ar[r]^{\cdot x}&\Cp[x]
}
\]
is of course an injection of coherent sheaves on the scheme $\Cp$.  Considered as a morphism of constructible vector bundles, however, one readily verifies that the kernel consists of a rank 1 vector bundle over the origin.  The same example shows that the homology of a differential graded vector bundle, considered as a constructible differential graded vector bundle, can be very different from the homology of the vector bundle considered as a complex of coherent sheaves.
\end{rem}
\section{Formal deformation theory}
Since we are working in the ring of motives, we may treat the constructible vector bundle $\mathcal{EXT}^1$ (once it is properly defined) as though it were a vector bundle.  In the original setup, in which we were working out Donaldson--Thomas counts associated to fine moduli spaces, this constructible vector bundle played an important role: it is naturally identified with the Zariski tangent space of our scheme $\mathcal{M}$ (see \cite{HL} for example).\smallbreak
Given an object $\mathcal{F}$ in a Calabi-Yau 3-dimensional category $\mathcal{C}$, we obtain an $A_{\infty}$-algebra $A=\Hom^{\bullet}(\mathcal{F},\mathcal{F})$.  Such an algebra is like a differential graded algebra, in that it has two operations $m_1:A\rightarrow A[1]$ and $m_2:A\otimes A\rightarrow A$, but it also has countably many higher operations 
\[
m_n:A^{\otimes n}\rightarrow A[2-n]
\]
which are required to satisfy some compatibility conditions (see (\ref{struc-compatibility})).  Given such a set of $m_n$ we get a set of $b_n$ making the following diagram commute
\[
\xymatrix{
A^{\otimes n}\ar[d]^{S^{\otimes n}}\ar[r]^(.45){m_n}& A[2-n]\ar[d]^{S^n}\\
A[1]^{\otimes n}\ar^{b_n}[r]& A[2],
}
\]
where $S$ is the degree -1 map sending $a\in A$ to $a$ in $A[1]$.  Clearly these $b_n$ contain the same information as the $m_n$, so we may just as well describe an $A_{\infty}$-algebra using them.  Here begins the constant tension in this subject between the $m_n$, which naturally extend our notions of ordinary algebras and differential graded algebras, but have increasingly awkward sign rules, and the $b_n$, which do not.\medbreak
We can describe the formal deformation theory of $\mathcal{F}$, using the functor
\begin{align*}
Def_{\mathcal{F}}:\hbox{Artinian nonunital algebras}\rightarrow &Sets\\
\textbf{m}\mapsto&\{\gamma\in \textbf{m}\otimes \Hom^1(\mathcal{F},\mathcal{F})| MC(\gamma)=0\}
\end{align*}
where $MC:\Hom^1(\mathcal{F},\mathcal{F})\rightarrow \Hom^2(\mathcal{F},\mathcal{F})$ is given by the formal sum of the degree $n$ functions
\[
MC_n(a)=b_n(\gamma,\ldots,\gamma),
\]
and $b_n$ are the higher multiplications of $\Hom^1(\mathcal{F},\mathcal{F})$.  We have shifted from the usual maps $m_n:A^{\otimes n}\rightarrow A[2-n]$ to maps $b_n:A[1]^{\otimes n}\rightarrow A[2]$ just to make the signs trivial here.
\medbreak
The fact that $\mathcal{C}$ is supposed to be a Calabi-Yau 3-dimensional category over some ground field $k$ enables us to make some extra assumptions on our $A_{\infty}$-Yoneda algebra $\End^{\bullet}(\mathcal{F})$, namely we assume that it has a cyclic structure.  We will come to what exactly this means in Chapter \ref{quiveralgebras}, but for the time being it is sufficient to note that this extra structure implies that we have a nondegenerate antisymmetric pairing
\[
\langle\bullet,\bullet\rangle:\Hom^1\otimes \Hom^2\rightarrow k
\]
and that if we define
\[
W_n(x)=\frac{1}{n}\langle b_{n-1}(x,\ldots,x),x\rangle
\]
and let $W$ be the formal sum of these degree $n$ functions, we have that $dW=MC$.  This makes sense once one views $\End^2(\mathcal{F})$ as the vector dual of $\End^1(\mathcal{F})$ via the pairing $\langle\bullet,\bullet\rangle$ and identifies each fibre of the cotangent space of $\End^1(\mathcal{F})$ with the vector dual of $\End^1(\mathcal{F})$ in the natural way. 
\smallbreak
It follows, then, that we are given a formal critical locus description for $\mathcal{F}$ without any reference to a moduli space, directly from the structure of a 3-dimensional Calabi-Yau category.
\medbreak
The $W$ we have here, though, is in some sense not yet intrinsic to the category -- it changes as we vary the representative we take of the quasi-equivalence class of the category $\mathcal{C}$, varying by quasi-isomorphisms the representative we take of the $A_{\infty}$-algebra $\End^{\bullet}(\mathcal{F})$.  Help is at hand though: it turns out (see Theorem 5 and Corollary 2 of \cite{KS}, as well as \cite{KSnotes}, \cite{kaj03} and Theorem \ref{cycmm} below) that we can always find a (noncanonical!) minimal model for our category, at least around a neighbourhood of our object $\mathcal{F}$, after constructible decomposition of the space of objects in the category.  So there is a quasi-isomorphism (at least after we replace $\mathcal{C}$ by the full subcategory whose objects are a constructible neighborhood of $\mathcal{F}$):
\begin{equation}
\label{roughmin}
\xymatrix{
\mathcal{C}\ar[r]^{\sim}& \mathcal{C}'
}
\end{equation}
to a Calabi-Yau $A_{\infty}$-category $\mathcal{C}'$ where the morphism spaces have zero differential, and so we have the identification $\End_{\mathcal{C}'}^1(\mathcal{F})\cong \Ext_{\mathcal{C}}^1(\mathcal{F})$.  This is good, since the graded vector space of $\Ext$s between two objects, as opposed to the differential graded vector space of $\Hom$s, is a true invariant under quasi-isomorphisms of $A_{\infty}$-categories.  What's more, since we have taken this minimal model in the category of \textit{cyclic} $A_{\infty}$-categories, this new $\End_{\mathcal{C}'}^1(\mathcal{F})$ comes also with its potential function, denoted $W_{\min}$.  Finally, the really good news is that this $W_{\min}$ doesn't depend on the choice of minimal model (up to some changes that have no effect on motivic Milnor fibres).  So $W_{\min}$, considered as a formal function on the constructible vector bundle $\mathcal{EXT}^1$, presents itself as a likely candidate for our intrinsic critical locus description of the category.
\section{An example in the general framework}
Before providing some of the gory details of how the above theory works, let us see how some of it works in a specific example.  First we fix some data.  We will start by defining $A$, an $A_{\infty}$-algebra.  Such an algebra has an underlying graded vector space, which in our case is just going to be
\[
A=\Cp\oplus \Cp[-1] \oplus \Cp[-2]\oplus \Cp[-3].
\]
Such an algebra comes also with a countable collection of operations
\[
m_n:A^{\otimes n}\rightarrow A[2-n],
\]
for $n\geq 1$, satisfying some compatibilities.  For example, in the case where $m_n=0$ for all $n\geq 3$ the algebra can be thought of (and indeed really is) just a differential graded algebra, with $m_2$ equal to the multiplication and $m_1$ giving the differential; in this case, the compatibility conditions say exactly that our algebra satisfies the conditions required of a differential graded algebra.  The $A$ we are going to consider is slightly different.  We first set $m_1=0$, i.e. the differential is zero -- this puts us in the `minimal' situation of (\ref{roughmin}).  Next, we set the thing to be unital.  So there is some $1\in A^0=\Cp$ which functions just like the identity under $m_2$, and such that $m_i(\ldots,1,\ldots)=0$ for all $i\geq 3$.  Let us extend this unit to a basis 
\[
\{1\in A^0, a\in A^1, a^*\in A^2, w\in A^3\}
\]
so that we have a graded basis for the whole of $A$.  Next, set
\begin{align*}
m_2(a,a^*)=m_2(a^*,a)=w\\
m_2(a,a)=0.
\end{align*}
For degree reasons, this and the unital property determine $m_2$ entirely.  We define $m_i=0$ unless $i\in \{2,3\}$.  We let $m_3(a,a,a,)=a^*$, and set $m_3$ to be zero on all other 3-tuples of basis elements.\medbreak
In fact this algebra hasn't been plucked from nowhere: it is the $A_{\infty}$ Koszul dual (as in \cite{Yoneda}) of the Ginzburg differential graded algebra $\Gamma(Q,W)$ (see Section \ref{NCCY}) associated to the quiver with potential we considered in Section \ref{basicexample}.  This is a differential graded algebra with cohomology concentrated in negative degrees, with zeroeth cohomology isomorphic to our algebra $B$ as defined in Section \ref{basicexample}.  So the Abelian category of $B$-modules sits inside the derived category of $\Gamma(Q,W)$-modules as the heart of a t-structure, and $A$ is the Yoneda algebra $\Ext^{\bullet}_{\Gamma(Q,W)\lmodi}(M,M)$ of the 1-dimensional simple module $M$ of Section \ref{basicexample}.  Note that this algebra is very different from the Yoneda algebra $\Ext^{\bullet}_{B\lmodi}(M,M)$, which is concentrated in infinitely many degrees.
\smallbreak
Under Koszul duality, the $B$-module $M$ gets sent to the free (right) $A$-module.  But it is maybe worth forgetting that for now, and just taking some category of modules over $A$ to be our Calabi-Yau category, and seeing what the programme sketched above, involving $W_{\min}$, does in this case.\smallbreak
As in Section \ref{basicexample} we will be interested in some very simple spaces of modules over $A$ (indeed the same spaces, under Koszul duality).  First we need to write down our version of the superpotential coming from the structure of our category.  To this end we introduce the symmetric pairing
\[
\langle\bullet, \bullet \rangle:A\otimes A\rightarrow \Cp[-3]
\]
given by letting $\langle a, a^*\rangle=\langle 1 , w\rangle=1$.  This gives us our $W$: if we let $x$ be a coordinate on $\Ext^1(M,M)\cong A^1$, then 
\[
W=x^4.
\]
(Recall that $W$ is actually defined in terms of the $b_n$, maps from $A[1]^{\otimes n}$ to $A[2]$, but up to sign this makes no difference to our $W$.)  The only modules we will be interested in are $A$ and extensions of $A$ by itself. Denote by $N$ the free left $A$-module. We denote by $N_{\alpha}$ the cone of a morphism $\alpha:N[-1]\rightarrow N$.  Such a module is really just the extension determined by $\alpha\in \Ext^1(N,N)$, but souped up to an object in an $A_{\infty}$-category.  Such an extension has, as underlying $A$-module, $N_1\oplus N_2$, where we have labelled the two copies of $N$ merely for convenience.  $N_{\alpha}$ has a differential determined by $\alpha$:
\begin{equation}
\label{dif}
d \left(\begin{array}{c}a_1\\a_2\end{array}\right)=m_2\left(\left(\begin{array}{cc}0&\alpha\\ 0 &0\end{array}\right),\left(\begin{array}{c}a_1\\a_2\end{array}\right)\right)=\left(\begin{array}{c}m_2({\alpha},a_2)\\0\end{array}\right).
\end{equation}
By a slight abuse of notation we denote the matrix appearing in (\ref{dif}) simply by $\alpha$.  By a slightly larger abuse of notation we have used the same $m_i$ as appear in the definition of $A$ to denote the natural extension to matrix calculus.  What we are really interested in is $\End^{\bullet}(N_{\alpha})$.
\begin{prop}
The $A_{\infty}$-algebra $\End^{\bullet}(N_{\alpha})$ has a model whose underlying graded vector space is
\begin{align}
\label{underlying}
H:=&\End^{\bullet}(N_1,N_1)\oplus \End^{\bullet}(N_1,N_2)\oplus \End^{\bullet}(N_2,N_1)\oplus \End^{\bullet}(N_2,N_2)\nonumber\\
=&A_{11}\oplus A_{12}\oplus A_{21}\oplus A_{22}\nonumber\\
=&M_{2\times 2}(A)
\end{align}
where the subscripts do not change the mathematical object denoted by the terms they are subscripts to, and are just added for notational convenience.  This algebra carries natural higher products coming from $A$, which we denote by $m_{2\times 2,n}$, or the shifted version by $b_{2\times 2,n}$, and twist by setting
\begin{equation}
\label{conemult}
b_{\alpha,i}(A_1,\ldots,A_i)=\sum_{n\geq i} b_{2\times 2,n}(\alpha,\ldots,\alpha,A_1,\alpha\ldots\alpha,A_2,\alpha,\ldots,\alpha,A_i,\alpha,\ldots,\alpha).
\end{equation}
\end{prop}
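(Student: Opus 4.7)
The plan is to exhibit $\End^{\bullet}(N_\alpha)$ as a Maurer--Cartan twist of $M_{2\times 2}(A)$, from which both the underlying graded vector space and the formula for the $b_{\alpha,i}$ drop out as an instance of the standard $A_{\infty}$ twisting construction.

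First I would identify the underlying graded vector space. Since the cone construction in (\ref{dif}) only modifies the differential on the direct sum $N_1\oplus N_2$ without changing the underlying graded $A$-module, there is an equality of graded vector spaces
\[
\End^{\bullet}(N_\alpha)=\End^{\bullet}(N_1\oplus N_2)=\bigoplus_{i,j\in\{1,2\}}\End^{\bullet}(N_j,N_i).
\]
Since $N$ is the free left $A$-module, each direct summand has $A$ as a minimal model via right multiplication, yielding $H=M_{2\times 2}(A)$ with its entrywise $A_{\infty}$ operations $m_{2\times 2,n}$ (shifted counterparts $b_{2\times 2,n}$).

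Next I would recognise $N_\alpha$ as obtained from $N_1\oplus N_2$ (with trivial twist) by the element
\[
\tilde\alpha=\begin{pmatrix}0&\alpha\\0&0\end{pmatrix}\in H^1,
\]
the same matrix that appears implicitly in (\ref{dif}). The general principle, valid for any $A_{\infty}$-algebra $R$ and any $\xi\in R^1$ satisfying the Maurer--Cartan equation $\sum_{n\geq 1}b_n(\xi,\ldots,\xi)=0$, is that the operations
\[
b_{\xi,i}(r_1,\ldots,r_i):=\sum_{k_0,\ldots,k_i\geq 0} b_{k_0+\cdots+k_i+i}(\xi^{k_0},r_1,\xi^{k_1},r_2,\ldots,r_i,\xi^{k_i})
\]
define a new $A_{\infty}$-structure on $R$, and this structure is exactly the one induced on endomorphisms of the module obtained by twisting by $\xi$. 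Applied to $R=M_{2\times 2}(A)$ and $\xi=\tilde\alpha$, this recovers the formula (\ref{conemult}) verbatim.

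Finally I would verify that $\tilde\alpha$ is indeed a Maurer--Cartan element of $M_{2\times 2}(A)$. Strict upper triangularity forces $b_{2\times 2,n}(\tilde\alpha,\ldots,\tilde\alpha)=0$ for every $n\geq 2$, because a nonzero contribution would require a chain of indices $i_0,i_1,\ldots,i_n$ with each consecutive pair $(i_{k-1},i_k)$ equal to $(1,2)$, which is impossible. The remaining term $b_1(\tilde\alpha)$ vanishes since $m_1=0$ in the minimal algebra $A$. The main obstacle is really only notational: one must pin down the identification $\End^{\bullet}_{A\lmodi}(N)\simeq A$ with the correct sign and opposite-algebra conventions so that matrix multiplication on $H$ genuinely matches composition of module homomorphisms, after which the proposition reduces to invoking the standard twisting formula rather than rederiving it from scratch.
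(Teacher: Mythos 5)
Your proposal is correct and follows essentially the same route as the paper: the paper proves this by pointing to the category of twisted objects (Definition \ref{twl}, composition formula (\ref{twmult}), and the $\widetilde{\cone}$ discussion of Section \ref{triangstruc}), which is exactly the Maurer--Cartan twisting of $M_{2\times 2}(A)$ by the strictly triangular element $\tilde\alpha$ that you describe, with the identification $\End^{\bullet}(N)\simeq A$ and the quasi-isomorphism to $\End^{\bullet}(N_\alpha)$ supplied by the quasi-fully faithful Yoneda embedding.
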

See Section \ref{twistcat} for an explanation of where this model is coming from, and an actual definition of $b_{2\times 2,n}$.  Note that the sum in (\ref{conemult}) is actually finite: any term in which $\alpha$ appears in consecutive places is automatically zero, from the definition of $b_{2\times 2,n}$.  So, for example
\begin{equation}
b_{\alpha,1}(A)=b_{2\times 2,2}(A,\alpha)+b_{2\times 2,2}(\alpha,A)+b_{2\times 2,3}(\alpha,A,\alpha).
\end{equation}
\smallbreak
Let $\nu_N$ be the scheme consisting of a single closed point, which we make into a parameter space of $A$-modules by decreeing that the module over the point is just $N$.  It is somewhat unfortunate that the same letter $\nu$ is used to define Behrend's microlocal function; from now on it will always mean a parameter space over a point, as here.  In the language of stack functions, this is just the map $\Spec(k)\rightarrow \ob(\mathcal{C})$ sending the point to $N$.  The stack function/parameter space $\nu_N\star \nu_N$ is, as in Section \ref{basicexample}, just $\Ext^{1}(N_2,N_1)/\mathbb{A}^1$, where the point $\alpha\in \Ext^1(N_2,N_1)$ parameterises the module $N_{\alpha}$.
\begin{defn}
\label{homdef}
We define a graded vector bundle $\mathcal{END}$ over the vector space $\Ext^1(N_2,N_1)$, given by the trivial bundle with fibre $H$ as defined in (\ref{underlying}).  This differential graded vector bundle has operations
\[
m_{\mathcal{END},i}:\mathcal{END}^{\otimes i}\rightarrow \mathcal{END}
\]
as defined fibrewise in (\ref{conemult}).
\end{defn}
\medbreak
While $\mathcal{END}$ is a useful object, it isn't quite right for our purposes, since it isn't minimal.  In particular, if we build the function $W$ using it, as it is, it has quadratic terms, since $m_{\mathcal{END}^1,1}\neq 0$.  Consider the decomposition
\[
\Ext^1(N_2,N_1)=E_{t}\coprod E_{\nt}
\]
where $E_t=0$ and $E_{\nt}\cong \Cp^*$ is the complement of $E_t$.  Consider first the part $E_t$.  Here $\alpha=0$, and so $\mathcal{END}^{\bullet}|_{E_{t}}$ \textit{is} minimal, and there is nothing for us to do.\smallbreak
Now take the part $E_{\nt}$.  The vector bundle $\mathcal{END}^0|_{E_{\nt}}$ is spanned by sections
\[
1_{ij}\in \Ext^0(N_i,N_j)\cong A_{ij},
\]
where as before the subscripts are being used to distinguish the two copies of $N$, not to pick out degrees, and our differential acts on these as follows:
\begin{align*}
\label{d0}
d(1_{11})=&a_{21}\alpha,\\
d(1_{12})=&-a_{11}\alpha+a_{22}\alpha,\\
d(1_{21})=&0,\\
d(1_{22})=&-a_{21}\alpha,
\end{align*}
where $\alpha$ denotes a coordinate on $\Ext^1(N_2,N_1)$ and the vector bundle $\mathcal{END}^1|_{E_{\nt}}$ is spanned by sections
\[
a_{ij}\in \Ext^1(N_i,N_j),
\]
which in turn are acted on as follows
\begin{align}
\label{d1}
d(a_{11})=&0,\\
d(a_{12})=&\alpha^2 a_{21}^*,\\
d(a_{21})=&0,\\
d(a_{22})=&0.
\end{align}
So the section $a_{11}$ gives us an embedding of $\mathcal{EXT}^1|_{E_{\nt}}$ into $\mathcal{END}^1|_{E_{\nt}}$.  In fact we can almost realise $\mathcal{EXT}^{\bullet}|_{E_{\nt}}$ as a sub $A_{\infty}$-vector bundle of $\mathcal{END}|_{E_{\nt}}$, by writing
\begin{align}
\label{almost2}
\mathcal{EXT}^{\bullet}|_{E_{\nt}}=\{1_{11}+1_{22},1_{21},a_{11}+a_{22}, a^*_{11}+a^*_{22},w_{11}+w_{22},w_{12}\}.
\end{align}
The identity \ref{almost2} isn't quite right though, since this sub-bundle isn't closed under the operations $m_{\mathcal{END}^{\bullet},i}$.  The fix involves tweaking the inclusion $i:\mathcal{EXT}^{\bullet}|_{E_{\nt}}\rightarrow \mathcal{END}^{\bullet}|_{E_{\nt}}$ -- we are working with $A_{\infty}$-morphisms, with `higher' parts that can be modified to counteract the failure of our sub-bundle to be closed under the $A_{\infty}$-operations $m_{\mathcal{END},i}$ -- this is the process of taking a minimal model.  None of this technicality matters to us at the moment, since the thing we really care about, $m_{\mathcal{EXT}^{\bullet},i}$, is unchanged by these modifications, and so we can read off our function $W_{E_{\nt},\min}$ -- it is just the function $x^4$ (after rescaling) on the 1-dimensional vector bundle $\mathcal{EXT}^1|_{E_{\nt}}$.
\medbreak
We are working with the idea that our motivic refinement, which we will denote $``DT"$ for now, looks something like
\begin{align*}
``\DT":\hbox{stack functions for } A\lmod\rightarrow &\Mot^{\hat{\mu}}(\Spec(k))\\
S\mapsto  &\int_{S}(1-\MF(W_{\min})).
\end{align*}
There will in general be some twists by powers of $\mathbb{L}^{1/2}$, a formal square root of the motive of the affine line, but we have conveniently picked our example so that these powers are all trivial, in the end.  Let us work out what this map does in our example.  It turns out we have already done most of the work.  Firstly one can easily check that
\[
``\DT"([E_t]\mathbb{L}^{-1})=(1-\MF(\tra(T^4)))\mathbb{L}^{-1}
\]
and so 
\[
``\DT"([E_t]\cdot \mathbb{L}^{-1})=\LL^{-1}-(1-\mathbb{L}) \mathbb{L}^{-1} \MF(x^4+y^2)-\MF(x^4+y^4)
\]
by Proposition \ref{trivpart}.  Secondly, we have that 
\[
``\DT"([E_{\nt}]\mathbb{L}^{-1})=(\LL-1) \LL^{-1}-(\mathbb{L}-1) \MF(x^4)\mathbb{L}^{-1}.
\]
In order for the map $``\DT"$ to preserve the ring structure, then, we need
\begin{equation}
(\MF(x^4)-\MF(x^4+y^2))=0.
\end{equation}
While equalities in the ring of motives can perhaps be a little elusive, there are certain realizations from the ring of motives to more manageable rings that make inequalities easier to identify.  For example, from the functoriality of the weight filtration of the mixed Hodge structure of a scheme $X$, it follows that if a finite group $G$ acts on $X$ we may form an equivariant version $\chi_{eq,q}$ of the Serre polynomial for $X$.  Using Propositions \ref{x4y4} and \ref{x4y2} one can show
\[
\chi_{eq,q}(\MF(x^4)-\MF(x^4+y^2))=(\alpha+\alpha^2+\alpha^3-2(\alpha+\alpha^2+\alpha^3)\sqrt{q}-q)
\]
from which we deduce that our map $``\DT"$ does \textit{not} preserve the ring structure, as it stands.
\begin{rem}
The expert reader will have spotted that we are being somewhat disingenuous here.  We are working under the assumption that the powers of $\mathbb{L}^{1/2}$ alluded to above can be ignored.  In fact this is because there is a natural choice of orientation data in our situation for which the power of $\mathbb{L}^{1/2}$ appearing in the correct motivic weight (see (\ref{intmapeq})) is zero.  But we are meant to be presenting a malfunctioning attempt at the integration map, for which we have ignored the orientation data.  So we should throw in some power of $\mathbb{L}^{1/2}$, so that our example of the problems one runs into when ignoring orientation data is true to its word.  This correction to the motivic weight turns out to be multiplication by $\mathbb{L}^{1/2}$, over $E_{\nt}$, and multiplication by $1$ over $E_{t}$.  What's more, since $\sqrt{-1}\in \Cp$, there is a square root for $\mathbb{L}$ given by $(1-\MF(x^2))$.  By motivic Thom-Sebastiani, if we multiply the weight by this correction term, the motivic weight over $E_{\nt}$ transforms from $(1-\MF(x^4))$ to $(1-\MF(x^4+y^2))$, and our integration map appears to preserve the product.  However, even laying aside the fact that we have used the condition that $\sqrt{-1}$ is in our field, which won't be true in general, note that even in this most trivial of situations there is an alternative square root to $\mathbb{L}$ provided by $(\MF(x^2)-1)$.  So this putative fix is rather ad hoc.  The role of orientation data in this example is to pick the correct square root, i.e. to replace the formal square root of $\mathbb{L}$ with the correct one for our situation.  In this case that means multiplying by $[Q]\cdot\mathbb{L}^{-1/2}$, where $[Q]=(1-\MF(x^2))$ is the correct square root.
\end{rem}
\medbreak
Let us compare the case where things looked better, Section \ref{basicexample}, with what has happened here.  The following basic observation makes this easier.
\begin{prop}(see Proposition \ref{crux})
Let $\alpha\in \Ext^1(M_2,M_1)$.  Define
\[
W_{\alpha}(a)=\sum_{n\geq 2}\frac{1}{n}W_{\alpha, n}(a),
\]
a function on $2\times 2$ matrices with entries in $Ext^1(M,M)$, by
\[
W_{\alpha,n}(a)=\langle b_{\alpha,n-1}(a,\ldots,a),a\rangle.
\]
Write $W:=W_0$.  Then $W_{\alpha}(a)=W(\alpha+a)$.
\end{prop}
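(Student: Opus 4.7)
The plan is to expand $W(\alpha+a)$ multilinearly inside the matrix algebra $H=M_{2\times 2}(A)$ and reorganise the resulting sum using cyclic symmetry of $\langle\bullet,\bullet\rangle$, matching the outcome term-by-term against the definition of $b_{\alpha,i}$ from (\ref{conemult}). Setting $V_n(s_1,\ldots,s_n):=\langle b_{n-1}(s_1,\ldots,s_{n-1}),s_n\rangle$ and expanding $\alpha+a$ in each of the $n$ tensor positions (including the one paired into the inner product) gives
\[
W(\alpha+a)=\sum_{n\geq 2}\frac{1}{n}\sum_{s\in\{\alpha,a\}^n}V_n(s).
\]

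The main step is to sort each inner sum by $k:=|s|_a$, the number of positions equal to $a$, and exploit cyclic symmetry. Since $H$ is cyclic Calabi-Yau and all classes involved are of odd degree before the shift that turns $m_n$ into $b_n$, the identity $V_n(s)=V_n(\sigma(s))$ for $\sigma\in\mathbb{Z}/n$ holds without signs. A short double count of pairs $(s,j)$ with $s_j=a$, via the two projections $(s,j)\mapsto s$ and $(s,j)\mapsto$ the cyclic rotation placing $s_j$ at position $n$, yields for every $k\geq 1$
\[
\sum_{s\in\{\alpha,a\}^n,\;|s|_a=k}V_n(s)=\frac{n}{k}\sum_{s:\;s_n=a,\;|s|_a=k}V_n(s).
\]
After substituting and swapping the order of the $n$ and $k$ summations, the contributions with $k\geq 2$ collect into
\[
\sum_{k\geq 2}\frac{1}{k}\sum_{n\geq k}\sum_{s:\;s_n=a,\;|s|_a=k}V_n(s)=\sum_{k\geq 2}\frac{1}{k}\langle b_{\alpha,k-1}(a,\ldots,a),a\rangle=W_\alpha(a),
\]
since by (\ref{conemult}), summing $b_{n-1}$ over all interleavings of $k-1$ copies of $a$ among $\alpha$'s in the first $n-1$ slots and pairing with the final $a$ is exactly $\langle b_{\alpha,k-1}(a,\ldots,a),a\rangle$.

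It remains to verify that the $k=0$ and $k=1$ contributions, equal respectively to $W(\alpha)$ and $\langle MC(\alpha),a\rangle$ (where $MC(\gamma):=\sum_{n\geq 1}b_n(\gamma,\ldots,\gamma)$ is formed in $H$), both vanish. Here the specific form of $\alpha$ enters decisively: as an element of $\Ext^1(M_2,M_1)$, it occupies the single strict upper-triangular slot of $M_{2\times 2}(A)$, so any matrix entry of $b_n(\alpha,\ldots,\alpha)$ for $n\geq 2$ would be a sum of $b_n$ evaluated along index sequences $(k_0,\ldots,k_n)$ with $(k_{\ell-1},k_\ell)=(1,2)$ at every step, which is impossible past the first step in a $2\times 2$ matrix. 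Together with $b_1=0$ (minimality) and $b_0=0$ (no curvature), this forces $b_n(\alpha,\ldots,\alpha)=0$ in $H$ for every $n\geq 1$, and hence $MC(\alpha)=0$ and $W(\alpha)=0$. The main obstacle is the cyclic double-count of the previous paragraph; the remaining steps are bookkeeping, and the absence of signs is a familiar feature of this degree convention.
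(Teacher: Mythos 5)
Your proof is correct and follows essentially the same route as the paper's own argument for Proposition \ref{crux}: expand $W(\alpha+a)$ into words in $\alpha$ and $a$ and regroup by cyclic symmetry of $\langle b_{n-1}(\bullet,\ldots,\bullet),\bullet\rangle$ so that the interleaved sums match the twisted compositions $b_{\alpha,k-1}$ of (\ref{conemult}). The only difference is one of detail: you spell out the cyclic double count and explicitly check that the pure-$\alpha$ ($k=0$) and single-$a$ ($k=1$) contributions vanish (via $b_n(\alpha,\ldots,\alpha)=0$ for the strictly upper-triangular $\alpha$ and minimality), points the paper's two-line proof leaves implicit.
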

There is a smooth function $+:\Mat_{\sut,2\times 2}(\Cp)\times \Mat_{2\times 2}(\Cp)\rightarrow \Mat_{2\times 2}(\Cp)$ given by matrix addition, and the proposition states that $+^*(W)=W_{-}$, the function on $\Mat_{\sut,2\times 2}(\Cp)\times M_{2\times 2}(\Cp)$ that restricts to $W_{\alpha}$ over $\alpha\in \Mat_{\sut,2\times 2}$.  It follows by the properties of the transformation of the motivic vanishing cycle under pullback that $[-\phi_{W_{-}}]|_{\Mat_{\sut,2\times 2}(\Cp)}=[-\phi_{W}]|_{\Mat_{\sut,2\times 2}(\Cp)}$.  So as well as integrating motivic weights across the same 1-dimensional subspace of $\Mat_{2\times 2}(\Cp)$ both times, we have actually been integrating against the same motivic weight $[-\phi_{\tra(T^4)}]$ both times as well, \textit{almost}.  The almost here comes from the fact that along $E_{\nt}$ we have modified the function $W_{\alpha}$, breaking it into a quadratic part and a part with cubic and higher terms -- this is what we do when we restrict to the minimal superpotential $W_{\min}$.  What is this quadratic part?  As noted in \cite{KS}, to a first approximation it is just $W_{\alpha,2}$ on the constructible vector space 
\begin{equation}
\label{Vdef}
V=\mathcal{HOM}^{1}/\Ker(b_{\mathcal{HOM}^{\bullet},1}).  
\end{equation}
On $E_t$ this is trivial, so we concentrate on $E_{\nt}$.  Here, $V$ is spanned by $a_{12}$ (see (\ref{d1})), and the quadratic function induced by $W_{-}$ equals $\alpha^2 y^2$, where $y$ is the coordinate on the vector space \mbox{$\langle a_{12}\rangle\subset H$}, as defined in (\ref{underlying}).  After rescaling, this is just the function $y^2$.  If we had modified $``\DT"$ so that instead of integrating $(1-\MF(x^4))$ along $E_{\nt}$ we integrated $(1-\MF(x^4))\cdot (1-\MF(y^2))=(1-\MF(x^4+y^2))$ we would have arrived at the right answer.
\section{The role of orientation data in fixing preservation of ring structure}
So let us recall the situation we have arrived at.  Firstly, our goal was to associate motivic Donaldson--Thomas counts to arbitrary stack functions of a Calabi-Yau 3-dimensional category $\mathcal{C}$.  In the slightly different example of the Abelian category of modules over a superpotential algebra (in our case, $\Cp[x]/\langle x^3\rangle$), we have a good idea of how to do this, that seems to work, with the product preserved on account of an application of the Kontsevich--Soibelman integral identity, followed by the motivic Thom-Sebastiani Theorem.  If we just start from the data of a 3-Calabi-Yau category $\mathcal{C}$, we have some proxy for the critical locus description, the minimal superpotential $W_{\min}$ considered as a function on the constructible vector bundle $\mathcal{EXT}^1$, the problem is that we don't know how to apply the integral identity.  More precisely, in the case of two stack functions from single points both parameterising the object $N$, we \textit{do} have something to apply the integral identity to -- the induced potential on the differential graded vector bundle $\mathcal{END}^{\bullet}$ over $\Ext^1(N,N)$, defined as in Definition \ref{homdef} -- but away from the origin, quadratic terms show up, that are removed when we only consider the minimal superpotential $W_{\min}$.
\smallbreak
The same story occurs if we replace the two stack functions we were multiplying before, which were both $\nu_N$, with arbitrary $\nu_{E_i}$, for $E_1,E_2\in \mathcal{C}$.  Let us denote the version of the vector bundle $V$ from (\ref{Vdef}) that we get after making these replacements by $V_{E_1,E_2}$, so $V_{E_1,E_2}$ is a vector bundle on $\Ext^1(E_2,E_1)$.  The key, then, is to get some control over the constructible vector bundle $V_{E_1,E_2}$, and its associated quadratic form, which we will denote $Q_{E_1,E_2}$, so that we know how to correct our map $``\DT"$ in order to get something that preserves products.  It turns out that (up to a notion of equivalence that induces isomorphisms of motivic Milnor fibers in the quotient ring $\overline{\Mot}^{\muhat}(\Ext^1(E_2,E_1))$) the pair of the vector bundle $(V_{E_1,E_2},Q_{E_1,E_2})$ is intrinsic to the category $\mathcal{C}$, i.e. if we had picked a different minimal model for the category consisting just of the two objects $E_1,E_2$, and so obtained a new pair of a vector bundle with nondegenerate quadratic form, $(V'_{E_1,E_2},Q'_{E_1,E_2})$, the modification to the motivic Milnor fibre obtained by replacing the motivic weight 
\[
(1-\MF(W_{\min}))
\]
by 
\[
(1-\MF(W_{\min}))(1-\MF(Q'_{E_1,E_2}))
\]
would be the same as before.  This shouldn't come as a great shock: the failure of our naive $``\DT"$ map to preserve the product is again intrinsic to $\mathcal{C}$, by construction.  So the dream is not dead at this point: if we can come up with a way to coherently counteract the error term introduced by ignoring the contribution from $(V_{E_1,E_2},Q_{E_1,E_2})$ we will have come up with a fix that is invariant under quasi-equivalences of Calabi-Yau categories.
\smallbreak
This then, defines the role of orientation data in the theory of motivic Donaldson--Thomas theory:
\begin{cond}
\label{maincon}
\textbf{Orientation data provides a way of replacing $(\mathcal{EXT}^1,W_{\min})$ with a pair $(\mathcal{EXT}^1\oplus \mathcal{V}, W_{\min}\oplus Q)$ in such a way that the map} $\Phi$ \textbf{defined by integrating with respect to the weight which, over an element $M\in \mathcal{C}$ is $(1-\MF(W_{\min}\oplus Q))\mathbb{L}^{-\dim(V)/2+\sum_{i\leq 1}(-1)^i \dim(\Ext^{i}(M,M))/2})$ provides a map preserving associative product}.
\end{cond}
\begin{rem}
We have finally applied some diligence and added in the powers of $\mathbb{L}^{1/2}$.  As we will see later, there is a natural choice of orientation data such that this power is zero for all modules in the Abelian category of $B$-modules.  This is already evident at least for the 2-dimensional module we have been studying in this chapter.
\end{rem}

\section{Appendix: deferred motivic calculations}
\label{motivesapp}
Recall Proposition \ref{x4y4}, which stated the equality of $\hat{\mu}$-equivariant motives
\begin{equation}
\MF(x^4+y^4)=[C_1]-4\mathbb{L},
\end{equation}
where $C_1$ is a genus 3 complex curve, with the action $2(\alpha+\alpha^2+\alpha^3)$ on its middle cohomology.
\begin{proof}
One can show this as follows: first, note that if $X=\Cp^2$, the blowup at the origin
\[
\xymatrix{
\tilde{X}\ar[d]^h\\ X
}
\]
provides an embedded resolution of $f=x^4+y^4$.  As ever, let $J$ denote the set of divisors in $(fh)^{-1}(0)$, as in the formula (\ref{MFformula}).  There are then 5 elements in $J$, which we denote $E,D_1,D_2,D_3,D_4$, where $E$ is the exceptional $\mathbb{P}^1$.  The preimage $h^{-1}(0)$ is $E$, which intersects all of the divisors of $J$ nontrivially.  So there are 5 terms in the sum 
\begin{equation}
\label{formu}
\sum_{\emptyset\neq I\subset J}(1-\mathbb{L})^{|I|-1}[\tilde{D_J}]_{\{0\}}
\end{equation}
coming from the 4 sets $\{E,D_i\}$ as well as from the singleton set $\{E\}$.  All divisors of $(fh)^{-1}(0)$ apart from the exceptional $\mathbb{P}^1$ have multiplicity 1, so it follows that the \'{e}tale cover corresponding to each of the points $E\cap D_i$ is just the 1-sheeted cover.  So each of these points contributes $(1-\mathbb{L})$ to (\ref{formu}) .  There remains the \'{e}tale cover over the complement to the projective variety $V(x^4+y^4)$ in $E$, which is denoted, as in the formula (\ref{MFformula}) by $\tilde{D}_{\{E\}}$.  This cover is 4-sheeted, since $fh$ vanishes to order 4 along $E$.  One can complete in the obvious way the resulting 4-sheeted \'{e}tale cover to a branched cover 
\[
\xymatrix{
C_1\ar[d]\\
\mathbb{P}^1
}
\]
of $\mathbb{P}^1$.  Since this branched cover is simply ramified at each branch point of $\mathbb{P}^1$, i.e. there is only one point in the fibre of each branch point, it follows that the cover is connected, and $C_1$ is a genus 3 curve.  One can work out the equivariant Euler characteristic of $C_1$ by taking a good cover, in the analytic topology, of $\mathbb{P}^1$, such that any open set in the cover contains at most one of the branchpoints.  This calculation yields
\begin{align*}
\chi_{eq}(C_1)=&(1+\alpha+\alpha^2+\alpha^3)\chi(\mathbb{P}^1-\{\hbox{4 points}\})+4\\
=&2-2(\alpha+\alpha^2+\alpha^3).
\end{align*}

Since we know that $\mathbb{Z}_4$ acts trivially on the top and bottom cohomology, we deduce that $C_1$ has the cohomology stated in the proposition.  Putting everything together we have
\begin{align*}
\MF(x^4+y^4)=&([C_1]-4)+4(1-\mathbb{L})\\
=&[C_1]-4\mathbb{L}.
\end{align*}

\end{proof}

In similar fashion we can explicitly describe $\MF(x^4+y^2)$:
\begin{prop}
\label{x4y2}
There is an equality of $\hat{\mu}$-equivariant motives
\begin{equation}
\MF(x^4+y^2)=[C_2]-2\mathbb{L}
\end{equation}
where $C_2$ is a genus 1 curve with the action $\alpha+\alpha^3$ on its middle cohomology.
\end{prop}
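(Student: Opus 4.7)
The plan is to follow the same strategy as in the proof of Proposition \ref{x4y4}, namely to construct an embedded resolution of $f = x^4+y^2$ and apply formula (\ref{MFformula}) directly. The one essential new feature is that $f$ is not homogeneous, so a single blowup does not resolve it.

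First I would blow up the origin of $\mathbb{A}^2$. In the chart where $y = xy'$ the pullback of $f$ becomes $x^2(x^2 + y'^2) = x^2(x-iy')(x+iy')$, so the exceptional divisor $E$ appears with multiplicity $2$ and the strict transform of $f^{-1}(0)$ acquires a node at the chart origin (which lies on $E$). A second blowup, at that node, produces a new exceptional divisor $E'$ of multiplicity $2+1+1 = 4$ that meets $E$ transversally at a single point, and separates the strict transforms $D_1, D_2$ of the two branches of the node, each of multiplicity $1$ and meeting $E'$ transversally at a distinct point. Hence $J = \{E,E',D_1,D_2\}$, with pairwise intersections $E\cap E'$, $D_1\cap E'$, $D_2\cap E'$ each a single point, and no triple intersections.

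Since $D_1, D_2$ do not map to the origin under the composite $h$, only those $I \subset J$ containing $E$ or $E'$ contribute to $[\psi_f]_{\{0\}}$. For $I = \{E\}$, the cover $\tilde D_{\{E\}}$ over $D^0_{\{E\}} \cong \mathbb{A}^1$ is given by $z^2 = u$ where $u|_E = y'^2$ is a perfect square, so the cover splits (exactly as the trivial covers at the points $E\cap D_i$ did in the proof of Proposition \ref{x4y4}). The crucial contribution comes from $I = \{E'\}$: in a chart of the second blowup the pullback of $f$ is $x^4(1+y''^2)$, so $\tilde D_{\{E'\}}$ is the $4$-sheeted cover of $D^0_{\{E'\}} \cong E' \setminus \{3\text{ pts}\}$ defined by $z^4 = 1 + y''^2$. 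Completing to a branched cover of $E' \cong \mathbb{P}^1$ yields a smooth projective curve $C_2$, with ramification profile $(4),(4),(2,2)$ over $y'' = i,-i,\infty$ respectively; Riemann--Hurwitz gives $g(C_2) = 1$.

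To pin down the $\mu_4$-representation on $H^1(C_2)$, I would stratify $C_2$ into its étale part (a free $\mu_4$-cover of $\mathbb{P}^1$ minus $3$ points, contributing $-(1+\alpha+\alpha^2+\alpha^3)$ to $\chi_{eq}$) together with the four ramification points: one each above $i$ and $-i$ fixed by all of $\mu_4$ (contributing the trivial representation), and the pair above $\infty$ forming a $\mu_4/\mu_2$-orbit (contributing the induced representation $1+\alpha^2$). This gives $\chi_{eq}(C_2) = 2 - \alpha - \alpha^3$, so $H^1(C_2)$ carries the representation $\alpha + \alpha^3$. Finally, assembling the contributions of the four relevant $I$'s in (\ref{MFformula}), the terms from the pairwise intersection points cancel the four ramification-point classes removed from $[C_2]$ and the split $I=\{E\}$ contribution, leaving the stated equality $\MF(x^4+y^2) = [C_2] - 2\mathbb{L}$. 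The main obstacle is the careful bookkeeping of $\hat\mu$-equivariant classes so that these cancellations come out cleanly; a useful sanity check is that applying $\chi$ gives $0 - 2 = -2$, matching the Milnor number of the $A_3$-singularity.
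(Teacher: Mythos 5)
Your proposal is correct and follows essentially the same route as the paper: the same two blowups (exceptional divisors of multiplicities $2$ and $4$), the identification of the completed $4$-sheeted cover of the second exceptional divisor as a genus $1$ curve with $\alpha+\alpha^3$ on $H^1$ via an equivariant Euler characteristic count, and the same assembly through the Denef--Loeser formula, yielding $\MF(x^4+y^2)=[C_2]-2\mathbb{L}$. (Only a cosmetic slip: there are five nonempty strata over the origin, namely $\{E\},\{E'\},\{E,E'\},\{E',D_1\},\{E',D_2\}$, not four, but your cancellation still comes out to $-2\mathbb{L}$ exactly as in the paper.)
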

\begin{proof}
The motivic Milnor fibre of $x^4+y^2$ is obtained by performing a couple of blowups as in our resolution of $S_3$, the $\mathbb{P}^1$ of $A_3$ singularities in the projective variety $V(\tra(T^4))$.  After the first blowup we introduce an exceptional $\mathbb{P}^1$, which the two components of the strict transform of the divisor given by the original vanishing locus of $x^4+y^2$ meet in a single point, as in the leftmost part of Figure \ref{bu1}.  Blowing up this point gives us the rightmost arrangement of divisors of Figure \ref{bu1}.  The new exceptional $\mathbb{P}^1$ we label $E_2$, and the strict transform of the first exceptional $\mathbb{P}^1$ we label $E_1$.   Let
\[
\xymatrix{
\tilde{Z}\ar[d]^s\\
\Cp^2
}
\]
be the map of schemes obtained by performing these two blowups.  Then the numbers next to the exceptional divisors in Figure \ref{bu1} indicate the order of vanishing of the function $(x^4+y^2) s$ on those divisors.\smallbreak
\begin{figure}
\centering
\input{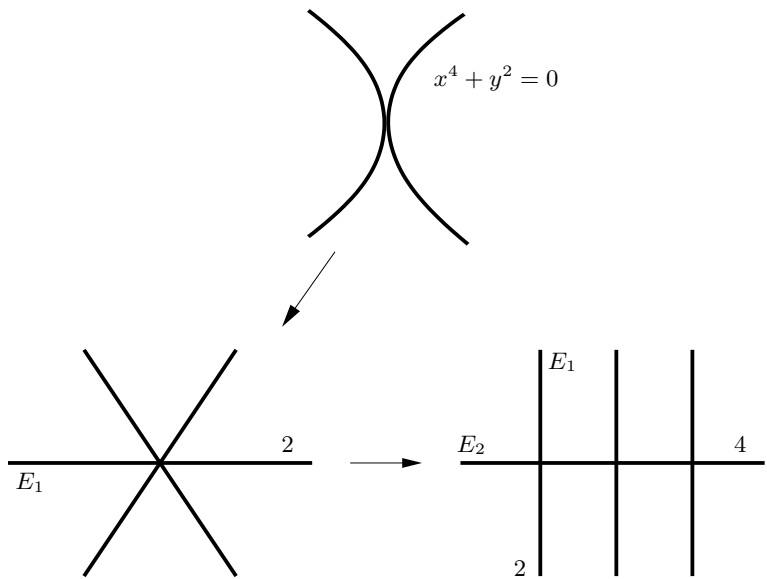}
\caption{Resolved $x^4+y^2$}
\label{bu1}
\end{figure}

The preimage $s^{-1}(0)$ is equal to the union $E_1\cup E_2$.  The complement to $E_2$ in $E_1$ is a copy of $\Cp$, from which it follows that our 2-sheeted \'{e}tale cover of it, $\tilde{D}_{\{E_2\}}$, must be the trivial $\mathbb{Z}_2$-torsor.  The (resolved) completion of the 4-sheeted \'{e}tale cover of $E_1$, which we denote $C_2$, is again connected, since two of its branching points are simply ramified.  So we can use the same trick as for Proposition \ref{x4y4} to work out its cohomology using equivariant Euler characteristics.  This gives that
\[
\chi(C_2)=(1+\alpha+\alpha^2+\alpha^3)\chi(\mathbb{P}^1-\{\hbox{3 points}\})+2+(1+\alpha^2)=2-(\alpha+\alpha^3),
\]
implying that $C_2$ is a torus with the action of $\mathbb{Z}_4$ on its middle cohomology given by the sum $\alpha+\alpha^3$.  Putting all the pieces together,
\begin{align*}
\MF(x^4+y^2)=&[C_2]-(2+(1+\alpha^2))+(1+\alpha^2)\mathbb{L}+(1-\mathbb{L})(2+(1+\alpha^2))\\
=&[C_2]-2\mathbb{L}.
\end{align*}
\end{proof}
Next we tidy up the unfinished business of calculating $[\tilde{D}_{\{Y\}}]$ from Proposition \ref{trivpart}.
\begin{prop}
\label{4sheetcover}
There is an equality of absolute equivariant motives
\begin{align}
\label{6part}
[\tilde{D}_{\{Y\}}]=&\mathbb{L}[C_1]+\mathbb{L}(\mathbb{L}-1)[C_2]-2\mathbb{L}(\mathbb{L}+1)\\
\label{6parta}
=&\mathbb{L}\MF(x^4+y^4)+(\mathbb{L}-1)\mathbb{L}\MF(x^4+y^2)+2\mathbb{L}(\mathbb{L}^2-1).
\end{align}
\end{prop}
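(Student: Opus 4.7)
The plan is to identify $\tilde{D}_{\{Y\}}$ as a concrete affine variety and compute its motive by compactification and explicit resolution. Under the isomorphism $D^0_{\{Y\}}\cong U := \mathbb{P}^3\setminus V(\tra(T^4))$ induced by the blowup $Y\to\mathbb{P}^3$, the 4-sheeted étale cover $\tilde{D}_{\{Y\}}$ parameterises fourth roots of the degree 4 section $\tra(T^4)\in H^0(\mathcal{O}(4))$. Using the $\mathbb{G}_m$-scaling to trivialise the fourth root in the line bundle $\mathcal{O}(1)$ yields
\[
\tilde{D}_{\{Y\}}\cong V := \{M\in \Mat_{2\times 2}(\Cp): \tra(M^4)=1\},
\]
as $\hat{\mu}$-equivariant varieties, with $\mu_4$ acting on $V$ by scalar multiplication. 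The variety $V$ is smooth, since the only critical points of $\tra(T^4)$ on $\Mat_{2\times 2}(\Cp)$ are the nilpotent matrices, which lie on $\{\tra(T^4)=0\}$.

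To compute $[V]$, I would pass to the projective closure
\[
\bar{V} := \{[X_1{:}X_2{:}X_3{:}X_4{:}S]\in \mathbb{P}^4 : S^4 = \tra(T^4)(X_1,X_2,X_3,X_4)\},
\]
a quartic $3$-fold that is the $4$-fold cyclic branched cover of $\mathbb{P}^3$ along $V(\tra(T^4))$. The decomposition $\bar{V} = V\sqcup V(\tra(T^4))$, in which the branch divisor is identified with $\{S=0\}\subset\bar{V}$, gives
\[
[V] = [\bar{V}]-(\LL+1)(2\LL+1),
\]
using the motive of $V(\tra(T^4))$ computed earlier in the chapter. The singular locus of $\bar{V}$ is the $\mathbb{P}^1$ above $S_3\subset V(\tra(T^4))$, where the branch divisor carries its $A_3$ singularities; a local computation in the coordinates $(t,a,b')$ of the embedded resolution shows that the transverse slice of $\bar{V}$ at a generic point of this singular $\mathbb{P}^1$ is $\{c^2=S^4+a^4\}\subset \Cp^3$, a double cover of $\Cp^2_{(S,a)}$ branched along the four lines $\{S^4+a^4=0\}$. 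This is precisely the geometric origin of the Fermat quartic $C_1$ that appeared in $\MF(x^4+y^4)$ in Proposition~\ref{x4y4}.

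I would then resolve $\bar{V}$ by a sequence of blowups paralleling the embedded resolution of $V(\tra(T^4))\subset \mathbb{P}^3$ used in the proof of Proposition~\ref{trivpart}: first along the singular $\mathbb{P}^1$ above $S_3$, then along the residual $D_4$-type locus. Stratifying the smooth resolved 3-fold by image in $\mathbb{P}^3$ produces three contributions to $[\bar{V}]$: the term $\LL\MF(x^4+y^4)$ from the étale part over $U$ combined with the transverse Fermat geometry above $S_3$; the term $2\LL(\LL^2-1)$ from the two totally-ramified branch components $S_1,S_2$, each of motive $[\mathbb{P}^1\times\Cp]$; and the term $(\LL-1)\LL\MF(x^4+y^2)$ from the residual exceptional locus, where the resolution of the $A_3$ singularity of the branch divisor brings in the elliptic curve $C_2$ of Proposition~\ref{x4y2}. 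The main obstacle is the explicit blowup-by-blowup analysis near $S_3$: the interaction of the $4$-fold cyclic branching with the embedded $A_3$-resolution produces specific exceptional divisors whose motives must be matched with (bundles over) $C_1$ and $C_2$, with the correct multiplicities and $\mu_4$-equivariant structure. Once this local accounting is done, the three contributions combine to give the stated identity in both its $C_i$-form and its $\MF$-form.
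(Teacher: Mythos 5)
Your opening identification is correct, and it is a genuinely different starting point from the paper: since $fh'$ vanishes to order $4$ along $Y$ and $D^0_{\{Y\}}\cong\mathbb{P}^3\setminus V(\tra(T^4))$, the cover $\tilde{D}_{\{Y\}}$ is indeed the global Milnor fibre $\{M\in\Mat_{2\times 2}(\Cp):\tra(M^4)=1\}$ with $\mu_4$ acting by scalar multiplication. The paper never leaves the base: it stratifies $\mathbb{P}^3\setminus V(\tra(T^4))$ into six explicit matrix strata $P_1,\ldots,P_6$ (fixing the trace and using the free $\Cp^*$-action rescaling the off-diagonal entries), recognises the restricted $4$-sheeted cover on each stratum as an open piece of $C_1$, of $C_2$, or a union of tori, and sums the six contributions.

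However, what you have written is a strategy rather than a proof, and the part you defer is precisely the content of the proposition. All of the computational weight — the equivariant motive of the quartic $3$-fold $\bar{V}$ (or of $V$ directly), i.e.\ the motives of the exceptional loci over $S_3$ with their $\mu_4$-structure and how they reassemble into $\mathbb{L}$- and $(\mathbb{L}-1)$-multiples of $[C_1]$ and $[C_2]$ — is exactly what you label ``the main obstacle,'' and no independent route to $[\bar V]$ is given (note also that the motive of the singular $\bar V$ is not the motive of its resolution, so this discrepancy must be tracked, an extra bookkeeping step you omit). Moreover, the three-way attribution you sketch is internally inconsistent: the whole $4$-sheeted cover lies over $U$, so ``the étale part over $U$'' is all of $\tilde{D}_{\{Y\}}$ and cannot account for just one of the three terms; and the two branch components $S_1,S_2$ you invoke contribute $2[\mathbb{P}^1\times\Cp]=2\mathbb{L}(\mathbb{L}+1)$, not the $2\mathbb{L}(\mathbb{L}^2-1)$ you assign to them. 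In fact the expression (\ref{6parta}) is just (\ref{6part}) rewritten by substituting $[C_1]=\MF(x^4+y^4)+4\mathbb{L}$ and $[C_2]=\MF(x^4+y^2)+2\mathbb{L}$ from Propositions \ref{x4y4} and \ref{x4y2}, so its three terms should not be expected to correspond to geometric strata of $\bar V$; any completed version of your argument still has to establish the single identity (\ref{6part}) by an explicit stratum-by-stratum or blowup-by-blowup count, which is what the paper's six-strata calculation carries out directly on the base.
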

\begin{proof}
We stratify the cover $\tilde{D}_{\{Y\}}$ by stratifying the base $D(\tra(T^4))$, the complement in $\mathbb{P}^3$ to $V(\tra(T^4))$.  Denote matrices of $D(\tra(T^4))$ by
\[
\left(\begin{array}{cc}a&b\\c&d\end{array}\right).
\]
Note that there is a $\Cp^*$-action on $D(\tra(T^4))$ given by
\[
t\cdot\left(\begin{array}{cc}a&b\\ c&d\end{array}\right)\mapsto\left(\begin{array}{cc}a&tb\\t^{-1}c&d\end{array}\right).
\]
\begin{enumerate}
\item
First consider the subscheme $P_1\subset D(\tra(T^4))$ of matrices with nonzero trace, and $c\neq 0$.  $P_1$ is acted on freely by $\Cp^*$ with the above action.  So we may take the quotient, and multiply the motive we get by $(\mathbb{L}-1)$.  So we fix the trace to be equal to 1, thereby fixing an element in the line of matrices determined by an arbitrary matrix with nonzero trace, and set $c=1$, thereby passing to the quotient by the $\Cp^*$-action.  Once we have fixed the trace, the complement $D(\tra(T^4))$ is determined entirely by the determinant, it is given by those matrices with determinant not equal to $\theta_1=1+\sqrt{1/2}$ or $\theta_2=1-\sqrt{1/2}$.  There is an isomorphism
\begin{align*}
\Cp\times (\Cp-\{\theta_1,\theta_2\})\rightarrow &P_1/\Cp^*\\
(x,y)\mapsto &\left(\begin{array}{cc}x&x(1-x)-y\\1&1-x\end{array}\right).
\end{align*}
Now
\[
p^4+q^4=(p+q)^4-4pq(p+q)+2(pq)^2
\]
from which it follows that the local defining function for $\tra(T^4)$ on $P_1/\Cp^*$ is $1-4y+2y^2$.  The function $2y^2-4y+1$ defines a 4-sheeted \'{e}tale cover in the usual way, and this is just the \'{e}tale cover occurring in the calculation of the motivic Milnor fibre of $\MF(x^4+y^2)$, since we form a homogeneous quartic from $2y^2-4y+1$ by introducing the variable $z$ and taking $2y^2z^2-4yz^3+z^4$, which vanishes to order 2 at infinity.  This is just the cover obtained by removing the branchpoints from the equivariant curve $C_2$ of Proposition \ref{x4y2}.  We conclude that there is an equality of absolute equivariant motives 
\begin{equation}
[D_{\{Y\}}]_{P_1}=\mathbb{L}(\mathbb{L}-1)([C_2]-(3+\alpha^2)).
\end{equation}
\item
Next let $P_2\subset D(\tra(T^4))$ be the subscheme of matrices with nonzero trace, $c=0$, and $b\neq 0$.  Again we take representatives with trace equal to 1, and again we use the free $\Cp^*$-action to assume that $b=1$.  Then there is an isomorphism
\begin{align*}
\Cp-\{\hbox{roots of }p(z)=z^4+(1-z)^4\}\rightarrow &P_2/\Cp^*\\
x\mapsto &\left(\begin{array}{cc}x&1\\0&1-x\end{array}\right).
\end{align*}
The local defining function for $\tra(T^4)$ becomes $x^4+(1-x)^4$.  This polynomial has 4 separate roots, so the 4-sheeted \'{e}tale cover it defines over $\Cp$ is the curve $C_1$, minus the branchpoints, and also minus the 4 points lying over infinity.  So
\begin{equation}
[D_{\{Y\}}]_{P_2}=(\mathbb{L}-1)([C_1]-4-(1+\alpha+\alpha^2+\alpha^3)).
\end{equation}
\item
Let $P_3\subset \mathbb{P}^3$ be the subscheme consisting of matrices with trace equal to zero, $a\neq 0$, and $c\neq 0$.  Then we can assume $a=1$, after taking an appropriate scalar multiple.  Furthermore we again have a free $\Cp^*$-action, and so we take the quotient again, and assume $c=1$.  There is an isomorphism
\begin{align*}
\Cp^*\rightarrow &P_3/\Cp^*\\
x\mapsto &\left(\begin{array}{cc} 1&x-1\\1&-1\end{array}\right).
\end{align*}
The local defining equation for $\tra(T^4)$ becomes $2x^2$.  The resulting 4-sheeted cover of $\Cp^*$ has 2 components, each a torus, and we conclude that
\begin{equation}
[D_{\{Y\}}]_{P_3}=(\mathbb{L}-1)(1+\alpha^2)(\mathbb{L}-1).
\end{equation}
\item
Let $P_4\subset \mathbb{P}^3$ be the subscheme consisting of matrices with zero trace, $a\neq 0$, $c=0$, $b\neq 0$.  We again may assume $a=1$.  $P_4$ is just a single free $\Cp^*$-orbit, and so we conclude that
\begin{equation}
[D_{\{Y\}}]_{P_4}=(\mathbb{L}-1)(1+\alpha+\alpha^2+\alpha^3).
\end{equation}
\item
Let $P_5\subset \mathbb{P}^3$ be the subscheme of diagonal matrices.  Then $P_5\cong \mathbb{P}^1$, and $V(\tra(T^4))\cap P_5$ consists of four points.  It follows that the \'{e}tale cover, restricted to $P_5$ is just the \'{e}tale cover occurring in the calculation of the motivic Milnor fibre of $x^4+y^4$, and so
\begin{equation}
[D_{\{Y\}}]_{P_5}=[C_1]-4.
\end{equation}
\item
Let $P_6\subset \mathbb{P}^3$ be the subscheme consisting of off-diagonal matrices.  Both entries $b$ and $c$ must be nonzero for the matrix to be in $D(\tra(T^4))$.  So we may assume $c=1$.  On this orbit $\Cp^*$ again doesn't act freely, so we will ignore it.  There is an isomorphism
\begin{align*}
\Cp^*\rightarrow &P_6\\
x\mapsto &\left(\begin{array}{cc}0&x\\1&0\end{array}\right).
\end{align*}
The local defining equation for $\tra(T^4)$ is $2x^2$.  So the resulting 4-sheeted \'{e}tale cover of $\Cp^*$ is given by a cover by 2 tori, and we have the equality
\begin{equation}
[D_{\{Y\}}]_{P_6}=(\mathbb{L}-1)(1+\alpha^2).
\end{equation}
\end{enumerate}
Putting all this together gives equation (\ref{6part}).  In light of Propositions \ref{x4y4} and \ref{x4y2} we also deduce equation (\ref{6parta}).

\end{proof}

\chapter{Background on $A_{\infty}$ algebra}
\label{inftystuff}
\section{$A_{\infty}$-algebra objects}
\label{inftybackground}
We start with some well-rehearsed definitions and facts regarding $A_{\infty}$-algebras, modules, and bimodules. The definitions and many of the facts contained in this chapter can be found in the excellent introduction \cite{keller-intro} -- our treatment of the sign issue is taken from here.  Further details can be found in \cite{Keller-A2}.  A comprehensive account of much of the needed material is to be found in Kenji Lef\`{e}vre-Hasegawa's thesis \cite{KLH} -- we will often refer to this for complete proofs of the background we need.  We mention also Kontsevich's paper \cite{Kontsevich95} and Kontsevich and Soibelman's notes \cite{KSnotes}, although these follow a slightly different approach to the one here (Section \ref{enrichments} is closer to these in spirit than the preceeding sections).  Finally we mention Paul Seidel's book \cite{Seidel08}, the treatment of which, modulo some signs, is basically the one given here.  In short, no claim is made for any originality here, except perhaps for some easy details -- in particular the statement that the category of twisted objects over a Calabi-Yau category is again a Calabi-Yau category.  \smallbreak
Throughout this section we discuss two constructions.  The first is a natural, abstract extension of the definitions of the categories of $A_{\infty}$-algebras, modules, and bimodules, the underlying objects of which lie in $(\vect)_{\mathbb{Z}}$, to the setting of an arbitrary monoidal $k$-linear $\mathbb{Z}$-graded category $\mathcal{C}$.  To the reader already familiar with these notions, these extensions will be as expected, and straightforward.  The second type restricts attention to $A_{\infty}$-algebras, modules, and bimodules in their more well-known setting (this corresponds to setting $\mathcal{C}=(\vect)_{\mathbb{Z}}$).  In this setting, the definitions are even more standard, but we introduce a relative notion of unitality, in order to accommodate the type of algebra we are heading towards in Chapter \ref{quiveralgebras}, which is a natural notion of $A_{\infty}$-quiver algebra, as well as to state a large class of algebras for which the construction of orientation data suggested in \cite{KS} works.
\begin{defn}
Let $\mathcal{C}$ be a monoidal $\mathbb{Z}$-graded category.  An $A_{\infty}$-algebra object in $\mathcal{C}$ is a pair $(A,(m_{A,i}))$, where $A\in\mathcal{C}$ and $m_{A,i}$ is a sequence of morphisms,
\[
m_{A,i}:A^{\otimes i}\rightarrow A,
\]
of degree $(2-i)$, for $i\in\mathbb{Z}$, $i\geq 1$, satisfying the compatibility relations
\begin{equation}
\label{struc-compatibility}
\sum_{a+b+c=n}(-1)^{a+bc}m_{A,a+c+1}(\id^{\otimes a}\otimes m_{A,b}\otimes \id^{\otimes c})=0
\end{equation}
for each $n\geq 1$.
\end{defn}
We make the assumptions on $\mathcal{C}$ in the definition above a standing assumption throughout this section.
\begin{defn}
An $A_{\infty}$-algebra object $(A,(m_{A,i}))$ in a category $\mathcal{C}$ is called \textit{minimal} if $m_{A,1}=0$.  It is called strict if $m_{A,i}=0$ for all $i\geq 3$.  An ordinary algebra object in $\mathcal{C}$ is an $A_{\infty}$-algebra object $(A,m_i)$ such that $m_i=0$ for all $i\neq 2$.
\end{defn}

\begin{rem}
If $(A,m_i)$ is an $A_{\infty}$-algebra object in $\mathcal{C}$, then the compatibility condition (\ref{struc-compatibility}), for $n=1$, ensures that $(A,m_1)$ is a differential graded object of $\mathcal{C}$.
\end{rem}
Here we follow the sign convention of \cite{keller-intro}, which in turn follows \cite{signsstart}.  As in that case, extra signs appear in the symmetric monoidal categories we consider, due to the Koszul sign rule/twisted symmetric monoidal structure.
\begin{defn}
An $A_{\infty}$-algebra object $(A,m_{A,n})$ in a category $\mathcal{C}$ is given a \textit{unital structure} by the data of a morphism $f:\idmon_{\mathcal{C}}\rightarrow A$ from the monoidal unit to $A$, such that the following diagrams commute
\[
\xymatrix{
A\ar[d]^{\id}\ar[r]&\idmon_{\mathcal{C}}\otimes A\ar[d]^{f\otimes \id}&&A\ar[d]^{\id}\ar[r]&A\ar[d]^{\id\otimes f}\otimes \idmon_{\mathcal{C}}\\
A&A\otimes A\ar[l]^(.55){m_{A,2}}&&A&A\otimes A\ar[l]^(.55){m_{A,2}},
}
\]
and such that for all $n\geq 3$, and for all $i\geq 0$, $m_{A,n}(\id^{\otimes i}\otimes f\otimes \id^{\otimes (n-i-1)})=0$.
\end{defn}
\begin{rem}
In the case $\mathcal{C}=(\vect)_{\mathbb{Z}}$, $A_{\infty}$-algebra objects are $A_{\infty}$-algebras, as presented in $\cite{keller-intro}$.  Due to the symmetric monoidal structure stipulated in Section \ref{notation}, when we evaluate the compatibility relations on actual elements of underlying graded vector spaces, the Koszul sign rule appears, as in Section 3.1 of \cite{keller-intro}.  Finally, a unital structure on an $A_{\infty}$-algebra object $(A,(m_{A,n}))$ in $(\vect)_{\mathbb{Z}}$ is a \textit{strictly} unital structure on the $A_{\infty}$-algebra $A$, in the usual ($A_{\infty}$) sense.
\end{rem}
\begin{defn}
An $A_{\infty}$-algebra object in $(\vect)_{\mathbb{Z}}$ will just be called an $A_{\infty}$-algebra.
\end{defn}

\begin{defn}
\label{algmorph}
A morphism of $A_{\infty}$-algebra objects in a category $\mathcal{C}$, from $(A,(m_{A,i}))$ to $(B,(m_{B,i}))$ is defined as a series of morphisms
\[
f_n:A^{\otimes n}\rightarrow B,
\]
of degree $(1-n)$, satisfying the following compatibility conditions, for $n\geq 1$
\begin{equation}
\label{morph-compatibility}
\sum_{a+b+c=n} (-1)^{a+bc}f_{a+1+c}(\id^{\otimes a} \otimes m_{A,b}\otimes \id^{\otimes c})=\sum_{\substack{r\leq n\\ i_1+\ldots+i_r=n}}(-1)^{\bigstar}m_{B,r}(f_{i_1}\otimes\ldots\otimes f_{i_r})
\end{equation}
where here $\bigstar=(r-1)(i_1-1)+(r-2)(i_2-1)+\ldots+(i_{r-1}-1)$.  A morphism is called \textit{strict} if $f_m=0$ for $m\geq 2$.  A morphism of \textit{unital} $A_{\infty}$-algebra objects $f:(A,g)\rightarrow (B,h)$ in a category $\mathcal{C}$ is a series of maps, as above, such that the following diagram commutes
\[
\xymatrix{
\id_{\mathcal{C}}\ar[r]^{g}\ar[rd]^{h}&A\ar[d]^{f_1}\\
&B,
}
\]
and such that, for all $n\geq 2$, and all $i\geq 0$, $f_{n}(\id^{\otimes i}\otimes g\otimes \id^{\otimes (n-i-1)})=0$.
\end{defn}
In the case $\mathcal{C}=(\vect)_{\mathbb{Z}}$ this definition of morphism is the usual definition of an $A_{\infty}$-algebra morphism, as found in, for example, \cite{keller-intro}, \cite{KLH}, \cite{Seidel08}, and the definition of a unital morphism is the definition of a strictly unital $A_{\infty}$-algebra morphism.
\begin{examp}
\label{idmorphism}
Let $A$ be an $A_{\infty}$-algebra object in $\mathcal{C}$.  Then we define the \textit{identity morphism} $\id_A:A\rightarrow A$, as a morphism of $A_{\infty}$-algebra objects, by the following series of morphisms in $\mathcal{C}$ (we use the same symbol for the usual identity morphism of $A$ as an element of $\mathcal{C}$):
\begin{align*}
\id_{A,1}=&\id_{A}\\
\id_{A,i}=& 0\hbox{ for }i\geq 2.
\end{align*}
\end{examp}
\begin{defn}
Let $(A,(m_{A,i})),(B,(m_{B,j})),(C,(m_{C,k}))$ be $A_{\infty}$-objects in $\mathcal{C}$, and let $f:(A,(m_{A,i}))\rightarrow (B,(m_{B,j}))$, $g:(B,(m_{B,j}))\rightarrow (C,(m_{C,k}))$ be morphisms between them.  Then we define the composition $g\circ f$ by setting 
\[
(g\circ f)_n=\sum_{\substack{r\leq n\\ i_1+\ldots+i_r=n}}(-1)^{\bigstar}g_{r}(f_{i_1}\otimes\ldots\otimes f_{i_r})
\]
where $\bigstar$ is as in Definition \ref{algmorph}.
\end{defn}
\begin{rem}
The above composition rule is associative, and so we obtain a $k$-linear category, with objects the $A_{\infty}$-objects of $\mathcal{C}$, and with morphisms as defined above.  The identity morphisms are as defined in Example \ref{idmorphism}.
\end{rem}
\smallbreak
In all the cases we consider, the graded category $\mathcal{C}$ is the graded category associated to an Abelian category $\mathcal{A}$.  As such there is a functor $\Ho^{\bullet}(-)$ from the category of objects equipped with differential in $\mathcal{C}$, to $\mathcal{A}_{\mathbb{Z}}=\mathcal{C}$.  One can check that the $A_{\infty}$-structure on $A$ induces a graded algebra structure on $\Ho^{\bullet}(A)$, and that a morphism of $A_{\infty}$-objects $f:A\rightarrow B$ induces a morphism of graded algebra objects $\Ho^{\bullet}(f):\Ho^{\bullet}(A)\rightarrow \Ho^{\bullet}(B)$.  The first part of this verification is to see that $f_1$ is a morphism of objects with differential.
\begin{defn}
A morphism $(f_n,n\geq 1):A\rightarrow B$ of $A_{\infty}$-algebra objects in $\mathcal{C}$ is a \textit{quasi-isomorphism} if $\Ho^{\bullet}(f)$ is an isomorphism of algebra objects in $\mathcal{C}$.  
\end{defn}
\begin{rem}
This is equivalent to $\Ho^{\bullet}(f_1)$ being an isomorphism of objects in $\mathcal{C}$.
\end{rem}

\begin{defn}
\label{unitalalg}
A triple $(A,S,f)$ gives $A$ the structure of an $A_{\infty}$-algebra \textit{over} the $A_{\infty}$-algebra $S$ if $f$ is a strict morphism of $A_{\infty}$-algebras $f:S\rightarrow A$ such that $f_1$ is an injection.
\end{defn}
Normally from the context the morphism $f$ is clear, in which case we call $A$ simply an $A_{\infty}$-algebra over $S$.
\begin{defn}
\label{unitalover}
A triple $(A,S,l)$ gives the $A_{\infty}$-algebra $A$ the structure of an $S$-unital algebra, where $S$ is an $A_{\infty}$-algebra, if it gives $A$ the structure of an $A_{\infty}$-algebra over $S$, and for all $n\geq 3$, $i\geq 0$, $m_{A,n}(\id^{\otimes i}\otimes l\otimes \id^{\otimes (n-i-1)})=0$.  A triple $((A,g),(S,h),l)$ gives the unital $A_{\infty}$-algebra $(A,g)$ the structure of an $(S,h)$-unital algebra, where $(S,h)$ is a unital $A_{\infty}$-algebra, if it gives $A$ the structure of an $S$-unital algebra, and $l$ is a morphism of unital $A_{\infty}$-algebras.
\end{defn}
\begin{defn}
\label{Saug}
Let $(A,S,l)$ give $A$ the structure of an $S$-unital algebra.  Then a strict morphism $p:A\rightarrow S$ gives $A$ the structure of an \textit{augmented} algebra over $S$ if $p\circ l=\id_S$.
\end{defn}

\begin{defn}
A morphism (respectively, unital morphism) $f:(A,S,g)\rightarrow (B,S,h)$ of $A_{\infty}$-algebras over $S$ is a morphism (respectively, unital morphism) of $A_{\infty}$-algebras $f:A\rightarrow B$ such that the following diagram commutes
\[
\xymatrix{
S\ar[r]^{g}\ar[rd]^{h}&A\ar[d]^{f}\\
&B.
}
\]
\end{defn}
Let $A$ be a unital $A_{\infty}$-algebra object in a category $\mathcal{C}$.  The monoidal unit $\idmon_{\mathcal{C}}$ of $\mathcal{C}$ possesses the structure of a strict $A_{\infty}$-algebra object in $\mathcal{C}$, and the unital morphism for $A$ becomes a strict morphism $\textit{u}$ of $A_{\infty}$-algebra objects in $\mathcal{C}$.  
\begin{defn}
\label{Caug}
Let $A$ be a unital $A_{\infty}$-algebra object in $\mathcal{C}$.  An augmentation of $A$ is a strict morphism of $A_{\infty}$-algebra objects $p:A\rightarrow \idmon_{\mathcal{C}}$ satisfying $p\circ u=\id_{\idmon_{\mathcal{C}}}$.
\end{defn}
Let $S$ be an arbitrary ordinary unital graded algebra.  Let $S\biModn S$ be the category of ordinary $S$-bimodules.  Then $S$ is the monoidal unit for $S\biModn S$, and the algebra structure on the monoidal unit is just the algebra structure on $S$.
\begin{prop}
\label{yeti}
There is an equivalence of categories between unital $A_{\infty}$-algebra objects in $S\biModn S$ and $S$-unital $A_{\infty}$-algebras.  There is also an equivalence between augmented $A_{\infty}$-algebra objects in $S\biModn S$ and augmented $A_{\infty}$-algebras over $S$.
\end{prop}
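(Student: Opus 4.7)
The plan is to construct a pair of mutually inverse functors by, in one direction, forgetting $S$-bilinearity of the structure maps, and in the other, inducing it from the unit inclusion. Forward: given a unital $A_\infty$-algebra object $(A,(m_{A,n}),u)$ in $S\biModn S$, view $A$ as a graded $k$-vector space, define $k$-linear operations $\tilde m_{A,n}: A^{\otimes n} \to A$ by precomposing $m_{A,n}$ with the canonical projection $A^{\otimes n} \to A^{\otimes_S n}$, and convert the bimodule unit $u : S \to A$ into a strict $A_\infty$-algebra morphism $l$ with $l_1 = u$ and $l_{\geq 2}=0$. Relation (\ref{struc-compatibility}) passes verbatim from $\otimes_S$ to $\otimes$, $l_1$ is injective because $u$ is a unit splitting, and the clause in the definition of a unital $A_\infty$-algebra object demanding that higher products vanish on insertions of $u$ is exactly the extra condition of Definition \ref{unitalover}, so $(A,S,l)$ is an $S$-unital $A_\infty$-algebra.

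Backward: given $(A,S,l)$ an $S$-unital $A_\infty$-algebra, endow $A$ with the bimodule structure $s \cdot a := m_{A,2}(l_1(s),a)$ and $a \cdot s := m_{A,2}(a,l_1(s))$; associativity and unitality of this action follow from (\ref{struc-compatibility}) applied to inputs with one or two insertions of $l_1$, together with the strictness of $l$ (so $m_{A,1}\circ l_1 = 0$) and the $n \geq 3$ vanishing condition of Definition \ref{unitalover}. The main technical step is then to show that each $m_{A,n}$ descends through $A^{\otimes n} \to A^{\otimes_S n}$: applying (\ref{struc-compatibility}) at length $n+1$ to an input tuple with $l_1(s)$ interposed between the $i$-th and $(i+1)$-th slot, every summand involving $m_{A,b}$ with $b \geq 3$ or $m_{A,1}$ applied to $l_1(s)$ vanishes by hypothesis, leaving only $m_{A,n}(\ldots,m_{A,2}(a_i,l_1(s)),a_{i+1},\ldots)$ and its counterpart $\pm m_{A,n}(\ldots,a_i,m_{A,2}(l_1(s),a_{i+1}),\ldots)$, which must therefore agree up to the expected Koszul sign. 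This is precisely $S$-bilinearity.

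The two constructions are inverse on objects. Extending to morphisms uses the same descent argument applied to (\ref{morph-compatibility}) in place of (\ref{struc-compatibility}): a unital morphism in $S\biModn S$ is a sequence of $S$-bilinear maps satisfying (\ref{morph-compatibility}), which forgets to a unital morphism of $S$-unital $A_\infty$-algebras, and conversely any such morphism descends through the relative tensor products. For the augmented case, an augmentation in $S\biModn S$ is by Definition \ref{Caug} a strict bimodule morphism $p : A \to S$ with $p\circ u = \id_S$, while Definition \ref{Saug} asks for a strict $k$-linear algebra morphism with the same property; any such $p$ is automatically a bimodule map since the bimodule action on either side is given by $m_{A,2}$ on images of $l_1$ and $p$ is strict and multiplicative. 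The one piece of genuine work is the descent step for the $m_{A,n}$; the rest is an unpacking of definitions, assisted by the observation that the Koszul signs on both sides descend from the same twisted symmetric monoidal structure on $(\vect)_{\mathbb{Z}}$ restricted to $S\biModn S$.
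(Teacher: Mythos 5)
Your proposal is correct and follows essentially the same route as the paper: the heart of both arguments is the descent of the $k$-linear $m_{A,n}$ through $A^{\otimes n}\rightarrow A^{\otimes_S n}$, obtained by applying (\ref{struc-compatibility}) (and (\ref{morph-compatibility}) for morphisms) to a tuple with an element of $S$ interposed, where strict $S$-unitality and $m_{A,1}\circ l=0$ kill every term except the two $m_{A,2}$-absorption terms. The only difference is cosmetic — you present the forgetful direction first, while the paper begins with the descent direction — and your treatment of the augmented case matches the paper's.
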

\begin{proof}
Let $A$ be a $S$-unital $A_{\infty}$-algebra.  We define a unital $A_{\infty}$-object $A_S$ in $S\biModn S$ as follows.  First let the underlying bimodule of $A_S$ be $A$.  Let
\[
(a_1,\ldots,a_n)\in A_S^{\otimes_S n},
\]
where we write $\otimes_S$ to remind the reader that this tensor product is in the category of $S$-bimodules.  Then we can lift $(a_1,\ldots,a_n)$ to some $(\tilde{a}_1,\ldots,\tilde{a}_n)\in A^{\otimes n}$.  We evaluate $m_A$ on this $n$-tuple, and let this be our element $m_{A_S,n}(a_1,\ldots,a_n)$.  To show that this is well defined, apply the compatibility equation (\ref{struc-compatibility}) to the $(n+1)$-tuple
\[
(\tilde{a}_1,\ldots,\tilde{a}_i,s,\tilde{a}_{i+1},\ldots,\tilde{a}_n).
\]
By strict unitality this becomes 
\[
(-1)^{i-1}m_{A,n}(\tilde{a}_1,\ldots,\tilde{a}_i s,\tilde{a}_{i+1},\ldots,\tilde{a}_n)+(-1)^i m_{A,n}(\tilde{a}_1,\ldots,\tilde{a}_i,s\tilde{a}_{i+1},\ldots,\tilde{a}_n)=0.
\]
Given a $S$-unital morphism $f:A\rightarrow B$ of strictly $S$-unital $A_{\infty}$-algebras, we define $f_S:A_S\rightarrow B_S$ similarly, by applying $f$ to $(\tilde{a}_1,\ldots,\tilde{a}_n)$.  Then the compatability equation (\ref{morph-compatibility}), applied to 
\[
(\tilde{a}_1,\ldots,\tilde{a}_{i},s,\tilde{a}_{i+1},\ldots,\tilde{a}_n)
\]
yields
\[
\begin{array}{c}
\pm\left(f_n(\tilde{a}_1,\ldots,\tilde{a}_{i}s,\tilde{a}_{i+1},\ldots,\tilde{a}_n)-f_n(\tilde{a}_1,\ldots,\tilde{a}_i,s\tilde{a}_{i+1},\ldots,\tilde{a}_n)\right)=\\ \sum_{\substack{p_1+\ldots+p_i=i\\q_1+\ldots+q_j=n-i}}\limits\pm m_{i+j+1}(f_{p_1}(\tilde{a}_1,\ldots,\tilde{a}_{p_1}),\ldots,f_{p_i}(\tilde{a}_{i-p_i+1},\ldots,\tilde{a}_{i}),f_1(s),\\f_{q_1}(\tilde{a}_{i+1},\ldots,\tilde{a}_{i+q_1}),\ldots,f_{q_j}(\tilde{a}_{n-q_j+1},\ldots,\tilde{a}_n))=0.
\end{array}
\]
In the opposite direction, let $A_S$ be a unital $A_{\infty}$-object of $S\biModn S$.  Then we define the maps $m_{A,n}$ of an $A_{\infty}$-algebra $A$ by setting 
\[
m_{A,n}(a_1,\ldots,a_n)=m_{A_S,n}(a_1,\ldots,a_n),
\]
and we deal with morphisms similarly.  The statement regarding augmentations is clear.
\end{proof}

If $M$ is an object of a monoidal category, $\mathcal{C}$, then there is a natural notion of the free (nonunital) $A_{\infty}$-algebra object generated by $M$ (we assume that $\mathcal{C}$ admits infinite direct sums).  This is the analogue of the tensor algebra (without unit) construction for a bimodule over an algebra.  It is constructed by taking direct sums of copies of $M^{\otimes i}$, for various $i$, labelled by rooted planar trees with $i$ branches, with conditions imposed by the compatibility conditions (see e.g. \cite{KSnotes}).  We denote this $A_{\infty}$-algebra object
\[
\Free_{\infty,\nonu}(M).
\]
\begin{defn}
\label{fingen}
Say a forgetful functor 
\[
\forget:\mathcal{C}\rightarrow (\vect)_{\mathbb{Z}}
\]
is understood.  We say that an $A_{\infty}$-algebra object $A$ is \textit{finitely generated} if there is a $M\in \mathcal{C}$ and a strict morphism $f:\Free_{\infty,\nonu}(M)\rightarrow A$ such that $\forget(M)$ is finite-dimensional, and $\forget(f)$ is surjective.
\end{defn}
\begin{rem}
Formally, $\Free_{\infty,\nonu}(M)$ is the free module over the $A_{\infty}$-operad in $\mathcal{C}$ generated by $M$, as defined in e.g. \cite{GKop}.
\end{rem}
Similarly, for $M$, as above, an object of some monoidal category $\mathcal{C}$ which admits infinite direct \textit{products}, we define the $A_{\infty}$-algebra object of $\mathcal{C}$ given by considering direct products of $M^{\otimes i}$ labelled by rooted planar trees
\[
\Free_{\infty,\nonu,\mathrm{formal}}(M).
\]
\section{$A_{\infty}$-modules and bimodules}
Assume as before that $\mathcal{C}$ is a graded monoidal category.  Where it is needed in the following section, let $\mathcal{D}_l$ be a graded category, and assume that we are given a bifunctor
\[
\otimes: \mathcal{C}\times \mathcal{D}_l\rightarrow \mathcal{D}_l,
\]
such that the associativity diagram
\[
\xymatrix{
\mathcal{C}\times\mathcal{C}\times\mathcal{D}_l\ar[d]_{\otimes \times \id}\ar[r]^(.57){\id\times \otimes}& \mathcal{C}\times \mathcal{D}_l\ar[d]\\
\mathcal{C}\times\mathcal{D}_l\ar[r] &\mathcal{C}
}
\]
commutes up to natural isomorphism, the obvious unitality axiom is satisfied, and the usual pentagon axiom is satisfied.  We assume we are given a natural isomorphism $\epsilon:\idmon_{\mathcal{C}}\otimes -\rightarrow -$.  Where it is required, we let $\mathcal{D}_r$ be another graded category, with a bifunctor
\[
\otimes: \mathcal{D}_r\times\mathcal{C}\rightarrow \mathcal{D}_r,
\]
subject to analagous conditions.
\begin{defn}
\label{moddef}
Let $A$ be an $A_{\infty}$-algebra object in a graded monoidal category $\mathcal{C}$.  A left $A_{\infty}$-module over $A$ is a pair $(M,(m_{M,i}))$ consisting of $M\in \ob(\mathcal{D}_l)$, and a series of maps
\[
m_{M,i}:A^{\otimes (i-1)}\otimes M\rightarrow M
\]
of degree $(2-i)$ satisfying the compatibility relations
\begin{equation}
\label{modcompatability}
\sum_{\substack{a+b+c=n\\c\geq 1}} (-1)^{a+bc}m_{M,a+1+c}(\id^{\otimes a}\otimes m_{A,b}\otimes \id^{\otimes c})+\sum_{a+b=n}(-1)^{a} m_{M,a+1}(\id^{\otimes a}\otimes m_{M,b})=0
\end{equation}
for every $n\geq 1$.  A \textit{right} $A$-module is defined similarly.
\end{defn}
As in the case of $A_{\infty}$-algebra objects, $M$ acquires the structure of a differential graded object of $\mathcal{D}_l$ under $m_{M,1}$.
\begin{defn}
\label{yot}
Let $M$ be a module over the unital $A_{\infty}$-object $(A,g)$.  Then we say $M$ is unital if the following diagram commutes 
\[
\xymatrix{
M\ar[d]_{\id}\ar[r]^(.40){\epsilon_M^{-1}}&\idmon_{\mathcal{C}}\otimes M\ar[d]^{g\otimes \id}\\
M&A\otimes M.\ar[l]_(.59){m_{M,2}}
}
\]
\end{defn}
If $\mathcal{C}=(\vect)_{\mathbb{Z}}$, then by a left module over $A$ we will always mean an $A_{\infty}$-module object of $\mathcal{D}_l=(\vect)_{\mathbb{Z}}$, with $\mathcal{C}\times\mathcal{D}_l\rightarrow \mathcal{D}_l$ provided by the tensor product on $(\vect)_{\mathbb{Z}}$.
\begin{defn}
\label{Sunital}
Let $(A,S,g)$ be an $A_{\infty}$-algebra over $S$.  Then an $S$-unital $A$-module $(M,(m_{M,i}))$ is defined to be an $A$-module satisfying the condition that for all $n\geq 3$ and $n-1 \geq i \geq 1$, $m_{M,n}(\id^{\otimes (n-i-1)}\otimes g\otimes \id^{\otimes i})=0$.  If $((A,g),(S,h),l)$ is a unital $A_{\infty}$-algebra over $(S,h)$, then we define $S$-unital modules to be those that are also unital in the sense of Definition \ref{yot}.
\end{defn}

\begin{defn}
Let $A$ be an $A_{\infty}$-algebra object of $\mathcal{C}$.  A morphism $f:(M,(m_{M,i}))\rightarrow (N,(m_{N,j}))$ of left $A$-modules is defined by a series of linear maps, for $n\geq 1$,
\[
f_n:A^{\otimes (n-1)}\otimes M\rightarrow N
\]
of degree $(1-n)$, satisfying the compatibility relations
\begin{equation}
\label{modmorphcompatibility}
\sum_{a+b+c=n}(-1)^{a+bc}f_{a+1+c}(\id^{\otimes a}\otimes m_b\otimes \id^{\otimes c})=\sum_{a+b=n,b\geq 1}(-1)^{a}m_{N,a+1}(\id^{\otimes a}\otimes f_{b}).
\end{equation}
Here we adopt the convention that $m_i$, where $i\in\mathbb{N}$, is to be taken to mean $m_{K,i}$ if applied to a tensor product of the form $A^{\otimes (i-1)}\otimes K$ for $A$ an $A_{\infty}$-algebra object, $K$ a module over it, and is to be taken to mean $m_{A,i}$ if applied to a tensor product of the form $A^{\otimes i}$.
\end{defn}
\begin{rem}
\label{lazyconvention}
In order to reduce clutter we will adopt this convention wherever appropriate, i.e. wherever only one possible multiplication on a given tensor product has been defined.
\end{rem}
\begin{defn}
\label{interunit}
A morphism $f:M\rightarrow N$ of $S$-unital modules over a $S$-unital $A_{\infty}$-algebra $(A,S,l)$ is a morphism of $A$-modules satisfying the condition that for all $n\geq 2$ and all $n-1 \geq i \geq 1$, $f_n(\id^{\otimes(n-i-1)} \otimes l\otimes \id^{\otimes i})=0$.
\end{defn}
Let $(A,S,l)$ be a $S$-unital algebra, for $S$ an ordinary algebra, and let $M$ be an $S$-unital left $A$-module.  Then $M$ is an ordinary left $S$-module.  There is a natural bifunctor
\[
\otimes_S:S\biModn S\times S\lModn \rightarrow S\lModn,
\]
where $S\lModn$ is the category of ordinary left $S$-modules, and $M$ is a unital left module over $A_S$, the unital $A_{\infty}$-algebra object of $S\biModn S$ formed by considering $A$ as an $S$-bimodule.  The following is proved in the same way as Proposition \ref{yeti}.
\begin{prop}
There is an equivalence of categories between the category of unital left/right modules over the unital $A_{\infty}$-algebra object $A_S$ of $S\biModn S$, and $S$-unital left/right modules over the $S$-unital $A_{\infty}$-algebra $(A,S,l)$.
\end{prop}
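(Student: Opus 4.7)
The plan is to mimic the proof of Proposition \ref{yeti}, constructing functors in both directions on the level of objects and morphisms and verifying they are mutually inverse. I expect the right-module version to be entirely symmetric to the left-module case, so I will concentrate on the left-module statement.

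Given an $S$-unital left $A$-module $(M,(m_{M,i}))$, I will construct a unital left $A_S$-module $M_S$ whose underlying left $S$-module is $M$ with the action coming from $l$ and $m_{M,2}$. For an elementary tensor $(a_1\otimes_S\cdots\otimes_S a_{n-1}\otimes_S x)\in A_S^{\otimes_S(n-1)}\otimes_S M$, I lift to $(\tilde a_1,\ldots,\tilde a_{n-1},\tilde x)\in A^{\otimes(n-1)}\otimes M$ and set
\[
m_{M_S,n}(a_1\otimes_S\cdots\otimes_S a_{n-1}\otimes_S x):=m_{M,n}(\tilde a_1,\ldots,\tilde a_{n-1},\tilde x).
\]
Well-definedness is the key step, and means showing that inserting $l(s)$ in any position (between two $\tilde a_i$, between $\tilde a_{n-1}$ and $\tilde x$, or absorbed into $\tilde x$) gives the same answer as the corresponding movement of $s$ across the tensor bar. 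For this I apply the module compatibility (\ref{modcompatability}) to the $(n+1)$-tuple with $l(s)$ inserted; the $S$-unital hypotheses on $(A,S,l)$ (Definition \ref{unitalover}) and on $M$ (Definition \ref{Sunital}) annihilate every term in which $l(s)$ appears inside an operation $m_{A,b}$ or $m_{M,b}$ with $b\geq 3$. The only surviving terms are those arising from $m_{A,2}(l(s),-)$, $m_{A,2}(-,l(s))$, and $m_{M,2}(l(s),-)$, which by unitality of $A$ and $M$ yield exactly the Leibniz-type identity
\[
m_{M,n}(\ldots,\tilde a_i\cdot s,\tilde a_{i+1},\ldots)=m_{M,n}(\ldots,\tilde a_i,s\cdot\tilde a_{i+1},\ldots)
\]
(with the analogous identities at the ends), which is the defining relation of $\otimes_S$. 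The unitality diagram of Definition \ref{yot} for $M_S$ is then the $n=2$ specialisation of the same calculation.

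Morphisms of $S$-unital modules (Definition \ref{interunit}) are transferred by the analogous lifting rule $f_{S,n}(a_1\otimes_S\cdots\otimes_S a_{n-1}\otimes_S x):=f_n(\tilde a_1,\ldots,\tilde a_{n-1},\tilde x)$, with well-definedness proved by applying the morphism compatibility (\ref{modmorphcompatibility}) to an $(n+1)$-tuple with $l(s)$ inserted and using Definition \ref{interunit} together with $S$-unitality to kill all terms involving $f_k$ or $m_k$ of arity $\geq 3$ touching $l(s)$. Functoriality of $M\mapsto M_S$ then follows directly from the respective composition formulas.

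In the reverse direction, given a unital $A_S$-module $(M_S,(m_{M_S,i}))$, I define an $S$-unital $A$-module structure on $M$ by precomposing $m_{M_S,n}$ with the canonical surjection
\[
A^{\otimes(n-1)}\otimes M\twoheadrightarrow A_S^{\otimes_S(n-1)}\otimes_S M.
\]
The module relations (\ref{modcompatability}) follow from those for $A_S$ on $M_S$, and the $S$-unital vanishing of Definition \ref{Sunital} is automatic because $l(s)$ lies in the image of the kernel of the surjection for $n\geq 3$ (either it is absorbed into an adjacent $S$-action, or it appears inside an $m_{M_S,n}$ with $n\geq 3$, where it vanishes by unitality of $M_S$). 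The main obstacle is again bookkeeping: one must check that the two constructions are mutually inverse on the nose, which reduces to the observation that the lift used to define $m_{M_S,n}$ is unique up to the relations that $m_{M,n}$ respects by the well-definedness argument above. Morphisms are handled in the same manner, completing the equivalence. The augmentation statement analogous to the one in Proposition \ref{yeti} follows formally, since augmentations are strict morphisms and the correspondence preserves strictness.
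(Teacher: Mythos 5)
Your proof is correct and is essentially the paper's argument: the paper simply states that this proposition "is proved in the same way as Proposition \ref{yeti}", and your construction — defining $m_{M_S,n}$ via lifts of elementary tensors, proving well-definedness by applying the compatibility relation to a tuple with $l(s)$ inserted so that $S$-unitality kills all higher terms and leaves the Leibniz-type identity, and treating morphisms and the inverse functor analogously — is exactly that transposition of the algebra case to modules.
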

\begin{defn}
Let $f:(M,(m_{M,i}))\rightarrow (N,(m_{N,i}))$ and $g:(N,(m_{N,i}))\rightarrow (K,(m_{K,i}))$ be a pair of $A$-module morphisms.  Then we define the composition $g\circ f:(M,(m_{M,i}))\rightarrow (K,(m_{K,l}))$ by defining the following series of morphisms, for $n\geq 1$:
\[
(g\circ f)_n=\sum_{a+b=n, n\geq b \geq 1}(-1)^{(b+1)a}g_{a+1}(\id^{\otimes a}\otimes f_{b}).
\]
\end{defn}
If $A$ is a unital $A_{\infty}$-algebra object, then we define the category $A\lMod$ to be the category of unital left $A$-modules with morphisms as in Definition \ref{interunit}.  We denote by $A\lModnu$ the category of nonunital left $A$-modules.  If $A$ has the structure of an $S$-unital $A_{\infty}$-algebra for some $A_{\infty}$-algebra $S$ then we denote by $A\lMod_{S}$ the category of $S$-unital left $A$-modules, with morphisms given by $S$-unital $A$-module morphisms.  We define the categories of unital, nonunital and $S$-unital right $A$-modules similarly, and denote them by $\rMod A$, $\rModnu A$ and $\mathsf{Mod}_S\mathrm{-}A$ respectively.  The full subcategories of all these categories with objects those modules satisfying the condition that the total homology of the object $M$ under $m_{M,1}$ is finite-dimensional (assuming we have some forgetful functor $\forget$ from $\mathcal{D}_l$ or $\mathcal{D}_r$ to $(\vect)_{\mathbb{Z}}$) will be denoted with the lowercase: $\amod$.  We will mostly be interested in these subcategories. \smallbreak

\begin{defn}
Let $A$ be an $A_{\infty}$-object in $\mathcal{C}$.  Let $M,N\in A\lModbs$, where $\blacksquare=nu,S,-$.  Let $f,g:M\rightarrow N$ be morphisms between them.  A \textit{homotopy} between them is a series of morphisms
\[
h_i:A^{\otimes (i-1)}\otimes M\rightarrow N
\]
of degree $-i$, such that 
\begin{eqnarray*}
&\sum_{a+b+c=n}(-1)^{a+bc}h_{a+1+c}(\id^{\otimes a}\otimes m_b\otimes \id^{\otimes c})-\\&\sum_{a+b=n,b\geq 1}(-1)^{a}m_{N,a+1}(\id^{\otimes a}\otimes h_{b})=f_n-g_n.
\end{eqnarray*}
We define homotopies of right modules similarly.
\end{defn}
The most useful facts about modules over an $A_{\infty}$-algebra are the following:
\begin{lem}[Homological perturbation theory]
Let $\mathcal{C}$ and $\mathcal{D}_l$ be the graded categories associated to Abelian categories, with $D_l$ semisimple.  Let $A$ be an $A_{\infty}$-object in $\CC$, and let left $A_{\infty}$-modules over it live in $\mathcal{D}_l$.  Let $f:M\rightarrow N$ be a morphism in $A\lModbs$, where again $\blacksquare=S,nu,-$.  Then if $f$ is a quasi-isomorphism, there is a $g:N\rightarrow M$ with $f\circ g$ and $g\circ f$ homotopic to the identity morphisms on $N$ and $M$ respectively.
\end{lem}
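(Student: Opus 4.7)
The plan is to reduce to the case of minimal $A_{\infty}$-modules (those with $m_{1}=0$) by the standard homotopy-transfer trick, and then invert the quasi-isomorphism componentwise using the fact that in the minimal case its first component is already an isomorphism of the underlying graded objects. The semisimplicity of $\mathcal{D}_{l}$ is what makes the transfer step possible.

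First I would establish a minimal model theorem for the categories $A\lModbs$: every module $M$ is $A_{\infty}$-homotopy equivalent to a minimal module $M^{\min}$ with $m_{M^{\min},1}=0$ and underlying object $H^{\bullet}(M)$. Using that $\mathcal{D}_{l}$ is semisimple, split the graded morphism $m_{M,1}$ to decompose $M\cong H^{\bullet}(M)\oplus C$ where $C$ is a direct sum of acyclic pairs on which $m_{1}$ is the identity. Transfer the $A_{\infty}$-structure to $H^{\bullet}(M)$ by the standard summation over planar rooted trees, producing operations $m_{M^{\min},n}$ for which the $A_{\infty}$-relations hold automatically, and simultaneously producing strict $A_{\infty}$-quasi-isomorphisms $i:M^{\min}\to M$ and $p:M\to M^{\min}$ together with a homotopy $h$ witnessing $ip\sim\id_{M}$ (and with $pi=\id_{M^{\min}}$). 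In the unital variants this entire construction is carried out inside $\CC$ (for $\blacksquare=-$) or $S\biModn S$ (for $\blacksquare=S$), as permitted by Proposition \ref{yeti} and its module analogue, so that every splitting and every transferred operation automatically respects the relevant unit.

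Second, transferring $f:M\to N$ across the minimal models yields a quasi-isomorphism $\bar f:M^{\min}\to N^{\min}$ whose first component $\bar f_{1}=H^{\bullet}(f)$ is an isomorphism of the underlying graded objects. I would construct a homotopy inverse $\bar g:N^{\min}\to M^{\min}$ by induction on $n$: set $\bar g_{1}:=\bar f_{1}^{-1}$, and, having chosen $\bar g_{1},\dots,\bar g_{n-1}$ so that the morphism compatibility relation (\ref{modmorphcompatibility}) is satisfied up to order $n-1$, solve (\ref{modmorphcompatibility}) at order $n$ for $\bar g_{n}$; this is a linear equation whose leading operator involves the invertible $\bar g_{1}$, so a solution exists at every stage. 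Parallel inductions then produce $A_{\infty}$-homotopies $\bar g\bar f\sim\id_{M^{\min}}$ and $\bar f\bar g\sim\id_{N^{\min}}$, the obstruction at each stage being absorbed by the previously constructed components and ultimately relying on the invertibility of $\bar f_{1}$.

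Finally, setting $g:=i_{M}\,\bar g\,p_{N}$ and composing the minimal-model homotopies $i_{M}p_{M}\sim\id_{M}$, $i_{N}p_{N}\sim\id_{N}$ with the homotopies $\bar g\bar f\sim\id$ and $\bar f\bar g\sim\id$ gives $g\circ f\sim\id_{M}$ and $f\circ g\sim\id_{N}$. The main obstacle is bookkeeping: both the homotopy-transfer formulae and the obstruction-vanishing arguments in the inductive construction of $\bar g$ and its homotopies involve intricate sign tracking over planar trees, and in the unital cases each step must additionally be verified to respect the condition that $\bar g_{n}$ and the constructed homotopies vanish whenever a unit argument is inserted. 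The latter is automatic once the entire construction is performed inside $S\biModn S$, which is where the reduction in Proposition \ref{yeti} is doing its real work.
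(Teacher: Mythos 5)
Your proposal is correct and follows essentially the same route the paper takes (which itself only sketches the argument and defers to \cite{KLH}): use semisimplicity of $\mathcal{D}_l$ to split off a contractible summand and transfer the structure to a minimal model, then invert the quasi-isomorphism there using that its first component is an isomorphism, the paper phrasing the transfer step in the comodule language of Section \ref{enrichments} rather than via explicit tree formulas. The only cosmetic difference is that the paper (and \cite{KLH}) would note that a quasi-isomorphism of minimal modules is in fact a strict $A_{\infty}$-isomorphism, solved recursively from the composition identity $\bar g\circ\bar f=\id$, which is marginally cleaner than solving the morphism compatibility relations directly, but this does not change the substance of your argument.
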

\begin{lem}[Minimal model lemma]
Let $A$ be as above, and let $M$ be a module over $A$.  Then there is a minimal module $N$ (i.e. $m_1(N)=0$), and a quasi-isomorphism
\[
f:N\rightarrow M.
\]
\end{lem}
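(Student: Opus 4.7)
The plan is to apply the standard transfer (Kadeishvili) construction, using the previous lemma (homological perturbation theory) and the semisimplicity of $\mathcal{D}_l$. First, I would exploit semisimplicity to split the differential. Setting $B=\Image(m_{M,1})$ and $Z=\Ker(m_{M,1})$, semisimplicity of $\mathcal{D}_l$ produces an object $N \in \mathcal{D}_l$ together with an internal direct sum decomposition $M \cong N \oplus B \oplus C$ satisfying $Z = N \oplus B$ and with $m_{M,1}$ restricting to an isomorphism $C \xrightarrow{\sim} B$. Let $i_1 \colon N \hookrightarrow M$ and $p_1 \colon M \twoheadrightarrow N$ be the inclusion and projection, and $h \colon M \to M$ the degree $-1$ endomorphism equal to $(m_{M,1}|_C)^{-1}$ on $B$ and zero on $N \oplus C$. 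By construction we obtain the SDR identities
\[
p_1 i_1 = \id_N,\quad m_{M,1} i_1 = 0,\quad p_1 m_{M,1}=0,\quad \id_M - i_1 p_1 = m_{M,1} h + h m_{M,1},
\]
and, after a standard side-condition adjustment of $h$, also $h^2 = 0$, $h i_1 = 0$, $p_1 h = 0$.

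Next I would transfer the $A_\infty$-structure along $i_1, p_1, h$ via the usual tree sum. Set $m_{N,1} = 0$ and, for $n \geq 2$, define
\[
m_{N,n} = \sum_{T} \pm \, p_1 \circ \mu_T \circ \bigl(\id_A^{\otimes(n-1)} \otimes i_1\bigr),
\]
where $T$ ranges over planar rooted trees with $n-1$ algebra leaves and one module leaf, internal vertices decorated by $m_{A,k}$ (algebra vertices) and $m_{M,k}$ (the unique module-type branch toward the root), and internal edges labeled by $h$; the edge into the root carries $p_1$. In parallel, define the morphism components $f_n \colon A^{\otimes(n-1)} \otimes N \to M$ by the identical tree sum but with the root edge relabeled $\id_M$, and with $f_1 := i_1$. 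Strict unitality (if applicable) follows from the fact that inserting a unit on any leaf is killed by $m_{M,\geq 3}$ or by $h$ upstream.

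I would then check the $A_\infty$-module relations (\ref{modcompatability}) for $(N, m_{N,\bullet})$ and the morphism relations (\ref{modmorphcompatibility}) for $f_\bullet$. The argument is the standard one: one expands each internal $h$ using $\id_M - i_1 p_1 = m_{M,1}h + hm_{M,1}$; the $i_1 p_1$ terms cancel between adjacent trees (because $m_{M,1} i_1 = 0$ and $p_1 m_{M,1} = 0$ kill boundary pieces), while the $\id_M$ terms assemble into the $A_\infty$-relations for $M$ evaluated on composite subtrees, which vanish. Since $m_{N,1} = 0$ we have $\Ho^\bullet(N) = N$, and $\Ho^\bullet(f_1) = \Ho^\bullet(i_1)$ is exactly the chosen splitting $N \xrightarrow{\sim} \Ho^\bullet(M)$, so $f$ is a quasi-isomorphism.

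The main obstacle is purely bookkeeping: organising the signs $\pm$ in the tree sum so that the cancellations described above go through uniformly, and making sure the same combinatorial formula serves in the three variants (unital, nonunital, $S$-unital) governed by $\blacksquare$. For the nonunital case this is Kadeishvili's original argument transported to the module setting; for the $S$-unital and unital cases one additionally observes that $i_1, p_1, h$ can be chosen as morphisms of the appropriate enriched category (using that $\mathcal{D}_l$ is semisimple, the splitting can be taken $S$-linear), after which the transferred structure automatically satisfies Definitions~\ref{Sunital} and \ref{interunit}. The detailed verification is carried out in \cite{KLH}.
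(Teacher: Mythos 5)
Your proof is correct in outline, but it takes a different route from the one the paper (following \cite{KLH}) actually sketches. You run the explicit Kadeishvili-style homotopy transfer: use semisimplicity of $\mathcal{D}_l$ to split $M$ into a minimal part $N$ and a contractible complement, extract SDR data $(i_1,p_1,h)$ with side conditions, and then define $m_{N,\bullet}$ and $f_\bullet$ by tree sums, checking the relations by the usual expansion of $\id_M - i_1p_1 = m_{M,1}h + hm_{M,1}$. The paper instead stays in the coalgebra/comodule picture of Section \ref{enrichments}: after the same splitting $M\cong N\oplus V$ with $V$ contractible, it upgrades the projection $N\oplus V\to V$ to a morphism of $A_{\infty}$-modules and identifies the \emph{kernel} of the corresponding map of cofree differential graded comodules as the cofree comodule on $N$; the coderivation restricted to this kernel is the $A_{\infty}$-structure on $N$, and the kernel inclusion is the quasi-isomorphism $f:N\to M$. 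What each buys: your version produces closed formulas for the transferred operations and for $f$ (and makes the ``canonical minimal model associated to a splitting'' of the next lemma transparent), at the price of exactly the sign bookkeeping you flag; the comodule-kernel route eliminates signs entirely (this is the point of the $b_n$ conventions) and handles the nonunital, unital and $S$-unital variants uniformly, since everything is phrased as (co)module morphisms in the appropriate enriched category. One small wrinkle in your write-up: since the algebra $A$ is not being transferred, the relevant trees are ``caterpillars'' with a single module spine carrying the homotopies $h$ and with algebra inputs fed directly into the $m_{M,k}$; there are no internal $m_{A,k}$ vertices, so your decoration rule should be adjusted accordingly (this affects only the bookkeeping, not the strategy).
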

We will consider a similar result in Theorem \ref{cycmm}, where we consider a version of the minimal model theorem for (cyclic) Calabi-Yau algebras.  A neat exposition of the method of proof for the above facts can be found in \cite{KLH} -- in general, by considering the underlying differential graded object of an object in $A_{\infty}$ algebra as the direct sum of a contractible part $V$ and a part with trivial differential $N$, the morphism $N\oplus V\rightarrow V$ is upgraded to a morphism of the relevant kind of $A_{\infty}$-object, and using the equivalences discussed in Section \ref{enrichments}, we identify the kernel of the projection in the category of appropriate co-differential graded co-objects as having underlying co-object the co-object associated to $N$, giving $N$ the appropriate $A_{\infty}$-structure.  As is somewhat standard, there is a degree of choice in the construction of quasi-inverses to quasi-isomorphisms, and to the construction of minimal models.  The following partially mitigates the overabundance of choices here:
\begin{lem}
Let $A$ and $M$ be as above.  Say we are given a splitting
\[
M^i\cong \Image(m_{M,1}:M^{i-1}\rightarrow M^i)\oplus \Ho^i(M)\oplus \frac{M^i}{\Ker(m_{M,1}:M^i\rightarrow M^{i+1})},
\]
for every $i\in \mathbb{N}$.  Then we have a \textit{canonical} minimal model, associated to this splitting.
\end{lem}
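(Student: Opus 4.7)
My plan is to apply homological perturbation theory, reading the transfer data directly off the splitting, so that no extra choices enter. From the hypothesised decomposition
\[
M^i\cong \Image(m_{M,1}|_{M^{i-1}})\oplus \Ho^i(M)\oplus \frac{M^i}{\Ker(m_{M,1}|_{M^i})}
\]
I would first extract three canonical maps: the projection $p: M\twoheadrightarrow N:=\Ho^{\bullet}(M)$ onto the middle summand, the inclusion $\iota:N\hookrightarrow M$ as the middle summand, and a degree $-1$ contracting homotopy $h:M\to M$ defined to vanish on the $\Ho^i(M)$ and $\Image$ summands and to equal the inverse of the tautological isomorphism $m_{M,1}:M^{i-1}/\Ker\xrightarrow{\sim}\Image(m_{M,1}|_{M^{i-1}})$ on the ``cokernel of kernel'' summand (landing back in $M^{i-1}$). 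A direct check gives $\iota p-\id_M=m_{M,1}h+hm_{M,1}$, $p\iota=\id_N$, together with the side conditions $ph=0$, $h\iota=0$, $h^2=0$, all determined purely by the splitting.

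Second, I would define the transferred module structure on $N$ and the accompanying quasi-isomorphism by the standard tree sum. For each $n\geq 1$ set
\[
m_{N,n}(a_1,\ldots,a_{n-1};x)=\sum_T \pm\, p\circ T(a_1,\ldots,a_{n-1};\iota x),
\]
where $T$ runs over planar rooted trees with $n-1$ ordered side-leaves carrying $a_i$ and one distinguished ``module'' leaf carrying $x$; each internal vertex of arity $r$ on the module-path is decorated by $m_{M,r}$ and each off-path internal vertex by $m_{A,r}$; and every internal edge on the module-path is decorated by $h$. The formula for the $A_\infty$-morphism $f:N\to M$ is identical with $p$ at the root replaced by $\iota$. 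The sums are finite in each arity because each tree must have at least one leaf per internal vertex.

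Finally I would verify the $A_\infty$-module compatibility (\ref{modcompatability}) and the morphism compatibility (\ref{modmorphcompatibility}). This is the standard argument: the compatibility relations on $(A,M)$ produce, for each ``bad'' tree, a counterterm obtained by contracting an internal edge; those counterterms pair up and cancel once $m_{M,1}h+hm_{M,1}$ is replaced by $\iota p-\id_M$, with the $\iota p$ half feeding into the next arity and the $\id_M$ half being killed by side conditions $ph=0=h\iota=h^2$ on the remaining degenerate trees. Minimality is immediate: $m_{N,1}=p\,m_{M,1}\,\iota=0$ since $\iota$ lands in $\Ho^{\bullet}(M)$; and $f_1=\iota$ is a quasi-isomorphism by construction. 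Canonicity is built in, since $p$, $\iota$ and $h$ — and hence every $m_{N,n}$ and $f_n$ — are determined by the splitting alone. The only non-routine step in the argument is the sign bookkeeping for the cancellation, which is what the side conditions $ph=h\iota=h^2=0$ are designed to control; I would handle it by passing to the bar-type picture alluded to in Section \ref{enrichments}, where the transferred structure is read off as a perturbed codifferential on $T(A[1])\otimes N$.
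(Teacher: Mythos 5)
Your overall strategy is the right one, and it is in substance the route the paper itself leans on: the paper gives no self-contained proof of this lemma, instead deferring to the homological perturbation machinery of Lef\`{e}vre-Hasegawa (phrased there as upgrading the projection onto the contractible summand and taking its kernel in the category of cofree comodules), and your explicit tree-sum transfer, ending with the perturbed codifferential on the bar construction, is the hands-on version of exactly that construction. Your point about canonicity is also the intended one: once the splitting is fixed, $p$, $\iota$ and $h$, and hence all the transferred operations and the quasi-isomorphism, involve no further choices.

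There is, however, one concrete slip: your contracting homotopy is placed on the wrong summand. The inverse of the tautological isomorphism $m_{M,1}:M^{i-1}/\Ker(m_{M,1})\xrightarrow{\sim}\Image(m_{M,1}|_{M^{i-1}})$ has as its domain the $\Image$ summand of $M^i$, not the coimage summand $M^i/\Ker(m_{M,1}|_{M^i})$, so declaring $h$ to vanish on the $\Image$ summand and to act by this inverse on the coimage summand is internally inconsistent, and with that $h$ the contraction identity fails: for $x=m_{M,1}(y)$ lying in the $\Image$ summand one gets $m_{M,1}h(x)+hm_{M,1}(x)=0$ while $(\iota p-\id_M)(x)=-x$. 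The correct prescription is that $h$ vanishes on $\Ho^i(M)$ and on the coimage summand of $M^i$, and on the $\Image$ summand of $M^i$ it is the inverse isomorphism landing in the coimage summand of $M^{i-1}$. With that correction the side conditions $ph=h\iota=h^2=0$ do hold (they are precisely what the splitting buys you), and the remainder of your argument --- the tree cancellation, minimality via $m_{N,1}=p\,m_{M,1}\,\iota=0$, and canonicity --- goes through as you describe; you should also make explicit that only vertices of arity at least $2$ appear in the trees, which is what guarantees finiteness of the sums in each arity.
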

\begin{defn}
We denote by $\Di(A\lModbs)$ the category obtained by quotienting out the morphism spaces of $A\lModbs$ by the homotopies.  We define $\Di(\rModbs A)$ similarly.
\end{defn}
\begin{rem}
In the introduction we claimed that $\Di(A\lModbs)$ was going to be defined as $\Ho^0(A\lModibs)$, where $A\lModibs$ is an $A_{\infty}$-category that we are yet to define.  We will see in due course that this definition agrees with the one above.
\end{rem}
\medbreak
The definitions for bimodules are similar to those of left or right modules (see section 2.5 of \cite{KLH}, where both of the approaches we consider in this chapter are spelt out, or \cite{TradBimodules}).  We recall them here since they play such an important part in the sequel.  Let $\mathcal{C}$ and $\mathcal{E}$ be symmetric monoidal categories.  In what follows, $A$ will be an $A_{\infty}$-algebra object in $\mathcal{C}$, and $B$ will be an $A_{\infty}$-algebra object of $\mathcal{E}$.  $A_{\infty}$-bimodule objects will live in a graded category $\mathcal{D}$, where we have two functors
\[
\otimes :\mathcal{C}\times \mathcal{D}\rightarrow \mathcal{D}
\]
and
\[
\otimes:\mathcal{D}\times\mathcal{E}\rightarrow \mathcal{D}
\]
satisfying the obvious associativity and unit axioms (we end up with four copies of the pentagon axiom, corresponding to the four categories $\mathcal{D}\times \mathcal{E}\times \mathcal{E}\times \mathcal{E}$, $\mathcal{C}\times \mathcal{D}\times \mathcal{E}\times \mathcal{E}$, $\mathcal{C}\times\mathcal{C}\times\mathcal{D}\times \mathcal{E}$ and $\mathcal{C}\times \mathcal{C}\times \mathcal{C}\times \mathcal{D}$).
\begin{defn}
\label{bimoddef}
If $A$ and $B$ are two $A_{\infty}$-algebra objects of $\mathcal{C}$ and $\mathcal{E}$ respectively, an $(A,B)$ \textit{bimodule} object $(M,(m_{M,i,j}))$ is given by a $M\in \mathcal{D}$ and a series of maps
\[
m_{M,i,j}:A^{\otimes i}\otimes M\otimes B^{\otimes j}\rightarrow M
\]
of degree $(1-i-j)$, defined for all $i,j\geq 0$, satisfying the following compatibility condition on $A^{\otimes i}\otimes M\otimes B^{\otimes j}$, for each $i,j\geq 0$,
\[
\sum_{a+b+c=i+j+1}(-1)^{a+bc}m_{a+1+c}(\id^{\otimes a}\otimes m_{b}\otimes \id^{\otimes c})=0,
\]
where here we adopt the convention mentioned in Remark \ref{lazyconvention}.  A morphism $f$ of $(A,B)$-bimodules is given by a series of maps
\[
f_{i,j}:A^{\otimes i}\otimes M\otimes B^{\otimes j}\rightarrow N
\]
of degree $(-i-j)$, satisfying the compatibility relation on each $A^{\otimes i}\otimes M\otimes B^{\otimes j}$, for $i,j\geq 0$
\begin{equation}
\sum_{\substack{a+b+c=i+1+j\\b\geq 1}}\limits(-1)^{ a+bc}f_{a+1+c}(\id^{\otimes a}\otimes m_b\otimes \id^{\otimes c})=\sum_{\substack{a+b+c=i+1+j\\a\leq i,c\leq j,b\geq 1}}\limits(-1)^{bc}m_{a+1+c}(\id^{\otimes a}\otimes f_{b} \otimes \id^{\otimes c}),
\end{equation}
where here we extend the abbreviation of Remark \ref{lazyconvention}, and write simply $f_e$ for any of the maps $f_{t,s}$, where $t+s=e-1$.
\end{defn}
As usual, if $\mathcal{C}=\mathcal{E}=(\vect)_{\mathbb{Z}}$, then we set $\mathcal{D}=(\vect)_{\mathbb{Z}}$ and recover the usual notion of an $A_{\infty}$-bimodule.
\begin{defn}
Let $f:(M,(m_{M,i,j}))\rightarrow (N,(m_{N,i,j}))$, $g:(N,(m_{N,i,j}))\rightarrow (K,(m_{K,i,j}))$ be morphisms of $(A,B)$-bimodules.  then we define their composition $g\circ f$ by setting
\[
(g\circ f)_{i,j}=\sum_{\substack{0\leq a\leq i\\0\leq b\leq j}}\limits(-1)^{(b+a)(j-b)}(g_{i-a,j-b}(\id^{\otimes (i-a)}\otimes f_{a,b}\otimes\id^{\otimes (j-b)})).
\]
\end{defn}
\bigbreak

\begin{defn}
If $(A,S,l)$ is an $A_{\infty}$-algebra over $S$, and $(B,T,l')$ is an $A_{\infty}$-algebra over $T$, then an $(S,T)$-unital bimodule $(M,(m_{i,j}))$ over $(A,B)$ is an $A_{\infty}$-bimodule, as defined above, satisfying the condition that for all $i,j\geq 0$ such that $i+j\geq 2$, and all $t<i$, $v<j$ 
\[
m_{M,i,j}(\id^{\otimes t}\otimes l\otimes \id^{\otimes (i+j-t)})=0=m_{M,i,j}(\id^{\otimes (i+j-v)}\otimes l' \otimes \id^{\otimes v}).
\]
If $((A,g),(S,h),l)$ is a unital $A_{\infty}$-algebra over $(S,h)$ and $((B,g'),(T,h'),l')$ is a unital $A_{\infty}$-algebra over $(T,h')$, then for $M$ to be a $(S,T)$-unital bimodule over $A$ we ask also that the following diagram commutes
\[
\xymatrix{
M\otimes k\ar[d]_{\id\otimes g'} & M\ar[l]\ar[d]^{\id}\ar[r] & k\otimes M\ar[d]^{g\otimes \id}\\
M\otimes A\ar[r]^<<<<<{m_{M,0,1}} & M & A\otimes M.\ar[l]_{m_{M,1,0}}
}
\]
\end{defn}
We leave the definition of a morphism of $(S,T)$-unital bimodules to the reader.  As is the case throughout our treatment of $A_{\infty}$ algebra, the notion of unitality above corresponds to \textit{strict} unitality as commonly defined (e.g. \cite{KLH}, \cite{keller-intro}).  Let $(A,S,l)$ be an $S$-unital $A_{\infty}$-algebra, and let $(B,T,g)$ be a $T$-unital $A_{\infty}$-algebra, for $S$ and $T$ ordinary algebras.  Let $\mathcal{C}$ be the category of ordinary $S$-bimodules, and let $\mathcal{E}$ be the category of ordinary $T$-bimodules.  Then, following Proposition \ref{yeti} we may define the $A_{\infty}$-algebra objects $A_S$ and $B_T$ in the categories $\mathcal{C}$ and $\mathcal{E}$ respectively.  An $S$-unital and $T$-unital $(A,B)$-bimodule $M$ has the structure of an ordinary $(S,T)$-bimodule.  The following is proved in the same way as Proposition \ref{yeti}
\begin{prop}
There is an equivalence of categories between $(S,T)$-unital $(A,B)$-bimodules, and unital $(A_S,B_T)$-bimodule objects.
\end{prop}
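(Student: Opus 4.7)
The plan is to mimic the construction used in Proposition \ref{yeti}, extended to handle the two-sided structure, and then to verify that the two constructions are mutually inverse equivalences of categories. The main task is bookkeeping: checking that lifts of tensors over $S$ and $T$ produce well-defined operations, using the $(S,T)$-unitality hypotheses.

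First I would construct a functor from $(S,T)$-unital $(A,B)$-bimodules to unital $(A_S,B_T)$-bimodule objects. Given $M$, its underlying object $M_{S,T}$ is $M$ equipped with its natural $(S,T)$-bimodule structure (which exists because $M$ is $(S,T)$-unital over $(A,B)$, and $S,T$ embed as ordinary algebras). For each $i,j\geq 0$, to define
\[
m_{M_{S,T},i,j}:A_S^{\otimes_S i}\otimes_S M_{S,T}\otimes_T B_T^{\otimes_T j}\rightarrow M_{S,T},
\]
take any class, lift it to an element $(\tilde a_1,\ldots,\tilde a_i,\tilde m,\tilde b_1,\ldots,\tilde b_j)$ of $A^{\otimes i}\otimes M\otimes B^{\otimes j}$, and apply $m_{M,i,j}$. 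Well-definedness amounts to checking that inserting an element $s\in S$ or $t\in T$ into one of the slots and moving it to the adjacent slot gives the same output. This is done exactly as in the proof of Proposition \ref{yeti}: apply the bimodule compatibility relation of Definition \ref{bimoddef} to a tuple of length $i+j+2$ in which $s$ (or $t$) has been inserted; $(S,T)$-unitality forces all higher-order terms $m_{M,i,j+1}(\cdots\otimes s\otimes\cdots)$ with $i+j+1\geq 3$ to vanish, leaving exactly the relation expressing that the two placements of $s$ yield the same value. The strict unitality of $l$ and $g$ ensures the two surviving terms have opposite signs and differ only by the movement of $s$ (resp.\ $t$).

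Morphisms of $(S,T)$-unital bimodules are handled identically: their components $f_{i,j}$ descend to morphisms between the tensor-product objects by the same argument, now applied to the morphism compatibility relation in Definition \ref{bimoddef}. Functoriality (preservation of identities and of composition) is immediate from the defining formulas, since composition is computed in terms of the raw $f_{i,j}$ and $g_{i,j}$, which are defined on representatives. The check that $M_{S,T}$ is \emph{unital} as a bimodule object reduces to the commutative square appearing in the $(S,T)$-unital bimodule definition.

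In the opposite direction, given a unital $(A_S,B_T)$-bimodule object $N$, its underlying $(S,T)$-bimodule provides an $(A,B)$-bimodule structure by setting $m_{N,i,j}$ on $A^{\otimes i}\otimes N\otimes B^{\otimes j}$ to be the composition of the surjection onto $A_S^{\otimes_S i}\otimes_S N\otimes_T B_T^{\otimes_T j}$ with the given structure maps of $N$. The $(S,T)$-unitality condition is automatic from the unitality of $N$ as a bimodule object (the higher operations vanish on any tensor with a component in the image of $l$ or $g$ because such components become zero after passing to the tensor product over $S$ or $T$, except in the $i+j=1$ case where they produce the required module action). These two assignments are manifestly mutually inverse on objects and morphisms, giving the equivalence. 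The main obstacle is simply keeping the signs and index ranges straight in the two-sided compatibility relation when verifying well-definedness; once that is set up correctly, everything reduces to the pattern already established in Proposition \ref{yeti}.
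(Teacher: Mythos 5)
Your proposal is correct and follows essentially the same route as the paper, which simply states that this proposition "is proved in the same way as Proposition \ref{yeti}": lift tensors over $S$ and $T$ to tensors over $k$, apply the structure maps, and use the $(S,T)$-unitality conditions together with the bimodule compatibility relations to check well-definedness, with the inverse functor given by composing with the natural surjections. The two-sided bookkeeping you describe is exactly the intended adaptation.
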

If $f,g$ are two morphisms $M\rightarrow N$ for $M,N\in A\biModbs B$, then a homotopy between them is given by a series of morphisms
\[
h_{i,j}:A^{\otimes i}\otimes M\otimes B^{\otimes j}\rightarrow M
\]
of degree $-1-i-j$ such that the following equality holds on morphisms $A^{\otimes i}\otimes M\otimes B^{\otimes j}\rightarrow M$:
\begin{eqnarray*}
&f-g=\sum_{\substack{a+b+c=i+1+j\\b\geq 1}}\limits(-1)^{a+bc}h_{a+1+c}(\id^{\otimes a}\otimes m_b\otimes \id^{\otimes c})-\\&-\sum_{\substack{a+b+c=i+1+j\\a\leq i,c\leq j,b\geq 1}}\limits(-1)^{a+bc}m_{a+1+c}(\id^{\otimes a}\otimes h_{b} \otimes \id^{\otimes c}).
\end{eqnarray*}
Just as in the case of left or right $A$-modules, we define the derived category of $(A,B)$-bimodules $\Di(A\biModbs B)$ by identifying homotopic morphisms.
\smallbreak
This ends our roll-call of definitions regarding the \textit{ordinary} category of $A_{\infty}$-algebras, modules and bimodules.  We will need some basic facts about enriched categories in the main text, and in particular the enriched category $\Perf(A\lModi)$, and the full subcategory of this $A_{\infty}$-category obtained by taking shifts and cones of objects in the image of the Yoneda embedding, which is discussed in Section \ref{twistcomp}.
\section{$A_{\infty}$-categories}
\begin{defn}
A (small) $k$-linear $A_{\infty}$-category $\mathcal{C}$ is given by a set of objects $\ob(\mathcal{C})$, a $\mathbb{Z}$-graded $k$-vector space $\Hom_{\mathcal{C}}(x,y)$ for each $x,y\in \ob(\mathcal{C})$, and, for each $(n+1)$-tuple of objects $(x_0,\ldots,x_n)\in \ob(\mathcal{C})^{n+1}$, for $n\geq 1$, a $k$-linear composition
\[
m_{\mathcal{C},n}:\Hom_{\mathcal{C}}(x_{n-1},x_n)\otimes\ldots\otimes \Hom_{\mathcal{C}}(x_0,x_1)\rightarrow \Hom_{\mathcal{C}}(x_0,x_n)
\]
of degree $(2-n)$, satisfying compatibility relations as in (\ref{struc-compatibility}).  A \textit{unital structure} on $\mathcal{C}$ is given by a graded morphism
\[
\xymatrix{
k\ar[r]^<<<<<{u_x} &\Hom_{\mathcal{C}}(x,x)
}
\]
for each $x\in \ob(\mathcal{C})$ (where $k$ is placed in degree zero), such that for all $x,y \in \ob(\mathcal{C})$ the following diagrams commute
\[
\xymatrix{
\Hom_{\mathcal{C}}(x,y)\otimes k\ar[d]^{\id\otimes u_x} & \Hom_{\mathcal{C}}(x,y)\ar[d]^{\id}\ar[l] \\
\Hom_{\mathcal{C}}(x,y)\otimes \Hom_{\mathcal{C}}(x,x) \ar[r]^<<<<<{m_{\mathcal{C},2}} & \Hom_{\mathcal{C}}(x,y)
}
\]
\[
\xymatrix{
k\otimes \Hom_{\mathcal{C}}(y,x)\ar[d]^{u_x\otimes \id} & \Hom_{\mathcal{C}}(y,x)\ar[d]^{\id}\ar[l] \\
\Hom_{\mathcal{C}}(x,x)\otimes \Hom_{\mathcal{C}}(y,x) \ar[r]^<<<<<{m_{\mathcal{C},2}} & \Hom_{\mathcal{C}}(y,x)
}
\]
and such that, for all $n\geq 3$, $n-1 \geq i \geq 0$, $m_{\mathcal{C},n}(\id^{\otimes i}\otimes u_x \otimes \id^{\otimes (n-i-1)})=0$.
\end{defn}
\begin{rem}
Strictly speaking, some of the $A_{\infty}$-categories that will follow, such as categories of modules over an $A_{\infty}$-algebra, will not be small.  We extend the notion by the usual move of considering Grothendieck universes, and ignore it from now on.  Note that the categories that we are \textit{most} interested in, for example categories of modules with finite-dimensional total homology over a fixed $A_{\infty}$-algebra, and categories of perfect modules over a fixed $A_{\infty}$-algebra \textit{will} be quasi-essentially small.
\end{rem}
\begin{defn}
Let $\mathcal{C}$ be an $A_{\infty}$-category.  If $m_{\mathcal{C},1}=0$ then we say that $\mathcal{C}$ is \textit{minimal}.
\end{defn}

\begin{examp}
An ordinary category is a special case of an $A_{\infty}$-category satisfying the property that $m_{\mathcal{C},i}=0$ for all $i\neq 2$.  A differential graded category is a special case satisfying the property that $m_{\mathcal{C},i}=0$ for all $i\geq 3$.
\end{examp}
 
\begin{examp}
An $A_{\infty}$-algebra can be thought of as an $A_{\infty}$-category with one object.  In this case, the two notions of unitality coincide.
\end{examp}
\begin{rem}
The definition of unitality that we have given is commonly referred to as \textit{strict} unitality.  Ordinarily one would only ask for a unit at the level of homology, since this is a notion that is stable under homotopy equivalences of $A_{\infty}$-categories.
\end{rem}
\begin{defn}
Let $\mathcal{C}$ be an $A_{\infty}$-category.  Then we define a new $A_{\infty}$-category $\mathcal{C}^{\op}$ as follows:
\begin{enumerate}
\item
The objects of $\mathcal{C}^{\op}$ are just the objects of $\mathcal{C}$.
\item
$\Hom_{\mathcal{C}^{\op}}(x,y):=\Hom_{\mathcal{C}}(y,x)$.
\item
$m_{\mathcal{C}^{\op},i}=m_{\mathcal{C}}\circ \mathsf{ref}_i$ where $\mathsf{ref}_i$ is any combination of applications of the morphism expressing the (untwisted) symmetric monoidal structure of $\dgvect$ that gives a map
\[
\Hom_{\mathcal{C}}(x_{i-1},x_i)\otimes\ldots\otimes \Hom_{\mathcal{C}}(x_0,x_1)\rightarrow \Hom_{\mathcal{C}}(x_0,x_1)\otimes\ldots\otimes \Hom_{\mathcal{C}}(x_{i-1},x_i).
\]
\end{enumerate}
It is  straightforward to check that this gives a new $A_{\infty}$-category $\mathcal{C}^{\op}$.
\end{defn}
\begin{defn}
Let $\mathcal{C},\mathcal{D}$ be two (small) $A_{\infty}$-categories.  Then an $A_{\infty}$-functor $F$ between them is given by a map of sets (again denoted $F$) $F:\ob(\mathcal{C})\rightarrow \ob(\mathcal{D})$ and, for all $(n+1)$-tuples of objects $(x_0,\ldots,x_n)\in \ob(\mathcal{C})^{n+1}$, a $k$-linear map
\[
F_n:\Hom_{\mathcal{C}}(x_{n-1},x_n)\otimes\ldots\otimes \Hom_{\mathcal{C}}(x_0,x_1)\rightarrow \Hom_{\mathcal{D}}(\phi_F(x_0),\phi_F(x_n))
\]
of degree $(1-n)$, satisfying compatibility relations analogous to (\ref{morph-compatibility}).  A morphism of \textit{unital} $A_{\infty}$-categories is a morphism, as above, satisfying the added condition that the following diagram commutes for all $x\in \ob(\mathcal{C})$:
\begin{equation*}
\xymatrix{
k\ar[dr]_-{u_{F(x)}}\ar[r]^(.35){u_x}&\Hom_{\mathcal{C}}(x,x)\ar[d]^{F_1} \\
&\Hom_{\mathcal{D}}(F(x),F(x)),
}
\end{equation*}
and also the condition that for all $n\geq 2$, and all $n-1\geq i \geq 0$, and all $x\in \ob(\mathcal{C})$, we have $F_{n}(\id^{\otimes i}\otimes u_x\otimes \id^{\otimes (n-i-1)})=0$.
\end{defn}
\begin{defn}
\label{cataug}
Given a unital $A_{\infty}$-category $\mathcal{C}$, we define $\mathcal{C}_{\id}$ to be the category with the same objects as $\mathcal{C}$, and with morphisms given by scalar multiples of the identity morphisms.  There is a strict functor $i:\mathcal{C}_{\id}\rightarrow \mathcal{C}$, and we define an augmentation of $\mathcal{C}$ to be another strict functor 
\[
\nu:\mathcal{C}\rightarrow \mathcal{C}_{\id}
\]
such that $\nu\circ i=\id_{\mathcal{C}_{\id}}$.
\end{defn}In analogy with the case of $A_{\infty}$-algebras, if $\mathcal{C}$ is an $A_{\infty}$-category then there is an induced structure of an ordinary $\mathbb{Z}$-graded category with set of objects the same as those of $\mathcal{C}$, and with morphism spaces between $x$ and $y$ replaced by $\Ho^{\bullet}(\Hom_{\mathcal{C}}(x,y))$.  This graded category is denoted $\Ho^{\bullet}(\mathcal{C})$.  To be precise, we have only defined the structure of a graded category if the original category $\mathcal{C}$ is a unital $A_{\infty}$-category, otherwise it just has the structure of a nonunital graded category.  Similarly we define the ordinary category $\Ho^0(\mathcal{C})$, and the category $\Zo^0(\mathcal{C})$, which has the same objects as $\mathcal{C}$, and 
\[
\Hom_{\Zo^0(\mathcal{C})}(x,y):=\Ker(d:\Hom_{\mathcal{C}}^0(x,y)\rightarrow \Hom^1_{\mathcal{C}}(x,y)).
\]
We say that two objects in $\mathcal{C}$ are quasi-isomorphic if they are isomorphic in $\Ho^0(\mathcal{C})$.  A functor $F:\mathcal{C}\rightarrow\mathcal{D}$ between $A_{\infty}$-categories induces an ordinary $\mathbb{Z}$-graded functor $\Ho^{\bullet}(F)$ between the (possibly nonunital) ordinary $\mathbb{Z}$-graded categories $\Ho^{\bullet}(\mathcal{C})$ and $\Ho^{\bullet}(\mathcal{D})$.  We say that $F$ is a quasi-isomorphism if $\Ho^{\bullet}(F)$ is an isomorphism.\smallbreak
\begin{defn}
Let $\mathcal{C},\mathcal{D}$ be $A_{\infty}$-categories.  Then a functor $F:\mathcal{C}\rightarrow \mathcal{D}$ is called an \textit{equivalence} if it induces a bijection $\ob(\mathcal{C})\rightarrow \ob(\mathcal{D})$, and induces isomorphisms
\[
F_1:\Hom_{\mathcal{C}}(x,y)\rightarrow \Hom_{\mathcal{D}}(F(x),F(y)).
\]
It is a quasi-equivalence if it induces a bijection between quasi-isomorphism classes of objects in $\mathcal{C}$ and quasi-isomorphism classes of objects in $\mathcal{D}$, and for each $x,y\in \ob(\mathcal{C})$ the morphism
\begin{equation}
\label{F1}
F_{1}:\Hom_{\mathcal{C}}(x,y)\rightarrow \Hom_{\mathcal{D}}(F(x),F(y))
\end{equation}
is a quasi-isomorphism.  $F$ is called \textit{quasi-full} if the morphism (\ref{F1}) induces a surjection on homology, for every $x,y\in \ob(\mathcal{C})$, and is called \textit{quasi-faithful} if it induces an injection on homology.  Functors satisfying both of these conditions will be called \textit{quasi-fully faithful}.  The quasi-essential image of a functor $F$ is the set of objects in $\ob(\mathcal{D})$ that are quasi-isomorphic to objects $F(x)$, for $x\in \ob(\mathcal{C})$.
\end{defn}
\begin{defn}
Let $\mathcal{D}\subset \mathcal{C}$ be an inclusion of $A_{\infty}$-categories.  We say that $\mathcal{D}$ is a quasi-strictly full subcategory if, for all $x,y\in \ob(\mathcal{D})$, $\Hom_{\mathcal{D}}(x,y)\rightarrow \Hom_{\mathcal{C}}(x,y)$ is a quasi-isomorphism, and $\mathcal{D}$ is closed under quasi-isomorphisms in $\mathcal{C}$.
\end{defn}
We next recall the definition of a natural transformation between $A_{\infty}$-functors.  This is the definition given in \cite{Seidel08}, though with different signs.
\begin{defn}
\label{nattrans}
Let $F$, $G$ be $A_{\infty}$-functors between $A_{\infty}$-categories $\mathcal{C}$ and $\mathcal{D}$.  Then a natural transformation $\nu:F\rightarrow G$ is given by
\begin{enumerate}
\item
For each $x\in \ob(\mathcal{C})$ a morphism $\nu_{0,x}:F(x)\rightarrow G(x)$.
\item
For each $(n+1)$-tuple of objects in $\mathcal{C}$ $(x_0,\ldots,x_n)\in \ob(\mathcal{C})^{n+1}$, for $n\geq 0$, a $k$-linear map
\[
\nu_{n}:\Hom_{\mathcal{C}}(x_{n-1},x_n)\otimes\ldots\otimes \Hom_{\mathcal{C}}(x_0,x_1)\rightarrow \Hom_{\mathcal{D}}(F(x_0),G(x_n))
\]
of degree $-n$.
\item
This data is required to satisfy the compatibility conditions
\begin{align*}
\sum_{r\leq n,i_1+\ldots+i_r=n,j\leq r}(-1)^{\blacklozenge}m_{\mathcal{D},r}(G_{i_1}\otimes\ldots\otimes G_{i_{j-1}}\otimes \nu_{i_j}\otimes F_{i_{j+1}}\otimes\ldots\otimes F_{i_r})=\\=\sum_{a+b+c=n}\nu_{a+1+c=n}(-1)^{a+bc}(\id^{\otimes a}\otimes m_{\mathcal{C},b}\otimes \id^{\otimes c})
\end{align*}
where $\blacklozenge=[(r-1)(i_1-1)+(r-r+1)(i_{r-1}-1)]+(r-j)+[i_1+\ldots+i_{j-1}]$.
\end{enumerate}
A natural transformation $\nu$ is called a natural quasi-equivalence if $\nu_{0,x}$ is a quasi-isomorphism for every $x\in \mathcal{C}$.  Let $F$ and $G$ now be \textit{unital} functors.  Then a natural transformation between them is required to satisfy the condition that 
\[
\nu_{i+j+1}(\id^{\otimes i} \otimes u_x\otimes \id^{\otimes j})=0
\]
for all $x\in \ob(\mathcal{C})$, and for all $i+j\geq 1$.
\end{defn}
\begin{rem}
\label{nadir}
The definition of the sign $\blacklozenge$ marks a nadir as far as sign conventions go.  We direct the exasperated reader to Section \ref{enrichments}, where a different approach is explained that does away with these signs -- see Remark \ref{annoyingsigns}.  This is the approach of \cite{KSnotes} and \cite{KS}, and one of the approaches of \cite{KLH}.
\end{rem}
\begin{rem}
\label{sketchyhpt}
There is a natural extension of the notion of a homotopy between morphisms of $A_{\infty}$-algebras to the notion of a homotopy between $A_{\infty}$-functors.  There is also a version of the homological perturbation lemma for $A_{\infty}$-categories, which states that if $F:\mathcal{C}\rightarrow \mathcal{D}$ is a quasi-equivalence between two unital $A_{\infty}$-categories, there is a functor $G$ and a natural quasi-equivalence $\nu:GF\rightarrow L$, such that $L$ is homotopic to the identity functor.
\end{rem}
\smallbreak
There is a standard notion of a \textit{module} over an $A_{\infty}$-category.
\begin{defn}
\label{catmods}
A left module over an $A_{\infty}$-category $\mathcal{C}$ is an $A_{\infty}$-functor $F$ from $\mathcal{C}$ to $\dgvect$.  If $\mathcal{C}$ is a unital $A_{\infty}$-category, we say the module is unital if $F$ is a unital functor.  A morphism of $\mathcal{C}$-modules is a natural transformation of functors.  If $F$ and $G$ are unital modules over $\mathcal{C}$, a morphism between them is a unital natural transformation between them.
\end{defn}
\begin{rem}
We define \textit{right} modules in the natural way, as $A_{\infty}$-functors from $\mathcal{C}^{\op}$ to $\dgvect$.
\end{rem}
\begin{examp}
If $A$ is an $A_{\infty}$-algebra, considered as an $A_{\infty}$-category with one object, then this is just the usual definition of an $A_{\infty}$-module over $A$.  The definition of morphism can also be checked to be the usual definition.
\end{examp}
\begin{rem}
\label{shiftdef}
We define direct sums and products of modules in the natural way, by taking pointwise direct sums and products, and direct sums and products of multiplication morphisms.  For $M\in\mathcal{C}\lMod$ we define the \textit{shift} of $M$, denoted $M[1]$, as the module taking $x\in\mathcal{C}$ to $M(x)[1]$.  The multiplication morphisms are given by
\[
m_{M[1],n}=-m_{M,n}.
\]
\end{rem}

\section{Bifunctors, and bimodules over $A_{\infty}$-categories}
We give a slightly odd looking definition for a bifunctor between $A_{\infty}$-categories.  It is motivated by the fact that an $A_{\infty}$-bifunctor to the category $\dgvect$ should be thought of as an $A_{\infty}$-bimodule for a pair of `$A_{\infty}$-algebras with many objects'.  Compare in this regard section 5.3 of \cite{KLH}, where the promotion to the $A_{\infty}$ world of the link between $(A,B)$-bimodule structures on a graded vector space $M$ and homomorphisms from $B$ to the endomorphism ring of $M$ as a left $A$-module is discussed and made precise.
\begin{defn}
Let $\mathcal{C},\mathcal{D},\mathcal{E}$ be $A_{\infty}$-categories.  A bifunctor $F$ from the ordered pair $(\mathcal{C},\mathcal{D}^{\op})$ to $\mathcal{E}$ is given by the following data
\begin{enumerate}
\item
For each pair $(x,y)\in(\mathcal{C},\mathcal{D})$ an object $F(y,x)\in\mathcal{E}$.
\item
For each $(n+1)$-tuple of objects $(x_0,\ldots,x_n)\in \ob(\mathcal{C})^{n+1}$ and each $(m+1)$-tuple of objects $(y_0,\ldots,y_{m})\in \ob(\mathcal{D})^{m+1}$ a map
\begin{eqnarray*}
m_{F,n,m}:\Hom_{\mathcal{C}}(x_{n-1},x_n)\otimes\ldots\otimes \Hom_{\mathcal{C}}(x_0,x_1)\otimes \Hom_{\mathcal{D}}(y_{m-1},y_m)\otimes\ldots\\
\ldots\otimes \Hom_{\mathcal{D}}(y_0,y_1)\rightarrow \Hom_{\mathcal{E}}(F(y_m,x_0),F(y_0,x_n))
\end{eqnarray*}
of degree $(1-m-n)$.
\item
These maps are to satisfy the compatibility relations, for each $i,j\geq 0$
\begin{eqnarray*}
&\sum_{\substack{1\leq a\leq i,\\b\leq i-a}}\limits(-1)^{(i+j-b-a)a+b}m_{F,i-a+1,j}(\id^{\otimes b}\otimes m_{\mathcal{C},a}\otimes \id^{\otimes (i+j-b-a)})+\\
&+\sum_{\substack{1\leq a\leq j,\\b\leq j-a}}\limits(-1)^{ba+i+j-b-a}m_{F,i,j-a+1}(\id^{\otimes(i+j-b-a)}\otimes m_{\mathcal{D},a}\otimes \id^{\otimes b})=\\
&= \sum_{\substack{r\leq i+j,\\a_1+\ldots+a_r=i,\\b_1+\ldots+b_r=j}}\limits(-1)^{\blacktriangledown}m_{\mathcal{E},r}(m_{F,a_r,b_r}, m_{F,a_{r-1},b_{r-1}},\ldots, m_{F,a_1,b_1})\sw_{a_1,\ldots,a_r,b_1,\ldots,b_r}
\end{eqnarray*}
where the final sum is over all $a_1,\ldots,a_r,b_1,\ldots,b_r$ such that $a_1+\ldots+a_r=i,b_1+\ldots+b_r=j$ and $a_t+b_t\geq 1$ for all $t\leq r$, $\sw$ is the isomorphism
\[
\begin{array}{c}
V_1\otimes\ldots\otimes V_i\otimes W_1\otimes\ldots\otimes W_j\rightarrow \\V_1\otimes\ldots\otimes V_{a_1}\otimes W_{b_1+\ldots+b_{r-1}+1}\otimes\ldots\otimes W_{j}\otimes\ldots\otimes V_{a_1+\ldots+a_{r-1}+1}\otimes\ldots\otimes V_i\otimes W_1\otimes\ldots\otimes W_{b_1}
\end{array}
\]

coming from the untwisted monoidal structure on $(\vect)_{\mathbb{Z}}$, and 
\[
\blacktriangledown=(a_r+b_r-1)(r-1)+(a_{r-1}+b_{r-1}-1)(r-1)+\ldots+(a_2+b_2-1).
\]
\end{enumerate}
We say the bifunctor $F$ is unital if the following diagram commutes
\[
\xymatrix{
\Hom_{\mathcal{D}}(y,y)\ar[dr]_{m_{F,0,1,x,y}} & k \ar[r]^(.41){u_x} \ar[l]_(.37){u_y} \ar[d]^{u_{F(y,x)}} & \Hom_{\mathcal{C}}(x,x)\ar[dl]^(.45){\hbox{ }\hbox{ }m_{F,1,0,x,y}}\\
& \Hom_{\mathcal{E}}(F(y,x),F(y,x)),
}
\]
and for all $n,m\geq 0$ such that $n+m\geq 2$ and all $0\leq i\leq n+m-1$, $i\neq n$, and all $x\in \mathcal{C}$, $y\in\mathcal{D}$ and $z\in \mathcal{C},\mathcal{D}$,
\[
m_{F,n,m,x,y}(\id^{\otimes i}\otimes u_z \otimes \id^{\otimes (n+m-i-1)})=0.
\]

\end{defn}
\begin{defn}
Let $\mathcal{C},\mathcal{D}$ be $A_{\infty}$-categories.  Then a $(\mathcal{C},\mathcal{D})$-bimodule is a bifunctor from $(\mathcal{C},\mathcal{D}^{\op})$ to $\dgvect$.  A bimodule is \textit{unital} if it is unital as a bifunctor.
\end{defn}
\begin{rem}
In the case of bimodules, since we are considering bifunctors to the $A_{\infty}$-category $\dgvect$, which has vanishing higher compositions, the bifunctor compatibility relations reduce to much simpler ones, which are essentially just the conditions of Definition \ref{bimoddef} for an algebra with many objects.  We do not give here the complicated definition of a morphism of bifunctors, since we do not need it.  A morphism of bimodules is given by the natural extension of the definition given in Definition \ref{bimoddef} to the $A_{\infty}$-category case.
\end{rem}
\begin{examp}
\label{diagcat}
Let $\mathcal{C}$ be an $A_{\infty}$-category.  Then
\[
\Hom_{\mathcal{C}}(-,-)(X,Y):=\Hom_{\mathcal{C}}(Y,X)
\]
is a bimodule for $\mathcal{C}$.  The bimodule multiplications are just the category compositions.
\end{examp}
\begin{examp}
Let $M$ be a $(\mathcal{C},\mathcal{D})$-bimodule.  Let $x\in\mathcal{D}$.  Then there is a forgetful functor
\[
-\cdot \id_x:\mathcal{C}\biMod \mathcal{D}\rightarrow \mathcal{C}\lMod
\]
satisfying $(M \cdot \id_x)(z):=M(x,z)$.  The module multiplications are given by
\[
m_{M\cdot \id_x,i}=m_{M,i,0}.
\]
\end{examp}
\section{$A_{\infty}$-enrichments of categories}
\label{enrichments}
Let $\mathcal{C}$ be a $\mathbb{Z}$-graded category.  Let $\mathcal{D}$ be an $A_{\infty}$-category satisfying $\Ho^{\bullet}(\mathcal{D})\cong \mathcal{C}$, then we say that $\mathcal{D}$ is an $A_{\infty}$\textit{-enrichment} of $\mathcal{C}$.  The classic case of an enrichment occurs when $\mathcal{A}$ is an Abelian category with enough projectives or enough injectives and $\mathcal{C}=\mathcal{A}_{\ext}$, the category with $\ob(\mathcal{A}_{\ext})=\ob(\mathcal{A})$, and $\Hom_{\mathcal{A}_{\ext}}(x,y)=\oplus_{n\in\mathbb{Z}}(\Ext_{\mathcal{A}}^n(x,y))$.
\smallbreak
There are two common types of enrichment of this category: firstly, there are differential graded categories, in which we replace the morphism spaces $\Hom_{\mathcal{C}}(M,N)$ with differential graded vector spaces $\RHom^{\bullet}(M,N)$ calculating $\Ext$ groups.  Secondly, there are minimal models of such categories (recall that a differential graded category is just a special kind of $A_{\infty}$-category).
\begin{rem}
There is a possibility of confusion regarding the phrase `enrichment of $\mathcal{A}$', where $\mathcal{A}$ is an Abelian category.  The phrase could, on the one hand, mean an enrichment, in the sense above, of the graded category $(\mathcal{A})_{\ext}$, obtained from $\mathcal{A}$ by replacing the vector spaces $\Hom_{\mathcal{A}}(x,y)$, for $x,y\in \ob(\mathcal{A})$, with the graded vector spaces $\Ext_{\mathcal{A}}^{\bullet}(x,y)$.  On the other hand the phrase could mean an $A_{\infty}$-category $\mathcal{C}$ satisfying the condition that the category $\Ho^{\bullet}(\mathcal{C})$ is triangulated, and $\mathcal{A}$ occurs as a heart.  Note that this phenomenon has already arisen in Section \ref{basicexample}, where the Yoneda algebra of the augmentation module $M$ of $\Cp[x]/\langle x^3\rangle$ was said to have finite-dimensional total homology, despite the fact that the Yoneda algebra of $M$, considered as an object of the category of $\Cp[x]/\langle x^3 \rangle$-modules, is infinite-dimensional.  This is because it was being calculated in an $A_{\infty}$-category that was not an enrichment of the graded category $(\mathcal{A})_{\ext}$, for $\mathcal{A}$ the Abelian category of $\Cp[x]/\langle x^3\rangle$-modules, but rather it was an $A_{\infty}$-category $\mathcal{D}$ such that $\Ho^{0}(\mathcal{D})$ happens to be a triangulated category with $\mathcal{A}$ as a heart, without itself being equivalent to $\mathrm{D}^b(\Cp[x]/\langle x^3\rangle)$, the bounded derived category of $\Cp[x]/\langle x^3 \rangle$-modules.
\end{rem}
\bigbreak
We next consider an enrichment of $\mathcal{C}$ for the more general case in which $A$ is an $A_{\infty}$-algebra and we put 
\[
\mathcal{C}=(A\lMod)_{\ext},
\]
where $(A\lMod)_{\ext}$ is the graded category with objects the left modules over $A$, and with
\[
\Hom_{(A\lMod)_{\ext}}^n(M,N):=\Hom_{A\lMod}(M,N[n])/\sim_{\mathrm{homotopy}}.
\]
\medbreak
Let $M$ be a module over the $A_{\infty}$-algebra $A$.  Firstly, there is a differential graded coalgebra, denoted $\TC(A)$, associated to $A$.  Its underlying coalgebra is $\Free_{\coalg}(\forget(A)[1])$, the free non-counital coalgebra generated by the underlying vector space of $A$, shifted by 1.  For the time being we'll use $\forget(A)$ to make it clear that we are forgetting the $A_{\infty}$-structure of $A$ at this stage in the construction.  Explicitly, this has the underlying vector space
\[
\Free_{\coalg}(\forget(A)[1])=\bigoplus_{i\geq 1} \forget(A)^{\otimes i}[i],
\]
and the comultiplication is given by
\[
\Delta(a_1,\ldots,a_n)=\sum_{1\leq i\leq n-1}(a_1,\ldots,a_i)\otimes (a_{i+1},\ldots,a_n).
\]
Now any coderivation $b_A:\TC(A)\rightarrow \TC(A)$ is determined by its postcomposition with the natural projection 
\[
p:\Free_{\coalg}(\forget(A)[1])\rightarrow \forget(A[1]),
\]
i.e. $b_A$ is determined by the composition
\[
\xymatrix{
\TC(A)\ar[r]^-{b_A}& \TC(A)\ar[r]^-p &A[1].
}
\]
We denote by $b_{A,n}$ the restriction of $p\circ b_A$ to $\forget(A)^{\otimes i}[i]$.  We assume that $b_A$ is a degree 1 map.  There is a unique sequence of maps $m_{A,n}$ such that the following diagram commutes, for all $n$
\[
\xymatrix{
(A[1])^{\otimes n}\ar[r]^(.6){b_{A,n}} & A[1]\\
A^{\otimes n}\ar[u]^{S^{\otimes n}}\ar[r]^(.54){m_{A,n}} & A\ar[u]^{S},
}
\]
where $S:A\rightarrow A[1]$ is the degree $-1$ map sending $a\in A$ to $a\in A[1]$.  It is a standard fact (e.g. see \cite{Sta63}) that the condition $b_A^2=0$ is equivalent to the condition that the $m_{A,i}$ determine the structure of an $A_{\infty}$-algebra, i.e. $b_A^2=0$ is equivalent to the equations of (\ref{struc-compatibility}) holding.  Since $A$ is an $A_{\infty}$-algebra, it comes with operations $m_{A,i}$, and so we obtain a derivation $b_A$.  We denote by $(\TC(A),b_A)$ the differential graded coalgebra associated to $A$.  
\begin{rem}
\label{annoyingsigns}
Note that, converted into a statement regarding the $b_{A,n}$, the equations (\ref{struc-compatibility}) become
\begin{equation}
\sum_{a+b+c=n} b_{A,a+c+1}(\id^{\otimes a}\otimes b_{A,b}\otimes \id^{\otimes c})=0
\end{equation}
and we have lost the annoying signs.  One can systematically remove all of the signs from the treatment of $A_{\infty}$ algebra given so far, by recasting everything in terms of the shifts $A[1]$ for algebras, $M[1]$ for modules, and so on.  This is the answer to Remark \ref{nadir}.  Unless there is good reason not to do so, we will avoid the sign-heavy compositions $m_i$, and work with the compositions $b_i$, from now on.
\end{rem}
\bigbreak
Now let $M$ be a left $A$-module.  One can construct a left $(\TC(A),b_A)$-differential graded comodule associated to $M$ as follows: first one takes the cofree left $\TC(A)$-comodule generated by $\forget(M)[1]$
\[
\Free_{\TC(A)\lcomod}(\forget(M)[1])
\]
which has the underlying vector space $(k\oplus \Free_{\coalg}(\forget(A)[1]))\otimes \forget(M)[1]$.  The comodule structure is given by
\[
\Delta_M(a_1,\ldots,a_n,m)=\sum_{n\geq i\geq 1}(a_1,\ldots,a_i)\otimes(a_{i+1},\ldots,a_n,m)
\]
for $n\geq 1$ and $\Delta_M(m)=0$.
Let $b_M$ be the coderivation on $\Free_{\TC(A)\lcomod}(\forget(M)[1])$ defined by the commutativity of the following diagram
\begin{equation}
\label{ala}
\xymatrix{
A[1]^{\otimes (i-1)}\otimes M[1]\ar[r]^(.69){b_{M,i}}& M[1]\\
A^{\otimes (i-1)}\otimes M\ar[u]^{S^{\otimes i}}\ar[r]^(.67){m_{M,i}}& M\ar[u]^{S}
}
\end{equation}
where $b_{M,i}$ is the restriction to $(A[1])^{\otimes (i-1)}\otimes M[1]$ of the composition
\[
\xymatrix{
\Free_{\TC(A)\lcomod}(\forget(M)[1])\ar[r]^{b_M}& \Free_{\TC(A)\lcomod}(\forget(M)[1])\ar[r]^(.75)p & M[1].
}
\]
One checks again that the condition $b_M^2=0$ is equivalent to the equations (\ref{modcompatability}), i.e. $b_M^2=0$ if and only if the $m_{M,i}$ define the structure of a left $A$-module on $M$.  
\medbreak
In a similar vein we have the following construction:  let $M$ now be an $(A,B)$-bimodule, where $A$ and $B$ are $A_{\infty}$-algebras.  Then define the differential graded cobimodule \\$\Free_{\TC(A)\coBimod \TC(B)}(\forget(M[1]))$ by stipulating that its underlying graded vector space is
\[
\bigoplus_{i,j\geq 0} \forget(A)[1]^{\otimes i}\otimes \forget(M)[1] \otimes \forget(B)[1]^{\otimes j}.
\]
This has commuting left $\TC(A)$-coaction and right $\TC(B)$-coaction, and so it has the structure of a $(\TC(A),\TC(B))$-cobimodule.  In the usual way one defines a coderivation $b_M$ on this structure, and the usual result applies: the condition that the maps 
\[
m_{M,i,j}: A^{\otimes i}\otimes M\otimes B^{\otimes j}\rightarrow M,
\]
defined \`{a} la (\ref{ala}), define an $(A,B)$-bimodule structure on $M$, is equivalent to the condition that $b_M^2=0$.\smallbreak
In each case we have a correspondence between coderivations $b$ satisfying $b^2=0$, and $A_{\infty}$-structures.  This correspondence extends to an equivalence of categories.  We state it in the comodule/cobimodule case.
\begin{prop}\cite{KLH}
The above constructions underlie an equivalence of categories between
\begin{enumerate}
\item
$A$-modules, for an $A_{\infty}$-algebra $A$, and cofree differential graded comodules for $(\TC(A),b_A)$, and
\item
$(A,B)$-bimodules, for $A,B$ $A_{\infty}$-algebras, and left cofree and right cofree \\$((\TC(A),b_A),(\TC(B),b_B))$-cobimodules.
\end{enumerate}
In the category of cofree differential graded $\TC(A)$-comodules the morphisms are the degree zero morphisms commuting with the differential.  The case is similar for cobimodules.
\end{prop}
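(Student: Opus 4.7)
The plan is to exploit the universal property of cofree coalgebras and cofree comodules/cobimodules, which makes coderivations correspond bijectively to their composition with the projection onto cogenerators. This converts the condition $b^2=0$ into the compatibility relations in Definitions~\ref{moddef} and \ref{bimoddef}, and likewise upgrades morphisms of $A_\infty$-objects to morphisms of differential graded co-objects.

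First I would make the functors precise. In the module case, sending $M \in A\lModnu$ to the cofree left $(\TC(A),b_A)$-comodule $\Free_{\TC(A)\lcomod}(\forget(M)[1])$ with coderivation $b_M$ determined by the $b_{M,i}$ of \eqref{ala} gives one functor; a morphism of $A$-modules $f:M\to N$ with components $f_i$ lifts to the unique $\TC(A)$-comodule morphism $F:\Free_{\TC(A)\lcomod}(\forget(M)[1]) \to \Free_{\TC(A)\lcomod}(\forget(N)[1])$ whose composition with $p:\Free_{\TC(A)\lcomod}(\forget(N)[1])\to N[1]$ restricts on $A[1]^{\otimes(i-1)}\otimes M[1]$ to $S\circ f_i\circ (S^{\otimes i})^{-1}$. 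In the reverse direction, given a cofree comodule $C=\Free_{\TC(A)\lcomod}(V)$ with degree $1$ coderivation $b_C$, the cogenerator space $V$ is recovered (up to canonical isomorphism) as the cokernel of the reduced coaction, and the components $b_{C,i}:=(p\circ b_C)|_{A[1]^{\otimes(i-1)}\otimes V}$ dualise via $S$ to module structure maps on $S^{-1}V$. One must check these two assignments are mutually inverse on objects and morphisms; this reduces to the universal properties and is essentially formal.

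Next I would verify that the two conditions on either side match. For structure maps, the key identity is that if $b_A$ and $b_M$ are the coderivations attached to $A$ and $M$, then the restriction of $b_M^2$ to $A[1]^{\otimes(n-1)}\otimes M[1]$ under $p$ is exactly the left-hand side of the $b$-shifted form of \eqref{modcompatability} (i.e.\ the sign-free version of Remark~\ref{annoyingsigns}). Hence $b_M^2=0$ if and only if the $m_{M,i}$ satisfy the module compatibility relations. Analogously, a degree $0$ comodule map $F$ with $F\circ b_M = b_N\circ F$, decomposed via $p$ into its components $F_i$, yields exactly the morphism compatibility \eqref{modmorphcompatibility}. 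I would check that composition of morphisms on both sides agrees, which again is a cogenerator-by-cogenerator computation using the comultiplication formula for $\Delta_M$.

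The bimodule case follows the same template with $\TC(A)$ on the left and $\TC(B)$ on the right: the underlying bigraded vector space $\bigoplus_{i,j\geq 0} A[1]^{\otimes i}\otimes M[1]\otimes B[1]^{\otimes j}$ is the left and right cofree $(\TC(A),\TC(B))$-bicomodule on $M[1]$, the degree $1$ bi-coderivations $b_M$ correspond via $p$ to families $b_{M,i,j}$, and squaring $b_M$ and collecting terms on each $A[1]^{\otimes i}\otimes M[1]\otimes B[1]^{\otimes j}$ recovers the compatibility relations of Definition~\ref{bimoddef}. The only genuinely delicate point, which I expect to be the main obstacle, is the bookkeeping of the universal property in the bi-cofree setting: one has to verify that any degree $1$ coderivation that is simultaneously a left and right $\TC(A)$-, resp.\ $\TC(B)$-, coderivation is uniquely determined by its components $b_{M,i,j}$, and conversely that arbitrary such components assemble to a well-defined coderivation on the bicomodule. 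Once this is in place, matching the bimodule morphism relations (and their composition law) to those of cobimodule maps is formally parallel to the one-sided case.
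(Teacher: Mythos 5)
Your proposal is correct and follows essentially the route the paper relies on: the paper itself gives no proof beyond the surrounding constructions (coderivations on $\TC(A)$ and on the cofree comodules being determined by their projections to cogenerators, with $b^2=0$ equivalent to the $A_\infty$ relations) and cites \cite{KLH}, and your sketch is the standard universal-property argument carried out there. The only minor imprecision is your recovery of the cogenerators as a ``cokernel of the reduced coaction'' (they are more naturally the kernel/primitive part of the coaction), but this is immaterial since objects of the comodule category come with their cogenerators specified.
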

This gives a handle on the enriched structure for $A\lModinu$ and $A\biModinu B$.
\begin{defn}
We define the category $A\lModinu$ as follows:
\begin{enumerate}
\item
The objects of $A\lModinu$ are just the left $A$-modules.
\item
Let $M,N\in \ob(A\lModinu)$.  Then 
\[
\Hom_{A\lModinu}^{\bullet}(M,N):=\Hom^{\bullet}_{\TC(A)\lcomod}(\overline{M}^l,\overline{N}^l)
\]
where here 
\[\overline{M}^l:=\Free_{\TC(A)\lcomod}(\forget(M)[1]).
\]
This morphism space is graded, since it denotes \textit{ungraded} comodule morphisms.
\item
The composition of morphisms $m_{A\lModinu,2}$ is just the natural composition of comodule morphisms.  All higher compositions are zero.
\item
The differential $d=m_{A\lmodinu,1}$ is given on homogeneous elements $f$ by $d(f)=f\circ b_{\overline{M}^l}+(-)^{|f|}b_{\overline{N}^l}\circ f$, where $b_{\overline{M}^l}$ and $b_{\overline{N}^l}$ are determined by the $A$-module structures on $M$ and $N$.
\end{enumerate}
\end{defn}
One verifies straight from the definitions that 
\[
\Zo^0(A\lModinu)\cong A\lModnu.
\]
It is also straightforward to show that 
\[
\Ho^0(A\lModinu)\cong \Di(A\lModnu)
\]
and so we are justified in calling $A\lModinu$ an \textit{enrichment} of $\Di (A\lModnu)$ -- it is an enrichment of the category $(A\lModnu)_{\ext}$.  Note that this $A_{\infty}$-enrichment is merely a differential graded category.
\bigbreak
The definition of $A\biModinu B$ is totally analogous, and is left to the reader, who may, as ever, consult \cite{KLH} for the precise definitions.
\bigbreak
In this context it is easy to define a tensor product for $A_{\infty}$-modules.  Note first that there is a naturally defined functorial tensor product on cofree differential graded comodules: as a first example let $\overline{M}^r$ be a cofree right $\TC(A)$-comodule, and let $\overline{N}^l$ be a cofree left $\TC(A)$-comodule.  Then, dually to the situation for free differential graded algebras, we define the tensor product $\overline{M}^r\otimes_{\TC(A)} \overline{N}^l$ as the shifted (we'll explain this shift in due course) \textit{limit}
\begin{equation}
\label{tp}
\xymatrix{
\lim(\overline{M}^r\otimes\overline{N}^l\ar@<.5ex>[r]^(.39){\id\otimes \Delta_N}\ar@<-.5ex>[r]_(.39){\Delta_M\otimes\id} & \overline{M}^r\otimes \TC(A) \otimes \overline{N}^l)[-1],
}
\end{equation}
where $\Delta_-$ are the comultiplications.  This limit inherits a natural differential, given by the fact that both maps commute with the differentials.  Now let $M$ be a $(B,A)$-bimodule, where $B$ is another $A_{\infty}$-algebra, and replace $\overline{M}^l$ with 
\[
\overline{M}^{rl}:=\Free_{B\coBimod A}(\forget(M[1])).
\]
Then $\overline{M}^{rl}$ carries two commuting coactions
\begin{align*}
\Delta_M^r:\overline{M}^{rl}\rightarrow \overline{M}^{rl}\otimes \TC(A)\\
\Delta_M^l:\overline{M}^{rl}\rightarrow \TC(B)\otimes \overline{M}^{rl},
\end{align*}
and the two terms of the diagram
\begin{equation}
\label{tp2}
\xymatrix{
\overline{M}^{rl}\otimes\overline{N}^l\ar@<.5ex>[r]^>>>>{\id\otimes \Delta_N}\ar@<-.5ex>[r]_>>>>{\Delta^r_M\otimes\id} & \overline{M}^{rl}\otimes \TC(A) \otimes \overline{N}^l
}
\end{equation}
are cofree $\TC(B)$-comodules.  One may check directly that the limit is also a cofree left $\TC(B)$-comodule, and that the differential again commutes with the morphisms of (\ref{tp2}), so that under the correspondence between free differential graded $\TC(B)$-comodules and $B$-modules, we have defined a left $B$-module.  Similarly, if $N$ is a $(A,C)$-bimodule, then now $\overline{M}^{rl}\otimes_{\TC(A)}\overline{N}^{rl}$ inherits the structure of a $(\TC(B),\TC(C))$-cobimodule.  Furthermore, by definition of a comodule map (not just the more restrictive differential graded comodule map) a comodule morphism
\[
f:M\rightarrow M'
\]
induces a morphism
\[
\xymatrix{
f\otimes N:\lim( \overline{M}^r\otimes\overline{N}^l\ar@<.5ex>[r]^>>>>{\id\otimes \Delta_N}\ar@<-.5ex>[r]_>>>>{\Delta_M\otimes\id} & \overline{M}^r\otimes \TC(A) \otimes \overline{N}^l)\ar[r]& \lim( \overline{M'}^r\otimes\overline{N}^l\ar@<.5ex>[r]^>>>>{\id\otimes \Delta_N}\ar@<-.5ex>[r]_>>>>{\Delta_M'\otimes\id} & \overline{M'}^r\otimes \TC(A) \otimes \overline{N}^l).
}
\]
Putting all this together one obtains
\begin{defn}
\label{enrichedtensor}
Let $N$ be a $(A,B)$-bimodule, and let $C$ be an $A_{\infty}$-algebra.  Then there is a strict $A_{\infty}$-functor 
\begin{equation*}
\xymatrix{
-\otimes_A N :C\biModinu A\hbox{ }\ar[r]& \hbox{ }C\biModinu B
}
\end{equation*}
as defined above.
\end{defn}
\begin{rem}
\label{funnyshift}
The reader may be wondering why we have this shift in (\ref{tp}).  One answer is that it is this shift that makes $\overline{A}^{rl}$ a monoidal unit in the category of $\TC(A)$-cobimodules, and it is this answer that recreates derived tensor products for usual algebras, as computed by the bar resolution, as we are about to see.
\end{rem}
We need to convert this description of $A_{\infty}$-tensor products back into the world of $A_{\infty}$-modules as defined in Definition \ref{moddef}.  So, concretely, we have that the underlying graded vector space of $M\otimes_A N$ is given by
\[
M\otimes_A N:=\bigoplus_{i\geq 0} M\otimes A[1]^{i}\otimes N
\]
with a differential $d$.  Write $d_i$ for the restriction of $d$ to the summand $M\otimes A[1]^i\otimes N$.  Then it is given by
\[
d_i=\sum_{\substack{l+m+n=i+2\\m<i+2}}(S^{-1}\otimes \id^{\otimes(i-m+1)}\otimes S^{-1})\circ(\id^{\otimes l}\otimes b_{m} \otimes \id^{\otimes (n-1)})\circ(S\otimes \id^{\otimes i}\otimes S).
\]
\begin{rem}
This is just the bar resolution.  As mentioned in \cite{LOT}, in the case where $N$ has a right $B$-action, this vector space has a right $B$-action too.  This again can be read off more easily from the coalgebra/comodule viewpoint, where it is part of Definition \ref{enrichedtensor}.
\end{rem}
\begin{rem}
For future reference we write down the differential on the right $\TC(B)$-comodule $\overline{M\otimes_{A}N}^r$ too.  The underlying comodule is given by
\[
\overline{M\otimes_{A}N}^r=\bigoplus_{i,j\geq 0} G_{i,j}[-1],
\]
where
\[
G_{i,j}:=M[1]\otimes A[1]^{\otimes i} \otimes N[1]\otimes  B[1]^{\otimes j}.
\]
The differential, restricted to $G_{i,j}$, is
\[
b_{\overline{M\otimes_{A}N}^r}|_{G_{i,j}}=\sum_{\substack{l+m+n=i+j+2\\l\geq 1}}\id^{\otimes l}\otimes b_{m}\otimes \id^{\otimes n}+\sum_{\substack{l+m=i+j+2\\l\leq i+1}}b_{l}\otimes \id^{\otimes m}.
\]
\end{rem}

So far we have considered enrichments only for categories of nonunital modules and bimodules.  Ultimately we will only be interested in unital modules and bimodules (or $S$-unital modules and bimodules), so we should adapt our treatment to this situation.  Furthermore, we must generalise to the case of the category of modules over a small $A_{\infty}$-category, to this end we briefly recall the treatment of Section 5 of \cite{Keller-A2}.
\bigbreak
So let $\mathcal{C}$ be a small $A_{\infty}$-category.  We denote by $\TCC(\mathcal{C})$ the (non-counital) cocategory that has the same objects as $\mathcal{C}$, and for $x,y\in \mathcal{C}$ has
\begin{equation}
\Hom_{\TCC(\mathcal{C})}(x,y):=\bigoplus_{i\geq 1} T_i(x,y),
\end{equation}
where
\begin{equation}
\label{Tdef}
T_i(x,y):=\bigoplus_{x_1,\ldots,x_{i-1}\in\mathcal{C}} \Hom_{\mathcal{C}}(x_{i-1},y)[1]\otimes \Hom_{\mathcal{C}}(x_{i-2},x_{i-1})[1]\otimes\ldots\otimes \Hom_{\mathcal{C}}(x,x_1)[1]
\end{equation}
(we take the above to mean that $T_1(x,y):=\Hom_{\mathcal{C}}(x,y)[1]$).  The cocomposition is defined via the obvious maps
\[
T_i(x,y)\rightarrow \bigoplus_{\substack{1\leq j\leq i-1\\z\in \ob(\mathcal{C})}}T_j(z,y)\otimes T_{i-j}(x,z).
\]
A degree 1 coderivation $b_{\mathcal{C}}$ on $\TCC(\mathcal{C})$ is a set of maps
\[
\Hom_{\TCC(\mathcal{C})}(x,y)[1]\rightarrow \Hom_{\TCC(\mathcal{C})}(x,y)[1]
\]
of degree 1 such that the following diagram commutes, for all $x$, $y$ and $z$:
\[
\xymatrix{
\Hom_{\TCC(\mathcal{C})}(x,z)[1]\ar[r]^(.35){\Delta_{x,y,z}} & \Hom_{\TCC(\mathcal{C})}(y,z)[1]\otimes \Hom_{\TCC(\mathcal{C})}(x,y)[1]\\
\Hom_{\TCC(\mathcal{C})}(x,z)[1]\ar[u]^{b_{\mathcal{C}}}\ar[r]^(.35){\Delta_{x,y,z}}& \Hom_{\TCC(\mathcal{C})}(y,z)[1]\otimes \Hom_{\TCC(\mathcal{C})}(x,y)[1]\ar[u]_{b_{\mathcal{C}}\otimes \id+\id\otimes b_{\mathcal{C}}},
}
\]
where $\Delta_{x,y,z}:\Hom_{\TCC(\mathcal{C})}(x,z)\rightarrow \Hom_{\TCC(\mathcal{C})}(y,z)\otimes \Hom_{\TCC(\mathcal{C})}(x,y)$ is the cocomposition map.  Then a coderivation $b_{\mathcal{C}}$ is determined by the compositions 
\[
\xymatrix{
T_n(x,y)\ar[r]^(.4){b_{\mathcal{C}}|_{T_n}}& \bigoplus_{i\geq 1} T_i(x,y)\ar[r]&T_1(x,y)=\Hom_{\mathcal{C}}(x,y)[1],
}
\]
and we have the same story as with $A_{\infty}$-algebras.  Let $M$ be a $\mathcal{C}$-module.  Then following this lead we define the $\TCC(\mathcal{C})$-comodule
\[
\overline{M}^l(x):=\bigoplus_{\substack{i\geq 0\\x,y\in \ob(\mathcal{C})}}T_i(x,y)\otimes M[1](x),
\]
where we set $T_0(x,x)=k$, and the comodule structure is given by the maps
\[
T_i(x,y)\otimes M[1](x)\rightarrow \bigoplus_{\substack{1\leq j\leq i\\z\in \ob(\mathcal{C})}}T_j(z,y)\otimes (T_{i-j}(x,z)\otimes M[1](x)).
\]
As before, there is a 1-1 correspondence between coderivations $b$ on $\overline{M}^l$ satisfying $b^2=0$ and $\mathcal{C}$-module structures on $M$.  In the same way as for modules over an $A_{\infty}$-algebra we may define the enriched category $\mathcal{C}\lModinu$ as the category having as objects the nonunital modules $M$ over $\mathcal{C}$, and as morphism complexes the (ungraded) morphisms of comodules:
\begin{equation*}
\Hom_{\mathcal{C}\lModinu}(M,N)=\Hom_{\TCC(\mathcal{C})\lcomod} (\overline{M}^l,\overline{N}^l),
\end{equation*}
the grading coming from the grading on $\overline{M}^l$ and $\overline{N}^l$.
\bigbreak
It is again easy to verify that 
\[
\Ho^n(\Hom_{\mathcal{C}\lModinu}(M,N))\cong \Hom_{\mathcal{C}\lModnu}(M,N[n])/\sim_{\mathrm{homotopy}},
\]
and so we have an enrichment of the graded category $(\mathcal{C}\lModnu)_{\ext}$ with objects the $\mathcal{C}$-modules, and 
\[
\Hom_{(\mathcal{C}\lModnu)_{\ext}}^n(M,N)=\Hom_{\mathcal{C}\lModnu}(M,N[n])/\sim_{\mathrm{homotopy}}.
\]  
We define $(\mathcal{C}\lMod)_{\ext}$, in the natural way, to be the subcategory of the graded category $(\mathcal{C}\lModnu)_{\ext}$ with objects the strictly unital modules, and
\[
\Hom_{(\mathcal{C}\lMod)_{\ext}}^n(M,N)=\Hom_{\mathcal{C}\lMod}(M,N[n])/\sim_{\mathrm{homotopy}},
\]
the unital morphisms up to homotopy.  The key proposition is the following:
\begin{prop}
\label{nicething}
Let $\mathcal{C}$ be a strictly unital $A_{\infty}$-category.  The inclusion $(\mathcal{C}\lMod)_{\ext}\rightarrow (\mathcal{C}\lModnu)_{\ext}$ is fully faithful.
\end{prop}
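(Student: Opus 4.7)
The assertion is that for strictly unital $\mathcal{C}$-modules $M$, $N$, the natural map
\[
\Hom_{\mathcal{C}\lMod}(M,N[n])/\!\sim_{\mathrm{homotopy}}\;\longrightarrow\;\Hom_{\mathcal{C}\lModnu}(M,N[n])/\!\sim_{\mathrm{homotopy}}
\]
is a bijection for every $n$. I would lift this to a DG-enrichment statement: introduce the subcomplex $\Hom^{\mathrm{u}}(M,N)\subset \Hom_{\mathcal{C}\lModinu}(M,N)$ of those coderivation-morphisms $f=(f_n)$ whose $f_n$ annihilate any tensor containing a unit $u_x$ of $\mathcal{C}$ in an $A$-slot, and show that the inclusion $\Hom^{\mathrm{u}}(M,N)\hookrightarrow\Hom_{\mathcal{C}\lModinu}(M,N)$ is a quasi-isomorphism. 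Since $\Zo^0(\Hom^{\mathrm{u}})$ is exactly the set of strictly unital morphisms and $\Ho^0(\Hom^{\mathrm{u}})$ is exactly unital morphisms modulo unital homotopies, passing to $\Ho^n$ gives the claimed bijection.

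First I would check that $\Hom^{\mathrm{u}}(M,N)$ is closed under $d(f)=b_{\overline{N}^l}\circ f-(-1)^{|f|}f\circ b_{\overline{M}^l}$. Reading off the compatibility formula (\ref{modmorphcompatibility}), every term in $(df)_n(\ldots,u_x,\ldots)$ that could be non-zero must have the unit absorbed by some $m_k$ before $f$ is applied; strict unitality of $\mathcal{C}$, $M$ and $N$ kills all such absorptions for $k\geq 3$ together with $m_1(u_x)=0$, so only $b_2$-absorptions contribute. A sign-tracking computation then shows that for each position $j$ of the unit, the contributions from $m_{\mathcal{C},2}(u_x,a_{j+1})$ and $m_{\mathcal{C},2}(a_{j-1},u_x)$ cancel in pairs, with boundary corrections at $j=1$ and $j=n-1$ absorbed respectively by the $m_{N,2}(u_x,-)$- and $m_{M,2}(u_x,-)$-terms; hence $\Hom^{\mathrm{u}}$ is a genuine subcomplex.

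The heart of the proof is to produce a contracting chain homotopy onto $\Hom^{\mathrm{u}}$. I would use the cosimplicial structure on $\Hom_{\mathcal{C}\lModinu}(M,N)$ whose cofaces are ``insert a unit $u_x$ into the $i$-th $A$-slot'' and whose codegeneracies are ``delete the $i$-th slot if it is a unit'' -- strict unitality of $\mathcal{C}$, $M$ and $N$ is exactly what makes the cosimplicial identities hold on the nose after interaction with the differential, since the higher-arity absorptions $b_{\mathcal{C},k}(\ldots,u_x,\ldots)$, $b_{M,k}(\ldots,u_x,\ldots)$, $b_{N,k}(\ldots,u_x,\ldots)$ all vanish for $k\geq 3$. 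The joint kernel of the codegeneracies is precisely $\Hom^{\mathrm{u}}(M,N)$, so the Dold--Kan alternating-sum construction provides a projection $\pi$ onto $\Hom^{\mathrm{u}}$ and a chain homotopy $H$ with $\id-\iota\pi=dH+Hd$; fullness (any nonunital cycle $f$ is cohomologous to $\iota\pi(f)$) and faithfulness (a unital cycle $f=dg$ equals $d(\iota\pi(g))$) follow immediately. The main obstacle throughout is the combinatorial sign-tracking, both for the subcomplex verification and for the Dold--Kan identities; once organised by the number of unit-slot insertions, no deeper input is needed.
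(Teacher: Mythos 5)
Your reduction — show that the subcomplex $\Hom^{\mathrm{u}}(M,N)\subset\Hom_{\mathcal{C}\lModinu}(M,N)$ of strictly unital morphisms is included quasi-isomorphically, then take $\Ho^n$ — is the right frame, and your check that $\Hom^{\mathrm{u}}$ is a subcomplex is correct: on an input containing a unit, strict unitality of $\mathcal{C}$, $M$, $N$ kills everything except the two $b_2$-absorptions, which cancel in pairs, with the boundary positions handled by the $m_{N,2}(u_x,-)$ and $m_{M,2}(u_x,-)$ terms. Note also that the paper itself gives no proof of this proposition; it simply cites Proposition 3.3.1.8 of Lef\`evre-Hasegawa, so there is no argument in the text for yours to parallel.

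The gap is at the heart of your argument, the appeal to Dold--Kan. The operators ``insert a unit in the $i$-th slot'' and ``delete the $i$-th slot if it is a unit'' do not make $\Hom_{\mathcal{C}\lModinu}(M,N)$ into a cosimplicial object in complexes: precomposition with unit insertion fails to commute with the differential precisely by the $b_2$-absorption terms, and the deletion operators require a splitting $\Hom_{\mathcal{C}}(x,x)=k\,u_x\oplus\overline{\Hom}(x,x)$ that the structure maps do not respect --- $m_1$ and the $m_k$ applied to non-unit morphisms may well have components along the units, so even the grading by ``number of units in the input'' is not preserved by $d$. Consequently Dold--Kan, which in any case only yields a contraction for the alternating sum of the cofaces, cannot be quoted to produce $H$ with $\id-\iota\pi=dH+Hd$ for the genuine differential; that identity is exactly what must be proved, and it is where all the work lies. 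A repair along your lines needs two further ingredients: (i) filter the quotient $\Hom/\Hom^{\mathrm{u}}$ by the number of \emph{non-unit} slots --- this filtration \emph{is} preserved by $d$, since no term of $d$ increases it --- so that the associated graded differential consists only of internal differentials and unit absorptions, where the classical normalization (extra-degeneracy) homotopy applies; and (ii) a completeness/convergence argument, because the morphism complex is an infinite product over arities, to pass from acyclicity of the associated graded to acyclicity of the quotient. Equivalently, one can write down the explicit homotopy inserting a unit adjacent to the output and show $dH+Hd=\id-T$ with $T$ topologically nilpotent for this filtration. Either way the proof is a filtration/perturbation argument, as in the cited reference, not an off-the-shelf normalization theorem.
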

This is Proposition 3.3.1.8 of \cite{KLH}.
Now we expect of any reasonable enrichment $\mathcal{E}$ of the category $(\mathcal{C}\lMod)_{\ext}$ that we should be able to find a lift of the inclusion 
\[
(\mathcal{C}\lMod)_{\ext}\rightarrow (\mathcal{C}\lModnu)_{\ext}
\]
to an inclusion of $A_{\infty}$-categories
\[
\mathcal{E}\rightarrow \mathcal{C}\lModinu
\]
and so from the homological perturbation theorem for categories (see Remark \ref{sketchyhpt}), we deduce that we can just take the full subcategory of $\mathcal{C}\lModinu$ whose objects are strictly unital modules, to be our category $\mathcal{C}\lModi$.
\bigbreak
There is a suitably modified version of Proposition \ref{nicething} for bimodules (again, this is just Proposition 3.3.2.2 of \cite{KLH}), and so, similarly, we define the category $\mathcal{C}\biModi\mathcal{D}$ to be the full subcategory of $\mathcal{C}\biModinu \mathcal{D}$ with objects the strictly unital $(\mathcal{C},\mathcal{D})$-bimodules.
\bigbreak
We finally come to deal with enrichments of the category $A\lMod_S$, for $(A,S,l)$ an $S$-unital $A_{\infty}$-algebra.  
\begin{assum}
\label{assume}
We assume that $l$ is an injection of graded vector spaces, that $S$ is an ordinary algebra, and that $m_{A,1}\circ l=0$.  Under these circumstances we will denote by $\otimes_S$ the \textit{ordinary} tensor product of $S$-modules.  $A$ is a graded bimodule over $S$, and its $A_{\infty}$-structure as a strictly unital algebra over $S$ corresponds to the structure of a unital $A_{\infty}$-algebra object in the category of graded $S$-bimodules.  We finally assume that the inclusion of ordinary graded $S$-bimodules $S\rightarrow A$ splits.
\end{assum}
Now let $M$ be an ordinary left $S$-module.  Then $\End_k(M)$ acquires the structure of an ordinary algebra object in the category of graded $S$-bimodules, and so also the structure of an $A_{\infty}$-algebra object in this category, and a strictly $S$-unital $A$-module structure on $M$ corresponds to a morphism of $A_{\infty}$-algebra objects $A_S\rightarrow \End_k(M)$, where $A_S$ is the unital $A_{\infty}$-algebra object in the category of graded $S$-bimodules induced by $A$, as in Proposition \ref{yeti}.
\bigbreak
We have a completely analogous story to the one before, involving free co-objects associated to $A_{\infty}$-objects.  Briefly, strictly $S$-unital $A_{\infty}$-algebra structures on $A$ give coderivations $b$ satisfying $b^2=0$ on the graded coalgebra object 
\[
\TC_S(A)=\oplus_{i\geq 1}(A[1])^{\otimes_S^i}
\]
satisfying $b^2=0$.  Furthermore, if we let
\[
\overline{M}^{l_S}=\Free_{\TC_S(A)\lcomod}(\forget_S(M)[1])=\oplus_{i\geq0} ((A[1])^{\otimes_S^i}\otimes_S M[1])
\]
then strictly $S$-unital $A$-module structures on $M$ extending the $S$-module structure on $M$ give rise to coderivations $b$ on this comodule object satisfying $b^2=0$.
\bigbreak
We define $A\lModi_{,S}$ for $A$ a strictly $S$-unital algebra by letting the objects be the strictly $S$-unital modules, and letting the homomorphism complexes be given by the spaces of (ungraded) free comodule morphisms
\[
\Hom_{A\lModi_{,S}}(M,N)=\Hom_{\TC_S(A)\lcomod}(\overline{M}^{l_S},\overline{N}^{l_S}).
\]
Proposition \ref{nicething} works in this setting too, and so these morphism spaces actually compute strictly $S$-unital morphisms, after taking homology, as before.  The story for bimodules is similar.
\begin{rem}
The notion of $S$-unital algebra, even for $S$ satisfying the restrictive conditions of Assumption \ref{assume}, extends the notion of a (small) unital category, since any (small) category $\mathcal{C}$ can be encoded in its quiver algebra $Q\mathcal{C}$, with each unit of the category then becoming an idempotent.  Letting $S$ be the algebra spanned as a vector space by these idempotents, modules over the category $\mathcal{C}$ become modules over the quiver algebra $Q\mathcal{C}$, with unital $\mathcal{C}$-modules corresponding to $S$-unital modules.  By construction, this is actually an equivalence at the level of $A_{\infty}$-categories.
\end{rem}
\section{The $A_{\infty}$ Yoneda embedding}
\label{twistcomp}
So far we have defined enrichments of the ordinary categories of $A_{\infty}$-modules over an $A_{\infty}$-category, and tensor products for appropriate right/left/bimodules.  Both of these constructions produce collections of differential graded vector spaces.  In the main body of the thesis these complexes will form differential graded vector \textit{bundles} over some proxy of the `stack of objects' in the category of perfect modules for an $A_{\infty}$-category $\mathcal{C}$.  We would like, in some sense, for these differential graded vector bundles to be finite-dimensional.  Ideally what we want is the rather strong statement that the underlying graded vector bundle should be finite-dimensional, not just the total homology of the differential graded vector bundle (see Remark \ref{tomp}).  Consider just homomorphism spaces for now.  It is clear enough that the spaces we have defined, which are basically variations on the bar complex, will never be finite-dimensional.  However, this (strong) notion of finiteness for differential graded homomorphism spaces clearly is not invariant under quasi-equivalences, so we might hope, at least when we restrict to the category of perfect modules, to be able to find a quasi-equivalence to a category with finite-dimensional homomorphism spaces, and maybe also find finite-dimensional models for tensor products.  We next introduce the Yoneda embedding/Lemma, which is the crucial ingredient for dealing with this problem.  The properties of this embedding are studied extensively in \cite{KLH}.
\bigbreak
Let $\mathcal{C}$ be an $A_{\infty}$-category.  We assume throughout that $\mathcal{C}$ is strictly unital.  Associated to an arbitrary $x\in \ob(\mathcal{C})$, there is a right $\mathcal{C}$-module $h_x$ satisfying
\[
h_x(z):=\Hom_{\mathcal{C}}(z,x),
\]
and a left $\mathcal{C}$-module $h^x$ satisfying
\[
h^x(z):=\Hom_{\mathcal{C}}(x,z).
\]
The right $\mathcal{C}$-module structure on $h_x$ is given by the compositions
\[
b_{n+1}:\Hom_{\mathcal{C}}(z_n,x)[1]\otimes \Hom_{\mathcal{C}}(z_{n},z_{n-1})[1]\otimes\cdots\otimes \Hom_{\mathcal{C}}(z_0,z_1)[1]\rightarrow \Hom_{\mathcal{C}}(z_0,x)[1]
\]
that define $\mathcal{C}$ as an $A_{\infty}$-category, and the left $\mathcal{C}$-module structure on $h^x$ is defined similarly.  We recall (see \cite{Seidel08} and \cite{KLH}) that $x\mapsto h^x$ can be upgraded to an $A_{\infty}$-functor of $A_{\infty}$-categories: 
\begin{align}
h^{-}:\mathcal{C}^{\op}\rightarrow &\mathcal{C}\lModi\\
h^{-}:x\rightarrow & \Hom_{\mathcal{C}}(x,-),
\end{align}
and the analogous statement holds for $h_-$.  In the language of left $\TCC(\mathcal{C})$-cocategories, the module $h^x$ is sent to the left $\TCC(\mathcal{C})$-comodule
\[
z\mapsto\bigoplus_{x_1,\ldots,x_{i-1}\in \ob(\mathcal{C})}\Hom_{\mathcal{C}}(x_{i-1},z)[1]\otimes\ldots\otimes \Hom_{\mathcal{C}}(x,x_1)[1].
\]
As ever, define 
\[
\overline{h^x}^l=\Free_{\TCC(\mathcal{C})\lcomod}(h^x[1])
\]
and 
\[
\overline{h^y}^l=\Free_{\TCC(\mathcal{C})\lcomod}(h^y[1]).
\]
A map $\phi:\overline{h^y}^l\rightarrow \overline{h^x}^l$ is determined by the induced maps
\[
\phi_i:\bigoplus_{y_1,\ldots,y_{i-1}\in \ob(\mathcal{C})}\Hom_{\mathcal{C}}(y_{i-1},z)[1]\otimes\ldots\otimes \Hom_{\mathcal{C}}(y,y_1)[1]\rightarrow \Hom_{\mathcal{C}}(x,z)[1].
\]
To define the contravariant functor $h^-$ we must give maps
\[
(h^-)_n:\Hom_{\mathcal{C}}(x_{n-1},y)[1]\otimes\ldots\otimes \Hom_{\mathcal{C}}(x,x_1)[1]\rightarrow \Hom_{\TCC(\mathcal{C})}(\overline{h^y}^l,\overline{h^x}^l).
\]
There is an obvious choice, given by setting $((h^-)_n)_m=b_{n+m}$, and this satisfies the required compatibility conditions by definition.
\begin{prop}[$A_{\infty}$ Yoneda embedding \cite{KLH}]

The $A_{\infty}$-functor
\begin{align*}
h^-:\mathcal{C}^{\op}\rightarrow &\mathcal{C}\lModi\\
x\mapsto &h^x:=\Hom_{\mathcal{C}}(x,-)
\end{align*}
is a quasi-fully faithful functor.
\end{prop}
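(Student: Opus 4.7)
The plan is to deduce quasi-full-faithfulness from an $A_\infty$ Yoneda lemma, namely the assertion that for any strictly unital left $\mathcal{C}$-module $M$ and any $x\in\ob(\mathcal{C})$, the evaluation-at-identity map
\[
\mathrm{ev}_x:\Hom_{\mathcal{C}\lModi}(h^x,M)\longrightarrow M(x),\qquad \phi\longmapsto \phi_0(u_x)
\]
(where $\phi_0$ denotes the component of $\phi$ landing in the length-zero summand of $\overline{M}^l(x)$, and $u_x$ is the shifted strict unit) is a quasi-isomorphism of complexes of $k$-vector spaces. Specialising to $M=h^y$ gives $\mathrm{ev}_x:\Hom_{\mathcal{C}\lModi}(h^x,h^y)\xrightarrow{\sim} h^y(x)=\Hom_{\mathcal{C}}(y,x)=\Hom_{\mathcal{C}^{\op}}(x,y)$, and the composition $\mathrm{ev}_x\circ (h^-)_1$ is, by direct inspection of the definition $((h^-)_n)_m=b_{n+m}$ together with the axiom $b_2(\alpha,u_x)=\alpha$, simply the identity. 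Since $(h^-)_1$ is a section of a quasi-isomorphism it is itself a quasi-isomorphism; in an $A_\infty$-functor, $F_1$ being a quasi-isomorphism on all $\Hom$-spaces is precisely quasi-full-faithfulness, so this is all we need to verify.

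Next I would construct the inverse to $\mathrm{ev}_x$ at the chain level. The insertion map
\[
j_x:M(x)\longrightarrow \Hom_{\mathcal{C}\lModi}(h^x,M)
\]
sends $m\in M(x)$ to the comodule morphism whose $n$-th component
\[
\Hom_{\mathcal{C}}(x_{n-1},z)[1]\otimes\cdots\otimes \Hom_{\mathcal{C}}(x,x_1)[1]\otimes \Hom_{\mathcal{C}}(x,x)[1]\longrightarrow M(z)[1]
\]
for $n\geq 1$ is $b^M_{n+1}(-,\ldots,-,m)$, regarding $m$ as lying in the degree-zero summand. Checking that $j_x(m)$ is a comodule morphism (i.e.\ commutes with the coderivations) amounts to the $A_\infty$-module axioms for $M$, and $\mathrm{ev}_x\circ j_x=\mathrm{id}_{M(x)}$ follows from strict unitality via $b_2(u_x,m)=m$.

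For $j_x\circ \mathrm{ev}_x\sim \mathrm{id}$ I would write down an explicit chain homotopy $H:\Hom_{\mathcal{C}\lModi}(h^x,M)\to \Hom_{\mathcal{C}\lModi}(h^x,M)$ of degree $-1$. The standard choice inserts a shifted strict unit $u_x$ into the argument: the $n$-th component of $H(\phi)$ evaluates $\phi_{n+1}$ on the $(n+1)$-tuple $(\alpha_{n-1},\ldots,\alpha_0,u_x,\mathrm{id}_x)$ (with appropriate sign), and the identity
\[
dH+Hd=j_x\mathrm{ev}_x-\mathrm{id}
\]
follows from the coderivation property of $\phi$ together with the strict-unitality relations $b_n(\ldots,u_x,\ldots)=0$ for $n\geq 3$ and $b_2(u_x,\alpha)=\alpha=b_2(\alpha,u_x)$, which cause all telescoping terms to collapse except the two boundary contributions.

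The main obstacle is keeping signs and summation ranges straight when verifying $dH+Hd=j_x\mathrm{ev}_x-\mathrm{id}$; in particular one must track the interaction of the coderivation on $\overline{h^x}^l$ with the Yoneda direction, and use both the $A_\infty$-relations on $\mathcal{C}$ and the module axioms for $M$, together with strict unitality at every stage where a $u_x$ is inserted, absorbed, or extracted. Once this homotopy is in hand, the quasi-isomorphism assertion follows, and hence so does the proposition.
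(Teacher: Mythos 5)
The paper gives no proof of this proposition at all — it is quoted directly from \cite{KLH} — and your argument is exactly the standard one found there (and in Seidel's book): reduce quasi-full-faithfulness to the $A_{\infty}$ Yoneda lemma, i.e.\ that evaluation at the strict unit $\Hom_{\mathcal{C}\lModi}(h^x,M)\to M(x)$ is a quasi-isomorphism, observe that $\mathrm{ev}_x\circ (h^-)_1=\mathrm{id}$ by strict unitality ($b_2(u_x,\alpha)=\pm\alpha$), and prove the Yoneda lemma by the unit-insertion homotopy; since quasi-full-faithfulness in this paper means precisely that $F_1$ is a quasi-isomorphism on each morphism complex, this does suffice. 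Two small repairs are needed in your write-up. First, the homotopy must insert the strict unit exactly \emph{once}: the former module-slot argument $\beta\in h^x(w)$ is moved into the bar chain and $u_x\in h^x(x)$ becomes the new module-slot argument, so the component of $H(\phi)$ is $\pm\phi_{n+1}(\alpha_{n-1},\ldots,\alpha_1,\beta,u_x)$; the tuple $(\alpha_{n-1},\ldots,\alpha_0,u_x,\mathrm{id}_x)$ as you wrote it lists the unit twice (in this setting $u_x$ and $\mathrm{id}_x$ are the same element), which would give an operator of degree $-2$ rather than a degree $-1$ homotopy, and the telescoping collapse would fail. Second, that $j_x(m)$ is a comodule morphism is automatic by cofreeness (any collection of components corestricted to the cogenerators determines one); the verification that actually uses the $A_{\infty}$-module axioms is that $j_x$ commutes with the differentials of $M(x)$ and of the morphism complex, i.e.\ is a chain map. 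With those adjustments your proof is complete and is the intended one.
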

Similarly, we have 
\begin{prop}[$A_{\infty}$ Yoneda embedding, covariant version]
The $A_{\infty}$-functor
\begin{align*}
h_-:\mathcal{C}\rightarrow &\rModi \mathcal{C}\\
x\mapsto &h_x:=\Hom_{\mathcal{C}}(-,x)
\end{align*}
is a quasi-fully faithful functor.
\end{prop}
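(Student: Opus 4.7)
The plan is to reduce the covariant statement to the contravariant one that has just been established, rather than redoing all of the underlying comodule computations. Recall that the category $\mathcal{C}^{\op}$ has the same objects as $\mathcal{C}$ and morphism spaces $\Hom_{\mathcal{C}^{\op}}(x,y) = \Hom_{\mathcal{C}}(y,x)$, with higher multiplications obtained from those of $\mathcal{C}$ by precomposition with a symmetry isomorphism of $\dgvect$. Applying the already-stated contravariant Yoneda embedding to $\mathcal{C}^{\op}$, and using $(\mathcal{C}^{\op})^{\op} \cong \mathcal{C}$, we obtain a quasi-fully faithful $A_{\infty}$-functor
\[
h^{-}_{\mathcal{C}^{\op}} : \mathcal{C} \rightarrow \mathcal{C}^{\op}\lModi.
\]
The first step is therefore to exhibit a canonical quasi-equivalence of $A_{\infty}$-categories $\mathcal{C}^{\op}\lModi \cong \rModi\mathcal{C}$, sending $h^{x}_{\mathcal{C}^{\op}}$ to $h_{x}$. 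This is essentially a book-keeping exercise: at the level of underlying objects, a strictly unital $A_{\infty}$-functor from $\mathcal{C}^{\op}$ to $\dgvect$ is the same data as a strictly unital $A_{\infty}$-functor from $\mathcal{C}$ to $\dgvect^{\op}$, and the cofree comodule descriptions $\overline{M}^{l}$ over $\TCC(\mathcal{C}^{\op})$ on the one hand, and $\overline{M}^{r}$ over $\TCC(\mathcal{C})$ on the other (cf.\ the definition of $T_i$ in \eqref{Tdef}), match up under the symmetry isomorphism of the underlying cocategories.

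Having set up this equivalence, I would compose the two to obtain the covariant Yoneda functor $h_{-}$ with explicit components
\[
(h_{-})_n : \Hom_{\mathcal{C}}(x_{n-1},x_n)[1] \otimes \cdots \otimes \Hom_{\mathcal{C}}(x_{0},x_{1})[1] \rightarrow \Hom_{\rModi\mathcal{C}}(h_{x_{0}},h_{x_{n}}),
\]
given on an $m$-fold tensor product of inputs in $\rModi\mathcal{C}$ by the compositions $b_{n+m}$ of $\mathcal{C}$, possibly reflected by the appropriate swap isomorphism coming from the translation between $\mathcal{C}$ and $\mathcal{C}^{\op}$. The compatibility axioms for $h_{-}$ then follow automatically from those verified on the $\mathcal{C}^{\op}$-side, and quasi-fully faithfulness transports across the equivalence of enriched categories established in the first step.

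For the quasi-fully faithfulness itself, one could alternatively give a direct argument that mirrors the contravariant case, and this is what I expect will be hiding inside the appeal to Proposition~\ref{nicething} and its right-module analogue: the morphism complex $\Hom_{\rModi\mathcal{C}}(h_{x},h_{y})$ is computed as the complex of cofree right $\TCC(\mathcal{C})$-comodule maps $\overline{h_{x}}^{r}\rightarrow \overline{h_{y}}^{r}$, and this splits as a direct sum over $T_i$-pieces that admit an explicit contracting homotopy analogous to the bar resolution, leaving only the $i=0$ summand $\Hom_{\mathcal{C}}(x,y)$ in homology. Taking $H^{0}$ then recovers the ordinary Yoneda lemma for the graded homotopy category $\Ho^{\bullet}(\mathcal{C})$ acting on itself by right multiplication.

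The main obstacle, and the part I would treat most carefully, is the sign and swap bookkeeping in the first step: the monoidal structure on $\dgvect$ is twisted by the Koszul convention, so when unwrapping cofree comodules over $\TCC(\mathcal{C}^{\op})$ into cofree comodules over $\TCC(\mathcal{C})$, one has to track the reflection isomorphisms $\mathsf{ref}_i$ from the definition of $\mathcal{C}^{\op}$ carefully enough to ensure that the equivalence $\mathcal{C}^{\op}\lModi \cong \rModi\mathcal{C}$ is an $A_{\infty}$-equivalence and not merely an equivalence of underlying graded categories. Once this is pinned down the rest is formal, and the proposition is immediate.
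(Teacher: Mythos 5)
Your reduction to the contravariant case via $\mathcal{C}^{\op}$ is correct and is essentially the paper's own (implicit) argument: the paper offers no separate proof, stating the covariant version with ``Similarly'' and citing \cite{KLH}, and since the paper defines right $\mathcal{C}$-modules precisely as $A_{\infty}$-functors from $\mathcal{C}^{\op}$ to $\dgvect$, the equivalence $\mathcal{C}^{\op}\lModi\cong\rModi\mathcal{C}$ you set up is essentially definitional, with only the comodule-side reflection bookkeeping you already flag. Your alternative direct sketch (contracting the bar-type pieces so that only $\Hom_{\mathcal{C}}(x,y)$ survives in homology) is likewise the content of the cited argument in \cite{KLH}, mirroring the homotopy written out in the proof of Proposition \ref{bas}.
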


\begin{defn}
\label{embe}
Given an $A_{\infty}$-category $\mathcal{C}$, we denote by 
\[
h^{\mathcal{C}}\subset \mathcal{C}\lModi
\]
the subcategory of left $\mathcal{C}$-modules with objects the $h^x$, for $x\in \ob(\mathcal{C})$, and with morphisms the $h^{\phi}$, for $\phi$ morphisms of $\mathcal{C}$  We denote by
\[
h^{\mathcal{C}}[\mathbb{Z}]\subset \mathcal{C}\lModi
\]
the subcategory of left $\mathcal{C}$-modules with objects shifts of $h^x$, and with morphisms from $h^y[m]$ to $h^x[n]$ given by $S^n h^{\phi} S^{-m}$, for $\phi\in \Hom_{\mathcal{C}}(x,y)$.  We define $h_{\mathcal{C}}$ and $h_{\mathcal{C}}[\mathbb{Z}]$ similarly.
\end{defn}
There is a natural isomorphism of $A_{\infty}$-categories
\[
h^{\mathcal{C}}[\mathbb{Z}]\cong h_{\mathcal{C}}[\mathbb{Z}]^{\op}.
\]

The $A_{\infty}$-Yoneda embedding provides the start of an answer to the problem of finding finite models for $\Hom_{\mathcal{C}\lmodi}(M,N)$, where $M$ and $N$ are unital left $\mathcal{C}$-modules, since if $x,y\in \ob(\mathcal{C})$ then we have that $\Hom_{\mathcal{C}\lmodi}(h^x,h^y)$ is functorially quasi-isomorphic to $\Hom_{\mathcal{C}}(y,x)$.  This motivates the following definition.
\begin{defn}
\label{findimcat}
Let $\mathcal{C}$ be an $A_{\infty}$-category.  We say $\mathcal{C}$ is finite-dimensional if $|\ob(\mathcal{C})|<\infty$ and for each $x,y\in \ob(\mathcal{C})$, $\dim(\Hom_{\mathcal{C}}(x,y))<\infty$.
\end{defn}
Note that if $\mathcal{C}$ is a finite-dimensional $A_{\infty}$-category then $h^{\mathcal{C}}[\mathbb{Z}]$ is a subcategory of $\mathcal{C}\lmodi$, and $h_{\mathcal{C}}[\mathbb{Z}]$ is a subcategory of $\rmodi\mathcal{C}$.
\section{Triangulated structure}
\label{triangstruc}
We recall here the triangulated structure on $\mathcal{C}\lModi$, discussed elsewhere in \cite{keller-intro}, \cite{KLH} and \cite{Seidel08}.  Given an arbitrary $A_{\infty}$-category $\mathcal{D}$ we have a quasi-fully faithful functor 
\[
h_-:\mathcal{D}\rightarrow \rModi \mathcal{D}
\]
into a differential graded category (recall that our model for $\rModi\mathcal{D}$ is actually a differential graded category).  Now $\Zo^0(\rModi\mathcal{D})$ is an ordinary category, and we can describe mapping cones in this category in the same way that we describe mapping cones of differential graded modules over a differential graded algebra.  Namely, let 
\[
\phi:M\rightarrow N
\]
be a morphism of differential graded $\TCC(\mathcal{D})$-comodules from $(M,d_M)$ to $(N,d_N)$, then we define the underlying graded comodule of $\cone(\phi)$ to be $M[1]\oplus N$.  We form the map $d_{\phi}$ as the composition
\[
\xymatrix{
M[1]\oplus N\ar[r]^(.57){\pi} &M[1]\ar[r]^(.55){\phi}& N\ar[r]^(.33)i&M[1]\oplus N
}
\]
where $\pi$ is the natural projection, $i$ the natural inclusion, and we are considering $\phi$ as a degree 1 map.  We give $M[1]\oplus N$ the differential $d_{M[1]}+d_{N}+d_{\phi}$, where $d_{M[1]}^n=-d_{M}^{n+1}$.  There is a natural series of maps
\[
\xymatrix{
M\ar[rr]^{\phi}&&N\ar[dl]\\
&\cone(\phi)\ar@{.>}[ul]
}
\]
where the dotted arrow is a degree 1 map.  Allowing diagrams that are quasi-isomorphic to these to be our distinguished triangles, and passing to the homotopy category, we obtain the structure of a triangulated category on $\Ho^{0}(\TCC(\mathcal{D})\lcomod)$, inducing the structure of a triangulated category on $\Di(\mathcal{D}\lModi)$.  One says $\mathcal{D}$ is a triangulated $A_{\infty}$-category if its quasi-essential image under $h_-$ is closed under taking triangles in $\rModi\mathcal{D}$.\bigbreak
One can use the Yoneda embedding to understand the compositions of morphisms into and out of cones in $A_{\infty}$-categories.  So we begin by fixing $\mathcal{D}$, a triangulated $A_{\infty}$-category.  Let $\phi\in \Zo^n(\Hom_{\mathcal{D}}(x,y))$ be a morphism.  Then we construct, as above, the cone of the induced map
\[
h_-(\phi):h_x\rightarrow h_y[n].
\]
This corresponds to the underlying $\TCC(\mathcal{D})$-comodule
\begin{equation*}
\begin{array}{c}
\cone(h_-(\phi))(z)=\left(\bigoplus_{\substack{i\geq 1\\x_1,\ldots,x_{i-1}\in \ob(\mathcal{C})}}\limits \Hom_{\mathcal{D}}(x_{i-1},x)[1]\otimes\ldots\otimes \Hom_{\mathcal{D}}(z,x_1)[1]\right)[1] \oplus\\ \left(\bigoplus_{\substack{i\geq 1\\x_1,\ldots,x_{i-1}\in \ob(\mathcal{C})}}\limits \Hom_{\mathcal{D}}(x_{i-1},y)[1]\otimes\ldots\otimes \Hom_{\mathcal{D}}(z,x_1)[1]\right)[n]
\end{array}
\end{equation*}
with differential twisted by $\sum_{i\geq 1}(h_-(\phi))_i$, where the composition with the natural projection
\[
\xymatrix{
\left(\bigoplus_{x_1,\ldots,x_{i-1}\in \ob(\mathcal{C})}\limits \Hom_{\mathcal{D}}(x_{i-1},x)[1]\otimes\ldots\otimes \Hom_{\mathcal{D}}(z,x_1)[1]\ar[d]^(.44){(h_-(\phi))_i}\right)[1] \\ \left(\bigoplus_{\substack{j\geq 1\\y_1,\ldots,y_{j-1}\in \ob(\mathcal{C})}}\limits \Hom_{\mathcal{D}}(y_{j-1},y)[1]\otimes\ldots\otimes \Hom_{\mathcal{D}}(z,y_1)[1]\ar[d]^(.57){\pi}\right)[n] \\ \Hom_{\mathcal{D}}(z,y)[n+1]
}
\]
is just $b_{i+1}(1_{\phi}\otimes \id^{\otimes i})$ (we let $1_{\phi}$ be the map $k\rightarrow \Hom_{\mathcal{D}}(x,y)$ sending $1$ to $\phi$).  From this we can already read off the structure of the right $\mathcal{D}$-module $h_{\cone(\phi)}$.  We have that $h_{\cone(\phi)}$ is the module $M$, where
\begin{equation}
\label{cork}
M(z)=\Hom_{\mathcal{D}}(z,x)[1]\oplus \Hom_{\mathcal{D}}(z,y)[n],
\end{equation}
and if we decompose, for arbitrary $z\in \ob(\mathcal{D})$, an arbitrary morphism $\psi\in M(z)$ as $\psi_x\oplus \psi_y$, according to the decomposition (\ref{cork}), then we have that
\begin{equation}
\label{cot}
\begin{array}{c}
b_{M,i}(\psi_x\oplus\psi_y,a_1,\ldots,a_{i-1})=\\=b_{\mathcal{D},i}(\psi_x,a_1,\ldots,a_{i-1})\oplus(b_{\mathcal{D},i}(\psi_y,a_1,\ldots,a_{i-1})+b_{\mathcal{D},i+1}(\phi,\psi_x,a_1,\ldots,a_{i-1})).
\end{array}
\end{equation}
We would like to get a handle also on the higher compositions of composable series of morphisms going through $\cone(\phi)$ that do not begin or terminate there.  There's a trick for doing this, starting from the obvious guess as to what the answer is.
\bigbreak
Let $\phi:x\rightarrow y$ be a morphism in $\Zo^{n}(\mathcal{D})$.  We enlarge the category $\mathcal{D}$, forming a category $\mathcal{D}'$, by adding an object $\widetilde{\cone}(\phi)$.  We write
\begin{equation}
\Hom_{\mathcal{D}'}(z,\widetilde{\cone}(\phi)):=\left(\begin{array}{c}\Hom_{\mathcal{D}}(z,y)[n] \\ \Hom_{\mathcal{D}}(z,x)[1]\end{array}\right),
\end{equation}
\begin{equation}
\Hom_{\mathcal{D}'}(\widetilde{\cone}(\phi),z):=\left(\begin{array}{cc}\Hom_{\mathcal{D}}(y,z)[-n] & \Hom_{\mathcal{D}}(x,z)[-1]\end{array}\right)
\end{equation}
and 
\begin{equation}
\Hom_{\mathcal{D}'}(\widetilde{\cone}(\phi),\widetilde{\cone}(\phi)):=\left(\begin{array}{cc}\Hom_{\mathcal{D}}(y,y)& \Hom_{\mathcal{D}}(x,y)[n-1]\\ \Hom_{\mathcal{D}}(y,x)[1-n]& \Hom_{\mathcal{D}}(x,x)\end{array}\right).
\end{equation}
In other words, we can think of $\widetilde{\cone}(\phi)$ as being a twisted direct sum of two objects $x[1]$ and $y$, with morphisms into $\widetilde{\cone}(\phi)$ being given by $1$-by-$2$ matrices, and morphisms out given by $2$-by-$1$ matrices, and endomorphisms given by $2$-by-$2$ matrices.  The morphism $\phi:x\rightarrow y$ becomes a strictly upper-triangular matrix, which we denote $M_{\phi}$.  Given
\[
(\psi_1,\ldots,\psi_n)\in \Hom_{\mathcal{D}'}(x_{n-1},x_n)[1]\otimes\ldots\otimes \Hom_{\mathcal{D}'}(x_0,x_1)[1],
\]
where $x_{i_1},\ldots,x_{i_{\tau}}=\widetilde{\cone}(\phi)$, we define
\begin{align*}
b_{\mathcal{D}'}(\psi_1,\ldots,\psi_n)=&b_{\mathcal{D},n}(\psi_1,\ldots,\psi_n)+\sum_{a\in\{1,\ldots,\tau\}}b_{\mathcal{D},n+1}(\psi_1,\ldots,\psi_{i_{a}},M_{\phi},\psi_{i_a+1},\ldots,\psi_n)+\\+&\sum_{\substack{a,b\in\{1,\ldots,\tau\}\\a<b}}b_{\mathcal{D},n+2}(\psi_1,\ldots,\psi_{i_a},M_{\phi},\psi_{i_a+1},\ldots,\psi_{i_b},M_{\phi},\psi_{i_b+1},\ldots,\psi_n)+\ldots
\end{align*}
It's a simple check to see that this indeed defines the structure of an $A_{\infty}$-category on the objects of $\mathcal{D}'$ (this is essentially the check that the structure of an $A_{\infty}$-category on the set of twisted objects generated by the objects in the above expression, recalled in the next section, is indeed an $A_{\infty}$-category, for which one may refer to Chapter 7 of \cite{KLH}), and by construction it is an extension of the $A_{\infty}$-structure on $\mathcal{D}$.
\begin{prop}
There is a quasi-isomorphism of right $\mathcal{D}'$-modules 
\[
\xymatrix{
h_{\widetilde{\cone}(\phi)}\ar[r]^{\sim} &h_{\cone(\phi)}.
}
\]
\end{prop}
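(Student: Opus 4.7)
The plan is to exhibit an explicit morphism $F : h_{\widetilde{\cone}(\phi)} \to h_{\cone(\phi)}$ of right $\mathcal{D}'$-modules and then check levelwise that it is a quasi-isomorphism. First we must specify what $h_{\cone(\phi)}$ means as a right $\mathcal{D}'$-module, since $\cone(\phi)$ is only an object of $\rModi \mathcal{D}$, not $\mathcal{D}'$. The natural choice is to extend the Yoneda embedding $h_-|_{\mathcal{D}}$ to a strictly unital $A_\infty$-functor $\Psi : \mathcal{D}' \to \rModi\mathcal{D}$ sending $\widetilde{\cone}(\phi) \mapsto \cone(\phi)$; on the matrix-shaped morphism spaces involving $\widetilde{\cone}(\phi)$, the first component $\Psi_1$ is the obvious assembly of matrix entries into an honest $\mathcal{D}$-module morphism, and the higher components $\Psi_n$ are defined by the same "insert all admissible $M_\phi$'s" recipe used to define $b_{\mathcal{D}'}$. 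Then $h_{\cone(\phi)}$ becomes a right $\mathcal{D}'$-module by pullback along $\Psi$. The morphism $F$ is constructed so that $F_1(z)$, for $z \in \ob(\mathcal{D})$, is the identity on the common underlying graded vector space $\Hom_\mathcal{D}(z,y)[n] \oplus \Hom_\mathcal{D}(z,x)[1]$, while $F_1(\widetilde{\cone}(\phi))$ is the map $\Psi_1$ from the matrix-style endomorphisms to $\Hom_{\rModi \mathcal{D}}(\cone(\phi), \cone(\phi))$; the higher $F_i$ account for the discrepancy between the twisted composition in $\mathcal{D}'$ and in $\rModi \mathcal{D}$ built into $\Psi$.

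Next I would verify that $F$ is genuinely a morphism of right $\mathcal{D}'$-modules. For arguments $a_1,\dots,a_{i-1}$ inside $\mathcal{D}$ the key point is direct: formula \eqref{cot} describes $b_{h_{\cone(\phi)},i}$ on the decomposition $\psi_x \oplus \psi_y$ as the untwisted action plus one copy of $b_{\mathcal{D},i+1}(\phi, \psi_x, a_1,\dots,a_{i-1})$, and this is exactly what the matrix form of $\psi_y \oplus \psi_x \in \Hom_{\mathcal{D}'}(z,\widetilde{\cone}(\phi))$ together with the explicit formula for $b_{\mathcal{D}'}$ produces, since at most one $M_\phi$ can be inserted (there is only one slot, to the immediate left of $\psi_y \oplus \psi_x$). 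For arguments that include a morphism into or out of $\widetilde{\cone}(\phi)$, compatibility reduces to the $A_\infty$-functor axioms for $\Psi$, which hold by construction.

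Finally I would check that $F$ is a quasi-isomorphism componentwise. For $z \in \ob(\mathcal{D})$ the map $F_1(z)$ is the identity of graded vector spaces equipped with differentials that, by the calculation above, coincide; so it is tautologically a quasi-isomorphism. The only substantive check is at $z = \widetilde{\cone}(\phi)$, where we need $\Psi_1$ to induce an isomorphism on cohomology between the matrix algebra $\Hom_{\mathcal{D}'}(\widetilde{\cone}(\phi),\widetilde{\cone}(\phi))$ and $\Hom_{\rModi\mathcal{D}}(\cone(\phi),\cone(\phi))$. This is the \textbf{main obstacle}, and I would handle it by exploiting the two-step filtration $h_y[n] \hookrightarrow \cone(\phi) \twoheadrightarrow h_x[1]$ on the target: the induced long exact sequences, together with quasi-fullness of the Yoneda embedding $h_-|_\mathcal{D}$ at each of the four pairs $(x,x), (x,y), (y,x), (y,y)$, identify $\Ho^\bullet \Hom_{\rModi\mathcal{D}}(\cone(\phi),\cone(\phi))$ entry-by-entry with the cohomology of the twist-by-$M_\phi$ differential on the $2 \times 2$ matrix of $\Ho^\bullet\Hom_\mathcal{D}$'s that defines $\Hom_{\mathcal{D}'}(\widetilde{\cone}(\phi),\widetilde{\cone}(\phi))$. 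Since $\Psi_1$ is, by design, the assembly map realising this identification, it is a quasi-isomorphism, completing the proof.
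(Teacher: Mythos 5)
Your second paragraph is, in fact, the whole of the paper's proof: in the paper $h_{\cone(\phi)}$ just denotes the explicit right $\mathcal{D}$-module of (\ref{cork})--(\ref{cot}), i.e.\ the cone of $h_-(\phi)$ itself, and the proposition amounts to observing that $h_{\widetilde{\cone}(\phi)}$, evaluated on chains of arguments lying in $\mathcal{D}$, reproduces exactly those formulas because only one insertion of $M_{\phi}$ is possible (the sole $\widetilde{\cone}(\phi)$-slot is at the end of the chain). So the computational core of your proposal is correct and coincides with the intended argument.

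The trouble is that the rest of your write-up is inconsistent about what the target module is. Having defined $h_{\cone(\phi)}$ as the pullback along $\Psi$ of $\Hom_{\rModi\mathcal{D}}(-,\cone(\phi))$, its value at $z\in\ob(\mathcal{D})$ is the comodule morphism complex $\Hom_{\rModi\mathcal{D}}(h_z,\cone(\phi))$, a bar-type complex, not $\Hom_{\mathcal{D}}(z,y)[n]\oplus\Hom_{\mathcal{D}}(z,x)[1]$; so $F_1(z)$ cannot be ``the identity on the common underlying graded vector space'', and your claim that these components are tautologically quasi-isomorphisms fails as stated --- they are Yoneda-type comparison maps and need exactly the filtration/long-exact-sequence argument (using quasi-fully-faithfulness, not merely quasi-fullness) that you reserve for $z=\widetilde{\cone}(\phi)$. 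If instead you work with the small model, the module (\ref{cork})--(\ref{cot}) itself, which is what the paper means by $h_{\cone(\phi)}$, then the components at $z\in\mathcal{D}$ really are identities, but then the comparison of $\Hom_{\mathcal{D}'}(\widetilde{\cone}(\phi),\widetilde{\cone}(\phi))$ with $\Hom_{\rModi\mathcal{D}}(\cone(\phi),\cone(\phi))$ --- what you call the main obstacle --- is not part of this proposition at all: in the paper it is deduced afterwards (the following proposition and the remark) from this module identification together with quasi-fully-faithfulness of the Yoneda embedding. Either model can be made to work, but you must choose one; as written, the ``tautological'' step lives in the small model while your construction of the target and your endomorphism check live in the big one.
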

The proof follows straight from our earlier description (\ref{cot}) of the cone.  
\begin{prop}
There is a quasi-isomorphism in $\mathcal{D}'$
\[
\xymatrix{
\widetilde{\cone}(\phi)\ar[r]^{\sim} & \cone(\phi).
}
\]
\end{prop}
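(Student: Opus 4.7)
The plan is to deduce this directly from the preceding proposition together with quasi-full-faithfulness of the $A_\infty$ Yoneda embedding. First I would verify that $\cone(\phi)$, a priori defined only as a right $\mathcal{D}$-module, extends canonically to a right $\mathcal{D}'$-module: inspection of the explicit multiplications (\ref{cot}) shows they are assembled from the compositions $b_{\mathcal{D},i}$, and the definition of $b_{\mathcal{D}'}$ given above --- insertions of $M_\phi$ between consecutive arguments --- extends each $b_{\mathcal{D},i}$ to a sequence $b_{\mathcal{D}',i}$ acting on strings of morphisms in $\mathcal{D}'$. With this right $\mathcal{D}'$-module structure in place, the statement of the preceding proposition is unambiguous: we have a quasi-isomorphism $h_{\widetilde{\cone}(\phi)} \xrightarrow{\sim} \cone(h_-(\phi))$ in $\rModi\mathcal{D}'$.

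Next I would interpret the target statement by placing everything inside the triangulated $A_\infty$-category $\rModi\mathcal{D}'$: the object $\widetilde{\cone}(\phi)\in \ob(\mathcal{D}')$ is identified with its Yoneda image $h_{\widetilde{\cone}(\phi)}$, and $\cone(\phi):=\cone(h_-(\phi))$ is an object of $\rModi\mathcal{D}'$ whose existence is guaranteed by the triangulated structure discussed in Section \ref{triangstruc}. By the covariant $A_\infty$ Yoneda embedding, $h_-\colon \mathcal{D}' \to \rModi\mathcal{D}'$ is quasi-fully faithful and reflects quasi-isomorphisms. Combined with the preceding proposition this yields the desired quasi-isomorphism.

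The only point requiring care is checking that the twisting of the differential by $\sum_{i\geq 1}(h_-(\phi))_i$ on the cone comodule coincides, under the natural identification of the underlying graded comodules, with the extra $M_\phi$-insertions built into $b_{\mathcal{D}'}$. This is straightforward: under the description of $\Hom_{\mathcal{D}'}(z,\widetilde{\cone}(\phi))$ as a column vector $(\Hom_{\mathcal{D}}(z,y)[n],\Hom_{\mathcal{D}}(z,x)[1])^T$, the inclusions and projections used in forming $d_\phi$ in the mapping cone construction correspond exactly to reading off the two components of such column vectors, and the map $b_{\mathcal{D},i+1}(1_\phi\otimes \id^{\otimes i})$ appearing in the twist of the differential on the cone agrees term by term with the single-$M_\phi$-insertion term in $b_{\mathcal{D}',i}$, while the higher multi-$M_\phi$-insertion terms are necessarily zero on morphism strings that do not pass repeatedly through $\widetilde{\cone}(\phi)$. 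This combinatorial match is the only non-formal step; once it is in place the proposition is immediate.
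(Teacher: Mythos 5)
Your two ingredients are the same ones the paper uses: its entire proof is that the quasi-isomorphism of Yoneda modules from the preceding proposition descends because the $A_{\infty}$-Yoneda embedding is quasi-fully faithful. So the route is the paper's. But there is a genuine slip in how you set the statement up, and with your reading the Yoneda step does no work. You declare $\cone(\phi):=\cone(h_-(\phi))$ to be an object of $\rModi\mathcal{D}'$ and identify $\widetilde{\cone}(\phi)$ with its Yoneda image; under that interpretation what you prove is a quasi-isomorphism in $\rModi\mathcal{D}'$, which is literally the preceding proposition restated, and you never produce a morphism \emph{in} $\mathcal{D}'$ — which is what the proposition asserts, and which requires both source and target to be objects of $\mathcal{D}'$.

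The missing point is the standing assumption of Section \ref{triangstruc} that $\mathcal{D}$ is a \emph{triangulated} $A_{\infty}$-category: the cone module is then quasi-isomorphic to $h_z$ for an honest object $z=\cone(\phi)\in\ob(\mathcal{D})\subset\ob(\mathcal{D}')$, so the preceding proposition really compares the Yoneda modules over $\mathcal{D}'$ of two objects of $\mathcal{D}'$. Quasi-fullness of $h_-$ then lifts the class of the module quasi-isomorphism $h_{\widetilde{\cone}(\phi)}\rightarrow h_{\cone(\phi)}$ to a closed degree-zero morphism $\psi:\widetilde{\cone}(\phi)\rightarrow \cone(\phi)$ in $\Zo^0(\mathcal{D}')$, and quasi-faithfulness shows $\psi$ is a quasi-isomorphism, since its image is invertible in $\Ho^0(\rModi\mathcal{D}')$ and $\Ho^{\bullet}(h_-)$ is fully faithful. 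Your first paragraph (checking that the twisted multiplications (\ref{cot}) match the $M_{\phi}$-insertions defining $b_{\mathcal{D}'}$) is fine, but it is the content of the \emph{previous} proposition; without recognising $\cone(\phi)$ as an object of $\mathcal{D}$ and invoking quasi-full faithfulness to descend, the statement actually being proved here is not reached.
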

\begin{proof}
This follows from the fact that the $A_{\infty}$-Yoneda embedding is quasi-fully faithful.
\end{proof}
\begin{rem}
This is a useful fact, since it gives us a good grip on the $A_{\infty}$-algebra $\Hom_{\mathcal{D}}(\cone(\phi),\cone(\phi))$.  The proposition above tells us that there is a quasi-isomorphism of $A_{\infty}$-algebras
\[
\xymatrix{
\Hom_{\mathcal{D}}(\cone(\phi),\cone(\phi))\ar[r]^(.49){\sim} & \Hom_{\mathcal{D}'}(\widetilde{\cone}(\phi),\widetilde{\cone}(\phi)).
}
\]
\end{rem}
As an instance of this usefulness, we mention the following statement.
\begin{prop}
Let $f:M\rightarrow N$ be a closed morphism in an $A_{\infty}$-category, and say the category consisting of the two objects $M$ and $N$ is given a Calabi-Yau structure of dimension $n$.  Then $\End_{\CC}(\cone(f))$ is quasi-isomorphic to a $n$-dimensional Calabi-Yau algebra.
\end{prop}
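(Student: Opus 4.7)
The plan is to transport the cyclic structure through the quasi-isomorphism
\[
\End_{\CC}(\cone(f)) \simeq \End_{\mathcal{D}'}(\widetilde{\cone}(f), \widetilde{\cone}(f))
\]
furnished by the preceding proposition, and then realise the right-hand side as a twist of a manifestly cyclic matrix $A_{\infty}$-algebra by a Maurer--Cartan element. Concretely, let $\mathcal{C}_0 \subset \CC$ be the full subcategory on $\{M, N\}$ and let $\mathfrak{M}$ denote the $2 \times 2$ matrix $A_{\infty}$-algebra obtained from $\mathcal{C}_0$ by placing the morphism spaces $\Hom(x,y)$ in the appropriate off-diagonal slots, with the shifts $[n-1]$ and $[1-n]$ on the off-diagonal blocks prescribed by the definition of $\widetilde{\cone}(f)$. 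The matrix-trace pairing
\[
\langle A, B \rangle := \sum_{i,j \in \{M, N\}} \langle A_{ij}, B_{ji} \rangle,
\]
built entrywise from the cyclic pairing on $\mathcal{C}_0$, is then homogeneous of degree $-n$ (the off-diagonal shifts are chosen precisely so that the pairing between the $[n-1]$ and $[1-n]$ blocks contributes no net shift beyond the underlying pairing) and nondegenerate blockwise, hence nondegenerate on cohomology.

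Next I would identify the $A_{\infty}$-operations on $\End_{\mathcal{D}'}(\widetilde{\cone}(f), \widetilde{\cone}(f))$ with the Maurer--Cartan deformation of $\mathfrak{M}$ by the element $M_f \in \mathfrak{M}^1$ whose only nonzero entry is $f$. The explicit expansion of $b_{\mathcal{D}', n}$ given in the construction of $\mathcal{D}'$ matches term-for-term the general formula for the twisted $A_{\infty}$-structure induced by a Maurer--Cartan element. The hypothesis that $f$ is closed gives $b_1(M_f) = 0$, and the higher terms $b_k(M_f, \ldots, M_f)$ for $k \geq 2$ vanish because $M_f$ is strictly upper-triangular and the composable strings $M \to N \to M \to \ldots$ formed by iterating $f$ do not exist in $\mathcal{C}_0$. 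Thus $M_f$ is a genuine Maurer--Cartan element and no curvature is introduced.

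It remains to prove (i) that the untwisted matrix algebra $(\mathfrak{M}, \langle -, - \rangle)$ is cyclic of dimension $n$, and (ii) that twisting a cyclic $A_{\infty}$-algebra by a Maurer--Cartan element preserves cyclicity. For (i), cyclic invariance of the functionals $\langle b_k(A_1, \ldots, A_k), A_{k+1} \rangle$ follows from the cyclic property of $\mathcal{C}_0$ applied block-by-block, together with the tracial nature of the sum. For (ii), the cleanest route is to observe, in the spirit of Proposition \ref{crux}, that the new potential is $W_{\mathrm{new}}(\alpha) = W(\alpha + M_f)$, which inherits cyclic invariance in $\alpha$ from the cyclic invariance of $W$ in its total argument. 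Passing to a minimal model in the category of cyclic $A_{\infty}$-algebras (cf.\ Theorem \ref{cycmm}) then produces a minimal cyclic $A_{\infty}$-algebra of dimension $n$ quasi-isomorphic to $\End_{\CC}(\cone(f))$, which is what is meant by an $n$-dimensional Calabi--Yau algebra in this thesis.

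The principal technical obstacle I anticipate is sign bookkeeping: both in confirming the graded (anti)symmetry of the matrix-trace pairing in the presence of the off-diagonal shifts, and in aligning the signs of the explicit insertion expansion defining $b_{\mathcal{D}', n}$ with those produced by the Maurer--Cartan twist in the coalgebra formulation. As recalled in Remark \ref{annoyingsigns}, these signs become tractable once one works entirely in terms of the shifted operations $b_n : A[1]^{\otimes n} \to A[2]$ and the associated coderivation on $\TC(\mathfrak{M})$; there the twist by a degree-$1$ element is literally a translation of the coderivation, and both the Maurer--Cartan vanishing and the preservation of cyclicity become essentially automatic.
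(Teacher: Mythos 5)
Your proposal is correct and takes essentially the same route as the thesis: the proof is deferred to Theorem \ref{goup}, which equips the category of twisted objects over the two-object category (of which $\widetilde{\cone}(f)$ is an object, the closedness of $f$ and strict triangularity of $M_f$ giving the Maurer--Cartan condition) with exactly your entrywise matrix-trace pairing, deducing nondegeneracy blockwise and cyclic symmetry from the twisted composition formula (\ref{twmult}) — the same $W(\alpha+a)$ observation you invoke. Your explicit Maurer--Cartan packaging and the final passage to a cyclic minimal model via Theorem \ref{cycmm} are a harmless elaboration of, not a departure from, that argument.
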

These notions will be explained in the next chapter, as well as the proof.
\section{The category of twisted objects}
\label{twistcat}
We recall some details of the category of twisted objects over a $A_{\infty}$-category, these can be found (with proofs) in \cite{KLH}.  First we'll make precise the matrix notation alluded to above.
\begin{defn}
\label{matrixmult}
A matrix $M\in \Mat_{m\times n}(\mathcal{C})$ is given by choosing an ordered $m$-tuple $(y_{M,1},\ldots,y_{M,m})\in \ob(\mathcal{C})^{m}$, and an ordered $n$-tuple $(x_{M,1},\ldots,x_{M,n})\in \ob(\mathcal{C})^n$, and for each $i\in \{1,\ldots,m\}$ and each $j\in \{1,\ldots,n\}$ a morphism $M_{i,j}\in \Hom_{\mathcal{C}}(x_{M,j},y_{M,i})$.  We say a sequence of matrices $M_1,\ldots,M_t$ is composable if for each $s\in \{1,\ldots,t\}$ we have that $m_s=n_{s+1}$ and $(y_{M_s,1},\ldots,y_{M_s,m_{s}})=(x_{M_{s+1},1},\ldots,x_{M_{s+1},n_{s+1}})$.  In that case we define $b_{\mathcal{C},t}(M_t,\ldots,M_1)$ to be the $m_{t}\times n_1$ matrix with
\[
b_{\mathcal{C},t}(M_t,\ldots,M_1)_{i,j}=\sum_{\substack{q_1\in \{1,\ldots,m_1\}\\ \ldots \\q_{t-1}\in \{1,\ldots,m_{t-1}\}}} b_{\mathcal{C},t} ((M_t)_{i,q_{t-1}},(M_{t-1})_{q_{t-1},q_{t-2}},\ldots,(M_1)_{q_1,j}).
\]
\end{defn}
\begin{defn}
Given a matrix $M$ in $\mathcal{C}$, associated to the ordered $m$-tuple 
\[
(y_{M,1},\ldots,y_{M,m})\in \ob(\mathcal{C})^m
\]
and the ordered $n$-tuple 
\[
(x_{M,1},\ldots,x_{M,n})\in \ob(\mathcal{C})^n,
\]
we define the transpose matrix $M^{T}$ in $\mathcal{C}^{\op}$ by setting
\[
(y_{M^T,1},\ldots,y_{M^T,m'})=(x_{M,1},\ldots,x_{M,n})
\]
and
\[
(x_{M^T,1},\ldots,x_{M^T,n'})=(y_{M,1},\ldots,y_{M,m}),
\]
and letting $(M^T)_{i,j}=M_{j,i}$.
\end{defn}
\begin{examp}
Let $S$ be a collection of objects in an $A_{\infty}$-category $\mathcal{C}$ admitting finite direct sums.  Then we may form a new $A_{\infty}$-category $\mathcal{C}^{\oplus}$, the full subcategory of $\mathcal{C}$ consisting of direct sums of the elements of $S$.  Morphisms between objects in $\mathcal{C}^{\oplus}$ are given by matrices in $\mathcal{C}$, and composition of morphisms is given as in Definition \ref{matrixmult}.  The transposition map gives a strict equivalence
\[
(\mathcal{C}^{\oplus})^{\op}\rightarrow (\mathcal{C}^{\op})^{\oplus}.
\]
\end{examp}
\begin{defn}
\label{twl}
The $A_{\infty}$-category $\tw_l(\mathcal{C})$ has objects given by pairs $((z_1,\ldots,z_n),A)$, for $n\geq 1$, where $(z_1,\ldots,z_n)$ is an ordered $n$-tuple of objects in $h^{\mathcal{C}}[\mathbb{Z}]$, as defined in Definition \ref{embe}, and $A$ is a strictly lower-triangular $n\times n$ degree 1 matrix in $h^{\mathcal{C}}[\mathbb{Z}]$ with 
\[
(x_{A,1},\ldots,x_{A,n})=(y_{A,1},\ldots,y_{A,n})=(z_1,\ldots,z_n),
\]
such that $A$ satisfies $\sum_{i\geq 1}b_{h^{\mathcal{C}}[\mathbb{Z}],i}(A,\ldots,A)=0$.  For 
\[
\begin{array}{c}
T_1=((z_{1,1},\ldots,z_{1,p_1}),A_1)\\
T_2=((z_{2,1},\ldots,z_{2,p_2}),A_2)
\end{array}
\] 
two objects of  $\tw_l(\mathcal{C})$, we define $\Hom_{\tw_l(\mathcal{C})}(T_1,T_2)$ to be the space of matrices $M$ in $h^{\mathcal{C}}[\mathbb{Z}]$ with $(x_{M,1},\ldots,x_{M,n})=(z_{1,1},\ldots,z_{1,p_1})$ and $(y_{M,1},\ldots,y_{M,m})=(z_{2,1},\ldots,z_{2,p_2})$.\smallbreak
For $i\in\{1,\ldots,t\}$ let $M_i$ be a morphism from $((z_{i-1,1},\ldots,z_{i-1,n_{i-1}}),A_{i-1})$ to $((z_{i,1},\ldots,z_{i,n_{i}}),A_{i})$, objects of $\tw_l(\mathcal{C})$.  We define composition in $\tw_l(\mathcal{C})$ by setting
\begin{equation}
\label{twmult}
b_{\tw_l(\mathcal{C}),t}(M_{t},\ldots,M_1)=\sum_{p_0,\ldots,p_t\geq 0} b_{h^{\mathcal{C}}[\mathbb{Z}],t+\sum p_i}(A_t^{\otimes p_t},M_t,A_{t-1}^{\otimes p_{t-1}},\ldots,M_1,A_0^{\otimes p_0}).
\end{equation}
Note that this sum is finite, since the matrices $A_i$ are strictly lower-triangular.
\end{defn}
A proof that these definitions actually define an $A_{\infty}$-category can be found in \cite{KLH}.
\begin{defn}
We define the $A_{\infty}$-category $\tw_r(\mathcal{C})$ in exactly the same way, instead considering ordered $n$-tuples of shifts of objects of $h_{\mathcal{C}}[\mathbb{Z}]$, and strictly upper-triangular matrices.
\end{defn}
\begin{rem}
Let us motivate these conventions before moving on with them.  The category of twisted objects will turn out to be quasi-equivalent to the category one gets from taking repeated mapping cones of objects in $h_{\mathcal{C}}[\mathbb{Z}]$ or $h^{\mathcal{C}}[\mathbb{Z}]$.  One can think of these (as in the Abelian case), as recursively defined extensions
\[
M_{i-1}\rightarrow M_{i}\rightarrow h_{x_i}[n_i],
\]
so when converted into matrices, the morphisms will go from $h_{x_{b}}[n_b]$ to $h_{x_{a}}[n_a][1]$, for $a<b$.  These maps come from maps in $\Hom_{\mathcal{C}}(x_{b},x_{a})$.  If one thinks of a matrix as a morphism from column vectors to column vectors, this corresponds to taking a strictly upper-triangular matrix.  Indeed it is sensible to consider a matrix this way, since it conserves the convention that we read morphisms from right to left.  The fact that we consider lower-triangular matrices when dealing with left modules is due to the fact that $\mathcal{C}$ maps contravariantly to this category, so we consider maps from $h^{x_{a}}[n_a]$ to $h^{x_{b}}[n_b][1]$, for $a<b$.
\end{rem}
\begin{prop}
\label{diamonddual}
There is a strict isomorphism of categories
\[
-^{\diamond_{\tw}}:\tw_l(\mathcal{C})^{\op}\rightarrow \tw_r(\mathcal{C}),
\]
sending objects $((x_1,\ldots,x_n),A)$ to $((x_1,\ldots,x_n),A^T)$ and sending morphisms $M$ of $\tw_l(\mathcal{C})^{\op}$ to morphisms $M^T$ of $\tw_r(\mathcal{C})$.
\end{prop}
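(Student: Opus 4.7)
The proof is essentially a bookkeeping exercise: the functor $-^{\diamond_{\tw}}$ is strict, so all we need to check is that (i) the object map is well defined, (ii) transposition on morphisms is a $k$-linear isomorphism of graded vector spaces on each morphism complex, and (iii) the assignment intertwines the higher compositions $b_n$. Each step rests on the elementary fact, which I would establish first, that under the strict isomorphism $h^{\mathcal{C}}[\mathbb{Z}]^{\op}\cong h_{\mathcal{C}}[\mathbb{Z}]$ noted after Definition~\ref{embe}, the matrix transposition of Definition~\ref{matrixmult} sends strictly lower-triangular matrices in $h^{\mathcal{C}}[\mathbb{Z}]$ to strictly upper-triangular matrices in $h_{\mathcal{C}}[\mathbb{Z}]$, and exchanges the source and target tuples.

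For well-definedness on objects I would need to see that if $A$ satisfies the Maurer--Cartan equation $\sum_{i\geq 1}b_{h^{\mathcal{C}}[\mathbb{Z}],i}(A,\ldots,A)=0$, then so does $A^T$ in $h_{\mathcal{C}}[\mathbb{Z}]$. This follows from the general identity $b_{\mathcal{D}^{\op},s}(\phi_1,\ldots,\phi_s)=b_{\mathcal{D},s}(\phi_s,\ldots,\phi_1)$ (up to the symmetry isomorphism defining $\mathcal{D}^{\op}$) applied termwise to entries of the matrix product: reversing the order of arguments is harmless because every entry of $A$ is repeated, so the MC equation in one category translates term for term to the MC equation for the transposed matrix in the opposite category. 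For well-definedness on morphisms, transposition is visibly a $k$-linear bijection from matrices for $\Hom_{\tw_l(\mathcal{C})}(T_1,T_2)$ to matrices for $\Hom_{\tw_r(\mathcal{C})}(T_2^{\diamond_{\tw}},T_1^{\diamond_{\tw}})$, and the latter is what we want for a functor $\tw_l(\mathcal{C})^{\op}\to\tw_r(\mathcal{C})$.

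The main content is compatibility with the higher compositions, which is where I expect the only real work. Given a composable sequence $(M_1,\ldots,M_t)$ in $\tw_l(\mathcal{C})^{\op}$, the defining convention for opposite $A_\infty$-categories gives
\[
b_{\tw_l(\mathcal{C})^{\op},t}(M_1,\ldots,M_t)=b_{\tw_l(\mathcal{C}),t}(M_t,\ldots,M_1),
\]
and substituting the twisted formula~(\ref{twmult}) expresses this as a sum over multi-indices $(p_0,\ldots,p_t)$ of compositions in $h^{\mathcal{C}}[\mathbb{Z}]$ of strings of the form $(A_t^{\otimes p_t},M_t,\ldots,M_1,A_0^{\otimes p_0})$. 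Applying $^T$ entrywise and using the termwise reversal identity for $b_n$ in $\mathcal{D}^{\op}$, each such string transforms into a composition in $h_{\mathcal{C}}[\mathbb{Z}]$ of the reversed string $(A_0^{T,\otimes p_0},M_1^T,A_1^{T,\otimes p_1},\ldots,M_t^T,A_t^{T,\otimes p_t})$, with the \emph{positions of $A$-insertions relative to adjacent $M_i$'s preserved}. Summing over the same multi-indices, one recognises exactly the formula~(\ref{twmult}) for $\tw_r(\mathcal{C})$ applied to $(M_1^T,\ldots,M_t^T)$.

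The only genuine obstacle is the simultaneous bookkeeping of the argument-reversal (from the $\op$ construction) with matrix transposition (which reverses source/target within each matrix entry) and with the fact that $A$ has been moved from below to above the diagonal. One verifies these three reversals are mutually consistent: strict lower-triangularity of $A$ is precisely what is needed to make all the sums in~(\ref{twmult}) finite on both sides, and to ensure that the composable strings on the left produce composable strings on the right after transposition. Once this is checked, strictness of $-^{\diamond_{\tw}}$ (i.e.\ the vanishing of its higher components $(-^{\diamond_{\tw}})_n$ for $n\geq 2$) and bijectivity on objects and morphisms are immediate, yielding a strict isomorphism of $A_\infty$-categories.
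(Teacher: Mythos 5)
The paper states this proposition without proof, treating it as immediate from the definitions, and your write-up is precisely the routine verification that was left implicit: transposition exchanges strict lower- for upper-triangularity, carries the Maurer--Cartan equation across the identification $h^{\mathcal{C}}[\mathbb{Z}]^{\op}\cong h_{\mathcal{C}}[\mathbb{Z}]$, and matches the twisted composition formula (\ref{twmult}) term by term after reversing the composable string. Your treatment of signs ``up to the symmetry isomorphism defining $\mathcal{D}^{\op}$'' is consistent with the paper's conventions (the $b$-operations of Remark \ref{annoyingsigns} and the untwisted reflection used to define the opposite category), so the argument is correct and essentially the intended one.
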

In the sequel we will be a little lax and denote by $-^{\diamond_{\tw}}$ any of the four functors as defined above with preimage $\tw_l(\mathcal{C})^{\op}$, $\tw_r(\mathcal{C})^{\op}$, $\tw_l(\mathcal{C})$ or $\tw_r(\mathcal{C})$.
\begin{prop}
\label{coney}
There are functorial mapping cones in $\tw_l(\mathcal{C})$ and $\tw_r(\mathcal{C})$.
\end{prop}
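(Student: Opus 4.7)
The plan is to construct mapping cones explicitly as twisted objects, using block-matrix constructions that mirror the cone construction sketched in Section \ref{triangstruc} for modules. Let $T_1=((z_{1,1},\ldots,z_{1,p_1}),A_1)$ and $T_2=((z_{2,1},\ldots,z_{2,p_2}),A_2)$ be objects of $\tw_l(\mathcal{C})$, and let $\phi:T_1\rightarrow T_2$ be a closed degree $0$ morphism, i.e. a matrix $\phi$ in $h^{\mathcal{C}}[\mathbb{Z}]$ of the appropriate size with $b_{\tw_l(\mathcal{C}),1}(\phi)=0$. I define
\[
\cone(\phi):=\bigl((z_{1,1}[1],\ldots,z_{1,p_1}[1],z_{2,1},\ldots,z_{2,p_2}),A_{\phi}\bigr),
\]
where $A_{\phi}$ is the strictly lower-triangular $(p_1+p_2)\times(p_1+p_2)$ block matrix
\[
A_{\phi}=\begin{pmatrix} A_1^{[1]} & 0 \\ \phi & A_2\end{pmatrix},
\]
with $A_1^{[1]}$ the matrix obtained from $A_1$ after applying the shift $[1]$ to both source and target (so still of degree $1$ and strictly lower-triangular), and $\phi$ placed in the lower-left block (where it belongs for a lower-triangular matrix, since it represents a morphism from the first block of entries into the second).

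The main step is verifying the Maurer--Cartan equation $\sum_{i\geq 1}b_i(A_{\phi},\ldots,A_{\phi})=0$. Because $A_{\phi}$ is block lower-triangular and $\phi$ sits strictly below the diagonal, each term in the expansion of $b_i(A_{\phi},\ldots,A_{\phi})$ decomposes according to how many $\phi$'s it contains. Terms with no $\phi$ give the diagonal blocks $\sum b_i(A_1^{[1]},\ldots,A_1^{[1]})$ and $\sum b_i(A_2,\ldots,A_2)$, both zero by the Maurer--Cartan assumption on $A_1$ and $A_2$ (the shift has no effect on the sign of the Maurer--Cartan equation since $b_i$ already carries the Koszul-free signs). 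Terms with exactly one $\phi$ assemble exactly to $b_{\tw_l(\mathcal{C}),1}(\phi)$ as computed by formula (\ref{twmult}), which vanishes because $\phi$ is closed. Terms with two or more $\phi$'s vanish automatically because the product of two strictly lower-triangular matrices each having a single nonzero off-diagonal block in the same position is zero. This confirms $A_{\phi}$ defines an object of $\tw_l(\mathcal{C})$.

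Next, the structural maps: the natural inclusion matrix $i:T_2\rightarrow\cone(\phi)$ (placing $T_2$ in the bottom block) and the natural projection $\pi:\cone(\phi)\rightarrow T_1[1]$ (projecting onto the shifted top block) are both closed degree $0$ morphisms, as a direct application of (\ref{twmult}) shows. To verify that this really is a cone, I pass through the $A_{\infty}$-Yoneda embedding: applying $h_{-}$ termwise extends to a quasi-fully faithful functor $\tw_r(\mathcal{C})\rightarrow\rModi\mathcal{C}$ (and analogously for $\tw_l$), and one checks that the right $\mathcal{C}$-module associated to $\cone(\phi)$ is quasi-isomorphic to the mapping cone of $h_{-}(\phi)$ computed in $\rModi\mathcal{C}$ as in Section \ref{triangstruc}, using the explicit description (\ref{cot}) of that cone's module structure. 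Since cones in $\rModi\mathcal{C}$ are functorial (this is just the statement for differential graded modules), the quasi-fully faithfulness of $h_-$ transports this functoriality back to $\tw_l(\mathcal{C})$.

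The hardest point is really the verification that the block construction genuinely produces the Yoneda image of the module-theoretic cone, together with care about the shift conventions and the resulting signs on $A_1^{[1]}$; everything else is essentially bookkeeping with block-triangular matrices. The statement for $\tw_r(\mathcal{C})$ then follows immediately from Proposition \ref{diamonddual}: the isomorphism $-^{\diamond_{\tw}}:\tw_l(\mathcal{C})^{\op}\rightarrow\tw_r(\mathcal{C})$ transports the cone construction to strictly upper-triangular matrices, replacing $\phi$ in the lower-left with $\phi^T$ in the upper-right, and functoriality is preserved under this strict isomorphism.
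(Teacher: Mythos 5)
Your construction is correct and is essentially the paper's own proof: the paper likewise forms the block matrix with the closed morphism in the single off-diagonal block and the source objects shifted, and verifies the Maurer--Cartan equation by observing that the expansion splits into the two diagonal Maurer--Cartan terms plus $b_{\tw,1}$ of the morphism, which is exactly your counting of $\phi$-instances (the paper's identity (\ref{muchbetter})), working in $\tw_r(\mathcal{C})$ with an upper-triangular block rather than your $\tw_l(\mathcal{C})$ picture transported via $-^{\diamond_{\tw}}$. Your additional comparison through the Yoneda embedding to the module-theoretic cone is not in the paper's proof of Proposition \ref{coney} itself but matches how the paper treats cones in Section \ref{triangstruc} and Proposition \ref{trac}, so it is a harmless elaboration rather than a different method.
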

\begin{proof}
We recall the proof here since it gives us a chance to write down what mapping cones in the category of twisted objects look like.  We only deal with $\tw_r(\mathcal{C})$, the construction for $\tw_l(\mathcal{C})$ is identical.  The construction is exactly as one would expect.  Given $T_1$ and $T_2$ as in the statement of Definition \ref{twl}, and given $M$ a closed morphism from $T_2$ to $T_1$, i.e. a degree zero matrix in $h_{\mathcal{C}}[\mathbb{Z}]$ with $(x_{M,1},\ldots,x_{M,n})=(z_{2,1},\ldots,z_{2,p_1})$ and $(y_{M,1},\ldots,y_{M,m})=(z_{1,1},\ldots,z_{1,p_2})$, satisfying $b_{\tw_r(\mathcal{C}),1}(M)=0$, we make the new twisted object
\[
\cone(M):=((z_{1,1},\ldots,z_{1,p_1},z_{2,1}[1],\ldots,z_{2,p_2}[1]),\left(\begin{array}{cc} A_1 & M\\0 & A_2 \end{array}\right)).
\]
From the easy identity
\begin{equation}
\label{muchbetter}
\begin{array}{c}
\sum_{i\geq 1}b_{\mathcal{C},i}\left(\left(\begin{array}{cc} A_1 & M\\0 & A_2 \end{array}\right),\ldots,\left(\begin{array}{cc} A_1 & M\\0 & A_2 \end{array}\right)\right)=\\=\left( \begin{array}{cc}\sum_{i\geq 1} b_{h_{\mathcal{C}}[\mathbb{Z}],i}(A_1,\ldots,A_1)&b_{\tw_r(\mathcal{C}),1}(M) \\0& \sum_{i\geq 1} b_{h^{\mathcal{C}}[\mathbb{Z}],i}(A_2,\ldots,A_2)\end{array}\right)
\end{array}
\end{equation}
we deduce that this gives us a new element of $\tw_r(\mathcal{C})$.  
\end{proof}
By construction, there are full and faithful embeddings of $A_{\infty}$-categories given by the (strict) functors
\[
i_r:\mathcal{C}\rightarrow \tw_r(\mathcal{C})
\]
\[
i_l:\mathcal{C}^{\op}\rightarrow \tw_l(\mathcal{C})
\]
with $i_l(x)=((h^x),0)$ and $i_r(x)=((h_x),0)$, and so one builds in a natural way, out of an object $\alpha\in \tw_l(\mathcal{C})$, a left $\mathcal{C}$-module $\Hom_{\tw_l(\mathcal{C})}(i_l(-),\alpha)$, giving an $A_{\infty}$-functor
\[
j_l:\tw_l(\mathcal{C})\rightarrow \mathcal{C}\lModi.
\]
We obtain a commuting diagram of $A_{\infty}$-functors, interpreting the strict functor $i_l$ as an $A_{\infty}$-functor with $(i_l)_n=0$ for all $n\geq 2$ (see \cite{KLH})
\[
\xymatrix{
\tw_l(\mathcal{C})\ar[r]^(.44){j_l} & \mathcal{C}\lModi\\
\mathcal{C}^{\op}\ar[u]^{i_l}\ar[ur]_{h^-}.
}
\]
Note that under $j_l$, $h^{\mathcal{C}}[\mathbb{Z}]$ is identified as a full subcategory of the category of twisted objects with associated matrices equal to zero.  Recall that, for a triangulated category $\mathcal{E}$, and $S$ a subset of objects of $\mathcal{E}$, we defined $\langle S \rangle_{\triang}$ to be the smallest strictly full (i.e. closed under isomorphisms in $\mathcal{E}$) subcategory of $\mathcal{E}$ which contains all of the objects of $S$ and is closed under shifts and triangles.  Since $\mathcal{C}\lModi$ has a triangulated homotopy category, i.e. $\Di(\mathcal{C}\lModi)$ is triangulated, if $S$ is a set of the objects of $\mathcal{C}\lModi$, then we set $\langle S \rangle_{\triang}$ to be the smallest strictly quasi-full subcategory of $\mathcal{C}\lModi$ containing $\ob(\triang_{\Di(\mathcal{C}\lModi)}(S))$.
\begin{prop}[\cite{KLH}]
\label{trac}
The $A_{\infty}$-functor $j_l$ maps to $\langle h^{\mathcal{C}}\rangle_{\triang}$, and is a quasi-equivalence of categories.
\end{prop}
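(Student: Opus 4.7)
The plan proceeds in three stages, corresponding to showing that $j_l$ lands in $\langle h^{\mathcal{C}} \rangle_{\triang}$, that it is quasi-essentially surjective onto this subcategory, and that it is quasi-fully faithful.

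For the first stage, I would proceed by induction on the length $n$ of the $n$-tuple $(z_1,\ldots,z_n)$ defining an object $T = ((z_1,\ldots,z_n), A) \in \tw_l(\mathcal{C})$. When $n=1$, the matrix $A$ must vanish (being strictly lower-triangular), and $j_l(T) \simeq h^{z_1}$ is manifestly in $h^{\mathcal{C}}[\mathbb{Z}] \subset \langle h^{\mathcal{C}} \rangle_{\triang}$. For the inductive step, let $T' = ((z_1,\ldots,z_{n-1}), A')$ be the sub-twisted-object obtained by dropping the last entry, with $A'$ the upper-left block of $A$. The last column of $A$ (below the diagonal) gives a closed morphism of degree $1$ from $((z_n), 0) = i_l(z_n)$ into a suitable shift, and the identity (\ref{muchbetter}) shows that $T$ is precisely a cone in $\tw_l(\mathcal{C})$ of this map. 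Since $j_l$ preserves cones up to quasi-isomorphism (the underlying graded module of a cone of twisted objects maps to the underlying graded module of a cone of $\mathcal{C}$-modules, with matching twisted differentials), and $((z_n),0)$ is sent to an object in $h^{\mathcal{C}}[\mathbb{Z}]$, the image $j_l(T)$ lies in $\langle h^{\mathcal{C}} \rangle_{\triang}$.

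For the second stage, quasi-essential surjectivity follows formally: $\langle h^{\mathcal{C}} \rangle_{\triang}$ is by definition the smallest strictly quasi-full subcategory of $\mathcal{C}\lModi$ closed under shifts and triangles and containing $h^{\mathcal{C}}$. The quasi-essential image of $j_l$ contains $h^{\mathcal{C}}$ (via the embedding factoring through $\mathcal{C}^{\op} \xrightarrow{i_l} \tw_l(\mathcal{C})$), and the first stage together with Proposition \ref{coney} shows this image is closed under shifts and cones up to quasi-isomorphism. Hence it exhausts $\langle h^{\mathcal{C}} \rangle_{\triang}$.

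The hard part is the third stage: showing that
\[
(j_l)_1 : \Hom_{\tw_l(\mathcal{C})}(T_1, T_2) \to \Hom_{\mathcal{C}\lModi}(j_l(T_1), j_l(T_2))
\]
is a quasi-isomorphism for all $T_1, T_2 \in \tw_l(\mathcal{C})$. I would argue by double induction on the lengths of the tuples underlying $T_1$ and $T_2$. The base case, when both are of length one, reduces to the statement that the Yoneda embedding $h^{-} : \mathcal{C}^{\op} \to \mathcal{C}\lModi$ is quasi-fully faithful. For the inductive step, decompose $T_2$ (say) as the cone in $\tw_l(\mathcal{C})$ of a morphism $T_2' \to T_2''$ between twisted objects of strictly smaller length, as in stage one. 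By Proposition \ref{coney} and the commutation of $j_l$ with the cone construction, both sides of $(j_l)_1$ sit in distinguished triangles (in the homotopy category), and the map between the triangles is compatible entry by entry with $(j_l)_1$ applied to $T_2'$ and $T_2''$. The five lemma on the long exact sequences obtained by taking $\Ho^{\bullet}$ then completes the induction. The principal obstacle is bookkeeping: one must check that the identification of the $\mathcal{C}$-module structure on $j_l(\cone(M))$ with the actual cone of $j_l(M)$ — implicit in the block-matrix formula (\ref{muchbetter}) and the description (\ref{cot}) of cones in terms of $h_-$ — is strictly compatible with the twisted multiplication (\ref{twmult}), so that the triangles on both sides really do align. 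Once this naturality is verified, the argument closes and $j_l$ is established as a quasi-equivalence onto $\langle h^{\mathcal{C}} \rangle_{\triang}$.
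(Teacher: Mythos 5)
The paper gives no proof of this proposition, deferring to \cite{KLH}; your argument --- exhibiting each twisted object as an iterated cone of shifted Yoneda modules by peeling off one entry at a time, deducing that the image lies in and exhausts $\langle h^{\mathcal{C}}\rangle_{\triang}$, and then proving quasi-fully faithfulness by induction on length using Proposition \ref{coney}, the quasi-fully faithful $A_{\infty}$ Yoneda embedding as base case, and the five lemma, after checking that $j_l$ takes twisted-object cones to module cones --- is exactly the standard proof in that reference, and it is sound. The only slip is notational: with $A$ strictly lower-triangular as in Definition \ref{twl}, the closed morphism you peel off sits in the last \emph{row} of $A$ (entries in $\Hom_{h^{\mathcal{C}}[\mathbb{Z}]}(z_j,z_n)$ for $j<n$), the last column being empty.
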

The above proposition is important, for it tells us that, up to quasi-isomorphism, the objects of $\tw_l(\mathcal{C})$ give us the whole of $\langle h^{\mathcal{C}}\rangle_{\triang}$, and up to quasi-isomorphism, the morphism spaces of $\tw_l(\mathcal{C})$ give us the morphism spaces of $\langle h^{\mathcal{C}}\rangle_{\triang}$ as well.  Of course the equivalent statements hold for $\tw_r(\mathcal{C})$ too.  This is good news for doing Geometry, because these categories are much leaner, a statement we make precise with the following propositions.  As mentioned in \cite{KS}, it is these propositions that make the categories $\tw_l(\mathcal{C})$ and $\tw_r(\mathcal{C})$ suitably geometric for motivic Donaldson--Thomas theory.
\begin{prop}[\cite{KS}]
\label{kl1}
Let $\mathcal{C}$ be a finite-dimensional category.  Then for a fixed $n$-tuple $\tau=(x_1,\ldots,x_n)$ of objects of $h^{\mathcal{C}}[\mathbb{Z}]$, there is a finite type affine scheme $\VV_{l,\tau}$ parameterising objects $\alpha=((y_1,\ldots,y_m),A)$ of $\tw_l(\mathcal{C})$ such that $(y_1,\ldots,y_m)=\tau$.
\end{prop}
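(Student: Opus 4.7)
The plan is to realize $\VV_{l,\tau}$ as a closed subscheme of a finite-dimensional affine space cut out by the Maurer--Cartan equation. First I would write down the ambient parameter space. Recall that an object of $\tw_l(\mathcal{C})$ with underlying tuple $\tau=(x_1,\ldots,x_n)$ is determined by a strictly lower-triangular degree $1$ matrix $A$ whose $(i,j)$-entry, for $i>j$, is a degree $1$ morphism in $h^{\mathcal{C}}[\mathbb{Z}]$ from $x_j$ to $x_i$. Since $\mathcal{C}$ is finite-dimensional in the sense of Definition \ref{findimcat}, and the morphism spaces of $h^{\mathcal{C}}[\mathbb{Z}]$ are just shifts of those of $\mathcal{C}$, each space $\Hom^1_{h^{\mathcal{C}}[\mathbb{Z}]}(x_j,x_i)$ is a finite-dimensional $k$-vector space. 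I would therefore define
\[
\VV_{l,\tau}^{\mathrm{amb}}:=\bigoplus_{1\leq j<i\leq n}\Hom^1_{h^{\mathcal{C}}[\mathbb{Z}]}(x_j,x_i),
\]
which is a finite-type affine $k$-space whose closed points are in bijection with strictly lower-triangular degree $1$ matrices $A$ associated to $\tau$.

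Next I would define the Maurer--Cartan map, which cuts out the locus of \emph{actual} twisted objects inside $\VV_{l,\tau}^{\mathrm{amb}}$. Set
\[
MC(A):=\sum_{i\geq 1}b_{h^{\mathcal{C}}[\mathbb{Z}],i}(A,\ldots,A),
\]
viewed as a map to $\bigoplus_{1\leq j<i\leq n}\Hom^2_{h^{\mathcal{C}}[\mathbb{Z}]}(x_j,x_i)$. The key observation is that although \emph{a priori} this is an infinite sum, strict lower-triangularity of $A$ forces $b_{h^{\mathcal{C}}[\mathbb{Z}],i}(A,\ldots,A)$ to vanish as soon as $i\geq n$ (any such product must travel through more than $n$ objects of the tuple, which is impossible for an $n\times n$ strictly lower-triangular matrix). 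Hence $MC$ is a finite sum of $i$-multilinear maps evaluated on the diagonal, i.e.\ an honest polynomial map between finite-dimensional affine $k$-spaces.

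Finally, I would take $\VV_{l,\tau}$ to be the scheme-theoretic zero locus of $MC$. By construction this is a closed subscheme of the finite-type affine scheme $\VV_{l,\tau}^{\mathrm{amb}}$, hence itself finite-type and affine; and by Definition \ref{twl} its closed points are precisely the twisted objects of $\tw_l(\mathcal{C})$ with underlying tuple $\tau$. The only point that requires genuine care—as opposed to unwinding definitions—is the nilpotency argument in the previous paragraph guaranteeing that $MC$ is polynomial; with that in place the rest is formal. One can record, for later use, that the total degree of the polynomials defining $\VV_{l,\tau}$ is bounded by $n$, a fact that will be useful when this construction is put into families in Chapter \ref{stackofobjects}.
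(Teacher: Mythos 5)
Your proposal is correct and is essentially the paper's own argument: both take the finite-dimensional affine space of strictly lower-triangular degree $1$ matrices (finite-dimensional by the hypothesis on $\mathcal{C}$) and cut out $\VV_{l,\tau}$ by the Maurer--Cartan equation, which is polynomial because strict lower-triangularity kills all terms $b_{h^{\mathcal{C}}[\mathbb{Z}],i}(A,\ldots,A)$ with $i\geq n$. The only difference is cosmetic (you name the target of $MC$ and bound the polynomial degree, where the paper simply counts the resulting algebraic equations), so there is nothing to add.
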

\begin{proof}
By `parameterise', here, we mean that there is a family of objects of $\tw_l(\mathcal{C})$ over $\VV_{l,\tau}$, such that for any object $\alpha$ as in the statement of the proposition, $\alpha$ corresponds to a (probably non-unique) $k$-point of this family.  This is discussed in \cite{KS}.  We need to show that there is a finite type affine scheme parameterising the possible strictly lower-triangular matrices $A$, in the above description of $\alpha$.  These matrices form a subset of
\[
\bigoplus_{i<j\in \{1,\ldots,n\}}\Hom_{h^{\mathcal{C}}[\mathbb{Z}]}^1(x_i,x_j)
\]
which is a finite-dimensional affine space, by assumption.  The equation 
\[
\sum_{i\geq 1}b_{h^{\mathcal{C}}[\mathbb{Z}],i}(A,\ldots,A)=0
\]
reduces to
\[
\sum_{1\leq i\leq n-1}b_{h^{\mathcal{C}}[\mathbb{Z}],i}(A,\ldots,A)=0,
\]
which gives $n^2$ algebraic equations on this space.
\end{proof}
\begin{defn}
\label{objects}
Fix a finite-dimensional $A_{\infty}$-category $\mathcal{C}$.  Then we denote by $\VV_l$ the ind-variety $\coprod \VV_{l,\tau}$, the disjoint union of the $\VV_{l,\tau}$, for all $n$-tuples $\tau$ of objects in $h^{\mathcal{C}}[\mathbb{Z}]$, for all $n$.  We define $\VV_r$ similarly.
\end{defn}
The following proposition comes directly from how we have set up $\tw_r(\mathcal{C})$.
\begin{prop}
\label{indvariety}
Let $\mathcal{C}$ be a finite-dimensional $A_{\infty}$-category.  For each pair $\tau_1$ and $\tau_2$ of of ordered sets of objects of $h_{\mathcal{C}}[\mathbb{Z}]$, there is a finite-dimensional graded vector bundle $\mathcal{HOM}_{\tau_1,\tau_2}$ over $\VV_{r,\tau_1}\times \VV_{r,\tau_2}$, and these satisfy the following condition:  If, for all $j\in\{0,\ldots,t\}$, $\tau_j$ is an ordered set of objects of $h_{\mathcal{C}}[\mathbb{Z}]$, there are maps of algebraic vector bundles 
\[
\xymatrix{
b_{\mathcal{HOM},j}:\pi_{0,1}^*(\mathcal{HOM}_{\tau_0,\tau_1})\times\ldots\times \pi_{t-1,t}^*(\mathcal{HOM}_{\tau_{t-1},\tau_t})\rightarrow \pi_{0,t}^*(\mathcal{HOM}_{\tau_0,\tau_t})
}
\]
where $\pi_{a,b}:\VV_{r,\tau_t}\times\ldots\times \VV_{r,\tau_0}\rightarrow \VV_{r,\tau_a}\times \VV_{r,\tau_b}$ is the natural projection.  These vector bundles, and maps between them, satisfy the property that for $x_0$ a point of $\VV_{r,\tau_0}$ parameterising an object $\alpha_0$ of $\tw_r(\mathcal{C})$, and $x_t$ a point of $\VV_{r,\tau_t}$ parameterising an object $\alpha_t$ of $\tw_r(\mathcal{C})$, the fibre of the vector bundle $\mathcal{HOM}_{\tau_0,\tau_t}$ over the point $(x_0,x_t)$ is naturally identified with $\Hom_{\tw_r(\mathcal{C})}(\alpha_t,\alpha_0)$, and the vector bundle maps $b_{\mathcal{HOM},j}$ agree with the composition maps in the category $\tw_r(\mathcal{C})$.
\end{prop}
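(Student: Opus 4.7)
The plan is to construct $\mathcal{HOM}_{\tau_1,\tau_2}$ as a trivial graded vector bundle whose fiber is the graded space of matrices between the appropriate tuples, and then to observe that the composition formula (\ref{twmult}) from Definition \ref{twl} assembles into polynomial (hence algebraic) maps of vector bundles on the affine schemes $\VV_{r,\tau}$.

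First I would fix ordered tuples $\tau_1=(z_{1,1},\ldots,z_{1,p_1})$ and $\tau_2=(z_{2,1},\ldots,z_{2,p_2})$ of objects of $h_{\mathcal{C}}[\mathbb{Z}]$ and consider the graded $k$-vector space
\[
H_{\tau_1,\tau_2}:=\bigoplus_{i,j} \Hom_{h_{\mathcal{C}}[\mathbb{Z}]}(z_{2,i},z_{1,j})
\]
of matrices in $h_{\mathcal{C}}[\mathbb{Z}]$ with column tuple $\tau_2$ and row tuple $\tau_1$. Since $\mathcal{C}$ is finite-dimensional (Definition \ref{findimcat}), and morphisms in $h_{\mathcal{C}}[\mathbb{Z}]$ between shifted objects are just shifts of morphisms in $\mathcal{C}$, $H_{\tau_1,\tau_2}$ is finite-dimensional in each degree. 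Take $\mathcal{HOM}_{\tau_1,\tau_2}$ to be the trivial graded vector bundle over $\VV_{r,\tau_1}\times \VV_{r,\tau_2}$ with fiber $H_{\tau_1,\tau_2}$. The fiber over a pair of points parameterising $\alpha_0=(\tau_1,A_0)$ and $\alpha_t=(\tau_2,A_t)$ is naturally identified with $\Hom_{\tw_r(\mathcal{C})}(\alpha_t,\alpha_0)$, since the underlying graded vector space of morphisms in $\tw_r(\mathcal{C})$ depends only on the row/column tuples and not on the twist matrices.

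Next I would check that the composition maps are algebraic. The formula (\ref{twmult}) reads
\[
b_{\tw_r(\mathcal{C}),t}(M_t,\ldots,M_1)=\sum_{p_0,\ldots,p_t\geq 0} b_{h_{\mathcal{C}}[\mathbb{Z}],t+\sum p_i}(A_t^{\otimes p_t},M_t,A_{t-1}^{\otimes p_{t-1}},\ldots,M_1,A_0^{\otimes p_0}).
\]
Strict upper-triangularity of each $A_i$ forces $p_i<|\tau_i|$ for any nonvanishing summand, so the sum is finite. For fixed $(p_0,\ldots,p_t)$ the summand is $k$-multilinear in the $A_i$ and $M_j$, and each $A_i$ depends linearly on its point in $\VV_{r,\tau_i}\subset \bigoplus_{a<b}\Hom_{h_{\mathcal{C}}[\mathbb{Z}]}^1(z_{i,b},z_{i,a})$, which is cut out by polynomial equations from affine space by Proposition \ref{kl1}. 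Consequently $b_{\mathcal{HOM},j}$ is a polynomial map of algebraic vector bundles, and by construction its fibrewise evaluation recovers the composition in $\tw_r(\mathcal{C})$.

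The principal obstacle is not difficulty but bookkeeping: one must recognise that the dependence of morphisms in $\tw_r(\mathcal{C})$ on the twist data $A_i$ is entirely through the polynomial formula above, and that the underlying graded vector spaces of morphisms are constant along $\VV_{r,\tau_1}\times \VV_{r,\tau_2}$. Once this is observed, the rest is immediate from the finite-dimensionality hypothesis on $\mathcal{C}$ and Proposition \ref{kl1}.
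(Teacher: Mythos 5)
Your proposal is correct and is exactly the argument the paper has in mind: the paper offers no written proof beyond the remark that the proposition ``comes directly from how we have set up $\tw_r(\mathcal{C})$'', and what you spell out — the trivial graded bundle with fibre the matrix space (independent of the twist matrices), finite-dimensionality from Definition \ref{findimcat}, and algebraicity of (\ref{twmult}) because strict triangularity truncates the sum and each summand is polynomial in the coordinates of the $A_i$ on the affine schemes of Proposition \ref{kl1} — is precisely that implicit reasoning made explicit.
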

\begin{defn}
\label{lambda1}
We denote by $\lambda:\VV_r\rightarrow\VV_l$ the isomorphism of ind-varieties inducing the same map of underlying sets of $k$-points as $-^{\diamond_{\tw}}$, as defined in Proposition \ref{diamonddual}.
\end{defn}

\begin{thm}
\label{triangparam}
Let $\mathcal{C}$ be as above, then there is an ind-variety $\VV_{l,3}$ parameterising triangles in $\tw_r(\mathcal{C})$.
\end{thm}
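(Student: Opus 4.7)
The plan is to build $\VV_{l,3}$ as an ind-variety whose $k$-points parameterise closed degree zero morphisms in $\tw_r(\mathcal{C})$; the cone construction of Proposition \ref{coney} then upgrades each such morphism to a genuine triangle. Up to quasi-isomorphism every triangle in $\tw_r(\mathcal{C})$ is of the form
\[
T_2 \xrightarrow{M} T_1 \to \cone(M) \to T_2[1],
\]
so parameterising triangles reduces to parameterising the data $(T_1, T_2, M)$ with $M$ a closed degree zero morphism.

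The construction proceeds one pair of tuples at a time. Fix ordered tuples $\tau_1, \tau_2$ of objects in $h_{\mathcal{C}}[\mathbb{Z}]$. By Proposition \ref{kl1} there are finite type affine schemes $\VV_{r,\tau_1}$ and $\VV_{r,\tau_2}$ parameterising twisted objects of $\tw_r(\mathcal{C})$ with those underlying tuples. By Proposition \ref{indvariety}, pullback to the product $\VV_{r,\tau_2} \times \VV_{r,\tau_1}$ produces a finite-dimensional graded vector bundle $\mathcal{HOM}_{\tau_2,\tau_1}$ whose fibre over $(\alpha_2, \alpha_1)$ is $\Hom_{\tw_r(\mathcal{C})}(\alpha_2,\alpha_1)$, along with a morphism of vector bundles $b_{\mathcal{HOM},1}\colon \mathcal{HOM}^0_{\tau_2,\tau_1} \to \mathcal{HOM}^1_{\tau_2,\tau_1}$ computing the differential in each fibre. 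The desired component is
\[
\VV_{l,3,\tau_1,\tau_2} := \ker\bigl(b_{\mathcal{HOM},1}\colon \mathcal{HOM}^0_{\tau_2,\tau_1} \to \mathcal{HOM}^1_{\tau_2,\tau_1}\bigr),
\]
the closed subscheme of the total space of $\mathcal{HOM}^0_{\tau_2,\tau_1}$ cut out by the algebraic equations $b_{\mathcal{HOM},1}(M)=0$ in each fibre. This is again of finite type, being closed in a finite type scheme. I then set
\[
\VV_{l,3} := \coprod_{\tau_1, \tau_2}\VV_{l,3,\tau_1,\tau_2},
\]
the disjoint union running over all pairs of ordered tuples of objects of $h_{\mathcal{C}}[\mathbb{Z}]$; this is an ind-variety by the same pattern as in Definition \ref{objects}.

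What remains is to check that this scheme genuinely carries a family of triangles. Over $\VV_{l,3,\tau_1,\tau_2}$ the data of a varying closed morphism $M$ together with the pulled-back families of twisted objects $A_1, A_2$ over $\VV_{r,\tau_1}, \VV_{r,\tau_2}$ assemble into a family of objects with underlying tuple $(\tau_1, \tau_2[1])$ and matrix $\begin{pmatrix} A_1 & M \\ 0 & A_2\end{pmatrix}$. The identity (\ref{muchbetter}) from the proof of Proposition \ref{coney} shows that the Maurer--Cartan equation for this block matrix reduces fibrewise to the Maurer--Cartan equations for $A_1$ and $A_2$ together with $b_{\tw_r(\mathcal{C}),1}(M)=0$; since the first two hold by construction over $\VV_{r,\tau_i}$ and the third is precisely the defining equation of $\VV_{l,3,\tau_1,\tau_2}$, we get an algebraic family of cones, hence an algebraic family of triangles. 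The main (mild) obstacle is bookkeeping: one must verify that the formation of the cone really is algebraic in families and that every triangle is quasi-isomorphic to one of this form, but both facts follow immediately from the explicit block-matrix construction of Proposition \ref{coney} and the quasi-equivalence $j_l$ (and its right-module analogue) of Proposition \ref{trac}.
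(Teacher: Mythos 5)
Your construction is exactly the one the paper uses: the paper takes $\VV_{l,3}$ to be the total space of the constructible bundle $\Zo^0(\mathcal{HOM})$ on $\VV_r\times\VV_r$, which is precisely your componentwise kernel $\ker\bigl(b_{\mathcal{HOM},1}\colon \mathcal{HOM}^0\to\mathcal{HOM}^1\bigr)$, with the third projection given by the same block-matrix cone of Proposition \ref{coney}. Your proposal is correct and follows essentially the same route, just with the fibrewise Maurer--Cartan verification via (\ref{muchbetter}) spelled out more explicitly than the paper bothers to.
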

By functoriality of the mapping cone, this ind-scheme can be taken as the total space of the constructible bundle $\Zo^0(\mathcal{HOM})$ on $\VV_r\times\VV_r$.  Note that this comes with three natural projections $p_1, p_2$ and $p_3$.  The first two are just the projections onto the first and second $\VV_r$ factors, while the third maps the point represented by a matrix $M$ over a point $((\tau_1,A_1),(\tau_2,A_2))\in \VV_r\times \VV_r$ to the cone $((\tau_1,\tau_2[1]),\left(\begin{array}{cc}A_1 & M\\0 & A_2\end{array}\right))$ -- see Proposition \ref{coney}.

\section{Finite models for tensor products}
\label{finmodtens}
Let $M$ be a $(\mathcal{C},\mathcal{D})$-bimodule, for $\mathcal{C}$ and $\mathcal{D}$ finite-dimensional $A_{\infty}$-categories, and assume $M$ is finite-dimensional too, in the sense that for all pairs $(x,y)\in (\mathcal{C},\mathcal{D})$, $M(y,x)$ is a finite-dimensional differential graded vector space.  We have defined a tensor product between $M$ and left $\mathcal{D}$-modules, that is functorial, i.e. we have a functor (see the discussion following Remark \ref{funnyshift})
\begin{align*}
M\otimes_{\mathcal{D}}-:\mathcal{D}\lModi\rightarrow &\mathcal{C}\lModi\\
N\mapsto & M\otimes_{\mathcal{D}}N.
\end{align*}
This functor uses the bar construction, and so it generally churns out infinite-dimensional $\mathcal{C}$-modules, no matter how nice $N$ or $M$ might be.  It turns out that if we restrict the functor to $\tw_l(\mathcal{C})$, considered as a subcategory of $\mathcal{C}\lModi$, we can do better than this.
\begin{prop}
\label{bas}
For all $y\in \ob(\mathcal{D})$, there is a quasi-equivalence of left $\mathcal{C}$-modules
\[
M\otimes_{\mathcal{D}} h^{y}\rightarrow M(y,-).
\]
\end{prop}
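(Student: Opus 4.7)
\medskip

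\noindent\textbf{Proof plan.} The strategy is the classical one for resolutions by the two-sided bar construction: exhibit an explicit augmentation map and show its cone is contractible via an extra-degeneracy homotopy that exists precisely because $\mathcal{D}$ is strictly unital. First I would write down the map $\varepsilon\colon M\otimes_{\mathcal{D}}h^y \to M(y,-)$ by unfolding the bar formula from Section \ref{finmodtens}: at $x\in\ob(\mathcal{C})$ the complex $(M\otimes_{\mathcal{D}}h^y)(x)$ has, as underlying graded vector space, a direct sum (over $i\geq 0$ and $y_1,\dots,y_i\in\ob(\mathcal{D})$) of
\[
M(y_i,x)\otimes \Hom_{\mathcal{D}}(y_{i-1},y_i)[1]\otimes\cdots\otimes \Hom_{\mathcal{D}}(y,y_1)[1],
\]
and $\varepsilon$ is defined on this summand by applying the bimodule operation $m_{M,0,i+1}$ (the right $\mathcal{D}$-action of $M$) to collapse everything into an element of $M(y,x)$. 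In the cofree comodule picture of Section \ref{enrichments}, this is the map induced by the projection of $\overline{h^y}^{\,l}$ to its $T_0$-summand, tensored with $M$; writing it that way makes the next step painless.

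Next I would verify that $\varepsilon$ is a closed morphism of left $\mathcal{C}$-modules. The verification that $\varepsilon\circ d = 0$ is exactly the $A_\infty$-bimodule relation
\[
\sum_{a+b+c=n}(-1)^{\bigstar}\,m_{M,0,a+1+c}(\id^{\otimes a}\otimes m_b\otimes\id^{\otimes c})=0
\]
applied to tuples $(m;d_i,\dots,d_1)$; the pieces with $b\leq i$ reproduce the internal bar differential, and the piece $b=i+1$ reproduces $\varepsilon$ composed with the outer collapsing. Compatibility with the left $\mathcal{C}$-action is automatic from the fact that the operations $m_{M,j,0}$ do not touch the $\mathcal{D}$-factors in the bar resolution, and the bimodule compatibility relations were imposed precisely so that left and right actions commute up to the $A_\infty$-sense.

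The heart of the argument is to produce a contracting homotopy on the mapping cone of $\varepsilon$, fibrewise at each $x\in\ob(\mathcal{C})$. Define an inclusion $i\colon M(y,x)\to(M\otimes_{\mathcal{D}}h^y)(x)$ by sending $m\in M(y,x)$ to $m\otimes 1_y$ in the $i=0$ summand, and extend to a $k$-linear map $h$ of degree $-1$ on the bar complex by inserting $1_y\in\Hom_{\mathcal{D}}(y,y)$ as an extra innermost tensor factor. The standard calculation shows that $\varepsilon\circ i=\id_{M(y,x)}$, and that on the bar complex $dh+hd=\id - i\circ\varepsilon$. This uses \emph{strict} unitality of $\mathcal{D}$ in an essential way: every higher $A_\infty$-operation $b_{\mathcal{D},n}$ for $n\geq 3$ vanishes when one of its inputs is $1_y$, as does $m_{M,j,n}$ for $n\geq 2$ in the $1_y$-slot (Definitions \ref{unitalalg}, \ref{Sunital}, \ref{catmods}). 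The only surviving terms are those coming from $b_{\mathcal{D},2}(\text{--},1_y)$, $b_{\mathcal{D},2}(1_y,\text{--})$, $m_{M,0,2}(m,1_y)$, and the analogous compositions in the inserted copy, and these telescope in the usual fashion.

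The main obstacle, as ever with the bar construction, is sign bookkeeping: one must choose the insertion and the decomposition of $dh+hd$ so that the signs from the shift conventions (Remark \ref{annoyingsigns}) cancel as intended. Recasting the entire argument in the coalgebra language of Section \ref{enrichments}, where the homotopy $h$ becomes the natural $(1_y)_*$-shift on $\overline{h^y}^{\,l}$ and the relation $dh+hd=\id -i\circ\varepsilon$ follows from comparing two coderivations, is the cleanest way to sidestep the signs; I would take that route rather than fight the explicit formulas.
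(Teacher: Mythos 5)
Your overall strategy is the same as the paper's: an augmentation collapsing the bar factors, a section given by inserting $\id_y$, and an extra-degeneracy homotopy, with strict unitality killing every higher operation that sees the unit; the paper writes exactly this triple $(\phi,\psi,h)$ at the level of the associated comodules.

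The genuine gap is your claim that compatibility of $\varepsilon$ with the left $\mathcal{C}$-action is ``automatic'' because the operations $m_{M,j,0}$ do not touch the $\mathcal{D}$-factors. The left $\mathcal{C}$-module structure on $M\otimes_{\mathcal{D}}h^y$ is not built from the $m_{M,j,0}$ alone: its higher action maps use the mixed bimodule operations $m_{M,a,b}$ with $a\geq 1$, $b\geq 1$, which do consume the $\mathcal{D}[1]$-factors of the bar complex. Your single-component collapse (only $m_{M,0,\bullet}$) is a chain map on underlying complexes, but it does not commute with these higher action maps, so it is not a morphism of left $\mathcal{C}$-modules, which is what the proposition requires. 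The paper's map is the comodule morphism with components $\phi_{i,j}=\sum_{k>j}\id^{\otimes(i+j+1-k)}\otimes b_k$: the $k=j+1$ term is your $\varepsilon$, while the terms with $k>j+1$ consume $\mathcal{C}$-inputs and are precisely the (necessarily nonzero) higher components of the $A_{\infty}$-morphism. So the comodule reformulation you defer to at the end is not merely a sign-management device — it is what supplies these components — and your description of the map there (projection of $\overline{h^y}^l$ onto its $T_0$-summand, tensored with $M$) is not the right map, since one must still apply the bimodule operations, with all the $k>j$ terms, to land in $M(y,-)$; correspondingly the homotopy identity is $dh+hd=\id-\psi\circ\phi$ with the full composite, not $\id-i\circ\varepsilon$. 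With the augmentation corrected in this way, your section, homotopy and unitality argument go through exactly as in the paper.
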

\begin{proof}
This kind of result is pretty standard, related as it is to the fact that the bar resolution actually is a resolution.  We will recount the proof anyway.  Consider the associated left $\TCC(\mathcal{C})$-comodules:
\[
\overline{M\otimes_{\mathcal{D}} h^{y}}^l(w)=\bigoplus_{i\geq 0, j\geq 1} H_{i,j}(w)[-1]
\]
where
\[
H_{i,j}(w)=\bigoplus_{z_1,z_2\in \mathcal{C}} T_{\mathcal{C},i}(z_2,w)\otimes M(z_1,z_2)[1]\otimes T_{\mathcal{D},j}(y,z_1),
\]
and
\[
\overline{M(y,-)}^l(w)=\big(\bigoplus_{\substack{i\geq 0\\z\in \mathcal{C}}}T_{\mathcal{C},i}(z,w)\otimes M(y,z)[1]\big).
\]
Here we have extended a little the notation of (\ref{Tdef}), setting, for an arbitrary $A_{\infty}$-category $\mathcal{E}$,
\[
T_{\mathcal{E},i}(x,y):=\bigoplus_{x_1,\ldots,x_{i-1}\in\mathcal{E}} \Hom_{\mathcal{E}}(x_{i-1},y)[1]\otimes \Hom_{\mathcal{E}}(x_{i-2},x_{i-1})[1]\otimes\ldots\otimes \Hom_{\mathcal{E}}(x,x_1)[1]
\]
for $i\geq 1$ and
\[
T_{\mathcal{E},0}(x,y):=\begin{cases} 0 \mbox{ for } x\neq y\\k\mbox{ otherwise.}\end{cases}
\]
We define 
\begin{align*}
\phi:\overline{M\otimes_{\mathcal{D}} h^{y}}^l\rightarrow &\overline{M(y,-)}^l
\end{align*}
by defining $\phi_{i,j}[1]$, the restriction of $\phi$ to $H_{i,j}$.  Set
\begin{align*}
\phi_{i,j}[1]:=\sum_{k>j} \id^{\otimes (i+j+1-k)}\otimes b_{k}.
\end{align*}
Next we define a map
\begin{align*}
\psi:\overline{M(y,-)}^l\rightarrow &\overline{M\otimes_{\mathcal{D}} h^{y}}^l
\end{align*}
by sending $\kappa\in T_i(z,w)\otimes M(y,z)[1]$ to $(\kappa, \id_{y})$.\smallbreak
Using strict unitality, we see that $\phi\circ \psi$ is the identity on $\overline{M(y,-)}^l$.  Finally, the degree -1 map
\begin{align*}
h:\overline{M\otimes_{\mathcal{D}} h^{y}}^l\rightarrow \overline{M\otimes_{\mathcal{D}} h^{y}}^l
\end{align*}
sending $\eta\in T_{\mathcal{C},i}(z_2,w)\otimes M(z_1,z_2)[1]\otimes T_{\mathcal{D},j}(y,z_1)$ to $(\eta,\id_y)$ is a homotopy between the identity on $\overline{M\otimes_{\mathcal{D}} h^{y}}^l$ and the map $\psi \circ \phi$.
\end{proof}
Let $M$ be a $(\mathcal{C},\mathcal{D})$-bimodule.  Then we build a $(\tw_r(\mathcal{C}),\tw_l(\mathcal{D})^{\op})$-bimodule $M_{\tw}$ as follows.  First, we set $M(h^y[b],h_x[a])=M(y,x)[a+b]$.  Next, let $\alpha=((x_1,\ldots,x_m),A)$ be an element of $\tw_r(\mathcal{C})$, and $\beta=((y_1,\ldots,y_n),B)$ be an element of $\tw_l(\mathcal{D})$.  Then 
\[
M_{\tw}(\beta,\alpha):=\bigoplus_{\substack{i\in\{1,\ldots,n\}\\j\in \{1,\ldots,m\}}}M(y_i,x_j).
\]
We interpret elements of $M_{\tw}(\beta,\alpha)$, then, as $m$ by $n$ matrices, with $(i,j)$th entry taking values in $M(y_j,x_i)$.  Then the module multiplication is given by matrix multiplications: let $\beta_0,\ldots,\beta_t\in \tw_l(\mathcal{D})$ and $\alpha_0,\ldots,\alpha_s\in \tw_r(\mathcal{C})$ and let $\phi_i\in \Hom_{\tw_r(\mathcal{C})}(\alpha_{i-1},\alpha_i)[1]$ for $i\in\{1,\ldots,s\}$ and $\psi_j\in \Hom_{\tw_l(\mathcal{D})}(\beta_{j-1},\beta_j)[1]$ for $j\in \{1,\ldots,t\}$, and let $T\in M_{\tw}(\beta_t,\alpha_0)$, then we set
\begin{equation}
\begin{array}{c}
b_{M_{\tw},s,t}(\phi_s,\ldots\phi_1,T,\psi_t,\ldots,\psi_1)=\\=\sum_{\substack{p_0,\ldots,p_s\geq 0\\q_0,\ldots,q_t\geq 0}}\limits b_{M,s+\sum p_i, t+\sum q_j}(A_s^{\otimes p_s},\phi_s,A_{s-1}^{\otimes p_{s-1}},\ldots,A_{0}^{\otimes p_0},T,(B_t^T)^{\otimes q_t},\psi_{t}^T,(B_{t-1}^T)^{\otimes q_{t-1}},\ldots,\psi_1^T,(B_1^T)^{\otimes q_0})
\end{array}
\end{equation}
where $\alpha_i=((\ldots),A_i)$ for $i\in\{1,\ldots,s\}$ and $\beta_j=((\ldots),B_j)$ for $j\in \{1,\ldots,t\}$.  One may easily check that this makes $M_{\tw}$ a $(\tw_r(\mathcal{C}),\tw_l(\mathcal{D})^{\op})$-bimodule, and that the $(\mathcal{C},\mathcal{D})$-module induced via the embeddings $i_r$ and $i_l$ is just the original bimodule $M$.\bigbreak
\begin{prop}
\label{finmods}
Let $\mathcal{C}$ and $\mathcal{D}$ be $A_{\infty}$-categories, and let $M$ be a $(\mathcal{C},\mathcal{D})$-bimodule.  Then there is a quasi-isomorphism of $(\tw_r(\mathcal{C}),\tw_l(\mathcal{D})^{\op})$-bimodules:
\[
\xymatrix{
M_{\tw}(-,-)\ar[r]^(.45){\sim} & -\otimes_{\mathcal{C}} M\otimes_{\mathcal{D}} -.
}
\]
\end{prop}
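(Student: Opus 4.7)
The plan is to reduce the statement to Proposition \ref{bas} and its symmetric analogue for right modules, and then to extend from the Yoneda subcategory along the cone structure that generates $\tw_r(\mathcal{C})$ and $\tw_l(\mathcal{D})$. First I would check the case of generators: when $\alpha = i_r(x)$ and $\beta = i_l(y)$, the right-hand side is $h_x \otimes_{\mathcal{C}} M \otimes_{\mathcal{D}} h^y$. Applying Proposition \ref{bas} on the $\mathcal{D}$-side gives a quasi-isomorphism to $h_x \otimes_{\mathcal{C}} M(y,-)$, and then the left-module/right-module symmetric version of the same proposition contracts this to $M(y,x) = M_{\tw}(i_l(y), i_r(x))$. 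So on generators both sides agree up to canonical quasi-isomorphism.

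Next I would promote this to an honest morphism of $(\tw_r(\mathcal{C}), \tw_l(\mathcal{D})^{\op})$-bimodules. The natural map to write down generalises the $\phi$ of Proposition \ref{bas}: given $\alpha = ((x_1,\ldots,x_m),A)$ and $\beta = ((y_1,\ldots,y_n),B)$, regard elements of $h_{\alpha}\otimes_{\mathcal{C}} M \otimes_{\mathcal{D}} h^{\beta}$ as bar chains with entries summed over matrix indices, and apply $b_{M,s,t}$ for all $s,t$ to collapse the chain, inserting arbitrarily many copies of $A$ and $B^T$ at every position to mirror the formula defining $M_{\tw}$. The verification that this is a bimodule map uses the same calculation that makes $\tw_r(\mathcal{C})$ and $\tw_l(\mathcal{D})$ into categories, namely the Maurer–Cartan equations $\sum_i b_i(A,\ldots,A)=0$ and $\sum_i b_i(B,\ldots,B)=0$, combined with the bimodule compatibility for $M$; the mechanism is the one displayed in equation (\ref{muchbetter}).

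To see it is a quasi-isomorphism, I would filter both sides by the total lower-triangular depth in the matrices $A$ and $B$, i.e.\ by the number of copies of $A$ and $B^T$ appearing (or equivalently filter by matrix index on $M_{\tw}$). Both filtrations are bounded and exhaustive since $A$ and $B$ are strictly triangular, and on the associated graded the twisting disappears and the map becomes a finite direct sum of copies of the generator case handled above, which is a quasi-isomorphism by Proposition \ref{bas}. Alternatively, one can run an induction on $m+n$: every twisted object arises as an iterated cone of Yoneda modules by Proposition \ref{trac}, and both $M_{\tw}(\beta,-)$ and $-\otimes_{\mathcal{C}} M \otimes_{\mathcal{D}} h^{\beta}_{\tw}$ send cones in $\tw_r(\mathcal{C})$ to cones in $\mathcal{C}\lModi$ (functoriality of cones, Proposition \ref{coney}, applied to both variables), so the five lemma in the triangulated homotopy category propagates the quasi-isomorphism from generators up to arbitrary twisted objects.

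The main obstacle is the bookkeeping in the middle step: one must check that the insertion-of-$A$-and-$B^T$ formula really defines a map of bimodules, which amounts to reorganising the infinite sums produced by the bar differential on one side and the twisted $b_{\tw}$ on the other, so that they match term by term. This is the same kind of calculation that underlies Proposition \ref{bas} and the construction of $\tw_l$, $\tw_r$, but with both twists present simultaneously; once framed in the comodule/cobimodule language of Section \ref{enrichments} it becomes routine, since everything reduces to the coderivation property $b^2=0$ applied to the cofree bicomodule built from $M$.
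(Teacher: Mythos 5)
Your proposal is correct, and its primary route differs from the paper's. The paper's proof is deliberately minimal: it invokes only Proposition \ref{bas} together with the facts that mapping cones are functorial in differential graded categories and that dg functors and natural transformations between them extend to triangulated hulls, so the quasi-isomorphism on the Yoneda generators propagates formally to all twisted objects — exactly the cone-induction you offer as your ``alternative''. Your main argument instead constructs the explicit collapse map (insertions of $A$ and $B^T$ followed by $b_{M,s,t}$, mirroring the definition of $M_{\tw}$) and proves it is a quasi-isomorphism by filtering both sides by matrix index, using strict triangularity of $A$ and $B$ to make the filtration bounded and to kill the twist on the associated graded, where Proposition \ref{bas} (and its right-module mirror) finishes the job. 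This buys an explicit formula for the comparison morphism, at the price of the bimodule-morphism bookkeeping you flag — a verification the paper's formal argument never has to perform. Two cosmetic points: the arrow in the statement runs from $M_{\tw}$ to the bar construction, whereas your collapse map goes the other way, so you should either invoke the homotopy inverse (the analogue of $\psi$ in Proposition \ref{bas}, inserting identities) or note that a quasi-isomorphism in one direction suffices; and in the filtration step you should record that your map is filtration-preserving, which is immediate since every term involving an $A$ or $B^T$ strictly shifts the matrix index.
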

The proof is completely formal, using the fact that mapping cones are functorial in differential graded categories, functors between differential graded categories preserve mapping cones, and Proposition \ref{bas}.  The point is that functors between differential graded categories and natural transformations between such functors determine their extensions to triangulated hulls.  
\begin{cor}
\label{diagfact}
Let $\mathcal{C}$ be an $A_{\infty}$-category, then there is a quasi-isomorphism of $(\tw_{r}(\mathcal{C}),\tw_{r}(\mathcal{C}))$-bimodules
\[
\xymatrix{
-\otimes_{\mathcal{C}} \mathcal{C} \otimes_{\mathcal{C}} -^{\diamond_{\tw}}\ar[r]^(.465){\sim} & \Hom_{\tw_r(\mathcal{C})}(-,-).
}
\]
\end{cor}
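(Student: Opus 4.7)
The plan is to specialise Proposition \ref{finmods} to the case $\mathcal{D}=\mathcal{C}$ with $M$ the diagonal $(\mathcal{C},\mathcal{C})$-bimodule $\Hom_{\mathcal{C}}(-,-)$ of Example \ref{diagcat}, and then to transport the right action along the strict isomorphism $-^{\diamond_{\tw}}$ of Proposition \ref{diamonddual}. Specialisation of Proposition \ref{finmods} supplies, as a first step, a quasi-isomorphism
\[
\Hom_{\mathcal{C}}(-,-)_{\tw} \xrightarrow{\sim} -\otimes_{\mathcal{C}} \mathcal{C} \otimes_{\mathcal{C}} -
\]
of $(\tw_r(\mathcal{C}),\tw_l(\mathcal{C})^{\op})$-bimodules.

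Next, since $-^{\diamond_{\tw}}:\tw_r(\mathcal{C})\to\tw_l(\mathcal{C})^{\op}$ is a strict isomorphism of $A_\infty$-categories, precomposing the second argument of both sides with $-^{\diamond_{\tw}}$ converts the right $\tw_l(\mathcal{C})^{\op}$-action into a right $\tw_r(\mathcal{C})$-action, yielding a quasi-isomorphism
\[
\Hom_{\mathcal{C}}(-,-)_{\tw}(-^{\diamond_{\tw}},-) \xrightarrow{\sim} -\otimes_{\mathcal{C}} \mathcal{C} \otimes_{\mathcal{C}} -^{\diamond_{\tw}}
\]
of $(\tw_r(\mathcal{C}),\tw_r(\mathcal{C}))$-bimodules.

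The remaining step, and the only one with real content, is to exhibit a \emph{strict} isomorphism of $(\tw_r(\mathcal{C}),\tw_r(\mathcal{C}))$-bimodules
\[
\Hom_{\mathcal{C}}(-,-)_{\tw}(-^{\diamond_{\tw}},-) \;\cong\; \Hom_{\tw_r(\mathcal{C})}(-,-).
\]
At the level of underlying graded vector spaces this is immediate from the definitions: for $\alpha=((x_1,\ldots,x_m),A)$ and $\beta=((y_1,\ldots,y_n),B)$ in $\tw_r(\mathcal{C})$, both sides equal $\bigoplus_{i,j}\Hom_{\mathcal{C}}(y_j,x_i)$, which is precisely the space of matrices in $h_{\mathcal{C}}[\mathbb{Z}]$ defining $\Hom_{\tw_r(\mathcal{C})}(\alpha,\beta)$ in Definition \ref{twl}.

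The hard part is matching the bimodule structure maps. On the left-hand side, the $(\tw_r(\mathcal{C}),\tw_l(\mathcal{C})^{\op})$-action is given by the formula for $b_{M_{\tw},s,t}$ after Proposition \ref{bas}, which inserts powers of the left twisting matrices $A_i$ between the slots on the $\tw_r$-side and powers of the \emph{transposed} twisting matrices $B_j^T$ between the slots on the $\tw_l^{\op}$-side. Applying $-^{\diamond_{\tw}}$ to $\beta$ replaces $B$ by $B^T$, which is strictly lower-triangular and so a legitimate twisting datum for $\tw_l$; the transposition built into the definition of $M_\tw$ on the $\tw_l^{\op}$ side is then precisely the one needed to turn the sequence of insertions into the composition law (\ref{twmult}) of $\tw_r(\mathcal{C})$ applied to the matrix built from $\phi_s,\ldots,\phi_1,T,\psi_t^{\diamond_{\tw}},\ldots,\psi_1^{\diamond_{\tw}}$. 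In other words, when $M=\Hom_{\mathcal{C}}(-,-)$, the twisted matrix-multiplication that defines $M_{\tw}(\beta^{\diamond_{\tw}},\alpha)$ literally \emph{is} the composition in $\tw_r(\mathcal{C})$ on morphisms between $\alpha$ and $\beta$. Once this routine but sign- and bookkeeping-heavy verification is carried out, composing with the quasi-isomorphism above yields the required quasi-isomorphism of $(\tw_r(\mathcal{C}),\tw_r(\mathcal{C}))$-bimodules.
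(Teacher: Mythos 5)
Your proposal is correct and follows essentially the route the paper intends: the paper states the corollary without proof as an immediate consequence of Proposition \ref{finmods} applied to the diagonal bimodule $\mathcal{C}$, together with the identification (made explicit later, in Section \ref{cluststuff}) that $\mathcal{C}_{\tw}$, with its second argument transported along $-^{\diamond_{\tw}}$, \emph{is} the bimodule $\Hom_{\tw_r(\mathcal{C})}(-,-)$, since the twisted matrix-multiplication defining $M_{\tw}$ specialises to the composition law (\ref{twmult}). Your spelling out of that identification is exactly the intended content (the direction of the arrow is immaterial here, and the paper itself is not consistent about it), so there is nothing to add.
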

Note the confusing swapping of arguments here!
\section{The category $\Perf(\mathcal{C}\lModi)$}
\label{perfconstr}
We now recall some details regarding the category of perfect $\mathcal{C}$-modules.  This section is not essential to the understanding of the thesis, since in all the cases in which we are interested this category is the quasi-essential image of the inclusion from $\tw_l(\mathcal{C})$.  However, there are categories $\mathcal{C}$ for which the two categories do not coincide, even up to quasi-equivalence, and for which we can still say something positive about orientation data.  This is because orientation data is a concept that exists independently of the story of motivic Donaldson--Thomas invariants (as noted in \cite{KS}, we do not even need the category for which we provide orientation data to be 3-dimensional).  It is anticipated that a more satisfactory treatment of these complications will be provided by working directly with the derived stack of objects in $\Perf(\mathcal{C}\lModi)$.  
\begin{defn}
The category $\Perf(\mathcal{C}\lModi)$ is the smallest quasi-strictly full sub-$A_{\infty}$-category of the category of left $\mathcal{C}$-modules that is closed under shifts, cones, and homotopy retracts, containing the image of the Yoneda embedding.  A homotopy retract is a diagram in $\mathcal{C}\lMod$ (i.e. we ask that $\alpha,\beta\in \Zo^0(\mathcal{C}\lModi)$):
\begin{equation}
\label{retdi}
\xymatrix{
M\ar[r]^{\alpha}& N\ar[r]^{\beta} &M
}
\end{equation}
such that the two maps compose to the identity in $\Di(\mathcal{C}\lModi)$.  
\end{defn}
The category $\Perf(\CC\lMod)$ is defined as $\mathrm{Z}^0(\Perf(\CC\lModi))$, and $\Perf(\rMod\CC)$ and\\ $\Perf(\rModi\CC)$ are defined likewise.
\begin{lem}
Every homotopy retract in $\CC\lMod$ is quasi-isomorphic to a retract.
\end{lem}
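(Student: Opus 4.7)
The plan is to form $C := \cone(\alpha) \in \CC\lModi$ and replace $N$ by $M \oplus C$ equipped with the canonical inclusion and projection; the latter is a strict retract, and the two diagrams will be linked by a single explicit quasi-isomorphism built from $\beta$ and the cone structure.

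First I construct $C = \cone(\alpha)$ using the mapping cone described in Section \ref{triangstruc}. This sits in a distinguished triangle
\[
M \xrightarrow{\alpha} N \xrightarrow{\iota} C \xrightarrow{\partial} M[1]
\]
in $\Di(\CC\lModi)$. The hypothesis that $\beta\alpha = \id_M$ in $\Di(\CC\lModi)$ says $[\alpha]$ is a split monomorphism in this triangulated category, so $[\partial]=0$ and $N \cong M \oplus C$ in $\Di(\CC\lModi)$. The strict retract to which I will compare the original is
\[
M \xrightarrow{\alpha'} M \oplus C \xrightarrow{\beta'} M,
\]
with $\alpha'$ and $\beta'$ the canonical inclusion and projection, satisfying $\beta'\alpha' = \id_M$ literally.

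Second, to link this strict retract to the original homotopy retract I propose the explicit chain-level splitting
\[
\psi : N \longrightarrow M \oplus C,\qquad n \longmapsto \bigl(\beta(n),\,(n-\alpha\beta(n),\,0)\bigr),
\]
using the model $C = N \oplus M[1]$ with the twisted differential of Section \ref{triangstruc}. A short check (using only that $\alpha$ and $\beta$ are closed) shows $\psi$ is a closed morphism of $\CC$-modules, and the degree $-1$ map $n \mapsto (0,\beta n)\in C$ provides a homotopy from the second coordinate of $\psi$ to the canonical map $\iota$, so $\psi$ implements the splitting on homology and is a quasi-isomorphism. Moreover $\beta'\circ\psi = \beta$ strictly, and $\psi\alpha-\alpha'$ is visibly the boundary of the explicit degree $-1$ morphism $m \mapsto (h(m),(-\alpha h(m),0))$ built from the original homotopy $h$ witnessing $\beta\alpha \sim \id_M$. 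Thus $(\id_M,\psi)$ is a morphism of diagrams which is a quasi-isomorphism on every term, exhibiting the original homotopy retract as quasi-isomorphic to a strict retract.

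The main obstacle is the bookkeeping when $\CC$ is a genuine $A_{\infty}$-category rather than a dg category: the formulae above describe only the leading (strict) components of the morphisms involved, and the higher $A_{\infty}$-components of $\psi$ must be chosen inductively so that $\psi$ is closed as a morphism of $A_{\infty}$-modules over $\CC$. This is a standard perturbation-theoretic step: at each arity one solves a cocycle equation whose right-hand side is built from the previously constructed components together with the $A_{\infty}$-module structure on $C$ inherited from the cone construction in Section \ref{triangstruc}, and the process terminates at each fixed degree because $C$ carries the obvious two-step filtration by $N$ and $M[1]$.
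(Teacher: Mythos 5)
Your argument is correct, but it takes a genuinely different route from the paper. The paper's proof is a two-line application of machinery it already has on hand: replace $M$ by a minimal model, observe that $\beta\circ\alpha$ is then a quasi-isomorphism of a minimal module and hence an actual $A_{\infty}$-isomorphism, and rescale $\beta$ by $(\beta\circ\alpha)^{-1}$ to get a literal retract with the same $N$. You instead keep $M$ as it is, split the triangle $M\xrightarrow{\alpha}N\to\cone(\alpha)$ using the retraction in the homotopy category, and write down an explicit quasi-isomorphism $\psi\colon N\to M\oplus\cone(\alpha)$ intertwining the homotopy retract with the canonical strict retract (the right-hand square strictly, the left-hand square up to the explicit homotopy built from $h$). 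What your approach buys is an explicit chain-level rectification that avoids minimal models and the invertibility of quasi-isomorphisms between them; what the paper's buys is brevity and the fact that $N$ itself is untouched, which is convenient for the way the lemma is used immediately afterwards. One correction to your final paragraph: in the model the paper actually uses, $\CC\lModi$ is a genuine differential graded category (morphisms are comodule morphisms) and $\CC\lMod=\mathrm{Z}^0(\CC\lModi)$, so there is no inductive construction of higher components to perform. Your $\psi$ should simply be read as the composite $(\beta,\iota\circ(\id_N-\alpha\circ\beta))$ of already-closed module morphisms, where $\iota\colon N\to\cone(\alpha)$ is the canonical closed inclusion; it is then automatically a morphism in $\CC\lMod$, with all "higher components" inherited from $\alpha$, $\beta$ and the cone structure, and the closedness and homotopy checks you sketch are exactly the one-line dg computations. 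Stating this removes the only soft spot in your write-up, namely the appeal to an unexecuted "standard perturbation-theoretic step".
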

\begin{proof}
By the minimal model theorem and the homological perturbation lemma (see \cite{KLH}), we may assume that $M$ is minimal in diagram (\ref{retdi}).  Then since $\beta\circ \alpha$ is a quasi-isomorphism on the minimal module $M$, it is in fact an $A_{\infty}$-isomorphism (again, see \cite{KLH}), and the lemma follows.
\end{proof}
\begin{lem}
\label{permtake}
Let 
\[
\xymatrix{
M\ar[r]^{\alpha} & N\ar[r]^{\beta} & M
}
\]
be a homotopy retract, and let $\phi:M\rightarrow S$ be a morphism.  Then $\cone(\phi)$ is quasi-isomorphic to a homotopy retract of a cone of a morphism from $N$ to $S$.  Alternatively, let $\psi:S\rightarrow M$ be a morphism.  Then, dually, $\cone(\psi)$ is quasi-isomorphic to a homotopy retract of a cone of a morphism from $S$ to $N$.
\end{lem}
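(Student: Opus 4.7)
The plan is to construct the homotopy retract explicitly using functoriality of the cone construction recalled in Section \ref{triangstruc}. For the first half of the lemma, form $\phi\circ\beta:N\to S$. The two squares
\[
\xymatrix@C=2em{
M \ar[r]^{\phi}\ar[d]^{\alpha} & S \ar[d]^{\id_S} & & N\ar[r]^{\phi\circ\beta}\ar[d]^{\beta} & S \ar[d]^{\id_S}\\
N\ar[r]^{\phi\circ\beta} & S & & M\ar[r]^{\phi} & S
}
\]
commute in $\Zo^0(\mathcal{C}\lModi)$ --- the right-hand one strictly, and the left-hand one up to the homotopy obtained by postcomposing a null-homotopy of $\beta\circ\alpha-\id_M$ with $\phi$. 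In the twisted-object model of Section \ref{twistcat}, $\cone(\phi)$ and $\cone(\phi\circ\beta)$ are represented by twisted objects on $(S,M[1])$ and $(S,N[1])$ whose only nonzero matrix entry is $\phi$ or $\phi\circ\beta$ respectively, and the two squares (together with a choice of homotopy) promote to closed morphisms $A:\cone(\phi)\to\cone(\phi\circ\beta)$ and $B:\cone(\phi\circ\beta)\to\cone(\phi)$ realized as upper-triangular $2\times 2$ matrices with diagonal entries $(\id_S,\alpha)$ and $(\id_S,\beta)$ respectively.

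Composing matrices via the formula (\ref{twmult}), $B\circ A$ has diagonal entries $(\id_S,\beta\circ\alpha)$, with off-diagonal entry assembled from the chosen homotopies. Since $\beta\circ\alpha\sim\id_M$, a direct computation using the expression (\ref{muchbetter}) for the $\tw_r$-differential shows that $B\circ A$ is homotopic to the identity morphism of $\cone(\phi)$, so $\cone(\phi)$ is a homotopy retract of $\cone(\phi\circ\beta)$. For the dual statement, with $\psi:S\to M$, the same recipe applied to $\alpha\circ\psi:S\to N$ yields maps $C:\cone(\psi)\to\cone(\alpha\circ\psi)$ and $D:\cone(\alpha\circ\psi)\to\cone(\psi)$ with $D\circ C\sim\id_{\cone(\psi)}$; alternatively this half can be deduced from the first by applying it in the opposite category via Proposition \ref{diamonddual}.

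The main obstacle is the bookkeeping needed to verify that the matrices representing $A$ and $B$ really are closed in $\tw_r$ and that the composite $B\circ A$ is genuinely null-homotopic, since all data live at the $A_\infty$-level rather than being strictly associative. This boils down to a direct expansion of formula (\ref{muchbetter}) together with the $A_\infty$-module structure equations on $\mathcal{C}\lModi$ and the homotopy $\beta\circ\alpha\sim\id_M$ supplied by the retract.
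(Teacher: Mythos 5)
Your argument is correct, but it follows a genuinely different route from the paper. You keep the homotopy retract as it is, absorb a chosen null-homotopy $h$ of $\beta\circ\alpha-\id_M$ into the off-diagonal entry of the comparison map $A:\cone(\phi)\to\cone(\phi\circ\beta)$, and then exhibit an explicit homotopy (essentially the matrix with single entry $h$) between $B\circ A$ and $\id_{\cone(\phi)}$; note that since the paper's model for $\mathcal{C}\lModi$ is a differential graded category, only $b_1$ and $b_2$ enter these matrix computations, so the $A_\infty$-bookkeeping you worry about at the end is lighter than you suggest (and, strictly speaking, $M$, $N$, $S$ are arbitrary perfect modules rather than objects of $h_{\mathcal{C}}[\mathbb{Z}]$, so you are using the one-step twisted-complex/cone formalism of Section \ref{triangstruc} in the dg category $\mathcal{C}\lModi$ rather than $\tw_r(\mathcal{C})$ itself --- a harmless relabelling). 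The paper instead first rigidifies: by the preceding lemma one may take a minimal model of $M$ and assume $\beta\circ\alpha=\id_M$ strictly, so both squares commute on the nose; strict functoriality of the mapping cone then produces the bottom row, and the five lemma shows its composite is a quasi-isomorphism. The paper's reduction buys a very short formal argument at the price of invoking the minimal model lemma and passing through a quasi-isomorphic replacement of the original diagram, while your construction is self-contained, never changes the modules, and produces the retracting homotopy explicitly, giving the identity in $\Di(\mathcal{C}\lModi)$ directly rather than via a quasi-isomorphism argument. Your treatment of the dual statement (either by the symmetric computation with $\alpha\circ\psi$ or via Proposition \ref{diamonddual}) matches the paper's one-line appeal to duality.
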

\begin{proof}
After taking a minimal model for $M$ we may assume that the diagram in the statement of the lemma is an actual retract diagram, by the previous lemma.  Consider the commutative diagram
\[
\xymatrix{
M\ar[d]^{\phi}\ar[r]^{\alpha} & N\ar[d]^{\phi\circ \beta} \ar[r]^{\beta} &M\ar[d]^{\phi}\\
S\ar[d]\ar[r]^{\id_S} & S\ar[d]\ar[r]^{\id_S} &S\ar[d]\\
\cone(\phi)\ar[r] & \cone(\phi\circ \beta)\ar[r]& \cone(\phi),
}
\]
where the maps in the bottom row are supplied by the functoriality of the mapping cone in differential graded categories (see e.g \cite{ToenDGnotes}).  Since the composition of the morphisms in the top row is a quasi-isomorphism, and so is the composition of the morphisms in the second, it follows that the third row is a homotopy retract, by the five lemma.
\smallbreak 
The argument for $\cone(\psi)$ is given by the dual.
\end{proof}
The point of Lemma \ref{permtake} is that we can permute the taking of homotopy retracts and the taking of cones.  Combined with Proposition \ref{trac}, this tells us that all elements of the category $\Perf(\mathcal{C}\lModi)$ can be obtained, up to quasi-isomorphism, by taking homotopy retracts of objects of $\tw_l(\mathcal{C})$.  
\bigbreak
Before introducing an ind-variety parameterising perfect modules, we give a short digression on $A_{\infty}$-limits.\bigbreak
We first discuss the classical case.  Let $D$ be a small unital $k$-linear category, and let $\mathcal{E}$ be a category such as the category of $A$-modules for an associative algebra $A$.  Then we consider the functor category
\[
\mathcal{E}^D:=\Fun(D,\mathcal{E})
\]
whose objects are $k$-linear functors, and whose morphisms are natural transformations.  Given a functor $D\rightarrow k$, where by abuse of notation $k$ denotes the category with one object, the endomorphism algebra of which is just $k$, one obtains a functor
\[
c:\mathcal{E}\rightarrow \mathcal{E}^D,
\]
obtained from the natural isomorphism $\mathcal{E}^k\cong \mathcal{E}$.
\smallbreak
A \textit{limit} functor for the diagram $D$ is a right adjoint to $c$ (obviously it depends also on the functor $D\rightarrow k$).  By the uniqueness of adjoints it is unique up to unique natural isomorphism.  A \textit{colimit} functor for the diagram $D$ is a left adjoint to $c$.
\begin{examp}
\label{retlim}
Let $D'$ be the non-unital $k$-linear category with one object $\omega$, and one-dimensional morphism space spanned by the morphism $f$ satisfying $f^2=f$.  One obtains in the natural way, from $D'$, an (ordinary) augmented $k$-linear category $D$, with one object, such that the induced ring structure on the augmentation ideal of the endomorphism algebra of the unique object is just the non-unital algebra spanned by $f$.  We define $D\rightarrow k$ by sending $f$ to $1$.  Let $\mathcal{E}=A\lMod_{\mathrm{norm}}$ denote the ordinary category of (ungraded) $A$-modules for some (ungraded) associative algebra $A$.  Then an object of $\mathcal{E}^D$ is given by an $A$-module $M$ and a morphism $\phi\in \End(M)$ satisfying $\phi^2=\phi$.  Given such a functor, we obtain a retract diagram
\[
\xymatrix{
\Image(\phi)\ar[r]^<<<<<{\iota} & M \ar[r]^<<<<<{\phi'} & \Image(\phi),
}
\]
where $\phi'$ is the map induced by $\phi$ and $\iota$ is the inclusion.  In this situation $\Image(-)$ is the limit functor and the colimit functor.
\medbreak
In fact it is possible to see, purely categorically, that one obtains a retract diagram from the limit for $D$.  The situation is summed up by the following commuting diagram:
\begin{equation}
\label{clownhat}
\xymatrix{
&M\ar@(ul,ur)^-{\phi}\\
N\ar[ur]^-i & M\ar[l]^-{\alpha}\ar[u]^-{\phi} & N\ar[l]^-i \ar[ul]_-{\phi\circ i}\ar@(dl,dr)[ll]^-{\beta}.
}
\end{equation}
The morphism $i$ comes from the fact that $N$ is the limit of the diagram 
\[
\xymatrix{
&M.\ar@(ul,ur)^{\phi}
}
\]
The morphisms $\alpha$ and $\beta$ come from the definition of the limit.  Note that if we take $\beta=\id_N$ then everything commutes.  By one last application of the definition of limit, it follows that $\beta=\id_N=\alpha\circ i$.
\end{examp}
\smallbreak
The theory of $A_{\infty}$-limits is really just the same thing but for $A_{\infty}$-functors.  Let $\mathcal{E}$ now denote a strictly unital $A_{\infty}$-category, and let $D$ be a small $k$-linear category as before.  We now let
\[
\mathcal{E}^D:=\Fun_{\infty}(D,\mathcal{E})
\]
be the set of $A_{\infty}$-functors from $D$ to $\mathcal{E}$.  This has the structure of an $A_{\infty}$-category (see \cite{Keller-A2}).  As before there is a (strict) functor 
\[
c:\mathcal{E}\rightarrow \mathcal{E}^D.
\]
We use this functor to construct a $(\mathcal{E},\mathcal{E}^D)$-bimodule $\Hom_{\mathcal{E}^D}(c-,-)$.  Given an arbitrary $A_{\infty}$-functor 
\[
G:\mathcal{E}^D\rightarrow \mathcal{E}
\]
we similarly obtain an $(\mathcal{E},\mathcal{E}^D)$-bimodule
\[
\Hom_{\mathcal{E}}(-,G-)
\]
and we define an adjunction between $c$ and $G$ to be a quasi-isomorphism of bimodules
\[
\xymatrix{
\Hom_{\mathcal{E}^D}(c-,-)\ar[r]^{\sim} & \Hom_{\mathcal{E}}(-,G-),
}
\]
and in this case we say the functor $G$ is the $A_{\infty}$-limit functor.
\medbreak
Now let $D$ be as before, and let $\mathcal{E}$ be the $A_{\infty}$-category $\mathcal{C}\lModi$.  Then a limit and a colimit functor exist for $D$ (see \cite{ToenDGnotes}).  Let $M$ be a $\mathcal{C}$-module, let $F\in \mathcal{E}^D$ be a functor sending the object of $D$ to $M$, and let $N$ be the limit.  Then we obtain a diagram as in \ref{clownhat}, commuting up to homotopy.  After taking $\Ho^{\bullet}(-)$ of the diagram, we get that the bottom row is again an isomorphism, and so the limit is a homotopy retract of $M$.
\smallbreak
Note that an $A_{\infty}$-functor from $D$ to $\mathcal{E}$ is given by the following data (which is the data mentioned in the introduction to \cite{KS}): a module $M$ and a morphism of non-unital $A_{\infty}$-algebras from $k$ to $\End(M)$.  For each $n$, $k^{\otimes n}\cong k$, and so such a morphism is given by a series of elements $f_n\in \End^{1-n}(M)$, satisfying the compatibility relations of an $A_{\infty}$-algebra morphism.  We assume, now, that we are working with the explicit (differential graded) model for the $A_{\infty}$-category $A\lModi$ that we recalled in Section \ref{enrichments}.  Say, conversely, that we are given a homotopy retract diagram
\[
\xymatrix{
N\ar[r]^i & M\ar[r]^q & N.
}
\]
Then there is some morphism $h\in \End(N,N)$ such that $dh=\id_N-q\circ i$.  Let $f_1\in \End(M,M)$ be given by $f_1=i\circ q$.  Let $f_2=i\circ h\circ q$. For $n\geq 3$, we set $f_n=i\circ h^{n-1} \circ q$.  Then one can easily check that these $f_n$ satisfy the required compatibility relations.  We claim, finally, that $N$ is the limit and the colimit of the associated element of $\mathcal{E}^D$.  This leads to the following proposition:
\begin{prop}\cite{KS}
There is an ind-variety $\VV_l^+$ parameterising objects of $\Perf(\mathcal{C}\lMod)$, and up to quasi-isomorphism, every element of $\ob(\Perf(\mathcal{C}\lMod))=\ob(\Perf(\mathcal{C}\lModi))$ occurs in this family.
\end{prop}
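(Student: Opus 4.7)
The plan is to extend $\VV_l$ (which parameterises twisted objects of $\mathcal{C}$) by adding fibrewise the algebraic data of a homotopy retract. The starting observation is that, by Proposition \ref{trac} together with Lemma \ref{permtake}, every object of $\Perf(\mathcal{C}\lModi)$ is quasi-isomorphic to a homotopy retract of some twisted object; so if we can functorially parameterise pairs (twisted object, homotopy retract data), we are done.

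First I would use the second description from the discussion preceding the proposition: a homotopy retract of $M$ is encoded by an $A_{\infty}$-functor $F:D\rightarrow \mathcal{C}\lModi$ sending the unique object of $D$ to $M$, equivalently by a sequence of elements $f_n\in \End^{1-n}(M)$ satisfying the algebraic compatibility equations of an $A_{\infty}$-algebra morphism $k\rightarrow \End(M)$ (with the non-unital algebra $k$ as in the text). The key input is Proposition \ref{indvariety}: for every ordered tuple $\tau$ of objects of $h^{\mathcal{C}}[\mathbb{Z}]$, the complex $\End^{\bullet}(M)$ forms a \emph{finite-dimensional} graded vector bundle on $\VV_{l,\tau}$, and the compositions $b_{\mathcal{HOM},j}$ and the differential are morphisms of algebraic vector bundles. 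Consequently, for each $n$ the space of possible $f_n$ is a fibrewise-affine subscheme of a finite-dimensional vector bundle, and the $A_{\infty}$-compatibility identities become polynomial equations in the fibre coordinates of these bundles. Taking the inverse limit as $n\rightarrow \infty$ within each $\VV_{l,\tau}$ defines a pro-scheme $\VV_{l,\tau}^{+}$, and setting $\VV_l^+:=\coprod_\tau \VV_{l,\tau}^+$ gives the required ind-(pro-)variety, together with the tautological family whose fibre at a point is the homotopy retract module determined by the data $(M,(f_n))$.

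A cleaner concrete alternative (which I would run in parallel) is to parameterise only the \emph{finite} retract data $(M,N,i,q,h)$, with $M,N\in\VV_l$, where $i:N\rightarrow M$ and $q:M\rightarrow N$ are closed degree zero morphisms and $h\in\End^{-1}(N)$ satisfies $dh=\id_N-q\circ i$: these conditions cut out an algebraic subvariety of a finite-dimensional vector bundle over $\VV_l\times \VV_l$ built from the bundles of Proposition \ref{indvariety}, and the associated $A_{\infty}$-idempotent data $(f_n)$ on $M$ is reconstructed by the formulas $f_1=i\circ q$, $f_n=i\circ h^{n-1}\circ q$ recalled in the discussion above. Surjectivity onto quasi-isomorphism classes then follows by iterating Lemma \ref{permtake}: any perfect module is a homotopy retract of some object of $\tw_l(\mathcal{C})$, and after replacing the twisted object by a quasi-isomorphic one coming from $\VV_l$ we obtain a point of $\VV_l^+$ representing the given quasi-isomorphism class.

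The step that requires the most care is not the set-theoretic surjectivity (which is essentially built in) but checking that the equations defining $\VV_l^+$ really are algebraic and that the tautological data define a family of perfect modules over the resulting ind-scheme in a way compatible with Proposition \ref{indvariety}. This amounts to verifying that the bundle operations $b_{\mathcal{HOM},j}$ of Proposition \ref{indvariety} assemble fibrewise into the $A_{\infty}$-morphism relations, and then invoking the adjunction description of limits in $\mathcal{E}^D$ recalled above to identify the functorial output with a perfect module; this is formal given Propositions \ref{trac}, \ref{indvariety} and Lemma \ref{permtake}, so no genuinely new technical difficulty arises beyond the bookkeeping of $A_{\infty}$-signs.
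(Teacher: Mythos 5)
There is a genuine gap, in two places. First, your main route ends with ``taking the inverse limit as $n\rightarrow\infty$'' and produces a pro-scheme, i.e.\ an ind-(pro-)variety, which is strictly weaker than what the proposition asserts. The point you are missing is the finiteness observation that drives the paper's proof: since $\mathcal{C}$ is finite-dimensional, for a twisted object $M$ parameterised by $\VV_{l,\tau}$ the complex $\End_{\tw_l(\mathcal{C})}(M)$ is concentrated in finitely many degrees, so the data $f_n\in\End^{1-n}(M)$ vanishes for all but finitely many $n$ and only finitely many of the $A_{\infty}$-morphism compatibility equations are nonzero. Hence the locus of strictly unital functors from $D$ is cut out by finitely many polynomial equations inside the total space of the \emph{finite-dimensional} graded bundle $\mathcal{END}^{\leq 0}$ over $\VV_{l,\tau}$, and $\VV_l^+$ is an honest ind-variety with no limit over $n$ needed; the family of perfect modules is then obtained, as you say at the end, by taking the $A_{\infty}$-limit of the tautological functor. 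Without this finiteness step your construction does not prove the statement as stated.

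Second, your ``cleaner concrete alternative'' does not work: if you parameterise quintuples $(M,N,i,q,h)$ with \emph{both} $M$ and $N$ points of $\VV_l$, then every module your space parameterises is already (quasi-isomorphic to) a twisted object, so this variant can never reach a perfect module lying outside the quasi-essential image of $\tw_l(\mathcal{C})$ --- which is precisely the situation the proposition is designed for (when $\tw_l(\mathcal{C})\rightarrow\Perf(\mathcal{C}\lModi)$ is a quasi-equivalence, $\VV_l$ of Definition \ref{objects} already suffices and $\VV_l^+$ is not needed). Consequently the surjectivity argument you give via Lemma \ref{permtake} fails for that variant: the retract of a twisted object need not admit a presentation as a twisted object, so one must parameterise only the idempotent data $(f_n)$ on $M$ and realise the new object as the limit of the associated functor, as in your first route once the finiteness issue above is repaired.
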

\begin{proof}
We take as our starting point $\VV_l$, the ind-variety of Definition \ref{objects} parameterising objects of $\tw_l(\mathcal{C})$.  Let $\VV_{l,\tau}$ be one of the components of this variety, parameterising twisted objects of the form $(\tau,-)$.  This variety has a graded vector bundle $\mathcal{END}^{\leq 0}$ on it, the graded vector bundle of endomorphisms of degree less than or equal to zero.  Over a point $x$ of $\VV_{l,\tau}$, corresponding to a twisted object $M\in \tw_l(\mathcal{C})$, points in the fibre correspond to potential morphisms of nonunital $A_{\infty}$-algebras from $k$ to $\End_{\tw_l(\mathcal{C})}(M)$.  They are actual morphisms if they satisfy the compatibility equations (\ref{morph-compatibility}).  Since $k$ and $\End_{\tw_l(\mathcal{C})}(M)$ are both concentrated in finitely many degrees, it follows that only finitely many of these equations are nonzero.  It follows that there is a subvariety of $\Tot(\mathcal{END}^{\leq 0})$, the total space of the underlying ungraded vector bundle, parameterising strictly unital functors from $D$, such that the target of the object of $D$ is parameterised by $\VV_{l,\tau}$.  We construct the required bundle of $\mathcal{C}$-modules over $\VV_l^+$, which we denote by $L_{\mathfrak{V}_l}$, by taking the limit of this functor.
\end{proof}
\begin{prop}
\label{dimdef}
There is a quasi-equivalence of categories 
\[
-^{\diamond}:\Perf(\mathcal{C}\lModi)\rightarrow \Perf(\rModi\mathcal{C})^{\op}.
\]
This quasi-equivalence is induced by an isomorphism of ind-varieties $\lambda:\VV^+_r\rightarrow\VV^+_l$ (where we use the same symbol $\lambda$ here as we did to denote the isomorphism $\lambda:\VV_r\rightarrow\VV_l$).
\end{prop}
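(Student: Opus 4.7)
The plan is to extend the strict isomorphism $-^{\diamond_{\tw}}:\tw_l(\mathcal{C})^{\op}\rightarrow \tw_r(\mathcal{C})$ of Proposition \ref{diamonddual}, which is induced at the level of parameter spaces by the isomorphism $\lambda:\VV_r\rightarrow \VV_l$ of Definition \ref{lambda1}, to the full categories of perfect modules, which are built from twisted objects by taking homotopy retracts.

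First I would upgrade $-^{\diamond_{\tw}}$ to the level of perfect modules. By Proposition \ref{trac}, the categories $\tw_l(\mathcal{C})$ and $\tw_r(\mathcal{C})$ are quasi-equivalent to the triangulated hulls of the image of the Yoneda embedding inside $\mathcal{C}\lModi$ and $\rModi\mathcal{C}$ respectively. By the discussion preceding the proposition, the category $\Perf(\mathcal{C}\lModi)$ is obtained, up to quasi-equivalence, from $\tw_l(\mathcal{C})$ by adjoining homotopy retracts (and similarly for the right version). Since the $A_{\infty}$-limit construction sketched in Section \ref{perfconstr} is functorial in the input $A_{\infty}$-category, and since the duality $-^{\diamond_{\tw}}$ is strict and preserves the full data of strictly unital $A_{\infty}$-functors from the category $D$ of Example \ref{retlim} (it exchanges retracts in $\tw_l(\mathcal{C})^{\op}$ with retracts in $\tw_r(\mathcal{C})$), we obtain the desired strict extension $-^{\diamond}:\Perf(\mathcal{C}\lModi)\rightarrow\Perf(\rModi\mathcal{C})^{\op}$.

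Next I would verify that $-^{\diamond}$ is a quasi-equivalence. Quasi-full faithfulness follows from the corresponding fact for $-^{\diamond_{\tw}}$, since taking homotopy retracts preserves quasi-isomorphisms between Hom-complexes (it is, under the identification with $A_{\infty}$-limits, a functorial construction in the $A_{\infty}$-category). Quasi-essential surjectivity is a matter of running the argument in reverse: every object of $\Perf(\rModi\mathcal{C})$ is, up to quasi-isomorphism, a homotopy retract of a twisted object in $\tw_r(\mathcal{C})$ by Lemma \ref{permtake}, and such a retract is hit by $-^{\diamond}$ applied to the corresponding homotopy retract of the preimage of the twisted object under $-^{\diamond_{\tw}}$.

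Finally, I would construct the ind-variety isomorphism $\lambda:\VV^+_r\rightarrow \VV^+_l$ extending the already-defined $\lambda:\VV_r\rightarrow \VV_l$. Recall from the construction of $\VV^+_l$ that over each component $\VV_{l,\tau}$ parameterising twisted objects of a fixed shape, $\VV^+_l$ is cut out as a closed subvariety of the total space of the graded vector bundle $\mathcal{END}^{\leq 0}$ by the finitely many compatibility equations (\ref{morph-compatibility}) determining when a tuple of elements of non-positive degrees gives a strictly unital $A_{\infty}$-algebra morphism from $k$ to the endomorphism algebra of the twisted object. By Proposition \ref{indvariety}, the endomorphism bundles of $\tw_l(\mathcal{C})$ and $\tw_r(\mathcal{C})$ are exchanged under $\lambda$ via the transposition map, which is an isomorphism of graded vector bundles respecting all higher composition maps up to the opposite. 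Since the compatibility equations cut out the same locus before and after transposition (they are invariant under passing to the opposite algebra, using that we take non-positive degrees and a strictly unital morphism), we obtain an isomorphism $\lambda:\VV^+_r\rightarrow \VV^+_l$ refining the original one, and by construction the bundle of perfect modules $L_{\VV^+_l}$ pulls back along $\lambda$ to the diamond dual of $L_{\VV^+_r}$. The main technical obstacle is verifying that the $A_{\infty}$-limit construction that produces perfect modules is compatible in families with the diamond duality, i.e.\ that the family of limit modules over $\VV^+_l$ is functorial enough that transposition of endomorphism bundles really does induce $-^{\diamond}$ on fibres; this is essentially a consequence of the functoriality of homotopy limits in differential graded categories and the fact that the relevant functoriality is encoded constructibly in algebraic families by Proposition \ref{indvariety}.
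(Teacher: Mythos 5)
Your proposal is correct and follows essentially the same route as the paper: the paper also extends the strict duality $-^{\diamond_{\tw}}$ of Proposition \ref{diamonddual} through the retract/idempotent data, phrasing it as induced quasi-equivalences $\Fun_{\infty}(D,\tw_r(\mathcal{C}))\rightarrow \Fun_{\infty}(D,\tw_l(\mathcal{C})^{\op})\rightarrow \Fun_{\infty}(D,\tw_l(\mathcal{C}))^{\op}$ using $D\cong D^{\op}$, and then observes that the isomorphism $\lambda:\VV_r^+\rightarrow\VV_l^+$ is clear by construction. Your more explicit unpacking (homotopy retracts via Lemma \ref{permtake}, transposition of $\mathcal{END}^{\leq 0}$ cutting out the same locus) is just a spelled-out version of the same argument.
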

\begin{proof}
The equivalence 
\[
-^{\diamond_{\tw}}:\tw_r(\mathcal{C})\rightarrow \tw_l(\mathcal{C})^{\op}
\]
induces a quasi-equivalence $\Fun_{\infty}(D,\tw_r(\mathcal{C}))\rightarrow \Fun_{\infty}(D,\tw_l(\mathcal{C})^{\op})$, and the equivalence $D\rightarrow D^{\op}$ induces equivalences
\[
\Fun_{\infty}(D,\tw_l(\mathcal{C})^{\op})\rightarrow \Fun_{\infty}(D^{\op},\tw_l(\mathcal{C})^{\op})\rightarrow \Fun_{\infty}(D,\tw_l(\mathcal{C}))^{\op}.
\]
The second statement is then clear, by construction.
\end{proof}
\begin{prop}
There is a constructible vector bundle $\mathcal{HOM}$ lying over \mbox{$\VV_l^+\times \VV_l^+$}, with homology over $(x,y)$ calculating the homology of the space of homomorphisms between the two perfect modules parameterised by $x$ and $y$.
\end{prop}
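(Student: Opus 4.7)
The plan is to build $\mathcal{HOM}$ by pulling back the already-constructed bundle on $\VV_l\times\VV_l$ and then correcting for the homotopy-retract data carried by points of $\VV_l^+$. Recall that $\VV_l^+$ sits inside the total space of $\mathcal{END}^{\leq 0}$ over $\VV_l$, so a point $x\in\VV_l^+$ lying over $x'\in\VV_l$ records a twisted object $T_{x'}\in\tw_l(\mathcal{C})$ together with a tuple $f^x=(f_1^x,f_2^x,\ldots)$ defining an $A_\infty$-functor $D\to\tw_l(\mathcal{C})$ whose $A_\infty$-limit is the perfect module $M_x$ parameterised by $x$. By the adjunction defining $A_\infty$-limits, the homomorphism complex $\Hom_{\Perf(\mathcal{C}\lModi)}(M_x,M_y)$ is quasi-isomorphic to the mapping complex $\Hom_{\mathcal{E}^D}(c(M_x),F_y)$ and in turn, after unwinding, to a complex functorially built from $\Hom_{\tw_l(\mathcal{C})}(T_{x'},T_{y'})$ together with the two tuples $f^x,f^y$.

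First I would pull back, via $\VV_l^+\times\VV_l^+\to\VV_l\times\VV_l$, the (locally finite-dimensional) graded vector bundle $\mathcal{HOM}^{\tw}$ supplied by Proposition \ref{indvariety} (transferred across $\lambda$ of Proposition \ref{dimdef}); call the result $\widetilde{\mathcal{HOM}}$. This is a constructible differential graded vector bundle on $\VV_l^+\times\VV_l^+$. Second, I would use the fact that the universal morphisms $f_n^x$ and $f_n^y$ are \emph{algebraic} sections of $\mathcal{END}^{\leq 0}$ over $\VV_l^+$, combined with the algebraic composition maps $b_{\mathcal{HOM},j}$ of Proposition \ref{indvariety}, to define a new differential on $\widetilde{\mathcal{HOM}}$ — explicitly the bar-type differential that expresses $\Hom$ of limits as a derived End-mapping complex (precompose with $f^x$, postcompose with $f^y$ and alternate with $b_\mathcal{HOM}$). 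Because only finitely many $b_{\mathcal{HOM},j}$ contribute in any bounded cohomological range, this perturbation is well defined as a morphism of constructible vector bundles.

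Third, I would invoke the formality Corollary \ref{formcor}: the category $\mathcal{V}_{\fin}$ of locally finite-dimensional constructible vector bundles is semisimple Abelian, so the constructible differential graded vector bundle obtained above is quasi-isomorphic to a constructible \emph{graded} vector bundle with zero differential, namely its fibrewise homology. This is the $\mathcal{HOM}$ of the statement. To identify its fibres with $\Ho^\bullet\Hom_{\Perf(\mathcal{C}\lModi)}(M_x,M_y)$ one applies, fibrewise, the quasi-isomorphism interpreting $\Hom$ of $A_\infty$-limits in terms of the idempotent-twisted bar complex, which follows from Lemma \ref{permtake} and the explicit model for the limit functor sketched after Example \ref{retlim}.

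The main obstacle is the third step: ensuring that the passage from the pulled-back perturbed complex to a constructible graded vector bundle is \emph{constructible in families}, i.e.\ that the ranks of kernels and cokernels of the induced idempotents can be made locally constant after a constructible decomposition of $\VV_l^+\times\VV_l^+$. Here the semisimplicity of $\mathcal{V}_{\fin}$ plus generic freeness is exactly what is required, applied to the idempotent $q\circ i$ of Example \ref{retlim} now realised as an endomorphism of the finite-dimensional constructible vector bundle $\widetilde{\mathcal{HOM}}$; the constructible decomposition of $\VV_l^+\times\VV_l^+$ over which the homotopy image stabilises is then the decomposition that underlies $\mathcal{HOM}$.
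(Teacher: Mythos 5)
Your step (2) contains a genuine gap: you cannot compute $\Hom$ between the limit modules by ``defining a new differential on $\widetilde{\mathcal{HOM}}$'', i.e.\ by keeping the pulled-back graded bundle $\Hom_{\tw_l(\mathcal{C})}(T_{x'},T_{y'})$ and only perturbing its differential. The homology of a complex on a fixed finite-dimensional graded fibre always has the same Euler characteristic as that fibre, whereas the perfect modules $M_x$, $M_y$ parameterised by $\VV_l^+$ are in general \emph{proper} homotopy retracts of $T_{x'}$, $T_{y'}$, so $\Ho^{\bullet}\Hom(M_x,M_y)$ has a different Euler characteristic. Concretely, take $\mathcal{C}=k$, $T_{x'}=T_{y'}=h^x\oplus h^x$ with zero twisting matrix, and the strict idempotent projecting onto the first summand (so $f_1=e$, $f_n=0$ for $n\geq 2$): the fibre of $\widetilde{\mathcal{HOM}}$ is $\Mat_{2\times 2}(k)$ concentrated in degree $0$, so \emph{any} differential on it is zero and its homology is $4$-dimensional, while $\Hom(\lim,\lim)\cong k$. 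The retract data only gives you a closed degree-zero endomorphism $\phi\mapsto f_1^y\circ\phi\circ f_1^x$ of the hom complex which is idempotent \emph{up to homotopy}; extracting the hom of the limits from it requires either enlarging the underlying bundle (a telescope/two-sided bar construction over the idempotent, inserting arbitrarily many copies of the $f_n$'s -- which is no longer ``a new differential on $\widetilde{\mathcal{HOM}}$''), or first passing to the homology bundle, where the induced endomorphism is an honest idempotent in the semisimple abelian category $\mathcal{V}_{\fin}$ of Proposition \ref{SSab}, and then taking its image. Your final paragraph gestures at the latter, but in your ordering the formality step is applied to a complex that does not compute the right thing in the first place.

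Note also that the statement does not ask for a finite-dimensional or minimal model, and the paper's own argument is far shorter: it takes the universal bundle $L_{\VV_l^+}$ of perfect $\mathcal{C}$-modules constructed in the preceding proposition and applies the bar resolution to the two pullbacks, producing a (generally infinite-dimensional) constructible differential graded vector bundle on $\VV_l^+\times\VV_l^+$ whose fibrewise homology is $\Ho^{\bullet}\Hom_{\Di(\mathcal{C}\lModi)}(M_x,M_y)$ by construction; quasi-isomorphism invariance then makes any perturbation or formality argument unnecessary. If you want the stronger, finite-rank statement you were aiming for, the corrected route is: pull back $\mathcal{HOM}^{\tw}$, pass to its homology bundle using Proposition \ref{SSab}, observe that $[f_1^x]$ and $[f_1^y]$ induce an honest idempotent endomorphism of this constructible graded bundle, and take its image -- its fibre over $(x,y)$ is $e_y\cdot\Ho^{\bullet}\Hom(T_{x'},T_{y'})\cdot e_x\cong\Ho^{\bullet}\Hom(M_x,M_y)$ because the retracts split in $\Di(\mathcal{C}\lModi)$.
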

\begin{proof}
This is given by the bar resolution and the bundle of $\mathcal{C}$-modules considered above.
\end{proof}
\begin{cor}
\label{perflag}
There is an ind-variety $\VV^+_{l,3}$ parameterising triangles of objects in $\Perf(\mathcal{C}\lMod)$, and up to quasi-isomorphism, every triangle in $\Perf(\mathcal{C}\lMod)$ occurs in this family.
\end{cor}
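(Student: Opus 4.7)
The plan is to mimic the construction of Theorem \ref{triangparam}, taking $\VV_{l,3}^+$ to be the total space of the constructible bundle $\Zo^0(\mathcal{HOM})$ on $\VV_l^+\times \VV_l^+$, where $\mathcal{HOM}$ is the bundle of morphism complexes constructed in the preceding proposition. A point of this ind-variety represents a triple $(M_1, M_2, \phi)$ consisting of two perfect modules and a closed degree zero morphism $\phi : M_1\rightarrow M_2$, and the three projections $p_1, p_2, p_3$ will send such a point to $M_1$, $M_2$ and $\cone(\phi)$ respectively. The first two projections are just the natural projections to $\VV_l^+$, so the work lies in constructing $p_3$ and verifying surjectivity up to quasi-isomorphism.

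For $p_3$, I would unpack the construction of $\VV_l^+$ from the previous proposition: a point of $\VV_l^+$ is a triple $(\tau, A, F)$ where $(\tau,A)\in \VV_l$ is a twisted object and $F=(f_n)_{n\geq 1}$ is the data of a strictly unital $A_\infty$-functor $D\rightarrow \tw_l(\mathcal{C})$ in the sense of Example \ref{retlim}, with associated perfect module $L_{(\tau,A,F)}$ given by the limit. Given $(\tau_i,A_i,F_i)$ for $i=1,2$, a closed morphism $\phi\in \Zo^0(\mathcal{HOM})$ lifts (after refining by a constructible decomposition if necessary, using Lemma \ref{permtake} and the existence of preferred lifts once ranks of retract components are constant) to a closed morphism $\tilde\phi$ between the underlying twisted objects. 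I would then apply Proposition \ref{coney}, which produces the cone of $\tilde\phi$ as the explicit block-matrix twisted object over the points $(\tau_1\cup\tau_2[1])\in \VV_l$ in an evidently algebraic way, and equip it with the block-diagonal functor data built from $F_1$ and $F_2$. The limit of the resulting $A_\infty$-functor $D\rightarrow \tw_l(\mathcal{C})$ is quasi-isomorphic to $\cone(\phi)$ in $\Perf(\mathcal{C}\lMod)$, which gives the projection $p_3$.

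The surjectivity is then almost formal: every distinguished triangle in $\Di(\Perf(\mathcal{C}\lMod))$ is isomorphic to a triangle of the form $M_1\rightarrow M_2\rightarrow \cone(\phi)$ for some closed morphism $\phi$, and by the previous proposition every object of $\Perf(\mathcal{C}\lMod)$ is quasi-isomorphic to a $L_{(\tau,A,F)}$, while $\mathcal{HOM}$ has the property that its homology over $(x,y)$ computes all morphisms in $\Perf(\mathcal{C}\lModi)$ between the modules parameterised by $x$ and $y$. Hence any triangle is quasi-isomorphic to one coming from a point of $\VV_{l,3}^+$.

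The main obstacle is the step of producing, algebraically in families, the lift $\tilde\phi$ of $\phi$ to a morphism of twisted objects compatible with the retract data $F_1, F_2$. The clean resolution is that we need only an algebraic lift \emph{up to quasi-isomorphism of the induced triangle}; one may stratify $\Zo^0(\mathcal{HOM})$ so that on each stratum the contracting homotopies entering into $F_1$ and $F_2$ have constant rank, and on each such stratum a preferred lift $\tilde\phi$ is produced by a fixed algebraic formula. Alternatively, one can avoid the choice entirely by observing, dually to Lemma \ref{permtake}, that the cone of a morphism out of a homotopy retract is itself a homotopy retract of a cone in $\tw_l(\mathcal{C})$, which automatically produces an algebraic $p_3$ after the same stratification.
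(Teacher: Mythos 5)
Your construction runs into a genuine problem at the very first step: you take $\VV^+_{l,3}$ to be the total space of $\Zo^0(\mathcal{HOM})$ over $\VV^+_l\times\VV^+_l$, but the bundle $\mathcal{HOM}$ over $\VV^+_l\times\VV^+_l$ from the preceding proposition is built from the bar resolution applied to the bundle of perfect $\mathcal{C}$-modules, so it is infinite-dimensional in each degree; only its \emph{homology} is locally finite-dimensional. Consequently $\Zo^0(\mathcal{HOM})$ has infinite-dimensional fibres and its total space is not an ind-variety, which is exactly what the corollary asserts must exist. This is the essential difference from Theorem \ref{triangparam}: there the morphism bundles on $\VV_r\times\VV_r$ are honest finite-dimensional bundles because morphism spaces in $\tw_r(\mathcal{C})$ are finite-dimensional, whereas no finite-dimensional $\mathcal{HOM}$-bundle has been constructed over $\VV^+_l$ (and replacing $\mathcal{HOM}$ by its homology destroys the ability to form cones in families, since cones require actual closed morphisms, not homotopy classes).

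The paper's proof avoids both this and your second obstacle (lifting a morphism of limits to data compatible with the retract functors) by parameterising the third object differently: instead of closed morphisms between the limit modules, it uses the ind-constructible bundle $\mathcal{NAT}$ over $\VV^+_l\times\VV^+_l$ of natural transformations between the defining $A_{\infty}$-functors from $D$. By Definition \ref{nattrans}, together with the fact that the relevant morphism complexes in $\tw_l(\mathcal{C})$ are concentrated in finitely many degrees on each pair of components, $\mathcal{NAT}^0$ is finite-dimensional there, so the total space of $\Zo^0(\mathcal{NAT})$ genuinely is an ind-variety; the projection $p_3$ then comes for free from functoriality of the limit followed by functoriality of the mapping cone, with no stratification or choice of lift needed. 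Your stratification argument, by contrast, would still need to produce, algebraically in families, a closed lift of a morphism of retracts to the ambient twisted objects commuting with the idempotent data, and a closed degree-zero morphism of the limits in general only lifts up to homotopy, so this step as written does not go through.
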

\begin{proof}
Define the ind-variety $\VV^+_l$ as above.  Consider two components $\VV^+_{l,\tau_1}$ and $\VV^+_{l,\tau_2}$, with $\VV_{l,\tau_i}^+$ parameterising functors whose target is an object of $\VV_{l,\tau_i}$.  There is an ind-constructible vector bundle $\mathcal{NAT}$ lying over $\VV^+_l\times \VV_l^+$ such that the homology of the fibre over a point $x,y$ computes the homotopy classes of natural transformations from the functor $y$ to the functor $x$.  From Definition \ref{nattrans} we deduce that $\mathcal{NAT}^0|_{\VV^+_{l,\tau_1}\times\VV^+_{l,\tau_2}}$ is finite-dimensional, and so the total space of $\Zo^0(\mathcal{NAT})$ is an ind-variety, as required.  The analogue of the projection to the mapping cone ($p_3$ in Theorem \ref{triangparam}) is given by functoriality of the limit and functoriality of the mapping cone.
\end{proof}

\chapter{Calabi-Yau $A_{\infty}$-algebras}
\label{quiveralgebras}
\section{Preliminaries on Calabi-Yau quiver algebras}
We introduce here a class of $A_{\infty}$-algebras that includes our two motivating examples -- the non-commutative crepant resolutions of \cite{NonComCrep}, \cite{FlopsAndNoncommRings}, \cite{Conifold} and the cluster collections studied in \cite{Kellerdg}, \cite{KellerMutations}, \cite{BridgelandM}, \cite{KS}.  Throughout this chapter, we define
\begin{equation}
\label{Sringdef}
S:=\bigoplus_{v\in I} k
\end{equation}
for some finite set $I$, which we will, rather suggestively, call our set of vertices.  We give this the algebra structure given by the direct sum of $|I|$ copies of the standard unital $k$-algebra structure on $k$.  This, then, is a unital algebra, and so trivially a unital $A_{\infty}$-algebra, after we decree that the whole of $S$ lives in degree zero.  We consider $S$ as a $S$-unital $A_{\infty}$-algebra, as in Definition \ref{unitalalg}.
\bigbreak
Consider the category $S\biMods S$ of $S$-unital $S$-bimodules (see Definition \ref{Sunital}).  This is identified with the category of ordinary differential graded $S$-bimodules, in the non-$A_{\infty}$ sense.  Let $(M,m_{M,n})$ be a non-unital $A_{\infty}$-algebra object in this category.  Consider the $S$-bimodule $M\oplus S$.  This has the structure of a strictly $S$-unital $A_{\infty}$-algebra:  we define $m_{M\oplus S,1}$ via the differentials on the two objects, and we define $m_{M\oplus S,2}$ via the pre-existing multiplication on $M$ and $S$, and via the ordinary left and right $S$-module structures on $M$.  All higher multiplications $m_{M\oplus S,i}$, for $i\geq 3$, are equal to $m_{M,i}$ when evaluated on $M^{\otimes i}$, and are equal to zero otherwise.  By construction, this algebra has a natural $S$-augmentation given by the projection $M\oplus S\rightarrow S$.
\begin{defn}
\label{quidef}
An $A_{\infty}$-quiver algebra with (finite) vertex set $I$ is given by the augmented $A_{\infty}$-algebra over $S$ with the underlying vector space $M\oplus S$ constructed as above, where $M$ is a finitely generated $A_{\infty}$-algebra object in the category of $S$-unital $S$-bimodules $S\biMods S$.
\end{defn}
\begin{rem}
There are three different ways to think about these objects: as augmented $A_{\infty}$-algebras over $S$ (see Definition \ref{Saug}), as augmented $A_{\infty}$-algebra objects in the category of ordinary $S$-bimodules (see Definition \ref{Caug}), and as augmented $A_{\infty}$-categories with set of objects $I$ (see Definition \ref{cataug}).
\end{rem}
\begin{examp}
Let $E\in S\bimods S$ be a finite-dimensional $S$-bimodule.  Consider the non-unital free tensor algebra on $E$, defined by
\begin{equation*}
\Tnu(E):=E\oplus (E\otimes_S E)\oplus (E\otimes_S E\otimes_S E)\oplus\ldots
\end{equation*}
Now let $I\subset \Tnu(E)$ be a two-sided ideal, and consider the (ordinary) finitely generated (in the ordinary sense) algebra object $\Tnu(E)/I$ in $S\biMods S$.  This is also a finitely generated $A_{\infty}$-algebra object (see Definition \ref{fingen}) in $S\biMods S$, since there is a strict morphism from the free non-unital $A_{\infty}$-algebra object generated by $E$ in $S\biMods S$ to $\Tnu(E)$ killing higher multiplications, that is surjective on underlying vector spaces, and clearly also a strict morphism from $\Tnu(E)$ to $\Tnu(E)/I$.  As such the $S$-bimodule 
\[
(\Tnu(E)/I)\oplus S
\]
acquires the structure of an $A_{\infty}$-quiver algebra with vertex set $I$.  So an ordinary quiver algebra with relations, with `edges' $E$, is an example of an $A_{\infty}$-quiver algebra, which is finitely generated when the number of arrows is finite, where the arrows are in the usual way just a basis for $E$ living in $\bigcup_{i,j\in I}e_i\cdot E\cdot e_j$.
\end{examp}
In the case of ordinary algebras there is a well-known correspondence between quiver algebras and small categories, with the vertices $I$ forming the objects of a category, and the arrows generating the morphisms.  We first start with the data of a quiver $\Gamma$ and some relations $R$, which are linear sums of paths in the graph.  One then forms the $S$-bimodule $E$, where we give $e_j\cdot E\cdot e_i$ a basis given by the set of arrows from $i$ to $j$.  Associated to this bimodule is a \textit{free} $k$-linear category $\mathcal{C}$ -- morphisms are given by (linear sums of) chains of arrows from the quiver.  Finally we set some of the morphisms to be equal to zero -- these prescriptions come from the relations $R$.  This gives us a category we denote by $\mathcal{C}_{\Gamma, R}$, and the first thing one shows is that left modules over the algebra $A$ associated to the data $(\Gamma,R)$ are essentially the same thing as functors $\mathcal{C}_{\Gamma,R}\rightarrow \vect$.
\bigbreak
The situation in the $A_{\infty}$ case is exactly analogous.  Let $M\oplus S$ be given the structure of an $A_{\infty}$-quiver algebra.  Assume we have some finite-dimensional bimodule $E\in S\bimods S$ such that there is a strict morphism of $S\bimods S$ $A_{\infty}$-algebra objects 
\[
f:\Free_{\infty,\nonu}(E)\rightarrow M,
\]
which is surjective on underlying $S$-bimodules, from the free unital $A_{\infty}$-algebra object $\Free_{\infty,\nonu}(E)$ to $M$.  Then as before we form a quiver, with vertices given by $I$ and with arrows from $i$ to $j$ given by a basis of $m_2(m_2(e_j,E),e_i)$.  These generate a free $A_{\infty}$-category, with relations, again labelled $R$, given by the kernel of $f$.  We think of this quiver with relations as a category with objects the vertices $I$, and we label this category $\mathcal{C}_{\Gamma,R}$.
\smallbreak
In the case of ordinary, ungraded quiver algebras, we define a representation of a quiver as a functor from $\mathcal{C}_{\Gamma,R}$ to $\vect$.  The first thing one shows here is that the category of modules for the algebra $A$ associated to $\Gamma$ and $R$ is equivalent to the category of representations of $(\Gamma,R)$.  In the $A_{\infty}$ case this situation is replicated.
\begin{prop}
\label{equicatd}
Let $(\Gamma$, $R)$ be a pair of a directed graph and higher relations on $\Gamma$.  Let $\mathcal{C}_{\Gamma,R}$ be the associated $A_{\infty}$-category, and let $A$ be the associated $A_{\infty}$-algebra.  The category of unital $A_{\infty}$-functors from $\mathcal{C}_{\Gamma,R}$ to the category $\dgvect$ is isomorphic to the category of finite-dimensional $S$-unital $A$-modules.
\end{prop}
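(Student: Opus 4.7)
My plan is to exhibit the isomorphism directly at the level of structures, using the equivalences between various notions that have already been established in the paper.

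First I would unwind the structure of a unital $A_{\infty}$-functor $F : \mathcal{C}_{\Gamma,R} \to \dgvect$. Such a functor assigns to each vertex $v \in I$ a differential graded vector space $F(v) \in \dgvect$, together with, for each tuple $(v_0,\ldots,v_n) \in I^{n+1}$, a map
\[
F_n : \Hom_{\mathcal{C}_{\Gamma,R}}(v_{n-1},v_n)\otimes\cdots\otimes \Hom_{\mathcal{C}_{\Gamma,R}}(v_0,v_1) \to \Hom_{\dgvect}(F(v_0),F(v_n)),
\]
of degree $1-n$, subject to the functor compatibility conditions and strict unitality. From such an $F$ I would form the $S$-module $M_F := \bigoplus_{v\in I} F(v)$, where the $v$th idempotent $e_v \in S$ acts as the projection to $F(v)$. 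Since $M_F$ is finite-dimensional over $S$ (because $F(v)$ is finite-dimensional in total), the multi-maps $F_n$ assemble, via the identification $A = M\oplus S$ with $M = \bigoplus_{v,w} \Hom_{\mathcal{C}_{\Gamma,R}}(v,w)$, into a sequence of degree $2-n$ maps $A^{\otimes n}\otimes M_F \to M_F$. The $S$-bilinearity of $F_n$ (or rather, the fact that $F_n$ vanishes on non-composable tuples) guarantees that these maps factor through $A^{\otimes_S (n-1)}\otimes_S M_F$, and the $A_{\infty}$-functor compatibility conditions become exactly the $A_{\infty}$-module compatibility equations (\ref{modcompatability}), while strict unitality of $F$ translates into $S$-unitality in the sense of Definition \ref{Sunital}.

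In the reverse direction, given a finite-dimensional $S$-unital $A$-module $N$, the $S$-action yields a decomposition $N = \bigoplus_{v\in I} e_v N$, and I would set $F_N(v) := e_v N$. Writing any element of $A^{\otimes (n-1)}\otimes N$ as an $S$-linear combination of tensors lying in the summands $e_{v_n}Ae_{v_{n-1}}\otimes e_{v_{n-1}}Ae_{v_{n-2}}\otimes \cdots\otimes e_{v_1}Ae_{v_0}\otimes e_{v_0}N$, the $A$-module maps $m_{N,n}$ restrict to maps $\Hom_{\mathcal{C}_{\Gamma,R}}(v_{n-1},v_n)\otimes\cdots\otimes \Hom_{\mathcal{C}_{\Gamma,R}}(v_0,v_1)\otimes F_N(v_0) \to F_N(v_n)$, which are adjoint to the maps $(F_N)_n$ required by Definition \ref{catmods}. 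The $S$-unitality of $N$ ensures that these assemble into a strictly unital functor, and the two constructions are manifestly mutually inverse on objects. On morphisms the same bookkeeping works verbatim: natural transformations between functors unwind, using the cocategory picture of Section \ref{enrichments}, into $A_{\infty}$-module morphisms, with strict unitality of the natural transformation (Definition \ref{nattrans}) matching the $S$-unital morphism condition of Definition \ref{interunit}.

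The hard part of the argument is purely combinatorial, namely to verify that the signs and the grading shifts appearing in the compatibility conditions for functors match those for modules. This is best handled by translating both sides into the coalgebra language of Section \ref{enrichments}: a strictly unital functor is a morphism of dg cocategories $\TCC(\mathcal{C}_{\Gamma,R})\to \TCC(\dgvect)$ compatible with augmentations, and a strictly $S$-unital module is a cofree differential graded $\TC_S(A)$-comodule; under the equivalence identifying $\mathcal{C}_{\Gamma,R}$ with the $A_{\infty}$-algebra object $A_S$ in $S\biMods S$, so that $\TCC(\mathcal{C}_{\Gamma,R})\cong \TC_S(A)$, the two types of data become tautologously the same, and the correspondence of morphisms is immediate. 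This reduces the verification of the isomorphism of categories to the already-established propositions of Section \ref{enrichments}, avoiding the sign headache entirely.
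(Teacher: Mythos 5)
Your argument is essentially correct, but note that the paper itself offers no proof of this proposition: it is stated as the $A_{\infty}$-analogue of the classical fact that quiver representations are modules over the path algebra, with the work implicitly delegated to Proposition \ref{yeti}, its module analogue, and the remark closing Section \ref{enrichments} identifying unital modules over a small category with $S$-unital modules over its quiver algebra. Your route — summing the $F(v)$ against the idempotents $e_v$, extending the $F_n$ by zero off composable chains, decomposing an $S$-unital module by the (strictly associative, thanks to the vanishing of higher products on $l(S)$) action of the orthogonal idempotents, and then outsourcing the sign check to the identification $\TCC(\mathcal{C}_{\Gamma,R})\cong \TC_S(A)$ of Section \ref{enrichments} — is exactly the natural way to fill in that omitted proof, and it is consistent with the machinery the paper has set up. Three small repairs are needed. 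First, the parenthetical ``because $F(v)$ is finite-dimensional in total'' is a non sequitur: nothing in the definition of an $A_{\infty}$-functor to $\dgvect$ forces finite-dimensional values. The correct statement is that your correspondence \emph{preserves} finite-dimensionality in both directions (since $I$ is finite, $\dim M_F=\sum_v\dim F(v)$), so it restricts to an isomorphism between functors with finite-dimensional values and finite-dimensional $S$-unital modules, which is what the proposition intends. Second, your identification should read $A\cong\bigoplus_{v,w}\Hom_{\mathcal{C}_{\Gamma,R}}(v,w)$ rather than $M\cong\bigoplus_{v,w}\Hom_{\mathcal{C}_{\Gamma,R}}(v,w)$: by Remark \ref{augfun} the endomorphism spaces contain the extra copy of $k\cdot\id_v$, and it is the $S$ summand of $A=M\oplus S$ that accounts for these; strict unitality of $F$ is then precisely what makes $e_v$ act as the projector onto $F(v)$ and kills the higher products on identities, matching $S$-unitality. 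Third, fix the indexing: $F_n$ (with $n$ morphism inputs, degree $1-n$) corresponds to $m_{M_F,n+1}:A^{\otimes n}\otimes M_F\to M_F$ of degree $2-(n+1)=1-n$, so your displayed ``degree $2-n$ maps $A^{\otimes n}\otimes M_F\to M_F$'' and the subsequent ``$A^{\otimes_S(n-1)}\otimes_S M_F$'' are off by one relative to each other. None of these affects the validity of the approach.
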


\begin{defn}
From now on, where $A$ is understood as an $A_{\infty}$-quiver algebra over a fixed vertex set $I$, we \textit{define} $A\lMod$ to be the category of $S$-unital $A$-modules $A\lMod_S$.  Similarly we define $A\lModis$ to be the full subcategory of $A\lModinu$ with objects the $S$-unital modules (see the end of Section \ref{enrichments}).
\end{defn}
\begin{rem}
\label{augfun}
If $\mathcal{C}$ is the $A_{\infty}$-category associated to a quiver with relations, then $\mathcal{C}$ has a natural augmentation: by construction, for each $x\in \ob(\mathcal{C})$ we have that 
\begin{equation}
\label{pqw}
\End_{\mathcal{C}}(x)\cong e_xMe_x\oplus k,
\end{equation}
the factor of $k$ coming from the formal addition of an identity.  This enables us to define a natural augmentation of this $A_{\infty}$-category, and putting these together we obtain our augmentation of $\mathcal{C}$.  In the quiver language the copy of $k$ in (\ref{pqw}) is the usual canonical 1-dimensional module associated to the vertex $x$, which we denote $s_x$. 
\end{rem}
Let $A$ be an $A_{\infty}$-quiver algebra.  Then, since $A$ is an $A_{\infty}$-algebra, we can take its homology algebra $\Ho^{\bullet}(A)$.  This inherits, naturally, the structure of an ordinary quiver algebra, and therefore also the structure of an $A_{\infty}$-quiver algebra, with vanishing higher multiplications.
\medbreak
Let $T$ be an $A_{\infty}$-object in the category $S\biMods S$, where $S$ is considered as an $S$-unital $A_{\infty}$-algebra.  As before we obtain an $S$-unital $A_{\infty}$-algebra structure on $T\oplus S$.  We say that this is a formal $A_{\infty}$-quiver algebra if there is some $E\in S\biMods S$ with associated non-unital $A_{\infty}$-algebra object $\Free_{\infty,\nonu,\mathrm{formal}}(E)$, and a strict morphism
\[
f:\Free_{\infty,\nonu,\mathrm{formal}}(E) \rightarrow T
\]
which is surjective on underlying vector spaces, and such that $\Ho^{\bullet}(T\oplus S)$ is complete with respect to the ideal generated by $\Ho^{\bullet}(f)(\Free_{\infty,\nonu,\mathrm{formal}}(E))=\Ho^{\bullet}(T)$.  We say that $T\oplus S$ is a finitely generated formal $A_{\infty}$-quiver algebra if the same condition holds, but for finite-dimensional $E\in S\bimods S$.
\begin{prop}
\label{retnice}
Let $A=M\oplus S$ be a finitely generated formal $A_{\infty}$-quiver algebra such that $\Ho^{\bullet}(A)$ is a nonpositively graded quiver algebra.  Then there is a quasi-equivalence of categories
\[
\xymatrix{
\langle s_i|i\in I\rangle_{\triang}\ar[r]^(.55){\sim}& A\lmodi,
}
\]
where the $s_i$ are as defined in Remark \ref{augfun}, and an equivalence of categories
\[
A\lmodinilp \rightarrow A\lmodi,
\]
where $A\lmodinilp$ is the full $A_{\infty}$-subcategory of $A\lmodi$ containing those $N$ such that 
\[
(\Ho^{\bullet}(M))^{t}\Ho^{\bullet}(N)=0
\] for some $t$.
\end{prop}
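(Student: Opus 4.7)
The two claims are linked: any iterated cone of shifts of simples is manifestly nilpotent, so $\langle s_i\rangle_{\triang}\subseteq A\lmodinilp\subseteq A\lmodi$, and both inclusions are of full $A_\infty$-subcategories, hence quasi-fully faithful. Only essential surjectivity up to quasi-isomorphism remains to be checked at each stage. I address the two claims in turn.

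\textbf{Step 1: nilpotency from formality.} Given $N\in A\lmodi$, pass to a minimal model $N'$ via the $A_\infty$ minimal model theorem; then the underlying graded vector space of $N'$ coincides with $\Ho^\bullet(N)$ and is therefore finite-dimensional. The graded algebra $\overline{A}:=\Ho^\bullet(A)$ acts on $N'$, and the hypothesis that $A$ is a formal $A_\infty$-quiver algebra gives that $\overline{A}$ is complete in its $\overline{M}$-adic topology, where $\overline{M}:=\Ho^\bullet(M)$. A graded Nakayama argument now forces $\overline{M}^t N'=0$ for some $t$: the intersection $K:=\bigcap_k\overline{M}^k N'$ is a finite-dimensional submodule with $\overline{M}\cdot K=K$; writing a basis of $K$ as $e_i=\sum_j a_{ij}e_j$ with $a_{ij}\in\overline{M}$ gives a matrix equation $(I-A)\vec e=0$, and by completeness the matrix $I-A$ admits a formal inverse $\sum A^k$, which converges degree-by-degree thanks to the nonpositive grading. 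Hence $\vec e=0$ and $N'$ is nilpotent.

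\textbf{Step 2: nilpotent modules are iterated extensions of simples.} Work with minimal $N\in A\lmodinilp$ and induct on the least $t$ with $\overline{M}^t N=0$. For $t=1$, $N$ is a finite-dimensional graded $S$-module, hence a finite direct sum of shifted simples $\bigoplus_j s_{i_j}[n_j]$, placing it in $\langle s_i\rangle_{\triang}$ as an iterated cone of zero morphisms. For $t\geq 2$, construct an $A_\infty$-surjection $\pi:N\to P$ where $P$ is a direct sum of shifted simples and $\cone(\pi)[-1]$ has nilpotency index at most $t-1$; the exact triangle $\cone(\pi)[-1]\to N\to P$ then places $N$ in $\langle s_i\rangle_{\triang}$ by the inductive hypothesis applied to both ends. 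The candidate for $\pi$ is the projection $N\to N/(\overline{M}\cdot N)$, where $\overline{M}\cdot N$ is the $S$-subspace of $N$ generated by the images of all higher operations $m_{N,k}(a_1,\ldots,a_{k-1},n)$ having at least one $a_i\in M$. With $\pi_n=0$ for $n\geq 2$, the $A_\infty$-compatibility relations reduce to $\pi_1\circ m_{N,k}=\pm\, m_{P,k}(\id^{\otimes(k-1)}\otimes\pi_1)$; the right-hand side vanishes for $k\geq 3$ by strict $S$-unitality and the trivial extension of the $S$-action on $P$, while the left-hand side vanishes by construction of $\overline{M}\cdot N$.

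\textbf{Main obstacle.} The delicate point is that the ``thick'' subspace $\overline{M}\cdot N$ (including higher-product images) may strictly contain the ``thin'' version (just the image of $m_{N,2}$ from $M\otimes N$) which controls the nilpotency index from Step 1, so one must verify that the induction still terminates. This is where the nonpositive grading on $\Ho^\bullet(A)$ becomes essential: on a bounded minimal $N$ the higher operations $m_{N,k}$ for $k\geq 3$ have degree $2-k\leq -1$ and strictly decrease cohomological degree, so they contribute to $\overline{M}\cdot N$ only below the support of $N$, and the two definitions agree at the top cohomological degree. A clean execution chooses the minimal model of $N$ compatibly with the cohomological degree filtration $F^p N:=\bigoplus_{j\leq p}N^j$, which is an $A_\infty$-subfiltration precisely because the nonpositive grading makes every higher operation of degree $\leq 0$; the quotient construction can then be iterated layer by layer, controlling the thick and thin filtrations simultaneously.
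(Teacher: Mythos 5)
Your Step 1 is fine (and is more explicit than the paper, which disposes of nilpotence in one line): completeness of $\Ho^{\bullet}(A)$ with respect to the ideal generated by $\overline{M}=\Ho^{\bullet}(M)$ puts $\overline{M}$ inside the Jacobson radical, and the Nakayama argument on the stabilized intersection $K=\bigcap_k\overline{M}^kN'$ is correct. The genuine gap is in Step 2, and you have half-diagnosed it yourself. Your induction is on the \emph{thin} nilpotency index $t$ (powers of $\overline{M}$ acting through the binary product on the minimal model), but the kernel of your projection $\pi$ is the \emph{thick} submodule generated by images of all higher operations with an $M$-input. Nothing in the proposal shows that this kernel has thin index at most $t-1$; the $A_{\infty}$-relations mix $m_2$-powers with higher products, so the inductive step does not close as stated. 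The base case has the same defect: $\overline{M}\cdot N=0$ in the $m_2$-sense does not make $N$ a direct sum of shifted simples as an $A_{\infty}$-module, since $m_k(a_1,\ldots,a_{k-1},n)$ with $a_i\in M$, $k\geq 3$, can survive. Your "main obstacle" paragraph names the right ingredients (higher operations have degree $2-k\leq -1$, so they strictly lower cohomological degree, and thick and thin agree in the top degree of $N$) but only gestures at the repair; as written the induction parameter is simply not controlled.

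The repair you sketch is in fact the paper's proof. After replacing $A$ by a minimal model and $N$ by a minimal module, the nonpositive grading makes the cohomological truncations $N^{\leq i}$ genuine $A_{\infty}$-submodules, and on each subquotient (concentrated in a single degree) every higher operation and every action of $A^{<0}$ vanishes for degree reasons, so the subquotient is an ordinary module over the ordinary completed path algebra $A^0$; classical nilpotence of finite-dimensional modules over a completed path algebra then exhibits each layer, and hence $N$, as an iterated extension of the $s_i$, proving both statements at once by induction on the filtration. If you prefer to keep your quotient-by-the-thick-submodule construction (which is correctly a strict short exact sequence with quotient a sum of shifted simples), the cleanest way to close it is to change the induction parameter to $\dim_k N$: your own observation that the thick submodule misses the top-degree piece (by nilpotence of the $A^0$-action there) shows it is a proper submodule, so the dimension strictly drops. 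Either way the essential mechanism — nonpositive grading forcing a degree filtration whose layers are $A^0$-modules, plus completeness of the path algebra — is exactly the one the paper uses; your proposal differs only in attempting to run the induction on the radical filtration first, which is the step that fails.
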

\begin{proof}
Since in the first statement we are working up to quasi-equivalence, we may take a minimal model for $A$, and we may restrict attention to the full subcategory of $A$-modules containing only minimal modules.  Let $M$ be such a minimal module.  Then since $A$ is concentrated in nonpositive degrees, $M$ admits a filtration of $A$-modules 
\[
\ldots\subset M^{\leq i}\subset M^{\leq i+1}\subset\ldots
\]
Each factor of this filtration is an $A^0$-module (note that under our minimality assumption, $A^0$ is an ordinary quiver algebra completed according to path length).  The first result follows, and then so does the second, by induction.
\end{proof}

\begin{defn}{\cite{KSnotes}}
An $A_{\infty}$-quiver algebra is compact if its homology algebra is finite-dimensional.
\end{defn}
This is equivalent to the $A_{\infty}$-category $\CC_{\Gamma,R}$ (see the discussion before Proposition \ref{equicatd}) being finite-dimensional, in the sense of Definition \ref{findimcat}.
\begin{examp}
Let $L_1,\ldots,L_n$ be bounded complexes of coherent sheaves on a smooth projective scheme $X$, with $\Hom_{\mathrm{D}^b(X)}(L_i,L_i)\cong k$ for every $i$, where $\mathrm{D}^b(X)$ is the bounded derived category of complexes of coherent sheaves on $X$.  We assume that $\Ext^n(L_i,L_j)=0$ for $n<0$, and that no automorphism of any of the $L_i$  factors through a morphism $f:L_i\rightarrow L_j$ for $j\neq i$.  Then the full subcategory of $\mathrm{D}^b(X)$ with objects consisting of the $L_1,\ldots,L_n$ determines a (ordinary) graded quiver algebra $R$.  The $a$th graded piece of the vector space $e_i\cdot R\cdot e_j$ is given by $\Ext^{a}_{\Coh(X)}(L_j,L_i)$.  This algebra is, by assumption, naturally an augmented $S$-algebra, for $I$ the set $\{L_1,\ldots,L_n\}$.  There is a well defined notion of the (generally non-unique) enriched derived category $\mathrm{D}^b_{\infty}(X)$ (e.g. as in \cite{Pol04}), which incorporates the data of Massey products, and we can consider also the full $A_{\infty}$-subcategory of $\mathrm{D}^b_{\infty}(X)$ with objects given by the $L_1,\ldots,L_n$.  After taking a minimal model, we obtain an $A_{\infty}$-quiver algebra $R'$.  By definition, $\Ho^{\bullet}(R')=R$, and since morphism spaces in $\mathrm{D}^b(X)$ are finite-dimensional graded vector spaces, both these algebras are compact.
\end{examp}
\begin{examp}
Consider $\mathcal{O}_{\mathbb{P}^1}$ and $\mathcal{O}_{\mathbb{P}^1}(1)$ in $\Coh(\mathbb{P}^1)\subset \mathrm{D}^b(\mathbb{P}^1)$.   Again, denote the associated quiver algebra $R$.  Then $\Ho^0(R)$ is the free path algebra of the Kronecker quiver, with two vertices labelled by $\mathcal{O}_{\mathbb{P}}$ and $\mathcal{O}_{\mathbb{P}}(1)$, and with two arrows going from the first vertex to the second.
The full Yoneda algebra $R'$ is just $R$ again, since there are no $\Ext^1$s.  It follows that the enriched algebra $R'_{\infty}$ is just the same as $R$.  There is an equivalence of derived categories
\[
\xymatrix{
\mathrm{D}^b(R\lmod)\ar[r]^>>>>>{\sim} &\mathrm{D}^b(\mathbb{P}^1)
}
\]
which, since $R\cong R'_{\infty}$, lifts to an equivalence of $A_{\infty}$-categories
\[
\xymatrix{
\mathrm{D}^b_{\infty}(\mathbb{P}^1)\ar[r]^>>>>>{\sim}& R\lmodi.
}
\]
\end{examp}
\begin{defn}{\cite{CostTFT}}
\label{CYcatdef}
A Calabi-Yau category of dimension $n$ is an $A_{\infty}$-category $\mathcal{C}$, such that for each pair $A,B\in\mathcal{C}$ there exists a nondegenerate pairing
\[
\langle\bullet,\bullet\rangle_{B,A}:\Hom_{\mathcal{C}}(A,B)\otimes \Hom_{\mathcal{C}}(B,A)\rightarrow k[-n]
\]
such that $\langle\bullet,\bullet\rangle_{A,B}$ is obtained from $\langle\bullet,\bullet\rangle_{B,A}$ by precomposing with the symmetry isomorphism in $\dgvect$
\[
\Hom_{\mathcal{C}}(A,B)\otimes \Hom_{\mathcal{C}}(B,A)\cong \Hom_{\mathcal{C}}(B,A)\otimes \Hom_{\mathcal{C}}(A,B).
\]
For $A_0,\ldots,A_{n-1}\in \ob(\mathcal{C})$ define the linear functional $W_n$ on
\[
\Hom^{\bullet}(A_{n-1},A_0)[1]\otimes \Hom^{\bullet}(A_{n-2},A_{n-1})[1]\otimes\ldots\otimes \Hom^{\bullet}(A_{0},A_{1})[1]
\]
by $W_{n}=\frac{1}{n}\langle\bullet,\bullet \rangle_{A_0,A_1}(b_{n-1}\otimes\id)$.  The symmetric monoidal structure on $\dgvect$ gives an isomorphism $\phi$ between
\[
\Hom^{\bullet}(A_{n-1},A_0)[1]\otimes \Hom^{\bullet}(A_{n-2},A_{n-1})[1]\otimes\ldots\otimes \Hom^{\bullet}(A_{0},A_{1})[1]
\]
and 
\[
\Hom^{\bullet}(A_{n-2},A_{n-1})[1]\otimes\ldots\otimes \Hom^{\bullet}(A_0,A_1)[1]\otimes \Hom^{\bullet}(A_{n-1},A_0)[1].
\]
We ask, in addition, that $W_{n}=W_{n}\circ \phi$.
\end{defn}
\begin{rem}
\label{tomp}
Note that if $\mathcal{C}$ is an $A_{\infty}$-category which can be given the structure of a Calabi-Yau category, then in particular, the morphism space between any two objects of $\mathcal{C}$ must be finite-dimensional in each degree, a property that is not stable under quasi-equivalence of $A_{\infty}$-categories.
\end{rem}
\begin{defn}
If $A$ is an $A_{\infty}$-algebra object in a symmetric monoidal graded category $\mathcal{D}$ admitting internal $\Hom$s, then we say that it is a $n$-dimensional Calabi-Yau $A_{\infty}$-algebra object if there is a symmetric degree $n$ map
\[
\langle\bullet,\bullet\rangle: A\otimes A\rightarrow \idmon_{\mathcal{D}}
\]
such that the $W_n$ defined as above are cyclically symmetric, and such that the map $\phi(\brackets)$ is an isomorphism, where $\phi:\Hom_{\mathcal{D}}(A\otimes A,\idmon_{\mathcal{D}})\rightarrow \Hom_{\mathcal{D}}(A,\underline{\Hom}_{\mathcal{D}}(A,\idmon_{\mathcal{D}}))$ is given by the presence of internal $\Hom$s. 
\end{defn}
\begin{rem}
Let $\mathcal{C}$ be a Calabi-Yau category.  Then it is easy to see that $\mathcal{C}^{\op}$ possesses a Calabi-Yau structure too.
\end{rem}
\begin{defn}
\label{compactCyalg}
An $n$-dimensional \textit{compact Calabi-Yau} $A_{\infty}$\textit{-quiver algebra} is an $A_{\infty}$-quiver algebra on some vertex set $V$, with the structure of a Calabi-Yau category given on the associated category $\mathcal{C}$ with objects $\ob(\mathcal{C})=V$ and morphisms $\Hom_{\mathcal{C}}(v_i,v_j)=m_2(m_2(e_j,A),e_i)$.
\end{defn}
Many of the categories we work with are categories of modules \textit{over} Calabi-Yau categories.  So the following theroem is essential.
\begin{thm}
\label{goup}
Let $\mathcal{C}$ be a Calabi-Yau category of dimension $n$.  Then $\tw_l(\mathcal{C})$ and $\tw_r(\mathcal{C})$ (as in Definition \ref{twl}) are Calabi-Yau categories of dimension $n$.
\end{thm}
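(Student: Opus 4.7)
The plan is to transport the CY pairing from $\CC$ to $\tw_l(\CC)$ via a matrix trace of pointwise pairings, and then verify each of the three conditions of Definition \ref{CYcatdef} by reducing them to the corresponding conditions on $\CC$. The case of $\tw_r(\CC)$ is obtained from $\tw_l(\CC)^{\op}$ via Proposition \ref{diamonddual}, together with the trivial observation (already noted in the remark after Definition \ref{CYcatdef}) that the CY axioms are self-dual, so $\CC^{\op}$ inherits a CY structure from $\CC$.

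First I would lift $\brackets_{\CC}$ to a pairing on $h^{\CC}[\ZZ]$: the shift identifications $\Hom_{h^{\CC}[\ZZ]}(x[a], y[b]) \cong \Hom_{\CC}(y, x)[a - b]$ let us pair with $\Hom_{h^{\CC}[\ZZ]}(y[b], x[a])$ landing in $k[a - b] \otimes k[b - a] \otimes k[-n] = k[-n]$, and symmetry, nondegeneracy and cyclic symmetry of the potentials descend componentwise. Then for twisted objects $T_s = ((z_{s,1}, \ldots, z_{s, p_s}), A_s)$ ($s = 1, 2$) and morphisms $M \in \Hom_{\tw_l(\CC)}(T_1, T_2)$, $N \in \Hom_{\tw_l(\CC)}(T_2, T_1)$ (both matrices over $h^{\CC}[\ZZ]$), define
\[
\brackets_{T_2, T_1}(M, N) \;:=\; \sum_{i, j} (-1)^{\epsilon_{i, j}}\,\brackets_{z_{2, i}, z_{1, j}}\!\bigl(M_{i, j}, N_{j, i}\bigr),
\]
with Koszul signs $\epsilon_{i, j}$ determined by the degrees of the objects $z_{s, t}$. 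Nondegeneracy is immediate: any nonzero $M$ has a nonzero entry $M_{i, j}$, and a dual partner in $\Hom_{h^{\CC}[\ZZ]}(z_{2, i}, z_{1, j})$ placed in entry $(j, i)$ of an otherwise zero $N$ yields a single nonvanishing contribution. Graded symmetry is inherited componentwise.

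The main step is the cyclic symmetry of the resulting potentials
\[
W_k^{\tw}(M_k, \ldots, M_1) \;=\; \frac{1}{k}\,\brackets_{T_k, T_1}\!\bigl(b_{\tw_l(\CC), k - 1}(M_k, \ldots, M_2),\, M_1\bigr)
\]
on composable loops $M_i : T_{i - 1} \to T_i$ with $T_k = T_0$. The critical observation is that the expansion (\ref{twmult}) of $b_{\tw_l(\CC), k - 1}(M_k, \ldots, M_2)$ automatically involves insertions of \emph{all} the Maurer--Cartan elements $A_0, A_1, \ldots, A_{k - 1}$: after the relabelling needed to apply (\ref{twmult}) to the tuple $(M_k, \ldots, M_2)$, the top-index block $A_t^{\otimes p_t}$ with $t = k - 1$ becomes $A_0^{\otimes p_0}$ under the wrap-around identification $T_k = T_0$. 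Expanding the matrix trace and using $\brackets_{\CC}\bigl(b_{h^{\CC}[\ZZ], N - 1}(-), -\bigr) = N \cdot W_N^{\CC}(-)$ exhibits $W_k^{\tw}(M_k, \ldots, M_1)$ as a sum, indexed by nonnegative integers $(p_0, \ldots, p_{k - 1})$ and by matrix-entry data, of the potentials $W_N^{\CC}$ ($N = k + \sum p_i$) evaluated on the cyclic word
\[
A_0^{\otimes p_0} * M_k * A_{k - 1}^{\otimes p_{k - 1}} * M_{k - 1} * \cdots * A_1^{\otimes p_1} * M_1.
\]
A cyclic rotation $(M_k, \ldots, M_1) \mapsto (M_{k - 1}, \ldots, M_1, M_k)$ merely rotates this cyclic word and relabels the dummy indices $p_i$, so cyclic symmetry of $W_k^{\tw}$ falls out directly from cyclic symmetry of each $W_N^{\CC}$ on $\CC$; no appeal to the Maurer--Cartan equation is needed. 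The hard part will be the bookkeeping of Koszul signs: the pointwise signs $\epsilon_{i, j}$ in the matrix trace, the cyclic-rotation signs on $\CC$ (which depend on the degrees of the $A_i$ and $M_j$), and the degree shifts from $h^{\CC}[\ZZ]$ must conspire to yield the overall sign predicted by cyclic symmetry in $\tw_l(\CC)$, but once the $\epsilon_{i, j}$ are chosen to match the conventions already built into (\ref{twmult}), this is a tedious but routine check.
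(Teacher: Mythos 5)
Your proposal is correct and follows essentially the same route as the paper: the paper also defines the pairing on twisted objects as the matrix trace of the entrywise Calabi--Yau pairings, gets nondegeneracy immediately from nondegeneracy on $\mathcal{C}$, and deduces cyclic symmetry from the expansion (\ref{twmult}) together with cyclic symmetry of $\langle\bullet,\bullet\rangle$ (handling $\tw_l$ versus $\tw_r$ via the same duality of Proposition \ref{diamonddual}). Your cyclic-word expansion with the Maurer--Cartan insertions is exactly the argument the paper compresses into its final sentence.
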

\begin{proof}
We work only with $\tw_r(\mathcal{C})$, the proof for $\tw_l(\mathcal{C})$ is identical (recall also the isomorphism $\tw_r(\mathcal{C})\cong \tw_l(\mathcal{C})^{\op}$ of Proposition \ref{diamonddual}).  Objects of $\tw_r(\mathcal{C})$ are given by $n$-tuples of objects $\tau=(h_{x_1}[m_1],\ldots,h_{x_n}[m_n])$, and matrices $A$ in $h_\mathcal{C}[\mathbb{Z}]$, as defined in Definition \ref{embe}, satisfying the equation
\[
\sum_{i\geq 1} b_i(A,\ldots,A)=0,
\]
while morphisms from $(\tau_1,A)$ to $(\tau_2,A')$ are given by matrices from $\tau_1$ to $\tau_2$ in $h_{\mathcal{C}}[\mathbb{Z}]$.  Given a matrix $M$ from 
\[
\tau_1=(h_{x_1}[m_1],\ldots,h_{x_n}[m_n])
\]
to
\[
\tau_2=(h_{y_1}[p_1],\ldots,h_{y_{n'}}[p_{n'}])
\]
and a matrix $M'$ from $\tau_2$ to $\tau_1$ we define 
\[
\langle M',M\rangle_{\tw_r(\mathcal{C})}=\sum_{\substack{1\leq a \leq n\\ 1\leq b \leq n'}}\langle M'_{a,b},M_{b,a}\rangle.
\]
Nondegeneracy of $\langle \bullet,\bullet \rangle_{\tw_r(\mathcal{C})}$ follows straight from nondegeneracy of $\langle\bullet,\bullet\rangle$.  Cyclic symmetry of $\langle \bullet,\bullet \rangle_{\tw_r(\mathcal{C})}$ follows from the definition of composition in $\tw_r(\mathcal{C})$ (see (\ref{twmult})) and cyclic symmetry of $\langle\bullet,\bullet\rangle$.
\end{proof}
\begin{defn}[\cite{KS}]
An ind-constructible Calabi-Yau category of dimension $n$ is given by an ind-variety $\mathcal{M}$, together with an ind-constructible finite-dimensional graded vector bundle $\mathcal{HOM}$ on $\mathcal{M}\times \mathcal{M}$, together with morphisms $\pi_{0,1}^*(\mathcal{HOM})\otimes\ldots \otimes \pi_{i-1,i}^*(\mathcal{HOM})\rightarrow \pi_{0,i}^*(\mathcal{HOM})$ satisfying the obvious $A_{\infty}$ compatibility conditions, and a morphism $\langle \bullet,\bullet \rangle:\mathcal{HOM}\otimes \sw^*(\mathcal{HOM})\rightarrow \id_{\mathcal{M}\times\mathcal{M}}[-n]$, satisfying the obvious cyclic invariance property (as in Definition \ref{CYcatdef}), inducing an isomorphism $\mathcal{HOM}\cong \sw^*(\mathcal{HOM})^*[-n]$, where $\sw:\mathcal{M}\times\mathcal{M}\rightarrow \mathcal{M}\times\mathcal{M}$ is the isomorphism swapping the two copies of $\mathcal{M}$, and $\sw^*(\mathcal{HOM})^*$ is the vector dual of $\sw^*(\mathcal{HOM})$.
\end{defn}
\begin{thm}
Let $\mathcal{C}$ be a finite-dimensional Calabi-Yau category of dimension $n$.  Then $\triang_{\mathcal{C}\lmodi}(\mathcal{C})$ and $\triang_{\rmodi\mathcal{C}}(\mathcal{C})$ are quasi-equivalent to ind-constructible Calabi-Yau categories of dimension $n$.
\end{thm}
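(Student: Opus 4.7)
The plan is to assemble the theorem directly from the machinery already established in the excerpt, since almost all ingredients are at hand. First I would invoke Proposition \ref{trac} to obtain a quasi-equivalence $\tw_l(\mathcal{C})\xrightarrow{\sim} \triang_{\mathcal{C}\lmodi}(\mathcal{C})$ and, dually, $\tw_r(\mathcal{C})\xrightarrow{\sim} \triang_{\rmodi \mathcal{C}}(\mathcal{C})$. Theorem \ref{goup} then supplies each of $\tw_l(\mathcal{C}),\tw_r(\mathcal{C})$ with a Calabi-Yau structure of dimension $n$. So the real content of the theorem is that this entire datum -- objects, morphisms, compositions, and the cyclic pairing -- can be upgraded to an ind-constructible package. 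I will carry this out for $\tw_r(\mathcal{C})$; the case of $\tw_l(\mathcal{C})$ is identical (or follows from Proposition \ref{diamonddual}).

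Next I would set $\mathcal{M}:=\VV_r=\coprod_{\tau}\VV_{r,\tau}$, the ind-variety of Definition \ref{objects}, and take for $\mathcal{HOM}$ the ind-constructible finite-dimensional graded vector bundle on $\mathcal{M}\times\mathcal{M}$ supplied by Proposition \ref{indvariety} (the component over $\VV_{r,\tau_1}\times \VV_{r,\tau_2}$ is the trivial bundle with fibre the space of matrices in $h_{\mathcal{C}}[\mathbb{Z}]$ from $\tau_1$ to $\tau_2$, which is finite-dimensional in each degree because $\mathcal{C}$ is finite-dimensional). The higher compositions $b_{\mathcal{HOM},j}$ are also provided by Proposition \ref{indvariety}; it is essentially tautological from the explicit formula (\ref{twmult}) that they are algebraic, as they are built out of finite sums of the fixed compositions $b_{\mathcal{C},i}$ applied to matrices whose entries depend linearly on bundle coordinates and polynomially on the matrices $A_0,\dots,A_s$ defining the objects of $\VV_r$ (the sums terminate because those matrices are strictly upper-triangular).

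The one step that requires a small check is promoting the Calabi-Yau pairing of Theorem \ref{goup} to a bundle morphism $\langle\bullet,\bullet\rangle:\mathcal{HOM}\otimes \sw^*(\mathcal{HOM})\rightarrow \id_{\mathcal{M}\times\mathcal{M}}[-n]$. Here I would simply apply, fibrewise, the pairing
\[
\langle M',M\rangle_{\tw_r(\mathcal{C})}=\sum_{a,b}\langle M'_{a,b},M_{b,a}\rangle
\]
constructed in the proof of Theorem \ref{goup}. This is linear in each matrix entry, and the pairing of $\mathcal{C}$ is fixed, so it defines a morphism of vector bundles; cyclic invariance at the level of the $W_n$ is inherited pointwise from Theorem \ref{goup}. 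Nondegeneracy, which in the ind-constructible setting requires the induced map $\mathcal{HOM}\to \sw^*(\mathcal{HOM})^*[-n]$ to be an isomorphism of bundles, holds over each product $\VV_{r,\tau_1}\times\VV_{r,\tau_2}$ because the pairing on fibres is block-diagonal in the entries $(a,b)$ and identifies $\Hom_{h_{\mathcal{C}}[\mathbb{Z}]}(h_{x_b}[m_b],h_{y_a}[p_a])$ with the vector dual of $\Hom_{h_{\mathcal{C}}[\mathbb{Z}]}(h_{y_a}[p_a],h_{x_b}[m_b])[-n]$, which is just the nondegeneracy of the fixed pairing on $\mathcal{C}$ applied in each matrix slot.

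The only real obstacle I foresee is bookkeeping, namely verifying that the pairing assembled from $\mathcal{C}$ really gives a \emph{bundle} map (not just a fibrewise map) and that the induced $\mathcal{HOM}\to \sw^*(\mathcal{HOM})^*[-n]$ is an isomorphism of ind-constructible bundles rather than merely a pointwise isomorphism; both reduce to the observation that $\mathcal{HOM}_{\tau_1,\tau_2}$ is (before restriction) a \emph{trivial} bundle over $\VV_{r,\tau_1}\times\VV_{r,\tau_2}$ with fibre depending only on $\tau_1,\tau_2$, so all three structures (composition, pairing, duality) are given by fixed linear maps pulled back from the finite-dimensional fibre and are therefore automatically algebraic. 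Thus the Calabi-Yau structure on $\tw_r(\mathcal{C})$ extends to an ind-constructible Calabi-Yau structure on $\mathcal{M}$, and composing with the quasi-equivalence of Proposition \ref{trac} yields the desired statement.
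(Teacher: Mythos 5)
Your proposal is correct and follows essentially the same route as the paper, whose proof simply cites Proposition \ref{trac}, Proposition \ref{indvariety}, and the Calabi-Yau structure constructed in the proof of Theorem \ref{goup}. The additional verification you spell out — that the pairing and duality are genuine bundle maps because each $\mathcal{HOM}_{\tau_1,\tau_2}$ is a trivial bundle with fixed fibre — is exactly the implicit content the paper leaves to the reader.
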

\begin{proof}
This follows from Propositions \ref{trac}, \ref{indvariety}, and the structure defined in the proof of Theorem \ref{goup}.
\end{proof}

\begin{defn}
Let $\mathcal{C}$ and $\mathcal{D}$ be ind-constructible Calabi-Yau categories of dimension $n$, with the same underlying ind-varieties of objects, $\mathcal{M}$.  Then a morphism $f$ of ind-constructible Calabi-Yau categories $\mathcal{C}\cong \mathcal{D}$ is given by a morphism $f_1:\mathcal{HOM}_{\mathcal{C}}\rightarrow \mathcal{HOM}_{\mathcal{D}}$, compatible with the inner products on the two constructible vector bundles, along with higher morphisms $f_i:\pi_{0,1}^*(\mathcal{HOM}_{\mathcal{C}})\otimes\ldots \otimes \pi_{i-1,i}^*(\mathcal{HOM}_{\mathcal{C}})\rightarrow \pi_{0,i}^*(\mathcal{HOM}_{\mathcal{D}})$ satisfying the compatibility conditions of an $A_{\infty}$-morphism (see Definition \ref{algmorph}).  We require also that 
\begin{equation}
\label{kajdef}
\sum_{a+b=n}\langle f_a,f_b\rangle_{\mathcal{D}}=0
\end{equation}
for all $n\geq 3$.  The morphism $f$ is an isomorphism if $f_1$ is.
\end{defn}
\begin{rem}
The condition \ref{kajdef} is considered by Kajiura in \cite{kaj03}.  Converted into the language of symplectic forms on $\KK$, introduced below, the condition that $f_1$ preserves the inner product corresponds to the condition that the constant term of the pulled back noncommutative symplectic form is unchanged.  The condition (\ref{kajdef}) corresponds to the condition that the pulled back symplectic form is still constant.
\end{rem}
\begin{thm}[Minimal model theorem for ind-constructible Calabi-Yau categories]
\label{cycmm}
Let $\mathcal{C}$ be an ind-constructible Calabi-Yau category of dimension $n$.  Then $\mathcal{C}$ is isomorphic to an ind-constructible Calabi-Yau category $\mathcal{D}$, with compositions $m_i$ respecting the decomposition $\mathcal{HOM}_{\mathcal{D}}\cong \mathcal{HOM}_{\min}\oplus V$, satisfying
\begin{enumerate}
\item $m_1|_{\mathcal{HOM}_{\min}}=0,$
\item $m_{i}|_{\pi_{0,1}^*(V)\otimes\ldots\otimes \pi_{i-1,i}^*(V)}=0$ for all $i\geq 2$, and
\item $\Ho^{\bullet}(V)=0$.
\end{enumerate}
\end{thm}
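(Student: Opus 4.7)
The plan is to adapt Kajiura's cyclic minimal model theorem \cite{kaj03} to the ind-constructible setting, using the formality statement of Corollary \ref{formcor} to perform the underlying linear-algebra splitting in families. First, $\mathcal{HOM}_{\mathcal{C}}$ is a finite-dimensional ind-constructible differential graded vector bundle on $\mathcal{M}\times\mathcal{M}$, so applying Corollary \ref{formcor} after a constructible refinement of $\mathcal{M}\times\mathcal{M}$ one obtains a splitting
\[
\mathcal{HOM}_{\mathcal{C}} \cong \mathcal{HOM}_{\min} \oplus V_0 \oplus V_0[-1]
\]
with $m_1$ vanishing on $\mathcal{HOM}_{\min}$ and restricting to the obvious isomorphism $V_0[-1]\to V_0$. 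Setting $V:=V_0\oplus V_0[-1]$ already delivers the underlying graded bundle needed for conditions (1) and (3). The pairing $\langle\bullet,\bullet\rangle$ provides an isomorphism $\mathcal{HOM}_{\mathcal{C}}\cong \sw^*(\mathcal{HOM}_{\mathcal{C}})^*[-n]$, and by semisimplicity of the category $\mathcal{V}_{\fin}$ of Proposition \ref{SSab} one can refine the stratification once more so that this isomorphism identifies $\mathcal{HOM}_{\min}$ with the dual of $\sw^*(\mathcal{HOM}_{\min})$ and $V$ with the dual of $\sw^*(V)$, with both restricted pairings still nondegenerate.

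Next, I will choose a degree $-1$ ind-constructible contracting homotopy $h:V\to V$ satisfying $m_1 h + h m_1 = \mathrm{id}_V$, $h^2=0$, and self-adjoint with respect to the pairing; existence in the constructible setting follows by a Gram--Schmidt-style construction stratum by stratum, again using Proposition \ref{SSab}. Extending $h$ by zero on $\mathcal{HOM}_{\min}$, I define the transferred compositions $m_n^{\mathcal{D}}$ on $\mathcal{HOM}_{\min}\oplus V$ and an $A_\infty$-morphism $f:\mathcal{D}\to \mathcal{C}$ by the Kontsevich--Soibelman/Kadeishvili sum over binary rooted planar trees with $n$ leaves: evaluate the appropriate $m_i^{\mathcal{C}}$ at each internal vertex, insert $h$ on each internal edge, and project by $p:\mathcal{HOM}_{\mathcal{C}}\twoheadrightarrow \mathcal{HOM}_{\min}$ at the root for the $m_n^{\mathcal{D}}$, while inserting no such projection for $f$. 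Condition (1) is built into the formula, condition (3) is structural, and condition (2) is the observation that if every leaf is inserted from $V$, the tree sum telescopes to an element lying in $V$, which is then killed by $p$.

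The cyclic symmetry $W_n^{\mathcal{D}} = W_n^{\mathcal{D}}\circ \phi$ is the content of Kajiura's theorem: self-adjointness of $h$ and cyclic symmetry of $\langle b_{i-1}^{\mathcal{C}}(-,\ldots,-),-\rangle$ conspire so that rotating the leaves of a rooted planar tree (equivalently, re-rooting it) leaves the corresponding contribution to $\langle b_{n-1}^{\mathcal{D}}(-,\ldots,-),-\rangle$ invariant. Analogously, the defining identity (\ref{kajdef}) for $f$ to be a morphism of Calabi-Yau categories is another cyclic-symmetry-of-trees statement, handled by the same self-adjointness. The main obstacle is therefore the construction of the self-adjoint ind-constructible homotopy $h$: all the tree combinatorics are standard and port over verbatim from \cite{kaj03} once $h$ is in hand, but producing $h$ requires iteratively refining the stratification of $\mathcal{M}\times\mathcal{M}$ so that the successive restricted pairings can be diagonalised compatibly with a chosen splitting of $m_1$ on each stratum, while keeping the total refinement locally finite.
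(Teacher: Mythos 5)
Your strategy---reduce to Kajiura's cyclic perturbation theory, with the constructible splitting supplied by Section \ref{cvect}---starts in the right place, but it is not the paper's argument and it has a genuine gap at the decisive step. The paper does not use tree-formula homotopy transfer at all: it encodes the entire cyclic structure as a coherent action $S_{\cyc}\in\Gamma(\Kcyc)$ on a noncommutative symplectic ind-constructible space built out of $\sw^*(\mathcal{HOM}[1])^*$, and then removes, order by order, the terms of $S_{\cyc,i}$ ($i\geq 3$) containing coordinates on the contractible summands by composing Hamiltonian flows $e^{(\bullet,H_{\cyc})}$. Each flow preserves the constant symplectic form coming from $\langle\bullet,\bullet\rangle$, and since $H_{\cyc,i}=0$ below the order being corrected, the infinite composition restricts to a finite composition of constructible bundle maps on each $\MM^n$; an isomorphism of ind-constructible Calabi--Yau categories therefore comes out by construction. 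In your version, the cyclicity of the transferred products on $\mathcal{HOM}_{\min}$ via a self-adjoint homotopy is believable, but the theorem asserts more: an isomorphism $\mathcal{C}\cong\mathcal{D}$ whose linear part is a pairing-preserving isomorphism of bundles and whose higher parts satisfy (\ref{kajdef}). Your $f$ is only specified by the usual tree sum on inputs from $\mathcal{HOM}_{\min}$; you do not say how $f_n$ acts on inputs from $V$, why the resulting collection is an $A_{\infty}$-morphism out of the direct-sum structure rather than merely a quasi-isomorphism out of the minimal part, nor why its linear term is an isomorphism compatible with the inner products. The one-sentence claim that (\ref{kajdef}) is ``another cyclic-symmetry-of-trees statement handled by the same self-adjointness'' is exactly the point that separates the cyclic decomposition theorem from ordinary homotopy transfer, and it is exactly what the Hamiltonian-flow formulation gives for free; asserting that it ``ports over verbatim'' from \cite{kaj03} misreads that paper, whose proof of the decomposition theorem is the symplectic one the thesis reproduces.

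The transfer route is probably repairable: with $h$ self-adjoint, $h^2=0$ and $h$ vanishing on $\mathcal{HOM}_{\min}$, the cross terms $\langle f_a,f_b\rangle$ can be made to vanish, but you would first have to write down $f_n$ on arbitrary (mixed and pure-$V$) inputs, verify the $A_{\infty}$-morphism identities for the direct-sum source, and only then check (\ref{kajdef}); none of this is in the proposal. Two smaller points: the tree sum must run over all planar rooted trees with vertices of valence at least three (the category $\mathcal{C}$ has nonzero higher compositions, so binary trees do not suffice), and your constructible self-adjoint homotopy, while plausibly available stratum by stratum via Proposition \ref{SSab}, is an extra piece of infrastructure the paper's proof simply never needs, since the flows are built directly from sections of $\Kcyc$ and the bracket.
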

\begin{proof}
The proof for this Theorem is essentially contained in \cite{kaj03}, an excellent reference for cyclic $A_{\infty}$-algebras.  There is, though, a possibility of mishap when transposing the proof of Theorem 5.15 of that paper, in that we have to introduce some analogue of the free tensor algebra associated to the underlying vector space of a cyclic $A_{\infty}$-algebra.  So we will recount the ideas of \cite{kaj03}, translating them into an appropriate ind-constructible framework.
\smallbreak
Firstly, fix the underlying ind-variety $\mathcal{M}=\ob(\mathcal{C})=\ob(\mathcal{D})$.  We denote by $\mathcal{HOM}$ the constructible vector bundle of homomorphisms on the ind-variety $\mathcal{M}\times\mathcal{M}$.  Consider the ind-constructible vector bundle $\homdual1:=\sw^*(\mathcal{HOM}[1])^*$, the vector dual of the pullback along the isomorphism that swaps arguments (every constructible vector bundle will be locally finite-dimensional).  The bracket $\langle \bullet,\bullet\rangle$ provides an isomorphism
\[
\homdual1\cong \HOM [2].
\]
Consider now the ind-variety $\mathcal{M}^n\cong (\mathcal{M}\times\mathcal{M})\times_{\mathcal{M}}\ldots\times_{\mathcal{M}} (\mathcal{M}\times\mathcal{M})$, where here and elsewhere we adopt the convention that $\MM^n\times_{\MM}\MM^m$ is the pullback in the following diagram
\[
\xymatrix{
\MM^n\times_{\MM}\MM^m\ar[d]\ar[r]&\MM^m\ar[d]^{\pi_1}\\
\MM^n\ar[r]^{\pi_n}&\MM.
}
\]
We define the constructible vector bundle $\KK$ on $\ch(\MM):=\coprod_{i\geq 2} \MM^i$, which is given by
\begin{equation}
\label{SI}
\bigoplus_{\substack{a_1<\ldots <a_i\in\{1,\ldots,n\}\\a_1=1,a_i=n}}\pi_{a_1,a_2}^*(\homdual1)\otimes\ldots\otimes \pi_{a_{i-1},a_i}^*(\homdual1)
\end{equation}
when restricted to $\MM^n$.  Consider the natural morphism given by concatenation of chains of terms in $\ob(\mathcal{C})$
\[
\mm:\ch(\mathcal{M})\times_{\mathcal{M}}\ch(\mathcal{M})\rightarrow \ch(\mathcal{M})
\]
where the fibre product is the pullback of the diagram
\[
\xymatrix{
\ch(\mathcal{M})\times_{\mathcal{M}} \ch(\mathcal{M})\ar[d]^-{\pi_1}\ar[r]^-{\pi_2} &\ch(\mathcal{M})\ar[d]^{\pi_{\mathrm{first}}}\\
\ch(\mathcal{M})\ar[r]^{\pi_{\mathrm{last}}}\ar[r] &\mathcal{M}
}
\]
and the projection $\pi_{\mathrm{first}}$ is the morphism projecting each component $\mathcal{M}^i$ onto its first copy of $\mathcal{M}$, and $\pi_{\mathrm{last}}$ is defined similarly.  We denote by $\Delta_{\KK}$ the natural morphism
\[
\Delta_{\KK}:\mm_*(\pi_{1}^*(\KK)\otimes \pi_2^*(\KK))\rightarrow \KK.
\]
Note that this is a morphism of locally finite-dimensional constructible vector bundles.  We say $b':\KK\rightarrow \KK$ is a derivation with respect to $\Delta_{\KK}$ if it satisfies the condition
\[
\Delta_{\KK}\circ(\mm_*(\pi_1^*(b')\otimes \id)+\mm_*(\id\otimes \pi_2^*(b')))-b'\circ \Delta_{\KK}=0.
\]
Let $b_1<\ldots<b_j\in\{1,\ldots,n\}$, and let $a_1<\ldots<a_i\in\{1,\ldots,j\}$, with $a_1=1, a_i=j, b_1=1, b_j=n$.  Then $b'$ induces a map of constructible vector bundles on $\mathcal{M}^n$:
\[
b_{\mathrm{part}}':\pi^*_{b_{a_1},b_{a_2}}(\homdual1)\otimes\ldots\otimes \pi^*_{b_{a_{i-1}},b_{a_i}}(\homdual1)\rightarrow\pi^*_{b_1,b_2}(\homdual1)\otimes\ldots\otimes\pi^*_{b_{j-1},b_j}(\homdual1).
\]
Now let 
\begin{equation}
\label{iotadef}
\iota:\{1,\ldots,m\}\rightarrow \{1,\ldots,n\}
\end{equation}
 be an order preserving injection, whose image contains $\{b_1,\ldots,b_j\}$.  We define $\tilde{b}_l=\iota^{-1}(b_l)$.  Then $\iota$ induces a projection $\MM^n\rightarrow \MM^m$.  We consider also the morphism
\[
\xymatrix{
b'_{\mathrm{part}}:\pi^*_{\tilde{b}_{a_1},\tilde{b}_{a_2}}(\homdual1 )\otimes\ldots\otimes\pi^*_{\tilde{b}_{a_{i-1}},\tilde{b}_{a_i}}(\homdual1)\ar[r]&\pi^*_{\tilde{b}_{1},\tilde{b}_{2}}(\homdual1)\otimes\ldots\otimes\pi^*_{\tilde{b}_{j-1},\tilde{b}_{j}}(\homdual1),
}
\]
and the following diagram:
\[
\xymatrix{
\pi^*_{b_{a_1},b_{a_2}}(\homdual1)\otimes\ldots\otimes \pi^*_{b_{a_{i-1}},b_{a_i}}(\homdual1)\ar[d]^{b'_{\mathrm{part}}} \ar[r]^-{\cong}& \pi_{\iota}^*(\pi^*_{\tilde{b}_{a_1},\tilde{b}_{a_2}}(\homdual1)\otimes\ldots\otimes\pi^*_{\tilde{b}_{a_{i-1}},\tilde{b}_{a_i}}(\homdual1))\ar[d]^{\pi_{\iota}^*(b'_{\mathrm{part}})}\\
\pi^*_{b_{1},b_{2}}(\homdual1)\otimes\ldots\otimes \pi^*_{b_{j-1},b_{j}}(\homdual1)\ar[r]^-{\cong}& \pi_{\iota}^*(\pi^*_{\tilde{b}_{1},\tilde{b}_{2}}(\homdual1)\otimes\ldots\otimes\pi^*_{\tilde{b}_{j-1},\tilde{b}_{j}}(\homdual1)).
}
\]
If all such diagrams commute, we say that $b'$ is a coherent morphism $\KK\rightarrow\KK$, and in the present case, where $b'$ is also a derivation, we say that it is a coherent derivation (even though the mangling of English given by `herent derivation' gives a more consistent notation).\smallbreak
Since we have set things up to involve only locally finite-dimensional constructible vector bundles, we may dualize everything in sight, and define the notion of a cocoherent coderivation.  Then coherent derivations $b'$ on $\mathcal{\KK}$ satisfying $b'^2=0$ are in 1-1 correspondence with $A_{\infty}$-structures on $\HOM$.
Consider the ind-constructible variety $\MM^n$.  Since $\MM$ is ind-constructible, $\MM^n$ has a locally finite constructible decomposition with components given by terms of the form $X_1\times\ldots X_{n}$ with $X_i$ a term in the constructible decomposition of $\MM$.  It follows that $\MM$ carries an action of $\ZZ_{n}$, given by cyclic permutation.  There is a further locally finite constructible decomposition of $\MM^{n}$ such that the $\ZZ_{n}$-action factors through a free $\ZZ_t$-action on each piece, for $t|n$, and we can assume each piece to be affine and smooth, from which it follows that the quotient scheme of each $\mathbb{Z}_{n}$-equivariant piece exists and is smooth.  We define the ind-constructible vector bundle $\cc$ by setting its restriction to $\MM^n$ to be
\[
\bigoplus_{a_1<\ldots<a_i\in\{1,\ldots,n\}}\pi_{a_1,a_2}^*(\homdual1)\otimes\ldots \otimes \pi_{a_{i-1},a_{i}}^*(\homdual1)\otimes \pi_{a_i,a_1}^*(\homdual1),
\]
and we define the restriction of $\Kcyc$ to $\MM^{n}_{\cyc}:=\MM^{n}/\ZZ_{n}$ to be the constructible vector bundle of $\ZZ_{n}$-invariant sections.  Let $\iota$ be as in (\ref{iotadef}), but without the restriction that $1$ is mapped to $1$ and $m$ to $n$.  There is an obvious cyclic symmetrization map from sections of $\Kcyc$ to sections of $\cc$, and we say a global section $S_{\cyc}\in\Gamma(\Kcyc)$ is coherent if its cyclic symmetrization is.  Let $S_{\cyc}$ be a coherent section of $\Kcyc$, with cyclic symmetrization $S$.  Then we next consider the Hamiltonian vector field associated to $S_{\cyc}$.  First, we define the coherent derivation $(\bullet,S_{\cyc})$ via its action on each $\pi_{1,n}^*(\homdual1)$, the constructible vector bundle on $\MM^n$.  For $a_1<\ldots<a_i\in\{1,\ldots,n\}$, with $a_1=1$ and $a_i=n$, we define a morphism
\[
\tau:\pi_{a_1,a_2}^*(\homdual1)\otimes...\otimes \pi_{a_{i-1},a_i}^*(\homdual1)\otimes \pi_{a_i,a_1}^*(\homdual1) \otimes \pi_{1,n}^*(\homdual1)\rightarrow \pi_{a_1,a_2}^*(\homdual1)\otimes...\otimes \pi_{a_{i-1},a_i}^*(\homdual1)
\]
via $\tau=\id^{\otimes (i-1)}\otimes\langle\bullet,\bullet\rangle$.  Extending linearly we get our desired morphism on each $\pi_{1,n}^*(\homdual1)$, and one can check that these morphisms define a coherent graded morphism $\KK\rightarrow \KK$ precisely if $S_{\cyc}$ is homogeneous of degree $-1$ -- we assume this to be the case for $S_{\cyc}$ from now on.\smallbreak
Next, define the Hamiltonian vector flow to be given by 
\[
e^{(\bullet,S_{\cyc})}:=\id+\frac{1}{k!}(\bullet,S_{\cyc})^k.
\]
Note that as long as the restriction of $S$ to $\MM^2$ is zero, this is a finite sum of morphisms of constructible vector bundles, upon restriction to each $\MM^n$, and so it is well-defined.  One can show (see \cite{kaj03} Section 4.3) that $e^{(\bullet,S_{\cyc})}$ preserves the noncommutative symplectic structure on $\Kcyc$ given by the Calabi-Yau pairing $\brackets$.  We now have everything in place to perform homological perturbation on our ind-constructible category.\smallbreak
The bracket $\brackets$ corresponds to a constant symplectic form (in the language of \cite{kaj03}, and \cite{KSdef}) on $\KK$, which in turn gives rise to the Poisson bracket $(\bullet,\bullet)$ used above.  Given this symplectic form, the data of an ind-constructible 3-dimensional Calabi-Yau $A_{\infty}$-category is given precisely by a coherent section $S_{\cyc}\in\Gamma(\Kcyc)$, called the \textit{action} (in the constant case, i.e. the one in which the symplectic form comes from a nondegenerate bracket on the underlying algebra, this is especially familiar, corresponding to the fact that the bracket establishes an isomorphism between the underlying vector space and its dual).  An isomorphism of ind-constructible Calabi-Yau $A_{\infty}$-categories, given by actions $S_{\cyc,1}$ and $S_{\cyc,2}$, is given by a coherent isomorphism $\mathcal{F}:\KK\rightarrow\KK$, commuting with $\Delta_{\KK}$, such that $\mathcal{F}^*(\omega)=\omega$ and $\mathcal{F}^*(S_{\cyc,2})=S_{\cyc,1}$. 
By the results of Section \ref{cvect}, we may split the constructible vector bundle $\HOM$ into $\HOM\cong\HOM_{\min}\oplus V_1\oplus V_2$, where $m_1$ acts trivially on $\HOM_{\min}$, and maps $V_1$ isomorphically onto $V_2$.  The strategy of the proof then is as in \cite{kaj03}.  One can assume, further, that these constructible vector bundles are trivialized, giving coordinates $x_1,\ldots,x_a$ on $\HOM_{\min}$, $y_1,\ldots,y_b$ on $V_1$ and $z_1,\ldots,z_c$ on $V_2$.  One writes the action $S_{\cyc}$ as $S_{\cyc}=\sum_{i\geq 2}S_{\cyc,i}$, where on each $\MM^n$, $S_{\cyc,i}$ is the sub bundle of $\KK$ given by the expression (\ref{SI}).  Then by assumption, $S_{\cyc,2}$ contains no $x$ instances.  We assume in addition that for $3\leq i\leq t$, $S_{\cyc,i}$ contains no $y$ or $z$ instances.  Then there is a $H_{\cyc}\in\Kcyc$, such that if we define $S'_{\cyc}=(e^{(\bullet,H_{\cyc})})^*(S_{\cyc})$, then $S'_{\cyc,i}$ contains no $y$ or $z$ instances for $3\leq i\leq t+1$.  Furthermore, $H_{\cyc,i}$ is zero for $i<t$, from which it follows that we can compose an infinite chain of these Hamiltonian flows together, recursively getting $S_{\cyc,i}$ into the right form as $i$ goes to infinity, since after restricting to each $\MM^n$ this is a finite composition of morphisms of constructible vector bundles, which is again a morphism of constructible vector bundles.
\end{proof}

\begin{rem}
By construction, the inclusion and the projection between $\HOM_{\min}$ to $\HOM_{\mathcal{D}}$ can be upgraded to morphisms of ind-constructible $A_{\infty}$-categories.  If $\HOM_{\min}'$ is some alternative summand appearing as in the theorem, we obtain a morphism $\HOM_{\min}\rightarrow \HOM_{\min}'$ which is furthermore an isomorphism of ind-constructible Calabi-Yau $A_{\infty}$-categories.
\end{rem}
\begin{rem}
The decomposition of the potential $W$ has a nice geometric interpretation.  Firstly, it is zero on the image of the differential.  The way to interpret this is that deformations in this direction do not move one along the moduli space of objects (recall that Zariski tangent spaces of fine moduli spaces are generally identified with $\Ext^1$).  Also, the component of $\mathcal{HOM}^1$ that is mapped injectively to $\mathcal{HOM}^2$ has a quadratic potential lying over it, and after differentiating it gives a linear term -- so passing to the scheme-theoretic degeneracy locus, this part gets ignored.  All that is left, then, is $\Ext^1$, and its minimal potential $W_{\min}$.
\end{rem}
Calabi-Yau categories are the objects that motivic Donaldson--Thomas theory needs to get going.  Since we are interested in categories up to quasi-equivalence -- by Requirement \ref{req2}, we should be able to push stack functions through quasi-equivalences of Calabi-Yau categories and get the same motivic Donaldson--Thomas invariant -- if we have a category $\mathcal{D}$ and we would like to start assigning motivic Donaldson--Thomas invariants to stack functions for $\mathcal{D}$, it is good enough to have chosen a quasi-equivalence
\[
\mathcal{D}'\rightarrow \mathcal{D}
\]
from a Calabi-Yau category.  So if $\mathcal{C}$ is a 3-dimensional Calabi-Yau category, we can use the Theorem \ref{goup} and Proposition \ref{trac} to deal with $\langle h^{\mathcal{C}}\rangle_{\triang}$ and $\langle h_{\mathcal{C}}\rangle_{\triang}$.
\begin{rem}
For $\mathcal{C}$ a $n$-dimensional Calabi-Yau category, we have a manageable subcategory of $\langle h^{\mathcal{C}} \rangle_{\triang}$, that is quasi-equivalent to the whole category, namely $\tw_l(\mathcal{C})$.  It is an interesting question whether $\Perf(\mathcal{C}\lModi)$ contains an ind-constructible subcategory that is Calabi-Yau, such that the inclusion is a quasi-equivalence.
\end{rem}
\section{Noncompact Calabi-Yau algebras}
\label{NCCY}
Above we follow the natural suggestion for the definition of a `Calabi-Yau $A_{\infty}$-quiver algebra', which is to use the correspondence between $A_{\infty}$-quiver algebras and small $A_{\infty}$-categories, and then use Definition \ref{CYcatdef}.  This gives us our notion of a \textit{compact} Calabi-Yau $A_{\infty}$-quiver algebra.  There is however a different notion, called, in contrast, a noncompact Calabi-Yau $A_{\infty}$-algebra, that is perhaps equally commonly used, which we recall from \cite{ginz}.  First, assume $A$ is a differential graded algebra (all $A_{\infty}$-algebras are quasi-isomorphic to differential graded algebras, so this entails no loss of generality).  Then $A$ naturally has the structure of a left $A\otimes A^{\op}$-module.  Furthermore, $A\otimes A$ has also the structure of a $A\otimes A^{\op}$-\textit{bimodule} -- by setting 
\[
(a,b)\cdot (c,d)\cdot (e,f)=(a\cdot c\cdot e,f\cdot d\cdot b).
\]
We assume that $A$ is \textit{homologically finite}, that is, it belongs to $\Perf(A\otimes A^{\op}\lMod)$.  Next consider
\[
\RHom_{A\otimes A^{\op}\lMod}(M,A\otimes A)
\]
for an arbitrary left $A\otimes A^{\op}$-module $M$ (i.e. an $A$-bimodule).  This object inherits the structure of a \textit{right} $A\otimes A^{\op}$-module.  Using the natural isomorphism between $A\otimes A^{\op}$ and $(A\otimes A^{\op})^{\op}$, the above object can be considered instead as a \textit{left} $A\otimes A^{\op}$-module.  Using functoriality of $\RHom$, this defines a functor 
\[
-^!:A\biMod A\rightarrow A\biMod A.
\]
\begin{defn}[\cite{ginz}]
We say that $A$ is a \textit{noncompact Calabi-Yau algebra} of dimension $d$ if there is a quasi-isomorphism
\[
\xymatrix{
\phi:A\ar[r]^{\sim}&A^![d]
}
\]
such that $\phi= \phi^![d]$ in the derived category.
\end{defn}
By Lemma 4.1 of \cite{CYTC}, if $A$ is a (noncompact) Calabi-Yau algebra of dimension $d$, then there is a nondegenerate functorial pairing, for all $M_1,M_2\in A\lmod$
\begin{equation}
\label{CYTCpairing}
\Hom_{(A\lmod)_{\ext}}(M_1,M_2)\otimes \Hom_{(A\lmod)_{\ext}}(M_2,M_1)\rightarrow k[-d].
\end{equation}
In the case in which $A$ is a differential graded quiver algebra associated to some quiver $Q$, we would like a lift of this pairing to a Calabi-Yau structure on the full subcategory $\mathcal{C}$ of $A\lmodi$ consisting of just the simple objects $s_i$, for $i$ the vertices of $Q$.  This would make $\mathcal{C}$ the category arising from a compact Calabi-Yau $A_{\infty}$-quiver algebra.  We next discuss a class of differential graded quiver algebras for which we do indeed have such a lift, and for which the Calabi-Yau structure on $\mathcal{C}$ is easily read off.
\bigbreak
For now we will only be interested in Calabi-Yau algebras and Calabi-Yau categories of dimension 3.  Let $Q$ be an (ungraded) quiver, i.e. an $S$-bimodule $E$, where $S$ is the algebra (\ref{Sringdef}), and let $W$ be a formal linear combination of cyclic paths, of length at least 2, of $Q$.  We form a graded quiver $Q'$, as follows (this construction is found in \cite{ginz} originally, though with a slightly different completion, and also in \cite{Kellerdg}):
\begin{enumerate}
\item
The vertices of $Q'$ are the vertices of $Q$.
\item
The degree 0 arrows of $Q'$ from $i$ to $j$ are just the arrows of $Q$ from $i$ to $j$.
\item
For each arrow $\alpha$ from $i$ to $j$ of $Q$, $Q'$ has an arrow $\alpha^*$ of degree -1 from $j$ to $i$.
\item
For each vertex $i$ of $Q$, $Q'$ has a loop $t_i$ at $i$ of degree -2.
\end{enumerate}
We let $E$ be the $S$-bimodule with the above basis of arrows, i.e. $e_j\cdot E\cdot e_i$ has a graded basis given by the arrows from $i$ to $j$.  We take the (ordinary) quiver algebra $\Free_{\nonu,\mathrm{formal}}(E)\oplus S$.  This is the completed (ordinary) quiver algebra freely generated by $E$.  This is given the differential $d$ defined by its action on generators:
\begin{enumerate}
\item
$d(\alpha)=0$ for each $\alpha$ of degree 0.
\item
$d(\alpha^*)=(\partial/\partial \alpha)W$, where $(\partial/\partial \alpha)$ is the noncommutative derivation with respect to $\alpha$, defined on cyclic paths as the sum of terms obtained by cyclically permuting $\alpha$ to the start, and then deleting it (see \cite{ginz}, Section 1, for a little more detail regarding this construction).
\item
$d(t_i)=e_i\sum_{\alpha}[\alpha,\alpha^*]e_i$, where the sum is over all the degree zero arrows $\alpha$.
\end{enumerate}
The result is a noncompact Calabi-Yau differential graded quiver algebra (see the Appendix of \cite{Kel09} for a proof of this), which we denote $\Gamma(Q,W)$.  We denote by $\mathcal{D}_r(Q,W)$ the full subcategory of $\rmodi\Gamma(Q,W)$ consisting of the $s_i$, for $i$ the vertices of $Q$.
By construction, this category is the $A_{\infty}$ Koszul dual of $\Gamma(Q,W)$, considered as an augmented category via the correspondence between $A_{\infty}$-quiver algebras and augmented $A_{\infty}$-categories (see \cite{Koszul} for the description of the $A_{\infty}$ Koszul dual in terms of the bar resolution).  It can, then, be described as follows:
\begin{enumerate}
\item
The objects of $\mathcal{D}_r(Q,W)$ are the vertices $i$ of $Q$.
\item
The only degree zero morphisms of $\mathcal{D}_r(Q,W)$ are the identity morphisms.
\item
The degree 1 morphisms from $i$ to $j$ are given by the vector dual $(e_i E e_j)^*$, while the degree 2 morphisms are given by $e_j E e_i$.
\item
For each vertex $i$ there is a degree 3 endomorphism $\omega_i$.
\item
These are (a $k$-basis for) all the morphisms of $\mathcal{D}_r(Q,W)$.  This category is equipped with differential zero.
\item
If $\alpha$ is a degree 1 homomorphism in $\mathcal{D}_r(Q,W)$, and $\beta$ is a degree 2 homomorphism, then $m_2(\alpha,\beta)=\alpha(\beta)=m_2(\beta,\alpha)$.
\item
The only compositions left to define are the compositions of the degree 1 part of $\mathcal{D}_r(Q,W)$.  Let $\alpha_1^*,\ldots,\alpha_n^*$ be a composable sequence of elements of $E^*$, beginning at vertex $i$ and ending at vertex $j$.  Define
\begin{align*}
\mu:\widehat{kQ_{\cyc}}:=\widehat{kQ}/[\widehat{kQ},\widehat{kQ}]\rightarrow &\widehat{kQ}\\
a_1\ldots a_m\mapsto &\frac{1}{m}(a_1\ldots a_m+a_2a_3\ldots a_ma_1+\ldots +a_ma_1\ldots a_{m-1})
\end{align*}
where here we are completing with respect to path length.  Consider $W_{n+1}$, the part of $W$ containing cyclic words of length $n+1$.  We let $b_{n+1}(\alpha_n^*,\ldots,\alpha_1^*)=(\alpha_n^*,\ldots,\alpha_1^*)\mu(W_{n+1})$, be the element of $e_iEe_j\cong \Hom^2_{\mathcal{D}_r(Q,W)}(j,i)$ defined by contraction (if $W$ is given by going allong some arrows $A$, then some arrows $B$, we write $W=BA$).
\end{enumerate}
It is straightforward to see that $\mathcal{D}_r(Q,W)$ is a Calabi-Yau category of dimension 3.  By Koszul duality there are quasi-equivalences of categories (see sections 4 and 5 of \cite{Koszul})
\begin{equation}
\label{Kosduals}
\xymatrix{
\Perf(\rModi \mathcal{D}_r(Q,W))\ar[r]^-{\sim} & \thick_{\rmodi\Gamma(Q,W)}(s_i|i\in I)\hbox{,   and }\\
\tw_r(\mathcal{D}_r(Q,W))\ar[r]^-{\sim} & \triang_{\rmodi\Gamma(Q,W)}(s_i|i\in I).
}
\end{equation}
Now by Proposition \ref{retnice} we deduce that $\triang_{\rmodi\Gamma(Q,W)}(s_i|i\in I)$ is closed under homotopy retracts, and so we have a quasi-equivalence
\begin{equation}
\xymatrix{
\tw_r(\mathcal{D}_r(Q,W))\ar[r]^-{\sim} & \Perf(\rModi \mathcal{D}_r(Q,W)).
}
\end{equation}
Summing up, we have the following
\begin{prop}
\label{sumup}
Let $\mathcal{D}_r(Q,W)$ be as above.  Then we have quasi-equivalences of categories
\[
\xymatrix{
\tw_r(\mathcal{D}_r(Q,W))\ar[r]^-{\sim} & \Perf(\rModi \mathcal{D}_r(Q,W))\ar[r]^-{\sim}&\rmodi\Gamma(Q,W),
}
\]
and
\[
\xymatrix{
\tw_l(\mathcal{D}_r(Q,W))\ar[r]^-{\sim} & \Perf(\mathcal{D}_r(Q,W)\lModi)\ar[r]^-{\sim}&\Gamma(Q,W)\lmodi,
}
\]
and the categories $\tw_r(\mathcal{D}_r(Q,W))$ and $\tw_l(\mathcal{D}_r(Q,W))$ are ind-constructible 3-dimensional Calabi-Yau categories.
\end{prop}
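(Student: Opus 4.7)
The plan is to assemble the statement by stitching together results established earlier in this chapter, since nearly all of the work has already been done. First I would confirm that $\mathcal{D}_r(Q,W)$ satisfies the hypotheses of the ambient machinery: by construction, its underlying graded $S$-bimodule of morphisms is finite-dimensional (generated by the degree~0 identities, the degree~1 duals $(e_iEe_j)^*$, the degree~2 elements $e_jEe_i$, and the degree~3 classes $\omega_i$), so $\mathcal{D}_r(Q,W)$ is a finite-dimensional $A_{\infty}$-category in the sense of Definition \ref{findimcat}. The Calabi-Yau pairing $\langle\bullet,\bullet\rangle$ is forced by the duality between degrees 1 and 2 together with the pairing of $\omega_i$ with the identities, and cyclic symmetry of the $W_n$ is verified from the explicit formula for $b_{n+1}(\alpha_n^*,\ldots,\alpha_1^*)$ via the cyclicization map $\mu$: the contraction $(\alpha_n^*,\ldots,\alpha_1^*)\mu(W_{n+1})$ reads off the coefficient of a cyclic word in $W$, which is manifestly symmetric under cyclic permutation of the arguments.

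Next I would combine the Koszul duality identifications recorded in (\ref{Kosduals}) with Proposition \ref{retnice} to obtain the equivalences. Specifically, the second equivalence of (\ref{Kosduals}) gives a quasi-equivalence $\tw_r(\mathcal{D}_r(Q,W))\simeq \triang_{\rmodi\Gamma(Q,W)}(s_i\mid i\in I)$, and Proposition \ref{retnice} (applied to $\Gamma(Q,W)^{\op}$, whose homology is again a nonpositively graded quiver algebra thanks to the degree conventions used in defining the differential $d$ on $\Gamma(Q,W)$) identifies this triangulated hull with the whole of $\rmodi\Gamma(Q,W)$, and in particular shows that it is already closed under homotopy retracts. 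Taking homotopy retracts therefore does nothing new, so this triangulated hull also coincides with $\thick_{\rmodi\Gamma(Q,W)}(s_i\mid i\in I)$, which by the first equivalence of (\ref{Kosduals}) is quasi-equivalent to $\Perf(\rModi\mathcal{D}_r(Q,W))$. Chaining yields the first row of quasi-equivalences, and the second row is obtained identically using $\mathcal{D}_r(Q,W)^{\op}$, Proposition \ref{trac} for the left-handed setup, and the isomorphism $\tw_l(\mathcal{C})\cong \tw_r(\mathcal{C}^{\op})^{\op}$ supplied by Proposition \ref{diamonddual}.

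Finally, the Calabi-Yau and ind-constructibility claims are immediate from the preceding results: Theorem \ref{goup} upgrades the 3-dimensional Calabi-Yau structure on $\mathcal{D}_r(Q,W)$ to a 3-dimensional Calabi-Yau structure on $\tw_r(\mathcal{D}_r(Q,W))$ and $\tw_l(\mathcal{D}_r(Q,W))$, while Propositions \ref{kl1} and \ref{indvariety} provide the ind-variety of objects $\mathfrak{V}_r$ (respectively $\mathfrak{V}_l$) together with the constructible vector bundle $\mathcal{HOM}$ of morphisms and its $A_{\infty}$-compositions, since finite-dimensionality of $\mathcal{D}_r(Q,W)$ was checked in the first step. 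The pairing $\langle\bullet,\bullet\rangle_{\tw_r(\mathcal{C})}$ constructed in the proof of Theorem \ref{goup} is a morphism of finite-dimensional vector bundles over $\mathfrak{V}_r\times\mathfrak{V}_r$, so the whole package fits together as an ind-constructible Calabi-Yau category in the sense of the definition preceding the proposition.

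The only genuinely nontrivial input is Proposition \ref{retnice}, which supplies the crucial fact that no homotopy retracts need to be added; everything else in the proof is a bookkeeping exercise in chaining quasi-equivalences. I would not expect any serious obstacle, since the statement really is a summary of the preceding development; the main point to be careful about is the direction of the Koszul duality arrow (covariant versus contravariant Yoneda) so that $\tw_r$ gets matched to $\rmodi\Gamma(Q,W)$ and $\tw_l$ to $\Gamma(Q,W)\lmodi$ rather than the other way around.
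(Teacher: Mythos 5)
Your proposal follows the paper's own argument essentially verbatim: the two rows of quasi-equivalences are obtained, exactly as in the text preceding the proposition, by combining the Koszul duality identifications (\ref{Kosduals}) with Proposition \ref{retnice} (which forces the triangulated hull of the simples to be all of $\rmodi\Gamma(Q,W)$, hence closed under homotopy retracts), and the Calabi-Yau and ind-constructibility claims follow from Theorem \ref{goup} together with Propositions \ref{trac}, \ref{kl1} and \ref{indvariety}, just as in the theorem immediately preceding the statement. The only small imprecision is your appeal to Proposition \ref{diamonddual} as giving $\tw_l(\mathcal{C})\cong\tw_r(\mathcal{C}^{\op})^{\op}$ (it in fact gives $\tw_l(\mathcal{C})^{\op}\cong\tw_r(\mathcal{C})$), but since the left-handed row can be run directly from the left-module versions of \ref{trac} and \ref{retnice}, this does not affect the argument.
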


\chapter{What is `the stack of objects'?}
\label{stackofobjects}
\section{Sheaves of modules}
We are dealing, throughout, with categories of modules over $A_{\infty}$-algebras.  The core case of interest to us is the category of perfect modules over these algebras.  To a certain extent we follow the treatment for the `stack of objects' in such a category outlined in the introduction to \cite{KS}.
\bigbreak
Let $A$ be an $A_{\infty}$-algebra.  First we consider twisted complexes constructed by taking repeated extensions of $A$ by shifts of itself.  Then we deal with (homotopy) retracts.  The trick is to parameterise these two steps algebraically.  The solution to this problem is written down in \cite{KS}.  The first step involves us considering, for each $n$, a closed subvariety of the variety of $n\times n$ upper triangular matrices with entries in the underlying vector space of $A$, cut out by a Maurer-Cartan equation.  The next step involves taking a closed subvariety of the bundle $\End^{\bullet}(M)$, defined by a set of higher $A_{\infty}$ coherence relations.  More details of this construction are provided in Section \ref{perfconstr}.  If we restrict attention to categories of modules over compact Calabi-Yau $A_{\infty}$-quiver categories (see Definition \ref{compactCyalg}) constructed from a quiver $Q$ with superpotential $W$ (see Section \ref{NCCY} for the construction of $\mathcal{D}_r(Q,W)$), we can ignore the second step, after replacing our ind-variety of upper triangular matrices with entries in $A$ with the ind-variety $\VV_l$, of Proposition \ref{kl1} and Definition \ref{objects}, due to Proposition \ref{sumup}.
\bigbreak
Now this object is simply a countable collection of varieties, possessing an obvious bundle of $\mathcal{D}_r(Q,W)$-modules.  Note that there is a great deal of overcounting in the setup as it stands, and the data of automorphism groups is not present.  We can try to fix the first problem by giving the components of the ind-variety $\VV_l^+$ an ordering $\VV_{l}=\coprod_{n\in\mathbb{N}} \mathfrak{V}_{l,\textbf{n}}$, and throwing away objects in $\VV^+_{l,\textbf{n}}$ that appear (up to quasi-isomorphism) in $\VV^+_{l,\textbf{m}}$, for $m<n$, to give a smaller ind-constructible variety $\VV^+_{l,-}$ and we can try to fix the second problem by introducing gauge groups and taking the associated ind-Artin stack.  Since orientation data is provided by an object in the category of ind-constructible super line-bundles on moduli stacks of objects, one might suppose the thing to do is to fashion this ind-stack into a representing stack, on which universal orientation data will live, so that orientation data is simultaneously provided for arbitrary moduli stacks, by pullback.
\smallbreak
This is not quite the approach we take.  The task of turning this ind-variety $\VV_l^+=\coprod \VV^+_{l,\tau}$ into a universal stack $\VV^+_{l,-}/G$ from which we can pull back orientation data via morphisms of stacks is not really workable, or necessary.  As a sign of the difficulties incurred, note that even in the case $A=k$, there is no hope of turning $\coprod \VV^+_{l,\tau}$ into an object that represents the functor sending schemes to families of perfect modules.  Here we can work just with $\VV_l$, since the category of twisted objects is closed under homotopy retracts.  One easily sees that the components of $\VV_{l}$ in this case are just given by functions $f:\mathbb{Z}\rightarrow \mathbb{N}$ such that all but finitely many numbers are set to zero.  It follows that $\VV_{l,-}$ here would just be $\VV_l$.  The function $f$ parameterises the differential graded vector space that has homology of dimension $f(i)$ in degree $i$.  The gauge group is, then, 
\[
G=\prod \GL_{f(i)}(k)
\]
and any differential graded vector bundle that can be obtained (up to quasi-isomorphism) from pullback from $\VV_l/G$ is quasi-isomorphic to its homology.  Furthermore, in order for a differential graded vector bundle to be obtained by pulling back from $\VV_{l,-}$ we would need this homology to be a graded vector bundle.  
\bigbreak
Using the propositions of Section \ref{cvect} there is an obvious workaround in the case $A=k$: since we are only interested in moduli of objects up to constructible decomposition.  For a variety $X$ parameterising differential graded vector spaces (i.e. a differential graded vector bundle on $X$) we \textit{can} assume that the bundle is formal, with homology bundles of constant dimension.  Then our ind-stack $\VV_l/G$ \textit{is} good enough -- up to constructible decomposition every differential graded vector bundle is obtained by pullback from $\VV_l/G$.  Assuming the base space of our family is a variety we can even use constructible triviality of vector bundles one more time to forget the group $G$.  In summation, every differential graded vector bundle \textit{is} obtained by pullback from $\VV_l$, up to constructible decomposition.
\bigbreak
So we propose a solution that does not start with $\VV_l^+$ so much as end up with it: let $\mathcal{M}$ be some moduli stack of objects in $\Perf(A\lMod)$.  By definition $\mathcal{M}$ has on it a flat family $\mathcal{F}$ of perfect $A$-modules.  Our solution is to work with this sheaf.  We give here a baby example.  We would like to define, on the `stack of objects' $\ob(\mathcal{C})$, a line bundle $\sDet(\Ext^{\bullet})$, as in \cite{KS}.  This, then, would provide one of the ingredients for the discussion of orientation data restricted to $\mathcal{M}$, via pullback.  Assuming we have our stack $\ob(\mathcal{C})$, we have a morphism $\iota:\mathcal{M}\rightarrow \ob(\mathcal{C})$, and we expect the isomorphism
\[
\iota^*(\sDet(\Ext^{\bullet}))\cong \sDet(\mathcal{E}xt_A(\mathcal{F})).
\]
Note, however, that we could just as well construct $\sDet(\mathcal{E}xt_A(\mathcal{F}))$ from our original sheaf $\mathcal{F}$.  This prefigures our treatment.  We cut out, for the main body of the paper, any consideration of $\ob(\mathcal{C})$ as a kind of moduli stack, and consider only sheaves of perfect modules.  Next we explain how we `end up' back with $\VV_l^+$.
\section{Flat families of modules}
\begin{defn}
A flat family of perfect $A$-modules over a scheme $X$ is a sheaf of $\mathcal{O}_X\times A$-modules such that the fibre over every geometric point is perfect, and the underlying graded coherent sheaf is a vector bundle.
\end{defn}
Here is the proposition we will use:
\begin{prop}
\label{ltg}
Let $\mathcal{F}$ be a family of $A$-modules parameterised by an irreducible integral scheme $S$, i.e. an $\mathcal{O}_S\otimes A$-module, such that the fibre over each geometric point is a perfect module.  Then the fibre over the generic point $\Spec(K(S))$ is a perfect $A\otimes K(S)$-module.
\end{prop}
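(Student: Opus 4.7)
The plan is to spread perfection from a closed point of $S$ to the generic point, using the ind-variety $\VV_l^+$ parameterising objects of $\Perf(A\lMod)$ (Section \ref{perfconstr}), combined with openness of the quasi-isomorphism locus in a family. First, I would pick any closed point $s_0\in S$. By hypothesis $M_0:=\mathcal{F}_{s_0}$ is a perfect $A\otimes k(s_0)$-module, so by the construction of $\VV_l^+$ there is a component $\mathfrak{V}_{l,\tau}^+$ and a $k(s_0)$-point $x_0$ above which the universal family $L_{\mathfrak{V}_l}|_{x_0}=:L_0$ is quasi-isomorphic to $M_0$; fix a closed morphism $\phi_0:L_0\to M_0$ in $A\otimes k(s_0)\lModi$ realising this quasi-isomorphism.

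Next I would spread the pair $(L_0,\phi_0)$ to an affine open neighborhood $U\subset S$ of $s_0$. The data defining $L_0$ as an object of $\Perf(A\otimes k(s_0)\lModi)$ is, via the description of $\VV_l^+$, given by a finite tuple of shifted copies of $A$ together with a strictly upper-triangular matrix of morphisms satisfying the Maurer--Cartan equation, and a homotopy-retract datum of degree zero; all of these are algebraic and finitely constrained, so they extend to sections over an affine open neighborhood $U$, producing a family $\widetilde{L}$ of perfect $A\otimes\mathcal{O}_U$-modules with $\widetilde{L}|_{s_0}\simeq L_0$. The morphism $\phi_0$ is given by finitely many homogeneous elements of the bar-resolution morphism complex (in each fixed degree only finitely many components contribute, and only finitely many $A_\infty$-compatibility relations need to be checked), and these likewise extend to an affine open neighborhood of $s_0$ after possibly shrinking $U$, producing a morphism $\widetilde{\phi}:\widetilde{L}\to\mathcal{F}|_U$ whose restriction to $s_0$ is $\phi_0$.

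The third step is to argue that $\widetilde{\phi}$ is still a quasi-isomorphism at the generic point $\eta$. The mapping cone $\cone(\widetilde{\phi})$ has underlying graded coherent sheaf a vector bundle over $U$ (both $\widetilde{L}$ and $\mathcal{F}|_U$ have underlying coherent sheaves which are vector bundles, by construction and by the definition of a flat family respectively), and concentrated in finitely many degrees, since both $\widetilde{L}$ and $\mathcal{F}$ have perfect fibres of uniformly bounded tor-amplitude on the component containing $s_0$. Upper semi-continuity applied to the dimensions of the homology sheaves of $\cone(\widetilde{\phi})$ in each such degree gives a nonempty open subset $V\subset U$ (containing $s_0$) on which $\widetilde{\phi}$ is a fibrewise quasi-isomorphism. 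Irreducibility of $S$ then forces $\eta\in V$, so $\mathcal{F}_\eta\simeq\widetilde{L}_\eta$, and the right-hand side is a perfect $A\otimes K(S)$-module by construction.

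The main obstacle is Step 2, the spreading of the $A_\infty$-data: although an $A_\infty$-quasi-isomorphism between perfect modules has countably many components, one must check that after restricting to a fixed finite-dimensional twisted complex on the left and using that $\mathcal{F}|_U$ has an underlying graded vector bundle of finite rank, only finitely many components in each bounded degree range can contribute and only finitely many $A_\infty$-coherence relations need to be verified; the verification also requires that shrinking $U$ to trivialise the relevant vector bundles does not force us to a single point. A secondary technical point is checking that the coherence of $\cone(\widetilde{\phi})$ in the sense needed for semi-continuity really does follow from the vector-bundle hypothesis built into the definition of a flat family of perfect $A$-modules.
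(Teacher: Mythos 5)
Your strategy is genuinely different from the paper's, but it has gaps that are not merely technical. First, the step you yourself flag as ``the main obstacle'' is exactly where the content lies and it is not resolved: a morphism in $A\lModi$ is a comodule map with countably many components $\phi_n:A^{\otimes(n-1)}\otimes L\to\mathcal{F}$ subject to infinitely many compatibility relations, and lifting each component from the fibre at $s_0$ to a neighbourhood does not by itself produce a morphism; you would need an obstruction/deformation argument, or a replacement by a strict model, and none is supplied. Second, your opening move quietly begs the question: the hypothesis gives perfectness at \emph{geometric} points, so at a closed point $s_0$ you only know that $\mathcal{F}_{s_0}\otimes_{k(s_0)}\overline{k(s_0)}$ is perfect over $A\otimes\overline{k(s_0)}$; to conclude that $\mathcal{F}_{s_0}$ itself is perfect over $A\otimes k(s_0)$ (so that it is parameterised by a $k(s_0)$-point of $\VV_l^+$) you already need descent of perfectness along a field extension, which is precisely what the proposition asserts, just at the generic point rather than a closed one. (Moreover an arbitrary irreducible integral scheme need not have a closed point with algebraically closed residue field, and the paper works over a general field of characteristic zero.) Third, your semicontinuity step uses that $\mathcal{F}$ has an underlying graded vector bundle of uniformly bounded amplitude, but the proposition assumes only that $\mathcal{F}$ is an $\mathcal{O}_S\otimes A$-module with perfect geometric fibres, with no coherence or boundedness hypothesis, so the argument does not cover the stated generality.

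For comparison, the paper's proof involves no geometry over $S$ at all: it uses the compactness characterization of perfect modules (Proposition \ref{cpctprop}). One writes the bar-type complex computing $\Hom$ from $\mathcal{F}_{K}$, $K=K(S)$, into an arbitrary direct sum $\bigoplus_i\mathcal{G}_i$, together with the natural map to the direct sum of the individual $\Hom$-complexes; after the faithfully flat base change $K\to\overline K$ this map is a quasi-isomorphism because the geometric generic fibre is perfect (hence compact), and since taking homology commutes with $-\otimes_K\overline K$ the map was already a quasi-isomorphism over $K$. Hence $\mathcal{F}_K$ is compact, hence perfect. The spreading-out you are reaching for is carried out afterwards, in Theorem \ref{constrdec}, and there it is done at the level of twisted complexes over $K(X)$, where the data are strict and finitely presented, precisely to sidestep the obstacle your proposal runs into.
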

\begin{proof}
Let $\mathcal{G}_i$ be a set of families of $A\otimes K(S)$-modules.  There is a differential graded $K(S)$-vector space
\[
\mathcal{RHOM}(\mathcal{F},\mathcal{G}_i)
\]
with underlying graded vector space $\Hom_{K(S)}(\bigoplus_{n\geq 0} \forget(A)^{\otimes n}[n]\otimes \mathcal{F}[1]_{K(S)},\mathcal{G}_i[1]_{K(S)})$, and the natural differential.  Set $K:=K(S)$.  The natural inclusion 
\[
\Hom_K(\bigoplus_{n\geq 0} \forget(A)^{\otimes n}[n]\otimes \mathcal{F}[1]_K,\bigoplus_{i\in I}\mathcal{G}_i[1]_K)\rightarrow \bigoplus_{i\in I} \Hom_K(\bigoplus_{n\geq 0} \forget(A)^{\otimes n}[n]\otimes \mathcal{F}[1]_K,\mathcal{G}_i[1]_K)
\]
becomes a quasi-isomorphism after tensoring with $\overline{K}$, by Proposition \ref{cpctprop}, since $\mathcal{F}$ is perfect at every geometric point.  It follows that it is a quasi-isomorphism to start with, and so $\mathcal{F}$ is perfect over the generic point.
\end{proof}
\begin{thm}
\label{constrdec}
Let $\mathcal{C}$ be a quiver algebra, and so in particular
\[
\langle h^{\mathcal{C}}\rangle_{\triang}\rightarrow \Perf(\mathcal{C}\lModi)
\]
is a quasi-equivalence of categories.  If $\mathcal{F}$ is a flat family of perfect left $\mathcal{C}$-modules over a finite type scheme $X$, then $X$ admits a finite decomposition into subschemes $X_i$ such that on each $X_i$ there is a map $\alpha_i:X_i\rightarrow \VV_l$, where $\VV_l$ is as defined in Definition \ref{objects}, and a quasi-isomorphism of $\mathcal{C}$-modules 
\[
\xymatrix{
\mathcal{F}\rightarrow \alpha_i^*(L_{\VV_l}),
}
\]
where $L_{\VV_l}$ is the natural bundle of perfect $\mathcal{C}$-modules on $\VV_l$.
\end{thm}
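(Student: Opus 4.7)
I would proceed by Noetherian induction on $X$, first reducing to the case where $X$ is irreducible and reduced; the general case then follows by decomposing $X$ into irreducible components and treating each inductively. Write $\eta = \Spec K(X)$ for the generic point. Proposition \ref{ltg} guarantees that $\mathcal{F}|_\eta$ is a perfect module over $\mathcal{C} \otimes_k K(X)$. The hypothesised quasi-equivalence $\langle h^{\mathcal{C}} \rangle_{\triang} \simeq \Perf(\mathcal{C}\lModi)$ base-changes along $k \hookrightarrow K(X)$, and combined with Proposition \ref{trac} it supplies a twisted object $T_\eta = ((h^{x_1}[n_1], \ldots, h^{x_m}[n_m]), A_\eta)$ over $K(X)$ together with an $A_\infty$-quasi-isomorphism $\phi_\eta : \mathcal{F}|_\eta \xrightarrow{\sim} T_\eta$.

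Next I would spread out. The matrix $A_\eta$ has entries in the finite-dimensional $K(X)$-vector space $K(X) \otimes_k \bigoplus_{i<j} \Hom^1_{h^{\mathcal{C}}[\mathbb{Z}]}(x_i, x_j)$, so after clearing denominators on an affine open $U \subseteq X$ its entries become regular. The Maurer--Cartan equation cutting out $\VV_{l,(x_1[n_1],\ldots,x_m[n_m])}$ (Proposition \ref{kl1}) consists of finitely many polynomial identities, which hold over $K(X)$ and therefore over the integral scheme $U$, so the data assembles into a $k$-morphism $\alpha : U \to \VV_l$. For the $A_\infty$-morphism $\phi_\eta$, the apparent difficulty that it may have infinitely many nonzero components $\phi_{\eta,n}$ can be circumvented by first replacing $\mathcal{F}|_\eta$ by a finite-dimensional minimal model (possible because $\mathcal{F}|_\eta$ has bounded, finite-dimensional total cohomology by perfectness together with the finite-dimensionality of $\mathcal{C}$), for which the bounded-degree structure forces only finitely many higher components to be nonzero. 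Each such component is a $K(X)$-linear map between finite-dimensional pieces of bundles, so it extends over a smaller affine open, and the finitely many compatibility relations hold by their validity at $\eta$. The result is an $A_\infty$-morphism $\phi : \mathcal{F}|_U \to \alpha^{*}(L_{\VV_l})$ restricting to $\phi_\eta$.

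The main obstacle is upgrading $\phi$ from a quasi-isomorphism at $\eta$ to a quasi-isomorphism on an open dense $X_1 \subseteq U$. For this I would invoke Corollary \ref{formcor}: the differential graded cone $\cone(\phi)$ has a bounded underlying graded coherent sheaf on $U$ whose fibre at $\eta$ is acyclic. The cohomology sheaves $\mathcal{H}^i(\cone(\phi))$ are coherent $\mathcal{O}_U$-modules with vanishing stalk at $\eta$, so each is supported on a proper closed subscheme of $U$; only finitely many are nonzero, so their supports' union is a proper closed $Z \subset U$, and $X_1 := U \setminus Z$ is the required open. Applying Noetherian induction to the closed complement $X \setminus X_1$ terminates in finitely many steps and yields the stated finite decomposition with the desired maps $\alpha_i$.
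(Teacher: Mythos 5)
Your argument follows the same route as the paper's own proof: reduce to an integral (affine) scheme, use Proposition \ref{ltg} to get perfectness at the generic point, realise the generic fibre as a twisted object over $K(X)$ (the paper cites stability of the quiver-algebra definition under scalar extension together with Propositions \ref{retnice} and \ref{trac}), spread this out so that $\mathcal{F}$ is generically pulled back from $\VV_l$, and finish by Noetherian induction. You have simply filled in the spreading-out and quasi-isomorphism-on-an-open details that the paper leaves implicit, so the proposal is correct and essentially identical in approach.
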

\begin{proof}
Since $X$ is finite type, we may assume, after finite constructible decomposition, that it is an integral affine scheme.  Note that the definition of a quiver algebra (Definition \ref{quidef}) is stable under extension of scalars.  It follows from Proposition \ref{retnice} and Proposition \ref{ltg} that the fibre of $\mathcal{F}$ over the generic point $\Spec(K(X))$ is a closed point of the ind-variety $(\VV_{K(X)})_l$ defined as in Definition $\ref{objects}$, but with the category $\mathcal{C}$ replaced by $\mathcal{C}\otimes {K(X)}$, and that, generically, $\mathcal{F}$ is pulled back from a map to $\VV_l$.  The result then follows by Noetherian induction.
\end{proof}
The slogan that goes with this result is that Proposition \ref{sumup}, which stated that there is a quasi-equivalence 
\begin{equation}
\label{reprep}
\xymatrix{
\tw_l(\mathcal{D}_r(Q,W))\ar[r]^(.4){\sim} & \Perf(\mathcal{D}_r(Q,W)\lModi),
}
\end{equation}
is true \textit{in constructible families}.  The above quasi-equivalence enables us to begin motivic Donaldson--Thomas theory for objects $M$ in $\Perf(\mathcal{D}_r(Q,W)\lModi)$, since it tells us that we can replace $M$ with a $M'\in \tw_l(\mathcal{D}_r(Q,W))$ such that $\End_{\tw_l(\mathcal{D}_r(Q,W))}(M')$ is a Calabi-Yau category with one object, and so possesses a well behaved potential function $W$, which is the crucial input for calculating the motivic weight of $M$.  The fact that this result is true in families tells us that we can calculate motivic weights in families too.
\begin{rem}
\label{genpt}
Note the key part played by passing to the generic point and then using Noetherian induction.  The fact that we can do this means that many things can be proved (the prime example being the proof (assuming their integral identity) of Kontsevich and Soibelman of the preservation of ring structure under the integration map $\Phi_-$ -- see Definition \ref{intmap}) by considering stack functions $\nu_M$, i.e. families of objects in our chosen Calabi-Yau category consisting of a single object.
\end{rem}
\begin{rem}
We can deal with flags in a similar fashion.  For example, let $\mathcal{M}$ be a moduli stack of 2-step filtrations
\begin{equation}
\label{poh}
0\subset M_1\subset M_2,
\end{equation}
such that $M_1$ and $M_2$ are flat families of $\mathcal{D}_r(Q,W)$-modules, and for every $x$ a geometric point of $\mathcal{M}$ the factors $(M_1)_x$ and $(M_2/M_1)_x$ are perfect.  Then after passing to an atlas, and taking a smooth, affine, constructible decomposition, and passing to a generic point, we get that (\ref{poh}) is a 2-step filtration of perfect $\mathcal{D}_r(Q,W)\otimes K$-modules, for $K$ some field extension of $k$.  We deduce that the sheaf of flags on $\mathcal{M}$ is pulled back (up to constructible decomposition of an atlas and quasi-isomorphism of the family) from the natural sheaf of flags on $\VV_{l,3}$, the ind-variety of triangles of $\tw_l(\mathcal{D}_r(Q,W))$.
\end{rem}
\bigbreak
Now assume only that $\mathcal{C}$ is a finite-dimensional Calabi-Yau category, not necessarily of the form $\mathcal{D}_r(Q,W)$.  Then we no longer have the quasi-equivalence (\ref{reprep}).  Recall, however, that we do have an ind-variety $\VV_l^+$ parameterising objects of $\Perf(\mathcal{C}\lModi)$, up to quasi-equivalence.  We prove the following in exactly the same way as \ref{constrdec}.
\begin{thm}
\label{notsogood}
If $\mathcal{F}$ is a flat family of perfect $\mathcal{C}$-modules over a finite type scheme $X$, $X$ admits a finite decomposition into subschemes $X_i$ such that on each $X_i$ there is a map $\alpha_i:X_i\rightarrow \VV_l^+$ and a quasi-isomorphism of $\mathcal{C}$-modules 
\[
\xymatrix{
\mathcal{F}\rightarrow \alpha_i^*(L_{\VV_l^+}),
}
\]
where $L_{\VV_l^+}$ is the natural bundle of perfect $\mathcal{C}$-modules on $\VV_l^+$.  Furthermore, given any flat family of 2-step filtrations of perfect $\mathcal{C}$-modules, we can, up to quasi-isomorphism, obtain the same family by pulling back from $\VV^+_{l,3}$, the ind-variety parameterising triangles in $\Perf(\mathcal{C}\lModi)$ (see Corollary \ref{perflag}).
\end{thm}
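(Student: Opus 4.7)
The plan is to transplant the proof of Theorem \ref{constrdec} essentially verbatim, substituting $\VV_l^+$ for $\VV_l$ and $\VV_{l,3}^+$ for $\VV_{l,3}$, and verifying that the two main ingredients (Proposition \ref{ltg} and the classification of perfect modules over the generic point) still apply. First I would reduce, by a finite constructible decomposition of $X$, to the case where $X$ is an irreducible, integral, affine, Noetherian scheme. By Proposition \ref{ltg}, the restriction $\mathcal{F}_{K(X)}$ is a perfect $\mathcal{C}\otimes K(X)$-module, and the construction of $\VV_l^+$ in Section \ref{perfconstr} is manifestly stable under base change of the ground field, so $\mathcal{F}_{K(X)}$ is quasi-isomorphic to the fibre of $L_{\VV_l^+}$ over some $K(X)$-point of $(\VV^+_{K(X)})_l$.

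Next I would spread this generic description out to a nonempty open subscheme $U\subseteq X$. Recall that a point of $\VV_l^+$ is described by a pair: a point of $\VV_l$ (a twisted object, cut out by finitely many algebraic equations on the strictly lower-triangular matrix entries, as in Proposition \ref{kl1}), together with a finite collection of higher morphisms $f_n\in \End^{1-n}$, of which only finitely many can be nonzero, subject to finitely many polynomial $A_\infty$-morphism compatibility relations. A $K(X)$-point of this data is a finite tuple of elements of $K(X)$ solving these equations, so clearing denominators one obtains the same data on a nonempty affine open $U\subseteq X$, giving a morphism $\alpha:U\to \VV_l^+$ and a bundle of $\mathcal{C}$-modules $\alpha^*(L_{\VV_l^+})$ on $U$. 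A generic quasi-isomorphism $\mathcal{F}_{K(X)}\to \alpha^*(L_{\VV_l^+})|_{\Spec K(X)}$ is likewise a finite collection of denominators-cleared sections, which defines a closed morphism $\mathcal{F}|_U\to \alpha^*(L_{\VV_l^+})$ on a possibly smaller nonempty open $U$. That this morphism remains a fibrewise quasi-isomorphism on some further open subset follows from Corollary \ref{formcor} applied to the mapping cone: after a constructible decomposition subordinate to the homology bundles of the cone, being acyclic is a closed condition on each stratum, which is either empty (in which case we discard the stratum) or equal to that stratum. Noetherian induction on the complement of $U$ produces the required finite decomposition.

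For the statement concerning triangles, one replaces $\VV_l^+$ with the ind-variety $\VV_{l,3}^+$ of Corollary \ref{perflag}. The entire argument goes through, using that a family of 2-step filtrations whose generic fibre consists of perfect modules corresponds generically to a point of $(\VV^+_{K(X),3})_l$, and that the extra data of a morphism between two objects of $\VV_l^+$ is again cut out by finitely many polynomial equations, so again spreads over an open subscheme.

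The main obstacle is the spreading-out step, since unlike $\VV_l$, the ind-variety $\VV_l^+$ is not simply a parameter space for solutions to a Maurer--Cartan equation but carries the extra structure coming from the limit construction for the category $D$ encoding idempotents. Once one checks (as in Section \ref{perfconstr}) that this structure is itself cut out by finitely many equations on the underlying bundle $\mathcal{END}^{\leq 0}$ over $\VV_l$, the argument reduces in the expected way to the situation already handled in Theorem \ref{constrdec}.
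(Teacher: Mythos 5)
Your proposal is correct and follows essentially the same route as the paper, which proves Theorem \ref{notsogood} simply by repeating the argument of Theorem \ref{constrdec} with $\VV_l$ and $\VV_{l,3}$ replaced by $\VV_l^+$ and $\VV_{l,3}^+$: reduce to an integral affine scheme, apply Proposition \ref{ltg} at the generic point, use that every perfect $\mathcal{C}\otimes K(X)$-module occurs up to quasi-isomorphism in the family over $(\VV^+_{K(X)})_l$, spread out, and conclude by Noetherian induction. Your extra care about spreading out the limit data and the quasi-isomorphism locus just makes explicit what the paper leaves implicit.
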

As already noted, while we can use such descriptions to say meaningful things about orientation data in high generality, Theorem \ref{notsogood} is not so much use for carrying out the programme of motivic Donaldson--Thomas invariants, since we do not obtain Calabi-Yau $\mathcal{HOM}$-bundles over $\mathcal{M}$, as we have not constructed them over $\VV_l^{+}$ in the first place.  In essence, all mention of $\VV_l^{+}$ could be removed from the following work: once we have given up on the idea of modelling the category $\Perf(A\lModi)$ with something that is geometric and has Calabi-Yau $\mathcal{HOM}$-bundles, as in the case of $\VV_l$, we may as well just work with superdeterminants of families of perfect modules directly, instead of working with $\VV_l^{+}$ and pulling back via Theorem \ref{notsogood}.

\begin{rem}
There is a somewhat different approach to the question of where orientation data (and, to complete the picture, the superpotential function $W$) should live.  We could take seriously the idea that these ingredients, that determine the motivic weight functions on moduli spaces that determine their motivic DT contributions, should be pulled back from some universal representing stack of perfect $A$-modules.  Such an object exists within the framework of \textit{derived algebraic geometry}, as developed in \cite{HAGI}, \cite{HAGII}.  The moduli stack of interest is a derived stack, in the terminology of \cite{HAGII}, and is constructed in our case in \cite{dgmoduli}.  One would hope to be able to construct the orientation data in the category of sheaves for this stack.  In this context one can think of a sheaf (on a stack) simply as a morphism of (derived) sheaves on simplicial $k$-algebras.  
\medbreak
In the following chapters, for familes $\mathcal{F}$ of perfect $\mathcal{C}$-modules over varieties $X$ we will be interested in super line bundles with the fibre $\sDet(M\otimes_{\mathcal{C}} T\otimes_{\mathcal{C}} M^{\diamond})$ over a module $M$ (with $-^{\diamond}$ as in Definition \ref{dimdef}, and superdeterminant as defined in (\ref{sdetdef})).  So it is clear enough what our sheaf on the stack of perfect $\mathcal{C}$-modules will be in this case: it is just the morphism taking the family $\mathcal{F}$ to this superdeterminant.  Furthermore, complexities due to the problem of `glueing along quasi-isomorphisms' disappear due to the fact that quasi-isomorphisms induce canonical isomorphisms of superdeterminants.  In summation, the correct place for Theorem \ref{ODconstr} to live is the monoidal category of constructible super line bundles over the stack of perfect $\mathcal{C}$-modules.  This fact explains why, despite the rather uneconomical structure of $\VV_l^{+}$, as opposed to $\VV_l$, Theorem \ref{ODconstr} remains true there.
\end{rem}

\chapter{Orientation data}
\label{ODchapter}
\section{Preliminaries on bifunctors}
\label{gettinggoing}
Let $\mathcal{C}$ be a finite-dimensional unital $A_{\infty}$-category, satisfying the property that
\[
\langle h_{\mathcal{C}\otimes K}\rangle_{\triang}\rightarrow \Perf(\rModi(\mathcal{C}\otimes K))
\]
is a quasi-equivalence of categories for all field extensions $K\supset k$ (in fact it is enough to check for $K=\overline{k}$ -- a consequence of the existence of $\VV_r^+$).  Let $\mathcal{M}$ be a stack parameterising a flat family of perfect right $\mathcal{C}$-modules (it will turn out to be more convenient to work with right modules for the duration of this chapter), which we denote $L_{\mathcal{M}}$. Let
\[
\beta:U\rightarrow \mathcal{M}
\]
be an atlas.  Then by Theorem \ref{constrdec} there is a constructible decomposition $U=\coprod U_i$, and morphisms $\alpha_i:U_i\rightarrow \mathfrak{V}_r$ such that the sheaf of $\mathcal{C}$-modules on $U_i$ given by the pullback of $L_{\mathcal{M}}$ along $\beta|_{U_i}$ is quasi-isomorphic to the pullback along $\alpha_i$ of the bundle of $\mathcal{C}$-modules on $\mathfrak{V}_r$, which we denote $L_{\mathfrak{V}_r}$.  A $\mathcal{C}$-module $M$ has an underlying vector space given by $\bigoplus_{x\in\mathcal{C}}M(x)$, and flatness tells us that this forgetful functor gives rise (locally) to a differential graded vector bundle along $U$.  
\bigbreak
Given a finite-dimensional differential graded vector bundle $V$ there is an associated super line bundle (i.e. a $\mathbb{Z}_2$-graded line bundle) given by taking the superdeterminant.  This is defined as
\begin{equation}
\label{sdetdef}
\sDet(V):=\prod^{\dim(V)}\bigwedge^{\mathrm{top}}(V_{\mathrm{even}})\otimes(\bigwedge^{\mathrm{top}}(V_{\mathrm{odd}}))^*,
\end{equation}
where $\prod$ is the parity change functor on $\mathbb{Z}_2$-graded objects.  The superdeterminant satisfies the canonical isomorphism
\begin{equation}
\label{qisdet}
\sDet(V)\cong \sDet(\Ho^{\bullet}(V)).
\end{equation}
Recall that there is a differential graded vector bundle $\mathcal{HOM}^{\bullet}$ on $\mathfrak{V}_r\times\mathfrak{V}_r$, satisfying the property that $\Ho^{n}(\mathcal{HOM}^{\bullet})$ calculates $\Hom_{\Di(\mathcal{C}\lModi)}(N,M[n])$ above a point $(M,N)\in \mathfrak{V}_r\times\mathfrak{V}_r$.  We deduce from (\ref{qisdet}) that if $V^{\bullet}$ is some other finite-dimensional differential graded vector bundle calculating homomorphisms in the derived category, there is a canonical isomorphism
\[
\sDet(V^{\bullet})\cong \sDet(\mathcal{HOM}^{\bullet})
\]
as super line bundles on $\mathfrak{V}_r\times \mathfrak{V}_r$.  In general we redefine the superdeterminant of a differential graded vector bundle to be the right hand side of (\ref{qisdet}), allowing us to work with all differential graded vector bundles with locally finite-dimensional homology.
\bigbreak
Recall from Proposition \ref{dimdef} the dualizing functor $-^{\diamond}:\Perf(\mathcal{C}\lModi)\rightarrow (\Perf(\rModi \mathcal{C}))^{\op}$.
We denote by
\[
\Theta:\mathcal{C}\biModi \mathcal{C}\rightarrow \Perf(\rModi \mathcal{C})\biModi\Perf(\rModi\mathcal{C})
\]
the strict differential graded functor given by
\[
\Theta:T\rightarrow -\otimes_{\mathcal{C}} T\otimes_{\mathcal{C}} -^{\diamond}.
\]
\begin{prop}
The functor $\Theta$ is a quasi-equivalence.
\end{prop}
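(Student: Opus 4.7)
The plan is to exhibit an explicit quasi-inverse to $\Theta$, namely the functor of restriction along the $A_{\infty}$ Yoneda embeddings, and to verify that the two composites are quasi-isomorphic to the identity. Since the Yoneda embeddings $h_-:\mathcal{C}\to \Perf(\rModi\mathcal{C})$ and $h^-:\mathcal{C}^{\op}\to \Perf(\mathcal{C}\lModi)$ are strict, and the quasi-equivalence $-^{\diamond}:\Perf(\mathcal{C}\lModi)\to\Perf(\rModi\mathcal{C})^{\op}$ of Proposition \ref{dimdef} identifies $(h_y)^{\diamond}$ with $h^y$ via Proposition \ref{diamonddual}, precomposition in the two arguments produces a strict differential graded functor
\[
r:\Perf(\rModi\mathcal{C})\biModi\Perf(\rModi\mathcal{C})\rightarrow \mathcal{C}\biModi\mathcal{C}.
\]

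First I would show that $r\circ \Theta\simeq \id_{\mathcal{C}\biModi\mathcal{C}}$. Unwinding definitions, for any $\mathcal{C}$-bimodule $T$ and any $x,y\in\ob(\mathcal{C})$ we have $(r\Theta)(T)(y,x)=h_x\otimes_{\mathcal{C}}T\otimes_{\mathcal{C}}(h_y)^{\diamond}$, which under the identification $(h_y)^{\diamond}\simeq h^y$ becomes $h_x\otimes_{\mathcal{C}}T\otimes_{\mathcal{C}}h^y$. Applying Proposition \ref{bas} in both slots gives a natural quasi-isomorphism to $T(y,x)$, and these quasi-isomorphisms evidently assemble into a natural quasi-isomorphism of $(\mathcal{C},\mathcal{C})$-bimodules. (Alternatively, one can read this off directly from Corollary \ref{diagfact} with the bar-resolution identification of $\Theta$ given by Proposition \ref{finmods}.)

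Next I would show $\Theta\circ r\simeq \id$, which gives the essential surjectivity part of the quasi-equivalence. Let $S\in\Perf(\rModi\mathcal{C})\biModi\Perf(\rModi\mathcal{C})$. The computation in the previous paragraph, applied to the bimodule $r(S)$, yields a natural quasi-isomorphism
\[
\Theta(r(S))(h_y,h_x)=h_x\otimes_{\mathcal{C}}r(S)\otimes_{\mathcal{C}}(h_y)^{\diamond}\;\simeq\; r(S)(y,x)=S(h_y,h_x),
\]
compatible with the residual $\mathcal{C}$-bimodule actions. So $\Theta(r(S))$ and $S$ agree as bifunctors up to quasi-isomorphism on the image of Yoneda in each argument. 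Both bifunctors are exact in each argument in the appropriate $A_{\infty}$ sense: they send distinguished triangles to distinguished triangles and carry homotopy retracts to homotopy retracts. For $\Theta(r(S))$ this is the standard exactness of the bar-resolution tensor product of Definition \ref{enrichedtensor}; for $S$ it is part of the enriched bifunctor data, together with Lemma \ref{permtake}, which allows homotopy retracts to be handled after passing to minimal models. Since every object of $\Perf(\rModi\mathcal{C})$ is obtained, up to quasi-isomorphism, from the image of Yoneda by iterated shifts, cones, and homotopy retracts (Section \ref{perfconstr}), the natural quasi-isomorphism on the Yoneda image propagates uniquely to a natural quasi-isomorphism $\Theta(r(S))\simeq S$ on all of $\Perf(\rModi\mathcal{C})\times\Perf(\rModi\mathcal{C})^{\op}$.

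The main obstacle is the propagation step in the last paragraph: one must verify carefully that the $A_{\infty}$-natural quasi-isomorphism on the Yoneda image really does extend coherently over cones and, more delicately, over homotopy retracts, where genuine higher $A_{\infty}$-components of the extending natural transformation may be required. This is essentially the same kind of argument as in the proof of Proposition \ref{finmods}, and reduces, via minimal models and Lemma \ref{permtake}, to the cone case, where functoriality of cones in differential graded categories suffices. Once both $r\circ\Theta\simeq\id$ and $\Theta\circ r\simeq\id$ are established, quasi-full faithfulness of $\Theta$ is automatic, and the proposition follows.
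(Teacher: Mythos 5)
Your proposal is correct and follows essentially the same route as the paper: restriction along the Yoneda embedding(s) as the (quasi-)inverse, the bar-resolution quasi-isomorphism of Proposition \ref{bas}/Corollary \ref{diagfact} to identify $r\circ\Theta$ with the identity, and propagation of the comparison from the Yoneda image over cones and homotopy retracts (the paper phrases this as differential graded functors preserving cones and finite limits/colimits up to quasi-equivalence, after replacing the bifunctor by a differential graded model). The only cosmetic difference is that the paper states the result as ``left inverse plus quasi-essential surjectivity'' rather than verifying both composites, which amounts to the same argument.
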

\begin{proof}
Via the $A_{\infty}$ Yoneda embedding $h_{-}$ there is a restriction functor
\[
\textbf{res}:\Perf(\rModi\mathcal{C})\biModi\Perf(\rModi\mathcal{C})\rightarrow \mathcal{C}\biModi \mathcal{C},
\]
and it is easy to see that it is a left inverse to $\Theta$.  Now it is enough to show that the quasi-essential image of $\Theta$ is the whole of $\Perf(\rModi\mathcal{C})\biModi\Perf(\rModi\mathcal{C})$.  So let $M$ be a bifunctor in this category.  After applying a quasi-equivalence $\eta$ we may assume that $\lperf$ is a differential graded category and $M$ is a differential graded bimodule for $\lperf$.  Then since differential graded functors preserve cones it follows that there is a quasi-equivalence $\Theta(\textbf{res}(M))|_{(\tw_r(\CC),\tw_r(\CC)^{\op})}\rightarrow M|_{(\tw_r(\CC),\tw_r(\CC)^{\op})}$. Furthermore, since differential graded functors preserve finite limits and colimits up to quasi-equivalence, this quasi-equivalence lifts to $\Theta(\textbf{res}(M))$.
\end{proof}

Specifically, we need that the bifunctor $\Hom_{\lperf}(-,-)$ is of this form.  Using the above proof and Corollary \ref{diagfact} we deduce that there is a quasi-isomorphism $\Hom_{\lperf}(-,-)\rightarrow \Theta(\mathcal{C})$.
\bigbreak
Let $\MM$ be a stack parameterising perfect $\CC$-modules, and let $T$ be a $\CC$-bimodule.  After restricting to one of the $U_i$ in the constructible decomposition of the atlas for $\mathcal{M}$ of Theorem \ref{notsogood}, we obtain (from the bar construction) a differential graded vector bundle 
\begin{equation}
\label{xidef}
\Xi_T(\alpha_i^*(L_{\mathfrak{V}^+_r}),\alpha_i^*\lambda^*(L_{\mathfrak{V}_l^+})):=\alpha_i^*(L_{\mathfrak{V}^+_r})\otimes T\otimes\alpha_i^*\lambda^*(L_{\mathfrak{V}_l^+})
\end{equation}
(recall from Proposition \ref{dimdef} that $\lambda$ is the isomorphism of ind-varieties induced by the construction of $-^{\diamond}$), which at a point $x$ parameterising a perfect module $M$ calculates $\Theta(T)(M,M)$.  Now assume that $\MM$ is furthermore a stack parameterising objects in $\tw_{r}(\CC)$.  By Proposition \ref{finmods}, we may consider instead the differential graded vector bundle $\\T_{\tw}(\alpha_i^*(L_{\mathfrak{V}_r}),\alpha_i^*\lambda_i^*(L_{\mathfrak{V}_l}))$, where these $\alpha_i$ are now the maps appearing in Theorem \ref{constrdec}, a quasi-isomorphic differential graded vector bundle with finite-dimensional fibres.  From the point of view of motivic Donaldson--Thomas theory, it is the second bundle that is useful, since in the case $T=\mathcal{C}$ it carries extra data encoding the cyclic structure of the category, enabling us to write down the minimal potential (see Definition \ref{CYcatdef} for a definition of this potential, see Theorem \ref{cycmm} for an explanation of how this minimal potential is obtained, and see Chapter \ref{example} for an explanation of why this minimal potential is such an important player in this story).  However, since these two constructible differential graded vector bundles are quasi-isomorphic, they are the same as far as the superdeterminant is concerned.  For the purposes of considering orientation data, which can be defined purely in terms of these superdeterminants, we needn't restrict ourselves to finite-dimensional vector bundles, and in particular we can extend the treatment to the case of $\MM$ parameterising objects in $\rperf$, not just $\tw_r(\mathcal{C})$.
\begin{defn}
We denote by $-^{*}$ the functor
\[
-^{*}:\mathcal{C}\biModi\mathcal{C}\rightarrow(\mathcal{C}\biModi\mathcal{C})^{\op}
\]
taking a bimodule to its vector dual.  Explicitly, a bimodule $S$ for $\mathcal{C}$ is given by a $\mathbb{N}^2$-indexed series of morphisms 
\begin{eqnarray*}
&b_{S,i,j}:\Hom_{\mathcal{C}}(x_{i-1},x_i)[1]\otimes\ldots\otimes \Hom_{\mathcal{C}}(x_0,x_1)[1]\otimes \Hom_{\mathcal{C}}(y_{j-1},y_j)[1]\otimes\ldots\\
&\ldots\otimes \Hom_{\mathcal{C}}(y_0,y_1)[1]\rightarrow \Hom(S(x_0,y_j),S(x_i,y_0))[1].
\end{eqnarray*}
We define $S^*(x,y)=S(y,x)^*$.  Applying the vector space duality functor on $\dgvect$ gives a $\mathbb{N}^2$-indexed series of maps
\begin{eqnarray*}
&b_{S^*,i,j}=b_{S,j,i}^*:\Hom_{\mathcal{C}}(y_{j-1},y_j)[1]\otimes\ldots \otimes \Hom_{\mathcal{C}}(y_0,y_1)[1]\otimes \Hom_{\mathcal{C}}(x_{i-1},x_i)[1]\otimes\ldots\\&\ldots\otimes \Hom(x_0,x_1)[1]\rightarrow \Hom(S(x_i,y_0)^*,S(x_0,y_j)^*)[1]
\end{eqnarray*}
and it is easy to check that these satisfy the bimodule compatability relations.  We deal with homomorphisms in a similar fashion.
\end{defn}
We denote by $\Theta_{\tw}$ the functor sending a $\mathcal{C}$-bimodule $T$ to the $\tw_r(\mathcal{C})$-bimodule $T_{\tw}$.  The following is clear.
\begin{prop}
There is a diagram of quasi-equivalences of categories, in which the top square commutes, and the bottom square commutes up to natural quasi-equivalence:
\[
\xymatrix{
\tw_{r}(\CC)\bimodi \tw_r(\CC)\ar[r]^{-^*}_{\cong}& (\tw_r(\CC) \bimodi \tw_r(\CC))^{\op}\\
\mathcal{C} \bimodi \mathcal{C}\ar[r]_{\cong}^{-^*}\ar[u]_{\Theta_{\tw}} & 
(\mathcal{C}\bimodi \mathcal{C})^{\op}\ar[u]_{\Theta_{\tw}}\\
\Perf(\rModi\mathcal{C})\biModi \Perf(\rModi\mathcal{C}) \ar[u]_{\textbf{res}}\ar[r]^-{-^*}_-{\sim} & (\Perf(\rModi\mathcal{C})\biModi \Perf(\rModi\mathcal{C}))^{\op}.\ar[u]_{\textbf{res}^{\op}}
}
\]
\end{prop}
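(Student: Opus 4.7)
The plan is to treat the two squares separately. For the top square, which should commute strictly, I would unwind the definition of $\Theta_{\tw}$ directly. Given a bimodule $T \in \mathcal{C}\bimodi\mathcal{C}$ and twisted objects $\alpha = ((x_1,\ldots,x_m),A) \in \tw_r(\mathcal{C})$ and $\beta = ((y_1,\ldots,y_n),B) \in \tw_l(\mathcal{C})$, the value $T_{\tw}(\beta,\alpha)$ is the finite direct sum $\bigoplus_{i,j} T(y_i,x_j)$ (with shifts absorbed into the gradings of the constituent objects of $\alpha$ and $\beta$), equipped with bimodule structure maps given by the matrix-style formula in the definition of $\Theta_{\tw}$, using $b_{T,s,t}$ and the twisting elements $A,B$. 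Since duality commutes with finite direct sums, there is a canonical isomorphism of graded vector spaces $(T_{\tw}(\beta,\alpha))^* \cong \bigoplus_{i,j} T(y_i,x_j)^* = (T^*)_{\tw}(\alpha^{\diamond_{\tw}},\beta^{\diamond_{\tw}})$, and dualising the matrix-style formula gives precisely the formula defining the bimodule structure on $(T^*)_{\tw}$. Thus $(T_{\tw})^* = (T^*)_{\tw}$ as $(\tw_r(\mathcal{C}),\tw_l(\mathcal{C})^{\op})$-bimodules on the nose.

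For the bottom square I would invoke the argument used in the preceding proof that $\Theta$ is a quasi-equivalence: $\textbf{res}$ is a strict left inverse to $\Theta$, and since the essential image of $\Theta$ is the whole of $\Perf(\rModi\mathcal{C})\biModi\Perf(\rModi\mathcal{C})$ up to natural quasi-equivalence, there is a natural quasi-equivalence $\Theta \circ \textbf{res} \simeq \id$. Combined with the top square this reduces the bottom square to the question of whether restriction of a bimodule for $\Perf(\rModi\mathcal{C})$ along the quasi-full inclusion $\tw_r(\mathcal{C}) \hookrightarrow \Perf(\rModi\mathcal{C})$ commutes with the vector dual $-^*$. But both constructions act pointwise on objects and by transposition/dualisation of the structure maps, and they involve disjoint data, so they commute on the nose.

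The main obstacle is the bookkeeping in the first step: one has to verify that the canonical identification of the dual of a finite direct sum with the direct sum of duals intertwines the matrix-style bimodule operations $b_{(T_{\tw})^*,s,t}$ with $b_{(T^*)_{\tw},s,t}$, and in particular that the reversal of argument order implicit in $-^*$ interacts correctly with the transposition functor $-^{\diamond_{\tw}}:\tw_l(\mathcal{C})^{\op}\xrightarrow{\cong}\tw_r(\mathcal{C})$ of Proposition \ref{diamonddual}. Once this is checked with the sign conventions of Section \ref{enrichments}, both squares commute in the stated senses, and all six functors are quasi-equivalences since $\Theta$, $\Theta_{\tw}$, $\textbf{res}$, and $-^*$ have all been established as such.
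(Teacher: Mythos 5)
The paper offers no proof of this proposition at all—it is prefaced only by ``The following is clear''—so your unwinding of the definitions supplies exactly the routine verification being left to the reader, and it is correct: $-^*$ commutes with the finite matrix construction $\Theta_{\tw}$ once the transposition in $-^{\diamond_{\tw}}$ is tracked (top square), and pointwise dualization commutes with restriction along an $A_{\infty}$-functor (bottom square). One small simplification: $\textbf{res}$ is restriction along the Yoneda embedding $h_-:\mathcal{C}\to\Perf(\rModi\mathcal{C})$, and your observation that restriction commutes with $-^*$ applies to it directly, so the detour through $\Theta\circ\textbf{res}\simeq\id$, the top square, and the inclusion $\tw_r(\mathcal{C})\hookrightarrow\Perf(\rModi\mathcal{C})$ is not actually needed for the bottom square.
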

\smallbreak
\begin{defn}
Recall the definition of the shift functor from Remark \ref{shiftdef}.  Following \cite{KS} we define the functor
\[
-^{\vee}:\mathcal{C}\biModi\mathcal{C}\rightarrow (\mathcal{C}\biModi \mathcal{C})^{\op}
\]
as the composition $-[-3]\circ -^*$.  Let $F$ be a bimodule for an $A_{\infty}$-category $\mathcal{C}$.  A self-duality structure of degree 3 on $F$ is a quasi-isomorphism
\[
\phi:F\rightarrow F^{\vee}.
\]
We may define a self-duality structure of degree $n$, for arbitrary $n$, similarly.
\end{defn}
From now on we assume that $\mathcal{C}$ is a finite-dimensional (as in Definition \ref{findimcat}) $A_{\infty}$-category with a 3-dimensional Calabi-Yau structure.
\begin{prop}
Let $H\in \Perf(\rModi\mathcal{C})\biModi\Perf(\rModi\mathcal{C})$ denote the $\Hom$ bifunctor.  Then there is a quasi-isomorphism
\[
H\rightarrow H^{\vee}.
\]
\end{prop}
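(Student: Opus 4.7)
The plan is to transport the Calabi--Yau pairing on $\mathcal{C}$ through the chain of quasi-equivalences established in the section, so that the duality on $\Hom$ becomes a consequence of the duality already present on the diagonal bimodule $\mathcal{C}$.

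First I would produce a quasi-isomorphism $\mathcal{C} \to \mathcal{C}^{\vee}$ in $\mathcal{C}\biModi\mathcal{C}$, where $\mathcal{C}$ is regarded as a bimodule over itself via its composition (as in Example \ref{diagcat}). The degree $-3$ nondegenerate pairing $\langle\bullet,\bullet\rangle_{B,A}$ of Definition \ref{CYcatdef} gives, by adjunction, a degree $0$ map
\[
\phi_{A,B}:\Hom_{\mathcal{C}}(A,B)\rightarrow \Hom_{\mathcal{C}}(B,A)^{*}[-3]=\mathcal{C}^{\vee}(B,A).
\]
Nondegeneracy says each $\phi_{A,B}$ is a quasi-isomorphism of complexes, so it remains to check that $\phi$ is a morphism of $(\mathcal{C},\mathcal{C})$-bimodules, i.e.\ that the higher components $\phi_{i,j}$ defined from the higher cyclic potentials $W_{n}$ satisfy the bimodule compatibility. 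This is exactly the cyclic invariance $W_{n}=W_{n}\circ \phi$ built into Definition \ref{CYcatdef}, expressed as the equality of the contractions obtained by pairing a composition against a sequence from either side. So $\phi:\mathcal{C}\to \mathcal{C}^{\vee}$ is a quasi-isomorphism of bimodules.

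Next I would push this through the quasi-equivalence $\Theta:\mathcal{C}\biModi\mathcal{C}\to \lperf\biModi\lperf$ of the previous proposition. By the diagram in the paragraph immediately preceding the statement, the bottom square commutes up to natural quasi-equivalence; applied to $\mathcal{C}$ this yields a natural quasi-isomorphism
\[
\Theta(\mathcal{C})^{*}\;\simeq\;\Theta(\mathcal{C}^{*}),
\]
hence also $\Theta(\mathcal{C})^{\vee}\simeq \Theta(\mathcal{C}^{\vee})$ after the uniform shift by $[-3]$. Applying $\Theta$ to $\phi$ therefore gives a quasi-isomorphism $\Theta(\mathcal{C})\to \Theta(\mathcal{C})^{\vee}$.

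Finally, I would use the already-noted quasi-isomorphism $H\to \Theta(\mathcal{C})$, coming from Corollary \ref{diagfact} together with $\textbf{res}(H)\simeq \mathcal{C}$ (the diagonal bimodule), to translate the above into a quasi-isomorphism $H\to H^{\vee}$. Dualising the direction of one of the maps is harmless since $-^{\vee}$ preserves quasi-isomorphisms (it is built from the vector-space dual of finite-dimensional complexes on each component, and a shift). The main technical obstacle is the verification of bimodule-linearity for $\phi$ in the first paragraph: spelling out how the cyclic symmetry of $W_n$ translates into the explicit identities on the $b_{\mathcal{C},n}$ required for $\phi$ to commute with the bimodule structure maps is the only point where one really has to unpack the $A_\infty$ definitions; once that is in hand, the rest is a formal consequence of the functoriality already set up.
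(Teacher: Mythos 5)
Your proposal is correct and follows essentially the same route as the paper: reduce via the quasi-equivalence $\Theta$ and the identification $H\simeq\Theta(\mathcal{C})$ to producing a quasi-isomorphism $\mathcal{C}\to\mathcal{C}^{\vee}$ of $\mathcal{C}$-bimodules, which is defined (strictly) by the adjoint of the Calabi--Yau pairing, with bimodule-compatibility supplied by the cyclic invariance and symmetry of $W$. The only difference is cosmetic: the paper takes the morphism to be strict with $f_1(\phi)=\langle\phi,\bullet\rangle$, whereas your wording about ``higher components'' is really just the check that this strict map commutes with all the higher bimodule structure maps.
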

\begin{proof}
Since $H$ is quasi-isomorphic to the image of $\mathcal{C}$ under $\Theta$, it is enough to show that there is a quasi-siomorphism of $\mathcal{C}$-bimodules
\[
\xymatrix{
\mathcal{C}\rightarrow \mathcal{C}^{\vee}.
}
\]
For this we consider the Calabi-Yau pairing $\langle\bullet,\bullet\rangle$ on $\mathcal{C}$.  We define a strict morphism 
\[
f:\mathcal{C}\rightarrow \mathcal{C}^{\vee}
\]
by setting $f_1(\phi)=\langle \phi,\bullet\rangle$.  It is straightforward to check that the cyclic invariance and symmetry of the function $W$ of Definition \ref{CYcatdef} implies that this is indeed a morphism of $\mathcal{C}$-bimodules, of the right degree.
\end{proof}
As remarked in \cite{KS}, the orientation data issue is one that arises for any bifunctor to $\dgvect$ with a self-duality structure.  Say $F$ is such a bifunctor for the category $\Perf(\rModi\mathcal{C})$.  Up to quasi-isomorphism we may assume that $F$ is $\Theta(T)$, for some $\mathcal{C}$-bimodule $T$.  In Section \ref{finmodtens} we explained how, restricted to $\tw_r(\mathcal{C})\times \tw_r(\mathcal{C})^{\op}$, the bifunctor $F$ is quasi-isomorphic to $T_{\tw}$ (see Proposition \ref{finmods}), which can be considered as a differential graded vector bundle over $\mathfrak{V}_r\times \mathfrak{V}_r$, where $\mathfrak{V}_r$ is the ind-variety paramaterizing objects of $\tw_r(\mathcal{C})$.  We define
\[
F_{bun}:=\Delta^*(T_{\tw})
\]
where $\Delta:\mathfrak{V}_r \rightarrow \mathfrak{V}_r\times \mathfrak{V}_r$ is the diagonal embedding, and we define the super line bundle
\begin{equation}
\label{basicF}
\FF:=\sDet(F_{bun}).
\end{equation}
By abuse of notation we denote by the same symbol $\FF$ the analogous super line bundle on $\VV^+$, where now we use the definition
\begin{equation}
\label{curlyF}
\FF:=\sDet(\Ho^{\bullet}(\Delta^*(\Xi_T(L_{\VV^+_r},\lambda^*(L_{\VV^+_l}))))),
\end{equation}
where $\Xi$ is as in (\ref{xidef}).  Similarly we define 
\[
\FF^{\leq a}:=\sDet(\Ho^{\leq a} (\Delta^*(\Xi_T(L_{\VV^+_r},\lambda^*(L_{\VV^+_l}))))).
\]
The self-duality structure on $F$ gives us an isomorphism of constructible super line bundles
\begin{equation}
\label{sqrtdet}
(\FF^{\leq 1})^{\otimes 2}\cong \FF,
\end{equation}
given by considering the canonical isomorphism, for any constructible vector bundle $V$ and any $n\in\mathbb{Z}$:
\begin{equation}
\label{dualcon}
\sDet(V[2n+1]^{*})\cong \sDet(V).
\end{equation}
Orientation data for $F$ can be defined to be a choice of another square root of $\FF$, satisfying some extra properties.  We will first flesh out this definition, before relating it to the foundational problem discussed in Chapter \ref{example}.

\section{The definition of orientation data}
Consider the ind-variety $\VV^+_{r,3}$ of exact triangles.  There are three projections $p_i:\VV^+_{r,3}\rightarrow \VV^+_r$ corresponding to the three terms in an exact triangle.  For $\FF$ as in the previous section, define 
\begin{equation}
\label{pulledF}
\mathcal{F}_i=p_i^*(\FF)
\end{equation}
and
\[
\mathcal{F}_{i}^{\leq a}=p_i^*(\FF^{\leq a}).
\]
We also have projections $p_{i,j}:\VV^+_{r,3}\rightarrow \VV^+_r\times\VV_r^+$ given by projection onto the $i$th and $j$th factors.  If $F=\Theta(T)$, we let
\begin{equation}
\label{pairedF}
\mathcal{F}_{i,j}=p_{i,j}^*(\sDet(\Ho^{\bullet}(L_{\VV_r^+}\otimes_{\mathcal{C}} T \otimes_{\mathcal{C}} \lambda^*(L_{\VV_l^{+}})))).
\end{equation}
Note that since $F$ has a self-duality structure we have an isomorphism
\[
\mathcal{F}_{1,3}\cong\mathcal{F}_{3,1}.
\]
It follows by basic homological algebra (and some applications of the isomorphisms above) that we have a canonical isomorphism
\begin{equation}
\label{deformiso}
\mathcal{F}_2\cong \mathcal{F}_1 \otimes \mathcal{F}_3\otimes \mathcal{F}_{1,3}^{\otimes 2}.
\end{equation}
We would like for this isomorphism to have a square root, i.e. we would like for there to be a constructible isomorphism
\begin{equation}
\label{sqrtiso}
\mathcal{F}_{2}^{\leq 1}\cong \mathcal{F}_{1}^{\leq 1} \otimes \mathcal{F}_{3}^{\leq 1}\otimes \mathcal{F}_{1,3}
\end{equation}
such that taking the tensor square of this isomorphism, and using the canonical isomorphism (\ref{sqrtdet}) between the square $(\mathcal{F}_{i}^{\leq 1})^{\otimes 2}$ and $\mathcal{F}_i$, we obtain the isomorphism (\ref{deformiso}).  We may not be able to achieve this with the ind-constructible super line bundle $\mathcal{F}^{\leq 1}$, which motivates the main definition of this chapter.  We give the original definition of \cite{KS}.
\begin{defn}
\label{ODdef}
Let $T$ be a bimodule for $\mathcal{C}$, equipped with a self-duality structure.  Let $\mathcal{F}$, $\mathcal{F}_i$, and $\mathcal{F}_{i,j}$ be defined as in (\ref{curlyF}), (\ref{pulledF}) and (\ref{pairedF}), in terms of $T$.  Then the set $\overline{\OD}^{+}(T)$ of complete orientation data for $T$ on $\rperf$ is given by triples of:
\begin{enumerate}
\item[\textbf{I:}]
An ind-constructible super line bundle $\sqrt(\mathcal{F})$ on $\VV^+_r$.
\item[\textbf{II:}]
An isomorphism of ind-constructible super line bundles
\begin{equation}
\label{sqrteq}
\sqrt(\mathcal{F})^{\otimes 2}\cong \mathcal{F}.
\end{equation}
\item[\textbf{III:}]
An isomorphism of ind-constructible super line bundles
\begin{equation}
\label{triangles}
f:\sqrt(\mathcal{F}_1)\otimes \sqrt(\mathcal{F}_2)^{-1}\otimes \sqrt(\mathcal{F}_3)\cong \mathcal{F}_{1,3}
\end{equation}
\end{enumerate}
such that the following diagram
\begin{equation}
\label{ODcomm}
\xymatrix{
(\sqrt(\mathcal{F}_1)\otimes \sqrt(\mathcal{F}_2)^{-1}\otimes \sqrt(\mathcal{F}_3)\otimes \mathcal{F}_{1,3}^{-1})^{\otimes 2}\ar[r]^(.8){f\otimes f}\ar[d]&\id\otimes \id\ar[d]\\
\mathcal{F}_{1} \otimes \mathcal{F}_{2}^{-1}\otimes \mathcal{F}_{3}\otimes (\mathcal{F}_{1,3}^{-1})^{\otimes 2}\ar[r]
& \id
}
\end{equation}
commutes.  Furthermore, we require the following data:
\begin{enumerate}
\item[\textbf{QIS:}]
For any scheme $S$ and any pair of constructible morphisms $\alpha,\beta:S\rightarrow \VV^+_r$ and any quasi-isomorphism $\gamma:\alpha^*(L_{\VV^+_r})\rightarrow \beta^*(L_{\VV^+_r})$ a constructible isomorphism $\alpha^*(\sqrt{\mathcal{F}})\rightarrow\beta^*(\sqrt{\mathcal{F}})$, commuting with all of the above isomorphisms.
\end{enumerate}
The set of orientation data for $T$ on $\tw_r(\mathcal{C})$, denoted $\overline{\OD}(T)$ is defined in the same way, but with all instances of $\VV_r^+$ replaced by $\VV_r$.
\end{defn}
\begin{examp}
\label{torsor}
Let $\zeta:\Kth(\tw_r(\mathcal{C}))\rightarrow \mathbb{Z}_2$ be a group homomorphism from the Grothendieck group of $\tw_r(\mathcal{C})$.  Then $\zeta$ induces a partition of the components of $\VV_r$ into two sets, respecting quasi-isomorphism of families.  To such a homomorphism we associate the super line bundle $L_{\zeta}$ on $\VV_r$ which is trivial, with parity determined by the value of $\zeta$.  This super line bundle comes with a canonical trivialization of its square given by the canonical trivialization of the tensor square of the trivial  odd or even super line bundle.  In particular, given orientation data
\[
\sqrt{\mathcal{F}}^{\otimes 2}\cong \mathcal{F}
\]
for $T$ on $\tw_r(\mathcal{C})$ we obtain new orientation data by replacing $\sqrt{\mathcal{F}}$ by $\sqrt{\mathcal{F}}\otimes L_{\tau}$.  Since $L_{\tau}$ comes from a homomorphism of the $\Kth$-group of $\langle h_{\mathcal{C}}\rangle_{\triang}$, it follows that condition (\textbf{III}) of Definition \ref{ODdef} is met by an essentially unchanged isomorphism.  As a result of this, we do not expect orientation data to be unique.  Note that this operation gives the set of complete orientation data a free $\Hom(\Kth(\tw_r(\CC)),\mathbb{Z}_2)$-action.
\end{examp}
\begin{rem}
In practice, we never have to worry about the condition \textbf{QIS}.  We stick, at all times, to constructions that induce canonical isomorphisms on quasi-isomorphic families, i.e. we stick to constructible super line bundles coming from superdeterminants of differential graded vector bundles arising from bifunctors.  The only examples of $\sqrt{\mathcal{F}}$ that we consider are given by taking $\sDet(\Theta(S))$, for $\mathcal{C}$-bimodules $S$, so that isomorphisms between evaluations of superdeterminants on quasi-isomorphic families come for free.  The fact that one can consider orientation data at the level of ind-constructible super line bundles on $\VV_r^+$ or $\VV_r$ is worth bearing in mind, though (e.g. consider Example \ref{torsor}).
\end{rem}
We use instead a modified definition:

\begin{defn}
\label{redod}
The category $\OD(T)$ or $\OD^+(T)$ of orientation data on $\VV_r$ or $\VV_r^+$ for a $\CC$-bimodule $T$ is given by:
\begin{enumerate}
\item
$\ob(\OD(T))$ or $\ob(\OD^+(T))$ is the set of pairs $(\mathcal{G},\phi)$ as in (\textbf{I}) and (\textbf{II}) of Definition \ref{ODdef}, such that there exist isomorphisms as in (\textbf{III}) and (\textbf{QIS}).
\item
$\Hom_{\OD(T)}((\mathcal{G},\phi),(\mathcal{H},\psi))$ or $\Hom_{\OD^+(T)}((\mathcal{G},\phi),(\mathcal{H},\psi))$ is given by the set of isomorphisms $\tau:\mathcal{G}\rightarrow \mathcal{H}$ such that the following diagram commutes
\begin{equation}
\label{ODmorph}
\xymatrix{
\mathcal{G}^{\otimes 2}\ar[rd]^-{\phi}\ar[d]^{\tau^{\otimes 2}}\\ 
\mathcal{H}^{\otimes 2}\ar[r]^-{\psi}& \FF.
}
\end{equation}
\end{enumerate}
\end{defn}
\begin{rem}

We are justified in forgetting the \textit{choice} of data in (\textbf{III}) since it plays no part in the determination of the motivic weight $w$ for objects of $\mathcal{E}$, as defined in \cite{KS} (see Definition \ref{intmap}).  The role of (\textbf{III}) is in the proof that the integration map $\Phi$ of that paper is a homomorphism, which runs independently of the particular isomorphism chosen.  Our condition that there exists \textit{some} isomorphism satisfying (\textbf{III}) amounts to the cocycle condition \ref{cocycle}.  Similarly, the existence of the isomorphisms in (\textbf{QIS}) is there merely to guarantee that the integration map is well defined -- we needn't worry what these isomorphisms actually are.
\end{rem}
\section{The link with constructible functions}
\label{thelink}
Given a square root $\sqrt(\mathcal{F})$ of $\mathcal{F}$, the existence or nonexistence of a square root of the isomorphism (\ref{deformiso}), as in (\ref{triangles}), is determined entirely by the value of a certain obstruction, denoted $l$, which is defined in terms of constructible functions.  We next describe this link.  We consider only self-dual bifunctors of degree three, since the story is unchanged in higher dimensions, which are of limited interest anyway.
\smallbreak
Say $L$ is an ind-constructible super line bundle on an ind-variety $X=\coprod X_i$ equipped with a trivialization 
\[
\psi:L^{\otimes 2}\cong \idmon.
\]  
Then after passing to each $X_i$, $L$ is a constructible super line bundle on an algebraic variety.  It follows that after stratifying further, the line bundle is in fact trivial (see Section \ref{cvect}).  Pick some $U$ in the stratification, let
\[
\tau_U:\idmon[a]|_U\cong L|_U
\]
be a local isomorphism (where $a$ takes value 0 or 1) on $U$, a locally closed subvariety of $X_i$.  Then we have the canonical isomorphism
\[
\phi:\idmon|_U\cong\idmon[a]|_U\otimes\idmon[a]|_U.
\]
Putting all this together gives an isomorphism on $U$
\[
\psi|_U \circ (\tau_U\otimes \tau_U)\circ \phi:\idmon|_U\cong \idmon|_U
\]
which gives a nonvanishing function on $U$, given by the image of the constant section $1$ of $\idmon$.  Note that changing $\tau_U$ changes this constructible function by a square.  Therefore we have a well defined element of 
\[
\Constr(U,k^*)/\Constr(U,k^*)^2,
\]
where $\Constr(U,k^*)$ is the set of $k^*$-valued constructible functions on $U$.  Since each $X_i$ is just an algebraic variety it follows that the presheaf on $X_i$,
\[
\mathfrak{F}(U)=\Constr(U,k^*)/\Constr(U,k^*)^2
\]
is in fact a sheaf, and so in the above way we build a section of it.  In this way we obtain an ind-constructible function on $X$, i.e. a constructible function on each of its components.  The data of $L$ with its trivialized square also gives a section of
\[
\Constr(X,\mathbb{Z}_2),
\]
the group of ind-constructible $\mathbb{Z}_2$-valued functions on $X$.  This function is given by the parity of $L$.  We define
\begin{equation}
\label{easyJ}
J_2(X):=\Constr(X,k^*)/(\Constr(X,k^*))^2\times \Constr(X,\mathbb{Z}_2).
\end{equation}
This agrees with the definition of \cite{KS} under the assumption $\sqrt{-1}\in k$, otherwise one sets 
\[
J_2(X)=((\Constr(X,k^*)/\Constr(X,k^*)^2)\times \Constr(X,\ZZ))/(-1,2)
\]
as in \cite{KS}.  It is in $J_2(\VV_{r,3})$ or $J_2(\VV^+_{r,3})$ that the obstruction $l$ belongs.  It is given by the fact that, if we define $\FF$ as in (\ref{basicF}) or (\ref{curlyF}), with $T=\mathcal{C}$, i.e. $\FF$ is the superdeterminant of the constructible vector bundle of self extensions, the following constructible super line bundle
\[
\mathcal{F}_{tr}=\mathcal{F}_{1}^{\leq 1}\otimes (\mathcal{F}_{2}^{\leq 1})^{-1}\otimes\mathcal{F}_{3}^{\leq 1} \otimes \mathcal{F}_{1,3}^{-1}
\]
has a canonically trivialised square given by equation (\ref{deformiso}), and so gives an element $l\in J_2(\VV_{r,3})$, alternatively $l\in J_2(\VV_{r,3}^+)$.  This obstruction should be read as the failure of $\mathcal{F}^{\leq 1}$ to provide orientation data.  Indeed clearly if it is zero we can give a constructible isomorphism as in equation (\ref{sqrtiso}) and in this way let $\mathcal{F}^{\leq 1}$ provide orientation data.
\bigbreak
This approach gives a new perspective on condition (\textbf{III}) of Definition \ref{ODdef}.  Given a $\sqrt{\mathcal{F}}$ and an isomorphism $\sqrt{\mathcal{F}}\cong\mathcal{F}$ we obtain an element $h$ of $J_2(\mathfrak{C})$, since the ind-constructible super line bundle $\sqrt{\mathcal{F}}^{-1}\otimes \mathcal{F}_{\leq 1}$ has a trivialized square.  In fact we have seen that the set of isomorphism classes of objects in $\ob(\OD(\mathcal{C}))$ embeds naturally in $J_2(\VV_r)$, and similarly there is an embedding
\[
\ob(\OD^+(\mathcal{C}))/\sim_{\mathrm{isom}} \subset J_2(\VV^+_r).
\]
As noted in \cite{KS} the existence of \textit{some} isomorphism satisfying condition (\textbf{III}), i.e. the condition met by isomorphism classes of orientation data considered as subsets of $J_2(\VV_r)$ or $J_2(\VV_r^+)$, amounts to the following condition:
\begin{cond}[\textbf{Cocycle condition}]
\label{cocycle}
There is an equality in $J_2(\VV_{r,3})$ (or $J_2(\VV_{r,3}^+$))
\[
p_1^*(h)-p_2^*(h)+p_3^*(h)=l.
\]
\end{cond}
\begin{rem}
Recall the extended example of Chapter \ref{example}.  There it was noted that if we assign to every object $E$ in the category of perfect modules the motivic weight $\LL^{\frac{\sum_{i\leq 1}(-1)^i \dim(\Ext^i(E,E))}{2}}(1-\MF(W_{\min}(E))$, the integration map, so defined, need not preserve the product.  Let $\nu_{M_1}$ and $\nu_{M_2}$ be two stack functions associated to the two modules $M_1$ and $M_2$, then we saw that, after fixing a minimal model for the category consisting of just these two objects, there is a constructible vector bundle $V$ over the space of extensions $\Ext^1(M_2,M_1)$, and a nondegenerate quadratic form $Q$ on $V$, such that the required correction over an extension $M_{\alpha}$ is $\mathbb{L}^{\frac{-\dim(V)}{2}}(1-\MF(Q))$.  Now such a pair $(V',Q')$ on a scheme $X$, in general, determines an element of $J_2(X)$, via the isomorphism induced by $Q'$ between $V'$ and $V'^*$.  Furthermore, the map from such pairs to equivalence classes of relative motives in $\overline{\Mot}^{\muhat}(X)$ (see Section \ref{mvc} and Section 4.5 of \cite{KS} for the definitions) sending $(V',Q')$ to $\mathbb{L}^{\frac{-\dim(V')}{2}}(1-\MF(Q'))$ factors through the map to $J_2(X)$ (note that for this statement to be (known to) be true, we must work with $\overline{\Mot}^{\muhat}(X)$ rather than $\Mot^{\muhat}(X)$).  Finally, one can show that the obstruction element $l$ of $J_2(\VV_{r,3})$ or $J_2(\VV^+_{r,3})$, pulled back to $\Ext^1(M_2,M_1)$, is exactly the element determined by $(V,Q)$.  So in this case if we correct the motivic weight against which we integrate by twisting by $\mathbb{L}^{\frac{-\dim(V)}{2}}(1-\MF(Q'))$, for $Q'$ a nondegenerate quadratic form on a constructible vector bundle that gives rise to the element $h\in J_2(\VV_r)$ or $h\in J_2(\VV_r^+)$, the cocycle condition above is just Condition \ref{maincon}.
\end{rem}
This last remark explains the following definition of the \textit{integration map} of Kontsevich and Soibelman.  First recall that we assume we have a map $\theta:\Kth(\langle h_{\CC}\rangle)\rightarrow \Gamma$, as in Section \ref{mott} and we define $\textbf{M}=\overline{\Mot}^{\muhat}(\Spec(k))[\LL^{1/2},[\GL_n(k)]^{-1}\hbox{ }|\hbox{ }n\in\mathbb{N}]$, as in (\ref{Mdef}).
\begin{defn} [Integration map for stack functions of objects of $\langle h_{\CC}\rangle_{\triang}$]
\label{intmap}
Let $\mathcal{C}$ be a 3-dimensional Calabi-Yau category.  Let $h\in J_2(\VV_r)$ be orientation data for the $\Hom_{\mathcal{C}}$-bimodule on $\VV_r$.  Define 
\[
\Phi_{h}(X)\in \mathrm{\textbf{M}}[x^{\gamma}|\gamma\in\Gamma],
\]
where $X$ is a variety with a flat family of objects $\mathcal{X}$ of $\langle h_{\CC}\rangle_{\triang}$ over it, satisfying $\theta([\mathcal{X}])=\gamma$, by
\begin{equation}
\label{intmapeq}
\Phi_{h}(X)=\alpha^*[-\mathbb{L}^{\frac{\ext^{\leqslant 1}-\dim{V}}{2}}\phi_{(W_{\min}+Q)}]_X\cdot x^{\gamma}
\end{equation}
where $\alpha:X\rightarrow \VV_r$ is a constructible morphism such that the pullback family of objects on $X$ is quasi-isomorphic to $\mathcal{X}$, $W_{\min}+Q$ is considered as a function on the ind-constructible vector bundle $\mathcal{EXT}^1\oplus V$, and $\mathbb{L}^{-\frac{\ext^{\leqslant 1}-\dim{V}}{2}}$ is the constructibly trivial relative motive over $\VV_r$ whose fibre over a point $x$ parameterising an object $M$, is $\mathbb{L}^{(-\dim(V)+\sum_{i\leq 1} (-1)^i \Ext^i(M,M))/2}$ (we say a relative motive is trivial if it is pulled back from the absolute ring of motives along the structure morphism).  The (equivalence class) of the pair $(V,Q)$ is the one determined by the isomorphism class of orientation data $h$, i.e. it is any pair of constructible vector bundle with nondegenerate quadratic form giving rise to $h$.
\end{defn}
\begin{rem}
Although this thesis doesn't aim to cover all of the foundational ground behind its inspiration \cite{KS}, we should at least say something more to convince the reader that the above definition is well-defined, at least forgetting the work that must be done to make sense of motivic vanishing cycles of formal functions (see the discussion following Theorem \ref{TStheorem}).  To this end, we assert that if $\mathcal{X}_1$ and $\mathcal{X}_2$ are two quasi-isomorphic families of objects pulled back from morphisms $X\rightarrow \VV_r$, and $E_1$ and $E_2$ are cyclic minimal models for the pulled back $A_{\infty}$-endomorphism bundles, then there is a cyclic $A_{\infty}$-isomorphism between them.  To prove this, one should consider the ind-constructible cyclic $A_{\infty}$-category with underlying ind-variety of objects given by $X_1\coprod X_2:=X\coprod X$, where the first copy of $X$ parameterises the objects via $\mathcal{X}_1$, and the second via $\mathcal{X}_2$.  This ind-constructible Calabi-Yau category has a minimal model $\mathcal{P}$ as in Theorem \ref{cycmm}.  We consider the full sub ind-constructible Calabi-Yau category $\mathcal{Q}$ with 
\[
\Hom_{\mathcal{Q}}(a,b)=\begin{cases} \Hom_{\mathcal{P}}(a,b) &\hbox{ if }a=b\hbox{ considered as points of }$X$\\0&\hbox{ otherwise.}\end{cases}
\]
Then by assumption there are constructible sections $\alpha_{12}$ and $\alpha_{21}$ of $\HOM_{\mathcal{Q}}|_{(X_1,X_2)}$ and $\HOM_{\mathcal{Q}}|_{(X_2,X_1)}$ arising from the original quasi-isomorphisms of modules.    Adapting the argument of \cite{KLH} to the cyclic bar complex relative to the sub bundles generated by the identity section over $(X_1,X_1)$, the identity section over $(X_2,X_2)$ and the sections $\alpha_{12}$ and $\alpha_{21}$, and observing that the inclusion of the reduced cyclic bar complex is a quasi-isomorphism (see (\cite{Lod98})), one gets that these quasi-isomorphisms can be made strict up to cyclic quasi-isomorphism of the category $\mathcal{Q}$.  It follows that there is a strict isomorphism of ind-constructible $A_{\infty}$-categories between the restrictions $\mathcal{Q}|_{X_1}$ and $\mathcal{Q}|_{X_2}$, given by pre and post composition with $\alpha_{12}$ and $\alpha_{21}$.
\end{rem}
\section{The construction of orientation data}
In this section we will work with self-dual bifunctors of arbitrary odd degree, since we are actually going to state a result, and it is easy to prove it in generality.  As noted in \cite{KS}, the problem of constructing orientation data is straightforward in at least one case.  If $F\cong H\oplus \Ho^{\vee}$, and its self-duality structure is the obvious one, then we can take $\sDet(H)$ as our square root of $\sDet(F)$, and the isomorphism (\ref{deformiso}) applied to $\sDet(H)$ demonstrates that the same isomorphism applied to $\sDet(F)$ has a square root.
\begin{defn}
\label{cansplit}
Let $T$ be a bimodule on $\mathcal{C}$ with a self-duality structure.  We say that $T$ is canonically split if there is some bimodule $H$ on $\mathcal{C}$ and an isomorphism
\[
T\cong H\oplus H^{\vee}
\]
such that the self-duality structure on $T$ is the one induced by the canonical self-duality structure on $H\oplus H^{\vee}$.
\end{defn}
In \cite{KS} orientation data is described by reducing our problem (that of constructing orientation data for the $\Hom$ bifunctor) to the easy one of providing orientation data for the image under $\Theta$ of a canonically split bimodule.  This is achieved by considering families (over $\mathbb{A}^1$) of self-duality structures for $\Hom$ which are canonically split over the zero fibre.  We adopt a slightly different, purely algebraic approach.  We describe a little more fully the link between the two approaches in Remark \ref{linkage}.
\begin{thm}
\label{ODconstr}
Let $n\geq 1$, and let $\mathcal{C}$ be either:
\begin{enumerate}
\item
The category associated to a minimal compact Calabi-Yau $A_{\infty}$-quiver algebra of dimension $2n+1$, with no morphisms of negative degree, or
\item
A finite-dimensional Calabi-Yau $A_{\infty}$-category of dimension $2n+1$ such that higher compositions vanish on degree zero morphisms, and there are no morphisms in negative degree.
\end{enumerate}
Then in either case $\mathcal{C}$ has a sub-bimodule $\mathcal{C}^{\geq n+1}$, whose underlying $\mathcal{C}_{\id}$-module is given by the pieces of $\mathcal{C}$ of degree at least $n+1$. This bimodule provides a differential graded vector bundle, via the map $\Theta$, on $\mathfrak{V}_r$ in case (1), and on $\mathfrak{V}^+_r$ in case (2), which we denote $L$ in either case, and we denote by $\mathcal{L}$ the superdeterminant of $L$.  Then there is a canonical isomorphism
\[
\mathcal{L}^{\otimes 2}\cong \mathcal{H}
\]
and this provides orientation data for $\CC$.
\end{thm}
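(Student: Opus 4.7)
The idea is to extract the desired square root of $\mathcal{H}$ from a natural splitting of $\mathcal{C}$ provided by the Calabi-Yau pairing. Under either hypothesis the pairing of degree $-(2n+1)$ forces $\mathcal{C}$ to be concentrated in degrees $[0, 2n+1]$. First I would check that $L := \mathcal{C}^{\geq n+1}$ is a sub-bimodule: the bimodule actions $m_{i,j}$ of degree $1 - i - j$ preserve $L$ when $i + j \leq 1$ by direct degree count, while for $i + j \geq 2$ they vanish as soon as any input is in degree $0$ (from the hypothesis of case (2), and from strict unitality over $\mathcal{C}_{\id}$ in case (1)), so that the nonzero contributions have enough extra positive degree to land in $L$. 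The Calabi-Yau pairing vanishes on $L \otimes L$ for degree reasons, so it descends to a perfect pairing $L \otimes (\mathcal{C}/L) \to k[-(2n+1)]$ of $\mathcal{C}$-bimodules, identifying $\mathcal{C}/L \cong L^\vee$ and producing a short exact sequence of $\mathcal{C}$-bimodules
\[
0 \to L \to \mathcal{C} \to L^\vee \to 0.
\]

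Applying the exact functor $\Theta$ and restricting to the diagonal of $\mathfrak{V}_r$ (or $\mathfrak{V}_r^+$) gives a short exact sequence of differential graded vector bundles; taking $\sDet$ and using the canonical isomorphism $\sDet(V^\vee) \cong \sDet(V)$ from (\ref{dualcon}) (since $V^\vee = V^*[-(2n+1)]$ involves an odd shift) yields the canonical isomorphism $\mathcal{L}^{\otimes 2} \cong \mathcal{H}$, supplying items \textbf{I} and \textbf{II} of Definition \ref{ODdef}. Condition \textbf{QIS} is automatic because $\mathcal{L}$ is the $\sDet$ of a bundle obtained by applying a functor to the pulled-back family. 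For the cocycle condition \textbf{III}, I would take a triangle $M \to N \to P$ parameterised by $\VV_{r,3}$ or $\VV_{r,3}^+$, and filter $N \otimes_{\mathcal{C}} L \otimes_{\mathcal{C}} N^{\diamond}$ using the filtrations of $N$ and $N^{\diamond}$; the associated graded has four pieces, giving
\[
\mathcal{L}_2 \cong \mathcal{L}_1 \otimes \mathcal{L}_3 \otimes \sDet\bigl(M \otimes_{\mathcal{C}} L \otimes_{\mathcal{C}} P^{\diamond}\bigr) \otimes \sDet\bigl(P \otimes_{\mathcal{C}} L \otimes_{\mathcal{C}} M^{\diamond}\bigr).
\]
Feeding the short exact sequence $0 \to L \to \mathcal{C} \to L^\vee \to 0$ into the middle slot of $\Theta(-)(M, P^{\diamond})$, and using that the Calabi-Yau duality together with (\ref{dualcon}) identifies $\sDet(\Theta(L^{\vee})(M, P^{\diamond})) \cong \sDet(\Theta(L)(P, M^{\diamond}))$, the product of the two cross-term superdeterminants equals $\sDet(\Theta(\mathcal{C})(M, P^{\diamond})) = \mathcal{H}_{1,3}$, which yields the triangle isomorphism required by \textbf{III}.

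The main obstacle is not producing these isomorphisms but verifying the commutativity of diagram (\ref{ODcomm}), i.e.\ that the square of the triangle-isomorphism matches the canonical isomorphism (\ref{deformiso}) coming from the long exact sequence of the triangle and the self-duality of $\Theta(\mathcal{C})$. Rather than tracking associativity and duality signs for superdeterminants directly, the cleanest route is to translate the whole construction into the constructible-function language of Section \ref{thelink}: the pair $(\mathcal{L}, \mathcal{L}^{\otimes 2} \cong \mathcal{H})$ determines a class $h \in J_2(\VV_r)$ (respectively $J_2(\VV_r^+)$), and condition \textbf{III} becomes the single identity $p_1^* h - p_2^* h + p_3^* h = l$ in $J_2(\VV_{r,3})$; the filtration argument above, re-expressed as an identity of $k^{*}$-valued constructible functions modulo squares together with parities, reduces to this identity by exactness of $\sDet$ and the canonical nature of every isomorphism used.
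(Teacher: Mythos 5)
Your proposal is correct and follows essentially the same route as the paper's proof: closure of $\mathcal{C}^{\geq n+1}$ under composition by a degree count plus vanishing on degree-zero inputs, identification of the quotient with $L^{\vee}$ via the Calabi--Yau pairing (the paper phrases this through the cone $\cone(L\to\mathcal{C})$ and its minimal model $N\cong\mathcal{C}^{\leq n}$ with $\tau_L:L\xrightarrow{\sim}N^{\vee}$, which is the same short exact sequence), then $\Theta$, $\sDet$ and the odd-shift duality (\ref{dualcon}) to get $\mathcal{L}^{\otimes 2}\cong\mathcal{H}$, and finally the triangle condition from bifunctoriality of $-\otimes_{\mathcal{C}}L\otimes_{\mathcal{C}}-^{\diamond}$ together with the identification of the two cross-term superdeterminants with $\mathcal{H}_{1,3}$. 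Your suggestion to settle the compatibility of squares in the $J_2$/cocycle language is consistent with the paper's own reduced Definition \ref{redod}, so it is only a cosmetic difference.
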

See Definition \ref{compactCyalg} for the definition of a compact Calabi-Yau $A_{\infty}$-quiver algebra.  Of course the first case is a special case of the second, in which we are in the happy situation of not having to deal with the slightly complicated $\mathfrak{V}_r^+$.
\begin{proof}
We first prove the statement that for all $\phi\in \Hom^t(x,y)$ for $t\geq n+1$ and $x,y\in\mathcal{C}$, and for all homomorphisms $\alpha_1,\ldots,\alpha_j$, and $\beta_1,\ldots,\beta_j$ in $\mathcal{C}$,
\[
m_{\mathcal{C},i+j+1}(\alpha_1,\ldots,\alpha_i,\phi,\beta_1,\ldots,\beta_j)
\]
has degree at least $(n+1)$.  For degree reasons, it is sufficient to check this under the assumption that at least one of the $\alpha$ or $\beta$ is of degree less than 1.  The statement then follows from the $\CC^0$-unitality of $\mathcal{C}$.  Next we consider
\[
S:=\cone(L\rightarrow\mathcal{C}),
\]
the cone of the inclusion morphism.  One can check easily that a minimal model $N$ for this $\mathcal{C}$-bimodule is given as follows.   Firstly, $N(x,y)=\Hom_{\mathcal{C}}^{\leq n}(x,y)$.  It is sufficient to define the composition morphisms for $N$ just for homogeneous morphisms of $\mathcal{C}$.  On these the composition for $N$ is just the composition for $\mathcal{C}$, unless the result is of degree $(n+1)$ or more, in which case it is zero.\smallbreak
We next claim that restricting the isomorphism of $\mathcal{C}$-bimodules
\[
\tau:\mathcal{C}\rightarrow\mathcal{C}^{\vee}
\]
to $L$, we obtain a quasi-isomorphism
\[
\tau_{L}:L \rightarrow N^{\vee}.
\]
This is easy to check, for dimension reasons apart from anything else.  In fact we have a commuting diagram of strict $\mathcal{C}$-bimodule morphisms, in which all the columns are isomorphisms
\begin{equation}
\label{lagdiag}
\xymatrix{
L\ar[d]_{\tau_L}\ar[r]^i & \mathcal{C} \ar[d]_{\tau}\ar[r]^q & N\ar[d]_{\tau_L^{\vee}} \\
N^{\vee}\ar[r]^{q^{\vee}} & \mathcal{C}^{\vee}\ar[r]^{i^{\vee}} & L^{\vee}.
}
\end{equation}
Since the top row of (\ref{lagdiag}) is quasi-isomorphic to a triangle of $\mathcal{C}\biModi\mathcal{C}$, so is its image under $\Theta$ (since differential graded functors preserve cones).  Given a triangle
\[
\xymatrix{
V_1\ar[r] & V_2\ar[r]& V_3
}
\]
of differential graded vector bundles on a scheme $X$, with finite-dimensional total homology, we obtain a canonical isomorphism $\sDet(V_2)\cong \sDet(V_1)\otimes \sDet(V_3)$.  In particular, if $\mathcal{N}$ is the superdeterminant of the differential graded vector bundle induced by $\Theta(N)$ on $\mathfrak{V}_r$ or $\mathfrak{V}_r^+$, then there is a canonical isomorphism
\[
\mathcal{H}\cong \mathcal{L}\otimes\mathcal{N}
\]
as well as a canonical isomorphism
\[
\mathcal{L}\cong\mathcal{N},
\]
by (\ref{dualcon}).  Putting these together we obtain an isomorphism
\[
\mathcal{H}\cong\mathcal{L}^{\otimes 2}.
\]
On $\mathfrak{V}_{r,3}$ or $\mathfrak{V}^+_{r,3}$, the ind-variety of triangles, we set 
\[
\mathcal{L}_{i,j}=p^*_{i,j}(\sDet(L_{\VV_r}\otimes L\otimes \lambda^*(L_{\VV_l})))
\]
or
\[
\mathcal{L}_{i,j}=p^*_{i,j}(\sDet(L_{\VV^+_r}\otimes L\otimes \lambda^*(L_{\VV^+_l}))),
\]
and we define $\mathcal{N}_{i,j}$ similarly.  Since $\mathcal{L}$ comes from the bifunctor $-\otimes_{\mathcal{C}} L\otimes_{\mathcal{C}} -^{\diamond}$, there is a quasi-isomorphism
\[
\mathcal{L}_1^{-1}\otimes \mathcal{L}_2\otimes \mathcal{L}^{-1}_3\otimes \mathcal{L}_{1,3}\otimes\mathcal{L}_{3,1}\cong \idmon.
\]
Since $\mathcal{N}_{1,3}\cong \mathcal{L}_{3,1}$, we obtain a commuting square of isomorphisms
\[
\xymatrix{
\mathcal{L}_1\otimes \mathcal{L}_2^{-1}\otimes\mathcal{L}_3\ar[r]\ar[d] & \mathcal{H}_{1,3}\ar[d]\\
\mathcal{N}_1\otimes \mathcal{N}_2^{-1}\otimes\mathcal{N}_3\ar[r]&\mathcal{H}_{3,1}
}
\]
and we're done.
\end{proof}
\begin{rem}
Note that the conditions of the theorem are quite weak.  For instance any odd-dimensional Calabi-Yau category that is also a differential graded category satisfies condition $(2)$.  For a category with one object, identified as an algebra $A$, this is the condition that $A$ is $A^0$-unital.
\end{rem}
\begin{defn}
\label{lagdef}
Let $\mathcal{C}_{\id}$ be the subcategory of $\mathcal{C}$ with the same objects as $\mathcal{C}$, and with only (scalar multiples of) the identity morphisms.  A $\mathcal{C}$-bimodule $L$ fitting into a diagram as in (\ref{lagdiag}), in which the rows are strict morphisms of $\mathcal{C}$-bimodules, and short exact sequences of underlying $\mathcal{C}_{\id}$-bimodules, will be called a Lagrangian sub-bimodule.
\end{defn}
\begin{rem}
\label{linkage}
Intuitively, the approach of \cite{KS} to constructing orientation data in the situation of Theorem \ref{ODconstr}(1) involves deforming the short exact sequence arising from the inclusion of the Lagrangian bimodule $\mathcal{C}^{\geq 2}$ in $\mathcal{C}$ until it splits, and then pulling back the canonical orientation data arising from the canonical splitting of the deformed version of $\mathcal{C}$.
\end{rem}
\chapter{Examples}
\label{exchap}
\section{Integration map for nilpotent modules over a Jacobi algebra}
\label{intnilp}
Let $(Q,W)$ be, as before, a quiver with potential.  Consider the quasi-equivalence of categories of Proposition \ref{sumup}
\[
\xymatrix{
\tw_{r}(\DD_r(Q,W))\ar[r]^-{\sim} &\rmodi \Gamma(Q,W).
}
\]
There is a full sub $A_{\infty}$-category of the right hand category given by modules concentrated in degree zero.  On the left hand side, this is quasi-equivalent to the category containing $h_{\mathcal{C}}$, and closed under cones of morphisms $M\rightarrow N[1]$.  In other words there is a quasi-equivalence of $A_{\infty}$-categories between the $A_{\infty}$-category of right modules for $\Gamma(Q,W)$ concentrated in degree zero, which we denote $\mathcal{A}$, and the full sub $A_{\infty}$-category of $\tw_r(\DD_r(Q,W))$, which we denote $\mathcal{A}_{\tw}$, with objects given by the $k$-points of 
\begin{equation}
\label{AAdef}
\mathfrak{A}_r:=\coprod_{\tau\in (h_{\CC})^n,n\in\mathbb{N}} \VV_{r,\tau},
\end{equation}
where $\VV_{r,\tau}$ is as in Proposition \ref{kl1}.  Note that for degree reasons, an $A_{\infty}$-module concentrated in degree zero over $\Gamma(Q,W)$ is just an ordinary module over the ordinary algebra $\Ho^0(\Gamma(Q,W))$.  Given a $\DD_r(Q,W)$-bimodule $T$ with a self-duality structure, we define orientation data for $T$ on $\mathfrak{A}_r$ as before, but now we restrict the cocycle condition to morphisms of degree 1 in $\mathcal{A}_{\tw}$, i.e. to those morphisms such that the mapping cone remains in $\mathcal{A}_{\tw}$.  Trivially, then, orientation data for $T$ on $\VV_r$ provides orientation data for $T$ on $\mathfrak{A}_r$. 
\medbreak
We construct an integration map $\Phi_{\mathcal{A}}$ for the stack functions for $\mathcal{A}$, i.e. given a flat family of nilpotent right $\Gamma(Q,W)$-modules $\mathcal{F}$ over a scheme $X$, we are going to construct an absolute motive, which will be its contribution to the Donaldson--Thomas count.  Up to constructible decomposition we may assume that the underlying vector bundle of our family is trivial, with fibre some $n$-dimensional vector space $V$.  We decompose this vector space as
\[
V=\oplus V_i,
\]
where $V_i=Ve_i$.  We fix a basis for each $V_i$, in this way obtaining a basis for $V$.  We order this basis so that every arrow of $Q$ is sent to a family of strictly upper-triangular matrices (working, again, up to constructible decomposition), and denote this (ordered) basis $(v_1,\ldots,v_m)$, where $v_t\in V e_{i(t)}$, for some function $i$ from the numbers $\{1\ldots m\}$ to the vertices of $Q$.  Given such a decomposition, we define
\[
\Rep(\Gamma(Q,W),\oplus V_i)
\]
to be the set of right modules of $\Ho^0(\Gamma(Q,W))$ respecting this decomposition, i.e. if $a$ is an arrow from $i$ to $j$ in $Q$, and $f\in \Rep(\Gamma(Q,W),\oplus V_i)$, then $f(a)$ can be considered as a linear map from $V_j$ to $V_i$.
\bigbreak
The space $\Rep(\Gamma(Q,W),\oplus V_i)$ is a Zariski closed subspace of 
\[
\bigoplus_{i,j \hbox{ vertices of } Q}\Hom_k(e_i \Ho^0(\Gamma(Q,W)) e_j,\Hom(V_i,V_j)):=D_{\oplus V_i},
\]
and occurs as the scheme-theoretic degeneracy locus of $\tra(W)$ on $D_{\oplus V_i}$ (see e.g. \cite{Conifold}).  If $P$ is the base for our flat family of nilpotent modules, we obtain a constructible morphism $g:P\rightarrow D_{\oplus V_i}$, with image in the intersection of scheme-theoretic degeneracy locus of the function $\tra(W)$ with the strictly upper-triangular matrices.  We define the \textit{dimension vector} of a module parameterised by $P$ to be the $n$-tuple $(\dim(V_1),\ldots,\dim(V_n))$.  Given our quiver $Q$ we obtain a $n\times n$ incidence matrix $M_Q$ with $(M_Q)_{ij}$ equal to the number of arrows from $i$ to $j$, and we define $\langle t_1,t_2\rangle_Q$ for two dimension vectors $t_1$ and $t_2$ by
\[
\langle t_1,t_2\rangle_Q:=t_1 t_2^T-t_1 M_Q t_2^T.
\]
\begin{defn}
\label{abintmap}
We define the integration map $\Phi_{\mathcal{A}}$ by setting 
\[
\Phi_{\mathcal{A}}(P)\in \maM[\LL^{1/2},[\GL_n(k)]^{-1}\hbox{ }|\hbox{ }n\in\mathbb{N}][x^\gamma|\gamma\in\mathbb{Z}^n]
\]
to be
\[
\Phi_{\mathcal{A}}(P)=g^*[-\phi_{\tra(W)}\mathbb{L}^{\frac{1}{2}\langle,\rangle_Q}]_P\cdot x^{v},
\]
where $\mathbb{L}^{\frac{1}{2}\langle,\rangle_Q}$ is the constructibly trivial relative motive over $\VV_{r}$ with fibre over a module $M$ given by $\mathbb{L}^{\frac{1}{2}\langle \dim(M),\dim(M) \rangle_Q}$, and $v$ is the dimension vector of the modules parameterised by $P$.
\end{defn}
By construction (see Section \ref{NCCY}) there is a natural isomorphism 
\[
D_{\oplus V_i}\cong \bigoplus_{i,j}\Hom_{\mathcal{D}_r(Q,W)^{\oplus}}^1(s_i\otimes V_i, s_j\otimes V_j),
\]
inducing an isomorphism
\[
\mathfrak{V}_{h_{s_{i(1)}},\ldots,h_{s_{i(n)}}}\cong \Rep(\Gamma(Q,W),\oplus V_i)\cap D_{\oplus V_i, \sut},
\]
where $D_{\oplus V_i, \sut}$ is the subspace of $D_{\oplus V_i}$ consisting of strictly upper-triangular matrices.  This follows from the fact that $\mathfrak{V}_{h_{s_{i(1)}},\ldots,h_{s_{i(n)}}}$ is the zero locus of the Maurer-Cartan equations, which occurs as the scheme-theoretic degeneracy locus of $W$ (see e.g. \cite{KSdef}).  So after constructible decomposition, we can consider our family of objects in $\mathcal{A}$ as a family of objects in $\mathcal{A}_{\tw}$.  Let $g:P\rightarrow \mathfrak{V}_{h_{s_{i(1)}},\ldots,h_{s_{i(n)}}}$ be a family of objects in $\mathcal{A}_{\tw}$.  Consider the differential graded vector bundle 
\[
g^*\Delta^*(\mathcal{HOM}^1|_{\mathfrak{V}_{(h_{s_{i(1)}},\ldots,h_{s_{i(n)}})}\times \mathfrak{V}_{(h_{s_{i(1)}},\ldots,h_{s_{i(n)}})}}),
\]
which as a graded vector bundle is just the trivial vector bundle with fibre $D_{\oplus V_i}$ on the space $P$.  Let $T_P$ be the total space of this vector bundle, then there is a morphism
\begin{equation}
\label{jdef}
j_M:T_P\rightarrow D_{\oplus V_i}
\end{equation}
given by $(N,a)\mapsto g(N)+a$.  We let $W_{T_P}$ be the (non-minimal) potential on the total space of $g^*\Delta^*(\mathcal{HOM}^1|_{\mathfrak{V}_{(h_{s_{i(1)}},\ldots,h_{s_{i(n)}})}\times \mathfrak{V}_{(h_{s_{i(1)}},\ldots,h_{s_{i(n)}})}})$, i.e. the pullback of the potential on the endomorphism bundle.  The key observation is the following
\begin{prop}
\label{reveng}
There is an equality of functions on $T_P$
\begin{equation}
\label{crux}
W_{T_P}=j_M^*(\tra(W))
\end{equation}
\end{prop}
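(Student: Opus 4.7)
The plan is to prove \eqref{crux} by a direct expansion of both sides in terms of the homogeneous components $W_n$ of the superpotential, and to identify them term by term using the construction of the higher compositions in $\mathcal{D}_r(Q,W)$ recalled in Section \ref{NCCY}. Writing $M := g(N)$, the right-hand side at a point $(N,a) \in T_P$ is simply $\tra(W)(M+a)$, where $M+a$ is viewed as an element of $D_{\oplus V_i}$, i.e.\ the collection of matrices of arrows. Expanding $W = \sum_{n\geq 2}\frac{1}{n}W_n$ and using multilinearity of trace of a product of arrows, $\tra(W)(M+a)$ becomes $\sum_{n\geq 2}\frac{1}{n}\sum_{p+q = n}\tra(W_n)(M,\ldots,M,a,M,\ldots,a,\ldots)$, the inner sum running over all insertions of $p$ copies of $M$ and $q$ copies of $a$ into the slots of the cyclic word $W_n$.

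The left-hand side is by definition $W_{T_P}(N,a) = \sum_{n\geq 2}\frac{1}{n}\langle b_{\tw_r(\mathcal{D}_r(Q,W)),n-1}(a,\ldots,a), a\rangle_{\tw_r}$, where the multiplications are the twisted compositions \eqref{twmult} on $\tw_r(\mathcal{D}_r(Q,W))$ associated to the Maurer-Cartan matrix $M$. Expanding \eqref{twmult}, each twisted $(n-1)$-ary composition is itself a finite sum over insertions of $M$ between the $a$-entries, applied to the original higher multiplications of $\mathcal{D}_r(Q,W)^{\oplus}$. By Section \ref{NCCY}, on the degree $1$ part those multiplications are given precisely by contraction with $\mu(W_{n+1})$, and the Calabi-Yau pairing $\langle\bullet,\bullet\rangle_{\tw_r}$ of Theorem \ref{goup} then converts such a contraction into the cyclic trace $\tra(W_n)$ evaluated on the relevant string of matrix arguments.

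Comparing the two expansions term by term, the only check to make is combinatorial: cyclically symmetrising $\mu(W_n)$ and then re-inserting the $M$'s in all possible positions reproduces, with the correct $\frac{1}{n}$ prefactor, the full multilinear expansion of $\tra(W_n)(M+a,\ldots,M+a)$. This is exactly the reason that the map $\mu$ was introduced in the definition of $\mathcal{D}_r(Q,W)$, and it is the categorical mechanism by which the Jacobi algebra structure $\Ho^0(\Gamma(Q,W)) = kQ/\langle \partial W \rangle$ matches the Maurer-Cartan equation $\sum_i b_i(M,\ldots,M) = 0$. In particular, for $a = 0$ one recovers the known identity that the Maurer-Cartan locus of $\tw_r(\mathcal{D}_r(Q,W))$ at the tuple $(h_{s_{i(1)}},\ldots,h_{s_{i(n)}})$ coincides scheme-theoretically with the critical locus of $\tra(W)$ restricted to the strictly upper-triangular matrices.

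The main obstacle is purely bookkeeping: one must keep track of the cyclic symmetry factor $\frac{1}{n}$, the sign conventions attached to the shifts $S^{\pm 1}$ implicit in the passage from $m_i$ to $b_i$ (see Remark \ref{annoyingsigns}), and the fact that on the twisted category one sums over arbitrary insertions of the Maurer-Cartan matrix between arguments. Once these bookkeeping aspects are handled, the identity is a tautology coming from the very definition of $\mathcal{D}_r(Q,W)$. This also recovers the special case recorded earlier (the proposition preceding \eqref{Vdef}), namely $W_\alpha(a) = W(\alpha + a)$, as the two-object, two-by-two matrix instance of the present statement.
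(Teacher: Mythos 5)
Your proposal is correct and follows essentially the same route as the paper: expand the twisted potential at a point $(N,a)$ as a sum over insertions of the Maurer--Cartan matrix into the bare compositions of $\mathcal{D}_r(Q,W)$, expand $\tra(W)(M+a)$ multilinearly, and match the two using cyclic invariance of $\langle b_{\bullet}(\cdot,\ldots,\cdot),\cdot\rangle$, which is exactly how the $\tfrac{1}{i}$ versus $\tfrac{1}{n}$ prefactors reconcile. The paper simply states this collapse in one line (rewriting the insertion sum as $\sum_i \tfrac{1}{i}\langle b_{i-1}(T_N+a,\ldots,T_N+a),T_N+a\rangle$ by cyclic symmetry), so your slightly more explicit bookkeeping is the same argument.
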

\begin{proof}
This follows from the construction of the higher compositions in $\tw_r(\mathcal{D}_r(Q,W))$ (see Definition \ref{twl}) and the Calabi-Yau structure on the graded vector bundle $\mathcal{HOM}^{\bullet}$ on $\tw_r(\mathcal{D}_r(Q,W)$ (see Theorem \ref{goup}).  At a point $(N,a)$, with $N$ an object of $\tw_r(\mathcal{D}_r(Q,W))$ with associated matrix $T_N$ the left hand side of (\ref{crux}) is equal to
\[
\sum_{i\geq 1}\frac{1}{i}\langle b_{i-1+p_0+\ldots+p_{i-1}}(T_N^{\otimes p_0},a,\ldots,a,T_N^{\otimes p_{i-1}}),a\rangle
\]
which is equal to 
\[
\sum_{i\geq 1}\frac{1}{i}\langle b_{i-1}(T_N+a,\ldots,T_N+a),T_N+a\rangle
\]
by cyclic symmetry of $\langle b_{i-1}(\bullet,\ldots,\bullet),\bullet\rangle$, which is the right hand side of (\ref{crux}).
\end{proof}

\begin{thm}
\label{BBScompare}
Let $\mathcal{D}_r(Q,W)$ be as above.  We give $\VV_r$ the orientation data $c$ of Theorem  \ref{ODconstr}.  Then there is an equality $\Phi_c|_{\mathcal{A}}=\Phi_{\mathcal{A}}$.
\end{thm}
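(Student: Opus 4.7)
My plan is to compare the two integration maps fiberwise, using Proposition \ref{reveng} as the key geometric input and the cyclic minimal model theorem (Theorem \ref{cycmm}) together with motivic Thom--Sebastiani (Theorem \ref{TStheorem}) as the key formal input, and then to verify that the orientation data of Theorem \ref{ODconstr} records exactly the quadratic form produced by the minimal model.

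First I would reduce, using Theorem \ref{constrdec} and constructible decomposition, to the case of a family of nilpotent $\Gamma(Q,W)$-modules over an irreducible scheme $X$ pulled back along a map $g : X \to \mathfrak{V}_{r,\tau}$, with trivialized underlying vector bundle $V = \bigoplus V_i$. Under the identification of $\mathfrak{V}_{r,\tau}$ with the locus of strictly upper-triangular matrices inside $\crit(\tra(W)) \subseteq D_{\oplus V_i}$, the map $g$ coincides with the classifying map used in Definition \ref{abintmap}. Next I would use Proposition \ref{reveng}: pulling $\mathcal{END}^1$ back along $g$ gives the trivial bundle $T_X = X \times D_{\oplus V_i}$ carrying the endomorphism potential $W_{T_X}$, and the proposition asserts $W_{T_X} = j_M^{\ast}(\tra(W))$, where $j_M(x,a) = g(x)+a$. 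Since $j_M$ is smooth (fiberwise a translation), pullback compatibility of the motivic vanishing cycle gives $[\phi_{W_{T_X}}] = j_M^{\ast}[\phi_{\tra(W)}]$; restricting to the zero section $X \hookrightarrow T_X$ identifies $g^{\ast}[\phi_{\tra(W)}]_X$ with the motivic vanishing cycle of $W_{T_X}$ viewed as a formal function along the zero section.

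Then I would invoke Theorem \ref{cycmm} constructibly on $X$, producing a splitting $\mathcal{END}^{\bullet} \cong \mathcal{EXT}^{\bullet} \oplus V^{\bullet}$ with $V^{\bullet}$ acyclic, after which $W_{T_X}$ decomposes (up to a formal change of coordinates not affecting the vanishing cycle) as the Thom--Sebastiani sum $W_{\min} \oplus Q$, where $Q$ is the nondegenerate quadratic form on the contractible summand identified in Chapter \ref{example} as the Calabi--Yau pairing of $\mathcal{HOM}^1/\Ker b_1$ with its $b_1$-image. Motivic Thom--Sebastiani (in the formal version indicated after Theorem \ref{TStheorem}) then yields
\[
g^{\ast}[\phi_{\tra(W)}]_X = [\phi_{W_{\min} + Q}]_X,
\]
which is the motivic weight appearing in $\Phi_c$.

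The main obstacle is verifying two compatibilities. First, one must show that the pair $(V,Q)$ produced by the minimal model determines the same class in $J_2(X)$ (as in Section \ref{thelink}) as the orientation data $c$ coming from the Lagrangian sub-bimodule $\mathcal{C}^{\geq 2}$ of Theorem \ref{ODconstr}; both are governed by the Calabi--Yau pairing between the non-minimal pieces of $\mathcal{END}^1$ and $\mathcal{END}^2$, and I would expect the comparison to be functorial once one unravels the construction of $-^{\vee}$ applied to $\mathcal{C}^{\geq 2}$ via $\Theta$ and restricts to the diagonal. Second, one must verify that the half-integer powers of $\mathbb{L}$ match: using that $\mathcal{D}_r(Q,W)$ is 3-Calabi--Yau, so that the self Euler characteristic satisfies $\chi(M,M) = 2\,\ext^{\leq 1}(M)$ and equals $\langle \dim M,\dim M\rangle_Q$ modulo a contribution absorbed by the $\dim V$ correction, and accounting for the dimension of $D_{\oplus V_i}$ relative to $\mathcal{EXT}^1$, the Tate twists in Definitions \ref{intmap} and \ref{abintmap} agree. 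I expect the $J_2$-class identification to be the more delicate step, with the dimension bookkeeping being routine once it is performed.
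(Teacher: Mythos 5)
Your route is the same as the paper's: factor the family through the zero section of the total space of $\mathcal{HOM}^1$, use Proposition \ref{reveng} to identify the pulled-back $\tra(W)$ with the total potential of the twisted-objects category, split it with Theorem \ref{cycmm}, apply motivic Thom--Sebastiani, and reduce everything to comparing two quadratic-form classes in $J_2$. The genuine gap is exactly the step you defer: you only state an expectation that the pair $(V,Q)$ produced by the minimal model and the orientation data $c$ of Theorem \ref{ODconstr} give the same $J_2$-class, ``once one unravels $-^{\vee}$ applied to $\mathcal{C}^{\geq 2}$''. That unravelling is the actual content of the proof and is not a formal functoriality statement. It uses the special feature of the heart: objects of $\mathcal{A}_{\tw}$ are built from \emph{unshifted} $h_{s_i}$ only, so $\Theta_{\tw}(\mathcal{D}_r(Q,W)^{\geq 2})$ restricted to the diagonal is literally the degree $2$ and $3$ part of the endomorphism complex $\mathcal{D}_r(Q,W)_{\tw}(M,M)$. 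From this one gets a canonical isomorphism of super line bundles with trivialized square
\[
\sDet\bigl(\mathcal{D}_r(Q,W)^{\geq 2}_{\tw}(M,M)\bigr)\otimes \sDet\bigl(\Ho^{\geq 2}(\mathcal{D}_r(Q,W)_{\tw}(M,M))\bigr)^{-1}\;\cong\;\sDet(\Image(d^1))\;\cong\;\sDet\bigl(\mathcal{HOM}^1/\Ker(d^1)\bigr),
\]
identifying the orientation-data class with the class of $(V_2,Q_2)=(\mathcal{HOM}^1/\Ker(d^1),\langle d(\bullet),\bullet\rangle)$ coming from the minimal-model splitting; since $\mathbb{L}^{-\dim(V_i)/2}(1-\MF(Q_i))$ depends only on this class (see the remark preceding Definition \ref{intmap}), the two motivic weights then agree in families. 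Without this (or an equivalent) computation the argument is incomplete, and note that the identification really does use the restriction to $\mathcal{A}$: on shifted twisted objects the bimodule $\mathcal{D}_r(Q,W)^{\geq 2}_{\tw}$ is no longer the degree-$\geq 2$ truncation of the endomorphism complex.

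A smaller point: your dimension bookkeeping is off. In a $3$-Calabi--Yau category $\chi(M,M)=0$ (since $\ext^3=\ext^0$ and $\ext^2=\ext^1$), not $2\,\ext^{\leq 1}(M)$. The identity actually needed to match the Tate twists of Definitions \ref{intmap} and \ref{abintmap} is $\ext^{\leq 1}(M)-\dim(V_2)=\langle\dim M,\dim M\rangle_Q$, which follows from $\dim\mathcal{HOM}^0=\mathbf{v}\mathbf{v}^T$, $\dim\mathcal{HOM}^1=\mathbf{v}M_Q\mathbf{v}^T$ and the exactness count $\dim\mathcal{HOM}^1=\ext^1+(\dim\mathcal{HOM}^0-\ext^0)+\dim V_2$; the residual comparison of $\dim V_1$ with $\dim V_2$ is not a separate count but is again absorbed into the $J_2$-class identification above.
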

\begin{proof}
Consider the morphism
\[
\pi:\Tot(\HOM^1|_{\mathfrak{V}_{(h_{s_{i(1)}},\ldots,h_{s_{i(n)}})}\times \mathfrak{V}_{(h_{s_{i(1)}},\ldots,h_{s_{i(n)}})}})\rightarrow \Rep(\Gamma(Q,W),\oplus V_i)
\]
defined as in (\ref{jdef}).  We have already observed that, up to constructible decomposition, we can assume that a family $P\rightarrow \Rep(\Gamma(Q,W),\oplus V_i)$ factors through the inclusion of the zero section of the space $\Tot(\HOM^1|_{\mathfrak{V}_{(h_{s_{i(1)}},\ldots,h_{s_{i(n)}})}\times \mathfrak{V}_{(h_{s_{i(1)}},\ldots,h_{s_{i(n)}})}})$.  One may use the previous proposition, and the formula (\ref{MFformula}) to show also that $\pi^*(\phi_{\tra(W)})=\phi_{W_{\mathrm{tot}}}$, where $W_{\mathrm{tot}}$ is the total potential constructed via the Calabi-Yau structure on the category of twisted objects (as in Theorem \ref{goup}).  We may split the superpotential as in Theorem \ref{cycmm}, and apply Theorem \ref{TStheorem}, the motivic Thom-Sebastiani theorem (we ignore here the problem with formal functions, that is, we assume that the left hand side of the equation in the theorem is well defined -- see the remark after Theorem \ref{TStheorem}).  We deduce that $\Phi_{\mathcal{A}}$ is given by integrating against the motivic weight $\LL^{(\ext^{\leq 1}(M)-\dim(V_2))/2}(1-\MF(W_{\min})(1-\MF(Q_2))$, where $Q_2$ is the quadratic form on $V_2:=\HOM^1|_{\mathfrak{V}_{(h_{s_{i(1)}},\ldots,h_{s_{i(n)}})}\times \mathfrak{V}_{(h_{s_{i(1)}},\ldots,h_{s_{i(n)}})}}/(\Ker(d^1))$ given by the bracket $\langle d(-),-\rangle$, while $\Phi_c|_{\mathcal{A}}$ is given by integrating with the motivic weight $\LL^{(\ext^{\leq 1}(M)-\dim(V_1))/2}(1-\MF(W_{\min})(1-\MF(Q_1))$, where $Q_1$ is a quadratic form on a constructible vector bundle $V_1$, given up to equivalence by the choice of orientation data.  
\smallbreak
We can explicitly describe the class of $Q_1$ in this case.  For an arbitrary twisted object $M$ parameterised by $\mathfrak{V}_{(h_{s_{i(1)}},\ldots,h_{s_{i(n)}})}$, the differential graded vector space $\DD_r(Q,W)^{\geq 2}_{\tw}(M,M)$ is precisely the degree 2 and 3 pieces of the differential graded vector space $\DD_r(Q,W)_{\tw}(M,M)$, calculating self extensions.  It follows that 
\[
\sDet(\DD_r(Q,W)^{\geq 2}_{\tw}(M,M))\otimes \sDet(\Ho^{\geq 2}(\DD_r(Q,W)_{\tw}(M,M)))^{-1}
\]
is canonically isomorphic to $\sDet(\Image(d^1))$, as a super vector space with trivialized square, where $d$ is the differential on $\DD_r(Q,W)_{\tw}(M,M)$.  The super line bundle $\sDet(\Image(d^1))$, along with the trivialization of its square, is in turn canonically isomorphic to $\sDet(V_2)$.  Since the classes of $\LL^{-\dim(V_i)/2}(1-\MF(Q_i))$, for $i=1,2$, are determined, in families, by the same isomorphism class of constructible super line bundles with trivialized square, the theorem follows.
\end{proof}

\section{The case of a quiver with $W=0$}
Let $Q$ be a quiver, and set $W=0$ to be the trivial linear combination of cycles of $Q$.  If one defines, as in Section \ref{basicexample}, the spaces
\begin{align*}
\Rep_n(\Gamma(Q,W))=&\Hom(\Ho^0(\Gamma(Q,W)),\Mat_{n\times n}(k))\\
\cong & \Hom(k Q, \Mat_{n\times n}(k)),
\end{align*}
then the spaces $\Rep_n(\Gamma(Q,W))$ are smooth, and after taking the quotient stack under the action of $\GL_n(k)$ on $\Rep_n(\Gamma(Q,W))$ by conjugation, one obtains a smooth stack.  In common with representation spaces of superpotential algebras these are degeneracy loci of functions on smooth ambient spaces -- the special feature here is that the representation spaces \textit{are} the ambient spaces, and the functions that they are degeneracy loci of, are identically zero.
\smallbreak
If $Q$ has no oriented cycles then the moduli spaces $\Rep_n$ and $\Rep_n/\GL_n(k)$ above are of course special cases of stack functions in the more general framework of motivic Donaldson--Thomas invariants: they are families of objects in the 3-dimensional Calabi-Yau category $\Perf(\rModi(\mathcal{D}_r(Q,W))$.  This remains true in those cases in which $Q$ has oriented cycles, if we consider only nilpotent representations in $\Rep_n(\Gamma(Q,W))$.  So one expects that the motivic Donaldson--Thomas count will be given by pulling back the motivic weight 
\[
(1-\MF(0))\LL^{-\dim(\Rep_n(\Gamma(Q,W))/\GL_n(k))/2}=(1-\MF(0))\LL^{\ext^{\leq 1}/2}
\]
which is defined to be just $\LL^{\ext^{\leq 1}/2}$.  For this to be the case, from the definition of the integration map (see Definition \ref{intmap}), we expect that the contribution from the orientation data should be trivial.  We will show that this is true in the case in which our families are of objects in the Abelian category of nilpotent $kQ$-modules.
\begin{thm}
\label{trivpot}
Let $Q$ be a quiver, endowed with the potential $W=0$, and let $\mathfrak{A}_r$ be as in Theorem \ref{BBScompare}.  Then the element $h\in J_2(\mathfrak{A}_r)$ arising from the orientation data of Theorem \ref{ODconstr} is trivial.
\end{thm}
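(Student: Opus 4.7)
The plan is to exploit Theorem~\ref{BBScompare} in order to reduce the claim to a concrete vanishing statement about the differential on the endomorphism complex. The proof of Theorem~\ref{BBScompare} represents the class $h\in J_2(\mathfrak{A}_r)$ by the pair $(V_2,Q_2)$, where $V_2:=\HOM^1/\Ker(d^1)$ restricted to the diagonal, and $Q_2(x):=\langle d^1(x),x\rangle$ is the quadratic form induced by the Calabi--Yau bracket. Thus it suffices to show that $V_2=0$ on every component of $\mathfrak{A}_r$, and this will follow from the assertion that $d^1$ is identically zero on every fibre of $\HOM^1$.

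To establish $d^1=0$ when $W=0$, the approach is a dimension count on the endomorphism complex $\End^{\bullet}(M)$ of a twisted object $M$ with dimension vector $\mathbf{d}=(d_1,\ldots,d_n)$. The description of $\mathcal{D}_r(Q,0)$ in Section~\ref{NCCY} gives $\dim\End^0(M)=\sum_i d_i^2$ and $\dim\End^1(M)=\sum_{a\colon i\to j}d_id_j$. Under the Koszul-duality equivalence of Proposition~\ref{sumup}, and using that $M$ is concentrated in degree zero as a $\Gamma(Q,0)$-module, one identifies $\Ho^0(\End^{\bullet}(M))\cong\Hom_{kQ}(M,M)$ and $\Ho^1(\End^{\bullet}(M))\cong\Ext^1_{kQ}(M,M)$; the hereditary property of $kQ$ ensures higher $\Ext$-groups do not pollute these low-degree identifications. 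Combining these with the Euler form identity $\dim\Hom_{kQ}(M,M)-\dim\Ext^1_{kQ}(M,M)=\langle\mathbf{d},\mathbf{d}\rangle_Q$ together with the standard relations $\dim\Ker(d^0)=\dim\Ho^0$ and $\dim\Ker(d^1)=\dim\Ho^1+\dim\Image(d^0)$ yields
\[
\dim\Image(d^1)=\dim\End^1-\dim\End^0+\langle\mathbf{d},\mathbf{d}\rangle_Q=0,
\]
so $d^1=0$ as desired.

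With $d^1=0$ the bundle $V_2$ vanishes on every component of $\mathfrak{A}_r$ and $(V_2,Q_2)$ represents $0\in J_2(\mathfrak{A}_r)$; by the comparison in the proof of Theorem~\ref{BBScompare} between the Thom--Sebastiani pair $(V_2,Q_2)$ and the orientation-data pair $(V_1,Q_1)$, the class of $(V_1,Q_1)$, which is $h$, must also vanish. The hardest step, I expect, is verifying that the identifications of $\Ho^0(\End^{\bullet}(M))$ and $\Ho^1(\End^{\bullet}(M))$ with $\Hom_{kQ}(M,M)$ and $\Ext^1_{kQ}(M,M)$ hold constructibly in families over $\mathfrak{A}_r$, and in particular that the nilpotency conditions required for quivers with oriented cycles are compatible with the Koszul-duality equivalence underlying Proposition~\ref{sumup}.
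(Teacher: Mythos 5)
Your proposal is correct, and it shares the paper's first step: both quote Theorem \ref{BBScompare} to identify $h$ with the class in $J_2(\mathfrak{A}_r)$ of the pair $\bigl(\Delta^*(\HOM^1)/\Ker(d),\ \langle d(\bullet),\bullet\rangle\bigr)$, so that everything reduces to showing $d$ vanishes on the degree-one part over $\mathfrak{A}_r$. Where you genuinely diverge is in how that vanishing is proved. The paper's argument is structural and essentially one line: in $\mathcal{D}_r(Q,W)$ every composition $b_n$ of degree-one morphisms is defined by contraction against the corresponding homogeneous piece of $W$, so for $W=0$ all such compositions vanish; since objects of $\mathfrak{A}_r$ are twisted objects whose twisting matrices have only degree-one entries, the twisted differential applied to a degree-one element is a sum of such compositions and is therefore zero fibre by fibre. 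Your argument is numerical instead: you identify $\Ho^0$ and $\Ho^1$ of the endomorphism complex with $\Hom_{kQ}(M,M)$ and $\Ext^1_{kQ}(M,M)$ and play the Euler form off against the dimensions $\dim\End^0=\sum_i d_i^2$, $\dim\End^1=\sum_{a\colon i\to j}d_id_j$ to force $\dim\Image(d^1)=0$; the count does check out, since $\dim\End^1-\dim\End^0=-\langle\mathbf{d},\mathbf{d}\rangle_Q$ exactly cancels $\dim\Ho^0-\dim\Ho^1=\langle\mathbf{d},\mathbf{d}\rangle_Q$. What the paper's route buys is brevity and independence from any identification of the homology: it only uses the definition of the compositions in $\mathcal{D}_r(Q,0)$. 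What your route buys is a statement that is visibly about representation theory of $kQ$, but it imports more machinery than needed. Two small corrections to your write-up: the identifications $\Ho^0\cong\Hom$ and $\Ho^1\cong\Ext^1$ over $\Ho^0(\Gamma(Q,0))$ follow from the nonpositive grading of the Ginzburg algebra (the t-structure argument underlying Proposition \ref{retnice}), not from hereditariness of $kQ$ — hereditariness is what you actually use in the Euler form identity $\dim\Hom-\dim\Ext^1=\langle\mathbf{d},\mathbf{d}\rangle_Q$; and the concern in your final paragraph about carrying the identification out ``constructibly in families'' is unnecessary, because $\Delta^*(\HOM^1)/\Ker(d)$ is already a constructible vector bundle on $\mathfrak{A}_r$, so it suffices to check that its fibres vanish pointwise, which is exactly what either argument does.
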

\begin{proof}
This follows from Theorem \ref{BBScompare}.  There it was shown that the element $h$ arising from the orientation data associated to the bimodule $\mathcal{D}_r(Q,W)^{\geq 2}$ was equal to the element of $J_2(\mathfrak{A}_r)$ arising from the constructible vector bundle $\Delta^*((\mathcal{HOM}^1)/\Ker(d))$, with the nondegenerate quadratic form $\langle d(\bullet),\bullet\rangle$.  Since all compositions of degree 1 elements in $\mathcal{D}_r(Q,W)$ are zero, it follows that $d|_{\Delta^*(\mathcal{HOM}^1)}=0$ on $\mathfrak{A}_r$, and we are done.
\end{proof}
\begin{examp}
Consider the quiver $Q_{\mathbb{P}^1}$ of Diagram \ref{P1quiv}, which we (necessarily) give the superpotential $W=0$.  The Ginzburg differential graded algebra $\Gamma(Q_{\mathbb{P}^1},W)$ has 0th cohomology given by the free path algebra $kQ_{\mathbb{P}^1}$.  The Abelian category $\mathcal{A}$ of left modules over this free quiver algebra is the heart for a Bridgeland stability condition (see \cite{TB07}) on the category of $\Gamma(Q_{\mathbb{P}^1},W)$-modules, and we deduce from Theorem \ref{trivpot} that if we endow the category of $\Gamma(Q_{\mathbb{P}^1},W)$-modules with the orientation data arising from the quasi-equivalence (see Propositions \ref{retnice} and \ref{sumup})
\[
\xymatrix{
\Gamma(Q_{\mathbb{P}^1},W)\lmodi \ar[r]^-{\sim} & \Perf(\rModi \mathcal{D}_l(Q_{\mathbb{P}^1},W)),
}
\]
then moduli of objects in this Abelian category have over them the trivial orientation data.  Given another Bridgeland stability condition in the same connected component as the first (see \cite{TB09}), the heart of our category is necessarily given by either a shift of the original heart, or by a shift of the Abelian category $\mathcal{A}'$ obtained by cluster mutation (see Chapter \ref{cluststuff}) at one of the two objects $h_{v_i}$.  The quiver one gets from such a mutation is the same as $Q_{\mathbb{P}^1}$, except we swap the vertices.  It follows that the superpotential in the mutated heart is also zero, and so we may apply Theorem \ref{trivpot} again and deduce that orientation data is trivial on $\mathcal{A}'$ too.  Assuming that there is only one connected component to the space of Bridgeland stability conditions (thanks to Tom Sutherland for pointing out this subtlety), it follows that all motivic Donaldson--Thomas invariants one might care to calculate in this example have trivial contribution from orientation data.
\end{examp}

\begin{figure}
\centering
\input{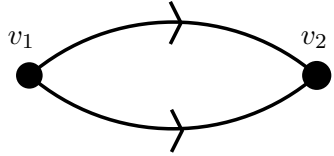}
\caption{The $\mathbb{P}^1$ quiver}
\label{P1quiv}
\end{figure}
\begin{que}
Let $(Q,W)$ be a pair of a quiver with potential, such that between any two vertices of $Q$ all of the arrows go the same way.  Assume, as above, that $W=0$.  We direct the reader to recollection of the notion of a cluster mutation of $\mathcal{D}_r(Q,W)$ in Section \ref{cluststuff}.  Note that if we mutate $Q$ at vertex $i$ we obtain a new pair $(Q',W')$, such that in general $W'\neq 0$.  By the main result of \cite{KellerMutations}, there is a quasi-equivalence of categories
\[
\xymatrix{
\Perf(\rModi\mathcal{D}_r(Q,W))\ar[r]^{\sim}& \Perf(\rModi\mathcal{D}_r(Q',W'))
}
\]
and so we obtain another choice of orientation data, coming from the Lagrangian sub-bimodule $\mathcal{D}_r(Q',W')^{\geq 2}\subset \mathcal{D}_r(Q',W')$.  By Corollary \ref{maincor} this orientation data is the same as the orientation data coming from the trivialization of the tensor square of \\$\sDet(\Theta_{\tw}(\mathcal{D}_r(Q,W)^{\geq 2}))$.  It is natural to ask, then, whether the orientation data is trivial on $\mathcal{A}'$, the heart of the category $\rmodinilp\Gamma(Q',W')$ consisting of degree zero nilpotent modules.  
\end{que}

\section{Orientation data for Hilbert schemes of $\Cp^3$}
\label{C3}
\begin{figure}
\centering
\input{3loops.pstex_t}
\caption{The quiver $Q_{\Cp^3}$ for $\Cp\langle x,y,z\rangle$}
\label{3loops}
\end{figure}
For the remainder of this chapter we work exclusively over $\Cp$.  We next consider a `global' example, and build a bridge to \cite{BBS}.  That paper provides an assignment of motivic Donaldson--Thomas counts to Hilbert schemes of $\Cp^3$, seemingly without the introduction of orientation data.  The approach is mainly the one replicated in Section \ref{basicexample}, and involves an integration map that is essentially the natural generalization of the integration map $\Phi_{\mathcal{A}}$ of Definition \ref{abintmap}; we will quickly recall how this goes in this case.\medbreak
Let $Q_{\Cp^3}$ be the quiver of Figure \ref{3loops}, with the arrows labelled $x,y,z$.  Next, let $W=xyz-xzy$.  For this quiver with superpotential, we form the \textit{uncompleted} Jacobi algebra
\begin{align*}
R:=&\Cp Q_{\Cp^3}\langle x,y,z \rangle/ \langle \frac{\partial}{\partial x}W,\frac{\partial}{\partial y}W,\frac{\partial}{\partial z}W\rangle\\
=&\Cp Q_{\Cp^3}\langle x,y,z \rangle / \langle yz-zy,zx-xz,xy-yx \rangle\\
\cong & \Cp[x,y,z],
\end{align*}
where these partial differentials are the noncommutative differentials of \cite{ginz} (see also \cite{CBEG}).  If we define
\[
\Rep_n(R):=\Hom(R,\Mat_{n\times n}(\Cp))
\]
and

\begin{equation}
\label{hy}
\Rep_n(\Cp Q_{\Cp^3}):=\Hom(\Cp Q_{\Cp^3}, \Mat_{n\times n}(\Cp))\cong \Mat_{n\times n}^3,
\end{equation}
then $\Rep_n(R)$ occurs as the critical locus of the function
\[
\tra(W):=\tra(XYZ-XZY),
\]
where the $X,Y,Z$ are given by the isomorphism of (\ref{hy}).
\medbreak
Next, consider the space $\Rep_n(R)\times \Cp^n$, the space of pairs of a representation and a vector in the underlying vector space of that representation.  There is an open subscheme
\[
U_n\subset \Rep_n(R)\times \Cp^n
\]
consisting of those pairs such that the subrepresentation generated by the vector $v\in \Cp^n$ is the whole representation.  If we define
\[
W_n\subset \Rep_n(\Cp Q_{\Cp^3})\times \Cp^n
\]
similarly, then we have that $U_n=\crit(\pi^*(\tra(W)|_{W_n})$, where $\pi$ here is the projection
\[
\pi:\Rep_n(\Cp Q_{\Cp^3})\times \Cp^n\rightarrow \Rep_n(\Cp Q_{\Cp^3}).
\]
There is an action of $\GL_n(\Cp)$ on these spaces, acting by translation on the vector $v\in \Cp^n$ and by conjugation on the representation, and this action is free on $U_n$ and $W_n$, with quotient $U_n/\GL_n(\Cp)$ the Hilbert scheme of $n$ points on $\Cp^3$.  The space $W_n/\GL_n(\Cp)$ is smooth, and $\tra(W)$ lifts to it, with $U_n/\GL_n(\Cp)$ occurring as its critical locus.  We can, then, assign $\Hilb_n(\Cp^3)$ a virtual motive, given by $[-\phi_{\tra(W)}]\LL^{-\dim(W_n)/2+ \dim(\GL_n(\Cp))/2}$, from the above setup, and this is what the authors of \cite{BBS} do.
\medbreak
\begin{figure}
\centering
\input{3loopsnew.pstex_t}
\caption{The quiver modified quiver $Q'_{\Cp^3}$ for $\Cp\langle x,y,z\rangle$}
\label{3loops'}
\end{figure}
We modify this situation slightly.  As noted in \cite{BBS} and elsewhere, the Hilbert scheme in question can be recast as a moduli space of representations (or \textit{left} modules) of the quiver $Q'_{\Cp^3}$ of Figure \ref{3loops'}.  We give this quiver the same potential $W=xyz-xzy$ and take the uncompleted Ginzburg differential graded algebra $\Gamma_{\nc}(Q'_{\Cp^3},W)$.  Note that this differential graded algebra, unlike $\Gamma_{\nc}(Q_{\Cp^3},W)$, is not concentrated in degree zero.  For the time being, consider the differential graded category $\Gamma_{\nc}(Q_{\Cp^3},W)\lmod$, which naturally forms a sub-category of $\Gamma_{\nc}(Q'_{\Cp^3},W)\lmod$.  The category $\Gamma_{\nc}(Q_{\Cp^3},W)\lmod$, in turn, contains the full subcategory $\Gamma(Q_{\Cp^3},W)\lmod$, as the category of $\Gamma_{\nc}(Q_{\Cp^3},W)$-modules with nilpotent homology modules. 
\medbreak
Let $A$ be the symmetric algebra on 3 generators, placed in degree 1 (by the Koszul sign rule, this algebra is actually isomorphic, as an ungraded algebra, to the exterior algebra on three generators).  There is an isomorphism $\mathcal{D}_l(Q_{\Cp^3},W)\cong A$, given by considering the category with one object as an algebra.  By Koszul duality (see Propositions \ref{Kosduals} and \ref{retnice}) there is a quasi-equivalence
\[
\xymatrix{
\Perf(\rModi A)\ar[r]^-{\sim}& \Gamma_{\nc}(Q_{\Cp^3},W)\lmodinilp.
}
\]
The left hand category is quasi-equivalent to a Calabi-Yau 3-dimensional category, by Theorem \ref{goup}.\medbreak
An arbitrary degree zero module in $\Gamma_{\nc}(Q_{\Cp^3},W)\lmodi$ will have homology supported above finitely many points.  For arbitrary $x\in \Cp^3$, the category of modules supported above $x$ is quasi-equivalent to $\Gamma_{\nc}(Q_{\Cp^3},W)\lmodinilp$, which is the category of finite-dimensional modules supported above the origin.  So we obtain a geometric model of our category by considering an ind-constructible category with underlying ind-variety of objects
\[
\VV_{\nc}:=\coprod_{i\geq 1}\mathfrak{V}_r^i\times M_i,
\]
where $\VV_r$ is the ind-variety parameterising objects in $\tw_r(A)$, and $M_i$ is the moduli space of $i$ distinct ordered points in $\Cp^3$.  There is a natural quasi-equivalence from this category to the category with the same description, but for which we replace $M_i$ by moduli spaces of unordered points -- we've chosen this model purely for convenience.  There is an ind-constructible graded vector bundle $\mathcal{HOM}^{\spadesuit}$ on $\VV_{\nc}\times\VV_{\nc}$ parameterising homomorphisms; over a pair $((M_1,\ldots,M_i),(z_1,\ldots,z_i))\times ((N_1,\ldots,N_j),(z'_1,\ldots,z'_j))$ it is a direct sum of copies of $\Hom_{\tw_r(A)}(M_a,N_b)$ for pairs satisfying $z_a=z'_b$.  The pairing $\langle \bullet,\bullet\rangle$ extends to this constructible vector bundle.  
\medbreak
So the category obtained by taking iterated cones of 1-dimensional modules for $\Cp[x,y,z]$ is an ind-constructible Calabi-Yau category, with objects arranged geometrically into the ind-variety
\[
\Aa_{\nc}:=\coprod_{i\geq 1}\Aa_r^i\times M_i,
\]
where $\mathfrak{A}_r$ is as in (\ref{AAdef}).  We concentrate on degree zero $\Gamma_{\nc}(Q_{\Cp^3}',W)$-modules, which form an Abelian category $\mathcal{A}_{\nc,\fram}$.  We can construct an integration map for families of objects in $\mathcal{A}_{\nc,\fram}$ in the usual way: given a map
\[
\zeta:X\rightarrow \Hom(\Cp Q'_{\Cp^3}, V_0\oplus V_{\infty})
\]
such that the image is in the scheme-theoretic degeneracy locus of the function $\tra(W)$, we let
\begin{equation}
\label{naidef}
\Phi_{\mathcal{A}_{\nc,\fram}}:=\mathbb{L}[-\phi_{\tra(W)}\mathbb{L}^{\frac{1}{2}\langle,\rangle_{Q'_{\Cp^3}}}].
\end{equation}
The point is: as well as building integration maps by supplying orientation data, we can reverse engineer integration maps to obtain orientation data.  We can use Proposition \ref{reveng} and the identity
\begin{equation}
\label{shiftW}
\tra((X+\lambda \id)(Y+\lambda \id)(Z+\lambda \id)-(X+\lambda \id)(Z+\lambda \id)(Y+\lambda \id))=\tra(XYZ-XZY)
\end{equation}
to show that in this case too, our integration map comes from integrating with respect to the nonminimal $W$ coming from the pullback of our graded vector bundle $\mathcal{HOM}^{\spadesuit}$ along the natural projection $f$:
\[
f:\Hom(\Cp Q'_{\Cp^3}, V_0\oplus V_{\infty})\rightarrow \Hom(\Cp Q'_{\Cp^3},V_{0})\cong \Hom(\Cp Q_{\Cp^3},V_0)\supset \VV_{h_{s_0},\ldots,h_{s_0}}\cong D_{V_0,slt}
\]
where here, as in Section \ref{intnilp} we assume (as we can, up to constructible decomposition), that the image of $\zeta$ lies in the subspace of strictly lower triangular matrices, with respect to some fixed basis contained in $V_0\cup V_{\infty}$, and that the image of an arrow from $i$ to $j$ is given by a morphism from $V_i$ to $V_j$ (there is some swapping here on account of our use of left modules).  We use Theorem \ref{cycmm} to split $W$ into a minimal (cubic and higher terms) part and a part arising from a nondegenerate quadratic form $Q$ on an ind-constructible vector bundle $V$.  Then we should set the orientation data $h_{\nc,\fram}$ to be the element of $J_2(\Rep(\Gamma_{\nc}(Q,W)))$ arising from this $(V,Q)$.  The statement that the cocycle condition holds then comes from the Kontsevich--Soibelman integral identity of \cite{KS}, proved (at least at the level of mixed Hodge modules) in \cite{COHA}.  
\medbreak
We give a different, more precise description of this orientation data, and prove that it does provide orientation data, at least on a category that is quasi-equivalent to the category of degree zero finite-dimensional modules on the framed quiver $Q'_{\Cp^3}$ with potential $W$.  Firstly, we must say something about the cyclic structure on the category of finite-dimensional modules for $\Gamma_{\nc}(Q'_{\Cp^3},W)$.  For this, first take the cyclic structure on the category with objects $\Cp^3\cup \{s_{\infty}\}$ that restricts to $\HOM^{\spadesuit}$ on $\Cp^3$, and where we set $\HOM|_{\{s_{\infty}\}\times\Cp^3}=\Cp[-1]$, the shift of the trivial line bundle, $\HOM|_{\Cp^3\times \{s_{\infty}\}}=\Cp[-2]$, and $\HOM|_{\{s_{\infty}\}\times\{s_{\infty}\}}=\Cp[0]\oplus\Cp[-3]$, and set all higher multiplications on these last three bundles to be trivial.  We next extend the (nondegenerate) bracket in the natural way.  We call this ind-constructible Calabi-Yau category $\CC$.  Note that the full $A_{\infty}$-Calabi-Yau subcategory of $\CC$ containing only $s_{\infty}$ and the skyscraper sheaf of a single point in $\Cp^3$ is naturally isomorphic to $\DD_l(Q',W)$.  Then the construction for $\tw_r({\CC})$ is exactly as in Definition \ref{twl}, as is the proof that $\tw_r(\CC)$ forms a new ind-constructible Calabi-Yau category, which is quasi-equivalent to the category $\Gamma_{\nc}(Q'_{\Cp^3},W)\lmodi$.  We note that, by construction, there is an inclusion of categories from the ind-constructible category with underlying ind-variety $\VV_{\nc}$, considered above.  We denote the underlying ind-variety of objects of $\tw_r(\CC)$ by $\VV_{\nc,\fram}$.  In analogy with our previous notation, we denote by $\Aa_{\nc,\fram}$ the ind-variety of objects of the sub ind-constructible Calabi-Yau category of $\tw_r(\CC)$ given by repeated degree 1 extensions of skyscraper sheaves on $\Cp^3$ and the simple module $s_{\infty}$.  This category is quasi-equivalent to $\mathcal{A}_{\nc,\fram}$.
\smallbreak
Next we specify the orientation data on $\Aa_{\nc,\fram}$.  There is a map of ind-constructible categories 
\[
\varrho_{i,t}:\VV_r^i\times M_i\rightarrow \VV_r
\]
given by projection onto the $t$th copy of $\VV_r^i$.  The ind-variety $\VV_r$ has an isomorphism class of super line bundles with trivialized square, $h\in J_2(\VV_r)$, given by the super line bundle with trivialized square coming from orientation data of Theorem \ref{ODconstr}. We define
\[
h_{\nc,\fram}=\sum_{i\geq 1,t\leq i} \varrho_{i,t}^*(h).
\]
\begin{prop}
\label{gettinglate}
The element $h_{\nc,\fram}$ provides orientation data for the $\Hom$ bifunctor on $\Aa_{\nc,\fram}$.
\end{prop}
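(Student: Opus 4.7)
The strategy is to identify $h_{\nc,\fram}$ with the orientation data produced by Theorem \ref{ODconstr} applied to a Lagrangian sub-bimodule of the framed Calabi--Yau category $\CC$.

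First, working up to constructible decomposition of $\Aa_{\nc,\fram}$, I would reduce to the case in which the points $z_1,\ldots,z_i \in \Cp^3$ are pairwise distinct. Over such a piece, all relevant data factors through a finite-dimensional full sub Calabi--Yau category $\CC_S \subset \CC$ with objects $\{s_z : z \in S\} \cup \{s_\infty\}$ for a finite $S \subset \Cp^3$. The algebra $A$ (the exterior algebra on three generators in degree $1$) has no morphisms in negative degree and higher compositions vanish on degree zero morphisms; the same is true for the Hom-spaces involving $s_\infty$ by construction of $\CC$. So $\CC_S$ satisfies hypothesis (2) of Theorem \ref{ODconstr}.

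Next, I would apply Theorem \ref{ODconstr} to the Lagrangian sub-bimodule $\CC_S^{\geq 2}$. One must first verify that the restriction $\tau_L : \CC_S^{\geq 2} \to N^{\vee}$ of the Calabi--Yau pairing is a quasi-isomorphism, where $N$ is a minimal model for $\cone(\CC_S^{\geq 2}\hookrightarrow \CC_S)$; in dimension 3 this follows because the pairing sends $\CC_S^p$ to $(\CC_S^{3-p})^*$ nondegenerately, so pairs $\CC_S^{\geq 2} = \CC_S^2 \oplus \CC_S^3$ nondegenerately against $\CC_S^{\leq 1} = \CC_S^0 \oplus \CC_S^1$. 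The theorem then produces orientation data $h' \in J_2(\Aa_{\nc,\fram})$ represented at a point $M$ by the super line $\sDet(\CC^{\geq 2}_{S,\tw}(M,M))$ with its canonically trivialized square.

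The main remaining step, and where I expect the main obstacle to lie, is the identification $h' = h_{\nc,\fram}$. For $M = (s_\infty; M_1,\ldots,M_i; z_1,\ldots,z_i)$ with the $z_t$ distinct, the vanishing $\Hom_{\CC}(s_{z_a}, s_{z_b}) = 0$ for $a \neq b$ gives a canonical splitting
\[
\CC^{\geq 2}_{S,\tw}(M,M) \;\cong\; \bigoplus_t A^{\geq 2}_{\tw}(M_t, M_t) \;\oplus\; F_M,
\]
where $F_M$ collects all contributions in which at least one argument is $s_\infty$. A direct inspection of the Hom-bundle $\HOM^{\spadesuit}$ shows that $F_M$ has fixed underlying graded vector space (with rank depending only on the dimension vector of $M$) concentrated in degrees $2$ and $3$, and that the Calabi--Yau pairing exhibits $\sDet(F_M)$ and its square as canonically trivial super line bundles. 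The obstacle is to check that this canonical trivialization of $\sDet(F_M)^{\otimes 2}$ is \emph{compatible} with the trivialization of $\sDet(\CC^{\geq 2}_{S,\tw}(M,M))^{\otimes 2}$ produced by the Lagrangian construction of Theorem \ref{ODconstr}; this reduces to chasing a diagram of short exact sequences built from the splitting of $\CC_S$ along $s_\infty$ versus the skyscraper sheaves, using that the Calabi--Yau pairing itself respects this splitting (since $\Hom_\CC(s_z,s_{z'}) = 0$ for $z\neq z'$). Once this compatibility is established, stripping off the canonically trivialized framing factor yields a canonical isomorphism of super line bundles with trivialized squares between the representative of $h'$ and $\bigotimes_t \varrho_{i,t}^*(\sDet(A^{\geq 2}_{\tw}(M_t,M_t)))$, which is a representative of $h_{\nc,\fram}$. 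Theorem \ref{ODconstr} then delivers the conclusion.
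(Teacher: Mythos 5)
Your approach is genuinely different from the paper's, and as written it has a gap at exactly the step you flag as the main obstacle. The paper does not apply Theorem \ref{ODconstr} to the framed category at all: it verifies the cocycle condition for $h_{\nc,\fram}$ directly, reducing (by passing to generic points and Noetherian induction) to the family $\Ext^1(M'_1,M'_2)$ over two framed modules, and showing that the obstruction class $l$ there is pulled back along the projection to $\Ext^1(M_2,M_1)$ of the underlying unframed modules — the key input being that no cycle of $W$ passes through $v_\infty$, so the differential $d_\alpha$ vanishes on, and has image avoiding, every $\End^1$ summand involving $s_\infty$. Your plan replaces this with an appeal to Theorem \ref{ODconstr} for $\CC_S$ plus an identification $h'=h_{\nc,\fram}$. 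Two problems: first, Theorem \ref{ODconstr} is stated for a fixed finite-dimensional category, while your $S\subset\Cp^3$ varies over the moduli of points, so before $h'$ even exists on $\Aa_{\nc,\fram}$ you need an ind-constructible version of the theorem (or a gluing argument over the $M_i$), which you do not supply.

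Second, and more seriously, the identification step is justified by a false claim. The framing contribution $F_M$ has rank $nb+b^2$, where $n$ is the total multiplicity of skyscraper constituents and $b$ that of $s_\infty$ (one degree-$2$ line for each ordered pair of a skyscraper constituent with an $s_\infty$ constituent in the direction $\HOM|_{\Cp^3\times\{s_\infty\}}=\Cp[-2]$, and one degree-$3$ line for each ordered pair of $s_\infty$ constituents); for example $n$ even, $b=1$ gives odd rank, so $\sDet(F_M)$ is an odd super line and is \emph{not} canonically trivial as a class in $J_2$, and the Calabi--Yau pairing cannot trivialize it (it pairs $F_M$ against the degree $\leq 1$ framing part, it does not trivialize $\sDet(F_M)$ itself). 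Moreover, the $J_2$ class attached to orientation data is the twist of $\mathcal{F}^{\leq 1}$ (Section \ref{thelink}), and the framed and unframed $\mathcal{F}^{\leq 1}$ also differ by framing contributions; so the comparison you actually need is between $\sDet(F_M)$ and the framing part of $\Ext^{\leq 1}_{\CC}(M,M)$ relative to $\bigoplus_t\Ext^{\leq 1}_{A}(M_t,M_t)$, with compatible square trivializations — simply ``stripping off'' $F_M$ is not available. The cancellation that makes this comparison (equivalently, the cocycle condition) work is precisely the observation that the differential and the associated quadratic forms are blind to the framing directions because no cycle of $W$ passes through $v_\infty$; this ingredient, which is the heart of the paper's proof, is absent from your argument, and without it the deferred ``compatibility of trivializations'' chase has nothing to run on.
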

\begin{proof}
To start with, consider the general situation in which we have just two objects $x_1$ and $x_2$ in a 3-dimensional Calabi-Yau category $\mathcal{D}$.  By the minimal model theorem there is a splitting of the four homomorphism spaces
\[
\Hom_{\mathcal{D}}(x_i,x_j)\cong \Ext_{\mathcal{D}}(x_i,x_j)\oplus V_{ij}
\]
preserved by the differential, which we'll denote $d$, such that $\Ho^{\bullet}(V_{ij})=0$.  Let $\alpha\in \Ext^1(x_2,x_1)$.  Then by the mapping cone construction of Section \ref{triangstruc} we obtain a differential $d_{\mathrm{tot},\alpha}$ on 
\[
H:=\bigoplus_{i,j\in\{1,2\}}\Hom_{\mathcal{D}}(x_i,x_j)
\]
calculating $\Ext^1(x_{\alpha},x_{\alpha})$, where $x_{\alpha}=\cone(\alpha)$.  Under the above splitting we have that $d_{\mathrm{tot},\alpha}=d+d_{\mathrm{min},\alpha}$, where $d_{\mathrm{min},\alpha}$ is the analogous differential on 
\[
E:=\bigoplus_{i,j\in\{1,2\}}\Ext_{\mathcal{D}}(x_i,x_j).
\]
We deduce that $d_{\mathrm{tot},\alpha}$ respects the splitting into two parts
\[
H\cong E \oplus \bigoplus_{i,j\in\{1,2\}} V_{ij}.
\]
So if we let $j_{\mathrm{tot}}\in J_2(\Ext^1(x_2,x_1))$ be determined by $H^1/\ker(d_{\mathrm{tot},\alpha})$, considered as a constructible vector bundle with the natural inner product, and $j_{\mathrm{min}} \in J_2(\Ext^1(x_2,x_1))$ be given by $E^1/\ker(d_{\mathrm{min}})$, then 
\begin{equation}
\label{spliteq}
j_{\mathrm{tot}}=j_{\mathrm{min}}+c,
\end{equation}
where $c$ is a constant element of $J_2(\Ext^1(x_2,x_1))$ given by the value of $j_{\mathrm{tot}}$ over $0\in \Ext^1(x_2,x_1)$.
\smallbreak
Now we need to show that our choice of orientation data satisfies the cocycle condition.  By the usual move of passing to generic points and using Noetherian induction (see Remark \ref{genpt}), it will be enough to prove this over the family parameterised by $\Ext^1(M'_1,M'_2)$, the coarse moduli space of the stack $\nu_{M'_2}\star\nu_{M'_1}$, where $M'_1$ and $M'_2$ are two $\Gamma_{\nc}(Q'_{\Cp^3},W)$-modules in $\mathcal{A}_{\nc,\fram}$.  We can construct $M'_1$ and $M'_2$ as repeated extensions by 1-dimensional $\Gamma_{\nc}(Q_{\Cp^3},W)$-modules and the simple module $s_{\infty}$.  By considering mapping cones this gives us a Calabi-Yau $A_{\infty}$-algebra $T:=\RHom(M_1\oplus s_{\infty}^{a_1}\oplus M_2\oplus s_{\infty}^{a_2},M_1\oplus s_{\infty}^{a_1}\oplus M_2\oplus s_{\infty}^{a_2})$, quasi-isomorphic to the endomorphism algebra of $M'_1\oplus M'_2$, with a sub differential graded vector space $T_{12}:=\RHom(M_1\oplus s_{\infty}^{a_1},M_2\oplus s_{\infty}^{a_2})$ calculating $\Ext^1(M'_1,M'_2)$.  Here $M_1$ and $M_2$ are direct sums of 1-dimensional $\Gamma_{\nc}(Q_{\Cp^3},W)$-modules.
\smallbreak
We denote by $\Zo^1(T_{12},d)$ the 1-cycles of $T_{12}$.  By formula (\ref{spliteq}), the obstruction $l\in J_2(\Zo^1(T_{12},d))$ above the extension defined by $\alpha$ is determined from the vector space $V=\End^1(M_1\oplus s_{\infty}^{a_1}\oplus M_2\oplus s_{\infty})/\Ker(d_{\alpha})$, where $d_{\alpha}$ is again determined by the mapping cone construction.  The obstruction $l_{\mathrm{min}}\in J_2(\Ext^1(M'_1,M'_2))$ is determined from $l$ via pullback along any inclusion $\Ext^1(M'_1,M'_2)\rightarrow \Zo^1(T_{12},d)$.  Now $V$ splits into sixteen different parts, of the form $\End^1(P,Q)$, for $P,Q\in\{M_1,s_{\infty}^{a_1},M_2,s_{\infty}^{a_2}\}$.  We note that since no terms in the potential pass through $v_{\infty}$, the differential applied to all of these summands is zero, unless in fact $P,Q\in\{M_1,M_2\}$, and also we deduce that the image of $d_{\alpha}$ lies in these same four summands.  So in fact the obstruction element $l\in J_2(\Zo^1(T_{12},d))$ is determined by the pullback along the projection $\Zo^1(T_{12},d)\rightarrow \Ext^1(M'_1,M'_2)\rightarrow \Ext^1(M_2,M_1)$.  Since our choice of orientation data is pulled back from orientation data on the underlying $\Gamma_{\nc}(Q_{\Cp^3},W)$-modules, we are done.
\end{proof}

\begin{prop}
There are equalities of $\hat{\mu}$-equivariant motives
\[
\Phi_{\nc}(\Hilb_n(\Cp^3))=\Phi_{\mathcal{A}_{\nc,\fram}}(\Hilb_n(\Cp^3))
\]
where $\Phi_{\nc}$ is the Kontsevich--Soibelman integration map with the given orientation data, and $\Phi_{\mathcal{A}_{\nc,\fram}}$ is as in (\ref{naidef}).
\end{prop}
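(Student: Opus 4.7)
The strategy follows closely the pattern of Theorem \ref{BBScompare}, adapted to the framed setting. The plan is to exhibit a common family of objects in $\tw_r(\CC)$ through which both integration maps factor, and then to check, fibrewise, that the motivic weights coincide. Concretely, I would start with the usual presentation of $\Hilb_n(\Cp^3)$ as the quotient $U_n/\GL_n(\Cp)$ of a smooth critical locus in $W_n/\GL_n(\Cp)$, and, after passing to an atlas and applying Theorem \ref{notsogood} (using the quasi-equivalence between $\Gamma_{\nc}(Q'_{\Cp^3},W)\lmodi$ and $\tw_r(\CC)$), arrange, up to constructible decomposition, a map $\zeta \colon P \to \Aa_{\nc,\fram}$ recovering the sheaf of framed modules underlying the universal ideal on $\Hilb_n(\Cp^3)$.

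Next I would lift $\zeta$ (again up to constructible decomposition) to a map into the strictly lower-triangular representation space of $Q'_{\Cp^3}$; this is the exact analogue of the factorisation used in the proof of Theorem \ref{BBScompare}. The central calculation is Proposition \ref{reveng}, applied to the extended Calabi-Yau category $\CC$: on the total space of the pulled back $\mathcal{HOM}^{\spadesuit,1}$-bundle, the total (non-minimal) potential built from the cyclic structure agrees with the pullback of $\tra(W)$ from the representation space. The identity (\ref{shiftW}) is what makes this work in the presence of the framing vertex: shifting $X,Y,Z$ by scalars does not change $\tra(xyz-xzy)$, which geometrically encodes the $\Cp$-equivariance that lets us treat arbitrary $\Gamma_{\nc}(Q_{\Cp^3},W)$-modules (supported anywhere in $\Cp^3$) on the same footing as nilpotent ones.

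Now apply the motivic Thom--Sebastiani theorem (Theorem \ref{TStheorem}) together with Theorem \ref{cycmm} to split the total potential as $W_{\min} + Q$ on $\mathcal{EXT}^1 \oplus V$. The right-hand side $\Phi_{\mathcal{A}_{\nc,\fram}}(\Hilb_n(\Cp^3))$, as in (\ref{naidef}), is thereby rewritten as the integral of $\LL^{(\ext^{\leq 1}-\dim V)/2}(1-\MF(W_{\min}))(1-\MF(Q))$, up to the overall $\mathbb{L}$-factor coming from the framing $e_\infty$-line bundle (the dimension of the stabiliser). The Kontsevich--Soibelman side is, by definition, the integral of exactly the same weight with $(1-\MF(Q))$ replaced by any nondegenerate quadratic form representing the class of $h_{\nc,\fram}$ in $J_2(\Aa_{\nc,\fram})$, so everything reduces to verifying that the class of $(V,Q)$ in $J_2$ agrees with $h_{\nc,\fram}$.

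This final verification is the place where the specific shape of $h_{\nc,\fram}$ enters, and in my view it is the only real technical point. The argument given inside the proof of Proposition \ref{gettinglate} is already most of what is needed: because no term of $W = xyz-xzy$ passes through the framing vertex $v_\infty$, the differential on the total $\End^1$ of a framed object, twisted by the mapping cone of the triangle defining the object, vanishes on every summand involving $s_\infty$, and its image sits inside the summands coming purely from the $\Gamma_{\nc}(Q_{\Cp^3},W)$-part. Consequently $(V,Q)$ is pulled back, via the projections $\varrho_{i,t}$, from the analogous pair on $\VV_r$, which is by construction a representative of the class $h$ of the Lagrangian-sub-bimodule orientation data of Theorem \ref{ODconstr}. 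Summing over the $i$ components of a framed module gives exactly $h_{\nc,\fram} = \sum_{i,t}\varrho_{i,t}^*(h)$. The hardest bookkeeping is matching the exponent of $\mathbb{L}^{1/2}$: one must check that $\tfrac12\langle \gamma,\gamma\rangle_{Q'_{\Cp^3}} + 1 = \tfrac12(\ext^{\leq 1}-\dim V)$ for framed modules of dimension vector $\gamma=(1,n)$, which follows from the Calabi-Yau property of $\mathcal{D}_l(Q'_{\Cp^3},W)$ together with the fact that the difference $\dim V - 2\dim(\Ker d^1/\mathcal{EXT}^1)$ is an Euler-characteristic computation controlled by $\langle\gamma,\gamma\rangle_{Q'_{\Cp^3}}$. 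Once this is in hand, the equality of the two integrands, and hence of the two integrals, follows.
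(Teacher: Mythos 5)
Your proposal is correct and follows essentially the same route as the paper, whose proof is simply to invoke the identity (\ref{shiftW}), Proposition \ref{reveng} and Theorem \ref{BBScompare} and observe that the two integration maps literally coincide. The extra detail you supply (the Thom--Sebastiani splitting, the identification of $(V,Q)$ with $h_{\nc,\fram}$ via the argument of Proposition \ref{gettinglate}, and the $\mathbb{L}^{1/2}$ bookkeeping) is just an unpacking of what those cited results already contain.
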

\begin{proof}
This follows from the identity (\ref{shiftW}), Proposition \ref{reveng} and Theorem \ref{BBScompare} -- in fact the two integration maps are the same.
\end{proof}
\begin{prop}
Under the pullback map 
\[
J_2(\Aa_{\nc,\fram})\rightarrow J_2(\Aa_r),
\]
the element $h_{\nc,\fram}$ is sent to $h$, the orientation data constructed from the $A$-bimodule $A^{\geq 2}$.
\end{prop}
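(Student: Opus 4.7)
The plan is to unpack the definition of $h_{\nc,\fram}$ along the natural inclusion $\iota:\Aa_r\hookrightarrow\Aa_{\nc,\fram}$, and show that only one term in the defining sum survives and that it pulls back to $h$ on the nose (in $J_2$). The inclusion $\iota$ sends a twisted nilpotent $A$-module (equivalently, a repeated degree-1 extension of copies of the unique simple $\mathcal{D}_l(Q_{\Cp^3},W)$-module, supported at a chosen base point $p_0\in\Cp^3$, with no $s_\infty$-factor) to the corresponding element of $\tw_r(\CC)$. In particular the image of $\iota$ lies in the $i=1$ stratum $\VV_r\times M_1$ of $\VV_\nc\subset \VV_{\nc,\fram}$, and $\iota$ is a section of the projection $\varrho_{1,1}:\VV_r\times M_1\to\VV_r$ coming from the choice of $p_0$.

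The first step is then a stratum-by-stratum analysis of $h_{\nc,\fram}=\sum_{i\geq 1,\,t\leq i}\varrho_{i,t}^*(h)$. For $i\geq 2$ the stratum $\VV_r^i\times M_i$ parameterises generators supported at $i$ \emph{distinct} points of $\Cp^3$, so the image of $\iota$ is disjoint from it; those terms contribute nothing to $\iota^{*}$. For $i=1$ the only term is $\varrho_{1,1}^*(h)$, and $\varrho_{1,1}\circ\iota$ is by construction the identity on $\VV_r$ (under the identification $\VV_r\times\{p_0\}\cong\VV_r$). At the level of isomorphism classes in $J_2(\Aa_r)$ this already gives $\iota^*(h_{\nc,\fram})=h$.

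The main thing to check is that the implicit trivializations of squares agree, not merely the isomorphism classes. This is the analogue of the step that made Proposition~\ref{gettinglate} work: no cyclic summand of the potential $W=xyz-xzy$ on $Q'_{\Cp^3}$ passes through $v_{\infty}$, and the higher compositions in $\CC$ involving $s_{\infty}$ vanish on the relevant degree-$1$ pieces, so the constructible vector bundle $V$ and nondegenerate quadratic form $Q$ used to realise $h_{\nc,\fram}$ as an element of $J_2$ decouple, along the image of $\iota$, as the pullback along $\varrho_{1,1}\circ\iota=\id$ of the pair $(V_A,Q_A)$ realising $h$ from $A^{\geq 2}$ via Theorem~\ref{ODconstr}. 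Equivalently, the Lagrangian sub-bimodule $\mathcal{C}^{\geq 2}$ restricts along $\iota$ to the Lagrangian $A^{\geq 2}\subset A$, so that $\sDet\bigl(\Theta_{\tw}(\mathcal{C}^{\geq 2})\bigr)$ and its square-root isomorphism pull back to $\sDet(\Theta_{\tw}(A^{\geq 2}))$ with its square-root isomorphism.

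I expect the only real obstacle to be packaging this last compatibility cleanly, i.e.\ tracking that the canonical splittings from the proof of Theorem~\ref{ODconstr} (the diagram relating $L$, $\mathcal{C}$ and $N=\cone(L\to\mathcal{C})$) restrict to the corresponding diagram for $A$; once that bookkeeping is done, everything else is formal from the vanishing observation already used in Proposition~\ref{gettinglate}.
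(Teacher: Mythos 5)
Your argument is correct and is essentially the paper's own proof, which treats the claim as following straight from the definition of $h_{\nc,\fram}$: the image of $\Aa_r$ meets only the $i=1$ stratum, so of the sum $\sum_{i,t}\varrho_{i,t}^*(h)$ only $\varrho_{1,1}^*(h)$ survives, and $\varrho_{1,1}\circ\iota=\id$ gives the result. Your closing worry about matching the trivializations of squares is superfluous: the statement is an equality of classes in $J_2$, and pullback of $J_2$-classes is functorial, so once the class-level identification is made there is nothing further to track.
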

\begin{proof}
This follows straight from the definition of $h_{\nc,\fram}$.
\end{proof}
\section{The conifold}
\label{conif}
\begin{figure}
\centering
\input{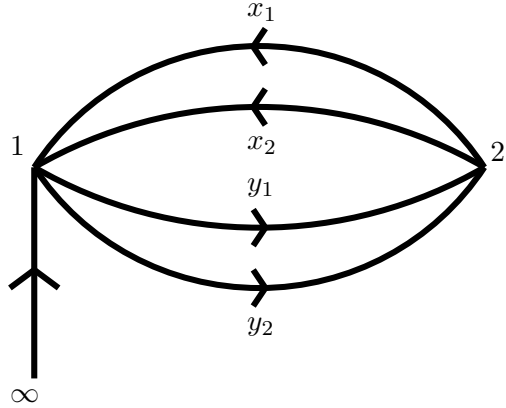}
\caption{The (framed) quiver $Q'_{\coni}$ for the noncommutative conifold}
\label{conquiv}
\end{figure}
Consider the quiver $Q'_{\coni}$ of Figure \ref{conquiv} with the potential 
\[
W=x_1y_1x_2y_2-x_1y_2x_2y_1.
\]
We will be interested in objects in $\mathcal{B}_{\nc,\fram}$, the Abelian category of finite-dimensional left modules, concentrated in degree zero, over the Jacobi algebra of this quiver with potential.  Continuing with our notation of the previous section, we consider these as degree zero modules of the (uncompleted) Ginzburg differential graded algebra $\Gamma_{\nc}(Q'_{\coni},W)$.   As with the case of the quiver of Figure \ref{3loops} we have added a vertex $\infty$ which should be thought of as providing a framing.  Note that if we delete this vertex the (uncompleted) Jacobi algebra associated to this quiver with potential is a noncommutative crepant resolution of the conifold singularity in the sense of \cite{NonComCrep}, due to \cite{FlopsAndNoncommRings}, see also \cite{Flops}; let $Y=\Spec(\Cp[x,y,z,w]/(xy-zw))$ be the conifold singularity, and let 
\[
\xymatrix{
X\ar[d]\\ Y
}
\]
be a crepant resolution, with $X$ the total space of the bundle $\mathcal{O}_{\mathbb{P}^1}(-1)^{\oplus 2}$ over $\mathbb{P}^1$.  Then if $B$ is the Jacobi algebra we obtain after deleting the vertex $\infty$, there is an isomorphism
\[
B\cong \End_{Y}(\pi^*(\mathcal{O}_{\mathbb{P}^1})\oplus \pi^*(\mathcal{O}_{\mathbb{P}^1}(1)))
\]
and an equivalence of derived categories
\[
\mathrm{D}^b_{\cpct}(Y)\rightarrow \Di(B\lmod)
\]
where the left hand side is the bounded derived category of coherent sheaves on $Y$ with compactly supported cohomology, and the right hand side is, as usual, $\Ho^0$ of the $A_{\infty}$-category of $B$-modules with finite-dimensional homology.  Accordingly we denote the quiver obtained by deleting the vertex $\infty$ from $Q'_{\coni}$ by $Q_{\coni}$.  Note that there is a forgetful functor 
\[
\mathcal{B}_{\nc,\fram}\rightarrow \mathcal{B}_{\nc},
\]
where $\mathcal{B}_{\nc}$ is the Abelian category of finite-dimensional left modules over $\Gamma_{\nc}(Q_{\coni},W)$, concentrated in degree zero.
\medbreak
If $M$ is an object of $\mathcal{B}_{\nc,\fram}$, then we define the dimension vector $\textbf{v}_M$ of $M$ to be the 3-tuple $(\dim(e_{\infty}M),\dim(e_1M),\dim(e_2M))$.  We will be considering some basic stack functions of objects of $\mathcal{B}_{\nc,\fram}$.  Firstly, $\Hilb_{n,m}(A)$ will be the scheme of objects of $\mathcal{B}_{\nc,\fram}$ with dimension vector $(1,n,m)$ such that $e_{\infty}M$ generates $M$.  Secondly, $\MM_{n,m}$ will be the stack of all representations of $\Gamma_{\nc}(Q_{\coni},W)$ concentrated in degree zero, considered as representations of $\Gamma_{\nc}(Q'_{\coni},W)$, with dimension vector $(0,n,m)$.  Finally, $\nu_{s_{\infty}}$ is the indicator function for the representation $s_{\infty}$, as usual.  Then there is an identity in the Hall algebra
\begin{equation*}
\sum_{n,m\geq 0} \MM_{n,m} \star \nu_{s_{\infty}}= \sum_{a,b\geq 0} \Hilb_{a,b}(A)\star\sum_{n,m\geq 0}\MM_{n,m}\cdot
\end{equation*}
coming from the fact that every module with dimension vector $(1,n,m)$ occurs uniquely as an extension of $s_{\infty}$ by a $\Gamma_{\nc}(Q_{\coni},W)$-module and also, uniquely, as an extension of an element of $\MM_{n-a,m-b}$ by an element of $\Hilb_{a,b}(A)$. 
\medbreak
Next take some $Z:\mathbb{Z}^2\rightarrow \Cp$ a homomorphism taking dimension vectors of finite-dimensional representations of the quiver $\Gamma_{\nc}(Q_{\coni},W)$ to points with imaginary part greater than zero (as in \cite{BridgelandM}).  We assume that $\arg(Z((1,0)))<\arg(Z((0,1)))$.  We define
\begin{equation}
\label{help}
\mathcal{C}_{\phi} =\hbox{ the full subcategory of } \mathcal{B}_{\nc} \hbox{ such that }\arg(Z(M))=\phi.
\end{equation}
As shown in \cite{NN}, the collection of simple objects of the union of these categories is given by a countable collection of spherical objects with dimension vector $(n,n+1)$, which we label $C_{n,n+1}$, a countable collection of spherical objects with dimension vector $(n+1,n)$, which we label $C_{n+1,n}$, and finally the skyscraper sheaves over points of the resolved conifold $X$, which all have dimension vector $(1,1)$ when considered as modules over our algebra $\Gamma_{\nc}(Q_{\coni},W)$.  A typical module $C_{2,3}$ is pictured in Figure \ref{23examp}.  The objects $C_{a,b}$ are all nilpotent, and can be considered as modules of $\Gamma(Q'_{\coni},W)$.
\begin{figure}
\centering
\input{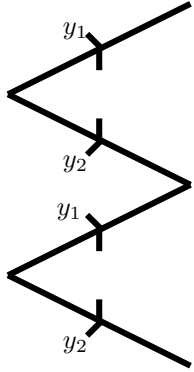}
\caption{The module $C_{2,3}$}
\label{23examp}
\end{figure}

\medbreak
By the existence and uniqueness of Harder-Narasimhan filtrations, there is an identity in the motivic Hall algebra
\begin{align}
\label{HNfilt}
\sum_{n,m\geq 0} \MM_{n,m}=&\mathcal{P}(0,1)\star\mathcal{P}(1,2)\star\ldots\star\mathcal{P}(n,n+1)\star\ldots\star\mathcal{P}(1,1)\star\ldots\star\mathcal{P}(n+1,n)\star\ldots\star\mathcal{P}(1,0).
\end{align}
where $\mathcal{P}(a,b)$ is the stack function for the objects in $\mathcal{C}_{Z((a,b))}$ which admit a filtration by simple objects in $\mathcal{C}_{Z((a,b))}$, where $\mathcal{C}_{Z((a,b))}$ is as in (\ref{help}).  Strictly speaking, in order to make sense of the right hand side we work in the ring obtained by quotienting out by the preimage under $\Phi_{\coni}$ of the ideal generated by $x^{(p,q)}$, for increasingly large $p$ and $q$ (see the discussion around (\ref{extrarel})).  For a fixed ideal the right hand side of (\ref{HNfilt}) becomes a finite product, so in particular, the infinite product makes sense in an appropriate completion of the Hall algebra.
\medbreak
Now to write down the image of these identities under the integration map we had better define the orientation data.  We first write down orientation data for the category $\mathcal{B}_{\nc}$ of degree zero finite-dimensional $\Gamma_{\nc}(Q_{\coni},W)$-modules.  Note that under the derived equivalence between $\Gamma_{\nc}(Q_{\coni},W)$-modules and sheaves on $X$, the resolved conifold, the nilpotent modules are sent to sheaves supported on the exceptional locus, while the others are sent to compactly supported sheaves with support away from the exceptional locus.  So we split our category as a direct sum
\[
\mathcal{B}_{\nc}=\Gamma_{\nc}(Q_{\coni},W)\lmod_0=\Gamma_{\nc}(Q_{\coni},W)\lmod_{0,\mathrm{\nilp}}\oplus \Coh_{\cpct}(Y'),
\]
where $Y'$ is $Y$ with the singular point removed, and the zeroes in the subscripts are because we are interested only in degree zero modules.  To construct orientation data for $\mathcal{B}_{\nc}$ it is enough to do so for each of the summands.  Under Koszul duality the left hand summand is just a subcategory of the category of perfect right $\mathcal{D}_l(Q_{\coni},W)$-modules, for which we have constructed orientation data, by Theorem \ref{ODconstr}.  The objects of the right hand summand are again arranged into the ind-constructible variety $\coprod_i \mathfrak{A}_r^i \times M'_i$, where $M'_i$ is the moduli space of $i$ distinct ordered points in $Y'$, and $\mathfrak{A}_r$ is as in Section \ref{C3}.  We give the category $\Coh_{\cpct}(Y')$ the pulled back orientation data, which on $\mathfrak{A}^i_r\times M'_i$ is given by $\pi_1^*(h)+\ldots+\pi_i^*(h)$, where $h\in J_2(\mathfrak{A}_r)$ is the orientation data arising from the fact that $\mathfrak{A}_r$ is a sub ind-variety of $\mathfrak{V}_r$, the ind-variety parameterising objects of $\tw_r(\mathcal{D}_l(Q_{\Cp^3},xyz-xzy))$, for $Q_{\Cp^3}$ as in Figure \ref{3loops}, since $Y'$ is smooth.
\medbreak
Next, in order to obtain orientation data for $\mathcal{B}_{\nc,\fram}$, the Abelian category of degree zero finite-dimensional representations for $\Gamma_{\nc}(Q'_{\coni},W)$, we pull back the orientation data above along the functor to the category of $\Gamma_{\nc}(Q_{\coni},W)$-modules, and this provides orientation data due to the proof of Proposition \ref{gettinglate}.  We denote the integration map defined using this orientation data by $\Phi_{\coni}$.
\begin{lem}
The stack function $\nu_{C_{a,b}}$ carries trivial orientation data.
\end{lem}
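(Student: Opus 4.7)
The plan is to reduce the statement to a parity computation in $J_2(\{C_{a,b}\})$, which over $\Cp$ is just $\mathbb{Z}_2$ (since $\Cp^*/(\Cp^*)^2$ is trivial). First I would observe that $C_{a,b}$ has dimension vector $(0,a,b)$ as a $\Gamma_{\nc}(Q'_{\coni},W)$-module, so under the forgetful functor $\mathcal{B}_{\nc,\fram}\to\mathcal{B}_{\nc}$ it maps to the spherical object $C_{a,b}$; moreover this $C_{a,b}$ lies entirely in the nilpotent summand $\Gamma_{\nc}(Q_{\coni},W)\lmod_{0,\nilp}$ (it is supported on the exceptional locus). Therefore, by the construction of $h_{\coni}$, the relevant orientation data near $\nu_{C_{a,b}}$ is the one pulled back by Koszul duality from the orientation data on $\Perf(\rModi\,\mathcal{D}_l(Q_{\coni},W))$ furnished by Theorem \ref{ODconstr} via the Lagrangian sub-bimodule $\mathcal{D}_l(Q_{\coni},W)^{\geq 2}$.

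Next I would invoke the dictionary of Section \ref{thelink}: at the single point $\{C_{a,b}\}$, the class of the orientation data in $J_2(\{C_{a,b}\})$ is the parity of the fibre of the constructible super line bundle $\sqrt{\FF}\otimes(\FF^{\leq 1})^{-1}$, which comes equipped with a canonical trivialization of its square and thus represents the discrepancy between the orientation data's square root of $\FF$ and the tautological one. By Theorem \ref{ODconstr}, $\sqrt{\FF}|_{C_{a,b}}=\sDet(\Theta(\mathcal{D}_l(Q_{\coni},W)^{\geq 2})(C_{a,b},C_{a,b}))$, while $\FF^{\leq 1}|_{C_{a,b}}=\sDet(\Ext^{\leq 1}(C_{a,b},C_{a,b}))$. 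The parity of a superdeterminant super line is just the parity of the total dimension of the underlying graded vector space.

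The key computation is then immediate from sphericality: $\Ext^{\bullet}(C_{a,b},C_{a,b})\cong k\oplus k[-3]$ in the 3-Calabi-Yau category, so $\dim\Ext^{\leq 1}=1$ and $\dim\Ext^{\geq 2}=1$, and the parity of $\sqrt{\FF}|_{C_{a,b}}\otimes(\FF^{\leq 1}|_{C_{a,b}})^{-1}$ is $1+1\equiv 0\pmod 2$. Hence the class in $J_2(\{C_{a,b}\})$ vanishes, so the pair $(V,Q)$ associated to the orientation data at $\nu_{C_{a,b}}$ may be taken to be $(0,\emptyset)$, and the orientation data contributes no twist to the integration map. The only subtlety I anticipate is tracking the sphericality claim carefully across the Koszul duality and the identification of $\mathcal{B}_{\nc}$'s nilpotent summand with $\Perf(\rModi\,\mathcal{D}_l(Q_{\coni},W))$, but this is guaranteed by Proposition \ref{sumup} and the fact that $\Ext^{\bullet}$ is a quasi-invariant of $A_\infty$-categories.
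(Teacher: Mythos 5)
Your framing is fine as far as it goes: reducing to a parity statement in $J_2(\Spec(\Cp))\cong\mathbb{Z}_2$, identifying the relevant orientation data as the one of Theorem \ref{ODconstr} on the nilpotent summand, and using the class of $\sqrt{\FF}\otimes(\FF^{\leq 1})^{-1}$ as in Section \ref{thelink}. The gap is in your ``key computation''. You evaluate the parity of $\sqrt{\FF}|_{C_{a,b}}=\sDet\bigl(C_{a,b}\otimes_{\mathcal{D}}\mathcal{D}_l(Q_{\coni},W)^{\geq 2}\otimes_{\mathcal{D}}C_{a,b}^{\diamond}\bigr)$ as $\dim\Ext^{\geq 2}(C_{a,b},C_{a,b})=1$, but this complex is \emph{not} quasi-isomorphic to $\Ext^{\geq 2}(C_{a,b},C_{a,b})$. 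Presenting $C_{a,b}$ as a twisted object with constituents $s_1^{\oplus a}, s_2^{\oplus b}$, its parity is the total dimension of $\bigoplus\Hom^{\geq 2}_{\mathcal{D}_l(Q_{\coni},W)}$ between the constituents (for $C_{2,3}$ this is $37$, not $1$), whereas $\dim\Ext^{\geq 2}(C_{a,b},C_{a,b})$ is the corresponding piece of the homology of the twisted endomorphism complex $E^{\bullet}$; the two differ modulo $2$ exactly by the rank of the twisted differential $d^1:E^1\rightarrow E^2$, i.e.\ by $\dim(E^1/\Ker d^1)$. But the class of the quadratic space $(E^1/\Ker d^1,\langle d(\bullet),\bullet\rangle)$ is precisely the orientation-data discrepancy the lemma asserts to vanish (compare the proof of Theorem \ref{BBScompare}), so your identification silently assumes the conclusion: the final parity comes out right only because the lemma is true. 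Example \ref{badlag} shows concretely that the parity of the square root over a spherical object cannot be read off from $\Ext^{\geq 2}$ alone -- there the same spherical object acquires square roots of opposite parities from two different presentations.

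What is actually needed is control of $d^1$. The paper's proof writes $C_{2,3}$ as the extension class $\alpha\in\Ext^1(s_1^{\oplus 2},s_2^{\oplus 3})$, so that $d=b_2(\alpha,\bullet)+b_2(\bullet,\alpha)+b_3(\alpha,\bullet,\alpha)$ on $E^{\bullet}$; since $W=x_1y_1x_2y_2-x_1y_2x_2y_1$ has no cubic terms, $d|_{E^1}=b_3(\alpha,\bullet,\alpha)$, supported on $\Ext^1(s_2^{\oplus 3},s_1^{\oplus 2})$, and splitting that space by the arrows $x_1,x_2$ one checks that $d$ exchanges the two summands up to the Calabi--Yau pairing, so the induced form on $E^1/\Ker d^1$ is split and the class in $J_2$ vanishes (this argument is also independent of the base field). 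If you prefer the parity shortcut over $\Cp$, the correct count is $\dim E^{\leq 1}+\dim\Ext^{\leq 1}(C_{a,b})=(a^2+b^2+4ab)+1$, which is even because $|a-b|=1$; either way, some computation with the twisted differential on the constituent simples, rather than with $\Ext^{\geq 2}(C_{a,b},C_{a,b})$, is the real content of the proof.
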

\begin{proof}
We prove this for $C_{2,3}$, the proof generalizes easily.  Note that $C_{2,3}$ is obtained by a single extension, i.e. it fits into a short exact sequence
\begin{equation}
\label{c23ext}
s_{2}^{\oplus 3}\rightarrow C_{2,3}\rightarrow s_1^{\oplus 2}.
\end{equation}
We define the ind-variety $\mathfrak{V}_{\coni}$ to be the ind-variety of objects of $\tw_r(\mathcal{D}_l(Q_{\coni},W))$.  Then we need to show that $l\in J_2(\mathfrak{V}_{\coni,3})$ is trivial above the extension determined by(\ref{c23ext}).  We calculate this in the usual way: first let
\[
E^{\bullet}=E_1\oplus\ldots\oplus E_4
\]
where
\begin{align*}
E_1&=\Ext_{\Gamma(Q_{\coni},W)\lmodi}(s_1^{\oplus 2},s_1^{\oplus 2})\\
E_2&=\Ext_{\Gamma(Q_{\coni},W)\lmodi}(s_1^{\oplus 2},s_2^{\oplus 3})\\
E_3&=\Ext_{\Gamma(Q_{\coni},W)\lmodi}(s_2^{\oplus 3},s_1^{\oplus 2})\\
E_4&=\Ext_{\Gamma(Q_{\coni},W)\lmodi}(s_2^{\oplus 3},s_2^{\oplus 3}).
\end{align*}
Then (\ref{c23ext}) gives an element $\alpha\in E_2$ (up to scalars), and we define $d=b_2(\alpha,\bullet)+b_2(\bullet,\alpha)+b_3(\alpha,\bullet,\alpha)$.  Since $W$ is quartic, in fact we have
\[
d|_{E^1}=b_3(\alpha,\bullet,\alpha),
\]
and $d|_{E^1}$ is only nonzero when restricted to $(E_3)^1$.  Next we decompose $(E_3)^1=U_1\oplus U_2$, where $U_i$ is spanned by elements of $\Ext_{\Gamma(Q_{\coni},W)\lmodi}^1(s_2^{\oplus 3},s_1^{\oplus 2})$ labelled by the arrow $x_i$ of Figure \ref{conif}.  Then since $W=x_1y_1x_2y_2-x_1y_2x_2y_1$ we deduce that $U_1$ is mapped to $U_2^*$ and $U_2$ is mapped to $U_1^*$ by $d$, where the duality here comes from $\langle\bullet,\bullet\rangle$.  We deduce that the quadratic form induced on $E^1/\Ker(d:E^1\rightarrow E^2)$ is split, and so $l$ is trivial above $\alpha$. 
\end{proof}
\begin{rem}
In fact since we're working over $\Cp$ it is sufficient to show that the number $\dim(E^1/\Ker(d:E^1\rightarrow E^2))$ has even parity (see the remark alongside (\ref{easyJ})).  This parity is given by $\dim(E^{\leq 1})+\dim(\Ext^{\leq 1}(C_{2,3}))=13+24+1$.  We give the above proof because it doesn't depend on the base field.
\end{rem}
One proves in exactly the same way
\begin{lem}
The modules $\mathcal{O}_x$, for $x$ a point on the exceptional locus, carry trivial orientation data.
\end{lem}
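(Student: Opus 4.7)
The plan is to follow the blueprint of the preceding lemma almost verbatim, with $\mathcal{O}_x$ playing the role that $C_{2,3}$ played there. First I would realise $\mathcal{O}_x$ as a single extension of the simple modules $s_1$ and $s_2$: since its dimension vector is $(1,1)$ and the module is nilpotent at both vertices, there is a short exact sequence $s_a \to \mathcal{O}_x \to s_b$ with $\{a,b\}=\{1,2\}$, and the extension class $\alpha \in \Ext^1(s_b,s_a)$ is determined up to scalar by the point $x$ on the exceptional $\mathbb{P}^1$. I would then form
\[
E^{\bullet}=\Ext^{\bullet}_{\Gamma(Q_{\coni},W)\lmodi}(s_a\oplus s_b,\,s_a\oplus s_b)
\]
and consider the twisted differential $d=b_2(\alpha,\bullet)+b_2(\bullet,\alpha)+b_3(\alpha,\bullet,\alpha)$ on $E^{\bullet}$ arising from the mapping cone on $\alpha$.

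Next, since the potential $W=x_1y_1x_2y_2-x_1y_2x_2y_1$ is quartic, exactly as for $C_{2,3}$ the only part of $d|_{E^1}$ that survives is $b_3(\alpha,\bullet,\alpha)$, and this is non-zero only on the summand $\Ext^1(s_b,s_a)$ in the direction opposite to $\alpha$. I would split this summand as $U_1\oplus U_2$, where $U_i$ is the subspace spanned by the degree one classes labelled by the arrow $x_i$ (or, dually, by $y_i$, depending on the direction of $\alpha$). Reading off the cubic composition from $W$ one sees that $d$ sends $U_1$ into $U_2^{\ast}$ and $U_2$ into $U_1^{\ast}$, with duality induced by the Calabi-Yau pairing $\langle\bullet,\bullet\rangle$. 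Hence the induced quadratic form $\langle d(-),-\rangle$ on $E^1/\Ker(d)$ is hyperbolic and represents the trivial class in $J_2$ above the point $[\alpha]$.

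The main obstacle I anticipate is not the pointwise calculation but the fact that $x$ is allowed to vary along the exceptional $\mathbb{P}^1$, so one really needs the obstruction to vanish as a section of $J_2$ over the whole component of $\mathfrak{V}_{\coni,3}$ parameterising the family $\{\mathcal{O}_x\}_{x\in\mathbb{P}^1}$, not merely at a single point. The remedy is that the splitting $U_1\oplus U_2$ is canonical, being dictated by the arrow labels of $Q_{\coni}$, so it propagates across constructible families; combined with the pullback description of the chosen orientation data on $\Coh_{\cpct}(Y')$ and the framing argument of Proposition~\ref{gettinglate}, this upgrades the pointwise splitness to triviality of $h_{\coni}|_{\nu_{\mathcal{O}_x}}$ in $J_2$, as required.
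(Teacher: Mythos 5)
Your proposal is correct and is essentially the paper's own argument: the paper proves this lemma "in exactly the same way" as the preceding one for $C_{a,b}$, namely by writing $\mathcal{O}_x$ as a one-step extension of the simples with class $\alpha$, twisting the differential on $\Ext^{\bullet}(s_a\oplus s_b,s_a\oplus s_b)$ by $\alpha$, and using the quartic shape of $W=x_1y_1x_2y_2-x_1y_2x_2y_1$ to see that $\langle d(-),-\rangle$ on $E^1/\Ker(d)$ interchanges the two arrow-labelled summands and is therefore split, hence trivial in $J_2$. Your additional remarks about constructibility in the family over the exceptional $\mathbb{P}^1$ go beyond what the lemma (stated pointwise) requires, but they are consistent with how the orientation data is used later.
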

\begin{lem}
The minimal potential $W_{\min}(C_{a,b})$ is trivial.
\end{lem}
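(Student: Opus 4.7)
The plan is to reduce triviality of $W_{\min}(C_{a,b})$ to the rigidity statement $\Ext^1_{\Gamma_{\nc}(Q_{\coni},W)}(C_{a,b}, C_{a,b}) = 0$. Recall from Theorem \ref{cycmm} that the minimal potential is constructed as a formal function on the ind-constructible vector bundle $\mathcal{EXT}^1$. Above the point corresponding to the object $C_{a,b}$ this is the vector space $\Ext^1(C_{a,b}, C_{a,b})$, and if this vanishes then $W_{\min}$ is the zero function on the zero vector space, hence trivial in the only sense relevant for motivic Milnor fibre calculations.

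The argument for rigidity proceeds by showing that each $C_{a,b}$ is a \emph{spherical} object in the ambient 3-Calabi-Yau category, in the sense that its Yoneda $A_\infty$-algebra is quasi-isomorphic to $\Cp \oplus \Cp[-3]$. The two simple modules $s_1, s_2$ over $\Gamma_{\nc}(Q_{\coni}, W)$ are easily seen to be spherical: the quiver $Q_{\coni}$ has no loops at either vertex $1$ or $2$, so $\Ext^1(s_i, s_i) = 0$; by Serre duality in the 3-Calabi-Yau category this forces $\Ext^2(s_i, s_i) = 0$; and we obviously have $\Ext^0(s_i, s_i) = \Cp$, so by Serre duality again $\Ext^3(s_i, s_i) = \Cp$.

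Following the description of the tilts in \cite{NN} (or equivalently invoking Keller's results from \cite{KellerMutations} on derived equivalences coming from mutations of quivers with potential, applied to the self-dual conifold quiver), each $C_{a,b}$ is obtained from $s_1$ or $s_2$ by iteratively applying the spherical twist auto-equivalences $T_{s_1}$ and $T_{s_2}$ of the derived category. A standard fact (Seidel-Thomas) is that spherical twists are auto-equivalences that carry spherical objects to spherical objects, so all $C_{a,b}$ are spherical. In particular $\Ext^1(C_{a,b}, C_{a,b}) = 0$, and by the reduction in the first paragraph the lemma follows.

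The hard part is purely bookkeeping: precisely identifying each $C_{a,b}$ with an explicit iterated twist $T_{s_{i_1}} \cdots T_{s_{i_k}}(s_j)$, so that the spherical-twist argument applies uniformly. This is essentially done in \cite{NN}; alternatively one can argue directly by recognising the tilted heart in which $C_{a,b}$ is simple as the module category of a Ginzburg algebra for a quiver obtained from $Q_{\coni}$ by Seiberg mutation, noting that such mutations preserve the isomorphism class of $Q_{\coni}$ (up to swapping the two vertices) and in particular preserve the property of having no loops, so that $\Ext^1$ between the simple module and itself vanishes in the tilted heart too.
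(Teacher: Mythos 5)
Your proposal is correct and follows essentially the same route as the paper: the paper's proof is the one-line observation that $W_{\min}$ is a function on $\Ext^1_{\Gamma(Q_{\coni},W)\lmodi}(C_{a,b},C_{a,b})$, which vanishes, sphericity of the $C_{a,b}$ having already been taken as given from \cite{NN} earlier in the section. Your extra material on deriving sphericity via spherical twists of the simples is a fuller justification of that cited input, not a different argument.
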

\begin{proof}
The minimal potential is a function on $\Ext^1_{\Gamma(Q_{\coni},W)\lmodi}(C_{a,b})$, which is itself trivial.
\end{proof}
Since each $C_{a,b}$ is spherical, there are equalities
\[
\mathcal{P}(n,n+1)=\sum_{i\geq 0} \nu_{C_{n,n+1}^{\oplus i}}/[\GL_i(\Cp)],
\]
and since the orientation data and the minimal potential are trivial on $C_{a,b}$, we have that
\[
\Phi_{\coni}(\mathcal{P}(n,n+1))=\sum_{i\geq 0} \mathbb{L}^{i^2/2}/[\GL_n(\Cp)]\cdot (x^{0,n,(n+1)})^i.
\]
Using a trick due to Tom Bridgeland we have the equation
\begin{equation}
\label{con1}
\Phi_{\coni}(\mathcal{P}(n,n+1))\cdot x^{1,0,0}\cdot \Phi_{\coni}(\mathcal{P}(n,n+1))^{-1}=x^{1,0,0}\cdot \sum_{0\leq i \leq n}[\mathrm{Gr}(n,i)]\cdot x^{0,in,i(n+1)}.
\end{equation}
Finally, we note that $\mathcal{P}(1,1)$ is the stack function for sheaves of $X$ supported at points, and deduce that
\begin{equation}
\label{con2}
\mathcal{P}(1,1) \star \nu_{s_{\infty}} \star \mathcal{P}(1,1)=\Hilb_{\mathrm{pts}}(X),
\end{equation}
where $\Hilb_{\mathrm{pts}}^n(X)$ is the stack function associated to length $n$ zero-dimensional coherent sheaves on $X$, and $\Hilb_{\mathrm{pts}}(X)=\coprod_{n\geq 0} \Hilb^n_{\mathrm{pts}}(X)$.  From (\ref{con1}), (\ref{con2}) and (\ref{HNfilt}) we obtain an infinite product formula for 
\begin{equation}
\label{laters}
\sum_{n,m\geq 0} \Phi_{\coni}(\Hilb_{n,m}(A)).
\end{equation}
From the construction of the minimal potential on the full subcategory of $\Coh(X)$ consisting of modules with zero-dimensional support, we deduce that
\[
\sum_{n\geq 0}\Phi_{\coni}(\Hilb_{\mathrm{pts}}(X))\cdot x^{(1,n,n)}=x^{(1,0,0)}\cdot (\sum_{n\geq 0}\Phi_c(\Hilb_{0}^n)x^{(0,n,n)})^{X}
\]
where the exponential is as defined in \cite{BBS}, $\Hilb_0^n$ is the punctual Hilbert scheme, given the orientation data $c$ it inherits as a moduli space of objects in the category of perfect modules over $\mathcal{D}_l(Q_{\Cp^3},xyz-xzy)$, and $\Phi_c$ is the Kontsevich--Soibelman integration map using the orientation data of the previous section, or equivalently of Theorem \ref{ODconstr} (see Definition \ref{intmap}).  By the main theorem of \cite{BBS} and also Theorem \ref{BBScompare}, this gives an infinite product description for the partition function (\ref{laters}), involving refined McMahon factors.

\chapter{The set of choices for orientation data}
\label{gocompare}
\section{Lagrangian sub-bimodules}
Recall Definition \ref{lagdef}, in which we defined Lagrangian sub-bimodules of the diagonal bimodule $\mathcal{C}$, for $\mathcal{C}$ a finite dimensional $A_{\infty}$-category carrying a $(2n+1)$-dimensional Calabi-Yau structure.  In Chapter \ref{ODchapter} we explained how a Lagrangian sub-bimodule of $\mathcal{C}$ gives rise to a choice of orientation data for the category of perfect modules over $\mathcal{C}$, and how there is a natural sub-bimodule of $\mathcal{C}$, denoted $\mathcal{C}^{\geq n+1}$, which is Lagrangian under quite weak assumptions.
\bigbreak
We assume that $2n+1=3$ -- this is the realm of motivic Donaldson--Thomas theory.  The natural question, given that one of the aims of the study of motivic Donaldson--Thomas invariants is to associate motives to spaces of objects in an arbitrary Calabi-Yau category, is whether our choice of orientation data is dependent on our choice of $\mathcal{C}$.  That is, say we have another finite-dimensional Calabi-Yau category $\mathcal{C}'$, and a quasi-equivalence of categories
\[
\xymatrix{
\Perf(\rModi \mathcal{C})\ar[r]^{\sim} & \Perf(\rModi\mathcal{C}'),
}
\]
then for an arbitrary stack $\mathcal{M}$ of objects in $\Perf(\rModi\mathcal{C})$ we have two choices of orientation data: one is the one constructed via tensor products with the bimodule $\mathcal{C}^{\geq 2}$, while the other comes from using the above quasi-isomorphism to consider $\mathcal{M}$ as a stack of $\mathcal{C}'$-modules, and then taking the tensor product with $\mathcal{C}'^{\geq 2}$.  In an ideal world, these two choices would always be the same, and so we would have a relatively canonical choice of orientation data: if a Calabi-Yau category can be described, up to quasi-equivalence, as $\Perf(\rModi\mathcal{C})$, for \textit{some} finite-dimensional $\mathcal{C}$, then we would pick the orientation data coming from this choice, i.e. the constructible super line bundle $\sDet(\Ho^{\bullet}(L_{\VV^+_r} \otimes \Theta(\mathcal{C}^{\geq 2})\otimes \lambda^*(L_{\VV_l^+})))$ on $\VV_r^+$ (see Section \ref{gettinggoing} for the relevant definitions).
\begin{examp}
\label{badlag}
Consider the quiver $Q$ with one vertex and one loop.  Denote the loop $a$.  We give this the superpotential $W=a^2$.  Then $\mathcal{D}_r(Q,W)$ is the category with one object, which we will call $E$, $\End_{\mathcal{D}_r(Q,W)}(E)$ has underlying graded vector space
\[
\End_{\mathcal{D}_r(Q,W)}(E)=k\oplus k[-1]\oplus k[-2] \oplus k[-3],
\]
and the differential maps the first graded piece isomorphically onto the second.  We deduce that the inclusion
\[
\Ho^{*}(S^3)\rightarrow \End^{\bullet}(E)
\]
from the cohomology algebra of the 3-sphere is a quasi-isomorphism, and so there is a quasi-isomorphism of categories
\[
\mathcal{D}_r(Q_0,0)\rightarrow \mathcal{D}_r(Q,W)
\]
where $Q_0$ is the quiver with one vertex and no edges.  This gives rise to a quasi-equivalence of categories
\[
\Perf(\rModi\mathcal{D}_r(Q,W))\rightarrow \Perf(\rModi\mathcal{D}_r(Q_0,0)).
\]
In both categories we may consider the stack function $\nu_{h_{E}}$, with one point, parameterising the spherical object $h_{E}$.  However, considered as a stack function for $\Perf(\rModi\mathcal{D}_r(Q,W))$, this stack has over it an even square root of the superdeterminant of the diagonal bifunctor, since $\mathcal{D}_r(Q,W)^{\geq 2}$ is 2-dimensional.  On the other hand, considered as a stack function for $\Perf(\rModi\mathcal{D}_r(Q_0,0))$ it has over it an odd square root of the superdeterminant of the diagonal bifunctor, since $\mathcal{D}_r(Q_0,0)^{\geq 2}$ is 1-dimensional.
\end{examp}
\bigbreak
Despite the above example, it turns out we can prove an invariance result in a wide class of situations.  This was the point of introducing the concept of a Lagrangian sub-bimodule.  The following proposition demonstrates that there are often many ways to construct Lagrangian sub-bimodules.  The main result of this section will be that they all produce the same orientation data.
\begin{prop}
\label{handylag}
Let $(Q,W)$ be a pair of a quiver with a potential.  After picking a basis for the underlying $S$-bimodule of $Q$ (i.e. a set of arrows), we identify each arrow $a:x\rightarrow y$ of $Q$ with a generator of the one-dimensional subspace of $\Hom^2(h_x,h_y)$ it represents in the category $\mathcal{D}_r(Q,W)$, and denote by $a^*\in \Hom^1(h_y,h_x)$ the dual of $a$ under the pairing $\langle\bullet,\bullet\rangle$.  Let $T$ be a subset of the arrows of $Q$ such that no cycle in $W$ contains 2 different arrows of $T$, and no cycle of $W$ contains more than a single copy of an arrow in $T$.  Then there is a Lagrangian sub-bimodule $L_T\subset\mathcal{D}_r(Q,W)$, equal to
\[
k\{ a|a\notin T,\hbox{ } a^*|a\in T\}\oplus \mathcal{D}_r(Q,W)^{3}.
\]
\end{prop}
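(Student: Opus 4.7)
The plan is to verify three things in turn: that $L_T$ is closed under the $\mathcal{C}$-bimodule action on $\mathcal{D}_r(Q,W)$ coming from the composition, that the Calabi-Yau pairing restricts to zero on $L_T\otimes L_T$, and that the induced morphism $\tau_L: L_T\rightarrow N^{\vee}$ (with $N=\mathcal{D}_r(Q,W)/L_T$) is a quasi-isomorphism, so that together with the dualized data we obtain a commuting diagram as in Definition \ref{lagdef}. The graded $\mathcal{C}_{\id}$-bimodule $L_T$ is defined explicitly, and by construction its complement $N$ has $\id_i$ in degree 0, $a^*$ for $a\notin T$ in degree 1, $a$ for $a\in T$ in degree 2, and zero in degree 3. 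Under the Calabi-Yau pairing $\langle\bullet,\bullet\rangle$ the graded pieces of $N^{\vee}=N^*[-3]$ match those of $L_T$ vertex-pair by vertex-pair, so once the pairing condition is established, $\tau_L$ is automatically an isomorphism of graded objects, and a quasi-isomorphism since both sides carry zero differential.

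The vanishing of $\langle\bullet,\bullet\rangle|_{L_T\otimes L_T}$ is immediate by inspection: the pairing lives in total degree 3, so only a degree 1 element $a^*$ (with $a\in T$) paired against a degree 2 element $b$ (with $b\notin T$) could contribute, and then $\langle a^*,b\rangle=\delta_{a,b}=0$ since $a\in T$ and $b\notin T$ are distinct arrows. This yields condition \textbf{(2)} and reduces the Lagrangian check to the sub-bimodule property \textbf{(1)}.

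The hard part is condition \textbf{(1)}: closure of $L_T$ under the higher bimodule operations. Since $\mathcal{D}_r(Q,W)$ has morphisms only in degrees $0$--$3$, any $m_n$ with $n\geq 3$ involving an input of degree $\geq 2$ would land in degree $>3$ and hence vanishes; so beyond $m_2$ (which either uses an identity, or pairs degree 1 with degree 2 to produce a degree 3 element automatically in $L_T^3$), the only nontrivial compositions are $m_{n+1}$ applied to composable sequences $\alpha_n^*,\ldots,\alpha_1^*$ of degree 1 morphisms, given by contraction with $\mu(W_{n+1})$. The output is a scalar multiple of the ``leftover arrow'' $\beta$ of any cyclic monomial $\beta\alpha_1\cdots\alpha_n$ appearing in $W$. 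Suppose one of the inputs $\alpha_k^*$ lies in $L_T$, so $\alpha_k\in T$; the remaining inputs are arbitrary arrows of $Q$. By the hypothesis that no cycle of $W$ contains two different arrows of $T$, the leftover $\beta$ cannot be a $T$-arrow distinct from $\alpha_k$; and by the hypothesis that no cycle contains a repeated $T$-arrow, we cannot have $\beta=\alpha_k$ either. Hence $\beta\notin T$, so $\beta\in L_T^2$. If two distinct inputs both lie in $L_T$, or the same $T$-arrow appears twice as an input, no cyclic monomial of $W$ can match and the composition is zero.

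Once (1), (2) and the matching of graded dimensions are in place, the diagram of Definition \ref{lagdef} is constructed from $\tau$ by restriction and descent in the obvious way, and everything commutes because $\tau$ itself is a strict morphism of $\mathcal{C}$-bimodules. The main obstacle is really only notational: one must unwind the definition of $m_{n+1}$ as the contraction against $\mu(W_{n+1})$ in Section \ref{NCCY}, and see that the two combinatorial hypotheses on $T$ are precisely calibrated so that the contractions stay within $L_T$. Once this is made explicit the verification is automatic.
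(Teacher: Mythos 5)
Your proposal is correct and takes essentially the same route as the paper: the paper's (very terse) proof consists precisely of the observation that the restrictions on $T$ make $L_T$ closed under the compositions — which is exactly your contraction-against-$\mu(W_{n+1})$ argument — after which the strict inclusions $L_T\rightarrow \mathcal{D}_r(Q,W)$ and $N_T^{\vee}\rightarrow \mathcal{D}_r(Q,W)^{\vee}$ have the same underlying image under the Calabi-Yau pairing (your isotropy-plus-graded-dimension count is an equivalent way of saying this), yielding the commuting diagram. One harmless slip: an $m_n$ with $n\geq 3$, one input of degree $2$ and the rest of degree $1$ would land in degree exactly $3$, not $>3$, but since $\mathcal{D}_r(Q,W)^{3}\subset L_T$ (and in fact such compositions vanish by cyclicity and strict unitality) this does not affect the closure argument.
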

\begin{proof}
The point is that $T$ is closed under the composition with elements of $\mathcal{D}_r(Q,W)$, due to our restrictions on $T$.  This means we can promote the exact sequence of underlying $\mathcal{D}_r(Q,W)_{\id}$-bimodules
\[
\xymatrix{
L_T\ar[r]&\mathcal{D}_r(Q,W)\ar[r]& N_T
}
\]
to a triangle of $\mathcal{D}_r(Q,W)$-bimodules, and 
\[
L_T\rightarrow \mathcal{D}_r(Q,W)
\]
and
\[
N_T^{\vee}\rightarrow \mathcal{D}_r(Q,W)^{\vee}
\]
are strict inclusions of bimodules, mapping to the same underlying $\mathcal{D}_r(Q,W)_{\id}$-bimodule, so there is an isomorphism
\[
L_T\rightarrow N_T^{\vee}
\]
making
\[
\xymatrix{
L_T\ar[r]\ar[d] & \mathcal{C}\ar[r]\ar[d] & N_T\ar[d]\\
N_T^{\vee}\ar[r] & \mathcal{C}^{\vee}\ar[r] & L_T^{\vee}
}
\]
commute.
\end{proof}
\begin{examp}
Let $(Q,W)$ be a quiver with potential, let $0$ be a vertex of $Q$, and assume that the cycles of $W$ going through $0$ only pass through it once.  Then we may pick $T$ to be the set of arrows ending at vertex $0$.  We will see in a short while that the Lagrangian sub-bimodule $L_T$ arises when we perform cluster mutations.  So the answer to the general question below, regarding the comparison of orientation data arising from different Lagrangian sub-bimodules, is related to the question of how orientation data changes under derived equivalence.
\end{examp}
So we consider the general question: how does orientation data change if we pick a different Lagrangian sub-bimodule of $\mathcal{C}$?  We start with an analogy.  Let $M$ be some manifold, and say it has on it a $2n$-dimensional graded symplectic vector bundle
\[
\xymatrix{
V\ar[d]\\
M,
}
\]
with an associated nondegenerate quadratic form on $V$ of grade $t$, for some odd number $t$.  Say now we are interested in finding a square root for the line bundle $\sDet(V)$ (this will indeed be a line bundle since $2n$ is even).  There is one situation in which this is easy, namely, let $L\subset V$ be a Lagrangian sub-bundle.  Then we have a natural isomorphism
\[
\xymatrix{
\phi_L:\sDet(L)^{\otimes 2}\ar[r]&\sDet(V).
}
\]
The analogue of the question we are interested in will be the following: let $L'\subset V$ be some other Lagrangian sub-bundle.  Assume, moreover, that $L\cap L'\subset V$ is also a sub-bundle, i.e. the dimension of the intersection between $L$ and $L'$ doesn't change along fibres (obviously in the situation in which everything is algebraic this can be arranged after taking a constructible decomposition).  We wish to find an isomorphism 
\[
\xymatrix{
\tau:\sDet(L')\ar[r] &\sDet(L)
}
\]
such that the following diagram commutes
\[
\xymatrix{
\sDet(L)^{\otimes 2}\ar[d]_{\tau^{\otimes 2}}\ar[r]^{\phi_L} &\sDet(V).\\
\sDet(L')^{\otimes 2}\ar[ur]_{\phi_{L'}}
}
\]
Such an isomorphism is given as follows:
\begin{enumerate}
\item
There is a natural isomorphism 
\[
\sDet(L)\cong \sDet(L\cap L')\otimes \sDet(L/(L\cap L')).
\]
\item
There is a natural isomorphism 
\[
\sDet(L')\cong \sDet(L\cap L')\otimes \sDet(L'/(L\cap L')).
\]
\item
There is a natural isomorphism 
\[
\sDet(L'/(L\cap L'))\cong \sDet((L'+L)/L).
\]
\item
The symplectic form on $V$ induces a natural isomorphism 
\[
\sDet((L'+L)/L)\cong \sDet(L/(L\cap L')).
\]
\end{enumerate}
Putting all this together we get exactly the isomorphism we want.  All of this is elementary linear algebra -- but the idea for what follows is essentially the same.
\begin{thm}
\label{invres}
Let $L_1$ and $L_2$ be Lagrangian sub-bimodules of $\mathcal{C}$, a finite-dimensional Calabi-Yau category satisfying one of the conditions of Theorem \ref{ODconstr}, with associated orientation data $\Upsilon_1$ and $\Upsilon_2$ on the ind-variety $\VV_r^+$.  Then there is an isomorphism in $\OD^+(\mathcal{C})$ of orientation data
\[
\Upsilon_1\cong\Upsilon_2.
\]
\end{thm}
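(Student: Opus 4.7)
The plan is to mimic, in the $A_{\infty}$ setting, the linear-algebraic argument that is sketched immediately before the theorem. Given Lagrangian sub-bimodules $L_1,L_2\subset\mathcal{C}$, I would first form the intersection $I:=L_1\cap L_2$ and sum $S:=L_1+L_2$ at the level of underlying $\mathcal{C}_{\id}$-bimodules. By hypothesis each $L_i\rightarrow\mathcal{C}$ is a strict inclusion of $\mathcal{C}_{\id}$-bimodules, so $I$ and $S$ are well-defined $\mathcal{C}_{\id}$-sub-bimodules of $\mathcal{C}$, fitting into a short exact sequence $0\rightarrow I\rightarrow L_1\oplus L_2\rightarrow S\rightarrow 0$. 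Because superdeterminants (of the differential graded vector bundles obtained by applying $\Theta$ and restricting to the diagonal) depend only on the underlying $\mathcal{C}_{\id}$-structure up to quasi-isomorphism, we do not need $I$ or $S$ to be upgraded to genuine $\mathcal{C}$-sub-bimodules. Passing to the image bundle $\Theta(-)_{bun}$ on $\VV^+_r$, this short exact sequence produces a canonical isomorphism $\sDet(L_1)\otimes\sDet(L_2)\cong\sDet(I)\otimes\sDet(S)$ of ind-constructible super line bundles.

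Next, I would use the fact that each $L_i$ is Lagrangian, so that $I^{\perp}=S$ with respect to the Calabi-Yau pairing. Concretely, the pairing $\langle\bullet,\bullet\rangle$ descends to a non-degenerate self-dual pairing of degree $3$ on $S/I$, with respect to which $L_1/I$ and $L_2/I$ are Lagrangian and complementary. This produces, via the pairing, a canonical isomorphism $\sDet(L_2/I)\cong\sDet(L_1/I)[*]$ (with the degree shift absorbed by the canonical isomorphism \eqref{dualcon}), and combined with the factorisations $\sDet(L_i)\cong\sDet(I)\otimes\sDet(L_i/I)$ this yields the comparison isomorphism $\tau:\sDet(L_2)\cong\sDet(L_1)$. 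The composition $\mathcal{L}_2:=\sDet(\Theta(L_2))\xrightarrow{\tau}\sDet(\Theta(L_1))=:\mathcal{L}_1$ is then a candidate morphism in $\OD^{+}(\mathcal{C})$.

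The final step is to verify the commutativity of the square \eqref{ODmorph}, i.e.\ that $\tau^{\otimes 2}$ intertwines the two canonical trivialisations $\mathcal{L}_1^{\otimes 2}\cong\mathcal{H}\cong\mathcal{L}_2^{\otimes 2}$ constructed as in Theorem~\ref{ODconstr}. This reduces, via the factorisations above, to a purely $\mathcal{C}_{\id}$-bimodule statement on $S/I$: that for a self-dual bimodule equipped with two complementary Lagrangian sub-bimodules $L_1/I$ and $L_2/I$, the pairing-induced isomorphism $\sDet(L_2/I)\cong\sDet(L_1/I)$ squares to the canonical trivialisation of $\sDet(L_1/I)\otimes\sDet(L_2/I)$ coming from the decomposition $S/I\cong L_1/I\oplus L_2/I$. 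This is a diagrammatic check entirely analogous to the finite-dimensional linear algebra recalled before the theorem, but done in the monoidal category of ind-constructible super line bundles on $\VV^+_r$ via the functor $\Theta$.

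The main obstacle I anticipate is bookkeeping of signs and shifts (parities of super line bundles), rather than any serious categorical obstruction: the self-duality isomorphism $\sDet(L_2/I)\cong\sDet(L_1/I)$ produced by the degree-$3$ Calabi-Yau pairing carries a parity coming from the rank of $L_1/I$, and one must check that it matches the parity built into the trivialisation of $\mathcal{H}$ used in Theorem~\ref{ODconstr} on both sides. A secondary technical point is that we have been cavalier about $I$ and $S$ being $\mathcal{C}_{\id}$-bimodules rather than $\mathcal{C}$-bimodules; this is legitimate because the functor $\Theta$ factors through the $\mathcal{C}_{\id}$-underlying structure up to quasi-isomorphism for the purposes of computing superdeterminants, but this observation needs to be spelled out carefully (for instance by realising everything as finite-dimensional differential graded vector bundles on the ind-variety $\VV^+_r$, as in equation~\eqref{xidef}).
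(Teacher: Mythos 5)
Your outline reproduces the linear-algebra skeleton that the paper itself sketches just before the theorem, and it is in fact the same route the paper's proof takes (intersection, the two quotients, and a duality exchanging them); the problem is the step you explicitly wave away. The claim that $I=L_1\cap L_2$ and the quotients need not be upgraded to genuine $\mathcal{C}$-bimodules because superdeterminants ``depend only on the underlying $\mathcal{C}_{\id}$-structure'' is false in the setting of the theorem: on $\VV_r^+$ the relevant super line bundles are defined by (\ref{curlyF}) as superdeterminants of the \emph{homology} of the bar-type complexes $\Xi_T(L_{\VV_r^+},\lambda^*(L_{\VV_l^+}))$, i.e.\ of derived tensor products over $\mathcal{C}$, and both the complex and its homology use the full $A_{\infty}$-bimodule structure of $T$. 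Without a $\mathcal{C}$-bimodule structure on $I$ and on $L_i/I$, the bundles $\sDet(\Theta(I))$ and $\sDet(\Theta(L_i/I))$ through which you want to factor are simply not defined on $\VV_r^+$ (the finite model of Proposition \ref{finmods}, whose underlying graded bundle does only see the $\mathcal{C}_{\id}$-structure, exists only over $\VV_r$, and even there the differential enters the comparison with homology). Likewise, the additivity isomorphisms $\sDet(\Theta(L_i))\cong\sDet(\Theta(I))\otimes\sDet(\Theta(L_i/I))$ and the duality exchanging $L_1/I$ and $L_2/I$ must come from triangles of $\mathcal{C}$-bimodules and from the functor $-^{\vee}$ on $\mathcal{C}\biModi\mathcal{C}$, because the trivializations $\mathcal{L}_i^{\otimes 2}\cong\mathcal{H}$ of Theorem \ref{ODconstr} that your $\tau^{\otimes 2}$ has to intertwine are themselves built at the homology level from the bimodule triangles $L_i\rightarrow\mathcal{C}\rightarrow N_i$ and the self-dual diagram (\ref{lagdiag}); graded-level identifications over $\VV_r$ neither automatically match them nor give the compatibility with quasi-isomorphic families that objects of $\OD^+(\mathcal{C})$ carry.

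Filling exactly this gap is the content of the paper's proof: one forms the $3\times 3$ diagram with $K=L_1\cap L_2$, $S=L_1/K$, $T=L_2/K$, the $N_i$ and $Q$, and shows that, because the inclusions $L_i\rightarrow\mathcal{C}$ are \emph{strict} (Definition \ref{lagdef}), the whole diagram upgrades \emph{uniquely} to strict morphisms of $\mathcal{C}$-bimodules (the higher multiplications on $S$, etc., are defined by lifting elements along $L_1\rightarrow S$, with strictness of $K\rightarrow L_1$ making this well defined). Uniqueness then forces the upgraded diagram to be self-dual under $-^{\vee}$, which is what produces the isomorphism $\mathcal{S}\cong\mathcal{T}$ of superdeterminants, while the rows and columns, now being triangles of $\mathcal{C}$-bimodules, give the canonical factorizations $\mathcal{L}_1\cong\mathcal{K}\otimes\mathcal{S}$ and $\mathcal{L}_2\cong\mathcal{K}\otimes\mathcal{T}$; composing yields $\tau$ compatibly with the squared trivializations. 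So the parity bookkeeping you flag is not the main obstacle --- the $A_{\infty}$ upgrade you dismissed is, and once you carry it out you have essentially rewritten the paper's argument.
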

\begin{proof}
We have a diagram of underlying $\mathcal{C}_{\id}$-bimodules
\begin{equation}
\label{brentcross}
\xymatrix{S\ar[r]& N_2\ar[r]& Q\\
L_1\ar[u]\ar[r] &\mathcal{C}\ar[u]\ar[r] & N_1\ar[u]\\
K\ar[r]\ar[u]& L_2\ar[u]\ar[r] & T\ar[u]
}
\end{equation}
in which the bottom left square is a pullback, the top right square is a pushout, and all rows and columns are short exact sequences.  We claim that, since the morphisms of the middle row and the middle column are strict, this square can be upgraded uniquely to a commutative diagram of strict $\mathcal{C}$-bimodule morphisms.  We explain this for the top left square, the others are similar.  So consider
\[
\xymatrix{
S\ar[r]^{\alpha} & N_2\\
L_1\ar[u]^{\phi}\ar[r]^{\beta} & \mathcal{C}\ar[u]^{\psi}\\
K.\ar[u]^{i}
}
\]

Let $a\in S$.  Then there exists $\tilde{a}\in L_1$ such that $\phi(\tilde{a})=a$.  Let $x_1,..,x_n$ and $y_1,\ldots,y_m$ be morphisms in $\mathcal{C}$.  Then we set 
\[
m_{S,m+n+1}(x_1,\ldots,x_n,a,y_1,\ldots,y_m)=\phi(m_{L_1,m+n+1}(x_1,\ldots,x_n,\tilde{a},y_1,\ldots,y_m)).
\]
Now pick $b\in K$.  We have
\begin{align*}
\phi(m_{m+n+1}(x_1,\ldots,x_n,i(b),y_1,\ldots,y_m))= &\pm\phi(i(m_{m+n+1}(x_1,\ldots,x_n,b,y_1,\ldots,y_m)))\\
=&0
\end{align*}
by strictness of $i$.  It follows that our definition of $m_{S}$ is well-defined.  The condition that in the upgrade of (\ref{brentcross}) to a diagram of $\mathcal{C}$-bimodules the morphisms are strict determines the module structures on the four corner objects uniquely.  It follows, from this uniqueness, that the entire diagram is self-dual under the functor $-^{\vee}$.  In particular, if we let $\mathcal{T}$ and $\mathcal{S}$ be the ind-constructible super line bundles on $\mathfrak{V}^+_r$ given by $\sDet(\Ho^{\bullet}(L_{\VV^+_r} \otimes \Theta(T)\otimes \lambda^*(L_{\VV_l^+})))$ and $\sDet(\Ho^{\bullet}(L_{\VV^+_r} \otimes \Theta(S)\otimes \lambda^*(L_{\VV_l^+})))$ respectively (see the discussion preceeding (\ref{curlyF}) for an explanation of this notation), then there is an isomorphism
\[
\phi:\mathcal{S}\cong \mathcal{T},
\]
as well as canonical isomorphisms 
\[
f:\mathcal{L}_2\cong \mathcal{K}\otimes \mathcal{T}
\]
and 
\[
g:\mathcal{L}_1\cong\mathcal{K}\otimes \mathcal{S}
\]
coming from the fact that all rows and columns are triangles of $\mathcal{C}$-bimodules (we define $\mathcal{K}=\sDet(\Ho^{\bullet}(L_{\VV^+_r} \otimes \Theta(K)\otimes \lambda^*(L_{\VV_l^+})))$).  Putting $\tau=f^{-1}(\id\otimes \phi)g$ gives the desired isomorphism of orientation data.
\end{proof}
One proves in exactly the same way the analogous statement regarding orientation data in $\OD(\mathcal{C})$ arising from pairs of Lagrangian sub-bimodules of $\mathcal{C}$.

\section{Cluster mutations}
\label{cluststuff}
We next recall the theory of cluster collections.  As will become clear to those who know this side of the story, this exposition is essentially Koszul dual to the usual treatment (for which our go-to reference is \cite{KellerMutations}, see also \cite{DWZ08}, \cite{DWZ10} and \cite{FZ02}).
\bigbreak
Let $E_0,\ldots,E_n$ be a collection of spherical objects in a 3-dimensional triangulated Calabi-Yau category $\mathcal{D}$, i.e. for each object $E_i$ we have a quasi-isomorphism
\[
\xymatrix{
\Ext_{\mathcal{D}}(E_i)\ar[r]^-{\sim} & \Ho^*(S^3).
}
\]
We further assume that for any two distinct objects $E_i$ and $E_j$, $\Ext_{\mathcal{D}}^{\bullet}(E_i,E_j)$ is concentrated in one degree only, and this degree is either 1 or 2.  Such a collection is called a cluster collection.  Under these circumstances, we are guaranteed that the full subcategory of $\mathcal{D}$ containing only these objects, is quasi-isomorphic to $\mathcal{D}_l(Q,W)$, for a quiver $Q$ whose vertices are the objects $E_i$, and such that the number of arrows from $E_i$ to $E_j$ is $\dim(\Ext^1(E_i,E_j))$ (see \cite{VdB10}).  We take the minimal category $\mathcal{D}_l(Q,W)$, which in turn fixes a model for the category $\tw_r(\mathcal{D}_l(Q,W))$, as explained in Section \ref{twistcat}, which is identified with a quasi-full subcategory of the triangulated closure of $\mathcal{D}_l(Q,W)$ in $\mathcal{D}$.  We obtain a quasi-fully faithful functor
\[
\Perf(\rModi\mathcal{D}_l(Q,W))\rightarrow \mathcal{D},
\]
and we assume that this is a quasi-equivalence, so we may as well set 
\[
\mathcal{D}=\Perf(\rModi\mathcal{D}_l(Q,W)).
\]
\bigbreak
The category $\mathcal{D}_l(Q,W)$ is a finite-dimensional Calabi-Yau category, and we can apply Theorem \ref{ODconstr} to obtain orientation data for the diagonal bimodule $\mathcal{D}_l(Q,W)$ on $\VV_r$, defined as in Definition \ref{objects} (but here we have reverted to right modules over $\mathcal{D}_l(Q,W)$).

Now we consider a new cluster collection $E'_1,\ldots,E'_n$ given as follows.  Firstly, we set
\[
E'_0=E_0[-1].
\]
Next, for $i\neq 0$ we let 
\[
E_i':=\cone(E_0[-1]\otimes \Ext^1(E_0,E_i)\rightarrow E_i)
\]
be the cone of the universal extension from $E_0$.  We let $\mathcal{C}'$ be the full subcategory of $\mathcal{D}$ with objects the $E'_i$.  By the main theorem of \cite{KellerMutations} we have a quasi-equivalence of categories
\[
\xymatrix{
\Perf(\rModi\mathcal{D}_l(Q,W))\ar[r]^-{\sim}& \Perf(\rModi\mathcal{C}')
}
\]
and so we find ourselves with two choices of orientation data.
\bigbreak
It turns out to be quite easy to write down a minimal model for $\mathcal{C}'$, after we make the simplifying assumption that for any $u$ a 3-cycle in $Q$ passing through vertex $0$, the $u$ coefficient of $W$ is zero.  There are a few types of differential graded vector spaces $\Hom_{\mathcal{C}'}(E'_i,E'_j)$ to consider.
\begin{enumerate}
\item
Say $i\neq 0 \neq j$, $\Ext^1(E_0,E_i)\neq 0\neq \Ext^1(E_0,E_j)$.  Then we have
\[
\Hom_{\mathcal{C}'}(E'_i,E'_j):=A\oplus B_2\oplus C_1 \oplus D_{0,3},
\]
where
\begin{align}
\label{pog}
A=&\Hom_{\mathcal{D}_l(Q,W)}(E_i,E_j),\nonumber\\
B=&\Hom_{\mathcal{D}_l(Q,W)}(E_i,E_0\otimes \Hom_{\mathcal{D}_l(Q,W)}^1(E_0,E_j)),\nonumber\\
C=&\Hom_{\mathcal{D}_l(Q,W)}(E_0\otimes \Hom^1_{\mathcal{D}_l(Q,W)}(E_0,E_i),E_j),\nonumber\\
D=&\Hom_{\mathcal{D}_l(Q,W)}(E_0\otimes \Hom_{\mathcal{D}_l(Q,W)}^1(E_0,E_i),E_0\otimes \Hom_{\mathcal{D}_l(Q,W)}^1(E_0,E_j)),
\end{align}
so in turn, we have a splitting $D=D^0\oplus D^3$ into graded pieces.  It is easy to see (using the Calabi-Yau pairing) that the differential maps $B$ to $D^3$ isomorphically, and (using nothing at all) that it maps $D^0$ to $C$, isomorphically.
\item
If $i\neq 0\neq j$, and either $\Ext^1(E_0,E_i)=0$ or $\Ext^1(E_0,E_j)=0$, then $\Hom_{\mathcal{C}'}(E'_i,E'_j)$ is a cone, and it is easy to check that its differential is zero.  For example let $\Ext^1(E_0,E_i)=0$.  Then
\[
\Hom_{\mathcal{C}'}(E_i',E_j')=\Hom_{\mathcal{D}_l(Q,W)}(E_i,E_0\otimes \Hom^1_{\mathcal{D}_l(Q,W)}(E_0,E_j))\oplus \Hom_{\mathcal{D}_l(Q,W)}(E_i,E_j).
\]
The differential is given by the composition $m_2$, but since we have stipulated that there are no cubic terms going through vertex $0$ in $W$, this means that any such composition is zero.
\item
Finally, assume that $i=0$ and $j\neq 0$ or $i\neq 0$ and $j=0$.  In the first case, 
\[
\Hom_{\mathcal{C}'}(E'_i,E'_j)=\Hom_{\mathcal{D}_l(Q,W)}(E_0[-1],E_0\otimes \Ext^1_{\mathcal{D}_l(Q,W)}(E_0,E_j))\oplus \Hom_{\mathcal{D}_l(Q,W)}(E_0[-1],E_i).
\]
If the first summand is nonzero then the differential on this graded vector space maps the degree minus one part onto the degree zero part, and is zero on the degree two part.  Otherwise this is just a graded vector space living in degree one.  In the second case,
\[
\Hom_{\mathcal{C}'}=\Hom_{\mathcal{D}_l(Q,W)}(E_0\otimes \Ext^1(E_0,E_i),E_0[-1])\oplus \Hom_{\mathcal{D}_l(Q,W)}(E_i,E_0[-1]).
\]
If the first summand is nonzero then the differential maps the degree three part isomorphically onto the degree four part, and is zero on the degree one part, otherwise this differential graded vector space is again just a graded vector space, living in degree two.
\end{enumerate}
So the only cases in which we have a nonvanishing differential are cases (1) and (3), and in case (1) we have shown that the inclusion $\Hom_{\mathcal{D}_l(Q,W)}(E_i,E_j)\rightarrow \Hom_{\mathcal{C}'}(E_i',E_j')$ is a quasi-isomorphism, while in the other case there is a unique choice of quasi-isomorphism of vector spaces from a minimal differential graded vector space.  Let $M$ be the underlying $\mathcal{C}'_{\id}$-bimodule of the diagonal bimodule $\mathcal{C}'$.  We have shown that we have a minimal model $M'\rightarrow M$ for $M$ (considered as a $\CC_{\id}$-bimodule), given by the inclusions 
\[
\Hom_{\mathcal{D}_l(Q,W)}(E_i,E_j)\rightarrow \Hom_{\mathcal{C}'}(E_i',E_j'),
\]
if we are in case (1) above, such that $M'(E'_i,E'_j)=M(E'_i,E'_j)$ in case (2), and the unique inclusion of the homology in case (3).
\begin{prop}
The sub $\mathcal{C}'_{\id}$-bimodule $M'$ is closed under the operations $m_{\mathcal{C}',n}$.
\end{prop}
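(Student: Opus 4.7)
The plan is to unfold $m_{\mathcal{C}',n}$ using the twisted-composition formula \eqref{twmult} and verify closure of $M'$ case by case. Closure under $m_1$ is automatic, since $M'$ was defined as the image of an injection from the homology in each of the three cases, so $m_1$ vanishes on it by construction.

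First I would dispose of case (2), which is immediate: there $M'(E'_i,E'_j)$ coincides with the full hom-space $\Hom_{\mathcal{C}'}(E'_i,E'_j)$, so closure is trivial. For case (1) the argument is purely combinatorial at the matrix level. Each $E'_i$ is realized in $\tw_r(\mathcal{D}_l(Q,W))$ as a two-slot twisted complex with constituents $(E_i,\,E_0\otimes V_i)$ and a single twist matrix entry $\alpha_i$ sitting at position $(1,2)$; the $A$-summand of $\Hom_{\mathcal{C}'}(E'_i,E'_j)$ is precisely the space of matrix morphisms supported at the $(1,1)$-entry. A generic term in the twisted-composition sum has the shape $b_r(\alpha_k^{p_{r-1}},\beta_{r-1},\ldots,\beta_1,\alpha_i^{p_0})$, where each $\beta_s$ is a $(1,1)$-entry and each inserted twist $\alpha_\ell$ is a $(1,2)$-entry. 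For such a term to contribute a nonzero $(1,1)$-entry to the output, the intermediate slot-indices traced by successive factors must be consistent, and a direct inspection forces $p_0=p_1=\cdots=p_{n-1}=0$: no twist insertion survives. The remaining term is $b_n(\beta_{n-1},\ldots,\beta_1)$ computed in $\mathcal{D}_l(Q,W)$, which lies in $\Hom_{\mathcal{D}_l(Q,W)}(E_i,E_k)=A$ whenever the target pair $(i,k)$ is itself in case (1), and is absorbed into $M'$ trivially when $(i,k)$ falls in case (2).

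Case (3) requires slightly richer bookkeeping, since $E'_0 = E_0[-1]$ carries no twist matrix and the definition of $M'(E'_0,E'_j)$ or $M'(E'_i,E'_0)$ picks out a specific graded piece (the $\omega_0$-summand when $V_j\neq 0$, or the whole hom-space when $V_j=0$, and symmetrically on the other side). Writing out explicitly which matrix slot each $M'$-input occupies and combining this with a degree count governed by the cluster hypothesis that $\Ext^\bullet(E_0,E_\ell)$ lives in a single degree, one verifies that every potentially offending twist insertion either fails the slot-compatibility test, or contributes in a degree incompatible with the graded piece defining $M'$ on the output hom-space. The hypothesis that no $3$-cycle of $W$ passes through vertex $0$, already invoked above to identify the differential on $M$, is what ensures the degree counts line up cleanly; it is not independently reinvoked here beyond that.

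The principal obstacle is simply the combinatorial subcase-splitting in case (3): the lowered object $E'_0$ and the varying nonvanishing pattern of $\Ext^1(E_0,-)$ force one to distinguish several small subcases, each disposed of by a matrix-slot and degree check of the kind sketched above. Once these are discharged, the strict inclusion $M'\hookrightarrow M$ of $\mathcal{C}'_{\id}$-bimodules upgrades uniquely to a strict inclusion of $\mathcal{C}'$-bimodules, which is precisely the claimed closure of $M'$ under the operations $m_{\mathcal{C}',n}$.
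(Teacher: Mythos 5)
Your case (1) argument proves the wrong thing, and the claim that closure under $m_1$ is automatic is not correct. With all inputs $\beta_s$ sitting in the $(1,1)$-entry, your slot inspection does show that the $(1,1)$-entry of the output is the untwisted composition $b_n(\beta_n,\ldots,\beta_1)\in A$ — but closure means the $B$, $C$ and $D$ components of the output vanish, and not all of them vanish for slot reasons. A single twist inserted adjacent to the source object \emph{is} slot-compatible: in the sum (\ref{twmult}) the term $b_{n+1}(\beta_n,\ldots,\beta_1,\lambda_i)$ (one copy of the twist of $E'_i$) lands in the component $C=\Hom_{\mathcal{D}_l(Q,W)}(E_0\otimes \Hom^1(E_0,E_i),E_j)$ of (\ref{pog}). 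Killing this term is the actual content of the proposition, and it is a degree argument, not a combinatorial one: if every input has degree $1$ the output sits in degree $2$ while $C$ is concentrated in degree $1$ — this is where the cluster hypothesis together with the case-(1) assumption $\Ext^1(E_0,E_j)\neq 0$ enters; if some input has degree $\neq 1$, one uses that the higher compositions of $\mathcal{D}_l(Q,W)$ are nonzero only on strings of degree-$1$ morphisms, so every twisted term of length $\geq 3$ dies and only the untwisted $b_2$ survives. Your proposal never makes either of these points in case (1) (degrees are invoked only in your case (3)), so the central step is missing.

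The same surviving term shows why ``closure under $m_1$ by construction'' fails. That $M'$ is a graded subspace mapping isomorphically onto the homology does not make it closed under the differential of $\mathcal{C}'$: for $x$ in the $A$-summand, $m_{\mathcal{C}',1}(x)$ has the a priori nonzero component $\pm b_2(x,\lambda_i)\in C$, and its vanishing is exactly the $n=1$ instance of the degree/cluster argument above, which the paper carries out explicitly ($m_1(x)\in C^{\geq 2}=0$ because $\Hom^1_{\mathcal{D}_l(Q,W)}(E_0,E_j)\neq 0$ forces $C$ into degree $1$). The paper's earlier computation around (\ref{almost2}) is a standing warning that a naive ``copy of the homology'' need not be closed under the operations, so this cannot be dismissed as formal. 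Your handling of case (2) is fine, and your sketch of case (3) is in the spirit of the paper's (brief) treatment, but as written the proof of the main case is incomplete.
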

\begin{proof}
Let $i\neq 0\neq j$ and suppose $E_i$ and $E_j$ satisfy $\Ext^1(E_0,E_i)\neq 0\neq  \Ext^1(E_0,E_j)$ (the case (3) is straightforward).  Let $x_1,\ldots,x_n$ be a series of morphisms in $\mathcal{C}'$, with the preimage of $x_1$ equal to $E_i$, and the target of $x_n$ equal to $E_j$.  Each of these morphisms is either a $1\times 1$ matrix, or a $1\times 2$, or a $2\times 1$ matrix of morphisms in $\mathcal{D}_l(Q,W)$.  We assume, to start with, that all of these matrices are of degree 1.  Then $m_{\mathcal{C}',n}(x_n,\ldots,x_1)\in A\oplus B\oplus C\oplus D$, where these are as defined in (\ref{pog}).  From the construction of the cone, the $B$ and $D$ components of $m_{\mathcal{C}',n}(x_n,\ldots,x_1)$ are zero.  The $C$ component lives in degree 2, so it is zero too.  Now say one of the $x_k$ is not of degree $1$.  By the construction of the higher compositions in $\mathcal{D}_l(Q,W)$, $m_{\mathcal{D}_l(Q,W),t}(\ldots,y_k,\ldots)=0$ for all $t\geq 3$, if $y_k$ is not of degree 1.  It follows that if $n=2$, then
\[
m_{\mathcal{C}',n}(x_n,\ldots,x_1)=m_{\mathcal{D}_l(Q,W),n}(x_n,\ldots,x_1),
\]
where the right hand multiplication is the multiplication of matrices in $\mathcal{D}_l(Q,W)$.  It follows that the multiplication of these morphisms would be the same if the $E_i'$ were not the universal extensions, but rather the cones of the zero map, for which closure is clear.  Finally we assume $n=1$.  Then $m_1(x_1)\in C^{\geq 2}$, which is zero, since we are working with a cluster collection, and by supposition $\Hom_{\mathcal{D}_l(Q,W)}^1(E_0,E_j)\neq 0$.
\end{proof}
It follows that $M'$ can be promoted to an $A_{\infty}$-category, with an inclusion 
\[
M'\rightarrow \mathcal{C}',
\]
which is a quasi-isomorphism.  So we have a minimal model for $\mathcal{C}'$, with underlying $\mathcal{C}'_{\id}$-bimodule $M'$, which we denote by $\mathcal{D}_l(Q',W')$ -- it is the category associated to the mutated quiver (see \cite{KellerMutations}).  We may consider the old diagonal bimodule $\mathcal{C}'$ as a strict bimodule for $\mathcal{D}_l(Q',W')$, and the diagonal bimodule $\mathcal{D}_l(Q',W')$ is a minimal model for this bimodule.  In general, of course, a minimal model always exists for $\CC'$, using homological perturbation -- see \cite{DWZ08} for a proof of the strictly speaking more powerful analogue under Koszul duality, stating that there is a splitting of the mutated quiver with superpotential into a quiver with a purely quadratic superpotential and a quiver with a superpotential with no quadratic terms.  We retain our simplifying assumption for now -- as we've just seen, there is a particularly neat minimal model in this case.
\bigbreak
Recall the definition of $T_{\tw}$, for $T$ a $\mathcal{C}$-bimodule and $\mathcal{C}$ an arbitrary $A_{\infty}$-category, from Section \ref{finmodtens}.  This is a bimodule for $\tw_r(\mathcal{C})$ that is quasi-isomorphic to 
\[
-\otimes_{\mathcal{C}} T\otimes_{\mathcal{C}} -^{\diamond_{\tw}}
\]
and we have $\mathcal{C}_{\tw}\cong \Hom_{\tw_r(\mathcal{C})}(-,-)$.  
\medbreak
By construction, $\mathcal{C}'\cong \mathcal{D}_l(Q,W)_{\tw}$, as a $\mathcal{C}'$-bimodule.  The bimodule $(\mathcal{D}_l(Q,W)^{\geq 2})_{\tw}$ is the Lagrangian sub-bimodule of $\mathcal{D}_l(Q,W)_{\tw}$ that gives rise to the choice of orientation data arising from the equation
\[
\mathcal{D}=\Perf(\rModi\mathcal{D}_l(Q,W)).
\]
The bimodule $\mathcal{D}_l(Q,W)^{\geq 2}_{\tw}$ forms a sub-bimodule of $\mathcal{C}'$, which is easy to describe.  For example let $i\neq 0 \neq j$, $\Ext^1(E_0,E_i)\neq 0\neq \Ext^1(E_0,E_j)$.  Then 
\begin{equation}
\label{hopp}
(\mathcal{D}_l(Q,W)_{\geq 2})_{\tw}(E_i',E_j')=\Hom_{\mathcal{D}_l(Q,W)}^{\geq 2}(E_i,E_j)\oplus B^2\oplus D^3.
\end{equation}
As before, $B^2$ maps isomorphically onto $D^3$.  If $i\neq 0$ and $j=0$ then either $\Ext^1_{\mathcal{D}_l(Q,W)}(E_0,E_i)\neq 0$ and $(\mathcal{D}_l(Q,W)^{\geq 2})_{\tw}(E'_i,E'_j)$ is a contractible vector space, or $\Ext^1_{\mathcal{D}_l(Q,W)}(E_0,E_i)= 0$ and $(\mathcal{D}_l(Q,W)^{\geq 2})_{\tw}(E'_i,E'_j)=\Ext^{\geq 2}_{\mathcal{D}_l(Q,W)}(E_i,E_0)[-1]=0$.  Similarly, if $i=0$ and $j\neq 0$ then $(\mathcal{D}_l(Q,W)^{\geq 2})_{\tw}(E'_i,E'_j)$ is either given by the degree  two part of $\Hom_{\mathcal{D}_l(Q,W)}(E_0[-1],E_0\otimes \Ext^1_{\mathcal{D}_l(Q,W)}(E_0,E_j))$ if $\Ext^1(E_0,E_j)\neq 0$, or by the degree one $\Hom_{\mathcal{D}_l(Q,W)}(E_0[-1],E_j)$ otherwise.

\smallbreak
In all other cases $(\mathcal{D}_l(Q,W)^{\geq 2})_{\tw}(E_i',E_j')$ occurs naturally as a subspace of $\Hom_{\mathcal{D}_l(Q',W')}(E_i',E_j')$, which is minimal, and so we obtain a minimal model for $(\mathcal{D}_l(Q,W)^{\geq 2})_{\tw}$ as a $\mathcal{D}_l(Q',W')$-bimodule from the inclusions
\[
\Hom_{\mathcal{D}_l(Q,W)}^{\geq 2}(E_i,E_j)\rightarrow \Hom_{\mathcal{D}_l(Q,W)}^{\geq 2}(E_i,E_j)\oplus B^2\oplus D^3,
\]
where the right hand side is as in (\ref{hopp}).  Call this minimal model $L_{\min}$.
\begin{prop}
The $\mathcal{D}_l(Q',W')$-bimodule $L_{\min}$ is a Lagrangian sub-bimodule of the diagonal bimodule $\mathcal{D}_l(Q',W')$.
\end{prop}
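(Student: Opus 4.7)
The plan is to transfer the Lagrangian structure across two steps: from $\mathcal{D}_l(Q,W)^{\geq 2} \subset \mathcal{D}_l(Q,W)$ first to $(\mathcal{D}_l(Q,W)^{\geq 2})_{\tw} \subset \mathcal{C}'$ via the functor $(-)_{\tw}$, and then to $L_{\min} \subset \mathcal{D}_l(Q',W')$ via restriction along the minimal model inclusion. The first transfer is essentially formal, while the second reduces to a finite case-by-case check.

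First I would verify that $(\mathcal{D}_l(Q,W)^{\geq 2})_{\tw}$ is a Lagrangian sub-bimodule of $\mathcal{C}'=\mathcal{D}_l(Q,W)_{\tw}$. By the proof of Theorem \ref{ODconstr}, $\mathcal{D}_l(Q,W)^{\geq 2}$ is Lagrangian in $\mathcal{D}_l(Q,W)$, giving a commuting diagram as in (\ref{lagdiag}). The functor $T\mapsto T_{\tw}$ is additive and fibrewise on $\Hom$-spaces, so it preserves strict inclusions and short exact sequences of underlying $\mathcal{C}_{\id}$-bimodules; moreover, by the commutativity (up to natural quasi-equivalence) in the diagram of Section \ref{gettinggoing} relating $\Theta_{\tw}$ and $-^\vee$, the duality functor is intertwined, so applying $(-)_{\tw}$ to the Lagrangian triangle for $\mathcal{D}_l(Q,W)^{\geq 2}$ yields one for $(\mathcal{D}_l(Q,W)^{\geq 2})_{\tw}$ in $\mathcal{C}'$, where the Calabi-Yau pairing on $\mathcal{C}'$ is the one coming from Theorem \ref{goup}.

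Next I would observe that the strict inclusion $\mathcal{D}_l(Q',W')\hookrightarrow \mathcal{C}'$ (recorded by the preceding proposition) is a quasi-isomorphism of cyclic $A_\infty$-categories, so the Calabi-Yau pairing on $\mathcal{D}_l(Q',W')$ is simply the restriction of the pairing on $\mathcal{C}'$. By the preceding proposition again, $L_{\min}$ sits as a strict sub-bimodule of $\mathcal{D}_l(Q',W')$, yielding a short exact sequence $L_{\min}\to \mathcal{D}_l(Q',W')\to N_{\min}$ of underlying $\mathcal{D}_l(Q',W')_{\id}$-bimodules with strict morphisms of $\mathcal{D}_l(Q',W')$-bimodules. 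What remains is to check that the restricted pairing induces an isomorphism $\tau_{L_{\min}}: L_{\min}\cong N_{\min}^\vee$ making the analogue of (\ref{lagdiag}) commute; this is equivalent to the assertion that $L_{\min}(E'_i,E'_j)$ is the exact annihilator of $L_{\min}(E'_j,E'_i)$ under the $(E'_i,E'_j)$-component of the pairing.

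The final step is precisely this case-by-case check, using the description of $\Hom_{\mathcal{D}_l(Q',W')}(E'_i,E'_j)$ and $L_{\min}(E'_i,E'_j)$ given in the passages preceding the proposition. In case (1), where both $i,j\neq 0$ and both $\Ext^1(E_0,-)$ are nonzero, the minimal model reduces to $\Hom_{\mathcal{D}_l(Q,W)}(E_i,E_j)$ and $L_{\min}$ is its degree $\geq 2$ part, so the check is the original Lagrangian property of $\mathcal{D}_l(Q,W)^{\geq 2}$ in $\mathcal{D}_l(Q,W)$. In case (2), where the differential already vanishes, $L_{\min}$ coincides with $(\mathcal{D}_l(Q,W)^{\geq 2})_{\tw}$ on this component and the annihilator property is inherited from the corresponding statement in $\mathcal{C}'$. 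The main obstacle will be case (3), where $i=0$ or $j=0$ and the minimal model collapses a contractible piece, so that $L_{\min}(E'_0,E'_j)$ sits in a different degree from its ambient component in $\mathcal{C}'$; here one must verify nondegeneracy directly using the Calabi-Yau duality between $\Ext^1(E_0,E_j)$ and $\Ext^2(E_j,E_0)$ on $\mathcal{D}_l(Q,W)$ together with the one-dimensionality of $\Ext^3(E_0,E_0)$, and check that the surviving pairing on the minimal model has not been corrupted by passage through the contractible summand $B^2\oplus D^3$.
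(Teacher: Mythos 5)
Your overall strategy (push the Lagrangian structure of $\mathcal{D}_l(Q,W)^{\geq 2}$ through $(-)_{\tw}$ into $\mathcal{C}'$, then descend to the minimal model and check a componentwise annihilator condition) is close in spirit to the paper's argument, and your cases (1) and (2) are handled correctly. But there is a genuine gap at the point where you write that ``by the preceding proposition again, $L_{\min}$ sits as a strict sub-bimodule of $\mathcal{D}_l(Q',W')$.'' That proposition only establishes that the minimal model $M'$ of the \emph{diagonal} bimodule is closed under the compositions of $\mathcal{C}'$; it says nothing about $L_{\min}$. Strictness of the inclusion $L_{\min}\to\mathcal{D}_l(Q',W')$ (equivalently, closure of the subspace $L_{\min}$ under the $A_\infty$-operations of $\mathcal{D}_l(Q',W')$) is exactly the nontrivial input here, because $L_{\min}$ is \emph{not} simply the degree $\geq 2$ part: in the components touching the mutated vertex it contains degree $1$ elements (the duals of the arrows of $Q'$ at vertex $0$), and products of degree $1$ elements land in degree $2$, so closure is not a formal degree count. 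It depends on the shape of the mutated potential $W'$. This is precisely why the paper, after computing $L_{\min}$ componentwise, identifies it with the sub-bimodule $L_T$ of Proposition \ref{handylag} for $T$ the arrows of $Q'$ at vertex $0$, and invokes that proposition together with the fact (from \cite{KellerMutations}) that $W'$ has no cycle passing through the mutated vertex twice; without some such argument the short exact sequence of strict bimodule maps demanded by Definition \ref{lagdef} does not exist, and your annihilator reformulation has nothing to apply to.

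The gap is repairable within your framework: one can check directly that, as subspaces of $\mathcal{C}'$, $L_{\min}=M'\cap(\mathcal{D}_l(Q,W)^{\geq 2})_{\tw}$ in every component, and since $M'$ is closed under the operations of $\mathcal{C}'$ (the preceding proposition) and $(\mathcal{D}_l(Q,W)^{\geq 2})_{\tw}$ is a strict sub-bimodule of $\mathcal{C}'$, the intersection is closed under the operations of $\mathcal{D}_l(Q',W')$; alternatively you can follow the paper and route everything through Proposition \ref{handylag}. Two smaller points: your claim that the Calabi-Yau pairing on $\mathcal{D}_l(Q',W')$ is the restriction of the pairing on $\mathcal{C}'$ requires checking nondegeneracy of that restriction (you flag this only in case (3), where it does hold, pairing $\Ext^3(E_0,E_0)\otimes V$ against $\Ext^0(E_0,E_0)\otimes V^*$); and your case (3) verification is only described, not carried out, whereas it is the crux both for the annihilator condition and for seeing the $L_T$-type asymmetry ($L_{\min}$ is everything on one side of the vertex-$0$ components and zero on the other), which is the computation the paper actually performs.
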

\begin{proof}
The triangle $\mathcal{D}_l(Q,W)^{\geq 2}\rightarrow \mathcal{D}_l(Q,W)\rightarrow \mathcal{D}_l(Q,W)^{\leq 1}$ gives a triangle of $\mathcal{C}'$-bimodules 
\[
\mathcal{D}_l(Q,W)^{\geq 2}_{\tw}\rightarrow \mathcal{C}'\rightarrow Q.
\]
The bimodule $Q$ is minimal except, firstly, in case (1), in which $i\neq 0 \neq j$, $\Ext^1(E_0,E_i)\neq 0\neq \Ext^1(E_0,E_j)$.  Then we have 
\[
Q(E_i,E_j)=\Hom_{\mathcal{D}_l'(Q,W)}^{\leq 1}(E_i,E_j)\oplus D^0\oplus C^1,
\]
with $D^0$ mapping isomorphically to $C^1$.  Secondly, if $j=0$, $i\neq 0$, then we have that $Q(E'_i,E'_j)=\cone(\Ext^{\leq 1}_{\mathcal{D}_l(Q,W)}(E_i,E_0)[-1]\rightarrow\Ext^{\leq 1}_{\mathcal{D}_l(Q,W)}(E_0\otimes \Ext^1(E_0,E_i),E_0))$, and so there is a unique minimal model for this vector space, since only one of the terms in the cone can be nonzero.  The other half of case (3) is similar.  As before, we obtain a minimal model $Q_{\min}$ by taking these injections of homology, and obtain the commutative diagram
\[
\xymatrix{
L_{\min}\ar[r]\ar[d] & \mathcal{D}_l(Q',W')\ar[d]\ar[r] & Q_{\min}\ar[d]\\
(\mathcal{D}_l(Q,W)^{\geq 2})_{\tw}\ar[r]\ar[d] & \mathcal{C}'\ar[r]\ar[d] & Q\ar[d]\\
Q^{\vee}\ar[r]\ar[d] &\mathcal{C}'^{\vee}\ar[r]\ar[d] & (((\mathcal{D}_l(Q,W)^{\geq 2})_{\tw})^{\vee}\ar[d]\\
Q_{\min}^{\vee}\ar[r] &\mathcal{D}_l(Q',W')^{\vee}\ar[r] &L_{\min}^{\vee}.
}
\]
Alternatively, we can explicitly describe the minimal bimodule $L_{\min}$.  If $i\neq 0\neq j$, then 
\[
L_{\min}(E_i',E_j')=\mathcal{D}_l(Q',W')^{\geq 2}(E_i',E_j').
\]
If $i=0=j$ then again,
\[
L_{\min}(E_i',E_j')=\mathcal{D}_l(Q',W')^{\geq 2}(E_i',E_j').
\]
If $i=0$ and $j\neq 0$, then $L_{\min}(E'_i,E'_j)$ is given by the homology of
\[
\Hom_{\mathcal{D}_l(Q,W)}^{\geq 2}(E_0,E_0\otimes \Ext_{\mathcal{D}_l(Q,W)}^1(E_0,E_j))[1]\oplus \Hom^{\geq 2}_{\mathcal{D}_l(Q,W)}(E_0,E_j)[1]
\]
and we deduce that $L_{\min}(E'_i,E'_j)=\mathcal{D}_l(Q',W')^{\geq 2}_{\tw}(E'_i,E'_j)$ if $\Ext_{\mathcal{D}_l(Q,W)}^1(E_0,E_j)\neq 0$, while $L_{\min}(E'_i,E'_j)=\mathcal{D}_l(Q',W')^{\geq 1}_{\tw}(E'_i,E'_j)$ otherwise.  Similarly, $L_{\min}(E'_j,E'_i)$ is given by the homology of 
\[
\Hom_{\mathcal{D}_l(Q,W)}^{\geq 2}(E_0\otimes \Ext_{\mathcal{D}_l(Q,W)}^1(E_0,E_j),E_0)[-1]\oplus \Hom^{\geq 2}_{\mathcal{D}_l(Q,W)}(E_j,E_0)[-1],
\]
which is zero, whatever $\Ext_{\mathcal{D}_l(Q,W)}^1(E_0,E_j)$ is.  It follows that $L_{\min}$ is the bimodule $L_T$, for $T$ the set of arrows in $Q'$ with source $0$, and so $L_{\min}$ is Lagrangian by Proposition \ref{handylag}, and the fact that $W'$ has no cycles passing through $0$ twice (see \cite{KellerMutations}).
\end{proof}
\begin{cor}
\label{maincor}
Let $\mathcal{D}_l(Q,W)$ and $\mathcal{D}_l(Q',W')$ be as above.  Let $(h)_{\mathcal{D}_l(Q,W)}$ (respectively $(h)_{\mathcal{D}_l(Q',W')}$) be the orientation data on $\Perf(\rModi \mathcal{D}_l(Q,W))$ (respectively $\\ \Perf(\rModi\mathcal{D}_l(Q',W'))$) coming from the Lagrangian bimodule $\mathcal{D}_l(Q,W)^{\geq 2}$ (respectively $\mathcal{D}_l(Q',W')^{\geq 2}$).  Then under the quasi-equivalence
\[
\phi:\Perf(\rModi \mathcal{D}_l(Q,W))\rightarrow \Perf(\rModi \mathcal{D}_l(Q',W'))
\]
there is an isomorphism $\phi^*((h)_{\mathcal{D}_l(Q',W')})\cong (h)_{\mathcal{D}_l(Q,W)}$.  In other words, Conjecture 12 of \cite{KS} is true (as long as there are no cubic terms of $W$ going through vertex $0$).
\end{cor}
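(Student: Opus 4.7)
The plan is to reduce the statement to an application of Theorem \ref{invres} combined with the explicit identification of Lagrangian sub-bimodules carried out in the paragraphs immediately preceding the corollary. The strategy rests on the observation that, once we transport the orientation data $(h)_{\mathcal{D}_l(Q,W)}$ along $\phi$, we obtain orientation data on $\Perf(\rModi \mathcal{D}_l(Q',W'))$ arising from a Lagrangian sub-bimodule of the diagonal $\mathcal{D}_l(Q',W')$-bimodule other than $\mathcal{D}_l(Q',W')^{\geq 2}$; since all Lagrangian sub-bimodules yield isomorphic orientation data, we are done.

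More precisely, first I would use the quasi-equivalence $\tw_r(\mathcal{D}_l(Q,W)) \xrightarrow{\sim} \Perf(\rModi \mathcal{D}_l(Q,W))$ together with the construction of $T_{\tw}$ from Section \ref{finmodtens} to rewrite $(h)_{\mathcal{D}_l(Q,W)}$ as the orientation data on $\tw_r(\mathcal{D}_l(Q,W))$ coming from the Lagrangian sub-bimodule $(\mathcal{D}_l(Q,W)^{\geq 2})_{\tw}$ of $\mathcal{D}_l(Q,W)_{\tw} \cong \Hom_{\tw_r(\mathcal{D}_l(Q,W))}(-,-)$. The subcategory $\mathcal{C}'$ of $\tw_r(\mathcal{D}_l(Q,W))$ spanned by $E'_0, \ldots, E'_n$ is, by construction of the cluster mutation and by the main result of \cite{KellerMutations}, quasi-equivalent to $\mathcal{D}_l(Q',W')$ via the minimal model inclusion $\mathcal{D}_l(Q',W') \hookrightarrow \mathcal{C}'$ exhibited above. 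Pulling the Lagrangian $(\mathcal{D}_l(Q,W)^{\geq 2})_{\tw}$ back along this quasi-isomorphism yields, up to quasi-isomorphism of $\mathcal{D}_l(Q',W')$-bimodules, the minimal bimodule $L_{\min}$ explicitly written down in the preceding pages.

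The key geometric step is then the identification $L_{\min} = L_T$, where $T$ is the set of arrows of $Q'$ with source $0$; this computation was carried out case-by-case in the discussion culminating in the proposition preceding the corollary. Since $W'$ has no cycle passing through vertex $0$ twice (a known feature of the Derksen--Weyman--Zelevinsky mutation rule, recalled in \cite{KellerMutations}), the hypotheses of Proposition \ref{handylag} are satisfied for $T$, so $L_T$ is a bona fide Lagrangian sub-bimodule of $\mathcal{D}_l(Q',W')$. Thus the orientation data $\phi^*((h)_{\mathcal{D}_l(Q',W')})$ and $(h)_{\mathcal{D}_l(Q,W)}$ are both obtained from Lagrangian sub-bimodules of the same diagonal bimodule $\mathcal{D}_l(Q',W')$, namely $\mathcal{D}_l(Q',W')^{\geq 2}$ and $L_T$ respectively, and Theorem \ref{invres} provides a canonical isomorphism between the two resulting elements of $\OD^+(\mathcal{D}_l(Q',W'))$.

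The main obstacle is purely bookkeeping: one must check that the transport of orientation data along the zig-zag $\Perf(\rModi \mathcal{D}_l(Q,W)) \xleftarrow{\sim} \tw_r(\mathcal{D}_l(Q,W)) \supset \mathcal{C}' \xleftarrow{\sim} \mathcal{D}_l(Q',W')$ really does send the Lagrangian $\mathcal{D}_l(Q,W)^{\geq 2}$ to a bimodule quasi-isomorphic to $L_T$, in a way compatible with the canonical isomorphisms appearing in Definition \ref{ODdef}(\textbf{QIS}). This amounts to tracing the quasi-isomorphism $L_{\min} \to (\mathcal{D}_l(Q,W)^{\geq 2})_{\tw}$ and the duality diagrams of Theorem \ref{invres} simultaneously; this compatibility is essentially formal, because everything in sight is built from superdeterminants of differential graded vector bundles pulled back from the same ind-variety of objects.
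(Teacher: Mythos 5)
Your proposal is correct and follows essentially the same route as the paper: the paper's own proof of Corollary \ref{maincor} is exactly the observation that $(\mathcal{D}_l(Q,W)^{\geq 2})_{\tw}$ has, as a $\mathcal{D}_l(Q',W')$-bimodule, the minimal model $L_{\min}=L_T$ (Lagrangian by Proposition \ref{handylag} since $W'$ has no cycle through vertex $0$ twice), after which Theorem \ref{invres} gives the isomorphism of the two orientation data. Your extra remarks about compatibility with (\textbf{QIS}) and superdeterminants are consistent with how the paper handles quasi-isomorphism invariance, so nothing is missing.
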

\begin{proof}
We have shown that the orientation data from $\mathcal{D}_l(Q,W)^{\geq 2}$ is given by a $\mathcal{D}_l(Q',W')$-bimodule that is quasi-isomorphic to a Lagrangian sub-bimodule.  So the result follows from Theorem \ref{invres}.
\end{proof}
\begin{rem}
In fact it is not too hard to see that the above argument can be adapted for the case in which there are cubic terms going through vertex zero, so we can prove Conjecture 12 of \cite {KS} generally.  We refrain from doing so here, since in fact the conjecture follows also from Theorem \ref{outwater} below.
\end{rem}
\section{Choices of orientation data -- Examples}
The above corollary suggests that maybe there is not the plethora of choices of orientation data that one might suspect there to be.  Let $\mathcal{C}$ be a finite-dimensional Calabi-Yau category, and assume that the embedding
\[
\langle h_{\mathcal{C}}\rangle_{\triang}\rightarrow \Perf(\rModi \mathcal{C})
\]
is quasi-full and faithful, even after tensoring with arbitrary field extensions of $k$.  Recall Example \ref{torsor}, in which it was shown that the set of isomorphism classes of choices of orientation data for $\Perf(\rModi\mathcal{C})$ carries a free action of 
\[
\Hom(\Kth(\Perf(\rModi\mathcal{C})),\mathbb{Z}_2).
\]
\begin{thm}
\label{rigidity}
Let $\mathcal{C}$ be as above, and assume also that the base field $k$ is algebraically closed.  Then the action of $\Hom(\Kth(\Perf(\rModi\mathcal{C})),\mathbb{Z}_2)$ on the set of isomorphism classes of orientation data in $\OD(\CC)$, the category of Definition \ref{redod}, is transitive.
\end{thm}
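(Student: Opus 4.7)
The plan is to convert the comparison between two orientation data into a single constructible $\mathbb{Z}_2$-valued function on $\VV_r$, and then to show that the axioms on orientation data force this function to descend to a homomorphism from $\Kth(\Perf(\rModi\mathcal{C}))$. So, given two orientation data $\Upsilon_1 = (\mathcal{G}_1,\phi_1)$ and $\Upsilon_2 = (\mathcal{G}_2,\phi_2)$ in $\OD(\mathcal{C})$, I would begin by forming the ratio ind-constructible super line bundle $L := \mathcal{G}_1\otimes\mathcal{G}_2^{-1}$. The isomorphisms $\phi_i\colon \mathcal{G}_i^{\otimes 2}\cong\FF$ combine into a canonical trivialization of $L^{\otimes 2}$, so $L$ defines an element of $J_2(\VV_r)$ in the sense of Section \ref{thelink}. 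Since $k$ is algebraically closed, every nonvanishing constructible function is (stratum by stratum) a square of another such function, so $\Constr(\VV_r,k^*)/\Constr(\VV_r,k^*)^2 = 0$; hence $J_2(\VV_r)\cong\Constr(\VV_r,\mathbb{Z}_2)$ and $L$ is entirely encoded by a constructible function $f\colon\VV_r\to\mathbb{Z}_2$.

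Next I would show that $f$ depends only on the class of an object in $\Kth(\Perf(\rModi\mathcal{C}))$. Invariance under quasi-isomorphism is immediate from condition \textbf{QIS} of Definition \ref{ODdef}, which supplies compatible isomorphisms of the two $\mathcal{G}_i$ over any quasi-isomorphism of pulled-back families. For additivity on triangles, I apply condition \textbf{III} of Definition \ref{ODdef} to both $\Upsilon_1$ and $\Upsilon_2$: for each $i$ there is an isomorphism
\[
\sqrt{\mathcal{F}_1}\otimes\sqrt{\mathcal{F}_2}^{-1}\otimes\sqrt{\mathcal{F}_3}\cong\mathcal{F}_{1,3},
\]
computed with $\mathcal{G}_i$ in place of $\sqrt{\mathcal{F}}$, so both super line bundles $p_1^*(\mathcal{G}_i)\otimes p_2^*(\mathcal{G}_i)^{-1}\otimes p_3^*(\mathcal{G}_i)\otimes\mathcal{F}_{1,3}^{-1}$ are trivializable on $\VV_{r,3}$. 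Taking their ratio, the two factors of $\mathcal{F}_{1,3}^{-1}$ cancel, and the two canonical trivializations of the squares are identified via the $\phi_i$, so
\[
p_1^*(L)\otimes p_2^*(L)^{-1}\otimes p_3^*(L)
\]
is a trivial super line bundle on $\VV_{r,3}$. Translating back to $\mathbb{Z}_2$-valued functions, this is exactly the identity $p_1^*(f)-p_2^*(f)+p_3^*(f)=0$, i.e.\ the additivity of $f$ on every exact triangle parameterised by $\VV_{r,3}$.

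With these two properties established, I would invoke Proposition \ref{kl1} and Theorem \ref{triangparam}, together with the hypothesis that $\langle h_{\mathcal{C}\otimes K}\rangle_{\triang}\to\Perf(\rModi(\mathcal{C}\otimes K))$ is a quasi-equivalence for every field extension $K\supset k$, to conclude that every object of $\Perf(\rModi\mathcal{C})$ and every triangle in it is represented (up to quasi-isomorphism) by a point of $\VV_r$ or $\VV_{r,3}$. Combined with the quasi-isomorphism invariance and triangle-additivity of $f$, this yields a well-defined group homomorphism
\[
\zeta\colon\Kth(\Perf(\rModi\mathcal{C}))\longrightarrow\mathbb{Z}_2
\]
whose value on the class of an object $M$ agrees with $f$ at any point of $\VV_r$ representing $M$. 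By construction, the super line bundle $L_\zeta$ attached to $\zeta$ in Example \ref{torsor} is isomorphic to $L$ as a super line bundle with trivialized square, and pushing this through the action of Example \ref{torsor} produces an isomorphism $\Upsilon_2\cong\zeta\cdot\Upsilon_1$ in the category $\OD(\mathcal{C})$.

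The main obstacle I expect is the final descent step: checking that the pointwise additivity of $f$ coming from the ind-scheme $\VV_{r,3}$ is genuinely enough to kill all the relations defining $\Kth$, i.e.\ that I have not merely constructed a function on the set of $k$-points of $\VV_r$ satisfying an infinitesimal cocycle condition, but a genuine character of $\Kth(\Perf(\rModi\mathcal{C}))$. This will use essentially that $\Kth$ of a triangulated category is generated by classes of objects modulo the triangle relations, so any function on isomorphism classes that is additive on triangles automatically descends; the quasi-equivalence assumption ensures that the $\Kth$-group computed from $\tw_r(\mathcal{C})$ coincides with that of $\Perf(\rModi\mathcal{C})$, so there is no mismatch between the category parameterised by $\VV_r$ and the category in the statement of the theorem.
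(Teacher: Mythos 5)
There is a genuine gap at the second step, where you claim that over an algebraically closed field $\Constr(\VV_r,k^*)/\Constr(\VV_r,k^*)^2=0$, so that $J_2(\VV_r)\cong\Constr(\VV_r,\mathbb{Z}_2)$ and the difference class $[L]=[\mathcal{G}_1\otimes\mathcal{G}_2^{-1}]$ is recorded by a $\mathbb{Z}_2$-valued function alone. In the framework of Section \ref{thelink} the $k^*$-valued ``constructible functions'' arise as comparison of trivializations of a line bundle over a positive-dimensional stratum, i.e.\ they are nonvanishing functions regular on strata, not finitely-valued functions; and such a function need not be a square of another one no matter how you refine the stratification (a coordinate function on a one-dimensional torus stratum is already a counterexample, over $\Cp$ as much as anywhere else). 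Algebraic closedness only kills the $k^*$-part \emph{pointwise}, i.e.\ it gives $J_2(\Spec(k))\cong\mathbb{Z}_2$. Since an isomorphism in $\OD(\mathcal{C})$ must commute with the trivializations of the squares (diagram (\ref{ODmorph})), what you ultimately need is the equality $[L]=[L_\zeta]$ in $J_2(\VV_r)$, and $[L_\zeta]$ has trivial $k^*$-part; your argument as written never controls the $k^*$-part of $[L]$, so the final identification $\Upsilon_2\cong\zeta\cdot\Upsilon_1$ does not follow.

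The paper closes exactly this hole by working the other way around: algebraic closedness is used only at the finitely many point-strata $\nu_{h_{x_i}}$ (and at $\nu_0$, $\nu_{h_{x_i}[j]}$), where the data reduces to a parity, and then the cocycle condition is \emph{propagated in families}. Every component $\VV_{r,\tau}$ carries a constructible map to $\VV_{r,3}$ realizing each twisted object as the cone on its last Yoneda summand over its truncation; pulling the identity $p_1^*\delta-p_2^*\delta+p_3^*\delta=0$ (for the difference $\delta$ of two orientation data) back along these maps expresses $\delta|_{\VV_{r,\tau}}$ inductively as a sum of pullbacks from smaller components and from points. This pins down the whole class in $J_2(\VV_{r,\tau})$ — in particular it forces the $k^*$-part of $\delta$ to vanish — and shows the $\mathbb{Z}_2$-part is the pullback of a character determined by the $n$ parities at the $h_{x_i}$, hence of $L_\zeta$ for a homomorphism $\zeta\colon\Kth(\Perf(\rModi\mathcal{C}))\to\mathbb{Z}_2$. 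Your quasi-isomorphism invariance, triangle additivity and descent-to-$\Kth$ steps (the worry in your last paragraph) are fine and are essentially the same mechanism the paper uses; what is missing is this inductive use of the extension structure of $\tw_r(\mathcal{C})$ (or some substitute) to dispose of the $k^*$-part of the difference class, rather than the false global vanishing statement.
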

\begin{proof}
Let $x_1,\ldots,x_n$ be the objects of $\mathcal{C}$.  It is sufficient to show that orientation data is determined by restriction to the $n$ stack functions $\nu_{h_{x_i}}$, the family of objects over a point consisting just of $h_{x_i}$.  Orientation data over each of these stack functions is just given by a parity, i.e. if $(\mathcal{G},\phi)$ is a pair consisting of a constructible super line bundle on $\mathfrak{V}$, with $\phi$ a trivialization of its square, then the restriction of this data to the point $h_{x_i}$ is either isomorphic to $k$ placed in degree 0, with the canonical trivialization of $k^{\otimes 2}$, or $k$ placed in degree 1, with the canonical trivialization of $k[1]^{\otimes 2}$.    In other words $J_2(\Spec(k))\cong \mathbb{Z}_2$, since we are working over an algebraically closed field (see (\ref{easyJ})).
\smallbreak
Now say $\tilde{a}=(\mathcal{G},\phi)$ gives orientation data.  Consider the obstruction element $l\in J_2(\mathfrak{V}_3)$ restricted to the triangle 
\[
0\rightarrow 0\rightarrow 0.
\]
Since it is built from a quadratic from on a subspace of $\Ext(0,0)^{\oplus 4}$, we deduce that it is trivial above this triangle, and so the cocycle condition states that $(h)_{\tilde{a}}|_0$, the element of $J_2(\mathfrak{V})$ coming from $\tilde{a}$, restricted to the zero module, is trivial.  Since zero occurs as an extension of $h_{x}$ by the shift of $h_{x}$, the cocycle condition then determines the value of $h_{\tilde{a}}$ at all stack functions $\nu_{h_x[i]}$.  Finally, all elements of $\tw_r(\mathcal{C})$ occur as extensions of these modules, and so it follows that $(h)_{\tilde{a}}$ is entirely determined by the value of $(h)_{\tilde{a}}$ at $h_{x}$, for $x\in \mathcal{C}$, and the obstruction element $l$, which is intrinsic to $\mathcal{C}$.
\end{proof}
\begin{examp}
Consider again the category $\Perf(\rModi\Ho^*(S^3,\Cp))$.  The theorem above tells us that the two non-isomorphic choices of orientation data given in Example \ref{badlag} are \textit{all} the possible choices of orientation data.
\end{examp}
\begin{examp}
Consider again the orientation data for the Abelian category of compactly supported (framed) coherent sheaves on $\Cp^3$ constructed in Section \ref{C3}.  The proof of Theorem \ref{rigidity} tells us that a choice of orientation data for this category comes from a choice of constructible super line bundle on $\Cp^3$ with trivialized square, and a choice of parity over the stack function $\nu_{s_{\infty}}$.  So as in Section \ref{thelink}, a choice of orientation data arises from a choice of an element of 
\[
\Constr(\Cp^3)^*/(\Constr(\Cp^3)^*)^2\times (\Constr(\Cp^3,\mathbb{Z}_2)\times \mathbb{Z}_2.
\]
Our construction in Section \ref{C3} corresponds to picking the trivial element.  Here, the space of choices of orientation data is uncountably infinite.
\end{examp}
Let $E_0,\ldots,E_n$ be a spherical collection in a 3-dimensional Calabi-Yau category $\mathcal{D}$.  Let $\mathcal{C}$ be the full sub-category containing these objects, and assume that we have taken a minimal model for $\mathcal{C}$.  Say we have a quasi-equivalence of categories 
\[
\langle h_{\mathcal{C}'}\rangle_{\triang}\rightarrow \langle h_{\mathcal{C}}\rangle_{\triang}
\]
where $\mathcal{C}'$ is the full subcategory of $\mathcal{D}$ containing objects $E_0',\ldots,E_n'$, formed as before, by letting $E_0'=E_0[-1]$ and replacing each other $E_i$ with the universal extension from $E_i$.  This situation is strictly more general than the situation considered in the previous section, see e.g.  \cite{Nagao} for examples.  Then, since we have picked a minimal model for $\mathcal{C}$, it follows that the resulting orientation data $\tilde{a}=(\sDet(\Ho^{\bullet}(L_{\VV^+_r} \otimes \Theta(\CC^{\geq 2})\otimes \lambda^*(L_{\VV_l^+}))),\phi)$ is trivial above each $h_{E_i}$.  If we pick a minimal model for $\mathcal{C}'$ too, then the same comment holds, regarding the orientation data $\tilde{a}'=(\sDet(\Ho^{\bullet}(L_{\VV^+_r} \otimes \Theta(\CC'^{\geq 2})\otimes \lambda^*(L_{\VV_l^+}))),\phi')$.  So to check that $\tilde{a}\cong\tilde{a}'$, it is necessary and sufficient to find out what the value of the obstruction element $l$ is at each of the universal extensions -- if it is zero then by the cocycle condition the two choices agree.  This is equivalent to checking that the super vector space with trivialized square determined by $\tilde{a}$ is trivial above the $h_{E'_i}$, by the fact that $\tilde{a}$ is trivial above the stack functions $h_{E_i}$ and it satisfies the cocycle condition.
\begin{thm}
\label{outwater}
Let $\mathcal{C}$ and $\mathcal{C}'$ be as above, and let $\tilde{a}$ and $\tilde{a}'$ be the orientation data on the category 
\[
\tw_r(\mathcal{C})
\]
coming from the two bimodules $\mathcal{C}^{\geq 2}$ and $\mathcal{C}'^{\geq 2}$.  Assume that, for all $i\neq 0$, and for all 3-tuples
\[
x_1,x_2,x_3
\]
of morphisms, with $x_1,x_3\in \Hom_{\mathcal{C}}(E_0,E_i)$ and $x_2\in \Hom_{\mathcal{C}}(E_i,E_0)$, we have the equality
\[
m_3(x_1,x_2,x_3)=0.
\]
Then there is an isomorphism of orientation data $\tilde{a}\cong\tilde{a}'$.  Alternatively, this assumption holds, regardless of the higher compositions, if $\sqrt{-1}\in k$.
\end{thm}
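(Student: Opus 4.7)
The plan is to use the cocycle-condition reduction indicated in the paragraph immediately preceding the theorem and then to carry out an explicit local calculation of the obstruction $l$ at each universal-extension triangle.

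First I would spell out the reduction. Because the chosen model for $\mathcal{C}$ is minimal, a direct parity/discriminant count using sphericality of the $E_i$ shows that $(h)_{\tilde{a}}$, the class of $\tilde{a}$ in $J_2(\mathfrak{V}_r)$, is trivial above each $h_{E_i}$; similarly $(h)_{\tilde{a}'}$ is trivial above each $h_{E_i'}$. The two orientation data differ by an ind-constructible super line bundle with canonically trivialized square, and this difference is additive on triangles because both $\tilde{a}$ and $\tilde{a}'$ satisfy the cocycle condition with the \emph{same} intrinsic obstruction $l$. By Theorem \ref{rigidity}, an isomorphism class in $\OD(\mathcal{C})$ is determined by its restriction to any generating set of $K_0(\tw_r(\mathcal{C}))$. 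Choosing the generators $\{h_{E_i'}\}_i$, the problem reduces to showing that $\tilde{a}|_{h_{E_i'}}$ is trivial; by the cocycle identity applied to the triangle $E_0^{n_i}[-1]\to E_i\to E_i'$ (with $n_i=\dim\Ext^1(E_0,E_i)$), this is equivalent to the vanishing of $l$ above the universal extension $\alpha$.

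Next I would use the description of the obstruction recalled in Section \ref{thelink} and around Definition \ref{intmap}: pulled back to $\Ext^1(E_0,E_i)^{\oplus n_i}$, the class $l$ coincides with the $J_2(\mathrm{pt})$-class of a pair $(V_{\alpha},Q_{\alpha})$, where $V_{\alpha}=\End^1_{\tw_r(\mathcal{C})}(\cone(\alpha))/\Ker(d_{\alpha})$ and $Q_{\alpha}(v)=\langle d_{\alpha}v,v\rangle$ is the form induced by the Calabi--Yau pairing. In the minimal model, sphericality of $E_0$ and $E_i$ together with the cluster-collection condition force $\End^{1}(\cone(\alpha))$ and $\End^{2}(\cone(\alpha))$ each to sit in a single off-diagonal position of the $2\times 2$ matrix of Hom-spaces, of dimension $n_i^2$ each. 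The twisted differential $d_{\alpha}$ is the sum $\sum_{p_0,p_1\geq 0} b_{1+p_0+p_1}(A^{\otimes p_1},\cdot,A^{\otimes p_0})$ with $A$ having only $\alpha$ in its upper-right block. A matrix-index bookkeeping (using $A^2=0$ as a matrix product and the fact that $E^1$ lives in the same matrix position as $\alpha$) shows that \emph{every} term vanishes except the single contribution $b_3(\alpha,\cdot,\alpha)$, which lands back in the $\Ext^2$-position. The hypothesis $m_3(x_1,x_2,x_3)=0$ for $x_1,x_3\in\Hom(E_0,E_i)$, $x_2\in\Hom(E_i,E_0)$ applies to precisely this composition and kills it. What remains of $d_{\alpha}$ is $m_2(\alpha,\cdot)+m_2(\cdot,\alpha)$; using the cyclic $\mathrm{CY}$ identities to identify these with dual pairing maps into $\Ext^3(E_i,E_i)\oplus\Ext^3(E_0,E_0)^{\oplus n_i^2}$, one sees $V_{\alpha}=0$, hence $l|_{\alpha}=0$.

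Finally, for the alternative clause, assume $\sqrt{-1}\in k$. Then the identification $(-1,2)=(1,0)$ in $J_2(\mathrm{pt})$ shows that only the parity of $\dim V_{\alpha}$ matters. Without invoking the hypothesis on $m_3$, one can still compute $\dim V_{\alpha}\bmod 2$ from the Euler characteristic of the self-dual complex $\End^{\bullet}(\cone(\alpha))$; using only sphericality and the cluster condition (which give the same graded dimensions as above), this Euler-characteristic count forces $\dim V_{\alpha}$ to be even, so the class in $J_2(\mathrm{pt})$ is trivial. The main obstacle is the matrix-$A_{\infty}$ composition in the middle paragraph: one must verify that matrix-index arguments truly eliminate \emph{all} contributions to $d_{\alpha}|_{E^1}$ except $b_3(\alpha,\cdot,\alpha)$, and reconcile the conventions (upper vs.\ lower triangular twistings, which off-diagonal block carries $\alpha$, signs coming from shifts) with the identification of $(V_{\alpha},Q_{\alpha})$ as the minimal-model quadratic part of Theorem \ref{cycmm}.
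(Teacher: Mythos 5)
Your global strategy coincides with the paper's: use the cocycle condition, together with triviality of $\tilde{a}$ over the $h_{E_i}$ and of $\tilde{a}'$ over the $h_{E_i'}$, to reduce to the vanishing of the obstruction $l$ at each universal extension, and treat the $\sqrt{-1}\in k$ clause by a parity count. The gap is in your local computation of $l$. First, the setting of the theorem is a \emph{spherical} collection, which the paper stresses is strictly more general than a cluster collection (the conifold example right after the theorem is exactly a case where $\Ext(E_0,E_i)$ is spread over degrees $1$ and $2$), so you may not assume that $\End^1$ of the twisted object is concentrated in the matrix position of $\alpha$. Second, and decisively, your bookkeeping is internally inconsistent: for an element $x$ in the \emph{same} block as $\alpha$ (both are morphisms from $E_0$ to $E_i$), the string $(\alpha,x,\alpha)$ is not composable, so the one term you retain, $b_3(\alpha,x,\alpha)$, vanishes identically for block reasons, and so do $b_2(\alpha,x)$ and $b_2(x,\alpha)$. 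Your argument therefore never uses the hypothesis on $m_3$ and would ``prove'' the theorem unconditionally, which is false: for the conifold the hypothesis fails and the vanishing of $l$ at the universal extension requires the separate computation the paper performs for $C_{2,1}$. The $b_3(\alpha,\cdot,\alpha)$ term that actually matters acts on degree-one elements of the \emph{opposite} off-diagonal block, built from $\Hom^1(E_i,E_0)$, and lands in the degree-two part of $\Hom(E_0,E_i)$ (tensored with multiplicity spaces); this is precisely the composition that the hypothesis with $x_2\in\Hom_{\mathcal{C}}(E_i,E_0)$ kills. Finally, your conclusion $V_\alpha=0$ is asserted rather than derived, and the paper does not claim it: what the paper shows is that, once the hypothesis removes the $b_3$ term, the remaining differential $b_2(\lambda,\cdot)+b_2(\cdot,\lambda)$ interchanges the diagonal part $V_a\oplus V_d$ (self-dual under the Calabi--Yau pairing) and the off-diagonal part $V_b\oplus V_c$ (whose two summands are dual to each other), so both pieces become isotropic for $Q=\langle d(\cdot),\cdot\rangle$ and the form on $U=V/\Ker(d)$ is split, hence trivial in $J_2(\Spec(k))$.

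The alternative clause also has a gap as written. The parity of $\dim U=\dim\Image(d)$ is not determined by an Euler-characteristic count of the endomorphism complex alone: one needs its cohomology in degrees $0$ and $1$, that is $\dim\Ext^{\leq 1}(\cone(\alpha),\cone(\alpha))$, and this is where sphericality of the \emph{universal extension itself} enters. The paper's count combines the dimensions of the relevant graded pieces of the complex with this $\Ext^{\leq 1}$, which is one-dimensional by sphericality of the cone, yielding an even total. You only invoke sphericality of $E_0$ and $E_i$ and the cluster condition, which does not pin down the rank of $d$ modulo $2$.
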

\begin{proof}
We need to calculate the obstruction element $l$ at each universal extension.  If this is trivial, then by the cocylce condition, and the fact that $(h)_{\tilde{a}}$ is trivial at each of the $\nu_{E_k}$, it follows that $(h)_{\tilde{a}}$ is trivial at each of the stack functions $\nu_{h_{E_i'}}$, and the two choices of orientation data are the same.\bigbreak
This obstruction element is determined in the usual way: let
\[
V=V_a\oplus V_b\oplus V_c\oplus V_d
\]
where 
\begin{align*}
V_a=&\Hom_{\mathcal{C}}(E_0\otimes \Hom_{\mathcal{C}}^1(E_0,E_i),E_0\otimes \Hom_{\mathcal{C}}^1(E_0,E_i))\\
V_b=&\Hom_{\mathcal{C}}(E_0\otimes \Hom_{\mathcal{C}}^1(E_0,E_i),E_i)\\
V_c=&\Hom_{\mathcal{C}}(E_i,E_0\otimes \Hom_{\mathcal{C}}^1(E_0,E_i))\\
V_d=&\Hom_{\mathcal{C}}(E_i,E_i).
\end{align*}
Then this graded vector space has a differential given by 
\[
d=b_2(\bullet,\lambda)+b_2(\lambda,\bullet)+b_3(\lambda,\bullet,\lambda)
\]
where $\lambda\in \Hom_{\mathcal{C}}(E_0[-1]\otimes \Hom_{\mathcal{C}}^1(E_0,E_i),E_i)$ is the universal extension.  Note that, by the assumption on $m_3$, we have the equality
\[
d=b_2(\bullet,\lambda)+b_2(\lambda,\bullet).
\]
The differential maps $V_a\oplus V_d$ to $V_b$.  It maps $V_b$ to zero, and $V_c$ to $V_a\oplus V_d$.  The spaces $V_a$ and $V_d$ are self-dual under the pairing $\langle\bullet,\bullet\rangle$ and $V_b$ and $V_c$ are dual to each other.  It follows that the quadratic form 
\[
Q=\langle d(\bullet),\bullet\rangle
\]
on 
\[
U=\frac{V}{\Ker(d:V_1\rightarrow V_2)}
\]
is split under the direct sum
\[
\frac{V_a\oplus V_d}{\Ker(d_1)}\bigoplus \frac{V_b\oplus V_c}{\Ker(d_1)}.
\]
In particular, the element $(\mathrm{Det}(Q),\mathrm{dim}(U))$ is trivial in $J_2(\Spec(k))$.  For the final remark, we note that under the assumption that $\sqrt{-1}\in k$, it is enough to show that $\dim(U)$ is even, and this will imply that $l\in J_2(\VV_{r,3})$ is trivial above the extension $\lambda$.  This follows from a dimension count, namely $\dim(V^1)=\dim(\Hom_{\mathcal{C}}^1(E_0,E_i))^2+1$ and $\dim(V^2)=\dim(\Hom_{\mathcal{C}}^1(E_0,E_i))^2$, and finally $\dim(\Ext^{\leq 1}(E_{\alpha},E_{\alpha})=1$, since the universal extension is spherical.  By basic linear algebra, the parity of $\dim(U)$ is given by the sum of these numbers.
\end{proof}
\begin{examp}
Consider the noncommutative conifold arising from the pair of a quiver with potential $(Q_{\coni},W)$ of Example \ref{conif}.  This is quite special, in that the above theorem does not apply to it (dropping temporarily the assumption that we work over $\Cp$), since the superpotential contains quartic terms, which are cycles between the two vertices.  However, it turns out that we have already calculated the value of $l$ at the universal extension, for this is just $C_{2,1}$, in the notation of Section \ref{conif}.  We deduce that the orientation data pulled back from the mutated conifold is isomorphic to the orientation data from $\mathcal{D}_l(Q_{\coni},W)^{\geq 2}$, even before we reinstate the assumption that we work over $\Cp$, which we now do.\medbreak
Let $(Q_{\coni,+},W_+)$ be the mutated quiver with superpotential.  This is in fact an isomorphic quiver to $(Q_{\coni},W)$, but the natural derived equivalence
\[
\mathrm{D}^b(\Gamma_{\nc}(Q_{\coni,+},W_+)\lmod)\rightarrow \mathrm{D}^b(\Gamma_{\nc}(Q_{\coni},W)\lmod)
\]
is not trivial, in that it doesn't come from an equivalence between the natural hearts of these two categories (see \cite{Flops}).  The heart of the left hand category should be thought of as $\Coh_{\cpct}(X_{\nc}^+)$, the category of finite modules on a noncommutative version of the flop of $X$.  If we restrict to sheaves supported away from the exceptional locus, the above quasi-equivalence does reduce to an equivalence of Abelian categories, and so the orientation data is the same after passing through the quasi-equivalence, by construction.  We have just seen that the orientation data on the exceptional locus, pushed through the derived equivalence, is also the same both sides.  We deduce that the conifold flop preserves orientation data.
\medbreak
Our comments here can be summed up by saying that natural choices of orientation data tend to glue across cluster transformations, flops, and crossings of `walls of the second kind'.  So while the fact that orientation data must be introduced at all, and is generally non-unique, implies a negative answer to Question \ref{bigq}, the good news is that orientation data is natural, well-behaved, and, we hope the reader agrees, not so frightening after all.
\end{examp}

\bibliographystyle{amsplain}
\bibliography{SPP}
\end{document}